\newcommand{\C}{\mathbb{C}}
\newcommand{\R}{\mathbb{R}}
\newcommand{\Z}{\mathbb{Z}}
\newcommand{\pr}{\mathbb{P}}
\newcommand{\GL}{\text{GL}}
\newcommand{\re}{\text{Re }}
\newcommand{\im}{\text{Im }}
\newcommand{\smo}{\sqrt{-1}}
\newcommand{\id}{\text{id}}
\newcommand{\zbar}{\overline{z}}
\newcommand{\del}{\partial}
\newcommand{\delbar}{\bar{\partial}}
\newcommand{\Pic}{\text{Pic}}
\newcommand{\Div}{\text{Div}}
\newcommand{\PDiv}{\text{PDiv}}
\newcommand{\Ord}{\text{ord}}
\newcommand{\End}{\text{End}}
\newcommand{\Hom}{\text{Hom}}
\newcommand{\gl}{\mathfrak{gl}}
\newcommand{\Hess}{\text{Hess}}
\newcommand{\hess}{\text{hess}}
\theoremstyle{plain}
\newtheorem{theorem}{Teorema}[chapter]
\newtheorem{proposition}[theorem]{Proposição}
\newtheorem{corollary}[theorem]{Corolário}
\newtheorem{lemma}[theorem]{Lema}
\theoremstyle{definition}
\newtheorem{definition}[theorem]{Definição}
\newtheorem{example}[theorem]{Exemplo}
\theoremstyle{remark}
\newtheorem{remark}[theorem]{Observação}
\title{\huge Fundamentos da Geometria Complexa:\\ \Large aspectos geométricos, topológicos e analíticos.}
\author{Lucas Kaufmann Sacchetto}
\date{}
\begin{document}
\maketitle

\tableofcontents
\renewcommand{\thechapter}{}
\renewcommand{\chaptername}{}
\chapter{Introdução}

O estudo da Geometria Complexa tem suas raízes no trabalho de Riemann e foi motivada pela Análise Complexa. No estudo de funções holomorfas de uma variável por exemplo, o conceito de continuação analítica leva naturalmente à ideia de funções multi-valoradas e foi para entender essas funções que Riemann introduziu o que hoje são chamadas Superfícies de Riemann. Na linguagem atual, uma superfície de Riemann é uma variedade complexa de dimensão $1$.

Já no caso das superfícies de Riemann, podemos observar um fenômeno muito comum na Geometria Complexa, que é a íntima relação entre as teorias topológica, analítica e geométrica. O Teorema de Uniformização afirma que toda superfície de Riemann pode ser escrita na forma $X = \widetilde{X}/\Gamma$, onde $\widetilde{X}$ é: i) a esfera de Riemann $\C_{\infty}$, ii) o plano complexo $\C$ ou iii) o disco unitário $\Delta$, e $\Gamma$ é um grupo discreto de automorfismos agindo livre e propriamente em $\widetilde{X}$. Do ponto de vista da Geometria diferencial, esses três casos podem ser caracterizados pelo fato de admitirem métricas de curvatura constante igual a $+1,0$ e $-1$ respectivamente. No caso em que $X$ é compacta, podemos diferenciar os três casos pelo genus $g$ de $X$: $g=0$, no primeiro caso, $g=1$ no segundo e $g \geq 2$ no terceiro. O genus tem também uma interpretação analítica: é a dimensão do espaço das $1$-formas holomorfas globais em $X$.\\

No caso das superfícies complexas ($\dim X = 2$) compactas também existe uma classificação, devida ao trabalho de Enriques e Kodaira, mas esta é mais complicada e ainda parcialmente incompleta. Para dimensões mais altas, uma classificação completa parece impossível. No entanto, existem algumas técnicas provindas do chamado `` Minimal Model Program".\\

Atualmente, a Geometria Complexa é uma área bastante ativa, e sua popularidade se dá possivelmente devido ao fato de estar na intersecção de vários ramos da Matemática, como a geometria diferencial, a análise complexa e a geometria algébrica. Outro motivo para o interesse está no fato de servir de ferramenta fundamental em partes importantes da Física teórica, como por exemplo na Teoria de Cordas. Essas conexões e ramificações tornam o assunto extremamente interessante e variado, e ao mesmo tempo trazem para a Geometria Complexa uma riqueza de exemplos e de pontos de vista.\\

\subsubsection{Variedades complexas}

Uma variedade complexa é um espaço topológico que é localmente modelado em abertos de $\C^n$ e cujas cartas locais diferem por transformações holomorfas.

Alguns exemplos básicos de variedades complexas são: o espaço euclideano complexo $\C^n$, o espaço projetivo complexo $\pr^n$, toros complexos $X=\C^n/L$, variedades algébricas suaves, etc.

Podemos falar de aplicações holomorfas entre variedades complexas e alguns resultados locais sobre funções holomorfas de várias variáveis se generalizam facilmente.\\

\indent \textbf{Princípio do Máximo.} Se $X$ é conexa e $f:X \to \C$ é holomorfa e não constante então $|f|$ não possui máximo local em $X$.\\
\indent \textbf{Princípio da Identidade.} Se $X$ é conexa e $f,g:X \to Y$ são holomorfas e $f=g$ em algum aberto não vazio $U \subset X$ então $f=g$ em $X$.\\

Como consequência do princípio do máximo vemos que em uma variedade compacta e conexa não existem funções holomorfas globais não constantes. Em particular, isso mostra que que nenhuma variedade compacta $X$ pode ser holomorficamente mergulhada em $\C^n$. Esse já um primeiro exemplo de como a teoria de variedades complexas difere drasticamente da teoria de variedades diferenciáveis, pois, pelo Teorema de Whitney, qualquer variedade diferenciável, compacta ou não, admite um mergulho em algum espaço euclideano $\R^N$.

Embora não admitam mergulhos no espaço afim, uma grande quantidade de variedades compactas admitem mergulhos em algum espaço projetivo complexo. Essas variedades são chamadas projetivas e são um caso particular das variedades de Kähler, das quais falaremos mais adiante.\\

Do fato de não existirem muitas funções holomorfas é útil considerarmos também as chamadas \emph{funções meromorfas} em $X$, que são ``funções'' dadas localmente como razão de funções holomorfas.\\

\subsubsection{Feixes e cohomologia}
Uma ferramenta essencial na geometria complexa é a teoria de feixes e sua cohomologia, que fornecem uma linguagem adequada para tratar de problemas formulados em termos de propriedades locais.

Um feixe sobre um espaço topológico $X$ é uma coleção de grupos abelianos $\mathcal{F}(U)$, um para cada aberto $U \subset X$, junto com homomorfismos de restrição $\mathcal{F}(U) \to \mathcal{F}(V)$, sempre que $V \subset U$. Os elementos de $\mathcal{F}(U)$ são chamados seções de $\mathcal{F}$ sobre $U$. Pedimos ainda que seções que concordam na intersecção dos abertos de uma cobertura possam ser coladas a uma seção na união desses abertos.

Alguns exemplos básicos de feixes sobre uma variedade complexa $X$ são: o feixe de funções holomorfas $\mathcal{O}_X$, o feixe de funções holomorfas que não se anulam $\mathcal{O}^*_X$, o feixe de $k$-formas $\mathcal{A}^k_X$ e os feixes localmente constantes $\underline{G}$, que associam a cada $U \subset X$ o conjunto das funções localemte constantes em $U$ com valores em um grupo abeliano $G$.

Podemos definir um morfismo de feixes $\phi: \mathcal{F}\to \mathcal{G}$ como sendo um coleção de homomorfismos $\phi_U: \mathcal{F}(U)\to \mathcal{G}(U)$ que comutam com as restrições. Podemos também definir feixes núcleo e imagem associados a $\phi$ e, com isso, definir sequências exatas de feixes. Uma sequência de feixes e morfismos $\mathcal{F} \stackrel{\phi}{\longrightarrow} \mathcal{G} \stackrel{\psi}{\longrightarrow} \mathcal{H}$ será exata em $\mathcal{G}$ se para todo elemento em $\sigma \in \ker \phi$ existe uma cobertura de $X$ por abertos $U_i$ de modo que $\sigma|_{U_i} \in \im \phi_{U_i}$. Mais geralmente uma sequência $\mathcal F^1 \to \mathcal F^2 \to \cdots$ será exata se o for em cada $\mathcal F^i$.\\

Um exemplo central na Geometria Complexa é a chamada \textit{sequência exponencial}, dada por
\begin{equation*}
0 \longrightarrow \underline{\Z} \longrightarrow \mathcal{O}_X \longrightarrow \mathcal{O}^*_X \longrightarrow 0,
\end{equation*}
onde a $\underline{\Z} \to \mathcal{O}_X$ é a inclusão e $\mathcal{O}_X \to \mathcal{O}^*_X$ é a exponencial de funções $f \mapsto \exp(2\pi \smo \cdot f)$. A exatidão da sequência é assegurada pela existência local do logaritmo.\\

\indent A ideia da cohomologia de feixes é associar a cada feixe $\mathcal{F}$ em $X$ uma sequência de grupos abelianos $H^i(X,\mathcal{F})$, $i=1,2,\ldots$, chamados grupos de cohomologia de $X$ com coeficientes em $\mathcal{F}$, de modo que morfismos de feixe induzam homomorfismos nos respectivos grupos de cohomologia e tal que para toda sequência exata curta $0 \to \mathcal{F} \to \mathcal{G} \to \mathcal{H} \to 0$ exista uma sequência exata da forma
\begin{equation*}
\begin{split}
0 &\longrightarrow H^0(X,\mathcal{F}) \longrightarrow H^0(X,\mathcal{G}) \longrightarrow H^0(X,\mathcal{H}) \longrightarrow \\
&\longrightarrow H^1(X,\mathcal{F}) \longrightarrow H^1(X,\mathcal{G}) \longrightarrow H^1(X,\mathcal{H}) \longrightarrow \cdots \\
 & \vdots \\
 & \longrightarrow H^n(X,\mathcal{F}) \longrightarrow H^n(X,\mathcal{G}) \longrightarrow H^n(X,\mathcal{H}) \longrightarrow \cdots.
\end{split}
\end{equation*}
\indent Além disso, o espaço $H^0(X,\mathcal{F})$ é naturalmente identificado com o espaço das seções globais de $\mathcal{F}$.\\

\subsubsection{Fibrados de Linha e o Grupo de Picard}
Um fibrado de linha holomorfo sobre $X$ é uma variedade complexa $L$ com uma aplicação holomorfa $\pi: L \to X$ tal que $L_x = \pi^{-1}(x)$ é um espaço vetorial complexo de dimensão $1$. Pedimos também que $L$ seja localmente trivial no sentido que existem biholomorfismos $\varphi_U: \pi^{-1}(U)\to U \times \C$ para $U \subset X$ abertos cobrindo $X$.

Equivalentemente, dada uma cobertura $\mathcal{U} = \{U_i\}$, um fibrado de linha pode ser dado por funções holomorfas que não se anulam $\varphi_{ij} \in \mathcal{O}^*_X(U_i \cap U_j)$ satisfazendo as condições de cociclo
\begin{equation*}
 \begin{split}
   \varphi_{ij} \cdot \varphi_{ji} &= I \;\; \text{ em } U_i \cap U_j \\
   \varphi_{ij} \cdot \varphi_{jk} \cdot \varphi_{ki} &= I \;\; \text{ em } U_i \cap U_j \cap U_k
 \end{split}
\end{equation*}

Um exemplo importante é o chamado \textit{fibrado canônico} $K_X$, que é o fibrado de $n$-formas holomorfas em $X$. Outro exemplo é o chamado fibrado tautológico sobre $\pr^n$, cuja fibra sobre uma reta $\ell \subset \pr^n$ é a própria reta, vista como subespaço de $\C^{n+1}$.\\

Dados dois fibrados de linha $L$ e $L'$, o produto tensorial $L \otimes L'$ é um outro fibrado de linha, e se $L^*$ denota o fibrado dual de $L$ (em que as fibras são $L_x^*$) então $L \otimes L ^* \simeq \mathcal{O}_X$, o fibrado trivial. Desta forma, o conjunto de classes de isomorfismo de fibrados de linha holomorfos formam um grupo abeliano, chamado Grupo de Picard, denotado $\Pic(X)$.\\
\indent A descrição de $L$ por cociclos nos permite identificar
\begin{equation*}
\Pic(X) \simeq H^1(X,\mathcal{O}^*_X).
\end{equation*}

Note que este grupo aparece na sequência exata longa associada à sequência exponencial:
\begin{equation*}
H^1(X,\Z) \longrightarrow H^1(X,\mathcal{O}_X) \longrightarrow H^1(X,\mathcal{O}^*_X) \longrightarrow H^2(X,\Z)
\end{equation*}

\indent A \textit{classe de Chern} de um fibrado $L \in \Pic(X)$ é sua imagem $c_1(L)$ pela aplicação $c_1:H^1(X,\mathcal{O}^*_X) \to H^2(X,\Z)$ e é um invariante importante de $L$.\\

\subsubsection{Divisores}

Um divisor em $X$ é uma soma formal $D = \sum a_i Y_i$ onde $a_i \in \Z$ e cada $Y_i \subset X$ é uma hipersuperfície irredutível. Equivalentemente, podemos definir um divisor como sendo uma seção global do feixe quociente $\mathcal{K}^*_X/\mathcal{O}^*_X$, onde $\mathcal{K}^*_X$ é o feixe das funções meromorfas sobre $X$. O grupo dos divisores em $X$ é denotado por $\Div(X)$.\\
\indent O exemplo básico de divisor é o que chamamos de divisor associado a uma função meromorfa $f \in K(X)$, definido por
\begin{equation*}
(f) = \sum \Ord (f) _Y \cdot Y,
\end{equation*}
onde a soma é sobre todas as hipersuperfícies de $X$ e $\Ord(f)_Y$ é um inteiro que mede se $f$ se anula ou se tem um singularidade ao longo de $Y$. Um divisor é dito principal se $D = (f)$ para alguma $f \in K(X)^*$. A ordem é aditiva e portanto o conjunto $\PDiv(X)$ dos divisores principais é um subgrupo de $\Div(X)$.

Da sequência exata de feixes
\begin{equation*}
0 \longrightarrow \mathcal{O}^*_X \longrightarrow \mathcal{K}^*_X \longrightarrow \mathcal{K}^*_X/ \mathcal{O}^*_X \longrightarrow 0
\end{equation*}
obtemos a sequência exata
\begin{equation*}
 \begin{matrix}
 H^0(X,\mathcal{K}^*_X) & \longrightarrow & H^0(X,\mathcal{K}^*_X/\mathcal{O}^*_X) & \longrightarrow & H^1(X,\mathcal{O}^*_X) \\
 \parallel &  & \parallel & & \parallel \\
 K(X)^* &  & \Div(X) & & \Pic(X)
 \end{matrix}
\end{equation*}
e portanto a cada divisor $D \in \Div(X)$ temos um fibrado de linha associado, denotado por $\mathcal{O}(D)$. Vemos também que o fibrado $\mathcal{O}(D)$ é trivial se e só se $D$ é um divisor principal, de onde obtemos uma inclusão
\begin{equation*}
\frac{\Div(X)}{\PDiv(X)} \hookrightarrow \Pic(X).
\end{equation*}

\subsubsection{Geometria Hermitiana}
A Geometria Hermitiana é a extensão natural da Geometria Riemanniana para as variedades complexas. Se $X$ é uma variedade complexa, a estrutura complexa de $X$ induz uma estrutura complexa em cada espaço tangente, isto é, existe um automorfismo $J:TX \to TX$ satisfazendo $J^2=-\text{id}$. Uma métrica riemanniana $g$ em é dita \textit{hermitiana} se
\begin{equation*}
g(JX,JY) = g(X,Y),
\end{equation*}
ou seja, as aplicações $J_x:T_x X \to T_x X$ são isometrias.\\
\indent A uma métrica hermitiana temos uma 2-forma associada, chamada \textit{forma fundamental}:
\begin{equation*}
\omega(X,Y) = g(JX,Y).
\end{equation*}

\indent O automorfismo $J:TX \to TX$ se estende à complexificação $T_{\C}X = TX \otimes \C$, o que fornece uma decomposição
\begin{equation*}
T_{\C}X = T^{1,0}X \oplus T^{0,1}X,
\end{equation*}
onde $T^{1,0}X$  e $T^{0,1}X$ são os autoespaços de $J$ associados aos autovalores $\smo$ e $-\smo$ respectivamente.

Essa decomposição induz uma decomposição no fibrado de $k$-formas:
\begin{equation*}
\textstyle \bigwedge^k_{\C}X \doteq \bigwedge^k(T_{\C}X) = \displaystyle \bigoplus_{p+q=k} \textstyle \bigwedge^{p,q} X, \text{ onde } \textstyle \bigwedge^{p,q} X = \textstyle \bigwedge^p (T^{1,0}X)^* \otimes \bigwedge^q (T^{0,1}X)^*.
\end{equation*}
Com isso podemos definir os operadores $\partial$ e $\bar{\partial}$ que agem nas $(p,q)$-formas tomando, respectivamente, as compontentes $(p+1,q)$ e $(p,q+1)$ da diferencial exterior $d$.

O operador $\delbar$ satisfaz $\delbar^2 = 0$ e portanto temos, para cada $p\geq 0$, um complexo de feixes
\begin{equation*}
\mathcal{A}_X^{p,0} \stackrel{\delbar}{\longrightarrow} \mathcal{A}_X^{p,1} \stackrel{\delbar}{\longrightarrow} \mathcal{A}_X^{p,2} \stackrel{\delbar}{\longrightarrow} \cdots
\end{equation*}
chamado \textit{complexo de Dolbeaut}, e os espaços de  cohomologia no nível de seções globais
\begin{equation*}
H^{p,q}_{\delbar}(X) = \frac{\ker\{\delbar: \mathcal{A}^{p,q}(X) \to \mathcal{A}^{p,q+1}(X)\}}{\im \{\delbar: \mathcal{A}^{p,q-1}(X) \to \mathcal{A}^{p,q}(X)\}}
\end{equation*}
são chamados espaços de cohomologia de Dolbeaut, que são análogos complexos dos espaços de cohomologia de de Rham.\\

\indent A partir de uma métrica hermitiana podemos definir o operador de Leftschetz $L: \alpha \mapsto \omega \wedge \alpha$ e seu adjunto $\Lambda = *^{-1} \circ L \circ *$, onde $*$ é o operador de Hodge. Juntamente com o operador de contagem $H$, que age em $\bigwedge^k X$ por multiplicaçao por $n-k$, obtemos uma representação de $\mathfrak{sl}(2,\C)$ na álgebra exterior de $X$.\\
\indent Essa representação oferece uma outra decomposição do fibrado de $k$-formas, chamada \textit{decomposição de Lefschetz}:
\begin{equation*}
\textstyle \bigwedge^k X = \displaystyle \bigoplus_{j \geq 0} L^j(P^{k-2j}(X)),\;\; P^k(X) = \ker \big(\Lambda: \textstyle \bigwedge^k X \to \bigwedge^{k-2}X \big).
\end{equation*}

\subsubsection{Variedades de Kähler}

Uma métrica hermitiana $g$ é dita de Kähler se a forma fundamental $\omega$ é fechada ($d \omega = 0)$ ou equivalentemente se o $(1,1)$-tensor $J$ é paralelo com relação à conexão de Levi-Civita de $g$ ($\nabla J=0$). Uma outra definição possível é exigir que a métrica $g$ pode ser aproximada até ordem $2$ pela métrica hermitiana padrão em $\C^n$.

Um variedade é dita de Kähler se admite uma métrica de Kähler. As variedades de Kähler formam uma classe importante de variedades complexas e contém por exemplo os toros complexos e todas as variedades projetivas.

Nem todas variedade complexa admite métricas de Kähler. Existem, por exemplo, obstruções de natureza topólogica.
\begin{proposition}
Seja $X$ uma variedade complexa compacta de dimensão $n$. Se $X$ admite uma métrica de Kähler então $H^{2k}(X,\R) \neq 0$ para $k=0,\cdots,n$.
\end{proposition}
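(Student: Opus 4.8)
The plan is to use the Kähler form itself to manufacture a nonzero class in every even degree. Fix a Kähler metric $g$ on $X$ with fundamental form $\omega$; by definition $d\omega = 0$, so $\omega$ represents a class $[\omega] \in H^2(X,\R)$ in de Rham cohomology. Since $d$ is a derivation and $\omega$ is closed, every power $\omega^k = \omega \wedge \cdots \wedge \omega$ ($k$ factors) is again a closed form, and its class is $[\omega]^k = [\omega^k] \in H^{2k}(X,\R)$. The whole argument reduces to showing that $[\omega^k] \neq 0$ for $0 \le k \le n$, which immediately yields $H^{2k}(X,\R) \neq 0$.

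First I would handle the top degree $k = n$. The key local computation is that $\omega^n$ is a positive multiple of the Riemannian volume form of $g$: writing $\omega$ in a local unitary coframe in the normalized form $\omega = \smo \sum_j \theta_j \wedge \overline{\theta}_j$, one gets $\omega^n = c_n\, dV_g$ with a constant $c_n > 0$, where $dV_g$ is the volume form for the orientation of $X$ induced by its complex structure. Since $X$ is compact (hence a closed oriented manifold), $\int_X \omega^n = c_n \cdot (\text{volume of } X) > 0$. By Stokes' theorem the integral of an exact $2n$-form over $X$ vanishes, so $\omega^n$ cannot be exact, i.e. $[\omega^n] \neq 0$ in $H^{2n}(X,\R)$.

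Next I would bootstrap from $k = n$ down to arbitrary $k$ with $1 \le k \le n$. Suppose for contradiction that $[\omega^k] = 0$, say $\omega^k = d\eta$ for some $(2k-1)$-form $\eta$. Because $\omega$ is closed, $d(\eta \wedge \omega^{n-k}) = d\eta \wedge \omega^{n-k} = \omega^k \wedge \omega^{n-k} = \omega^n$, so $\omega^n$ would be exact, contradicting the previous step. Hence $[\omega^k] \neq 0$ for all $1 \le k \le n$. The case $k = 0$ is trivial: $H^0(X,\R) \neq 0$ since $X$ is nonempty.

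The main obstacle — indeed essentially the only nontrivial input — is the identification of $\omega^n$ with a strictly positive multiple of the volume form, together with the fact that the complex structure orients $X$ compatibly; this is where the Hermitian (and Kähler) hypothesis is actually used. I would be careful to fix the sign and normalization of $\omega$ so that the constant $c_n$ is genuinely positive; once that pointwise linear-algebra fact is in place, the rest is just Stokes' theorem and the Leibniz rule for $d$.
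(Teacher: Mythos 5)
Your proof is correct and follows essentially the same route as the paper: both arguments rest on the identity $\omega^n = n!\,\mathrm{vol}_X$ (Proposi\c{c}\~ao \ref{prop:volume-form}), use Stokes to conclude $\omega^n$ is not exact, and then observe that exactness of $\omega^k$ would force exactness of $\omega^n = \omega^k \wedge \omega^{n-k}$. The only cosmetic difference is the order of presentation (you start at $k=n$ and descend, while the paper starts at $k=1$), which does not change the substance.
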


Um exemplo importante é a chamada métrica de Fubini-Study $g_{FS}$ no espaço projetivo $\pr^n$. Ela é definida a partir de sua forma fundamental, dada localmente por
\begin{equation*}
\omega_{FS} = \frac{\smo}{2} \partial \bar{\partial} \log \bigg( 1 + \sum_{j \neq i} \left| \frac{z_j}{z_i} \right|^2 \bigg)
\end{equation*}
em $U_i = \{z_i \neq 0\}$.

Uma maneira equivalente de definir $g_{FS}$ é exigir que a submersão $S^{2n+1} \to \pr^n$ seja uma submersão riemanniana.\\

A condição de que uma métrica de Kähler pode ser aproximada pela métrica padrão permite obter relações entre operadores diferenciais em $X$ cuja definição depende apenas de derivadas primeiras da métrica, como por exemplo os operadores $\partial$ e $\bar{\partial}$, seus adjuntos $\del^*$ e $\delbar^*$ com relação a métrica $L^2$ (veja eq. \ref{eq:intro-L2-metric} adiante), os operadores de Lefschetz $L$ e $\Lambda$ e etc. Algumas dessas relações são as chamadas identidades de Kähler

\begin{theorem} Se $X$ é uma variedade complexa com uma métrica de Kähler então valem as seguintes relações de comutação
\begin{itemize}
\item[1.] $[L,\del] = [L,\delbar] = 0$ ~e~~ $[\Lambda,\del^*] = [\Lambda^*,\delbar^*] = 0$
\item[2.] $[\Lambda,\delbar] = - \smo \del^*$ ~e~~ $[\Lambda,\del] = \smo \delbar^*$,
\item[3.] $[\delbar^*,L] = \smo \del$ ~e~~ $[\del^*,L] = -\smo \delbar$.

\end{itemize}
\end{theorem}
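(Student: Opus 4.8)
The plan is to exploit, in turn, the osculating description of K\"ahler metrics recalled above, an algebraic reduction of the list to a single identity, and one explicit computation on the flat model.

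\emph{Step 1: reduction to $\C^n$.} By the characterization mentioned above, around any point $p\in X$ there are holomorphic coordinates in which the K\"ahler metric $g$ agrees with the standard Hermitian metric of $\C^n$ up to order two. Now $\del$ and $\delbar$ do not involve the metric at all; the Lefschetz operators $L=\omega\wedge(\cdot)$ and $\Lambda=L^{*}$ involve $\omega$ but none of its derivatives; and the formal adjoints $\del^{*},\delbar^{*}$ have coefficients at $p$ built from $g$ and its \emph{first} derivatives at $p$ only. Hence each of these operators, evaluated at $p$, coincides with the corresponding operator of the flat metric, and therefore so do all the commutators in items 1--3. Since $p$ is arbitrary, it suffices to verify the three groups of identities on $\C^{n}$ equipped with the constant metric, whose fundamental form is $\omega=\tfrac{\smo}{2}\sum_{j}dz_{j}\wedge d\bar z_{j}$.

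\emph{Step 2: reduction to one identity.} The operators $L$ and $\Lambda$ are real, and complex conjugation interchanges $\del\leftrightarrow\delbar$ and $\del^{*}\leftrightarrow\delbar^{*}$. So conjugating $[\Lambda,\delbar]=-\smo\,\del^{*}$ gives $[\Lambda,\del]=\smo\,\delbar^{*}$, while taking the $L^{2}$-adjoint of $[\Lambda,\delbar]=-\smo\,\del^{*}$ gives $[\delbar^{*},L]=\smo\,\del$, whose conjugate is $[\del^{*},L]=-\smo\,\delbar$; that covers items 2 and 3. For item 1, $d\omega=0$ together with $\omega$ being of type $(1,1)$ forces $\del\omega=\delbar\omega=0$, so by the Leibniz rule $\del(\omega\wedge\alpha)=\omega\wedge\del\alpha$, i.e. $[L,\del]=0$, and likewise $[L,\delbar]=0$; the two bracket relations on the right of item 1 (the second of which should read $[\Lambda,\delbar^{*}]=0$) are just the $L^{2}$-adjoints of these. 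So everything comes down to the single identity $[\Lambda,\delbar]=-\smo\,\del^{*}$ on the flat model.

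\emph{Step 3: the model computation.} On $\bigwedge^{\bullet}\C^{n}$ let $e_{j},\bar e_{j}$ denote exterior multiplication by $dz_{j},d\bar z_{j}$, and $\iota_{j},\bar\iota_{j}$ their pointwise adjoints for the standard Hermitian inner product on forms; for the normalization $\langle dz_{j},dz_{k}\rangle=2\delta_{jk}$ these satisfy $\{e_{j},\iota_{k}\}=\{\bar e_{j},\bar\iota_{k}\}=2\delta_{jk}$, all other anticommutators among the $e$'s, $\bar e$'s, $\iota$'s and $\bar\iota$'s vanishing. In this language $\delbar=\sum_{j}\bar e_{j}\,\partial_{\bar z_{j}}$, $\del^{*}=-\sum_{j}\iota_{j}\,\partial_{\bar z_{j}}$, $L=\tfrac{\smo}{2}\sum_{j}e_{j}\bar e_{j}$, and hence $\Lambda=L^{*}=-\tfrac{\smo}{2}\sum_{j}\bar\iota_{j}\iota_{j}$. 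One then expands $[\Lambda,\delbar]=-\tfrac{\smo}{2}\sum_{j,k}[\bar\iota_{j}\iota_{j},\bar e_{k}]\,\partial_{\bar z_{k}}$ and pushes $\bar e_{k}$ to the left through $\bar\iota_{j}\iota_{j}$ — it anticommutes with $\iota_{j}$, and past $\bar\iota_{j}$ it contributes only the scalar $\{\bar e_{k},\bar\iota_{j}\}=2\delta_{jk}$ — so that $[\bar\iota_{j}\iota_{j},\bar e_{k}]=-2\delta_{jk}\iota_{j}$; collecting terms gives $[\Lambda,\delbar]=\smo\sum_{j}\iota_{j}\,\partial_{\bar z_{j}}=-\smo\,\del^{*}$.

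\emph{Main obstacle.} Nothing here is conceptually deep; the real work is Step 3, which is a matter of sign and normalization bookkeeping — one must fix a convention for the Hermitian inner product on forms (whence all the factors of $2$) and for the Koszul signs when commuting odd operators, and then stay consistent. It is worth stressing that the K\"ahler hypothesis enters exactly once, in Step 1: it is $d\omega=0$ that permits the second-order osculation by the flat metric, hence that forces the first-order data of $g$ at $p$ — all that the adjoints see — to agree with that of $\C^{n}$; without it only the relations of item 1 (which need merely $\del\omega=\delbar\omega=0$) survive.
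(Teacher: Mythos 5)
Your proposal is correct and follows essentially the same route as the paper's proof: item 1 via $\del\omega=\delbar\omega=0$ and Leibniz plus adjoints, reduction of items 2--3 to a single commutator by conjugation and $L^2$-adjunction, verification of that commutator on flat $\C^n$ using the operators $e_j,\bar e_j$ and their pointwise adjoints with the anticommutation relation $\{e_j,\iota_k\}=2\delta_{jk}$, and the passage to a general K\"ahler metric via holomorphic normal coordinates, observing that all operators involved depend only on the metric and its first derivatives. (You are also right that the second relation in item 1 should read $[\Lambda,\delbar^*]=0$.)
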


Uma consequência das Identidades de Kähler é o fato de que os Laplacianos $\Delta_{\del} = \del \del^* + \del^* \del$ e $\Delta_{\delbar} = \delbar \delbar^* + \delbar^* \delbar$  satisfazem
\begin{equation*}
\Delta_{\del} = \Delta_{\delbar} = \frac{1}{2} \Delta,
\end{equation*}
onde $\Delta$ denota o Laplaciano usual. Em particular, o núcleo dos três laplacianos coincide, o que tem importantes consequências na Teoria de Hodge de uma variedade de Kähler.

Ainda das Identidades de Kähler obtemos o chamado \emph{Teorema ``Difícil'' de Lefschetz} que diz que a decomposiçao de Lefschetz passa para a cohomologia.
\begin{theorem}
Se $X$ é uma variedade de Kähler compacta então
\begin{equation*}
L^{n-k}:H^k(X,\R) \longrightarrow H^{2n-k}(X,\R) 
\end{equation*}
é um isomorfismo para todo $k \leq n$ e existe uma decomposição
\begin{equation*}
H^k(X,\R) = \bigoplus_{i \geq 0} L^i H^{k-2i}(X,\R)_p.
\end{equation*}
\end{theorem}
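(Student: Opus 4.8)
\emph{Proof idea.}
The plan is to transport both assertions to the level of harmonic forms and then feed them into the representation theory of $\mathfrak{sl}(2,\C)$. By the Hodge theorem on the compact manifold $X$, the inclusion of harmonic forms induces isomorphisms $\mathcal{H}^k(X)\xrightarrow{\ \sim\ }H^k(X,\R)$, so it suffices to prove everything for the finite-dimensional graded space $\mathcal{H}^\bullet(X)=\bigoplus_k\mathcal{H}^k(X)$, provided one checks that the operators $L$, $\Lambda$, $H$ preserve $\mathcal{H}^\bullet(X)$ and that the operator induced by $L$ on $H^\bullet(X,\R)$ is cup product with the K\"ahler class $[\omega]$. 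Granting this, the theorem becomes a statement purely about the action of an $\mathfrak{sl}(2,\C)$-triple on a finite-dimensional module.

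The first and, I expect, only delicate point is to show that the triple $(L,\Lambda,H)$ commutes with the Laplacian $\Delta$. Since $\omega$ is closed we have $[L,d]=0$, and the K\"ahler identities of the preceding theorem, rewritten in terms of $d=\del+\delbar$ and $d^*=\del^*+\delbar^*$, yield the real K\"ahler identities
\begin{equation*}
[\Lambda,d^*]=0,\qquad [L,d^*]=d^{c},\qquad [\Lambda,d]=-(d^{c})^{*},\qquad d^{c}:=\smo(\delbar-\del).
\end{equation*}
Expanding $[\Delta,L]=[\,dd^*+d^*d,\ L\,]$ with these relations collapses it to $-(dd^{c}+d^{c}d)$, and the elementary identity $dd^{c}+d^{c}d=0$ (immediate from $\del^2=\delbar^2=0$ and $\del\delbar=-\delbar\del$) gives $[\Delta,L]=0$; taking adjoints gives $[\Delta,\Lambda]=0$, while $[\Delta,H]=0$ is automatic since $H$ is scalar on forms of each fixed degree. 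Hence $L,\Lambda,H$ restrict to $\mathcal{H}^\bullet(X)$. Moreover $\omega=L(1)$ is harmonic (it is $L$ applied to the harmonic function $1$, or simply because it is parallel), so $L$ preserves harmonicity and the induced map on cohomology is indeed $[\alpha]\mapsto[\omega]\cup[\alpha]$.

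Now I would run the $\mathfrak{sl}(2,\C)$ argument. On $\mathcal{H}^\bullet(X)$ the operator $H$ acts on $\mathcal{H}^k(X)$ by the scalar $n-k$, so $\mathcal{H}^k(X)$ is exactly the weight-$(n-k)$ space and $L$ is the weight-lowering operator. The structure theorem for finite-dimensional $\mathfrak{sl}(2,\C)$-modules gives: (i) for every $j\ge 0$ the $j$-fold lowering operator restricts to an isomorphism from the weight-$j$ space onto the weight-$(-j)$ space; and (ii) the module is the direct sum of the irreducible submodules generated by its highest-weight vectors, i.e.\ by the vectors killed by the raising operator. Applying (i) with $j=n-k\ge 0$ gives that $L^{n-k}\colon\mathcal{H}^k(X)\to\mathcal{H}^{2n-k}(X)$ is an isomorphism, which after the Hodge identification is the Hard Lefschetz isomorphism. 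Applying (ii), and observing that the highest-weight vectors in $\mathcal{H}^k(X)$ (for $k\le n$) are precisely the primitive harmonic forms $\ker\big(\Lambda\colon\mathcal{H}^k(X)\to\mathcal{H}^{k-2}(X)\big)$, one obtains
\begin{equation*}
\mathcal{H}^k(X)=\bigoplus_{i\ge 0}L^{i}\big(\ker\Lambda\cap\mathcal{H}^{k-2i}(X)\big).
\end{equation*}
Transporting this through the Hodge isomorphism and defining $H^{k}(X,\R)_p$ as the image of $\ker\Lambda\cap\mathcal{H}^k(X)$ — equivalently as $\ker\big(L^{n-k+1}\colon H^k(X,\R)\to H^{2n-k+2}(X,\R)\big)$, which coincides with it by Hard Lefschetz one degree lower — yields the stated Lefschetz decomposition.

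The main obstacle is the commutation $[\Delta,L]=[\Delta,\Lambda]=0$: this is the single place where the K\"ahler hypothesis is genuinely used, through the K\"ahler identities and the vanishing $dd^{c}+d^{c}d=0$. Everything after that is formal — the Hodge theorem on a compact manifold together with the representation theory of $\mathfrak{sl}(2,\C)$ — the only bookkeeping being the translation between the algebraically defined operators $L,\Lambda$ on harmonic forms and their cohomological meaning (cup product with $[\omega]$ and the primitive decomposition of $H^\bullet(X,\R)$).
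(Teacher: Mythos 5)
Your proposal is correct and follows essentially the same route as the paper: both reduce to harmonic forms via the commutation $[\Delta,L]=0$ (you derive it from the real K\"ahler identities and $dd^c+d^cd=0$, the paper from $\Delta=2\Delta_{\delbar}$ and $[\Delta_{\delbar},L]=0$, which is the same use of Theorem \ref{thm:kahler-identities}) and then invoke the $\mathfrak{sl}(2,\C)$ structure theory already packaged in the fibrewise Lefschetz decomposition of Theorem \ref{thm:lefschetz-forms}. The only cosmetic difference is that you run the representation theory directly on $\mathcal{H}^\bullet(X)$, whereas the paper intersects the pointwise decomposition with $\ker\Delta$; both are valid.
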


Aqui $H^k(X,\R)_p = \{c \in H^k(X,\R) : \Lambda c = 0\}$ é o espaço das classes de cohomologia primitvas.\\

\subsubsection{Teoria de Hodge}

A Teoria de Hodge é uma ferramenta analítica essencial no estudo da cohomologia de variedades de Kähler. O objetivo é buscar representantes canônicos para cada classe de cohomologia.

Uma métrica hermtiana em uma variedade compacta induz métricas nos fibrados de formas. Com isso podemos definir um produto hermitiano $L^2$ no espaço $\mathcal A^k_\C(X)$ das formas diferenciais em $X$
\begin{equation} \label{eq:intro-L2-metric}
(\alpha,\beta) = \int_X ( \alpha,\beta ) \text{vol},\;\;\; \alpha,\beta \in \mathcal{A}_{\C}^k(X).
\end{equation}

O resultado central da teoria de Hodge diz que toda forma diferencial pode ser escrita como $\alpha = \alpha_0 + \del \beta + \del^* \gamma$ com $\alpha_0$ $\del-$harmônica, e um resultado análogo para $\delbar$.

\begin{theorem} Se $X$ é uma variedade hermitiana compacta, existem duas decomposições ortogonais
\begin{equation*}
\mathcal{A}^{p,q}(X) = \mathcal{H}_{\del}^{p,q}(X) \oplus \del \mathcal{A}^{p-1,q}(X) \oplus \del^*\mathcal{A}^{p+1,q}(X)
\end{equation*}
e
\begin{equation*}
\mathcal{A}^{p,q}(X) = \mathcal{H}_{\delbar}^{p,q}(X) \oplus \delbar \mathcal{A}^{p,q-1}(X) \oplus \delbar^*\mathcal{A}^{p,q+1}(X).
\end{equation*}
\end{theorem}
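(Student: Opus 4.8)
\noindent\textbf{Ideia da demonstra\c{c}\~ao.}
A estrat\'egia \'e reduzir o enunciado ao teorema fundamental dos operadores el\'ipticos sobre variedades compactas e deixar o restante por conta da \'algebra linear formal dos adjuntos. Tratarei em detalhe apenas o caso de $\delbar$, j\'a que o caso de $\del$ \'e obtido trocando $\delbar$ por $\del$ ao longo de toda a argumenta\c{c}\~ao (ou por conjuga\c{c}\~ao complexa). Primeiro eu introduziria o laplaciano $\Delta_{\delbar} = \delbar\delbar^* + \delbar^*\delbar$ agindo em $\mathcal A^{p,q}(X)$ e mostraria que $\mathcal H^{p,q}_{\delbar}(X) = \ker\Delta_{\delbar}$: se $\Delta_{\delbar}\alpha = 0$ ent\~ao, como $X$ \'e compacta e $\delbar^*$ \'e o adjunto $L^2$ de $\delbar$,
\begin{equation*}
0 = (\Delta_{\delbar}\alpha,\alpha) = (\delbar\alpha,\delbar\alpha) + (\delbar^*\alpha,\delbar^*\alpha),
\end{equation*}
logo $\delbar\alpha = \delbar^*\alpha = 0$; a rec\'iproca \'e imediata.

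Em seguida eu verificaria a ortogonalidade das tr\^es parcelas do lado direito. Para $\alpha \in \mathcal H^{p,q}_{\delbar}(X)$, $\beta \in \mathcal A^{p,q-1}(X)$ e $\gamma \in \mathcal A^{p,q+1}(X)$ tem-se $(\alpha,\delbar\beta) = (\delbar^*\alpha,\beta) = 0$ e $(\alpha,\delbar^*\gamma) = (\delbar\alpha,\gamma) = 0$ pelo passo anterior, enquanto $(\delbar\beta,\delbar^*\gamma) = (\delbar^2\beta,\gamma) = 0$ pois $\delbar^2 = 0$. Assim a soma no lado direito \'e direta e ortogonal, e resta apenas mostrar que ela esgota $\mathcal A^{p,q}(X)$.

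O passo central, e o \'unico que exige an\'alise, \'e o seguinte: $\Delta_{\delbar}$ \'e um operador diferencial el\'iptico de segunda ordem, formalmente autoadjunto, nas se\c{c}\~oes de $\bigwedge^{p,q} X$; pelo teorema fundamental dos operadores el\'ipticos em variedades compactas (desigualdade de G\aa{}rding, lema de Rellich e regularidade el\'iptica) o n\'ucleo $\ker\Delta_{\delbar}$ tem dimens\~ao finita e vale a decomposi\c{c}\~ao ortogonal
\begin{equation*}
\mathcal A^{p,q}(X) = \mathcal H^{p,q}_{\delbar}(X) \oplus \Delta_{\delbar}\big(\mathcal A^{p,q}(X)\big).
\end{equation*}
Para concluir basta identificar $\Delta_{\delbar}\mathcal A^{p,q}(X)$ com $\delbar\mathcal A^{p,q-1}(X) \oplus \delbar^*\mathcal A^{p,q+1}(X)$: a inclus\~ao ``$\subset$'' segue de $\Delta_{\delbar}\eta = \delbar(\delbar^*\eta) + \delbar^*(\delbar\eta)$, e a inclus\~ao ``$\supset$'' segue do passo de ortogonalidade, pois $\delbar\mathcal A^{p,q-1}(X) \oplus \delbar^*\mathcal A^{p,q+1}(X)$ \'e ortogonal a $\mathcal H^{p,q}_{\delbar}(X)$ e o complemento ortogonal deste \'ultimo dentro de $\mathcal A^{p,q}(X)$ \'e precisamente $\Delta_{\delbar}\mathcal A^{p,q}(X)$.

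O principal obst\'aculo \'e exatamente esse passo anal\'itico --- a exist\^encia do operador de Green, equivalentemente a afirma\c{c}\~ao de que a imagem de um operador el\'iptico autoadjunto em variedade compacta \'e fechada com complemento ortogonal igual ao seu n\'ucleo. Ele depende essencialmente da teoria de espa\c{c}os de Sobolev e costuma ser invocado como caixa-preta; tudo o mais \'e formal. Observo, por fim, que uma vez estabelecida a decomposi\c{c}\~ao segue de imediato o isomorfismo $\mathcal H^{p,q}_{\delbar}(X) \cong H^{p,q}_{\delbar}(X)$ e, em particular, a finitude dos n\'umeros de Hodge de $X$.
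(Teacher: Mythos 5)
Sua proposta est\'a correta e segue essencialmente o mesmo caminho do texto: o pr\'oprio trabalho n\~ao demonstra o teorema (remete a Demailly e \`as ``mesmas t\'ecnicas'' do caso riemanniano), e o seu esbo\c{c}o \'e o argumento padr\~ao, com a identifica\c{c}\~ao $\mathcal H^{p,q}_{\delbar}(X)=\ker\Delta_{\delbar}$, a ortogonalidade formal das tr\^es parcelas e a redu\c{c}\~ao correta de todo o conte\'udo anal\'itico \`a decomposi\c{c}\~ao $\mathcal A^{p,q}(X)=\ker\Delta_{\delbar}\oplus\Delta_{\delbar}\mathcal A^{p,q}(X)$ dada pela teoria el\'iptica em variedades compactas. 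Voc\^e localiza com precis\~ao onde est\'a a dificuldade real (regularidade el\'iptica e fechamento da imagem), que \'e exatamente o ponto que o texto tamb\'em delega \`a refer\^encia.
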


Aqui $\mathcal{H}_{\del}^{p,q}(X)$ e $\mathcal{H}_{\delbar}^{p,q}(X)$ são, respectivamente, os espaços de $(p,q)$-formas $\del$-harmônicas e $\delbar$-harmônicas e  quando $X$ é de Kähler temos a igualdade $\mathcal{H}_{\del}^{p,q}(X)=\mathcal{H}_{\delbar}^{p,q}(X)$.

Como consequência, quando $X$ é de Kähler, podemos identificar o espaço de cohomologia de Dolbeaut $H^{p,q}_{\delbar}(X)$ com o espaço $H^{p,q}(X)$ das classes de cohomologia de de Rham que são representáveis por formas do tipo $(p,q)$ e com isso podemos obter uma outra decomposição na cohomologia de $X$, a chamada Decomposição de Hodge.

\begin{theorem} Se $X$ é uma variedade de Kähler compacta então existe uma decomposição
\begin{equation*}
H^k(X,\C) = \bigoplus_{p+q=k} H^{p,q}(X) ~\text{ com }~H^{p,q}(X) = \overline{H^{q,p}(X)}.
\end{equation*}
\end{theorem}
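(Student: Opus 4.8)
The plan is to derive the Hodge decomposition as a corollary of the Hodge-theoretic results already quoted, namely the Hodge decomposition theorem for $\delbar$ on a compact hermitian manifold together with the K\"ahler identity $\Delta_{\del} = \Delta_{\delbar} = \frac12 \Delta$. First I would recall that for a compact K\"ahler manifold the operator $\Delta = dd^* + d^*d$ preserves the bidegree $(p,q)$, precisely because $\Delta = 2\Delta_{\delbar}$ and $\Delta_{\delbar}$ is visibly of bidegree $(0,0)$. Consequently a complex-valued $k$-form $\alpha$ is $\Delta$-harmonic if and only if each of its $(p,q)$-components $\alpha^{p,q}$ is $\Delta$-harmonic, which gives an algebraic direct sum decomposition of the space of harmonic $k$-forms $\mathcal{H}^k_{\C}(X) = \bigoplus_{p+q=k} \mathcal{H}^{p,q}_{\delbar}(X)$.

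Next I would invoke the Hodge isomorphism: on a compact manifold the de Rham cohomology $H^k(X,\C)$ is canonically isomorphic to the space $\mathcal{H}^k_{\C}(X)$ of harmonic $k$-forms (this is the de Rham analogue of the Dolbeault decomposition theorem stated above, applied to the complexified forms), and likewise $H^{p,q}_{\delbar}(X) \cong \mathcal{H}^{p,q}_{\delbar}(X)$. Combining these isomorphisms with the splitting of harmonic forms by bidegree yields
\begin{equation*}
H^k(X,\C) \;\cong\; \mathcal{H}^k_{\C}(X) \;=\; \bigoplus_{p+q=k} \mathcal{H}^{p,q}_{\delbar}(X) \;\cong\; \bigoplus_{p+q=k} H^{p,q}_{\delbar}(X),
\end{equation*}
and I would then identify $H^{p,q}_{\delbar}(X)$ with the subspace $H^{p,q}(X) \subset H^k(X,\C)$ of classes representable by a closed $(p,q)$-form; the point is that a $\delbar$-harmonic $(p,q)$-form on a K\"ahler manifold is automatically $d$-closed (since it is $\Delta$-harmonic, hence $d$-harmonic, hence $d$-closed and $d^*$-closed), so it defines a genuine de Rham class, and this assignment is exactly the composite above. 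This shows the sum is the claimed internal direct sum inside $H^k(X,\C)$.

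For the conjugation symmetry $H^{p,q}(X) = \overline{H^{q,p}(X)}$ I would argue that complex conjugation sends $(p,q)$-forms to $(q,p)$-forms and commutes with $d$, hence induces a conjugate-linear isomorphism on de Rham cohomology $H^k(X,\C) \to H^k(X,\C)$ carrying classes representable by $(p,q)$-forms to classes representable by $(q,p)$-forms; since the metric is real, conjugation also commutes with $\Delta$ and therefore preserves harmonicity, so it restricts to a conjugate-linear isomorphism $\mathcal{H}^{p,q}_{\delbar}(X) \to \mathcal{H}^{q,p}_{\delbar}(X)$, which under the identifications above is precisely $H^{p,q}(X) = \overline{H^{q,p}(X)}$.

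The main obstacle, and the only place where the K\"ahler hypothesis genuinely enters, is the step asserting that $\Delta$ respects the bidegree decomposition and that $\delbar$-harmonic forms are $d$-closed; both rest on the identity $\Delta_{\del} = \Delta_{\delbar}$, which in turn is the consequence of the K\"ahler identities recorded earlier. I would be careful to note that the independence of the decomposition from the choice of K\"ahler metric, and the fact that $H^{p,q}(X)$ as defined via ``representability by $(p,q)$-forms'' really coincides with the image of $\mathcal{H}^{p,q}_{\delbar}(X)$, are not automatic: one must check that if a $d$-closed $(p,q)$-form is $d$-exact then its harmonic representative (which lies in $\mathcal{H}^{p,q}_{\delbar}$ by the bidegree argument) vanishes, so that the map $\mathcal{H}^{p,q}_{\delbar}(X) \to H^{p,q}(X)$ is injective, and surjectivity follows because any closed $(p,q)$-form is cohomologous to its harmonic part. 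Everything else is a formal assembly of the quoted theorems.
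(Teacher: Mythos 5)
Your proposal is correct and follows essentially the same route as the paper: the identity $\Delta = 2\Delta_{\delbar}$ from the K\"ahler identities, the bidegree splitting of harmonic forms, the Hodge isomorphisms, the identification of $\mathcal{H}^{p,q}_{\delbar}(X)$ with the subspace $H^{p,q}(X)$ of classes representable by $(p,q)$-forms, and conjugation of representatives for the symmetry. The only cosmetic difference is that you establish injectivity of $\mathcal{H}^{p,q}_{\delbar}(X)\to H^{p,q}(X)$ by the direct orthogonality argument (harmonic and exact implies zero), whereas the paper routes this through the $\del\delbar$-lemma; both are valid and the overall structure is identical.
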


Dada a existência da decomposição de Hodge podemos considerar os espaços de cohomologia integral $H^{p,q}(X,\Z)$, definidos como sendo a intersecção de $H^{p,q}(X)$ com a imagem da aplicação natural $H^{p+q}(X,\Z) \to H^{p+q}(X,\C)$. Olhando para a sequência exata longa associada a sequência exponencial podemos ver que a primeira classe de Chern de um fibrado de linha holomorfo está sempre em $H^{1,1}(X,\Z)$. Além disso, o Teorema das $(1,1)$-classes de Lefschetz diz que todo elemento de $H^{1,1}(X,\Z)$ é da forma $c_1(L)$ para algum $L \in \Pic(X)$.
\begin{theorem}
Se $X$ é uma variedade de Kähler compacta então a aplicação $\Pic(X) \to H^{1,1}(X,\Z)$ é sobrejetora, isto é, todo elemento de $H^{1,1}(X,\Z)$ é a primeira classe de Chern de um fibrado de linha holomorfo sobre $X$.
\end{theorem}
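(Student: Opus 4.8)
The plan is to combine the exponential sequence with the Hodge decomposition. From the long exact sequence associated to the exponential sequence one has
\[
H^1(X,\mathcal{O}_X) \longrightarrow H^1(X,\mathcal{O}_X^*) \stackrel{c_1}{\longrightarrow} H^2(X,\underline{\Z}) \stackrel{\iota_*}{\longrightarrow} H^2(X,\mathcal{O}_X),
\]
where $\iota_*$ is induced by the inclusion $\underline{\Z}\hookrightarrow\mathcal{O}_X$. Since $\Pic(X)=H^1(X,\mathcal{O}_X^*)$, exactness says that the image of $c_1$ is exactly $\ker\iota_*$. Hence the theorem reduces to showing that every class of $H^{1,1}(X,\Z)$ is killed by $\iota_*$.

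The main step is to identify $\iota_*$ in terms of the Hodge decomposition. Factor the sheaf inclusion as $\underline{\Z}\hookrightarrow\underline{\C}\hookrightarrow\mathcal{O}_X$, so that $\iota_*$ becomes the composite $H^2(X,\Z)\to H^2(X,\C)\to H^2(X,\mathcal{O}_X)$, the first map being change of coefficients. By Dolbeault's theorem $H^2(X,\mathcal{O}_X)\cong H^{0,2}_{\delbar}(X)$, which equals $H^{0,2}(X)$ because $X$ is compact K\"ahler; and $H^2(X,\C)=H^{2,0}(X)\oplus H^{1,1}(X)\oplus H^{0,2}(X)$ by the Hodge decomposition. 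I claim that under these identifications the map $H^2(X,\C)\to H^2(X,\mathcal{O}_X)$ is precisely the projection onto the $H^{0,2}$ summand. To prove this I would compare resolutions: the de Rham complex $(\mathcal{A}^\bullet_X,d)$ resolves $\underline{\C}$, the Dolbeault complex $(\mathcal{A}^{0,\bullet}_X,\delbar)$ resolves $\mathcal{O}_X$, and the bidegree projection $\mathcal{A}^k_X\to\mathcal{A}^{0,k}_X$ is a morphism of complexes lifting $\underline{\C}\hookrightarrow\mathcal{O}_X$ --- the point being that for $\alpha=\sum_{p+q=k}\alpha^{p,q}$ the only contribution to the $(0,k+1)$-component of $d\alpha=\del\alpha+\delbar\alpha$ comes from $\delbar\alpha^{0,k}$. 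Therefore this projection computes $\iota_*$ on cohomology. Finally, on a compact K\"ahler manifold a harmonic representative of a de Rham class splits into harmonic $(p,q)$-pieces, and its $(0,2)$-piece represents the corresponding Dolbeault class; so the composite is the Hodge projection onto bidegree $(0,2)$, as claimed.

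Granting this, the proof finishes at once: if $\alpha\in H^{1,1}(X,\Z)$, then $\alpha$ comes from a class in $H^2(X,\Z)$ and its image $\alpha_{\C}\in H^2(X,\C)$ lies in the $H^{1,1}$ summand, so its projection onto $H^{0,2}(X)$ vanishes; hence $\iota_*\alpha=0$, so $\alpha\in\im c_1$, i.e.\ $\alpha=c_1(L)$ for some $L\in\Pic(X)$. Together with the already-noted fact that $c_1$ always takes values in $H^{1,1}(X,\Z)$, this shows that $\Pic(X)\to H^{1,1}(X,\Z)$ is surjective.

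I expect the only real obstacle to be the key lemma identifying $\iota_*$ with the Hodge projection onto bidegree $(0,2)$ --- in particular, the compatibility of the de Rham and Dolbeault resolutions with the inclusion $\underline{\C}\hookrightarrow\mathcal{O}_X$, and tracking its effect on harmonic representatives (which is where both compactness and the K\"ahler condition enter). The long exact sequence of the exponential sequence, Dolbeault's theorem and the Hodge decomposition are all quoted from earlier in the text.
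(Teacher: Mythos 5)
Your proposal is correct and follows essentially the same route as the paper: the text's proof also runs the exponential sequence, invokes exactly the lemma you identify (that the map $H^k(X,\C)\to H^k(X,\mathcal O_X)$ induced by $\underline{\C}\hookrightarrow\mathcal O_X$ is, via the Dolbeault isomorphism, the Hodge projection onto $H^{0,k}(X)$, proved there by comparing the de Rham and Dolbeault resolutions through the bidegree projections), and concludes that a class of type $(1,1)$ dies in $H^2(X,\mathcal O_X)$ and hence lies in the image of $c_1$. The only cosmetic difference is that the paper packages the argument as a commutative triangle $H^2(X,\Z)\to H^2(X,\C)\to H^2(X,\mathcal O_X)$ and has already established the key lemma earlier (in the discussion of the Jacobian), whereas you prove it in place.
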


O teorema acima responde parcialmente a famosa Conjectura de Hodge, um dos problemas do milênio propostos pelo ``Clay Mathematics Institute''. A conjectura diz que, se $X$ uma variedade projetiva, então toda classe de cohomologia em $H^{p,p}(X,\mathbb Q)$ pode ser escrita como combinação linear com coeficientes racionais de classes fundamentais de subvariedades de $X$.\\

\subsubsection{Geometria dos Fibrados Vetoriais Complexos}

Dado um fibrado vetorial complexo $E \to X$ podemos introduzir algumas estruturas geométricas em $E$. Uma estrutura hermitiana, por exemplo, é um produto hermitiano em cada fibra $E_x$ que varia diferenciavelmente com $x$.

Um outro objeto natural é o que chamamos de conexão, que é essencialmente uma maneira de derivarmos seções de $E$. Associada a uma conexão $\nabla$ temos sua curvatura $F_\nabla$, que é uma $2$-forma com valores no fibrado de endomorfismos de $E$.

Existem algumas condições de compatibilidade que podemos exigir de uma conexão. Um resultado importante nesse contexto é o seguinte.
\begin{proposition}
Seja $E$ um fibrado holomorfo com uma estrutura hermitiana. Então existe uma única conexão em $E$ que é a compatível, simultaneamente, com a estrutura hermitiana e a estrutura holomorfa de $E$. Essa conexão recebe o nome de \textbf{conexão de Chern}.
\end{proposition}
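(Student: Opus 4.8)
The plan is to reduce the statement to a local computation in a holomorphic frame and then to patch the local answers together. Write $\bar\partial$ for the Dolbeault operator of the holomorphic bundle $E$. Recall that a connection $\nabla$ is compatible with the holomorphic structure when $\nabla^{0,1}=\bar\partial$, and compatible with the Hermitian metric $h$ when $d\,h(s,t)=h(\nabla s,t)+h(s,\nabla t)$ for all local sections $s,t$. The strategy is to show that these two conditions together pin down $\nabla$ by an explicit formula over any holomorphic-frame neighbourhood, which gives uniqueness, and that the locally defined connections then glue.

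First I would fix a holomorphic frame $e_1,\dots,e_r$ over an open set $U$ and put $h_{ij}=h(e_i,e_j)$, so that $H=(h_{ij})$ is a smooth, positive-definite, Hermitian-matrix-valued function on $U$. Every connection over $U$ is of the form $\nabla=d+\theta$ for a matrix $\theta$ of $1$-forms. Since the $e_i$ are holomorphic we have $\bar\partial e_i=0$, so $\nabla^{0,1}=\bar\partial$ holds if and only if every entry of $\theta$ is a $(1,0)$-form. Substituting $\nabla=d+\theta$ into the metric-compatibility identity and evaluating on frame elements gives a single matrix equation of the shape $dH={}^t\theta\,H+H\,\overline{\theta}$ (the exact placement of transposes and conjugates depends on the convention chosen for $h$). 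Now decompose by bidegree: as $\theta$ is of type $(1,0)$ and $\overline\theta$ of type $(0,1)$, this decouples into $\partial H={}^t\theta\,H$ and $\bar\partial H=H\,\overline\theta$, and the second equation is exactly the conjugate-transpose of the first, because $H$ is Hermitian, hence it is automatically satisfied. The first equation determines $\theta$ uniquely (in the relevant convention, $\theta={}^t\!\big((\partial H)\,H^{-1}\big)$, which makes sense since $H$ is invertible). So over each such $U$ there is one and only one connection satisfying both conditions, and it is of the form $d+\theta$ with $\theta$ of type $(1,0)$; in particular it really is a connection, the Leibniz rule being built into $d+\theta$.

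Global uniqueness is then immediate because both conditions are local: two global solutions agree with the unique local solution on every holomorphic-frame neighbourhood. One can also see this conceptually: if $\nabla,\nabla'$ both work, then $A:=\nabla-\nabla'$ is a global $\End(E)$-valued $1$-form; $\nabla^{0,1}=(\nabla')^{0,1}$ forces $A$ to be of type $(1,0)$, while subtracting the two metric-compatibility identities gives $h(As,t)+h(s,At)=0$, i.e. $A^{*}=-A$ for the $h$-adjoint $*$. But the $h$-adjoint sends $\End(E)$-valued forms of type $(1,0)$ to type $(0,1)$, so $A=-A^{*}$ would be simultaneously of type $(1,0)$ and of type $(0,1)$, whence $A=0$. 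For existence, this uniqueness shows that the locally defined connections coincide on overlaps of frame neighbourhoods, so they glue to a global connection $\nabla$ on $E$; by construction it is compatible with both the metric and the holomorphic structure, and this is the Chern connection.

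The genuinely delicate part is purely the bookkeeping: fixing once and for all whether $h$ is conjugate-linear in the first or the second slot, choosing the matching convention for the connection matrix $\theta$, and then checking that the $(0,1)$-part of the metric-compatibility equation is the conjugate of the $(1,0)$-part. That last point is the crux of the argument — it is precisely the Hermitian symmetry of $H$ that turns what at first looks like an overdetermined system for $\theta$ into one with a unique solution. Everything else (positivity and invertibility of $H$, the Leibniz rule, and the gluing via uniqueness) is routine.
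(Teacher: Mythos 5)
Your proof is correct and follows essentially the same route as the paper: in a holomorphic frame the condition $\nabla^{0,1}=\delbar_E$ forces the connection matrix to be of type $(1,0)$, and metric compatibility then pins it down as $\del H\cdot H^{-1}$ (up to transpose conventions), exactly the paper's equation (\ref{eq:local-chern}). The only cosmetic difference is that the paper checks frame-independence of the local formula by direct computation, whereas you obtain the gluing for free from local uniqueness — both are fine.
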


No caso de fibrados de linha holomorfos, a conexão de Chern pode ser usada para construir métricas de Kähler na variedade de base.

\begin{proposition}
Seja $X$ uma variedade complexa e $L$ um fibrado de linha holomorfo hermitiano sobre $X$. Se a conexão de Chern de $L$ tem curvatura positiva $F$ então $\omega = \smo F$ define uma métrica de Kähler em $X$. 
\end{proposition}

Um outro aspecto importante das conexões é que a partir delas (mais precisamente, a partir de sua curvatura) podemos definir invariantes cohomológicos, chamados classes características. Se $\nabla$ é uma conexão em um fibrado $E \to X$ de posto $r$, para cada polinômio simétrico invariante $P_k$ de grau $k$ em $\gl(r,\C)$ podemos definir uma classe de cohomologia $[P_k(F_\nabla)] \in H^{2k}(X,\C)$, que independe de $\nabla$.

O exemplo mais importante são as classes de Chern, obtidas tomando como polinômios invariantes os poliômios $P_k$ definidos por
\begin{equation*}
\det(I + B) = 1 +  P_1(B) +  P_2(B) + \cdots +  P_r(B).
\end{equation*}

As classes de Chern estão intimamente relacionadas com a geometria diferencial da variedade $X$. Por exemplo, se $g$ é uma métrica de Kähler em  $X$ e $r$ é o seu tensor de Ricci então a forma de Ricci, definida por $\rho = r(J \cdot, \cdot)$, satisfaz $\rho \in 2 \pi c_1(X)$, onde $c_1(X)$ é a primeira classe de Chern de $X$, definida como sendo a primeira classe de Chern de seu fibrado tangente holomorfo.

Um resultado importante nesse contexto é o celebrado Teorema de Calabi-Yau, que diz que, quando $X$ é compacta, qualquer $(1,1)$-forma real e fechada em $2\pi c_1(X)$ é a forma de Ricci de uma métrica de Kähler em $X$ com $[\omega]$ especificada. Como consequência imediata vemos que toda variedade compacta com $c_1(X) = 0$ admite uma métrica de Kähler-Einstein, i.e., uma métrica satisfazendo $\rho = \lambda \omega$ para algum $\lambda \in \R$ (neste caso temos $\lambda = 0$).\\

\subsubsection{Topologia de variedades complexas}

A existência de uma estrutura complexa tem fortes consequências sobre a topologia da variedade. Existem alguns resultados importantes nesse sentido.

No âmbito das variedades não compactas, mais precisamente das variedades de Stein (i.e., subvariedades fechadas de $\C^N$) temos o seguinte resultado.

\begin{theorem}
Seja $X \subset \C^N$ uma variedade de Stein de dimensão complexa $n$. Então os grupos de homologia de $X$ com coeficientes em $\Z$ satisfazem
\begin{equation*}
H_i(X,\Z) = 0 ~~ \text{ para } i > n 
\end{equation*}
e
\begin{equation*}
H_n(X,\Z) \text{ é livre de torção.} 
\end{equation*}
\end{theorem}

Este resultado tem consequências na topologia das variedades projetivas, como por exemplo o famoso Teorema de Hiperplanos de Lefschetz.

\begin{theorem}
Seja $X \subset \pr^m$ uma variedade algébrica de dimensão $n$. Seja $Y = X \cap W$ a intersecção de $X$ com uma hipersuperfície algébrica $W$ que contém os pontos singulares de $X$ mas não contém $X$.

Nessas condições, o homomorfismo
\begin{equation*}
H^i(X,\Z) \longrightarrow H^i(Y,\Z)
\end{equation*}
induzido pela inclusão $Y \subset X$ é um isomorfismo para $i<n-1$ e é injetor se $i=n-1$. Além disso o quociente $H^{n-1}(Y,\Z) \slash H^{n-1}(X,\Z)$ é livre de torção.
\end{theorem}

-------------------------------------------------------------------------------------------------------------------------------------------
\\

 O presente trabalho tem como objetivo apresentar uma discussão detalhada dos resultados citados acima, ilustrando-os e motivando-os através de exemplos. Todos os resultados não demonstrados são acompanhados de referências apropriadas. É assumido que o leitor tenha conhecimentos básicos de análise complexa, teoria de variedades diferenciáveis, topologia algébrica e geometria riemanniana.

O texto é fruto de um trabalho de mestrado desenvolvido pelo autor entre os anos de 2010 e 2012 sob a orientação do Professor Claudio Gorodski no Instituto de Matemática e Estatística da Universidade de São Paulo. Gostaria de prestar meus sinceros agradecimentos ao Andrew Clarke e ao Paulo Cordaro pelos apontamentos e correções sugeridas.\\

Durante a realização deste trabalho o autor recebeu apoio financeiro da FAPESP e do CNPq.
\renewcommand{\thechapter}{\arabic{chapter}}
\renewcommand{\chaptername}{Capítulo}
\chapter{Material Preliminar} \label{ch:cap0}

Neste primeiro capítulo apresentaremos os conceitos que servirão de base para o restante do trabalho. Recordaremos os principais resultados sobre funções de uma variável complexa e apresentaremos algumas definições e resultados básicos da teoria de funções de várias variáveis complexas.

Na última seção introduziremos o conceito de variedade complexa, que será o objeto central do estudo deste trabalho.

\section{Funções holomorfas de uma variável}

Uma função $f:U \to \C$ definida em um aberto $U \subset \C$ é holomorfa se é diferenciável no sentido complexo, ou seja, se o limite
\begin{equation*}
f'(z) = \lim_{w \to z} \frac{f(w)-f(z)}{w-z}
\end{equation*}
existe para todo $z \in U$.

Escrevendo $f = u + \smo v$ onde $u,v:U \to \R$, $f$ será holomorfa em $U$ se e somente se $f$ for de classe $C^1$ e satisfizer as \textit{equações de Cauchy-Riemann}
\begin{equation*}
\frac{\del u}{\del x} = \frac{\del v}{ \del y} ~~~ \text{ e } ~~~ \frac{\del u}{\del x} = -\frac{\del v}{ \del y}. 
\end{equation*}

Se definirmos os operadores diferenciais
\begin{equation*}
\frac{\del}{\del z} \doteq \frac{1}{2}\left( \frac{\del}{\del x} - \smo \frac{\del}{\del y} \right)~\text{ e }~ \frac{\del}{\del \bar z} \doteq \frac{1}{2}\left( \frac{\del}{\del x} + \smo \frac{\del}{\del y} \right),
\end{equation*}
vemos que as equações de Cauchy-Riemann para $f$ são equivalentes a equação $\frac{\del f}{\del \bar z} = 0$.

Existe ainda uma outra caracterização das funções holomorfas. Daqui em diante vamos supor que todas as funções em questão são de classe $C^1$.

Identificando $\C \simeq \R^2$ via $x+\smo y \mapsto (x,y)$, podemos ver uma função complexa como uma aplicação$f:U \to \R^2$ e portanto a diferencial de $f$ em um ponto $z \in U$ é uma aplicação $\R$-linear $df_z: T_z U \simeq \R^2 \to T_{f(z)} \R^2 \simeq \R^2$. Das equações de Cauchy-Riemann vemos então que $f$ é holomorfa se e somente se para todo $z \in U$, $df_z$ tem a forma $\begin{pmatrix} a & b \\ -b & a \end{pmatrix}$, ou seja, se e somente se
\begin{equation*}
df_z J_0 = J_0 df_z,~~ \text{ onde }~ J_0 = \begin{pmatrix} 0 & -1 \\ 1 & 0 \end{pmatrix}.
\end{equation*}

Note que, segundo a identificação $\R^2 \simeq \C$, a ação da matriz $J_0$ em $\R^2$ corresponde à multiplicação por $\smo$ em $\C$ e portanto vemos que $f$ é holomorfa se e somente se $df_z$ é $\C$-linear para todo $z \in U$.

Um resultado central na teoria de funções holomorfas de uma variável é a chamada Fórmula de Cauchy, que diz que o valor de uma função holomorfa em um ponto é determinado pelos seus valores em um círculo ao redor deste ponto.

\begin{theorem} \emph{(Fórmula de Cauchy)} 
Seja $f:U \to \C$ uma função holomorfa e suponha que $B_r(z_0) = \{w \in \C : |w-z_0|<r\} \subset U$. Então para todo $z \in B_r(z_0)$ temos que
\begin{equation*}
f(z) = \frac{1}{2\pi \smo} \int_{|w-z_0| = r} \frac{f(w)}{w-z}dw~~,
\end{equation*} 
onde o círculo $|w-z_0|=r$ é percorrido uma vez no sentido anti-horário.
\end{theorem}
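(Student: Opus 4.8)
The plan is to derive the formula from a combination of Green's theorem (or equivalently Stokes' theorem in the plane) together with a limiting argument that isolates the singularity at $z$. First I would introduce, for small $\varepsilon > 0$, the annular region $\Omega_\varepsilon = \{ w : \varepsilon < |w - z| < r \} \cap \{|w-z_0| < r\}$; more precisely, let $\Omega_\varepsilon$ be the region bounded on the outside by the circle $|w - z_0| = r$ and on the inside by the small circle $|w - z| = \varepsilon$, where $\varepsilon$ is chosen small enough that $\overline{B_\varepsilon(z)} \subset B_r(z_0)$. On the closure of this region the function $g(w) = \dfrac{f(w)}{w - z}$ is holomorphic, hence in particular $\dfrac{\del g}{\del \bar w} = 0$ there, so the $1$-form $g(w)\, dw$ is closed.

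Next I would apply Stokes' theorem to $g(w)\, dw$ on $\Omega_\varepsilon$: since $d\big(g(w)\, dw\big) = \dfrac{\del g}{\del \bar w}\, d\bar w \wedge dw = 0$, the boundary integral vanishes, which gives
\begin{equation*}
\int_{|w - z_0| = r} \frac{f(w)}{w-z}\, dw = \int_{|w - z| = \varepsilon} \frac{f(w)}{w-z}\, dw,
\end{equation*}
both circles oriented counterclockwise (the inner boundary of $\Omega_\varepsilon$ inherits the clockwise orientation, which accounts for the sign matching up). The right-hand side is now independent of $\varepsilon$, so it equals its own limit as $\varepsilon \to 0$.

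The remaining step is to evaluate $\lim_{\varepsilon \to 0} \int_{|w-z|=\varepsilon} \dfrac{f(w)}{w-z}\, dw$. Parametrizing $w = z + \varepsilon e^{\smo\theta}$, $\theta \in [0, 2\pi]$, one computes $dw = \smo\varepsilon e^{\smo\theta}\, d\theta$ and $w - z = \varepsilon e^{\smo\theta}$, so the integral becomes $\smo \int_0^{2\pi} f(z + \varepsilon e^{\smo\theta})\, d\theta$. Writing $f(z + \varepsilon e^{\smo\theta}) = f(z) + \big(f(z + \varepsilon e^{\smo\theta}) - f(z)\big)$, the first term contributes $2\pi\smo f(z)$, while the second is bounded in absolute value by $2\pi \sup_{|w-z| = \varepsilon} |f(w) - f(z)|$, which tends to $0$ as $\varepsilon \to 0$ by continuity of $f$ at $z$. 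Dividing by $2\pi\smo$ yields the formula.

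I expect the main subtlety to be bookkeeping with orientations of the boundary of $\Omega_\varepsilon$ — making sure the inner circle picks up the correct sign so that both resulting contour integrals are counterclockwise — rather than any deep analytic difficulty; the limiting estimate is routine once continuity of $f$ is invoked. An alternative to the Stokes/Green argument is a purely homotopy-theoretic one (the two circles are homotopic in $U \setminus \{z\}$, on which $g$ is holomorphic, hence the integrals agree by homotopy invariance of integrals of closed forms), but since the text has just emphasized the Cauchy–Riemann equations in the form $\del f / \del \bar z = 0$, the Green's-theorem route seems the most natural fit here.
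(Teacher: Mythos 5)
The paper states this classical result without proof (it is recalled in the preliminary chapter), so there is no argument of the paper's to compare against; your proof is the standard Stokes/Green's-theorem derivation and is correct, including the orientation bookkeeping on the inner circle and the continuity estimate in the limit $\varepsilon \to 0$. The only point worth flagging is that applying Stokes on $\overline{\Omega_\varepsilon}$ requires $g(w) = f(w)/(w-z)$ to be defined and $C^1$ up to the outer circle $|w-z_0|=r$, i.e.\ $f$ holomorphic on a neighbourhood of the \emph{closed} disk — which is what the hypothesis is implicitly meant to guarantee, since otherwise the contour integral in the statement would not even be defined.
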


A fórmula acima pode ser usada para escrever uma série de potências para $f$ na variável $z$ em torno do ponto $z_0$ com raio de convergência positivo. Em particular vemos que toda função holomorfa $f:U \to \C$ é analítica em $U$ com respeito a variável $z$. A recíproca também é válida, isto é, toda função que é analítica na variável $z$ também será holomorfa.

Recordamos a seguir os principais resultados a respeito das funções holomorfas de uma variável.

\begin{proposition} \textbf{Princípio do Máximo}.
Se $U \subset \C$ é um aberto conexo e $f:U \to \C$ é holomorfa e não constante então $|f|$ não possui máximo local em $U$.
\end{proposition}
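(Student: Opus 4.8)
The plan is to argue by contradiction, extracting the mean value property from the Cauchy Formula, which we have just established. Suppose $|f|$ has a local maximum at some $z_0 \in U$; then there is $r > 0$ with $\overline{B_r(z_0)} \subset U$ and $|f(z)| \le |f(z_0)|$ for every $z \in B_r(z_0)$.

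First I would rewrite the Cauchy Formula on each circle $|w - z_0| = \rho$, with $0 < \rho < r$, using the parametrization $w = z_0 + \rho e^{\smo\theta}$, $dw = \smo\rho e^{\smo\theta}\,d\theta$, which turns it into the mean value identity
\[
f(z_0) = \frac{1}{2\pi}\int_0^{2\pi} f\big(z_0 + \rho e^{\smo\theta}\big)\,d\theta.
\]
Taking moduli and using the local bound $|f| \le |f(z_0)|$ on the disk gives
\[
|f(z_0)| \le \frac{1}{2\pi}\int_0^{2\pi}\big|f(z_0 + \rho e^{\smo\theta})\big|\,d\theta \le |f(z_0)|,
\]
so equality holds throughout. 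The integrand $\theta \mapsto |f(z_0)| - |f(z_0 + \rho e^{\smo\theta})|$ is continuous, nonnegative and has vanishing integral, hence vanishes identically; since $\rho \in (0, r)$ is arbitrary, $|f|$ is constant, equal to $|f(z_0)|$, on $B_r(z_0)$.

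Next I would upgrade the information that $|f|$ is constant to the conclusion that $f$ itself is constant on the ball. If $|f(z_0)| = 0$ then $f \equiv 0$ there; otherwise $f$ never vanishes on $B_r(z_0)$ and, writing $f = u + \smo v$, the sum $u^2 + v^2$ is a positive constant. Differentiating this relation and combining it with the Cauchy-Riemann equations forces $du = dv = 0$ on the connected ball, so $f$ is constant on $B_r(z_0)$. (Equivalently, a nonvanishing holomorphic function of constant modulus admits a holomorphic logarithm with constant real part, hence is constant.)

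Finally comes the global step: $f$ coincides with a constant on the nonempty open set $B_r(z_0)$, so by the convergent power series expansion of $f$ about each point of $U$ — i.e. by the Identity Principle, which applies because $U$ is connected — $f$ is constant on all of $U$, contradicting the hypothesis. The only mildly delicate point is the passage from constant modulus to constant function; everything else is a direct consequence of the Cauchy Formula and of analyticity. If the Open Mapping Theorem were available one could shortcut the argument entirely: a non-constant holomorphic map is open, so $f(U)$ is open, and then $|f(z_0)|$ cannot dominate $|f|$ on any neighborhood of $z_0$.
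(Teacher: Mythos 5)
Your argument is correct and complete: the mean value identity extracted from the Cauchy Formula, the equality case of the integral inequality forcing $|f|$ to be constant on a ball, the passage from constant modulus to constant function via the Cauchy--Riemann equations (the linear system in $(u_x,u_y)$ has determinant $u^2+v^2\neq 0$), and the final appeal to the Identity Principle on the connected set $U$ all go through. The paper itself only recalls this proposition without proof, and what you give is precisely the standard argument one would supply here, so there is nothing to reconcile.
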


\begin{proposition} \textbf{Princípio da Identidade}. 
Se $f,g:U \to \C$ são funções holomorfas e $f = g$ em um aberto $V \subset U$ então $f=g$ em $U$.
\end{proposition}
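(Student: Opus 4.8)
The plan is to reduce the statement to the vanishing of a single holomorphic function and then run a connectedness argument, assuming (as in the connected version stated in the Introduction) that $U$ is connected. First I would put $h = f - g$; this is holomorphic on $U$, it vanishes on the nonempty open set $V$, and it suffices to show $h \equiv 0$ on $U$.

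The key ingredient is already available: every holomorphic function of one variable is analytic, so by the Cauchy Formula recorded above, near each point $h$ equals the sum of its Taylor series on a disc of positive radius, and in particular $h$ is infinitely complex-differentiable. With this in hand I would introduce
\[
A = \{ z \in U : h^{(k)}(z) = 0 \text{ para todo } k \geq 0 \}.
\]
Then $A$ is nonempty because $V \subset A$ (on $V$ the function $h$ and all its derivatives vanish); $A$ is closed in $U$, being the intersection $\bigcap_{k \geq 0} (h^{(k)})^{-1}(0)$ of closed subsets of $U$; and $A$ is open, since for $z_0 \in A$ the Taylor coefficients of $h$ at $z_0$ all vanish, hence $h \equiv 0$ on a whole disc about $z_0$, and therefore so do all its derivatives, placing that disc inside $A$.

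Since $U$ is connected and $A$ is a nonempty subset that is both open and closed, $A = U$. In particular $h \equiv 0$, i.e. $f = g$ on $U$, which is what we wanted.

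I do not anticipate a real obstacle: all the analytic content sits in the power-series representation coming from the Cauchy Formula, and the remainder is the standard open-and-closed argument on a connected space. The only subtlety worth flagging is the connectedness hypothesis — without it the conclusion holds only on the connected component of $U$ meeting $V$. An alternative packaging of the same proof would take $A$ to be the interior of the zero set of $h$ and show that $A$ is also closed in $U$, again using the analyticity of $h$ to rule out boundary points of $A$.
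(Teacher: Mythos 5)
Your proof is correct and is the standard argument: the paper states this proposition without proof, and your reduction to $h=f-g$, the power-series representation supplied by the Cauchy Formula, and the open-and-closed argument on the set $A=\{z\in U: h^{(k)}(z)=0 \text{ para todo } k\}$ give a complete derivation. You are also right to flag that connectedness of $U$ must be assumed (as it is in the neighbouring statement of the Maximum Principle and in the manifold version later in the text); without it the conclusion only holds on the component of $U$ meeting $V$.
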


\begin{proposition} \textbf{Teorema de Liouville}.
Se $f:\C \to \C$ é holomorfa e limitada então $f$ é constante.
\end{proposition}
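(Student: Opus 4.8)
The plan is to deduce Liouville's theorem from the Cauchy formula stated above, via the so-called Cauchy estimates on the derivative. First I would observe that, since $f$ is holomorphic on all of $\C$, for any fixed $z_0 \in \C$ and any $r > 0$ the ball $B_r(z_0)$ is contained in the domain of $f$, so the Cauchy formula applies on every circle $|w - z_0| = r$. Differentiating that formula with respect to $z$ under the integral sign — which is legitimate because the integrand depends smoothly on $z$ and the circle is compact — and then evaluating at $z = z_0$ gives
\begin{equation*}
f'(z_0) = \frac{1}{2\pi \smo} \int_{|w - z_0| = r} \frac{f(w)}{(w - z_0)^2}\, dw.
\end{equation*}

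Next I would estimate this integral. Let $M = \sup_{\C} |f|$, which is finite by hypothesis. On the circle $|w - z_0| = r$ one has $|f(w)| \le M$ and $|w - z_0|^2 = r^2$, so the elementary bound $\left| \int_{\gamma} g \right| \le \operatorname{length}(\gamma) \cdot \sup_{\gamma} |g|$ applied to a circle of length $2\pi r$ yields
\begin{equation*}
|f'(z_0)| \le \frac{1}{2\pi} \cdot \frac{M}{r^2} \cdot 2\pi r = \frac{M}{r}.
\end{equation*}

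The key step is then to let $r \to \infty$: the left-hand side does not depend on $r$, while the right-hand side tends to $0$, so we conclude $f'(z_0) = 0$. Since $z_0 \in \C$ was arbitrary, $f' \equiv 0$ on the connected open set $\C$, and therefore $f$ is constant — either by the Identity Principle comparing $f$ to a constant function on a small open set, or by integrating $f'$ along segments.

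I do not expect any serious obstacle; the only points needing a little care are the justification of differentiation under the integral sign and the passage ``$f' \equiv 0$ on a connected domain $\Rightarrow$ $f$ constant''. An alternative that sidesteps the first point is to use analyticity directly: expand $f$ in a power series $f(z) = \sum_{n \ge 0} a_n (z - z_0)^n$, which by the remarks above converges on all of $\C$, extract the Cauchy coefficient estimates $|a_n| \le M / r^n$ for every $r > 0$ from the same contour bound, and let $r \to \infty$ to force $a_n = 0$ for all $n \ge 1$, so that $f \equiv a_0$.
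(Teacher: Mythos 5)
Your proof is correct. The paper states the Teorema de Liouville without proof, as one of the classical facts about functions of one complex variable recalled in the preliminary chapter, so there is no argument in the text to compare against; what you give is the standard and complete proof via the Cauchy estimate $|f'(z_0)| \le M/r$ followed by $r \to \infty$, and both the differentiation under the integral sign and the step ``$f' \equiv 0$ on a connected open set implies $f$ constant'' are routine to justify. The alternative via the coefficient bounds $|a_n| \le M/r^n$ is equally valid and fits naturally with the paper's remark that holomorphic functions are analytic.
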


O Teorema a seguir é devido a Riemann e como veremos não se generaliza para dimensões maiores que $1$.

\begin{theorem} \emph{(Teorema da Aplicação de Riemann)}
Seja $U \subset \C$ um aberto simplesmente conexo que não é o plano complexo inteiro. Então $U$ é biholomorfo à bola unitária $B_1(0)$, isto é, existe uma função holomorfa bijetora $f:U \to B_1(0)$ tal que sua inversa também é holomorfa.
\end{theorem}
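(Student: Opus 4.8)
The plan is to run the classical extremal/normal-families proof. Fix a point $z_0 \in U$ and consider the family
\[
\mathcal{F} = \{\, f : U \to B_1(0) \ \text{holomorphic and injective, with}\ f(z_0) = 0 \,\}.
\]
The argument has three steps: first, show $\mathcal{F} \neq \emptyset$; second, show the supremum $M = \sup_{f \in \mathcal{F}} |f'(z_0)|$ is finite, strictly positive, and attained by some $f \in \mathcal{F}$; third, show that any such extremal $f$ is surjective onto $B_1(0)$. An injective holomorphic function has nowhere-vanishing derivative, so such an $f$ is automatically a biholomorphism onto its image, and the third step identifies that image with $B_1(0)$, finishing the proof.

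For the nonemptiness step --- the only place where the hypotheses $U \neq \C$ and $U$ simply connected are genuinely used --- I would pick $a \in \C \setminus U$. Since $z \mapsto z - a$ is nowhere zero on the simply connected set $U$, it admits a holomorphic square root $h$ on $U$ (the local branches glue precisely because $U$ is simply connected). Then $h$ is injective, and $h(U) \cap (-h(U)) = \emptyset$: indeed $h(z_1) = -h(z_2)$ would give $z_1 - a = z_2 - a$, hence $z_1 = z_2$ and then $h(z_1) = -h(z_1)$, i.e.\ $h(z_1) = 0$, which is impossible since $h^2 = z - a$ never vanishes on $U$. As $h(U)$ is open it contains some disk $B_r(w_0)$, so it misses $B_r(-w_0)$; therefore $z \mapsto 1/(h(z) + w_0)$ is holomorphic, injective and bounded on $U$, and composing it with a suitable affine map (rescaling and translating the image of $z_0$ to the origin) produces an element of $\mathcal{F}$.

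For the extremal step, $M < \infty$ because Cauchy's formula applied on a disk $B_\rho(z_0) \subset U$ gives $|f'(z_0)| \le 1/\rho$ for every $f \in \mathcal{F}$, and $M > 0$ because $\mathcal{F} \neq \emptyset$ and injective holomorphic functions have nonzero derivative. Choosing $f_n \in \mathcal{F}$ with $|f_n'(z_0)| \to M$, the uniform bound $|f_n| < 1$ makes $\{f_n\}$ a normal family (Montel's theorem), so a subsequence converges locally uniformly to a holomorphic $f$ with $f(z_0) = 0$ and $|f'(z_0)| = M > 0$, hence nonconstant. Then $|f| \le 1$ on $U$, which the maximum principle sharpens to $|f| < 1$, and $f$ is injective by Hurwitz's theorem (a locally uniform limit of injective holomorphic functions on a domain is injective or constant). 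Thus $f \in \mathcal{F}$ and realizes $M$.

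The surjectivity step is the heart of the matter, and the one I expect to be the main obstacle. Suppose for contradiction that $\alpha \in B_1(0) \setminus f(U)$, and write $\varphi_\beta(w) = (w - \beta)/(1 - \bar\beta w)$ for the standard automorphism of the disk. Then $\varphi_\alpha \circ f$ is holomorphic, disk-valued and nowhere zero on the simply connected $U$, so it has a holomorphic square root $F : U \to B_1(0)$; put $G = \varphi_{F(z_0)} \circ F$, which lies in $\mathcal{F}$. Writing $q(w) = w^2$, one checks that $f = \varphi_\alpha^{-1} \circ q \circ \varphi_{F(z_0)}^{-1} \circ G =: \Phi \circ G$, where $\Phi : B_1(0) \to B_1(0)$ fixes the origin and is \emph{not} injective (because $q$ is not). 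The Schwarz lemma --- itself a consequence of the maximum principle --- then forces $|\Phi'(0)| < 1$, and since $G(z_0) = 0$ the chain rule gives $|f'(z_0)| = |\Phi'(0)|\,|G'(z_0)| < |G'(z_0)| \le M$, contradicting $|f'(z_0)| = M$. The delicate points in this step are the correct construction of the holomorphic square roots (where simple connectedness enters, as in the first step) and the bookkeeping needed to check that the auxiliary self-map $\Phi$ of the disk fixes the origin and is genuinely non-injective, so that Schwarz gives a \emph{strict} inequality that actually beats the extremal $f$.
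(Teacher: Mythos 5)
Your proof is the standard and complete normal-families argument (non-emptiness via a square root of $z-a$, extraction of an extremal element by Montel--Hurwitz, and surjectivity via the Schwarz lemma applied to the non-injective self-map $\Phi$ of the disk), and all the delicate points you flag are handled correctly. The paper itself states the Riemann Mapping Theorem only as recalled background in the preliminary chapter and gives no proof, so there is nothing to compare against; your argument stands on its own.
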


Note que a hipótese $U \neq \C$ acima é essencial pois, pelo teorema de Liouville, não existe uma função holomorfa não constante $f:\C \to B_1(0)$.

\section{Funções holomorfas de várias variáveis} \label{sec:several-complex-variables}

A definição de holomorfia para uma função de $n$ variáveis complexas é dada a partir das equações de Cauchy-Riemann para cada par de variáveis reais de $\C^n$.

\begin{definition}
Seja $U \subset \C^n$ um aberto e $f:U \to \C$ uma função de classe $C^1$. Escreva $f = u + \smo v$ onde $u,v:U \to \R$ e denote por $x_i = \re z_i$ e $y_i = \im z_i$ as coordenadas reais em $\C^n$. Dizemos que \textit{$f$ é holomorfa em $U$} se satisfaz as equações de Cauchy-Riemann 
\begin{equation*}
\frac{\del u}{\del x_i} = \frac{\del v}{\del y_i}~~ \text{ e }~~ \frac{\del u}{\del y_i} = - \frac{\del v}{\del x_i} ~~ \text{ em } U~~~~~ (i=1,\ldots,n)
\end{equation*}
\end{definition}

Definindo os operadores
\begin{equation*}
\frac{\del}{\del z_i} \doteq \frac{1}{2}\left( \frac{\del}{\del x_i} - \smo \frac{\del}{\del y_i} \right)~\text{ e }~ \frac{\del}{\del \bar z_i} \doteq \frac{1}{2}\left( \frac{\del}{\del x_i} + \smo \frac{\del}{\del y_i} \right)
\end{equation*}
vemos que $f$ é holomorfa se e somente se $\frac{\del f}{\del \bar z_i} = 0$ para $i=1,\ldots,n$.

Assim como no caso unidimensional há uma caracterização em termos da diferencial de $f$. Considere a identificação $\R^{2n} \simeq \C^n$ dada por $(x_1,\ldots,x_n,y_1,\ldots,y_n) \mapsto (x_1 + \smo y_1,\ldots,x_n + \smo y_n)$. Segundo esse isomorfismo, a multiplicação por $\smo$ em $\C^n$ corresponde à transformação linear em $\R^{2n}$ dada pela matriz
\begin{equation*}
J_0 = \begin{pmatrix} 0 & -I_n \\ I_n & 0 \end{pmatrix}
\end{equation*}
onde $I_n$ denota a matriz identidade de ordem $n$.

Das equações de Cauchy-Riemann vemos que $f:U \to \C$ é holomorfa se e somente se $df_z J_0 = J_0 df_z$ para todo $z \in U$, onde vemos $df_z$ como operador linear em $\R^{2n}$. Equivalentemente, vendo $df_z$ como um operador em $\C^n$, vemos que $f$ é holomorfa se e somente se $df_z$ é $\C$-linear para todo $z \in U$.

A fórmula de Cauchy se generaliza naturalmente para funções de várias variáveis. Antes de enunciar o resultado é conveniente definirmos os polisdicos em $\C^n$. Dada uma $n$-upla $r = (r_1,\ldots,r_n)$ com $r > 0$ e um ponto $w \in \C^n$ definimos o\textit{ polidisco centrado em $w$ de raio $r$} por
\begin{equation*}
B_r(w) = \{z \in \C^n: |z_i - w_i| < r_i,~i=1,\ldots,n\}.
\end{equation*}

Note que $B_r(w)$ é simplesmente o produto dos discos abertos em $\C$ centrados em $w_i$ e com raio $r_i$.

\begin{proposition}
Seja $f$ uma função holomorfa em um aberto $U \subset \C^n$. Seja $\xi = (\xi_1,\ldots,\xi_n) \in U$ e suponha que $B_r(\xi) \subset U$. Então para todo $z \in B_r(\xi)$ temos a fórmula
\begin{equation*}
f(z) = \left( \frac{1}{2\pi \smo} \right)^n \int_{|w_1 - \xi_1|=r_1} \cdots \int_{|w_n - \xi_n|=r_n} \frac{f(w_1,\ldots,w_n)}{(w_1 - z_1) \cdots (w_n -z_n)} dw_1 \cdots dw_n.
\end{equation*}
\end{proposition}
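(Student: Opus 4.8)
The plan is to reduce the statement to the one-variable Cauchy formula (Teorema 0.1.5) by a one-variable-at-a-time argument. The key observation is the characterization of holomorphy proved above: since $\del f/\del \bar z_i = 0$ for each $i$, if we fix all variables but one the resulting function of a single complex variable is again holomorphic. Concretely, fix $z \in B_r(\xi)$. For $(z_2,\dots,z_n)$ near $(\xi_2,\dots,\xi_n)$, the map $w_1 \mapsto f(w_1,z_2,\dots,z_n)$ is holomorphic on a neighborhood of the closed disc $\{|w_1-\xi_1|\le r_1\}$ (shrinking $r_1$ slightly if necessary, or assuming $\overline{B_r(\xi)}\subset U$, and passing to the limit $r_1'\uparrow r_1$ at the end by continuity), so Teorema 0.1.5 gives
\begin{equation*}
f(z_1,z_2,\dots,z_n) = \frac{1}{2\pi\smo}\int_{|w_1-\xi_1|=r_1}\frac{f(w_1,z_2,\dots,z_n)}{w_1-z_1}\,dw_1 .
\end{equation*}

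Next I would iterate. For each fixed $w_1$ on the circle $|w_1-\xi_1|=r_1$, the function $(z_2,\dots,z_n)\mapsto f(w_1,z_2,\dots,z_n)$ is holomorphic on the polydisc $\{|z_i-\xi_i|<r_i,\ i=2,\dots,n\}$, because the Cauchy--Riemann equations in the variables $z_2,\dots,z_n$ continue to hold when $z_1=w_1$ is held fixed. Applying the one-variable Cauchy formula in the variable $w_2$ inside the integral above, then in $w_3$, and so on, after $n$ steps one obtains the iterated integral
\begin{equation*}
f(z) = \left(\frac{1}{2\pi\smo}\right)^n \int_{|w_1-\xi_1|=r_1}\cdots\int_{|w_n-\xi_n|=r_n}\frac{f(w_1,\dots,w_n)}{(w_1-z_1)\cdots(w_n-z_n)}\,dw_n\cdots dw_1 .
\end{equation*}
Finally, since $f$ is continuous, the integrand is continuous on the compact product of circles $\{|w_1-\xi_1|=r_1\}\times\cdots\times\{|w_n-\xi_n|=r_n\}$ and bounded there (the denominator is bounded away from $0$ because $z\in B_r(\xi)$ is interior), so Fubini's theorem lets us rewrite the iterated integral as the single multiple integral in the statement, and the order of integration is irrelevant.

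The only genuinely delicate point is the bookkeeping about domains: at each stage of the iteration one must be sure that the partially evaluated function $f(w_1,\dots,w_k,z_{k+1},\dots,z_n)$ is holomorphic in the remaining variables on a polydisc still containing the relevant smaller closed polydisc, and that the circle $|w_{k}-\xi_k|=r_k$ lies in the (open) domain of holomorphy rather than merely on its boundary. This is handled either by assuming $\overline{B_r(\xi)}\subset U$ from the start, or by carrying out the whole argument with radii $r_i'<r_i$ and letting $r_i'\uparrow r_i$, using uniform convergence of the integrands (again a consequence of the continuity of $f$ on $\overline{B_r(\xi)}$ once the radii are slightly shrunk). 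Everything else is a routine application of the one-dimensional theory together with Fubini.
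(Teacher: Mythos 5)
Your argument is correct: reduce to the one-variable Cauchy formula by freezing all but one coordinate (separate holomorphy in each variable follows from the Cauchy--Riemann equations $\del f/\del \bar z_i = 0$), iterate, and combine the iterated integral into a multiple integral by Fubini, with the boundedness of the denominator guaranteed because $z$ is interior to the polydisc. The paper states this proposition without proof, so there is no alternative argument to compare against; what you give is the standard proof. Your handling of the only delicate point --- that the distinguished boundary $\{|w_i-\xi_i|=r_i\}$ need not lie in $U$ under the literal hypothesis $B_r(\xi)\subset U$, remedied either by assuming $\overline{B_r(\xi)}\subset U$ or by shrinking the radii and passing to the limit using uniform continuity of $f$ --- is in fact more careful than the paper's own statement (and its one-variable version), which silently integrate over the boundary circles.
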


Assim como no caso unidimensional podemos usar a fórmula acima para escrever uma série de potências para $f$ em torno do ponto $\xi$, isto é, em $B_r(\xi)$ $f$ se escreve como uma série convergente
\begin{equation*}
f(z) = \sum_{i_1,\ldots,i_n=0}^\infty a_{i_0 \cdots i_n} (z_1 - \xi_1)^{i_1} \cdots (z_n - \xi_n)^{i_n}.
\end{equation*}

Vemos portanto que também no caso multidimesional uma função é holomorfa se e somente for analítica nas variáveis $z_1,\ldots,z_n$.

Alguns dos resultados enunciados na seção anterior, como o Princípio do Máximo, o Princípio da Identidade e o Teorema de Liouville, se generalizam facilmente para o caso de funções holomorfas de várias váriaveis. No entanto, alguns fenômenos observados no caso unidimensional, como por exemplo o Teorema da Aplicação de Riemann, não têm uma contrapartida multidimensional, como mostra o exemplo abaixo.

\begin{example}
Denote por $B = B_{(1,1)}(0) = \{z \in \C^2: |z_1|<1,|z_2|<1 \}$ o polidisco de raio $(1,1)$ e por $\mathbb D^2=\{z \in \C^2: ||z|| < 1\}$ o disco unitário em $\C^2$. Note que tanto $\mathbb D$ quanto $B$ são simplesmente conexos, mas, como mostraremos a seguir, $\mathbb D^2$ e $B$ não são biholomorfos, mostrando que o Teorema da Aplicação de Riemann não se generaliza para dimensão $2$.

Os biholomorfismos do polidisco $B$ são da forma
\begin{equation*}
\varphi(z_1,z_2) = \left( e^{\smo \theta_1} \frac{z_1 - a_1}{1 - \overline {a_1} z_1}, e^{\smo \theta_2} \frac{z_2 - a_2}{1 - \overline {a_2} z_2} \right),~~\theta_i \in [0,2\pi),~ a_i \in \mathbb D
\end{equation*}
ou da forma $\psi = \sigma \circ \varphi$ onde $\sigma(z_1,z_2) = (z_2,z_1)$ (veja por exemplo \cite{narasimhan}, cap. 5). Em particular vemos que o grupo de biholomorfismos de $B$ age transitivamente, pois escolhendo os parâmetros $a_1$ e $a_2$ corretamente podemos levar qualquer ponto de $B$ na origem.

Suponha que exista um biholomorfismo $f:\mathbb D^2 \to B$. Compondo com um biholomorfismo de $B$ que leva $f(0)$ em $0$ podemos supor, sem perda de generalidade que $f(0) = 0$. Denote por $\text{Aut}_0(\mathbb D^2)$ e $\text{Aut}_0(B)$ os grupos de biholomorfismos de $\mathbb D^2$ e $B$ que fixam a origem.

A conjugação por $f$ definiria um isomorfismo entre $\text{Aut}_0(\mathbb D^2)$ e $\text{Aut}_0(B)$. No entanto, da descrição dos biholomorfismos de $B$ acima vemos que um elemento de  $\text{Aut}_0(B)$ é da forma $\varphi (z_1,z_2)= (e^{\smo \theta_1}z_1,e^{\smo \theta_2}z_2)$ ou $\varphi (z_1,z_2)= (e^{\smo \theta_2}z_2,e^{\smo \theta_1}z_1)$ e portanto $\text{Aut}_0(B)$ é abeliano, enquanto $\text{Aut}_0(\mathbb D^2)$ não o é, pois contém o grupo unitário $\text U(2)$.

Vemos assim que $\text{Aut}_0(\mathbb D^2)$ e $\text{Aut}_0(B)$ não podem ser isomorfos e portanto o biholomorfismo $f$ não pode existir.

O exemplo acima foi apresentado pela primeira vez por Poincaré e pode ser facilmente generalizado para dimensões maiores.
\end{example}

Além de resultados que deixam de ser válidos quando passamos de funções de uma para várias variáveis há também o fenômeno oposto, isto é, resultados que só valem no caso multidimensional. Um exemplo é o Teorema de Hartogs.

\begin{theorem} \label{thm:hartogs} \emph{(Teorema de Hartogs)}
Seja $f$ uma função holomorfa em uma vizinhança de $B_r(0) \setminus B_{r'}(0) \subset \C^n$ onde $r'_i <r_i$ para $i=1,\ldots,n$ e $n \geq 2$. Então $f$ admite uma única extensão a uma função holmorfa $\widetilde f: B_r(0) \to \C$.
\end{theorem}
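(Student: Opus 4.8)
The plan is to build the extension by a Cauchy integral taken in the single variable $z_n$, and then to identify it with $f$ wherever both are defined. First I fix a radius $\rho$ with $r'_n<\rho<r_n$. Whenever $|z_i|<r_i$ for $i<n$ and $|w|=\rho$, the point $(z_1,\dots,z_{n-1},w)$ lies in the shell $B_r(0)\setminus B_{r'}(0)$, since its last coordinate already exceeds $r'_n$; hence $f$ is holomorphic there and I can set
\[
\widetilde f(z_1,\dots,z_n)=\frac{1}{2\pi\smo}\int_{|w|=\rho}\frac{f(z_1,\dots,z_{n-1},w)}{w-z_n}\,dw,
\qquad |z_i|<r_i\ (i<n),\ |z_n|<\rho .
\]
Differentiating under the integral sign (the integrand is holomorphic in $z_1,\dots,z_n$ and the circle of integration always stays inside the domain of $f$), $\widetilde f$ is holomorphic on the polydisc $U'=\{|z_i|<r_i,\ i<n\}\times\{|z_n|<\rho\}$. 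Observe that $U'$ and the shell together cover $B_r(0)$: a point with $|z_n|<\rho$ lies in $U'$, and one with $|z_n|\ge\rho>r'_n$ lies in the shell. So once I check that $\widetilde f=f$ on the overlap, gluing produces a holomorphic function on all of $B_r(0)$.

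The heart of the argument is this identification. Fix $z'=(z_1,\dots,z_{n-1})$ with $|z_i|<r_i$ and consider the slice $w\mapsto f(z',w)$, which is holomorphic at least on the annulus $r'_n<|w|<r_n$ and so has a Laurent expansion $f(z',w)=\sum_{k\in\Z}c_k(z')\,w^k$ there, with $c_k(z')=\frac{1}{2\pi\smo}\int_{|w|=\rho}f(z',w)\,w^{-k-1}\,dw$. Differentiating once more under the integral, each $c_k$ is holomorphic in $z'$ on the whole polydisc $\{|z_i|<r_i,\ i<n\}$. This is exactly where the hypothesis $n\ge 2$ is used: the set of $z'$ for which $|z_j|>r'_j$ for some index $j<n$ is open and nonempty, and for such $z'$ the point $(z',w)$ stays in the shell for every $|w|<r_n$, so $w\mapsto f(z',w)$ is holomorphic on the full disc and therefore $c_k(z')=0$ for all $k<0$. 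By the Identity Principle on the connected polydisc, $c_k\equiv 0$ for every $k<0$.

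Consequently every slice is $f(z',w)=\sum_{k\ge 0}c_k(z')\,w^k$, and evaluating the Cauchy integral term by term --- using $\frac{1}{2\pi\smo}\int_{|w|=\rho}w^k(w-z_n)^{-1}\,dw=z_n^k$ for $k\ge 0$ and $=0$ for $k<0$ when $|z_n|<\rho$ --- gives $\widetilde f(z',z_n)=\sum_{k\ge 0}c_k(z')\,z_n^k$. Comparing the two expansions, $\widetilde f=f$ at every point of $U'$ where $f$ is defined. Thus $F$ defined by $F=\widetilde f$ on $U'$ and $F=f$ on a neighbourhood of the shell is a well-defined holomorphic function $F\colon B_r(0)\to\C$ agreeing with $f$ near the shell. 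For uniqueness, two such extensions differ by a holomorphic function vanishing on a nonempty open subset of the connected set $B_r(0)$, hence vanishing identically by the Identity Principle.

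The routine parts are the differentiations under the integral sign and the term-by-term evaluation; the step I expect to require the most care is the domain bookkeeping --- checking that the circle $|w|=\rho$ stays in the domain of $f$ for \emph{every} $z'$ in the polydisc, which is what makes $\widetilde f$ live on the full polydisc and not merely on the shell --- together with the use of the Identity Principle on the parametrized Laurent coefficients to kill their negative part.
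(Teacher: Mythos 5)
Your proposal is correct and follows essentially the same route as the paper's sketch: extend by the one-variable Cauchy integral in the last coordinate, obtain holomorphy by differentiating under the integral sign, identify the result with $f$ on the overlap, and conclude uniqueness from the Identity Principle. The only substantive difference is that you carry out the identification $\widetilde f = f$ in full detail --- via the holomorphic dependence of the Laurent coefficients $c_k(z')$ on $z'$ and the Identity Principle to kill the negative part --- whereas the paper disposes of this step with a one-line appeal to the one-variable Cauchy formula; your version correctly supplies the argument needed at points of the shell where the slice in $z_n$ is only holomorphic on an annulus.
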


Vamos dar a ideia da demonstração para o caso $n=2$. O caso geral é análogo. A ideia é estender a função $f$ usando a fórmula de Cauchy.

Note que a intersecção de $B_r(0) \setminus B_{r'}(0)$ com um plano $z_1 = c$, $|c|<r_1$ é um disco $\{c\} \times \{|z_2|<r_2\}$ se $|c|>r'_1$ ou um anel $\{c\} \times \{r'_2<z_2<r_2\}$ se $|c|<r_1'$.

Defina $\widetilde f: B_r(0) \to \C$ por
\begin{equation*}
\widetilde f(z_1,z_2) = \frac{1}{2 \pi \smo} \int_{|w_2|=r_2} \frac{f(z_1,w_2)}{w_2-z_2} dw_2,~~ |z_1|<r_1,~|z_2|<r_2.
\end{equation*}

Usando o fato de que o integrando é holomorfo em $z_2$ não é difícil ver que $\widetilde f$ é holomorfa em $z_2$ (veja por exemplo o Lema 1.1.3. do capítulo 1 de \cite{huybrechts}) e  pelo fato da série de potências de $f$ convergir uniformemente para $f$ no círculo $|w_2| = r_2$ temos que $\frac{\del \widetilde f}{\del \bar z_1} = \frac{1}{2 \pi \smo} \int_{|w_2|=r_2} \frac{\del}{\del \bar z_1} \left(\frac{f(z_1,w_2)}{w_2-z_2}\right) dw_2 = 0$, pois $f$ é holomorfa em $z_1$. Assim, $\widetilde f$ também é holomorfa em $z_1$.

O fato de que $\widetilde f (z_1,z_2) = f(z_1,z_2)$ para $(z_1,z_2) \in B_r(0) \setminus B_{r'}(0)$ segue da fórmula de Cauchy em uma variável e a unicidade de $\widetilde f$ segue do Princípio da Identidade.\\

O resultado acima certamente não é valido no caso $n=1$. Considere por exemplo a função $f(z)= 1/z$ definida no anel $\{z:1<|z|<2\}$. Se $f$ admitisse uma extensão holomorfa ao disco $\{z:|z|<1\}$ esta teria que coincidir com $1/z$ no disco furado $\{z:0<|z|<1\}$, pelo Princípio da Identidade, mas isso não é possível pois $1/z$ não se estende nem continuamente à origem.

O Teorema de Hartogs também fornece algumas informações sobre as singularidades e o conjunto de zeros de funções holomorfas de mais de uma variável. Por exemplo, se $U$ é um aberto em $\C^n$, $n \geq 2$ e $S \subset U$ é um conjunto discreto, usando o Teorema de Hartogs, podemos ver que qualquer função holomorfa $f:U \setminus S \to \C$ se estende a $U$. Em particular, $f$ não possui singularidades isoladas. Esse argumento também mostra que os zeros de uma função holomorfa não são isolados, pois um zero isolado de $f$ seria uma singularidade isolada de $1/f$.\\

Tendo definido funções holomorfas de várias váriaveis podemos definir o que é uma aplicação holomorfa com valores em $\C^m$.

\begin{definition}
Uma aplicação $f:U \subset \C^n \to \C^m$ é holomorfa se cada componente $f_1,\ldots,f_m:U \to \C$ for uma função holomorfa.
\end{definition}

\section{Variedades Complexas} \label{sec:complex-manifolds}

As variedades complexas são o análogo complexo das variedades diferenciáveis. De maneira sucinta, uma variedade complexa é um espaço toplógico que é localmente modelado em abertos de $\C^n$ e cujas cartas locais diferem por transformações holomorfas.

Vamos tornar isso preciso, recordando primeiro a definição de uma variedade diferenciável.

Um \textit{atlas diferenciável} em um espaço topológico $M$ é uma coleção de pares $(U_i,\varphi_i)$ chamados de \textit{cartas} onde os $U_i$ são abertos cobrindo $M$, cada $\varphi_i:U_i \to \R^n$ é um homemorfismo sobre um aberto de $\R^n$ e, sempre que $U_i \cap U_j \neq \emptyset$, a transição
\begin{equation*}
\varphi_i \circ \varphi_j^{-1}: \varphi_j(U_i \cap U_j) \to \varphi_i(U_i \cap U_j)
\end{equation*}
é uma aplicação diferenciável.

Uma \textit{variedade diferenciável} é um espaço topológico $M$, Hausdorff e com base enumerável, equipado com um atlas diferenciável maximal. O número $n$ é chamado de dimensão de $M$.

A definição de uma variedade complexa é análoga, mas exigimos que as cartas assumam valores em $\C^n$ e que as funções de transição sejam holomorfas.

\begin{definition}
Um \textit{atlas holomorfo} em um espaço topológico $X$ é uma coleção de pares $(U_i,\varphi_i)$ chamados de \textit{cartas holomorfas} onde

\begin{itemize}
	\item [(a)] Cada $U_i$ é um aberto em $X$ e $X = \bigcup_i U_i$
  \item [(b)] Cada $\varphi_i:U_i \to \C^n$ é um homemorfismo sobre um aberto de $\C^n$
  \item [(c)] Sempre que $U_i \cap U_j \neq \emptyset$ a transição
\begin{equation*}
\varphi_{ij} = \varphi_i \circ \varphi_j^{-1}: \varphi_j(U_i \cap U_j) \to \varphi_i(U_i \cap U_j)
\end{equation*}
é uma aplicação holomorfa.
\end{itemize}

Uma \textit{variedade complexa} é um espaço topológico $X$, Hausdorff e com base enumerável, equipado com um atlas holomorfo maximal. O número $n$ é chamado de dimensão complexa de $X$.
\end{definition}

\begin{remark}
Identificando $\C^n \simeq \R^{2n}$ e usando o fato de que toda aplicação holomorfa é diferenciável fica claro da definição que uma variedade complexa de dimensão complexa $n$ é uma variedade diferenciável de dimensão $2n$.

No entanto, nem toda variedade diferenciável $M$ admite uma estrutura complexa. Uma obstrução óbvia é que a dimensão de $M$ deve ser par. Uma outra obstrução é a orientabilidade, pois toda variedade complexa é orientável\footnote{Dizemos que uma variedade diferenciável é orientável se admite um atlas difernciável de modo que o jacobiano das funções de transição tenha sempre determinante positivo.}. De fato, dado um atlas holomorfo $(U_i, \varphi_i)$ em uma variedade complexa $X$, a diferencial da mudança de coordenadas em um ponto $z \in \varphi_j(U_i \cap U_j)$, vista como uma aplicação diferenciável entre abertos de $\R^{2n}$, tem a forma
\begin{equation*}
d(\varphi_i \circ \varphi_j^{-1})_z = \begin{pmatrix} A & B \\ -B & A \end{pmatrix},~~ A,B \in \GL(n,\R)
\end{equation*}
e portanto $\det [d(\varphi_i \circ \varphi_j^{-1})_z] = (\det A)^2 + (\det B)^2 > 0$, o que mostra que $X$ é orientável.

Em geral, decidir se uma variedade real admite ou não uma estrutura complexa é uma tarefa complicada. Sobre as esferas de dimensão par por exemplo é sabido que $S^2$ possue uma única estrutura complexa e que as esferas $S^4$ e $S^{2n}$ para $n> 3$ não admitem nenhuma estrutura complexa. A existência de uma estrutura complexa em $S^6$ ainda é um problema em aberto importante.
\end{remark}

Tendo definido uma estrutura complexa podemos falar sobre funções holomorfas entre variedades complexas.
\begin{definition}
Dadas duas variedades complexas $X$ e $Y$, uma aplicação contínua $f:X \to Y$ é \textit{holomorfa} se para toda carta $(U,\varphi)$ de $X$ e toda carta $(V, \psi)$ de $Y$ com $f(U) \subset V$, a aplicação $\psi \circ f \circ \varphi^{-1}: \varphi(U) \to \psi(f(U))$ é holomorfa.
\end{definition}

Os resultados locais sobre funções holomorfas em $\C^n$ se generalizam imediatamente para as funções holomorfas em variedades complexas.

\begin{proposition} \textbf{Princípio da Identidade}. 
Sejam $X$ e $Y$ variedades complexas com $X$ conexa. Se $f,g:X \to Y$ são funções holomorfas e $f = g$ em um aberto $U \subset X$ então $f=g$ em $X$.
\end{proposition}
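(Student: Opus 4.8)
The plan is to run the standard connectedness argument, reducing the global statement to the identity principle for holomorphic functions on a domain in $\C^n$, which was noted earlier to generalize the one-variable case. Define
\[
S = \{ p \in X : f \text{ e } g \text{ coincidem em alguma vizinhan\c ca aberta de } p \}.
\]
By hypothesis $U \subseteq S$, so $S$ is nonempty, and $S$ is open directly from its definition. Since $X$ is connected, it suffices to show that $S$ is also closed; then $S = X$ and $f = g$ everywhere.

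To show $S$ is closed, I would pick $p \in \overline{S}$. First observe that $p$ is a limit of points at which $f$ and $g$ agree, so by continuity of $f$ and $g$ we get $f(p) = g(p)$; only now is a common target chart available. Choose a holomorphic chart $(U,\varphi)$ around $p$ with $\varphi(U) \subseteq \C^n$ and a holomorphic chart $(V,\psi)$ around the common value $f(p) = g(p)$, shrinking $U$ so that $f(U) \subseteq V$ and $g(U) \subseteq V$, and shrinking once more so that $\varphi(U)$ is a polidisco, in particular connected. In these coordinates $\widetilde f = \psi \circ f \circ \varphi^{-1}$ and $\widetilde g = \psi \circ g \circ \varphi^{-1}$ are holomorphic maps $\varphi(U) \to \C^m$, i.e. $m$-tuples of holomorphic functions on the connected open set $\varphi(U)$.

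Since $U$ is a neighborhood of $p \in \overline{S}$, there is a point $q \in S \cap U$, and by definition of $S$ the maps $f$ and $g$ agree on an open neighborhood of $q$, which we may take inside $U$. Thus $\widetilde f$ and $\widetilde g$ agree on a nonempty open subset of the connected domain $\varphi(U)$, so by the identity principle for holomorphic functions of several variables (applied to each of the $m$ components of $\widetilde f - \widetilde g$) we conclude $\widetilde f = \widetilde g$ on all of $\varphi(U)$, hence $f = g$ on $U$. In particular $f$ and $g$ coincide near $p$, so $p \in S$, proving $S$ closed.

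The only genuinely delicate points are organizational: one must invoke continuity to obtain $f(p) = g(p)$ before a common chart on the target side exists, and one must arrange the source chart domain to be connected (e.g. a polydisc) so that the $\C^n$ identity principle applies on the whole of it rather than just on a component. Everything else is immediate from the local theory already set up.
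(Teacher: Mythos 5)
Your proof is correct and is exactly the standard open-closed-nonempty connectedness argument that the paper implicitly relies on (the paper omits the proof, merely asserting that the local results in $\C^n$ "se generalizam imediatamente"). The two delicate points you flag — establishing $f(p)=g(p)$ by continuity before a common target chart exists, and shrinking the source chart to a connected (polydisc) domain so the several-variables identity principle applies to all of it — are precisely the details worth making explicit.
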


\begin{proposition} \textbf{Princípio do Máximo}.
Seja $X$ uma variedade complexa conexa. Se $f:X \to \C$ é uma função holomorfa não constante então $|f|$ não possui máximo local em $X$.
\end{proposition}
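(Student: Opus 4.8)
The plan is to argue by contradiction, reducing to the local statement in $\C^n$ and then invoking the Identity Principle just established. Suppose $|f|$ has a local maximum at some point $p \in X$. Choose a holomorphic chart $(U,\varphi)$ with $p \in U$; after shrinking $U$ we may assume that $U$ is connected and that $\varphi(U) \subset \C^n$ is an open connected set (for instance a polydisc centered at $\varphi(p)$) on which $|f|$ attains an absolute maximum at $\varphi(p)$. Then $g := f \circ \varphi^{-1} : \varphi(U) \to \C$ is holomorphic, and $|g|$ attains a local — in fact absolute — maximum at $\varphi(p)$.

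Next I would apply the Maximum Principle for holomorphic functions of several complex variables (noted earlier to follow readily from the one-variable case) to conclude that $g$ is constant on $\varphi(U)$. The substantive point here is the $\C^n$ Maximum Principle itself, which one obtains by restricting $g$ to each complex line $t \mapsto \varphi(p) + t v$, $v \in \C^n$, passing through $\varphi(p)$: this restriction is a holomorphic function of the single variable $t$ in a neighborhood of $0$ whose modulus has a maximum at $0$, so by the one-variable Maximum Principle it is constant; since this holds in every direction $v$ and $\varphi(U)$ is connected, $g$ is constant on $\varphi(U)$.

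Consequently $f$ is constant on the nonempty open set $U \subset X$. Since $X$ is connected and $f$ is holomorphic, the Identity Principle — applied to $f$ and to the constant map with that value, which agree on $U$ — forces $f$ to be constant on all of $X$, contradicting the hypothesis that $f$ is nonconstant. Therefore $|f|$ has no local maximum on $X$.

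The only real obstacle is bookkeeping rather than mathematical depth: one must make sure the chart can be shrunk to a connected neighborhood on which the asserted local maximum is genuinely a maximum of $|g|$, and one must have the $\C^n$ Maximum Principle available — but, as indicated, the latter reduces cleanly to the one-dimensional statement by slicing with complex lines, so no new analytic work is needed.
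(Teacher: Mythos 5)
Your proof is correct and is exactly the standard argument the paper has in mind (the paper omits the proof, remarking only that the local results in $\C^n$ transfer immediately to holomorphic functions on manifolds): reduce to a chart, establish the $\C^n$ maximum principle by slicing with complex lines, and propagate constancy with the Identity Principle. One small point of hygiene: in the slicing step what you actually need is the convexity of the polydisc --- not merely the connectedness of $\varphi(U)$ --- so that every point of $\varphi(U)$ lies on a complex line through $\varphi(p)$ whose intersection with $\varphi(U)$ is connected and contains both points; since you chose a polydisc this is automatic, but ``connected'' alone would not suffice for an arbitrary chart image.
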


Como consequência do Princípio do Máximo vemos que uma variedade compacta não possui muitas funções holomorfas.
\begin{corollary} \label{cor:compact-constant}
Se $X$ é uma variedade complexa compacta e conexa então toda função holomorfa em $X$ é constante.
\end{corollary}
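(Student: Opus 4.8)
The idea is to combine compactness with the Maximum Principle proved just above. Let $f: X \to \C$ be holomorphic with $X$ compact and connected. The plan is as follows. First, since $X$ is compact and the function $x \mapsto |f(x)|$ is continuous on $X$, it attains a maximum at some point $p \in X$; that is, $|f(p)| \geq |f(x)|$ for all $x \in X$. In particular $|f|$ has a local maximum at $p$. Now invoke the Maximum Principle (the \textbf{Princ\'{\i}pio do M\'aximo} stated immediately before): if $f$ were non-constant, then $|f|$ could not have any local maximum on the connected manifold $X$. This contradicts the existence of $p$. Hence $f$ must be constant.

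The only point that deserves a word of care is the passage from ``global maximum'' to ``the hypothesis of the Maximum Principle is violated.'' The Maximum Principle as stated says a non-constant holomorphic $f$ on a connected $X$ has no local maximum of $|f|$; a global maximum is a fortiori a local one, so the contradiction is immediate. I would also note that connectedness is genuinely used: it enters through the Maximum Principle (which requires $X$ connected), and without it the statement would only give that $f$ is constant on each connected component.

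I do not expect any real obstacle here — the corollary is a routine deduction. If one wanted to avoid quoting the Maximum Principle and argue directly, an alternative would be to work in a chart $(U,\varphi)$ around the maximum point $p$ and apply the Maximum Principle for holomorphic functions of several complex variables to $f \circ \varphi^{-1}$ on $\varphi(U) \subset \C^n$, concluding $f$ is locally constant near $p$, and then use the Identity Principle (the \textbf{Princ\'{\i}pio da Identidade} above) together with connectedness of $X$ to propagate this to all of $X$. But the shorter argument via the already-stated Maximum Principle on $X$ is cleaner, and that is the one I would write up.
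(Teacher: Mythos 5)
Your argument is correct and is exactly the deduction the paper intends: compactness gives a global (hence local) maximum of $|f|$, and the Maximum Principle on the connected manifold $X$ then forces $f$ to be constant. The paper states this corollary as an immediate consequence of the Princípio do Máximo without further proof, so your write-up matches its approach.
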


O resultado acima é um primeiro exemplo de como a teoria de variedades complexas difere drasticamente da teoria de variedades diferenciáveis. No caso diferenciável o espaço das funções suaves em uma variedade diferenciável (compacta ou não) é um espaço vetorial de dimensão infinita\footnote{Isso se deve ao fato de existirem partições diferenciáveis da unidade.}, ou seja, existem muitas funções suaves globais, enquanto que na categoria holomorfa (no caso compacto e conexo) só existem as constantes.

Uma consequência desse resultado é o fato de $\C^n$ não possuir subvariedades compactas.

\begin{corollary} \label{cor:compact-submanifold}
Uma variedade complexa compacta e conexa de dimensão positiva não admite um mergulho holomorfo em $\C^n$ para nenhum $n\geq 1$.
\end{corollary}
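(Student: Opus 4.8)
\textbf{Proposta de demonstra\c{c}\~ao.} O plano \'e reduzir o enunciado ao Corol\'ario \ref{cor:compact-constant}, segundo o qual toda fun\c{c}\~ao holomorfa em uma variedade complexa compacta e conexa \'e constante. A ideia \'e que, se houvesse um mergulho holomorfo em $\C^n$, suas fun\c{c}\~oes coordenadas seriam fun\c{c}\~oes holomorfas globais e portanto constantes, o que colapsaria a imagem do mergulho a um ponto.

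Primeiro, suponho por absurdo que exista um mergulho holomorfo $\iota : X \to \C^n$ para algum $n \geq 1$. Denotando por $z_1,\ldots,z_n : \C^n \to \C$ as fun\c{c}\~oes coordenadas, que s\~ao holomorfas, cada composi\c{c}\~ao $f_j \doteq z_j \circ \iota : X \to \C$ \'e uma fun\c{c}\~ao holomorfa na variedade compacta e conexa $X$. Pelo Corol\'ario \ref{cor:compact-constant}, cada $f_j$ \'e constante, digamos $f_j \equiv c_j$ com $c_j \in \C$. Logo $\iota(x) = (c_1,\ldots,c_n)$ para todo $x \in X$, ou seja, $\iota$ \'e uma aplica\c{c}\~ao constante.

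Por outro lado, um mergulho \'e em particular uma aplica\c{c}\~ao injetora, e uma aplica\c{c}\~ao constante s\'o \'e injetora quando o dom\'{\i}nio tem no m\'aximo um ponto. Como $X$ tem dimens\~ao complexa positiva, toda carta holomorfa identifica um aberto n\~ao vazio de $X$ com um aberto de $\C^n$, de modo que $X$ \'e um conjunto infinito; isto contradiz a injetividade de $\iota$, e portanto tal mergulho n\~ao pode existir.

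N\~ao vejo obst\'aculos s\'erios nesta demonstra\c{c}\~ao --- o \'unico ponto a observar \'e o uso essencial da hip\'otese de dimens\~ao positiva, sem a qual o enunciado seria falso, pois um ponto se mergulha trivialmente em $\C^n$. Vale notar ainda que o mesmo argumento mostra, mais geralmente, que qualquer aplica\c{c}\~ao holomorfa de uma variedade compacta e conexa com valores em $\C^n$ \'e constante.
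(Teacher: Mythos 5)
Sua demonstra\c{c}\~ao est\'a correta e segue essencialmente o mesmo caminho do texto: compor o mergulho com as fun\c{c}\~oes coordenadas de $\C^n$, aplicar o Corol\'ario \ref{cor:compact-constant} para concluir que cada componente \'e constante, e da\'{\i} que $X$ se reduziria a um ponto. Voc\^e apenas explicita com mais cuidado o papel da hip\'otese de dimens\~ao positiva, o que \'e um complemento bem-vindo, mas n\~ao altera o argumento.
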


\begin{proof}
Seja $X$ uma variedade complexa compacta e conexa e suponha que existe um mergulho holomorfo $f:X \to \C^n$. As composições $f_i = f \circ z_i$ de $f$ com as funções coordenadas $z_i:\C^n \to \C$, $i=1,\cdots,n$ definem funções holomorfas globais em $X$ e portanto, do corolário \ref{cor:compact-constant}, cada $f_i$ deve ser constante. Asssim, $f$ é constante e portanto $X$ se reduz a um ponto.
\end{proof}

Este é um outro exemplo da diferença entre as teorias das variedades complexas e diferenciáveis. Em contraste com o resultado acima, o famoso Teorema de Whitney diz que toda variedade diferenciável, compacta ou não, admite um mergulho suave em algum $\R^N$. 

\subsubsection{A estrutura complexa no fibrado tangente}

Dada uma variedade complexa $X$ considere o seu fibrado tangente $TX$. 
 
 Seja $\varphi: U \to \varphi(U) \subset \C^n$ uma carta holomorfa em $U \subset X$. Se $p \in U$, a diferencial $d\varphi _p : T_p X \to T_{\varphi(p)} \C^n = \C^n$ estabelece um isomorfismo linear. Podemos então transportar a estrutura complexa padrão de $\C^n$ a $T_p X$, obtendo assim uma aplicação linear $J_p: T_p X \to T_p X$ satisfazendo $J_p ^2 = -\id$. Explicitamente
\begin{equation*}
J_p  = d (\varphi^{-1})_{\varphi (p)} \circ J_0 \circ d \varphi _p 
\end{equation*}
onde $J_0$ é a multiplicação por $\smo$ em $\C^n$.

Se $\psi: V \to \psi(U) \subset \C^n$ é um outro sistema de coordenadas com $U \cap V \neq \emptyset$ então $\psi = h \circ \varphi$ em $U \cap V$, onde $h:\varphi(U \cap V) \to \psi(U \cap V)$ é um biholomorfismo. Do fato de $h$ ser holomorfa temos que $J_0 d h_q = d h_q J_0$ para todo $q \in \varphi(U \cap V)$ (veja a seção \ref{sec:several-complex-variables}).

Assim, para $p \in U \cap V$ temos
\begin{equation*}
 \begin{split}
 d (\psi^{-1})_{\psi (p)} \circ J_0 \circ d \psi _p &=  d (\psi^{-1})_{\psi (p)} \circ J_0 \circ dh_{\varphi(p)} \circ d \varphi _p \\
    &= d (\psi^{-1})_{\psi (p)} \circ  dh_{\varphi(p)} \circ J_0 \circ d \varphi _p \\
    &= d (\varphi^{-1})_{\varphi (p)} \circ J_0 \circ d \varphi _p,
 \end{split}
\end{equation*}
o que mostra que a definição de $J_p$ independe do sistema de coordenadas.\\

Vemos assim que $T_p X$ é naturalmente um espaço vetorial complexo, onde definirmos a multiplicação por um escalar pela fórmula $(a+\smo b)v = av +bJ_p v$. Dizemos que $J_p$ é a \textit{estrutura complexa induzida} em $T_p X$ e denotamos por $J:TX \to TX$ o endomorfismo de $TX$ definido por $J(p,v)=J_p(v)$.\\

Em coordenadas, se $\varphi$ é dada por $\{z_i = x_i + \smo y_i\}_{i=1,\ldots,n}$, a base $\left \lbrace \frac{\del}{\del x_1},\ldots,\frac{\del}{\del x_n}, \frac{\del}{\del y_1}\ldots, \frac{\del}{\del y_n} \right \rbrace$ de $\R^{2n} \simeq \C^n$ induz pela diferencial $d \varphi_p : T_p X \to \C^n$ uma base
\begin{equation} \label{eq:baseTpX}
\left \lbrace \frac{\del}{\del x_1} \bigg|_p, \ldots, \frac{\del}{\del x_n} \bigg|_p,\frac{\del}{\del y_1} \bigg|_p, \ldots, \frac{\del}{\del y_n} \bigg|_p \right \rbrace \subset T_p X,
\end{equation}
e a aplicação $J_p$ é dada por
\begin{equation} \label{eq:J-coord}
J_p : \frac{\del}{\del x_i} \bigg|_p \longmapsto \frac{\del}{\del y_i} \bigg|_p, \;\;\; \frac{\del}{\del y_i} \bigg|_p \longmapsto -\frac{\del}{\del x_i} \bigg|_p 
\end{equation}

Da própria definição da estrutura complexa induzida, vemos que uma aplicação entre variedades complexas $f:X \to Y$ será holomorfa se e somente se $df_x J^X_x = J^Y_{f(x)} df_x$ para todo $x \in X$, onde $J^X$ e $J^Y$ denotam as estruturas complexas induzidas em $TX$ e $TY$ respectivamente.

\subsection{Exemplos}

Apresentaremos a seguir alguns dos exemplos básicos de variedades complexas.

\begin{example}
O exemplo trivial de variedade complexa é o espaço complexo $n$-dimensional $\C^n$. Mais geralmente, qualquer $\C$-espaço vetorial $V$ de dimensão finita é uma variedade complexa, pois podemos tomar como carta holomorfa global um isomorfismo $\C$-linear $\phi:V \to \C^n$, onde $n = \dim_\C V$.
\end{example}

\begin{example} \label{ex:projective-space} \index{espaço projetivo complexo} \textbf{O espaço projetivo complexo}.
O espaço projetivo complexo de dimensão $n$, denotado por $\pr^n$, é o conjunto de todas as retas complexas em $\C^{n+1}$ que passam pela origem. Podemos defini-lo como sendo o quociente
\begin{equation*}
\pr^n = \frac{\C^{n+1}\setminus \{0\}}{\sim},
\end{equation*}
onde $u \sim v$ se e somente se $u = \lambda v$ para algum $\lambda \in \C^*$.

Consideramos em $\pr^n$ a topologia quociente dada pela projeção canônica $\pi:\C^{n+1}\setminus \{0\} \to \pr^n$ que associa a cada $0 \neq v \in \C^{n+1}$ a sua classe de equivalência $[v] \in \pr^n$, isto é, um conjunto $U \subset \pr^n$ será aberto se e somente se sua pré-imagem $\pi^{-1}(U)$ for aberta em  $\C^{n+1}\setminus \{0\}$.

Com essa topologia a aplicação $\pi$ é aberta, pois se $V \subset \C^{n+1} \setminus \{0\}$ é um aberto então $\pi^{-1}(\pi(V)) = \{\lambda z : z \in V, \lambda \in \C^* \}$ é aberto em $\C^{n+1} \setminus \{0\}$  e portanto $\pi(V)$ é aberto em $\pr^n$. Isso mostra que $\pr^n$ tem base enumerável, pois a imagem de uma base de abertos de $\C^{n+1} \setminus \{0\}$ será uma base de abertos de $\pr^n$. Além disso, essa topologia é Hausdorff. De fato, dados $[v],[w] \in \pr^n$ distintos, os conjuntos $\{\lambda v : \lambda \in \C^*\}$ e $\{\mu w : \mu \in \C^*\}$ são fechados disjuntos em $\C^{n+1} \setminus \{0\}$ e portanto podemos separá-los por abertos disjuntos $V$ e $W$, e  assim as projeções $\pi(V)$ e $\pi(W)$ são abertos disjuntos separando $[v]$ e $[w]$.\\

Se $v=(z_0,\ldots,z_n) \in \C^{n+1}\setminus \{0\}$ denotamos por $[z_0:\cdots:z_n] \in \pr^n$ sua classe de equivalência e os números $z_0,\ldots,z_n$ são chamados \emph{coordenadas homogêneas} de $[v]$. Note que $[z_0:\cdots:z_n] = [\lambda z_0:\cdots: \lambda z_n]$ para todo $\lambda \in \C^*$.

Para definir coordenadas holomorfas em $\pr^n$ considere os subconjuntos
\begin{equation*}
U_i = \{[z_0:\cdots:z_n] \in \pr^n: z_i \neq 0 \} \subset \pr^n,~~~i=0,\ldots,n.
\end{equation*}

É claro que os $U_i$ são abertos em $\pr^n$ e que $\pr^n = \bigcup_{i=0}^n U_i$.

Defina $\varphi_i: U_i \to \C^n$ por
\begin{equation*}
\varphi_i([z_0:\cdots:z_n]) = \left(\frac{z_0}{z_i},\cdots,\frac{z_{i-1}}{z_i},\frac{z_{i+1}}{z_i}\cdots, \frac{z_n}{z_i}\right).
\end{equation*}

É fácil ver que $\varphi_i$ é um homemorfismo e que sua imagem é todo o $\C^n$. A sua inversa é dada por $\varphi_i^{-1}(w_1,\ldots,w_n) = [w_1:\cdots:w_{i-1}:1:w_{i+1}:\cdots:w_n]$.

A definição de $\varphi_i$ possue uma motivação geométrica. Se denotarmos por $H_i$ o hiperplano afim de $\C^n$ dado pela equação $z_i = 1$, uma reta complexa passando pela origem $\ell = [z_0:\cdots:z_n]$ com $z_i \neq 0$ intersectará $H_i$ em um único ponto $p = H_i \cap \ell = (z_0/z_i,\ldots,z_{i-1},1,z_{i+1},\ldots,z_n/z_i)$. Obtemos as coordenadas de $\ell$ projetando $p$ em $\C^n$ via $(z_0,\ldots,z_n) \mapsto (z_0,\ldots,z_{i-1},z_{i+1},\ldots,z_n)$.

Para calcular a expressão da mudança de coordenadas suponha, sem perda de generalidade, que $i < j$. Temos então que $\varphi_j(U_i \cap U_j) = \{(w_1,\ldots,w_n) \in \C^n: w_i \neq 0\}$ e usando a expressão acima para a inversa de $\varphi_j$ vemos que a mudança de coordenadas é dada por
\begin{equation*}
\varphi_{ij}(w_1,\ldots,w_n) = \left( \frac{w_1}{w_i},\cdots, \frac{w_{i-1}}{w_i},\frac{w_{i+1}}{w_i},\cdots,\frac{w_{j-1}}{w_i},\frac{1}{w_i},\frac{w_{j+1}}{w_i}, \cdots \frac{w_n}{w_i}\right),
\end{equation*}
que é claramente holomorfa em $\C^n \setminus \{w_i = 0\}$.

Vemos portanto que o espaço projetivo complexo é uma variedade complexa. É fácil ver que a projeção natural $\pi: \C^{n+1} \setminus \{0\} \to \pr^n$ é holomorfa.\\

Uma outra descrição útil de $\pr^n$ é como um quociente da esfera $S^{2n+1}$ por uma ação do círculo $S^1$. Considere a esfera unitária $S^{2n+1} \subset \C^{n+1}$. Note que dois pontos $u,v \in S^{2n+1}$ definem a mesma reta complexa se e somente se $u = \lambda v$ com $|\lambda| = 1$. Assim, $\pr^n$ pode ser visto como o quociente de $S^{2n+1}$ pela relação de equivalência $u \sim v \Leftrightarrow u = \lambda v$ para algum $\lambda \in S^1$, ou equivalentemente, como o quociente
\begin{equation*}
\pr^n = \frac{S^{2n+1}}{S^1},
\end{equation*}
onde $S^1$ age em $S^{2n+1}$ via $\lambda \cdot (z_0,\ldots,z_n) = (\lambda z_0, \ldots,\lambda z_n)$.

Denotando por $\rho: S^{2n+1} \to \pr^n$ a projeção canônica não é difícl ver que a topologia quociente induzida por $\rho$ é a mesma topologia induzida por $\pi$. Em particular isso mostra que $\pr^n$ é uma variedade compacta, pois é a imagem de $S^{2n+1}$ por uma aplicação contínua.\\

Podemos pensar em $\pr^n$ como sendo uma compactificação de $\C^n$. Considere o aberto $U_0=\{z_0 \neq 0\}$ definido acima. Note que a carta $\varphi_0:U_0 \to \C^n$ definida acima estabelece um isomorfismo holomorfo $U_0 \simeq \C^n$. O complementar de $U_0$ é o conjunto $H_0 = \{z_0 = 0\}$ que é biholomorfo ao espaço projetivo $\pr^{n-1}$ segundo a aplicação $\phi:H_0 \to \pr^{n-1}$, $[0:z_1:\cdots:z_n] \mapsto [z_1:\cdots:z_n]$.

Vemos então que $\pr^n$ pode ser escrito como
\begin{equation*} 
\pr^n = U_0 \cup H_0 \simeq \C^n \cup \pr^{n-1},
\end{equation*}
isto é, o espaço projetivo $\pr^n$ é obtido de $\C^n$ através da junção de uma cópia de $\pr^{n-1}$. O conjunto $H_0$ é chamado de hiperplano no infinito.

Repetindo o argumento acima para o fator $\pr^{n-1}$ obtemos, por indução, uma decomposição
\begin{equation} \label{eq:celullar-decomp-pn}
\pr^n \simeq \C^n \cup \C^{n-1} \cup \cdots \cup \C \cup \{p\},
\end{equation}
onde $p$ é um ponto correspondente a $p_0 = [0:\cdots:0:1] \in \pr^n$.

No caso $n=1$ vemos que $\pr^1$ é difeomorfo à compactificação de $\C$ por um ponto, e portanto $\pr^1 \simeq S^2$ como variedades diferenciáveis.

Lembre que um CW-complexo é um espaço topológico obtido de cópias homeomorfas de discos fechados identificados pela fronteira. Para a definição precisa e os resultados básicos sobre CW-complexos consulte \cite{hatcher}.

Na decomposição (\ref{eq:celullar-decomp-pn}), a cada passo colamos um disco fechado de dimensão $2k$ em $\pr^{k-1}$. O interior desse disco é homeomorfo a $\C^{2k}$ e sua fronteira é colada em $\pr^{k-1}$ segundo a projeção canônica $S^{2k-1} \to \pr^{k-1}$. Isso mostra que $\pr^n$ é um CW-complexo com uma célula em cada dimensão $0,2,\ldots,2n$, e portanto os grupos de homologia com coeficientes inteiros de $\pr^n$ são dados por
\begin{equation*}
H_i(\pr^n,\Z) = \left \lbrace \begin{split} \Z ~ &\text{ se } i=0,2,\ldots,2n \\ 0 ~ &\text{ caso contrário} \end{split} \right.
\end{equation*}

\end{example}

\begin{example} \label{ex:complex-tori} \index{toro complexo} \textbf{Toros complexos.}
Um reticulado em $\C^n$ é um subgrupo de $(\C^n,+)$ da forma
\begin{equation*}
L = \left \lbrace \alpha = \sum_i n_i \alpha_i : n_i \in \Z , i=1,\cdots,2n \right \rbrace ,
\end{equation*}
onde $\alpha_1,\cdots,\alpha_{2n} \in \C^n$ são linearmente independentes sobre $\R$.

Dado um reticulado $L \subset \C^n$ podemos considerar o grupo quociente
\begin{equation*}
X = \frac{\C^n}{L},
\end{equation*}
no qual dois elementos $z,w \in \C^n$ serão identificados se a $z-w \in L$. O conjunto $X$ é chamado \textbf{toro complexo} e veremos a seguir que $X$ é uma variedade complexa compacta de dimensão $n$.

Consideramos em $X$ a topologia quociente induzida pela projeção canônica $\pi: \C^n \to X$, que leva cada ponto $z \in \C^n$ na sua classe $z + L \in X$. Da própria definição de topologia quociente a aplicação $\pi$ é contínua. Além disso $\pi$ é aberta, pois dado um aberto $V \subset \C^n$ temos que
\begin{equation*}
\pi^{-1}(\pi(V)) = \bigcup_{\alpha \in L} (V + \alpha),
\end{equation*}
que é aberto em $\C^n$ e portanto $\pi(V)$ é aberto em $X$. Com isso é fácil mostrar que $X$ é de Hausdorff e tem base enumerável.

Note que todo ponto de $\C^n$ é equivalente a um ponto no paralelogramo fundamental $P = \{ \alpha = \sum_i t_i \alpha_i : t_i \in [0,1] , i=1,\cdots,2n \}$. Em particular temos que $X = \pi(P)$ e portanto, como $\pi$ é contínua, vemos que $X$ é compacto.

Como $L$ é discreto, dado um ponto $z \in \C^n$ podemos escolher uma vizinhança aberta $U$ de $z$ suficientemente pequena de modo que $U$ não contenha nenhum outro ponto de $z + L$. Desta forma $\pi|_U:U \to \pi(U)$ é bijetora e portanto um homeomorfismo. Vemos então que $\pi:\C^n \to X$ é um recobrimento.

Com isso podemos definir cartas holomorfas em $X$. Cobrimos $\C^n$ por abertos $V_i$ de modo que $\pi|_{V_i}:V_i \to \pi(V_i)=U_i$ são homeomorfismos e tomamos como cartas em $X$ as inversas $\varphi_i = (\pi|_{V_i})^{-1}:U_i \to V_i$.

Se $U_i \cap U_j \neq \emptyset$ é fácil ver que $V_j$ intersecta $V_i+\alpha$ para um único $\alpha \in L$ e então $\varphi_j(U_i \cap U_j) = (V_i + \alpha) \cap V_j$ e $\varphi_i(U_i \cap U_j) = V_i \cap (V_j -\alpha)$. A mudança de coordenadas nessa caso é dada por
\begin{equation*}
\begin{split}
\varphi_{ij}:(V_i + \alpha) \cap V_j &\longrightarrow V_i \cap (V_j - \alpha) \\
z &\longmapsto z - \alpha,
\end{split}
\end{equation*}
que é claramente holomorfa e portanto $(U_i,\varphi_i)$ é um atlas holomorfo em $X$.

Note que a aplicação de recobrimento $\pi:\C^n \to X$ é holomorfa, pois sua composição com uma carta $\varphi_i:U_i \to V_i$ é a identidade. Mais ainda, a estrutura complexa acima definida é a única que torna $\pi$ holomorfa.\\

Todo toro complexo $X = \C^n/L$ é difeomorfo a um produto de $2n$-cópias do círculo $S^1$. O difeomorfismo é obtido pela passagem ao quociente da aplicação $\psi:\R \times \cdots \times \R \to \C^n$ dada por $\psi(t_1,\ldots,t_{2n}) = \sum_i t_i \alpha_i$, lembrando que $S^1 \simeq \R / \Z$. Em particular, quaisquer dois toros complexos de mesma dimensão são difeomorfos. A estrutura complexa no entanto é mais rígida, isto é, podem existir toros complexos de mesma dimensão que não são isomorfos como variedades complexas (isto é, não existe um biholomorfismo entre eles). Esse fenômeno já ocorre em dimensão $1$, como veremos as seguir.\\

Sejam $X = \C/L$ e $Y = \C/M$ dois toros complexos unidimensionais e denote por $\pi_L:\C \to X$ e $\pi_M: \C \to Y$ as projeções canônicas. Seja $f:X \to Y$ uma aplicação holomorfa. Compondo com uma translação em $Y$ que leva $f(0)$ em $0$ podemos supor que $f(0) = 0$.

Como $\C$ é simplesmente conexo, a aplicação $f \circ \pi_L:\C \to Y$ se levanta a uma aplicação $F: \C \to \C$ satisfazendo $\pi_M \circ F = f \circ \pi_L$ e podemos escolhê-la de modo que $F(0) = 0$. Note que $F$ é holomorfa pois, localmente, $F = \pi_M^{-1} \circ f \circ \pi_L$.

Como $f(0)=0$ temos que $F(L) \subseteq M$. Mais ainda, temos que $F(z + \alpha) = F(z) \mod M$ para todo $\alpha \in L$ e portanto dados $z \in \C$ e $\alpha \in L$ existe $\omega(z,\alpha) \in M$ de modo que $F(z + \alpha) - F(z) = \omega(z,\alpha)$. Note que o membro esquerdo dessa igualdade é contínuo em $z$ e portanto, como $M$ é discreto, temos que $\omega(z,\alpha) = \omega (\alpha)$ independe de $z$.

Derivando a equação $F(z + \alpha) - F(z) = \omega(\alpha)$ em relação a $z$ temos que $F'(z+ \alpha) = F'(z)$ para todo $\alpha \in M$. Vemos então que $F'$ é duplamente periódica e portanto, pelo Teorema de Liouville, é constante. Logo $F$ deve ser da forma $F(z) = \gamma z$.

Vemos então que toda aplicação holomorfa $f:X \to Y$ é induzida por uma aplicação $F:\C \to \C$ da forma $F(z) = \gamma z + a $ onde $a,\gamma \in \C$ e $\gamma L \subseteq M$.

Esse fato nos permite construir exemplos de toros não isomorfos. Considere por exemplo os reticulados $L = \Z + \xi \Z$ onde $\xi = \exp(\frac{\smo \pi}{4})$ e $M = \Z + \smo \Z$ e sejam $X = \C \slash L$ e $Y = \C \slash M$ os toros associados. Se existisse um biholomorfismo $f:X \to Y$ existiria um número complexo $\gamma$ tal que $\gamma L \subseteq M$. Em particular teríamos que $\gamma \in M = \Z[\smo]$ e portanto $|\gamma|^2$ seria um número inteiro. Por outro lado teríamos também que $\gamma (1 + \xi) \in M$ e portanto $|\gamma (1+\xi)|^2 = |\gamma|^2|1+\xi|^2$ também seria inteiro, o que não pode ocorrer pois $|1+\xi|^2 = 2 + \sqrt 2$.

\end{example}

\begin{example} \label{ex:coverings}
No exemplo acima, para definir a estrutura holomorfa no toro complexo $X$ utilizamos apenas o fato de que $\pi:\C^n \to X$ é um recobrimento e que as transformações de recobrimento são holomorfas.

Podemos generalizar este exemplo para a seguinte situação. Suponha que um espaço topológico $X$ seja recoberto por uma variedade complexa $\widetilde X$ e denote por $\pi:\widetilde X \to X$ a aplicação de recobrimento. Se as transformações de recobrimento (que são homeomorfismos entre abertos de $\widetilde X$) forem holomorfas então existe uma única estrutura complexa em $X$ que torna $\pi$ holomorfa. As cartas holomorfas são obtidas pela composição das cartas em $\widetilde X$ com inversas locais de $\pi$.

Em particular, se $G$ é um grupo agindo em uma variedade complexa $X$ por biholomorfismos e a ação é livre e propriamente descontínua\footnote{Uma ação $G \times X \to X$, $(g,x)\mapsto g\cdot x$ é livre se $g \cdot x \neq x$ para todo $x$ em $X$ e todo $g \neq e$ em $G$ e é propriamente descontínua se dados $x,y \in X$ existem vizinhanças $U$ de $x$ e $V$ de $y$ de modo que o conjunto $\{g \in G: (g\cdot U) \cap V \neq \emptyset \}$ é finito.} então a aplicação quociente $\pi: X \to X \slash G$ é um recobrimento e portanto o epaço de órbitas $X \slash G$ admite uma estrutura complexa.
\end{example}

\subsection{Funções Meromorfas}

Como vimos, uma variedade complexa não possue muitas funções holomorfas. No caso compacto e conexo por exemplo, só existem as constantes (Corolário \ref{cor:compact-constant}). Sendo assim, somos naturalmente levados a considerar uma classe mais ampla de objetos, que são  as chamadas funções meromorfas.\\

Se $X$ uma variedade complexa, dado um ponto $x \in X$ e duas funções holomorfas $f$ e $g$ definidas em vizinhanças $U$ e $V$ de $x$ dizemos que $f$ e $g$ definem o mesmo germe em $x$, e denotamos $f \sim_x g$, se existe uma viznhança $W$ de $x$ com $W \subset U \cap V$ tal que $f|_W = g|_W$.

O \textbf{caule de funções holomorfas} em $x$, denotado por $\mathcal O_{X,x}$, é o conjunto das funções holomorfas definidas em torno de $x$ segundo a relação de equivalência $\sim_x$. A classe de uma função $f$ é denotada por $f_x$ e é chamado germe de $f$ em $x$. Esta definição é um caso particular do que se chama caule de um feixe, do qual vamos falar com mais detalhes no próximo capítulo (veja o Exemplo \ref{ex:stalk2}).

É fácil ver que $\mathcal O_{X,x}$ é domínio de integridade. Denotamos por $Q(\mathcal O_{X,x})$ o seu corpo de frações.

\begin{definition} \label{def:meromorphic-function}
Seja $X$ uma variedade complexa. Uma \textbf{função meromorfa} em $X$ é uma aplicação
\begin{equation*}
f: X \longrightarrow \bigcup_{x \in X} Q(\mathcal O_{X,x})
\end{equation*}
tal que para todo $x_0$ existe uma vizinhança $U$ de $x_0$ e funções holomorfas $g,h:U \to \C$ tal que $f_x = \frac{g_x}{h_x}$.
\end{definition}

Em outras palavras, uma função meromorfa é dada localmente como razão de duas funções holomorfas. Note que, apesar do nome, uma função meromorfa não é de fato uma função, pois seu valor não está definido nos pontos em que o denominador se anula. No entanto, se $f_x = \frac{g_x}{h_x}$ e $h_x \neq 0$ faz sentido calcularmos $f(x) \doteq \frac{g(x)}{h(x)}$ e o valor não depende dos representantes escolhidos.\\

O conjunto das funções meromorfas em $X$ é denotado por $K(X)$. Não é difícil ver que quando $X$ é conexa $K(X)$ é um corpo, chamado \textit{corpo de funções} de $X$. A chamada \emph{Geometria Birracional} se ocupa do estudo das variedades por meio do seu corpo de funções. Um resultado central da Geometria Birracioal Complexa é o chamado Teorema de Siegel, que dá informação sobre a extensão de corpos $\C \subset K(X)$.

Lembre que, dada uma extensão de corpos $K \subset L$ dizemos que $\alpha_1,\ldots,\alpha_r$ são algebricamente dependentes sobre $K$ se existe $f \in K[x_1,\cdots,x_r]$ tal que $f(\alpha_1,\ldots,\alpha_r)=0$ e são algebricamente independentes caso contrário. O grau de transcendência da extensão $K \subset L$ é o número máximo de elementos algebricamente independentes.
 
Note que as funções constantes podem ser vistas como funções meromorfas e portanto obtemos uma extensão de corpos $\C \subset K(X)$. O Teorema de Siegel dá uma limitação para o grau de transcendência dessa extensão. Para uma demonstração consulte \cite{huybrechts}, cap. 2.

\begin{proposition} \emph{(Teorema de Siegel)}
Se $X$ é uma variedade complexa de dimensão $n$, o grau de transcendência de $K(X)$ sobre $\C$ é no máximo $n$.
\end{proposition}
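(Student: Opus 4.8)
The plan is to establish the sharper claim that, for $X$ compact and connected (some compactness hypothesis is indispensable: on $\C$ the functions $e^{z}$ and $e^{\sqrt2\,z}$ are already algebraically independent), any $n+1$ meromorphic functions $f_1,\dots,f_{n+1}\in K(X)$ are algebraically dependent over $\C$; since a field extension of $\C$ has transcendence degree $>n$ exactly when it contains $n+1$ algebraically independent elements, this yields the theorem. Write $f=(f_1,\dots,f_{n+1})$. First I would fix a finite ``good'' atlas: biholomorphisms $\varphi_i\colon U_i\to B(0,1)\subset\C^n$ ($i=1,\dots,N$, finite by compactness) with $f_j=g_{ij}/h_{ij}$ on $U_i$, the $g_{ij},h_{ij}$ holomorphic on $B(0,1)$ and relatively prime (using that $\mathcal O_{\C^n,0}$ is a UFD), chosen so that the smaller polydiscs $\varphi_i^{-1}(B(0,1/2))$ still cover $X$. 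Put $c=\max_{i,j}\sup_{B(0,1)}\{|g_{ij}|,|h_{ij}|\}$, let $Z=\bigcup_{i,j}\varphi_i^{-1}\{h_{ij}=0\}$ be the fixed proper analytic subset outside of which every $P(f)$, $P\in\C[x_1,\dots,x_{n+1}]$, is holomorphic, and choose points $x_i\in\varphi_i^{-1}(B(0,1/2))\setminus Z$ with coordinate balls $B(x_i,R)$ compactly contained in $U_i\setminus Z$, so that $\delta:=\min_i\inf_{B(x_i,R)}\prod_j|h_{ij}\circ\varphi_i^{-1}|>0$.

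The heart is a dimension count played off a Schwarz estimate. For a polynomial $P$ of degree $\le d$ in $n+1$ variables, normalized so all its coefficients have modulus $\le1$ -- there are $\binom{d+n+1}{n+1}\sim d^{n+1}/(n+1)!$ such -- clearing denominators on $U_i$ gives $P(f)=P_i/\bigl(\prod_j h_{ij}\bigr)^{d}$ with $P_i$ holomorphic on $B(0,1)$ and $\sup_{B(0,1)}|P_i|\le\binom{d+n+1}{n+1}\,c^{(n+1)d}$. Demanding that $P(f)$ vanish to order $\ge m$ at every $x_i$ -- equivalently, each $P_i$ to order $\ge m$ at $\varphi_i(x_i)$, since $x_i\notin Z$ -- imposes at most $N\binom{m+n-1}{n}\sim Nm^{n}/n!$ linear conditions on the coefficients of $P$. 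Since $d^{n+1}$ dominates $Nm^{n}$ whenever $m$ is of order at most $d^{(n+1)/n}$, and the gap between the exponents $n+1$ and $n$ -- $n+1$ functions against an $n$-dimensional manifold -- is exactly what makes the count work, for every large $d$ there is a nonzero $P^{(d)}$ with $P^{(d)}(f)$ vanishing to order $\ge m(d)$ at every $x_i$, with $m(d)/d\to\infty$.

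Now assume, toward a contradiction, that $f_1,\dots,f_{n+1}$ are algebraically independent. Then each $P^{(d)}(f)\ne0$ in $K(X)$ and is holomorphic on $B(x_i,R)$, vanishing to order $m(d)$ at the center $x_i$. The one-variable Schwarz lemma applied along complex lines through $x_i$, combined with the maximum principle, yields $\sup_{B(x_i,R/2)}|P^{(d)}(f)|\le 2^{-m(d)}\sup_{B(x_i,R)}|P^{(d)}(f)|$, while $\sup_{B(x_i,R)}|P^{(d)}(f)|=\sup|P_i|/\bigl|\prod_j h_{ij}\bigr|^{d}\le\binom{d+n+1}{n+1}(c^{n+1}/\delta)^{d}$ grows only exponentially in $d$. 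Because $m(d)$ grows super-linearly, $2^{-m(d)}\binom{d+n+1}{n+1}(c^{n+1}/\delta)^{d}\to0$, so the $P^{(d)}(f)$ become uniformly arbitrarily small on the fixed polydiscs $B(x_i,R/2)$.

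The remaining -- and genuinely hardest -- step is to upgrade ``arbitrarily small'' to ``identically zero'' for some $d$, which contradicts algebraic independence and finishes the proof. This is delicate because $P^{(d)}$ varies with $d$ and has unbounded degree, so no limiting argument is available, and because $f=(f_1,\dots,f_{n+1})$ maps the $n$-dimensional $X$ into $\C^{n+1}$ with image in a lower-dimensional analytic set, so there is no obvious lower bound for $\sup_{B(x_i,R/2)}|P(f)|$ in terms of $\max_\alpha|c_\alpha|$. What is needed is a quantitative estimate $\sup_{B(x_i,R/2)}|P(f)|\ge\eta^{\,d}\max_\alpha|c_\alpha|$, valid for all nonzero $P$ of degree $\le d$ with $\eta>0$ independent of $d$ -- equivalently, a bound $\ge\eta^{d}$ for the least singular value of the linear map carrying the coefficients of $P$ to a suitable block of Taylor coefficients of the numerators $P_i$ -- so that the super-exponential Schwarz decay overwhelms it; the fact that $e^{z}$ fails to be meromorphic on a compact manifold is morally why such Liouville-type degeneracies are excluded. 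I would concentrate the work on this estimate, following \cite{huybrechts}, Ch.~2.
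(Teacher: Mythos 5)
The paper itself gives no proof of this proposition (it defers to Huybrechts, cap.~2), so there is nothing to compare line by line; but your strategy is the standard Siegel argument, your observation that compactness must be added to the hypothesis is correct (the statement as printed is false for $X=\C$), and the first two steps --- the dimension count producing $P^{(d)}\neq 0$ with high-order vanishing, and the Schwarz estimate --- are essentially right. The gap you flag at the end, however, is real, and the remedy you propose is the wrong one: no lower bound of the form $\sup|P(f)|\ge\eta^{d}\max_\alpha|c_\alpha|$ is needed (nor is one easy to come by). The classical way to close the argument is self-referential and elementary, and your setup is only one adjustment away from it.

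The adjustment: impose the order-$m$ vanishing not at auxiliary points $x_i\notin Z$ but at the centers $\varphi_i^{-1}(0)$ of the charts, as conditions on the \emph{numerators} $P_i=P(f)\cdot\bigl(\prod_j h_{ij}\bigr)^{d}$, which are holomorphic on all of $B(0,1)$; this is still linear in the coefficients of $P$, so the dimension count is unaffected, and it lets you work on the half-polydiscs $V_i=\varphi_i^{-1}(B(0,1/2))$, which (unlike your balls $B(x_i,R)$) cover $X$. Because $g_{ij}$ and $h_{ij}$ are relatively prime, on an overlap $U_i\cap U_k$ the germs $h_{ij}$ and $h_{kj}$ cut out the same polar divisor of $f_j$, hence differ by a unit; so $P_i=P_k\cdot\prod_j(h_{ij}/h_{kj})^{d}$ with $\bigl|\prod_j(h_{ij}/h_{kj})\bigr|\le C$ on $\overline{V_i}\cap\overline{V_k}$, with $C$ independent of $d$ by compactness. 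Now set $M=\max_i\sup_{\overline{V_i}}|P_i\circ\varphi_i^{-1}|$ and let the maximum be attained in chart $i_0$. Every point of $B(0,1)$ in chart $i_0$ lies in some $V_k$, so $\sup_{B(0,1)}|P_{i_0}|\le C^{d}M$; the Schwarz lemma at the center then gives $M\le\sup_{B(0,1/2)}|P_{i_0}|\le 2^{-m}C^{d}M$. Choosing $m>d\log C/\log 2$ (linear in $d$ suffices, which your count accommodates with room to spare) forces $M=0$, hence $P_i\equiv 0$ for all $i$, hence $P^{(d)}(f)=0$ in $K(X)$ by the identity principle --- the desired contradiction. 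In short: you do not need to bound $P(f)$ from below at all; the inequality $M\le 2^{-m}C^{d}M$ kills $M$ outright.
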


\chapter{Feixes e Cohomologia}
É muito comum, tanto na Geometria quanto na Análise, encontrarmos problemas que podem ser resolvidos localmente, mas que nem sempre possuem uma solução global. Como exemplo deste fenômeno podemos citar o problema decidir quando uma forma diferencial fechada é exata ou ainda o problema relacionado de existência de soluções de equações diferenciais parciais.

O conceito de feixe fornece uma linguagem natural para tratarmos esse tipo de problema e a sua cohomologia vem como uma ferramenta para entender as possíveis obstruções na passagem do local para o global.

\section{Definição}
Considere um espaço topológico $X$. Para cada aberto $U \subset X$ denote por $\mathcal{C}(U)$ o espaço das funções contínuas em $U$ a valores complexos. A continuidade é uma propriedade local, isto é, se $f \in \mathcal{C}(U)$ e $V \subset U$ é um outro aberto de $X$, a restrição $f|_V$ define uma função contínua em $V$, isto é, $f|_V \in \mathcal{C}(V)$.

Se além disso $X$ for uma variedade diferenciável, podemos definir o espaço $\mathcal{C}^{\infty}(U)$ das funções diferenciáveis sobre $U$ a valores complexos. Novamente, como diferenciabilidade é uma propriedade local, se $f \in \mathcal{C}^{\infty}(U)$ e $V \subset U$ então $f|_V \in \mathcal{C}^{\infty}(V)$.

Esses são exemplos de feixes. A ideia do conceito de feixe sobre um espaço topológico é concentrar em um só, objetos que são definidos por propriedades locais.

\begin{definition} \label{def:sheaf}
Um \textbf{pré-feixe} $\mathcal{F}$ de grupos abelianos em $X$ é uma coleção de grupos abelianos $\mathcal{F}(U)$, um para cada aberto $U \subset X$, com $\mathcal{F}(\emptyset) = 0$ e homomorfismos $r_{V,U}: \mathcal{F}(U) \to \mathcal{F}(V)$ sempre que $V \subset U$, satisfazendo
\begin{enumerate}
\item[S1.] $r_{U,U} = \text{id}_{\mathcal{F}(U)}$
\item[S2.] $r_{W,V} \circ r_{V,U} = r_{W,U}$ sempre que $W \subset V \subset U$
\end{enumerate}
O grupo $\mathcal{F}(U)$ é chamado de grupo de seções \index{seção!de um feixe} de $\mathcal F$ sobre $U$ e as aplicações $r_{V,U}$ são chamadas restrições. Com frequência, quando o aberto $U$ estiver subentendido, denotaremos $r_{V,U}(s) = s|_V$ para $s \in \mathcal{F}(U)$.\\

Dizemos que um pré-feixe $\mathcal{F}$ é um \textbf{feixe} se satisfizer as duas condições adicionais: para todo aberto $U \subset X$ e toda cobertura por abertos $U = \bigcup_{i \in I} U_i$ temos
\begin{enumerate}
\item[S3.] Se $s,t \in \mathcal{F}(U)$ e $r_{U_i,U} (s) = r_{U_i,U} (t) $ para todo $i \in I$, então $s = t$,
\item[S4.] Se $s_i \in \mathcal{F}(U_i)$ são tais que $r_{U_i \cap U_j,U_i} (s_i) = r_{U_i \cap U_j,U_j} (s_j)$ para todo $i,j \in I$ então existe $s \in \mathcal{F}(U)$ tal que $r_{U_i,U} (s) = s_i$.
\end{enumerate}
\end{definition}

A condição S3 diz que uma seção é determinada pelas suas restrições e a condição S4 diz que se temos seções dadas em uma cobertura de $U$ e elas concordam nas intersecções então elas se combinam em uma (única) seção definida em todo $U$.\\

\begin{remark}
De maneira análoga podemos definir feixes de anéis (ou de espaços vetoriais, de álgebras, etc.). Nesse caso cada $\mathcal{F}(U)$ será um anel (um espaço vetorial, uma álgebra, etc.) e cada restrição $r_{U,V}$ um homomorfismo de anéis (uma transformação linear, um homomorfismo de álgebras, etc.).
\end{remark}

\subsection*{Exemplos}
Nos exemplos a seguir $X$ é uma variedade complexa. As condições S1-S4 da definição \ref{def:sheaf} são imediatamente verificadas.
\begin{example} \label{ex:sheaf-diff-fcts}
O feixe das funções diferenciáveis em $X$, definido por
\begin{equation*}
\mathcal{C}^{\infty}_X (U) = \{ f:U \to \C : f \text{ é diferenciável} \}
\end{equation*}
com as aplicações de restrição $\mathcal{C}^{\infty}_X (U) \ni f  \mapsto f|_V \in \mathcal{C}^{\infty}_X (V) $ é um feixe, denotado por $\mathcal{C}^{\infty}_X$.

Observe que $\mathcal{C}^{\infty}_X$ é um feixe de anéis e também um feixe de $\C$-álgebras.
\end{example}

\begin{example} \label{ex:sheaf-hol-fcts}
O feixe das funções holomorfas em $X$, definido por
\begin{equation*} \index{Ox@$\mathcal O_X$}
\mathcal{O}_X (U) = \{ f:U \to \C : f \text{ é holomorfa} \}
\end{equation*}
com as aplicações de restrição $\mathcal{O}_X (U) \ni f  \mapsto f|_V \in \mathcal{O}_X (V) $ é um feixe, denotado por $\mathcal{O}_X$.

Nesse exemplo também temos que $\mathcal{O}_X$ é um feixe de anéis e um feixe de $\C$-álgebras.
\end{example}

\begin{example}
O feixe das funções holomorfas que não se anulam, definido por 
\begin{equation*} \index{Oes@$\mathcal O^*_X$}
\mathcal{O}^*_X (U) = \{ f:U \to \C : f \text{ é holomorfa e } f \neq 0 \text{ em } U\}
\end{equation*}
com a operação de multiplicação, é um feixe, denotado por $\mathcal{O}^*_X$. Note que $\mathcal{O}^*_X$ é apenas um feixe de grupos.
\end{example}

Os exemplos \ref{ex:sheaf-diff-fcts} e \ref{ex:sheaf-hol-fcts} acima podem ser generalizados da seguinte maneira.

\begin{example} \label{ex:sheaf-diff-sec}
Dado um fibrado vetorial complexo\footnote{Para a definição consulte a seção \ref{sec:cxvb}.} $\pi: E \to X$ definimos o feixe de seções (diferenciáveis) de $E$ por
\begin{equation*}
\mathcal{E} (U) = \{ s:U \to \pi^{-1}(U) : s \text{ é diferenciável e } \pi \circ s = \text{id}_U \}
\end{equation*}
com as aplicações de restrição naturais. Obtemos assim um feixe, denotado por $\mathcal{E}$. Note que quando $E = X \times \C$ é o fibrado trivial de posto 1, temos que $\mathcal{E} = \mathcal{C}^{\infty}_X$.
\end{example}

\begin{example} \label{ex:sheaf-hol-sec}
Agora se $\pi: E \to X$ é um fibrado holomorfo\footnote{Para a definição consulte a seção \ref{sec:hol-vb}.}, definimos o feixe de seções holomorfas de $E$ por
\begin{equation*}
\mathcal{O}(E) (U) = \{ s:U \to \pi^{-1}(U) : s \text{ é holomorfa e } \pi \circ s = \text{id}_U \}
\end{equation*}
com as aplicações de restrição naturais. Obtemos assim um feixe, denotado por $\mathcal{O}(E)$. Quando $E = X \times \C$ é o fibrado trivial de posto 1 temos que $\mathcal{O}(E) = \mathcal{O}_X$.
\end{example}

Os dois exemplos anteriores contém ainda mais estrutura: dada uma seção $s \in \mathcal{E}(U)$ e uma função $f \in \mathcal{C}^{\infty}(U)$ podemos multiplicá-los, e obtemos uma nova seção $fs \in \mathcal{E}(U)$, ou seja, $\mathcal{E}(U)$ é um módulo sobre o anel $\mathcal{C}^{\infty}(U)$. Analogamente, $\mathcal{O}(E)(U)$ é um módulo sobre $\mathcal{O}_X (U)$.

\begin{definition}
Seja $\mathcal{A}$ um feixe de anéis sobre $X$. Dizemos que $\mathcal{F}$ \textbf{é um feixe de $\mathcal{A}$-módulos} sobre $X$ se cada $\mathcal{F}(U)$ é um $\mathcal{A}(U)$-módulo e as restrições $\mathcal{F}(U) \to \mathcal{F}(V)$ são homomorfismos de $\mathcal{A}(U)$-módulos, onde $\mathcal{F}(V)$ é visto como módulo sobre $\mathcal{A}(U)$ via
\begin{equation*}
fs = r_{V,U}(f) \cdot s, \;\;\; f \in \mathcal{A}(U),\: s \in \mathcal{F}(V).
\end{equation*}

\begin{example} \label{ex:const-sheaf} \index{feixe!constante}
Se $G$ é um grupo abeliano poderíamos tentar definir um feixe constante, fazendo $\mathcal{F}(U) = G $ para cada aberto $\emptyset \neq U \subset X$, $\mathcal{F}(\emptyset) = 0$ e tomando $r_{U,V} = \text{id}_G$ se $V \neq \emptyset$ e $r_{U,\emptyset} = 0$. No entanto, desta maneira não obtemos um feixe: as condições S1-S3 da definção \ref{def:sheaf} são satisfeitas mas a condição S4 não.

De fato, sejam $U,V$ abertos disjuntos não vazios e $W = U \cup V$. Tome $g \in \mathcal{F}(U) = G $ e $h \in \mathcal{F}(V)=G$ elementos distintos de $G$. Temos que $r_{U\cap V,U}(g) = 0 = r_{U\cap V,V}(h)$, pois $U \cap V = \emptyset$, mas não podemos colar a nenhuma seção $f$ sobre $W$, pois teríamos $f = r_{U,W}(f) = g \neq h = r_{V,W}(f) = f$.

No entanto podemos remediar a situação da seguinte maneira: definimos um feixe em $X$ associando a cada aberto $U \subset X$ o grupo das funções localmente constantes em $U$ a valores em $G$. É fácil ver que agora as condições S1-S4 são satisfeitas. Obtemos assim um feixe, chamado \textbf{feixe localmente constante associado a $G$}, denotado por $\underline{G}$ ou simplesmente $G$. Os exemplos que aparecerão com mais frequencia são os feixes $\underline{\Z},\underline{\R}$ e $\underline{\C}$.
\end{example}
\end{definition}

\section{Limites Diretos}
Nesta seção definiremos o limite direto de um sistema de grupos abelianos. Este conceito será usado mais adiante na definição do caule de um feixe e na definição dos grupos de cohomologia de \v{C}ech.

\begin{definition}
Um conjunto dirigido $I$ é um conjunto com uma ordem parcial $\leq$ tal que para todo $i,j \in I$ existe $k \in I$ tal que $k \leq i$ e $k \leq j$.
\end{definition}

\begin{example} \label{ex:open-covering}
Seja $X$ um espaço topológico. Dadas duas coberturas abertas $\mathcal{U} = \{U_i\}_{i \in I}$ e $\mathcal{V} = \{V_j\}_{j \in J}$ de $X$, dizemos que $\mathcal{V}$ é um refinamento de $\mathcal{U}$, e escrevemos $\mathcal{V} \preceq \mathcal{U}$, se para todo $j \in J$ existe $i \in I$ tal que $V_j \subset U_i$.

É fácil ver que $\preceq$ é uma ordem parcial. Além disso, se $\mathcal{U} = \{U_i\}_{i \in I}$ e $\mathcal{V} = \{V_j\}_{j \in J}$, a cobertura $\mathcal{W} = \{U_i \cap V_j \}_{(i,j) \in I \times J}$ é um refinamento comum de $\mathcal{U}$ e $\mathcal{V}$, isto é, $\mathcal{W} \preceq \mathcal{U}$ e $\mathcal{W} \preceq \mathcal{V}$.

Desta maneira, o conjunto de todas as coberturas abertas de $X$ com  a relação de ordem $\preceq$ é um conjunto dirigido.
\end{example}

\begin{example} \label{ex:neighbourhoods}
Seja $X$ um espaço topológico e $x$ um ponto de $X$. Dadas duas vizinhanças abertas $U$ e $V$ de $x$, temos que $U \cap V$ é uma vizinhança de $x$ contida em $U$ e em $V$. Assim, o conjunto das vizinhanças abertas de $x$ com a ordem parcial dada pela inclusão $\subseteq$ é um conjunto dirigido.
\end{example}

\begin{definition}
Um \textit{sistema dirigido} de grupos abelianos é uma coleção $\{A_i\}_{i \in I}$ de grupos abelianos, indexada por um conjunto dirigido com homomorfismos $f_{ij}:A_j \to A_i$ sempre que $i \leq j$ satisfazendo
\begin{enumerate}
\item $f_{ii} = \text{id}_{A_i}$
\item $f_{ij} \circ f_{jk}= f_{ik}$ sempre que $i \leq j \leq k$
\end{enumerate}
\end{definition}

\begin{example} \label{ex:stalk1}
Se $\mathcal{F}$ é um feixe sobre $X$, então $\{\mathcal{F}(U): x \in U \}$, com a ordem definida no exemplo \ref{ex:neighbourhoods} e as restrições $r_{V,U}: \mathcal{F}(U) \to \mathcal{F}(V)$, é um sistema dirigido de grupos abelianos
\end{example}

\begin{definition} \label{def:direct-limit}
Um \textbf{limite direto} de um sistema dirigido de grupos abelianos $(\{A_i\}_{i \in I}, f_{ij})$ é um grupo $L$ definido pela seguinte propriedade universal
\begin{enumerate}
\item Existem homomorfismos $f_i:A_i \to L$ tal que $f_j = f_i \circ f_{ij}$ sempre que $i \leq j$.
\item Se $B$ é um grupo abeliano com homomorfismos $g_i:A_i \to B$ satisfazendo $g_j = g_i \circ f_{ij}$ sempre que $i \leq j$ então existe um homomorfismo $g: L \to B$ tal que $g \circ f_i = g_i$.
\end{enumerate}
\end{definition}

É facil ver que, se existir, o limite direto $L$ é único a menos de isomorfismo e será denotado por $\varinjlim A_i$. Para ver a existência, defina
\begin{equation} \label{eq:dirlim}
L = \bigsqcup_i A_i \big/\sim
\end{equation}
onde $x_i \in A_i$ é equivalente a $x_j \in A_j$ se $f_{ki} (x_i) = f_{kj} (x_j)$ para algum $k \leq i,j$, e defina $f_i: A_i \to L$ o homomorfismo obtido compondo a inclusão de $A_i$ na união disjunta com a projeção na classe de equivalência.\\

Essa descrição nos dá uma maneira intuitiva de pensar no lmite direto $\varinjlim A_i$: ele é formado pelos elementos dos $A_i$'s, e identificamos dois deles se ficam iguais a partir de um certo instante. Essa descrição deve ficar mais clara no próximo exemplo.

\begin{example} \label{ex:stalk2} \textbf{O caule de um feixe}

Seja $\mathcal{F}$ um feixe sobre $X$ e $x \in X$. Como vimos no exemplo \ref{ex:stalk1}, o conjunto das seções de $\mathcal{F}$ sobre abertos contendo $x$ formam um sistema dirigido de grupos abelianos. O limite direto
\begin{equation} \label{eq:stalk}
\mathcal{F}_x = \varinjlim_{x \in U} \mathcal{F}(U)
\end{equation}
é chamado \textbf{caule de $\mathcal{F}$ em $x$}.

Tendo em vista a descrição (\ref{eq:dirlim}), um elemento de $\mathcal{F}_x$ é representado por uma seção $s \in \mathcal{F}(U)$ sobre algum aberto contendo $x$, e identificamos duas seções $s \in \mathcal{F}(U)$ e $t \in \mathcal{F}(V)$ se existe algum aberto $x \in W \subset U \cap V$ tal que $s|_W = t|_W$.

Observe que $\mathcal{F}_x$ tem o mesmo tipo de estrutura que $\mathcal{F}$, isto é, se $\mathcal{F}$ for um feixe de anéis então $\mathcal{F}_x$ será um anel e assim por diante.
\end{example}

\section{Morfismos de feixes}

\begin{definition}
Um morfismo de (pré-)feixes $\phi: \mathcal{F} \to \mathcal{G}$ é uma coleção de homomorfismos $\phi_U: \mathcal{F}(U) \to \mathcal{G}(U)$, um para cada aberto $U$, que comutam com as restrições, isto é
\begin{equation*}
\phi_U (s)|_V = \phi_V (s|_V) \;\;\; s \in \mathcal{F}(U), \: V \subset U.   
\end{equation*}

Note que se $\phi: \mathcal{F} \to \mathcal{G}$ é um homomorfismo de feixes então temos homomorfismos induzidos nos caules
\begin{equation*}
\phi_x : \mathcal{F}_x \to \mathcal{G}_x
\end{equation*}
para todo $x \in X$.
\end{definition}

\begin{example} \label{ex:exp-hom}
A exponencial de funções $f \mapsto \exp (2\pi \smo f)$ define um morfismo $\exp:\mathcal{O}_X \to \mathcal{O}^*_X$.
\end{example}

\begin{proposition} \label{prop:kernel-sheaf}
Se $\phi: \mathcal{F} \to \mathcal{G}$ é um homomorfismo de feixes então a associação
\begin{equation*}
U \longmapsto \ker (\phi_U: \mathcal{F}(U) \to \mathcal{G}(U) )
\end{equation*}
define um feixe sobre $X$, denotado por $\ker \phi$ e chamado \textbf{feixe núcleo}.
\end{proposition}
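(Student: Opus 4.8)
Primeiro verifico que a associa��o $U \mapsto \ker \phi_U$ define um pr�-feixe de grupos abelianos. Cada $\ker\phi_U$ � um subgrupo de $\mathcal{F}(U)$, logo um grupo abeliano, e $\ker\phi_\emptyset = 0$ pois $\mathcal{F}(\emptyset)=0$. Dado $V \subset U$, o fato de $\phi$ comutar com as restri��es garante que $r_{V,U}$ leva $\ker\phi_U$ em $\ker\phi_V$: se $\phi_U(s)=0$ ent�o $\phi_V(s|_V) = \phi_U(s)|_V = 0$. Defino ent�o a restri��o de $\ker\phi$ como a aplica��o $r_{V,U}$ com dom�nio e contradom�nio assim restringidos. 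As propriedades S1 e S2 da Defini��o \ref{def:sheaf} s�o herdadas imediatamente das propriedades correspondentes de $\mathcal{F}$, j� que as restri��es de $\ker\phi$ nada mais s�o do que as restri��es de $\mathcal{F}$ corestringidas aos n�cleos.

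Em seguida verifico S3 e S4 relativamente a uma cobertura $U = \bigcup_{i \in I} U_i$. A propriedade S3 � imediata: se $s,t \in \ker\phi_U$ satisfazem $s|_{U_i} = t|_{U_i}$ para todo $i$, ent�o, vistos como elementos de $\mathcal{F}(U)$, a propriedade S3 de $\mathcal{F}$ d� $s=t$. Para S4, sejam $s_i \in \ker\phi_{U_i}$ com $s_i|_{U_i \cap U_j} = s_j|_{U_i \cap U_j}$ para todos $i,j \in I$. Aplicando S4 a $\mathcal{F}$, obtenho $s \in \mathcal{F}(U)$ com $s|_{U_i} = s_i$, e resta mostrar que $s \in \ker\phi_U$, isto �, que $\phi_U(s)=0$. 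Para isso observo que $\phi_U(s)|_{U_i} = \phi_{U_i}(s|_{U_i}) = \phi_{U_i}(s_i) = 0$ para todo $i$; como $\mathcal{G}$ � um feixe, a propriedade S3 de $\mathcal{G}$ for�a $\phi_U(s)=0$, como desejado.

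O �nico ponto que requer um m�nimo de aten��o — ainda que o argumento permane�a inteiramente elementar — � o fecho de S4: a colagem fornecida por S4 em $\mathcal{F}$ produz uma se��o de $\mathcal{F}$, mas a conclus�o de que ela pertence ao n�cleo usa de maneira essencial que $\mathcal{G}$ tamb�m � um feixe, mais precisamente a sua propriedade de separa��o S3. Observo, por fim, que a mesma demonstra��o mostra que se $\phi$ � um morfismo de feixes de an�is, de $\mathcal{A}$-m�dulos, etc., ent�o $\ker\phi$ herda a estrutura correspondente.
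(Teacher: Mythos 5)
A sua demonstração está correta e segue essencialmente o mesmo caminho da demonstração do texto: colar as seções via S4 de $\mathcal{F}$ e concluir que a seção colada pertence ao núcleo usando a propriedade de separação S3 de $\mathcal{G}$. Você apenas é um pouco mais explícito na verificação de que as restrições de $\mathcal{F}$ de fato levam $\ker\phi_U$ em $\ker\phi_V$, detalhe que o texto dá por entendido.
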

\begin{proof}
As restrições de $\ker \phi$ são induzidas pelas restrições de $\mathcal{F}$, e portanto $\ker \phi$ é um pré-feixe.

Seja $U \subset X$ um aberto e $\{U_i\}_{i \in I}$ uma cobertura de $U$. A propriedade S3 vale em $\ker \phi$ pois vale em $\mathcal{F}$.

Agora, se $s_i \in \ker \phi_{U_i}$ satisfazem $s_i|_{U_i \cap U_j} = s_j|_{U_i \cap U_j}$ então, como $\mathcal{F}$ é um feixe, temos, pela propriedade S4, que existe $s \in \mathcal{F}(U)$ tal que $s|_{U_i} = s_i$.

A seção $\phi_U (s) \in \mathcal{G}(U)$ satisfaz $\phi_U(s)|_{U_i} = \phi_{U_i}(s|_{U_i}) = \phi_{U_i}(s_i) = 0$ e portanto, pela propriedade 3, temos que $\phi_U (s) = 0$ e portanto $s \in ker \phi_U$, o que mostra que a propriedade S4 vale para $\ker \phi$.

Logo $\ker \phi$ é um feixe.
\end{proof}

\begin{example}
Considere o homomorfismo $\exp: \mathcal{O}_X\to \mathcal{O}_X^*$ definido no exemplo \ref{ex:exp-hom}. Se $U \subset X$ é conexo então o núcleo de $\exp:\mathcal{O}_X (U)\to \mathcal{O}_X^*(U)$ é formado pelas funções constantes em $U$ com valores inteiros e portanto o núcleo do homomorfismo $\exp: \mathcal{O}_X \to \mathcal{O}^*_X$ é o feixe localmente constante $\underline{\Z}$.
\end{example}

\begin{definition}
Dizemos que $\phi: \mathcal{F} \to \mathcal{G}$ é injetor se $\ker \phi$ é o feixe nulo.
\end{definition}

Um análogo da proposição \ref{prop:kernel-sheaf} não vale para a imagem de um morfismo, isto é, a associação
\begin{equation*}
U \longmapsto \im (\phi_U: \mathcal{F}(U) \to \mathcal{G}(U) )
\end{equation*}
não define, em geral, um feixe, como mostra o seguinte exemplo.

\begin{example} \label{ex:im-exp} Tome $X = \C^*$ e considere o homomorfismo exponencial $\exp: \mathcal{O}_X \to \mathcal{O}_X^*$. Seja $f$ a função identidade em $\C^*$. Note que $f \in \mathcal{O}_X^*(\C^*)$.

Se $U = \C^* - \{z:\re z \leq 0 \}$, podemos definir um logaritmo em $U$, isto é, uma função $\log_U \in \mathcal{O}_X(U)$ tal que $\exp (\log_U) = \text{id}_U = f|_U$. Tomando agora $V = \C^* - \{z:\re z \geq 0 \}$ obtemos $\log_V \in \mathcal{O}_X(V)$ tal que $\exp (\log_U) = f|_V$.

Vemos portanto que $f|_U \in \im \exp_U$ e $f|_V \in \im \exp_V$. No entanto, não existe $\log \in \mathcal{O}_X(U \cup V) = \mathcal{O}_X(\C^*)$ com $\exp \circ \log = f$, o que mostra que a condição S4 falha para $\im \exp$. Logo  $\im \exp$ não é um feixe.
\end{example}

Embora a primeira tentativa de definição de $\im \phi$ forneça apenas um pré-feixe, existe uma maneira canônica de construir um feixe a partir de um pré-feixe.

\begin{definition} \label{def:assoc-sheaf}
O \textbf{feixe associado a um pré-feixe $\mathcal{F}$}, denotado por $\mathcal{F}^+$, é o feixe definido da seguinte maneira: as seções  de $\mathcal{F}^+$ sobre um aberto $U$ são aplicações $s: U \to \bigsqcup _{x \in U} \mathcal{F}_x$ tal que para todo $x \in U$ existe um aberto $V \subset U$ contendo $x$ e uma seção $t \in \mathcal{F}(V)$ tal que $s(y) = t(y)$ para todo $y \in V$.
\end{definition}

Em outras palavras, as seções de $\mathcal{F}^+$ são elementos que são dados localmente por seções de $\mathcal{F}$.\\

No exemplo \ref{ex:im-exp}, embora não exista uma seção do pré-feixe $W \mapsto \im \exp_W$ sobre $\C^*$ que se restrinja a identidade em $U$ e $V$, podemos definir uma seção $\iota$ sobre $\C^*$ do feixe associado, fazendo $\iota|_U = \text{id}_U \in \im \exp_U$ e $\iota|_V = \text{id}_V \in \im \exp_V$.

\begin{definition} \label{def:im-sheaf}
O \textbf{feixe imagem} de um morfismo $\phi:\mathcal{F} \to \mathcal{G}$, denotado por $\im \phi$, é o feixe associado ao pré-feixe
\begin{equation*}
U \longmapsto \im (\phi_U: \mathcal{F}(U) \to \mathcal{G}(U) ).
\end{equation*}

Dizemos que $\phi$ é sobrejetor se $\im \phi = \mathcal{G}$.
\end{definition}

\begin{example}
A exponencial $\exp: \mathcal{O}_X \to \mathcal{O}^*_X$ é um homomorfismo sobrejetor, pois todo aberto $U$ pode ser escrito como uma união $U = \bigcup U_i$ com $\exp_{U_i}: \mathcal{O}_X(U_i) \to \mathcal{O}^*_X (U_i)$ sobrejetora.
\end{example}

\begin{remark} Note que, como mostra o exemplo acima, o fato de $\phi:\mathcal{F} \to \mathcal{G}$ ser sobrejetor não implica que $\phi_U:\mathcal{F}(U) \to \mathcal{G}(U)$ é sobrejetor para todo $U$. No entanto, esse exemplo ilustra também que para um morfismo ser sobrejetor, basta que $\phi_U$ o seja para um quantidade suficiente de abertos de $X$. O seguinte lema torna isso mais preciso
\end{remark}

\begin{lemma} \label{lemma:stalk-hom}
Seja $\phi:\mathcal{F} \to \mathcal{G}$ um morfismo de feixes e sejam $\phi_x:\mathcal{F}_x \to \mathcal{G}_x$ os homomorfismos induzidos. Então
\begin{itemize}
\item[a.] $\phi$ é injetor se e somente se $\phi_x$ é injetor para todo $x \in X$
\item[b.] $\phi$ é sobrejetor se e somente se $\phi_x$ é sobrejetor para todo $x \in X$
\end{itemize}
\end{lemma}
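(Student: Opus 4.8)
The plan is to handle the two items separately, translating each sheaf-theoretic condition into a statement about stalks by carefully unwinding the definitions of kernel sheaf, image sheaf, and stalk as a direct limit. The key technical fact I will use repeatedly is that taking stalks commutes with the constructions involved: since a stalk is a direct limit over neighborhoods of $x$ and filtered colimits of abelian groups are exact, the stalk of $\ker\phi$ at $x$ is $\ker\phi_x$, and the stalk of $\im\phi$ at $x$ is $\im\phi_x$ (the sheafification in Definition \ref{def:im-sheaf} does not change stalks, since a section of $\mathcal F^+$ is locally given by a section of $\mathcal F$, so $(\mathcal F^+)_x = \mathcal F_x$). Granting these identifications, each equivalence becomes almost formal; the work is in verifying the identifications honestly.

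For item (a): first I would show that $\phi$ injective implies each $\phi_x$ injective. Suppose $\phi$ is injective, i.e. $\ker\phi$ is the zero sheaf, and let $s_x \in \mathcal F_x$ with $\phi_x(s_x) = 0$. Represent $s_x$ by a section $s \in \mathcal F(U)$ on some neighborhood $U$ of $x$. Then $\phi_U(s)$ has germ zero at $x$, so there is a smaller neighborhood $V \ni x$ with $\phi_U(s)|_V = \phi_V(s|_V) = 0$; hence $s|_V \in (\ker\phi)(V) = 0$, so $s|_V = 0$ and $s_x = 0$. Conversely, suppose every $\phi_x$ is injective and let $s \in (\ker\phi)(U)$; then for each $x \in U$ the germ $s_x$ lies in $\ker\phi_x = 0$, so $s_x = 0$, meaning $s$ vanishes on a neighborhood of each point of $U$. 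By the uniqueness axiom S3 (applied to the cover of $U$ by these neighborhoods, comparing $s$ with the zero section), $s = 0$. Thus $\ker\phi = 0$.

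For item (b): if $\phi$ is surjective, meaning $\im\phi = \mathcal G$ as sheaves, I want each $\phi_x$ surjective. Given $t_x \in \mathcal G_x$ represented by $t \in \mathcal G(U)$, surjectivity of $\im\phi = \mathcal G$ together with Definition \ref{def:assoc-sheaf} says $t$ is, near $x$, a section of the presheaf image: there is a neighborhood $V \ni x$ and $s \in \mathcal F(V)$ with $\phi_V(s) = t|_V$, whence $\phi_x(s_x) = t_x$. Conversely, if every $\phi_x$ is surjective, I must show the image sheaf $\im\phi$ equals $\mathcal G$; it suffices to check that every section $t \in \mathcal G(U)$ is locally in the presheaf image. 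Fix $x \in U$: since $\phi_x$ is surjective there is $s_x \in \mathcal F_x$ with $\phi_x(s_x) = t_x$, so picking a representative $s \in \mathcal F(W)$ on some $W \ni x$ we get $\phi_W(s)$ and $t|_W$ with equal germ at $x$, hence equal on some smaller $W' \ni x$; there $t|_{W'} = \phi_{W'}(s|_{W'})$ lies in the presheaf image. So $t$ is locally a section of the presheaf image, i.e.\ $t \in (\im\phi)(U)$, giving $\im\phi = \mathcal G$.

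The main obstacle, such as it is, is the bookkeeping in the surjective case: one must resist the temptation to conflate $\phi_U$ being surjective with $\phi$ being surjective, and instead work strictly through the ``locally in the presheaf image'' characterization supplied by Definitions \ref{def:assoc-sheaf} and \ref{def:im-sheaf}, shrinking neighborhoods as needed so that germ-level equalities promote to equalities on honest open sets. Once that discipline is maintained, no step requires more than axioms S1--S4 and the direct-limit description of stalks from Example \ref{ex:stalk2}.
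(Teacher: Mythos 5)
Your proof is correct and follows essentially the same route as the paper: identify $(\ker\phi)_x$ with $\ker\phi_x$ and use axiom S3 for part (a), and work through the ``locally a section of the presheaf image'' characterization of $\im\phi$ for part (b). If anything, your treatment of the converse in (a) is slightly more careful than the paper's (which asserts that a trivial stalk yields a neighborhood with trivial sections, where your argument via germs vanishing locally plus S3 is the precise version).
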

\begin{proof}
a. É claro da definição do feixe núcleo que $(\ker \phi)_x = \ker (\phi_x)$. Logo, se $\phi$ é injetora temos que $\ker (\phi_x) = 0$ e portanto $\phi_x$ é injetora.

Para a recíproca vamos utilizar a seguinte afirmação: se $\mathcal{E}$ é um feixe tal que todos os seus caules são triviais então $\mathcal{E}$ é o feixo nulo. De fato: se $\mathcal{E}_x = 0$ existe um aberto $U \ni x$ tal que $\mathcal{E}(U) = 0$. Se $\mathcal{E}_x = 0$ para todo $x$, podemos cobrir $X$ por tais abertos e vemos que $\mathcal{E}(U) = 0$ para todo $U$.

Logo, se $\ker (\phi_x) = 0$ para todo $x$ temos que $\ker \phi$ é o feixe nulo e portanto $\phi$ é injetora.\\

b. Suponha $\phi$ é sobrejetor. Seja $x \in X$ e $\sigma_x \in \mathcal{G}_x$. O germe $\sigma_x$ é representado por algum $\sigma \in \mathcal{G}(U)$, com $x \in U$. Da definição de feixe associado, as seções de $\mathcal{G} = \im \phi$ são dadas localmente por elementos de $\im \phi_U$. Assim, podemos supor $U$ suficientemente pequeno de modo que $\sigma = \phi_U(s)$ para alguma $s \in \mathcal{F}(U)$. Como $\phi$ comuta com as projeções vemos, tomando o limite direto, que $\phi_x(s_x) = \sigma_x$ e portanto $\phi_x$ é sobrejetora.

Suponha agora $\phi_x$ sobrejetora para todo $x$ e sejam $U \subset X$ e $\sigma \in \mathcal G(U)$. Se $x \in U$ vemos, da sobrejetividade de $\phi_x$, que existe um aberto $U_x \subset U$ contendo $x$ e $s_x \in \mathcal G(U_x)$ tal que $\phi_{U_x}(s_x) = \sigma|_{U_x}$ e como $U = \cup_{x \in U} U_x$ temos, da definição do feixe imagem, que $\sigma \in \im \phi(U)$, mostrando que $\phi$ é sobrejetora.
\end{proof}
Tendo definido os feixes núcleo e imagem de um morfismo, podemos definir sequências exatas de feixes,

\begin{definition} \index{sequência!exata de feixes}
Dada uma sequência de feixes e morfismos,
\begin{equation*}
\mathcal{F} \stackrel{\phi}{\longrightarrow} \mathcal{G} \stackrel{\psi}{\longrightarrow} \mathcal{H}
\end{equation*}
dizemos que essa sequência \textbf{é exata em $\mathcal{G}$} se $\im \phi = \ker \psi$.

Mais geralmente, um \textbf{complexo de feixes} \index{complexo!de feixes} é uma sequência da forma
\begin{equation*}
 \cdots \longrightarrow \mathcal{F}^k \stackrel{\phi^k}{\longrightarrow} \mathcal{F}^{k+1} \stackrel{\phi^{k+1}}{\longrightarrow} \mathcal{F}^{k+2} \longrightarrow \cdots
\end{equation*}
com  $\phi^{k+1} \circ \phi^k = 0$ para todo $k$, e dizemos que esse complexo é exato se a sequência $\mathcal{F}^{k-1} \to \mathcal{F}^k \to \mathcal{F}^{k+1}$ for exata em $\mathcal{F}^k$ para todo $k$.
\end{definition}

\begin{remark}
Uma sequência da forma $0 \to \mathcal{F} \to \mathcal{G} \to \mathcal{H} \to 0$ é chamada de sequência exata curta.
\end{remark}

\begin{example} \index{sequência!exponencial}
Como vimos, a exponencial de funções holmorfas é um morfismo sobrejetor e seu núcleo é o feixe localmente constante $\Z$. Obtemos assim uma sequência exata curta
\begin{equation*}
0 \longrightarrow \Z \longrightarrow \mathcal{O}_X \longrightarrow \mathcal{O}^*_X\longrightarrow 0
\end{equation*}
chamada sequência exponencial. Voltaremos a falar dela com mais detalhes.
\end{example}

\begin{example} \label{ex:derhamcx} \textbf{O complexo de de Rham} \index{complexo!de de Rham}

Denote por $\mathcal{A}^k_X$ \index{A@$\mathcal A^k_X$} o feixe das $k$-formas diferenciais em $X$. A diferencial exterior define naturalmente um morfismo de feixes $d:\mathcal{A}^k_X \to \mathcal{A}^{k+1}_X$. Como $d^2 = 0$, obtemos um complexo
\begin{equation*}
 0 \longrightarrow \mathcal{A}^0_X \stackrel{d}{\longrightarrow} \mathcal{A}^1_X \stackrel{d}{\longrightarrow} \cdots \longrightarrow \mathcal{A}^k_X \stackrel{d}{\longrightarrow} \mathcal{A}^{k+1}_X \stackrel{d}{\longrightarrow} \cdots
\end{equation*}
chamado complexo de de Rham de $X$.
\end{example}

De maneira análoga ao lema \ref{lemma:stalk-hom} demonstramos o seguinte
\begin{lemma}
Uma sequência curta de feixes
\begin{equation*}
0 \longrightarrow \mathcal{F} \longrightarrow \mathcal{G} \longrightarrow \mathcal{H} \longrightarrow 0
\end{equation*}
é exata se e somente se a sequência induzida nos caules
\begin{equation*}
0 \longrightarrow \mathcal{F}_x \longrightarrow \mathcal{G}_x \longrightarrow \mathcal{H}_x \longrightarrow 0
\end{equation*}
é exata para todo $x \in X$.
\end{lemma}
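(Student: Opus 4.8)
Pretendo reduzir o enunciado ao Lema~\ref{lemma:stalk-hom} e \`a descri\c{c}\~ao expl\'icita do feixe associado dada na Defini\c{c}\~ao~\ref{def:assoc-sheaf}. Por defini\c{c}\~ao, a sequ\^encia $0 \to \mathcal{F} \xrightarrow{\phi} \mathcal{G} \xrightarrow{\psi} \mathcal{H} \to 0$ \'e exata exatamente quando valem tr\^es condi\c{c}\~oes: $\phi$ \'e injetor (exatid\~ao em $\mathcal{F}$), $\im\phi = \ker\psi$ (exatid\~ao em $\mathcal{G}$) e $\psi$ \'e sobrejetor (exatid\~ao em $\mathcal{H}$); o mesmo crit\'erio vale, termo a termo, para a sequ\^encia de caules. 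Os itens a.\ e b.\ do Lema~\ref{lemma:stalk-hom} j\'a cuidam de dois desses pontos: $\phi$ \'e injetor se e somente se todos os $\phi_x$ o s\~ao, e $\psi$ \'e sobrejetor se e somente se todos os $\psi_x$ o s\~ao. Resta, portanto, mostrar que $\im\phi = \ker\psi$ como subfeixes de $\mathcal{G}$ se e somente se $\im(\phi_x) = \ker(\psi_x)$ para todo $x \in X$.

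O passo central \'e verificar que n\'ucleo e imagem comutam com a passagem ao caule. Para o n\'ucleo isso j\'a apareceu na demonstra\c{c}\~ao do Lema~\ref{lemma:stalk-hom}: $(\ker\psi)_x = \ker(\psi_x)$, pois o limite direto que define $(\ker\psi)_x$ \'e tomado sobre o subsistema de $\{\mathcal{G}(U)\}_{x \in U}$ formado pelos n\'ucleos. Para a imagem, recordo que $\im\phi$ \'e o feixe associado ao pr\'e-feixe $P\colon U \mapsto \im(\phi_U)$. Da descri\c{c}\~ao na Defini\c{c}\~ao~\ref{def:assoc-sheaf} deduzo que a passagem ao feixe associado preserva caules, isto \'e, $(P^+)_x = P_x$: um germe em $x$ de uma se\c{c}\~ao de $P^+$ fica determinado pelo seu valor em $\mathcal{G}_x$ e, reciprocamente, todo elemento de $P_x$ prov\'em de uma se\c{c}\~ao local de $P$. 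Por outro lado, $P_x = \varinjlim_{x \in U} \im(\phi_U) = \im(\phi_x)$, pois todo elemento de $\im(\phi_U)$ \'e da forma $\phi_U(s)$ e, passando ao limite direto, todo elemento de $P_x$ se escreve como $\phi_x(s_x)$. Logo $(\im\phi)_x = \im(\phi_x)$.

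Com essas identidades o argumento se fecha assim. A inclus\~ao $\im\phi \subseteq \ker\psi$ equivale a $\psi \circ \phi = 0$ como morfismo de feixes $\mathcal{F} \to \mathcal{H}$, o que por sua vez equivale a $\psi_x \circ \phi_x = 0$ para todo $x$ --- aqui uso que um morfismo de feixes se anula se e somente se todos os morfismos induzidos nos caules se anulam, fato j\'a empregado na prova do Lema~\ref{lemma:stalk-hom}. Admitida essa inclus\~ao, tenho um monomorfismo de feixes $\im\phi \hookrightarrow \ker\psi$; pelo item b.\ do Lema~\ref{lemma:stalk-hom} ele \'e sobrejetor se e somente se o for em cada caule, e um morfismo de feixes que \'e simultaneamente injetor e sobrejetor \'e um isomorfismo. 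Logo esse monomorfismo \'e um isomorfismo --- isto \'e, $\im\phi = \ker\psi$ --- se e somente se $\im(\phi_x) = \ker(\psi_x)$ para todo $x$. Combinando as tr\^es equival\^encias, a exatid\~ao da sequ\^encia de feixes equivale \`a exatid\~ao simult\^anea de todas as sequ\^encias $0 \to \mathcal{F}_x \to \mathcal{G}_x \to \mathcal{H}_x \to 0$, que \'e o que se queria.

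O ponto que exige mais aten\c{c}\~ao \'e a identidade $(\im\phi)_x = \im(\phi_x)$: ela repousa sobre a preserva\c{c}\~ao de caules pela passagem ao feixe associado --- que segue diretamente da Defini\c{c}\~ao~\ref{def:assoc-sheaf} --- e sobre um c\'alculo elementar com limites diretos; sem ela a redu\c{c}\~ao a caules n\~ao se fecha. As demais etapas s\~ao formais e praticamente reproduzem os argumentos da demonstra\c{c}\~ao do Lema~\ref{lemma:stalk-hom}.
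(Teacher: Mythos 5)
A sua demonstração está correta e segue essencialmente o caminho que o texto indica (o lema é enunciado com a observação de que se demonstra "de maneira análoga ao Lema~\ref{lemma:stalk-hom}"): redução a caules via as identidades $(\ker\psi)_x=\ker(\psi_x)$ e $(\im\phi)_x=\im(\phi_x)$, sendo esta última justificada corretamente pela preservação de caules na passagem ao feixe associado. Nada a corrigir.
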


Como deve ter ficado claro nos exemplos dados até aqui, a exatidão de uma sequência curta de feixes $0 \to \mathcal{F} \to \mathcal{G} \to \mathcal{H} \to 0$ não se traduz na exatidão da sequência $0 \to \mathcal{F}(X) \to \mathcal{G}(X) \to \mathcal{H}(X) \to 0$. A cohomologia de feixes é uma maneira de medir esta falha.

\section{Cohomologia de Feixes}
Dado um feixe $\mathcal{F}$ sobre um espaço topológico $X$, podemos deifnir grupos abelianos $H^i(X,\mathcal{F})$, chamados grupos de cohomologia de $X$ com coeficientes em $\mathcal{F}$, que carregam informações sobre sequências exatas envolvendo $\mathcal{F}$. Além disso, como no caso de outras teorias de cohomologia (cohomologia singular, cohomologia de de Rham, cohomologia de Dolbeaut, etc.), o conhecimento dos grupos $H^i(X,\mathcal{F})$, nos dá informações sobre as diversas estruturas de $X$ (topologia, estrutura diferenciável, estrutura complexa, etc.).

Existem diversas maneiras de definirmos tais grupos, cada qual com suas vantagens e desvantagens. Nesta seção apresentaremos duas delas: a cohomologia de \v{C}ech e a cohomologia via resoluções. A primeira tem uma natureza combinatória e de certo modo mais construtiva, enquanto a segunda possui boas propriedades formais, o que simplifica drasticamente alguns cálculos. A boa notícia é que poderemos usar esses dois pontos de vista, pois quando $X$ é um espaço paracompacto, por exemplo uma variedade, essas definições coincidem.

O objetivo aqui é apresentar a cohomologa de feixes mais como uma ferramenta do que como uma teoria em si. Tudo será feito tendo como objetivo principal as aplicações na geometria complexa. Desta maneira, alguns resultados técnicos serão enunciados sem demonstração. No entanto, resultados mais elementares ou aqueles cujas demonstrações são instrutivas para a compreensão da aplicação da teoria serão devidamente demonstrados.

\subsection{Cohomologia de \v{C}ech} \label{sec:cech-cohom} \index{cohomologia!de \v{C}ech}
Durante toda esta seção $X$ será um espaço topológico e $\mathcal{F}$ um feixe sobre $X$.

Seja $\mathcal{U} = \{U_i\}_{i \in I}$ uma cobertura de $X$ por abertos. Vamos supor daqui em diante que $I$ é um conjunto ordenado. É conveniente introduzir a notação
\begin{equation*}
U_{i_0 i_1 \cdots i_n} = U_{i_0} \cap U_{i_1} \cap \cdots \cap U_{i_n}, \;\; \{i_0, \cdots , i_n\} \subset I.
\end{equation*}

Note que $U_{i_0\cdots \widehat{i_k} \cdots i_n} \supseteq U_{i_0 \cdots i_n}$.

Uma \textbf{n-cocadeia de \v{C}ech} com relação a $\mathcal{U}$ é uma coleção $f=(f_{i_0,\cdots,i_n})$ de seções de $\mathcal{F}$, uma para cada $U_{i_0 \cdots i_n}$ com $i_0<\cdots<i_n$. Note que o espaço das n-cocadeias de \v{C}ech, denotado por $\check{C}^n (\mathcal{U},\mathcal{F})$, é o produto
\begin{equation} \label{eq:cech-cochains}
\check{C}^n (\mathcal{U},\mathcal{F}) = \prod_{i_0<\cdots<i_n} \mathcal{F}(U_{i_0 \cdots i_n})
\end{equation}

Existe uma aplicação de cobordo $\delta: \check{C}^n (\mathcal{F},\mathcal{U}) \to \check{C}^{n+1} (\mathcal{F},\mathcal{U})$, definida por
\begin{equation} \label{eq:cech-coboundary}
(\delta f)_{i_0 \cdots i_{n+1}} = \sum_{k=0}^{n+1} (-1)^k \: f_{i_0\cdots \widehat{i_k} \cdots i_{n+1}}|_{U_{i_0 \cdots i_{n+1}}}
\end{equation}

Definimos os cociclos de \v{C}ech \index{cociclos!de \v{C}ech} e os cobordos por
\begin{equation*}
\check{Z}^n (\mathcal{U},\mathcal{F}) = \ker \{ \delta: \check{C}^n (\mathcal{U},\mathcal{F}) \longrightarrow \check{C}^{n+1} (\mathcal{U},\mathcal{F})\} \;\; \text{ e } \;\; \check{B}^n (\mathcal{U},\mathcal{F}) = \im \{\delta: \check{C}^{n-1} (\mathcal{U},\mathcal{F}) \longrightarrow \check{C}^n (\mathcal{U},\mathcal{F}) \}
\end{equation*}
respectivamente.

Um cálculo direto mostra que $\delta \circ \delta = 0$, isto é, $\check{B}^n (\mathcal{U},\mathcal{F}) \subseteq \check{Z}^n (\mathcal{U},\mathcal{F})$.

\begin{definition} \label{def:cech-cohom-rel}
O \textbf{n-ésimo grupo de cohomologia de \v{C}ech} com coeficientes em $\mathcal{F}$ com relação a cobertura $\mathcal{U}$ é o grupo
\begin{equation} \label{eq:cech-cohom-rel}
\check{H}^n (\mathcal{U},\mathcal{F}) = \frac{\check{Z}^n (\mathcal{U},\mathcal{F})}{\check{B}^n (\mathcal{U},\mathcal{F})}.
\end{equation}
\end{definition}

Quando $n=0$, uma cocadeia é simplesmente uma coleção de seções $f_i \in \mathcal{F}(U_i)$. O espaço dos cobordos é trivialmente 0, de modo que $\check{H}^0 (\mathcal{U},\mathcal{F}) = \check{Z}^0 (\mathcal{U},\mathcal{F})$. A aplicação de cobordo nesse caso é dada por
\begin{equation*}
(\delta f)_{ij} = f_i - f_j
\end{equation*}
onde omitimos as restrições por conveniência.

Vemos então que $\delta f = 0$ se e somente se $f_i = f_j$ em $U_i \cap U_j$, para todo $i,j \in I$. Pelos axiomas S3 e S4 da definição de feixe isso ocorre se e somente se $f_i = f|_{U_i}$ para alguma seção global $f \in \mathcal{F}(X)$.

Assim vemos que
\begin{equation*} 
\check{H}^0 (\mathcal{U},\mathcal{F}) = \mathcal{F}(X),
\end{equation*}
isto é, no nível 0, o grupo de cohomologia é justamente o espaço das seções globais de $\mathcal{F}$.\\

Observe que dependemos da escolha de uma cobertura $\mathcal{U}$ de $X$ para definir os grupos de cohomologia e em geral não há uma maneira canônica de fazer essa escolha. Gostaríamos portanto de definir grupos de cohomologia que independem da cobertura $\mathcal{U}$. Isso será feito tomando coberturas mais e mais finas e tomando o limite direto dos grupos $\check{H}^n (\mathcal{U},\mathcal{F})$.\\

Conforme o Exemplo \ref{ex:open-covering}, o conjunto das coberturas abertas de $X$ é um conjunto dirigido: dizemos que $\mathcal{V} \preceq \mathcal{U}$ ($\mathcal{V}$ é mais fina que $\mathcal{U}$), se para todo $j \in J$ existe $i \in I$ tal que $V_j \subset U_i$. Isso pode ser codificado, utilizando o Axioma da Escolha, na existência de uma função $r:J \to I$ tal que $V_j \subset U_{r(j)}$.

Obtemos assim aplicações de refinamento $\widetilde{r}:\check{C}^n(\mathcal{U},\mathcal{F}) \to \check{C}^n(\mathcal{V},\mathcal{F})$, definidas por
\begin{equation*}
(\widetilde{r}f)_{j_0 \cdots j_n} = (f_{r(j_0) \cdots r(j_n)}|_{V_{j_0 \cdots j_n}})
\end{equation*}

Note que $\widetilde{r} \circ \delta = \delta \circ \widetilde{r}$ e portanto $\widetilde{r}$ manda cociclos em cociclos e cobordos em cobordos. Obtemos assim uma aplicação induzida nas cohomologias
\begin{equation*}
H(r): \check{H}^n(\mathcal{U},\mathcal{F}) \longrightarrow \check{H}^n(\mathcal{V},\mathcal{F})
\end{equation*}

Não é difícil mostrar também que a aplicação induzida $H(r)$ não depende da escolha de $r:J \to I$. Mais precisamente, duas aplicações $H(r)$ e $H(r')$ são homotópicas no sentido de álgebra homológica (veja \cite{miranda}, cap. 9, lema 3.1). Obtemos assim aplicações $\check{H}^n(\mathcal{U},\mathcal{F}) \to \check{H}^n(\mathcal{V},\mathcal{F})$ que dependem apenas das coberturas $\mathcal{U}$ e $\mathcal{V}$. Denotaremos essa aplicação por $H^{\mathcal{U}}_{\mathcal{V}}$.

Como $H^{\mathcal{U}}_{\mathcal{U}}=\text{id}$ e $H^{\mathcal{V}}_{\mathcal{W}} \circ H^{\mathcal{U}}_{\mathcal{V}} = H^{\mathcal{U}}_{\mathcal{W}}$ quando $\mathcal{W} \preceq \mathcal{V} \preceq \mathcal{U}$, obtemos assim um sistema dirigido de grupos abelianos indexado pelo conjunto das coberturas de $X$.

\begin{definition}
O \textbf{n-ésimo grupo de cohomologia de \v{C}ech} de $X$ com coeficientes em $\mathcal{F}$ é o limite direto
\begin{equation} \label{eq:cech-cohom}
\check{H}^n(X,\mathcal{F}) = \varinjlim _{\mathcal{U}} \check{H}^n(\mathcal{U},\mathcal{F})
\end{equation}
\end{definition}

Em grau 0, como $\check{H}^n(\mathcal{U},\mathcal{F})= \mathcal{F}(X)$ para toda cobertura $\mathcal{U}$, temos que
\begin{equation} \label{eq:H0cech=global-sec}
\check{H}^0 (X,\mathcal{F}) = \mathcal{F}(X).
\end{equation}

Em geral, não há uma interpretação tão clara para os grupos de cohomologia de grau mais alto. No entanto, como veremos mais adiante, para certos feixes, a cohomologia de \v{C}ech coincide com outras cohomolgias conhecidas.\\

Um dos fatos mais importantes sobre a cohomologia de feixes é que a associação $\mathcal{F} \mapsto \check{H}^{\bullet}(X,\mathcal{F})$ transforma sequências exatas curtas de feixes em sequências exatas longas nos grupos de cohomologia.

Dada uma sequência exata curta
\begin{equation*}
0 \longrightarrow \mathcal{F} \stackrel{\phi}{\longrightarrow} \mathcal{G} \stackrel{\psi}{\longrightarrow} \mathcal{H} \longrightarrow 0
\end{equation*}
obtemos naturalmente aplicações nos grupos de $n$-cocadeias de \v{C}ech
\begin{equation*}
\check{C}^n (\mathcal{U},\mathcal{F}) \stackrel{\phi}{\longrightarrow} \check{C}^n (\mathcal{U},\mathcal{G}) \;\; \text{ e } \;\; \check{C}^n (\mathcal{U},\mathcal{G}) \stackrel{\psi}{\longrightarrow} \check{C}^n (\mathcal{U},\mathcal{H})
\end{equation*}
que comutam com $\delta$. Obtemos portanto aplicações induzidas nas cohomologias
\begin{equation*}
\check{H}^n (X,\mathcal{F}) \stackrel{\phi^*}{\longrightarrow} \check{H}^n (X,\mathcal{G}) \;\; \text{ e } \;\; \check{H}^n (X,\mathcal{G}) \stackrel{\psi^*}{\longrightarrow} \check{H}^n (X,\mathcal{H}),
\end{equation*}
simplesmente considerando as aplicações induzidas nos $\check{H}^n(\mathcal{U},\cdot)$ e tomando o limite.\\

Vamos construir agora aplicações $\check{H}^n(X,\mathcal{H}) \to \check{H}^{n+1}(X,\mathcal{F})$.

Dado um elemento $[\sigma] \in \check{H}^n(X,\mathcal{H})$ ele é representado por algum $\sigma \in \check{C}^n(\mathcal{U},\mathcal{H})$ com $\delta \sigma = 0$. Como $\psi$ é sobrejetora, existe um refinamento $\mathcal{U}' \preceq \mathcal{U}$ e $\tau \in \check{C}^n(\mathcal{U}',\mathcal{G})$ tal que $\psi \tau = r (\sigma)$, onde $r$ denota a restrição de $\mathcal{U}$ para $\mathcal{U}'$. Agora temos que
\begin{equation*}
\psi \delta \tau = \delta \psi \tau = \delta r \sigma = r \delta \sigma = 0,
\end{equation*}
e portanto $\delta \tau|_U \in \ker \psi_U$ para todo $U \in \mathcal{U}'$.

Da exatidão da sequência em $\mathcal{G}$ existe um refinamento $\mathcal{U}'' \preceq \mathcal{U}'$ e $\mu \in \check{C}^{n+1}(\mathcal{U}'',\mathcal{F})$ tal que $\phi \mu = \delta \tau$ e portanto temos que
\begin{equation*}
\phi \delta \mu = \delta \phi \mu = \delta^2 \tau = 0
\end{equation*}
e como $\phi$ é injetora segue que $\delta \mu = 0$ e portanto $\mu$ define uma classe em $\check{H}^{n+1}(\mathcal{U}'',\mathcal{F})$ e consequentemente um elemento $[\mu] \in \check{H}^{n+1}(X,\mathcal{F})$. É facil ver que esta classe de cohomologia independe das escolhas feitas.

Definimos então $\Delta: \check{H}^n(X,\mathcal{H}) \to \check{H}^{n+1}(X,\mathcal{F})$ por $\Delta [\sigma] = [\mu]$.

\begin{proposition} \label{prop:cech-long-exact} \textbf{Sequência exata longa na cohomologia de \v{C}ech}\\
A sequência
\begin{equation*}
\begin{split}
0 &\longrightarrow \check{H}^0(X,\mathcal{F}) \stackrel{\phi^*}{\longrightarrow} \check{H}^0(X,\mathcal{G}) \stackrel{\psi^*}{\longrightarrow} \check{H}^0(X,\mathcal{H}) \stackrel{\Delta}{\longrightarrow} \\
&\longrightarrow \check{H}^1(X,\mathcal{F}) \stackrel{\phi^*}{\longrightarrow} \check{H}^1(X,\mathcal{G}) \stackrel{\psi^*}{\longrightarrow} \check{H}^1(X,\mathcal{H}) \stackrel{\Delta}{\longrightarrow} \cdots \\
 & \vdots \\
 & \longrightarrow \check{H}^n(X,\mathcal{F}) \stackrel{\phi^*}{\longrightarrow} \check{H}^n(X,\mathcal{G}) \stackrel{\psi^*}{\longrightarrow} \check{H}^n(X,\mathcal{H}) \stackrel{\Delta}{\longrightarrow} \cdots
\end{split}
\end{equation*}
é exata.
\end{proposition}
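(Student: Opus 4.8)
The plan is to check exactness separately at each of the four kinds of node in the sequence: at $\check{H}^0(X,\mathcal{F})$, at $\check{H}^n(X,\mathcal{G})$, at $\check{H}^n(X,\mathcal{H})$, and at $\check{H}^n(X,\mathcal{F})$ with $n\geq 1$. In every case one inclusion, image into kernel, is formal: it follows from $\psi\circ\phi=0$ on sections together with the fact that the induced maps commute with $\delta$ and with refinement, so that $\psi^*\circ\phi^*=0$ and, by the very construction of $\Delta$ given above, $\phi^*\circ\Delta=0$ and $\Delta\circ\psi^*=0$. The real work is in the reverse inclusions. The structural fact that makes these go through is that $\check{H}^n(X,\cdot)$ is a direct limit over the directed set of coverings: every class is represented by an honest cocycle over some cover, two cocycles represent the same class exactly when they become cohomologous over a common refinement, and hence any finite string of ``pass to a finer cover'' steps is harmless. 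I would fix this convention first and then use it silently.

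Exactness at $\check{H}^0$ is just left-exactness of global sections: since $\check{H}^0(X,\mathcal{F})=\mathcal{F}(X)$ and similarly for $\mathcal{G}$ and $\mathcal{H}$, I must show $0\to\mathcal{F}(X)\to\mathcal{G}(X)\to\mathcal{H}(X)$ is exact. Injectivity of $\phi_X$ holds because $\phi$ is injective; and if $s\in\mathcal{G}(X)$ has $\psi_X(s)=0$, surjectivity of $\psi$ (the stalk criterion, Lemma~\ref{lemma:stalk-hom}) furnishes a cover with local lifts $t_i\in\mathcal{F}(U_i)$ of $s$, which agree on overlaps by injectivity of $\phi$ and glue via S4. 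For the reverse inclusion at $\check{H}^n(X,\mathcal{H})$, take a cocycle $\sigma$ with $\Delta[\sigma]=0$: retracing the construction of $\Delta$ gives $\phi\mu=\delta\tau$ with $\mu$ a cocycle, and $\Delta[\sigma]=0$ means $\mu=\delta\nu$ after a refinement, whence $\delta(\tau-\phi\nu)=\delta\tau-\phi\delta\nu=0$ and $\psi(\tau-\phi\nu)=\psi\tau=\sigma$ up to restriction, so $[\sigma]=\psi^*[\tau-\phi\nu]$.

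The reverse inclusions at $\check{H}^n(X,\mathcal{G})$ and at $\check{H}^n(X,\mathcal{F})$ run on the same pattern of ``correct the cochain, then split it''. For $\ker\psi^*\subseteq\im\phi^*$: given a cocycle $\tau$ with $\psi\tau=\delta\rho$ (after refining), replace it by $\tau-\delta\widetilde\rho$, where $\widetilde\rho$ is a lift of $\rho$ along $\psi$ obtained, after a further refinement, from surjectivity of $\psi$; this corrected cochain is still a cocycle and lies in $\ker\psi$, hence --- using exactness of the sheaf sequence at $\mathcal{G}$, which is local, one more refinement to globalize, and injectivity of $\phi$ to see the preimage is again a cocycle --- it equals $\phi\mu$, so $[\tau]=\phi^*[\mu]$. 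For $\ker\phi^*\subseteq\im\Delta$: a cocycle $\mu$ with $\phi\mu=\delta\tau$ (after refining) gives $\sigma:=\psi\tau$, a cocycle since $\delta\sigma=\psi\delta\tau=\psi\phi\mu=0$, and by the construction of $\Delta$ one has $\Delta[\sigma]=[\mu]$.

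The main obstacle is not conceptual but bookkeeping: at nearly every reverse inclusion one passes to one or two successive refinements --- once to lift a cochain along the surjection $\psi$, once more to split a cochain killed by $\psi$ through the injection $\phi$ --- and the resulting identities a priori live over different covers. One must verify that they are compatible under the refinement maps $\widetilde{r}$ and so descend to genuine relations in $\varinjlim_{\mathcal{U}}\check{H}^n(\mathcal{U},\cdot)$. With the direct-limit convention fixed at the outset this is routine: each remaining step is a short chase with $\delta$, $\phi$, $\psi$ and the restriction maps, entirely parallel to the construction of $\Delta$ carried out just before the statement.
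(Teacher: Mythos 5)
Your proposal is correct, and it is the natural completion of the construction the paper sets up: the paper builds $\phi^*$, $\psi^*$ and $\Delta$ explicitly and then states the proposition without proof, so your diagram chase in the direct limit is exactly the argument being omitted, and each of your reverse inclusions (correct the cochain by a coboundary lifted along $\psi$, then split through $\phi$ using injectivity to see the preimage is a cocycle) is sound. Two small points. First, in the step at $\check{H}^0(X,\mathcal{G})$ you attribute the existence of local lifts $t_i\in\mathcal{F}(U_i)$ of a section $s$ with $\psi_X(s)=0$ to \emph{surjectivity} of $\psi$; that is the wrong reason. What you are using is exactness of the sheaf sequence at $\mathcal{G}$, i.e.\ $\ker\psi=\im\phi$, together with the definition of the image sheaf (sections of $\im\phi$ are \emph{locally} honest images of $\phi_U$); the conclusion is right, the citation is not. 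Second, when you "lift a cochain along the surjection $\psi$ after a refinement", producing a \emph{single} refinement of $\mathcal{U}$ on which every component $\rho_{i_0\cdots i_{n-1}}$ lifts simultaneously is not automatic for an arbitrary cover of an arbitrary space; this is the one place where paracompactness (or local finiteness of the covers one ranges over) genuinely enters. The paper commits the same elision in its construction of $\Delta$, and since everything in this text is ultimately applied to manifolds the hypothesis is harmless, but it deserves a sentence rather than silence.
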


\subsection{Cohomologia via resoluções} \label{sec:derived-cohomology} \index{cohomologia!via resoluções}
A cohomologia de feixes usando resoluções tem um caráter mais abstrato que a cohomologia de \v{C}ech, mas tem a vantagem de ser computacionalmente mais eficiente. A linguagem mais adequada para definir tal cohomologia é a de funtores derivados em categorias abelianas. No entanto isso exigiria introduzir toda uma teoria que só seria usada na definição e em alguns resultados básicos. Como o objetivo aqui é usar a cohomolgia de feixes como uma ferramenta apenas, usaremos uma abordagem mais \textit{ad hoc}, via resoluções flasque ou acíclicas. Para o tratamento usando funtores derivados consulte \cite{voisin}.\\

\begin{definition} \index{resolução!de um feixe}
Uma resolução de um feixe $\mathcal{F}$ é um complexo de feixes $0 \to \mathcal{F}^0 \to \mathcal{F}^1 \to \cdots$ junto com um morfismo $\mathcal{F} \to \mathcal{F}^0$ de modo que o complexo
\begin{equation*}
0 \longrightarrow \mathcal{F} \longrightarrow \mathcal{F}^0 \longrightarrow \mathcal{F}^1 \longrightarrow \mathcal{F}^2 \longrightarrow \cdots
\end{equation*}
é exato.
\end{definition}

\begin{example} \label{ex:derham-res}
Seja $(\mathcal{A}_X^{\bullet},d)$ o complexo de de Rham de $X$ (cf. exemplo \ref{ex:derhamcx}). O núcleo de $d: \mathcal{A}^0_X \to \mathcal{A}^1_X$ é formado pelas funções localmente constantes, ou seja, $\ker \{d: \mathcal{A}^0_X \to \mathcal{A}^1_X \} = \underline{\R}$ é o feixe localmente constante com caule $\R$.

Agora, pelo lema de Poincaré, para $U \subset X$ um aberto contrátil suficientemente pequeno, toda $k$-forma fechada $\alpha \in \mathcal{A}^k_X (U)$ é exata, e portanto obtemos uma sequência exata de feixes
\begin{equation*}
0 \longrightarrow \underline{\R} \longrightarrow \mathcal{A}^0_X \longrightarrow \mathcal{A}^1_X \longrightarrow \mathcal{A}^2_X \longrightarrow \cdots.
\end{equation*}

Vemos então que o complexo de de Rham é uma resolução do feixe localmente constante $\underline{\R}$.
\end{example}

A cohomologia de um feixe será definida por meio de resoluções por feixes flasque.
\begin{definition} \index{feixe!flasque}
Um feixe $\mathcal{F}$ sobre $X$ é \textit{flasque} se para todo aberto $U \subset X$ a restrição $r_{U,X}:\mathcal{F}(X) \to \mathcal{F}(U)$ é sobrejetora.
\end{definition}

Em outras palavras, um feixe é flasque se as seções definidas em um aberto de $X$ podem ser estendidas para todo $X$.

\begin{definition}
Seja $\mathcal{F}$ um feixe sobre $X$ e $0 \to \mathcal{F}^0 \stackrel{\phi^0}{\to} \mathcal{F}^1 \stackrel{\phi^1}{\to} \cdots$ uma resolução tal que cada $\mathcal{F}^k, k \geq 0$ é flasque.

O \textbf{$i$-ésimo grupo de cohomologia de $X$ com coeficientes em $\mathcal{F}$}, denotado por $H^i(X,\mathcal{F})$, é o $i$-ésimo grupo de cohomologia do complexo
\begin{equation*}
\mathcal{F}^0 (X) \stackrel{\phi^0}{\longrightarrow} \mathcal{F}^1 (X) \stackrel{\phi^1}{\longrightarrow} \mathcal{F}^2 (X) \stackrel{\phi^2}{\longrightarrow} \cdots, 
\end{equation*}
ou seja,
\begin{equation} \label{eq:cohom}
H^i(X,\mathcal{F}) = \frac{ \ker \{\phi^i: \mathcal{F}^i(X) \to \mathcal{F}^{i+1}(X) \} }{ \im \{\phi^{i-1}: \mathcal{F}^{i-1}(X) \to \mathcal{F}^i(X) \} }.
\end{equation}
\ \ 
\end{definition}

Todo feixe admite uma \textbf{resolução canônica} \index{resolução!canônica} por feixes \textit{flasque}, também chamada de resolução de Godement, construída da seguinte maneira. Dado um feixe $\mathcal F$ considere o feixe $\mathcal F_{\text{God}}$ definido por
\begin{equation*}
\mathcal F_{\text{God}}(U) = \prod_{x \in U} \mathcal F_x,
\end{equation*}
com as aplicações de restrição óbvias.

É claro que $\mathcal F_{\text{God}}$ é \textit{flasque} e há uma inclusão natural $\mathcal F \to \mathcal F_{\text{God}}$, que associa a cada seção $\sigma \in \mathcal F(U)$ o elemento induzido $\sigma_x \in \mathcal F_x$. A resolução canônica é definida indutivamente: fazendo $\mathcal F^0 = F_{\text{God}}$ e\footnote{Dados feixes $\mathcal{F}$ e $\mathcal{G}$ de modo que $\mathcal{G}(U)$ é um subgrupo de $\mathcal{F}(U)$ para todo $U \subset X$ definimos o feixe quociente $\mathcal{F}/\mathcal{G}$ como sendo o feixe associado ao pré-feixe $U \mapsto \mathcal{F}(U)/\mathcal{G}(U)$.} $\mathcal C^1 = F_{\text{God}} / \mathcal F$ obtemos uma sequência exata de feixes
\begin{equation} \label{eq:canonical-res1}
0 \longrightarrow \mathcal F \longrightarrow \mathcal F^0 \longrightarrow \mathcal C^1 \longrightarrow 0.
\end{equation}

Definimos então $\mathcal F^n = \mathcal C^n_{\text{God}}$ e $\mathcal C^{n+1} = \mathcal C^n_{\text{God}} / \mathcal C^n$, obtendo assim sequências exatas de feixes
\begin{equation} \label{eq:canonical-res2}
0 \longrightarrow \mathcal C^n \longrightarrow \mathcal F^n \longrightarrow \mathcal C^{n+1} \longrightarrow 0, ~~~ (n \geq 1).
\end{equation}

Compondo as aplicações $\mathcal F^n \to \mathcal C^{n+1}$ e $\mathcal C^{n+1} \to \mathcal F^{n+1}$ obtemos $\mathcal F^n \to \mathcal F^{n+1}$, produzindo assim uma resolução $0 \to \mathcal F \to \mathcal{F}^0 \to \mathcal{F}^1 \to \cdots$ de $\mathcal F$ por feixes \textit{flasque}.\\

É um fato que duas resoluções \textit{flasque} diferentes produzem grupos de cohomologia naturalmente isomorfos (isso seguirá por exemplo da equação (\ref{eq:flasque-cohomology}) e da proposição \ref{prop:acyclic-res}), de modo que a definição acima faz sentido.\\

Assim como no caso da cohomologia de \v{C}ech temos uma identificação
\begin{equation} \label{eq:H0res=global-sec}
H^0(X,\mathcal{F}) = \mathcal{F}(X).
\end{equation}

Mais precisamente, se $0 \to \mathcal{F} \stackrel{j}{\to} \mathcal{F}^{\bullet}$ é uma resolução então $H^0(X,\mathcal{F}) = j (\mathcal{F}(X))$.
De fato: no nível das seções globais temos um complexo $0 \to \mathcal{F}(X) \stackrel{j}{\to} \mathcal{F}^0(X) \stackrel{\phi^0}{\to} \mathcal{F}^1(X) \to \cdots$ e portanto, se $\sigma \in \mathcal{F}(X)$, temos $\phi^0(j \sigma) = 0$, ou seja, $j \sigma \in \ker \phi^0 = H^0(X,\mathcal{F})$. Reciprocamente, dada $\widetilde{\sigma} \in H^0(X,\mathcal{F}) = \ker \phi^0$ temos que $\widetilde{\sigma}|_{U_i} \in \ker \phi^0_{U_i}$ e, passando a um refinamento se necessário, temos $\widetilde{\sigma}|_{U_i} = j_{U_i} \sigma_i$ para $\sigma_i \in \mathcal{F}(U_i)$. Como $j$ é injetora as $\sigma_i$'s concordam nas intersecções e portanto $\sigma|_{U_i} = \sigma_i$ define um elemento $\sigma \in \mathcal{F}(X)$ tal que $j \sigma = \widetilde{\sigma}$.\\

Se $0 \to \mathcal A \to \mathcal B \to \mathcal C \to 0$ é uma sequência exata de feixes e $\mathcal A$ é \textit{flasque} não é difícil ver que complexo induzido nas seções globais $0 \to \mathcal A(X) \to \mathcal B(X) \to \mathcal C(X) \to 0$ é exato e além disso $\mathcal B$ é \textit{flasque} se e somente se $\mathcal C$ é \textit{flasque}. Assim sendo, se $\mathcal F$ é um feixe \textit{flasque} e $0 \to \mathcal F \to \mathcal{F}^0 \to \mathcal{F}^1 \to \cdots$ é sua a resolução canônica, temos que as sequências exatas (\ref{eq:canonical-res1}) e (\ref{eq:canonical-res2}) envolvem apenas feixes \textit{flasque} e portanto induzem sequências exatas nas seções globais. Desta forma o complexo $0 \to \mathcal F(X) \to \mathcal{F}^0(X) \to \mathcal{F}^1(X) \to \cdots$ é exato, de onde concluimos que
\begin{equation} \label{eq:flasque-cohomology}
H^i(X,\mathcal{F}) = 0 \text { para todo } i \geq 1 \text{ se } \mathcal{F} \text{ é flasque.}
\end{equation}

Feixes com cohomologia trivial recebem um nome especial:
\begin{definition} \index{feixe!acíclico}
Um feixe $\mathcal{F}$ é acíclico se $H^i(X,\mathcal{F}) = 0$ para todo $i \geq 1$.
\end{definition}

Devido a existência de partições diferenciáveis da unidade, um argumento simples (veja por exemplo \cite{g-h} p. 42) mostra que 

\begin{example} \label{ex:acyclic}
Os feixes, $\mathcal{C}^{\infty}_X$, $\mathcal{A}^k_X$ ou ainda qualquer feixe de seções (diferenciáveis) de um fibrado $E$ são exemplos de feixes acíclicos.
\end{example}

Os feixes acíclicos aparecem com muito mais frequência e de maneira bem mais natural que os feixes flasque. O seguinte resultado mostra que esses podem igualmente ser usados para calcular os grupos de cohomologia. Para uma demonstração consulte \cite{voisin}, prop. 4.32.

\begin{proposition} \label{prop:acyclic-res}
Seja $\mathcal{F} \to \mathcal{F}^{\bullet}$ uma resolução de $\mathcal{F}$ e suponha que cada $\mathcal{F}^k$ é um feixe acíclico. Então $H^i(X,\mathcal{F})$, é o i-ésimo grupo de cohomologia do complexo
\begin{equation*}
\mathcal{F}^0 (X) \stackrel{\phi^0}{\longrightarrow} \mathcal{F}^1 (X) \stackrel{\phi^1}{\longrightarrow} \mathcal{F}^2 (X) \stackrel{\phi^2}{\longrightarrow} \cdots, 
\end{equation*}
ou seja,
\begin{equation*}
H^i(X,\mathcal{F}) = \frac{ \ker \{\phi^i: \mathcal{F}^i(X) \to \mathcal{F}^{i+1}(X) \} }{ \im \{\phi^{i-1}: \mathcal{F}^{i-1}(X) \to \mathcal{F}^i(X) \} }.
\end{equation*}
\end{proposition}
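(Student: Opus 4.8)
The plan is to reduce the statement to the case of short exact sequences by the standard \emph{dimension-shifting} argument, applied to the kernel sheaves of the given resolution.

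The first step is to make available the long exact cohomology sequence attached to a short exact sequence of sheaves $0 \to \mathcal{A} \to \mathcal{B} \to \mathcal{C} \to 0$, now in the ``resolution'' framework. Applying the functorial Godement construction (which is exact, being built from the exact stalk functor) to $0 \to \mathcal{A} \to \mathcal{B} \to \mathcal{C} \to 0$ yields a short exact sequence of canonical flasque resolutions $0 \to \mathcal{A}^\bullet \to \mathcal{B}^\bullet \to \mathcal{C}^\bullet \to 0$; since each $\mathcal{A}^k$ is flasque, the induced sequences on global sections $0 \to \mathcal{A}^k(X) \to \mathcal{B}^k(X) \to \mathcal{C}^k(X) \to 0$ are exact (the fact recorded just before (\ref{eq:flasque-cohomology})), so we obtain a short exact sequence of complexes of abelian groups, and the snake lemma gives the connecting homomorphisms $H^i(X,\mathcal{C}) \to H^{i+1}(X,\mathcal{A})$ together with the long exact sequence, exactly as in Proposition \ref{prop:cech-long-exact}.

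Now, given the acyclic resolution $0 \to \mathcal{F} \to \mathcal{F}^0 \stackrel{\phi^0}{\to} \mathcal{F}^1 \to \cdots$, put $\mathcal{Z}^0 = \mathcal{F}$ and, for $k \geq 1$, $\mathcal{Z}^k = \ker \phi^k = \im \phi^{k-1}$, the two being equal by exactness of the resolution; by Proposition \ref{prop:kernel-sheaf}, $\mathcal{Z}^k(X) = \ker(\phi^k \colon \mathcal{F}^k(X) \to \mathcal{F}^{k+1}(X))$. Exactness of the resolution breaks it into short exact sequences
\begin{equation*}
0 \longrightarrow \mathcal{Z}^k \longrightarrow \mathcal{F}^k \longrightarrow \mathcal{Z}^{k+1} \longrightarrow 0 \qquad (k \geq 0).
\end{equation*}
Feeding each of these into the long exact sequence and using that $H^j(X,\mathcal{F}^k) = 0$ for $j \geq 1$ (acyclicity), the part in degrees $\geq 1$ gives isomorphisms $H^j(X,\mathcal{Z}^{k+1}) \cong H^{j+1}(X,\mathcal{Z}^k)$ for all $j \geq 1$; iterating these, for each $i \geq 1$,
\begin{equation*}
H^i(X,\mathcal{F}) = H^i(X,\mathcal{Z}^0) \cong H^{i-1}(X,\mathcal{Z}^1) \cong \cdots \cong H^1(X,\mathcal{Z}^{i-1}).
\end{equation*}
Finally, the low-degree terms of the long exact sequence for $0 \to \mathcal{Z}^{i-1} \to \mathcal{F}^{i-1} \to \mathcal{Z}^i \to 0$, together with $H^1(X,\mathcal{F}^{i-1}) = 0$, identify $H^1(X,\mathcal{Z}^{i-1})$ with $\operatorname{coker}(\mathcal{F}^{i-1}(X) \to \mathcal{Z}^i(X))$. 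This last map is $\phi^{i-1}$ with codomain restricted to $\ker \phi^i$, so its image is $\im(\phi^{i-1} \colon \mathcal{F}^{i-1}(X) \to \mathcal{F}^i(X))$, whence the cokernel is $\ker(\phi^i_X)/\im(\phi^{i-1}_X)$, which is exactly the $i$-th cohomology of the complex of global sections. The case $i = 0$ is (\ref{eq:H0res=global-sec}).

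I expect the main obstacle to be the first step: rigorously setting up the long exact sequence in this somewhat ad hoc framework, in particular verifying that the connecting maps and the whole sequence are independent of the choices involved, and, relatedly, that the groups $H^i(X,\mathcal{F})$ themselves do not depend on the chosen flasque resolution (the well-definedness flagged after (\ref{eq:flasque-cohomology})). Once the long exact sequence is in place, the dimension-shifting is purely formal.
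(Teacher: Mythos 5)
Your argument is correct. The paper itself does not prove this proposition — it simply cites Voisin, Prop.\ 4.32 — so there is no in-text proof to compare against; but your dimension-shifting argument via the kernel sheaves $\mathcal{Z}^k$ and the short exact sequences $0 \to \mathcal{Z}^k \to \mathcal{F}^k \to \mathcal{Z}^{k+1} \to 0$ is the standard proof, and it is precisely the technique the paper itself deploys immediately afterwards in the sketch showing $\check{H}^i(\mathcal{U},\mathcal{F}) \simeq H^i(X,\mathcal{F})$ for coverings acyclic for $\mathcal{F}$. Two small points in your favour: defining $\mathcal{Z}^i$ as $\ker\phi^i$ (rather than as the image sheaf, whose global sections need not be $\im \phi^{i-1}_X$) is exactly the right choice for the final cokernel identification via Proposition \ref{prop:kernel-sheaf}; and your worry about choices in the long exact sequence largely evaporates because the Godement resolution is functorial and canonical, so the connecting maps require no choices — what remains is only the (circular-looking but logically harmless) point that independence of the flasque resolution is itself a consequence of the proposition being proved, as the paper notes after (\ref{eq:flasque-cohomology}).
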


Essa proposição nos permite identificar, para certos feixes, os grupos $H^i(X,\mathcal{F})$ com os outros grupos de cohomologia conhecidos.
\begin{example} \textbf{Cohomologia de de Rham}\\
Vimos no exemplo \ref{ex:derham-res} que o complexo de de Rham $(\mathcal{A}^{\bullet}_X,d)$ é uma resolução do feixe localmente constante $\underline{\R}$. Como os feixes $\mathcal{A}^k_X$ são acíclicos, a proposição acima nos diz que
\begin{equation*}
H^i(X,\underline{\R}) = \frac{ \ker \{d: \mathcal{A}^i(X) \to \mathcal{A}^{i+1}(X) \} }{ \im \{d: \mathcal{A}^{i-1}(X) \to \mathcal{A}^i(X) \} },
\end{equation*}
ou seja, o $i$-ésimo grupo de cohomologia de $X$ com coeficientes no feixe constante $\underline{\R}$ é o i-ésimo grupo de cohomolgia de de Rham:
\begin{equation} \label{eq:derhamcohom}
H^i(X, \underline{\R}) = H^i_{dR}(X).
\end{equation}
\\
\end{example}

\begin{example} \label{ex:sing-cohom} \textbf{Cohomologia singular}\\
Se $X$ é uma variedade podemos definir as cadeias singulares com coeficientes em $\Z$ como sendo somas formais $\eta = \sum n_i \phi_i$, onde cada $\phi_i: \Delta^p \to X$ é uma aplicação contínua definida sobre um simplexo padrão $\Delta^p \subset \R^{p+1}$. O espaço das cadeias singulares é denotado por $C_p^{sing}(X,\Z)$.

O espaço das cocadeias singulares é definido por $C^p_{sing}(X,\Z)  = \text{Hom}(C_p^{sing}(X,\Z),\Z)$. A aplicação de bordo $C_{p+1}^{sing}(X,\Z) \to C_p^{sing}(X,\Z)$ induz uma aplicação de cobordo $\delta: C^p_{sing}(X,\Z)  \to C^{p+1}_{sing}(X,\Z)$.

Podemos definir o feixe das $p$-cocadeias singulares $C^p_{sing}$ por
\begin{equation*}
U \longmapsto C^p_{sing}(U,\Z).
\end{equation*}
e obtemos assim um complexo de feixes
\begin{equation*}
C^0_{sing} \to C^1_{sing} \to \cdots \to C^p_{sing} \to C^{p+1}_{sing} \to \cdots.
\end{equation*}

Agora, se $U \subset X$ é contrátil, os grupos de cohomologia singular $H^i_{sing}(U,\Z)$ se anulam para $i > 0$, o que quer dizer que a sequência $C^0_{sing}(U) \to C^1_{sing}(U) \to \cdots$ é exata. Isso mostra que a sequência acima é uma sequência exata de feixes. O núcleo $\ker \delta: C^0_{sing}(U) \to C^1_{sing}(U)$ é formado pelas funções localmente constantes em $U$ a valores inteiros.

Obtemos assim uma sequência exata de feixes
\begin{equation*}
0 \to \underline{\Z} \to C^0_{sing} \to C^1_{sing} \to \cdots \to C^p_{sing} \to C^{p+1}_{sing} \to \cdots.
\end{equation*}
que é uma resolução do feixe localmente constante $\underline{\Z}$.

Pode-se mostrar que essa resolução é acíclica (ver \cite{voisin}, teorema 4.47) e portanto, pela proposição \ref{prop:acyclic-res} vemos que
\begin{equation} \label{eq:singcohom}
H^i(X, \underline{\Z}) = H^i_{sing}(X,\Z),
\end{equation}
isto é, a cohomologia de $X$ com coeficientes no feixe constante $\underline{\Z}$ é a cohomologia singular de $X$ com coeficientes em $\Z$.
\end{example}

Dados dois feixes $\mathcal F$ e  $\mathcal G$ sobre um espaço topológico $X$, um morfismo $\phi:\mathcal F \to \mathcal G$ induz naturalmente homomorfismos $h^k: H^k(X,\mathcal F) \to H^k(X,\mathcal G)$, construido da seguinte maneira.

Considere $\mathcal F \to \mathcal F^ \bullet$ e $\mathcal G \to \mathcal G^ \bullet$ resoluções \textit{flasque}. É um fato que podemos escolher essas resoluções de modo que o morfismo se estenda a morfismos $\phi^k:\mathcal F^k \to \mathcal G^k$ que comutam com as aplicações $\mathcal F^k \to \mathcal F^{k+1}$ e $\mathcal G^k \to \mathcal G^{k+1}$ (veja por exemplo \cite{voisin}, cap. 4) e portanto obtemos um diagrama comutativo da forma.

\begin{equation*}
\begin{matrix}
 0 & \longrightarrow  & \mathcal{F} & \longrightarrow & \mathcal{F}^0 & \longrightarrow  & \mathcal{F}^1 & \longrightarrow & \mathcal{F}^2 & \longrightarrow  \cdots \\
   & &~~ \downarrow \phi& & ~~ \downarrow \phi^0 & & ~~ \downarrow \phi^1  & & ~~ \downarrow \phi^2 &  \\
 0 & \longrightarrow  & \mathcal{G} & \longrightarrow & \mathcal{G}^0 & \longrightarrow  & \mathcal{G}^1 & \longrightarrow & \mathcal{G}^2 & \longrightarrow  \cdots
 \end{matrix}
\end{equation*}

Definimos então $h^k[\sigma] = [\phi^k \sigma] \in H^k(X,\mathcal G)$, onde $[\sigma] \in H^k(X,\mathcal F)$. Pode-se mostrar ainda que $h^k$ está bem definida e que não depende das extensões $\phi^k$ nem das resoluções escolhidas.

Para a cohomologia via resoluções também existem sequências exatas longas associadas a sequências exatas curtas de feixes. Para uma demonstração consulte \cite{voisin}.

\begin{proposition} \textbf{Sequência exata longa na cohomologia via resoluções}\\
Se
\begin{equation*}
0 \longrightarrow \mathcal{F} \longrightarrow \mathcal{G} \longrightarrow \mathcal{H} \longrightarrow 0
\end{equation*}
é uma sequência exata curta de feixes então existe uma sequência exata longa
\begin{equation*}
\begin{split}
0 &\longrightarrow H^0(X,\mathcal{F}) \longrightarrow H^0(X,\mathcal{G}) \longrightarrow H^0(X,\mathcal{H}) \longrightarrow \\
&\longrightarrow H^1(X,\mathcal{F}) \longrightarrow H^1(X,\mathcal{G}) \longrightarrow H^1(X,\mathcal{H}) \longrightarrow \cdots \\
 & \vdots \\
 & \longrightarrow H^n(X,\mathcal{F}) \longrightarrow H^n(X,\mathcal{G}) \longrightarrow H^n(X,\mathcal{H}) \longrightarrow \cdots.
\end{split}
\end{equation*}
\end{proposition}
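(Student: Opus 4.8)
A estratégia é reduzir o enunciado a um fato padrão de álgebra homológica — o ``lema do zig-zag'', segundo o qual toda sequência exata curta de complexos de grupos abelianos dá origem a uma sequência exata longa em cohomologia — aplicado a resoluções \textit{flasque} adequadamente compatíveis dos três feixes. O primeiro passo, portanto, é produzir uma sequência exata curta de \emph{complexos} de feixes \textit{flasque}
\begin{equation*}
0 \longrightarrow \mathcal{F}^\bullet \longrightarrow \mathcal{G}^\bullet \longrightarrow \mathcal{H}^\bullet \longrightarrow 0
\end{equation*}
na qual cada uma das três linhas $0 \to \mathcal{F} \to \mathcal{F}^\bullet$, $0 \to \mathcal{G} \to \mathcal{G}^\bullet$ e $0 \to \mathcal{H} \to \mathcal{H}^\bullet$ é uma resolução.

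Para construir esse diagrama, usaria a resolução canônica de Godement, que tem a vantagem de ser funtorial. Como a sequência $0 \to \mathcal{F} \to \mathcal{G} \to \mathcal{H} \to 0$ é exata, a sequência induzida nos caules $0 \to \mathcal{F}_x \to \mathcal{G}_x \to \mathcal{H}_x \to 0$ é exata para todo $x \in X$; tomando o produto sobre os pontos de cada aberto obtém-se uma sequência exata de feixes \textit{flasque} $0 \to \mathcal{F}_{\mathrm{God}} \to \mathcal{G}_{\mathrm{God}} \to \mathcal{H}_{\mathrm{God}} \to 0$, que fornece o primeiro termo das três resoluções. Aplicando o lema da cobra às sequências $0 \to \mathcal{F} \to \mathcal{F}_{\mathrm{God}} \to \mathcal{C}^1_{\mathcal{F}} \to 0$ e às análogas para $\mathcal{G}$ e $\mathcal{H}$, concluo que os conúcleos formam uma sequência exata curta $0 \to \mathcal{C}^1_{\mathcal{F}} \to \mathcal{C}^1_{\mathcal{G}} \to \mathcal{C}^1_{\mathcal{H}} \to 0$. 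Repetindo indutivamente a construção das resoluções canônicas (cf. (\ref{eq:canonical-res1}) e (\ref{eq:canonical-res2})) — sempre aplicando Godement aos conúcleos e o lema da cobra ao diagrama da etapa anterior — obtenho o diagrama comutativo desejado, com linhas exatas e colunas as resoluções canônicas de $\mathcal{F}$, $\mathcal{G}$ e $\mathcal{H}$.

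O segundo passo é passar às seções globais. Como cada $\mathcal{F}^k$ é \textit{flasque}, o fato recordado imediatamente antes de (\ref{eq:flasque-cohomology}) — se $0 \to \mathcal{A} \to \mathcal{B} \to \mathcal{C} \to 0$ é exata e $\mathcal{A}$ é \textit{flasque} então $0 \to \mathcal{A}(X) \to \mathcal{B}(X) \to \mathcal{C}(X) \to 0$ é exata — garante que
\begin{equation*}
0 \longrightarrow \mathcal{F}^\bullet(X) \longrightarrow \mathcal{G}^\bullet(X) \longrightarrow \mathcal{H}^\bullet(X) \longrightarrow 0
\end{equation*}
é uma sequência exata curta de complexos de grupos abelianos. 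O lema do zig-zag fornece então a sequência exata longa entre os grupos de cohomologia desses três complexos; por definição, o $i$-ésimo desses grupos é $H^i(X,\mathcal{F})$, $H^i(X,\mathcal{G})$ e $H^i(X,\mathcal{H})$, respectivamente, e o início $0 \to H^0(X,\mathcal{F}) \to \cdots$ vem da identificação $H^0(X,\mathcal{F}) = \mathcal{F}(X)$ vista em (\ref{eq:H0res=global-sec}).

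O ponto mais delicado é a construção do diagrama de resoluções compatíveis: em cada etapa da indução é preciso verificar que os conúcleos realmente formam uma sequência exata curta — o que segue do lema da cobra, mas exige cuidado com a comutatividade dos quadrados envolvidos — e que a propriedade de ser \textit{flasque} se propaga aos feixes de Godement dos conúcleos. Há ainda dois pontos de rotina a conferir: a naturalidade do homomorfismo de conexão $\Delta$ (necessária, por exemplo, se quisermos que a sequência exata longa seja funtorial nas sequências exatas curtas de feixes) e a independência da sequência obtida em relação às escolhas feitas, que decorre do fato, já mencionado, de os grupos $H^i(X,\mathcal{F})$ não dependerem da resolução \textit{flasque} utilizada.
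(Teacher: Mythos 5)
A sua demonstração está correta. Observe que o texto não apresenta prova alguma desta proposição — ela é enunciada e a demonstração é remetida a \cite{voisin} —, de modo que o seu argumento preenche exatamente essa lacuna com a estratégia padrão: a funtorialidade e a exatidão da constru\c{c}\~ao de Godement (exatidão nos caules implica exatidão do produto $\prod_{x\in U}$, logo dos feixes $\mathcal F_{\text{God}}$), o lema da cobra para propagar a exatidão aos conúcleos em cada etapa da resolu\c{c}\~ao canônica, o fato de que seções globais preservam exatidão quando o subfeixe é \textit{flasque}, e por fim o lema do zig-zag. Os pontos que você mesmo aponta como delicados são, de fato, os únicos que exigem verificação, e todos fecham: o lema da cobra vale na categoria (abeliana) de feixes porque pode ser checado nos caules, e a compatibilidade das aplicações $H^i(X,\mathcal F)\to H^i(X,\mathcal G)$ assim obtidas com as definidas no texto (via extensões arbitrárias do morfismo a resoluções \textit{flasque}) segue da independência da resolução escolhida, já registrada no texto.
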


\subsection{Comparando $H^i(X,\mathcal{F})$ e $\check{H}^i(X,\mathcal{F})$}

Nessa seção vamos comparar a cohomologia de \v{C}ech e a cohomologia definida via resoluções. Veremos que existe um isomorfismo $H^i(X,\mathcal{F}) \simeq \check{H}^i(X,\mathcal{F})$ quando $X$ é um espaço paracompacto. Em particular, o isomorfismo existe quando $X$ é uma variedade diferenciável, que é o caso que nos interessa.

Note que para grau $0$, já sabemos da existência de um isomorfismo $H^0(X,\mathcal{F}) \simeq \check{H}^0(X,\mathcal{F})$, pois os dois grupos são naturalmente isomorfos ao espaço das seções globais $\mathcal{X}$. Mais adiante usaremos esse fato e  indução em $i$ para obtermos os demais isomorfismos $H^i(X,\mathcal{F}) \simeq \check{H}^i(X,\mathcal{F})$.\\

Dado um feixe $\mathcal{F}$, nosso objetivo é relacionar a cohomologia de \v{C}ech e a cohomologia definida via resoluções. Para calcular a segunda, precisamos de uma resolução acíclica de $\mathcal{F}$. Uma resolução natural é a chamada resolução de \v{C}ech.

Dada uma cobertura $\mathcal{U}$ de $X$, definimos a partir de $\mathcal{F}$ uma sequência de feixes $\check{C}^0, \check{C}^1,\cdots$ por
\begin{equation*}
\check{C}^n (V) = \prod_{i_0,\cdots,i_n \in I} \mathcal{F}(U_{i_0 \cdots i_n} \cap V).
\end{equation*}

Note que o espaço das seções globais de $\check{C}^n$ é o espaço $\check{C}^n(\mathcal{U},\mathcal{F})$ das $n$-cocadeias de \v{C}ech  (veja a equação (\ref{eq:cech-cochains})).

De maneira semelhante à equação (\ref{eq:cech-coboundary}) definimos uma aplicação de cobordo $\delta: \check{C}^n (V) \to \check{C}^{n+1} (V)$ e obtemos assim um complexo de feixes
\begin{equation} \label{eq:cech-complex}
\check{C}^0 \stackrel{\delta}{\longrightarrow} \check{C}^1 \stackrel{\delta}{\longrightarrow} \check{C}^2 \stackrel{\delta}{\longrightarrow} \cdots
\end{equation}

Assim como vimos na seção \ref{sec:cech-cohom} o núcleo de $\delta: \check{C}^0 (V) \to \check{C}^1 (V)$ é o espaço das seções de $\mathcal{F}$ sobre $V$. Assim, obtemos um complexo de feixes
\begin{equation} \label{eq:cech-resolution}
0 \longrightarrow \mathcal{F} \longrightarrow \check{C}^0 \stackrel{\delta}{\longrightarrow} \check{C}^1 \stackrel{\delta}{\longrightarrow} \check{C}^2 \stackrel{\delta}{\longrightarrow} \cdots
\end{equation}

Note que a cohomologia do complexo $\check{C}^0 (X) \to \check{C}^1 (X) \to \cdots$ é a cohomologia de \v{C}ech $\check{H}^{\bullet}(\mathcal{U},\mathcal{F})$ de $\mathcal{F}$ com respeito a $\mathcal{U}$.

Para uma demonstração do resultado a seguir consulte \cite{voisin}, proposição 4.17.
\begin{proposition}
A sequência (\ref{eq:cech-resolution}) é uma sequência exata de feixes, e portanto o complexo de \v{C}ech $(\check{C}^{\bullet}, \delta)$ é uma resolução de $\mathcal{F}$.
\end{proposition}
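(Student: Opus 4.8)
The plan is to check exactness stalkwise, using the lemma (stated in the excerpt) that a complex of sheaves is exact if and only if it is exact on all stalks. So fix a point $x \in X$; I must show that the sequence of stalks
\begin{equation*}
0 \longrightarrow \mathcal{F}_x \longrightarrow \check{C}^0_x \stackrel{\delta}{\longrightarrow} \check{C}^1_x \stackrel{\delta}{\longrightarrow} \cdots
\end{equation*}
is exact. Exactness at $\mathcal{F}_x$ and at $\check{C}^0_x$ is essentially the remark already made in the text: the kernel of $\delta\colon \check{C}^0(V)\to\check{C}^1(V)$ is $\mathcal{F}(V)$ by the sheaf axioms S3--S4, and this identification passes to the direct limit over $V\ni x$. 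The content is therefore exactness at $\check{C}^n_x$ for $n\geq 1$.

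The key step is the standard \emph{contracting homotopy} for the \v{C}ech complex near a point. Pick an index $i_\ast\in I$ with $x\in U_{i_\ast}$. For a small open $V$ with $x\in V\subset U_{i_\ast}$, define $h\colon \check{C}^n(V)\to\check{C}^{n-1}(V)$ by
\begin{equation*}
(hf)_{i_0\cdots i_{n-1}} = f_{i_\ast i_0\cdots i_{n-1}}\big|_{U_{i_0\cdots i_{n-1}}\cap V},
\end{equation*}
which makes sense because $U_{i_\ast i_0\cdots i_{n-1}}\cap V = U_{i_0\cdots i_{n-1}}\cap V$ (since $V\subset U_{i_\ast}$). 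A direct computation with the formula (\ref{eq:cech-coboundary}) — isolating the $k=0$ term, which is $f_{i_0\cdots i_n}$, from the remaining terms, which reassemble into $h(\delta f)$ up to sign — gives the homotopy identity $\delta h + h\delta = \mathrm{id}$ on $\check{C}^n(V)$ for $n\geq 1$. Hence if $f\in\check{C}^n(V)$ satisfies $\delta f = 0$, then $f = \delta(hf)$, so $f$ is a coboundary \emph{on $V$}. Passing to the direct limit over the neighbourhoods $V$ of $x$ contained in $U_{i_\ast}$ (these are cofinal among all neighbourhoods of $x$, so the stalk is unchanged), this shows every $\delta$-closed germ in $\check{C}^n_x$ is a $\delta$-coboundary, i.e. the stalk complex is exact at $\check{C}^n_x$ for $n\geq1$.

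I expect the main obstacle to be purely bookkeeping: verifying the homotopy identity $\delta h + h\delta = \mathrm{id}$ requires carefully tracking signs and restriction maps in (\ref{eq:cech-coboundary}), and one must be slightly careful because the \v{C}ech cochains in the excerpt are indexed by \emph{ordered} tuples $i_0<\cdots<i_n$, whereas the homotopy $h$ most naturally uses the alternating/unordered convention; one handles this either by passing to the (isomorphic) alternating cochain complex or by checking that the insertion of $i_\ast$ together with reordering produces exactly the signs demanded by the formula. Once the homotopy is in place, taking the direct limit is routine since direct limits are exact, and the conclusion that $(\check{C}^\bullet,\delta)$ is a resolution of $\mathcal{F}$ follows immediately by combining the $n=0$ discussion with the $n\geq1$ exactness and invoking the stalkwise criterion.
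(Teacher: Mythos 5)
The paper offers no proof of this proposition at all --- it simply refers the reader to Voisin (Prop.\ 4.17) --- so there is no argument of the author's to compare yours against. Your contracting-homotopy proof is the standard one and it is correct. The computation does close up: writing $(\delta f)_{i_* i_0\cdots i_n} = f_{i_0\cdots i_n} - \sum_{k=0}^{n}(-1)^{k} f_{i_* i_0\cdots \widehat{i_k}\cdots i_n}\big|$, the second sum is exactly $(\delta(hf))_{i_0\cdots i_n}$, so $h\delta + \delta h = \mathrm{id}$ on $\check{C}^n(V)$ for $n\geq 1$ and every closed cochain over $V\subset U_{i_*}$ is exact over $V$; passing to the direct limit over such $V$ (cofinal among neighbourhoods of $x$) gives stalkwise exactness, and exactness at $\mathcal{F}_x$ and $\check{C}^0_x$ is the sheaf axioms S3--S4 as you say. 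The one point you flag --- ordered versus arbitrary tuples --- is real but dissolves here: the sheaves $\check{C}^n$ introduced just before (\ref{eq:cech-complex}) are indexed by \emph{arbitrary} tuples $i_0,\ldots,i_n\in I$, which is precisely the convention in which your $h$ works with no reordering or extra signs; if one insists on the strictly ordered (alternating) model of (\ref{eq:cech-cochains}) one either invokes the standard quasi-isomorphism between the two complexes or defines $h$ on alternating cochains by first extending them alternately to all tuples. With that understood, the argument is complete.
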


Temos até agora uma resolução de $\mathcal{F}$ cuja cohomologia no nível das seções globais é a cohomologia de \v{C}ech $\check{H}^{\bullet}(\mathcal{U},\mathcal{F})$. Para garantir que essa cohomologia é de fato a cohomologia definida via resoluções, precisamos garantir que os feixes $\check{C}^k$ são acíclicos, pelo menos para uma quantidade suficiente de coberturas, para podermos tomar o limite direto.

\begin{definition}
Uma cobertura $\mathcal{U} = \{U_i\}_{i \in I}$ é acíclica para o feixe $\mathcal{F}$ se $H^k(U_{i_0\cdots i_n},\mathcal{F}|_{U_{i_0\cdots i_n}}) = 0$ para todo $k > 0$ e todos $i_0< \ldots < i_n \in I$.
\end{definition}

Para esse tipo de cobertura temos o isomorfismo desejado.

\begin{proposition}
Se $\mathcal{U}$ é uma cobertura localmente finita que é acíclica para $\mathcal{F}$ então $\check{H}^i(\mathcal{U},\mathcal{F}) \simeq H^i(X,\mathcal{F})$
\end{proposition}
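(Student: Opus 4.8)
O plano \'e empregar a resolu\c{c}\~ao de \v{C}ech (\ref{eq:cech-resolution}) como resolu\c{c}\~ao de $\mathcal{F}$ e invocar a Proposi\c{c}\~ao \ref{prop:acyclic-res}. J\'a sabemos que $0 \to \mathcal{F} \to \check{C}^0 \to \check{C}^1 \to \cdots$ \'e uma resolu\c{c}\~ao de $\mathcal{F}$ e que, por (\ref{eq:cech-cochains}), o complexo de se\c{c}\~oes globais $\check{C}^0(X) \to \check{C}^1(X) \to \cdots$ \'e exatamente o complexo de \v{C}ech $(\check{C}^{\bullet}(\mathcal{U},\mathcal{F}),\delta)$, cuja cohomologia \'e $\check{H}^{\bullet}(\mathcal{U},\mathcal{F})$. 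Portanto, pela Proposi\c{c}\~ao \ref{prop:acyclic-res}, bastar\'a mostrar que cada feixe $\check{C}^n$ \'e ac\'iclico; feito isso, a identidade $\check{H}^i(\mathcal{U},\mathcal{F}) \simeq H^i(X,\mathcal{F})$ sai de imediato.

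Para a aciclicidade de $\check{C}^n$, o primeiro passo \'e escrever
\[
\check{C}^n = \prod_{i_0 < \cdots < i_n} (j_{i_0\cdots i_n})_*\big(\mathcal{F}|_{U_{i_0\cdots i_n}}\big),
\]
onde $j_{i_0\cdots i_n}: U_{i_0\cdots i_n} \hookrightarrow X$ \'e a inclus\~ao e $(j_U)_*\mathcal{G}$ \'e o feixe $V \mapsto \mathcal{G}(U \cap V)$. Em seguida, para cada tal inclus\~ao de aberto eu tomaria uma resolu\c{c}\~ao \textit{flasque} $\mathcal{F}|_U \to \mathcal{I}^{\bullet}$ e observaria que: (i) como resolu\c{c}\~oes \textit{flasque} produzem sequ\^encias exatas de se\c{c}\~oes sobre qualquer aberto (consequ\^encia dos fatos b\'asicos sobre feixes \textit{flasque} e de (\ref{eq:flasque-cohomology}) aplicada aos abertos $U \cap V$) e $(j_U)_*$ preserva \textit{flasque}, o complexo $0 \to (j_U)_*(\mathcal{F}|_U) \to (j_U)_*\mathcal{I}^{\bullet}$ ainda \'e uma resolu\c{c}\~ao \textit{flasque}; (ii) logo $H^k\big(X,(j_U)_*(\mathcal{F}|_U)\big) = H^k\big(\Gamma(U,\mathcal{I}^{\bullet})\big) = H^k(U,\mathcal{F}|_U)$, que \'e $0$ para $k \geq 1$ pela hip\'otese de $\mathcal{U}$ ser ac\'iclica para $\mathcal{F}$.

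O passo restante \'e passar ao produto, e \'e aqui que a hip\'otese de local finitude entra de modo essencial. Dado $x \in X$, escolho uma vizinhan\c{c}a $V_0$ que encontra apenas finitos $U_i$; ent\~ao, para todo $V \subset V_0$, tem-se $U_{i_0\cdots i_n} \cap V = \emptyset$ sempre que algum $U_{i_k}$ n\~ao encontre $V_0$, de modo que, restrito a $V_0$, o produto acima \'e um produto \emph{finito}. Como produtos finitos comutam com os limites diretos que definem os caules, $\check{C}^n$ \'e localmente um produto finito dos feixes $(j_{i_0\cdots i_n})_*(\mathcal{F}|_{U_{i_0\cdots i_n}})$; tomando o produto (localmente finito) das resolu\c{c}\~oes \textit{flasque} destes obtenho uma resolu\c{c}\~ao \textit{flasque} de $\check{C}^n$, e como $\Gamma(X,-)$ comuta com produtos e a cohomologia de um complexo comuta com produtos, concluo
\[
H^k(X,\check{C}^n) \;=\; \prod_{i_0 < \cdots < i_n} H^k\big(U_{i_0\cdots i_n},\mathcal{F}|_{U_{i_0\cdots i_n}}\big) \;=\; 0 \qquad (k \geq 1).
\]

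Espero que o obst\'aculo principal seja exatamente esta verifica\c{c}\~ao de que os feixes $\check{C}^n$ s\~ao ac\'iclicos --- tanto a identifica\c{c}\~ao $H^k(X,(j_U)_*(\mathcal{F}|_U)) = H^k(U,\mathcal{F}|_U)$ quanto o uso cuidadoso da local finitude para domar o produto infinito; uma vez estabelecida a aciclicidade, a Proposi\c{c}\~ao \ref{prop:acyclic-res} encerra o argumento. Uma alternativa seria montar o complexo duplo $\check{C}^p(\mathcal{U},\mathcal{G}^q)$ com $\mathcal{G}^{\bullet}$ uma resolu\c{c}\~ao \textit{flasque} de $\mathcal{F}$ e comparar as cohomologias das duas filtra\c{c}\~oes.
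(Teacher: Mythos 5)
A arquitetura geral da sua proposta coincide com a do texto: tudo se reduz a mostrar que $H^k(X,\check{C}^n)=0$ para $k\geq 1$, e a partir da\'i tanto faz invocar a Proposi\c{c}\~ao \ref{prop:acyclic-res} (como voc\^e faz) ou fazer o deslocamento de dimens\~ao \`a m\~ao com as sequ\^encias $0\to\check{Z}^p\to\check{C}^p\to\check{Z}^{p+1}\to0$ (como o texto faz). O problema est\'a na sua demonstra\c{c}\~ao da aciclicidade dos fatores, que \'e justamente o ponto que o texto n\~ao demonstra e delega a Godement. O passo (i) \'e falso: se $\mathcal{F}|_U\to\mathcal{I}^\bullet$ \'e uma resolu\c{c}\~ao \textit{flasque}, o complexo $(j_U)_*\mathcal{I}^\bullet$ \'e formado por feixes \textit{flasque}, mas em geral \emph{n\~ao \'e exato}: seus feixes de cohomologia s\~ao as imagens diretas superiores $R^q(j_U)_*(\mathcal{F}|_U)$, cujos caules em $x$ s\~ao $\varinjlim_{V\ni x}H^q(U\cap V,\mathcal{F})$ e que, suportadas na fronteira de $U$, n\~ao precisam se anular. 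Consequentemente o passo (ii) tamb\'em falha: $H^k(X,(j_U)_*(\mathcal{F}|_U))$ n\~ao coincide com $H^k(U,\mathcal{F}|_U)$ em geral. Contraexemplo concreto: $X=\pr^1$, $U=\pr^1\setminus\{\infty\}\simeq\C$ e $\mathcal{F}=\underline{\Z}$. Aqui $(j_U)_*\underline{\Z}\simeq\underline{\Z}_{\pr^1}$ (todo aberto conexo pequeno $V\ni\infty$ tem $V\cap U$ conexo), logo $H^2(X,(j_U)_*\underline{\Z})=H^2(\pr^1,\Z)=\Z$, enquanto $H^2(U,\Z)=H^2(\C,\Z)=0$; al\'em disso $R^1(j_U)_*\underline{\Z}$ \'e um feixe concentrado em $\infty$ com caule $\Z$, pois $\varinjlim H^1(V\setminus\{\infty\},\Z)=H^1(S^1,\Z)=\Z$.

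O ponto essencial \'e que a igualdade de que voc\^e precisa \'e controlada pela topologia local de $U_{i_0\cdots i_p}$ perto de sua fronteira (via $R^qj_*$), e n\~ao pela aciclicidade global $H^k(U_{i_0\cdots i_p},\mathcal{F})=0$ que a hip\'otese fornece --- s\~ao condi\c{c}\~oes independentes. Seu tratamento do produto via finitude local est\'a correto, mas n\~ao salva o argumento, porque a falha j\'a ocorre fator a fator. Para fechar a lacuna voc\^e teria de ou demonstrar o isomorfismo $H^k(X,\check{C}^p)\simeq\prod H^k(U_{i_0\cdots i_p},\mathcal{F})$ nas condi\c{c}\~oes do enunciado (que \'e exatamente o conte\'udo do Teorema 4.4.4 de \cite{godement} citado no texto), ou ent\~ao seguir a via alternativa que voc\^e mesmo menciona no final: o complexo duplo $\check{C}^p(\mathcal{U},\mathcal{G}^q)$ com $\mathcal{G}^\bullet$ uma resolu\c{c}\~ao \textit{flasque} de $\mathcal{F}$, que evita completamente a quest\~ao e usa apenas que a cohomologia de \v{C}ech de um feixe \textit{flasque} se anula em graus positivos.
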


A ideia da demonstração é a seguinte: da resolução de \v{C}ech obtemos sequências exatas curtas de feixes
\begin{equation*}
0 \longrightarrow \check{Z}^p \longrightarrow \check{C}^p \longrightarrow \check{Z}^{p+1} \longrightarrow 0,
\end{equation*}
e portanto temos uma sequência exata longa associada. Uma parte desta sequência é
\begin{equation*}
H^{q-1}(X,\check{C}^p) \longrightarrow H^{q-1}(X,\check{Z}^{p+1}) \longrightarrow H^q(X,\check{Z}^p) \longrightarrow H^q(X,\check{C}^p).
\end{equation*}

Do fato de $\mathcal{U}$ ser localmente finita pode-se mostrar que\footnote{Esse isomorfismo segue por exemplo do Teorema 4.4.4. em \cite{godement}.} $H^k(X,\check{C}^p) \simeq \prod_{i_0<\cdots<i_n \in I} H^k(U_{i_0 \cdots i_n},\mathcal{F})$ e como $\mathcal{U}$ é acíclica para $\mathcal F$ concluímos que $H^k(\mathcal{U},\check{C}^p) = 0$ para $k>0$ e portanto os termos das pontas da sequência acima se anulam para $q >1$. Temos assim que $H^q(X,\check{Z}^p) \simeq H^{q-1}(X,\check{Z}^{p+1})$ para $q>1$ e, como $\check{Z}^0 = \mathcal{F}$ (pelos axiomas de feixe), temos, para $p>1$,
\begin{equation*}
H^p(X,\mathcal{F}) = H^p(X,\check{Z}^0) \simeq  H^{p-1}(X,\check{Z}^1) \simeq \cdots \simeq H^1(X,\check{Z}^{p-1})  \simeq \frac{H^0(X,\check{Z}^p)}{\delta \check{C}^{p-1}(X)} = \check{H}^p(\mathcal{U},\mathcal{F}).
\end{equation*}

\begin{corollary}
Se $\mathcal{F}$ é um feixe flasque sobre um espaço paracompacto $X$ então $\check{H}^i(X,\mathcal{F}) = 0$ para todo $i>0$.
\end{corollary}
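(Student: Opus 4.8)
O plano é reduzir o cálculo de $\check{H}^i(X,\mathcal{F})$ a um limite direto sobre coberturas localmente finitas, nas quais a proposição anterior identifica a cohomologia de \v{C}ech com a cohomologia via resoluções, que por sua vez se anula por (\ref{eq:flasque-cohomology}).

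Primeiro observaria que a restrição $\mathcal{F}|_U$ de um feixe flasque a qualquer aberto $U \subset X$ é novamente flasque: dada uma seção sobre um aberto $V \subset U$, basta estendê-la primeiro a todo $X$ (o que é possível pois $\mathcal{F}$ é flasque) e depois restringir a $U$. Logo, por (\ref{eq:flasque-cohomology}), $\mathcal{F}|_U$ é acíclico para todo aberto $U$, e portanto \emph{qualquer} cobertura aberta de $X$ é acíclica para $\mathcal{F}$ no sentido da definição acima — em particular toda interseção finita $U_{i_0 \cdots i_n}$ tem cohomologia superior nula com coeficientes em $\mathcal{F}$.

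Em seguida usaria a paracompacidade: toda cobertura aberta de $X$ admite um refinamento aberto localmente finito. Assim, o conjunto das coberturas localmente finitas é cofinal no conjunto dirigido de todas as coberturas abertas de $X$, e o limite direto (\ref{eq:cech-cohom}) pode ser computado apenas sobre tais coberturas. Para cada cobertura localmente finita $\mathcal{U}$, sendo ela acíclica para $\mathcal{F}$, a proposição anterior fornece $\check{H}^i(\mathcal{U},\mathcal{F}) \simeq H^i(X,\mathcal{F})$, que é $0$ para $i \geq 1$ novamente por (\ref{eq:flasque-cohomology}). Passando ao limite direto, concluiria que $\check{H}^i(X,\mathcal{F}) = 0$ para todo $i > 0$.

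O passo que merece mais atenção é a restrição do limite direto às coberturas localmente finitas, pois é aí que a hipótese de paracompacidade entra de forma essencial, via o fato de que um sistema dirigido e qualquer subsistema cofinal produzem o mesmo limite. Os demais passos são essencialmente formais, consistindo apenas em combinar corretamente os resultados já estabelecidos.
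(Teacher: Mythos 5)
Sua prova está correta e segue essencialmente o mesmo caminho do texto: anulamento de $H^i(X,\mathcal{F})$ por flasquidão, aciclicidade de toda cobertura, a proposição anterior para coberturas localmente finitas, e a paracompacidade para passar ao limite direto. O único acréscimo é que você torna explícito o passo (que o texto deixa impl\'{\i}cito) de que a restrição de um feixe flasque a um aberto é novamente flasque, o que de fato é necessário para justificar que toda cobertura é acíclica.
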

\begin{proof}
Como $\mathcal{F}$ é flasque temos, da equação (\ref{eq:flasque-cohomology}), que  $H^i(X,\mathcal{F}) = 0$ para $i> 0$. Em particular toda cobertura é acíclica para $\mathcal F$ e portanto, da proposição acima, segue que $\check{H}^i (\mathcal{U},\mathcal{F})  = 0$ para todo $i > 0$ e toda cobertura localmente finita de $X$. Como $X$ é paracompacto, toda cobertura admite um refinamento localmente finito e portanto, tomando o limite direto, obtemos $\check{H}^i(X,\mathcal{F}) = 0$ para $i>0$.
\end{proof}

Já temos agora os ingredientes necessários para mostrar a existência dos isomorfismos $\check{H}^i(X,\mathcal{F})$ $\simeq H^i(X,\mathcal{F})$. Precisaremos apenas dos seguintes fatos: 1)~Em grau zero $\check{H}^0(X,\mathcal{F}) \simeq H^0(X,\mathcal{F})$, 2)~Para feixes flasque vale $\check{H}^i(X,\mathcal{F}) = 0$ para todo $i>0$ e 3)~As duas cohomologias transformam sequências exatas curtas de feixes em sequências exatas longas nos grupos de cohomologia.

\begin{proposition} \label{prop:iso-cech-res}
Se $X$ é um espaço paracompacto então para todo feixe $\mathcal{F}$ existem isomorfismos naturais $\check{H}^i(X,\mathcal{F}) \simeq H^i(X,\mathcal{F})$ para todo $i \geq 0$.
\end{proposition}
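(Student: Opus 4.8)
The plan is to prove the isomorphism by induction on $i$, relying on exactly the three features of the two cohomology theories that have been isolated above: in degree $0$ both are canonically the group of global sections $\mathcal{F}(X)$ (equations (\ref{eq:H0cech=global-sec}) and (\ref{eq:H0res=global-sec})); both vanish in positive degree on a flasque sheaf --- equation (\ref{eq:flasque-cohomology}) on the resolution side, and the preceding corollary (which is exactly where the paracompactness hypothesis enters) on the \v{C}ech side; and both convert a short exact sequence of sheaves into a long exact sequence, namely Proposition \ref{prop:cech-long-exact} and its analogue stated just above for cohomology via resolu\c{c}\~oes. The base case $i = 0$ is then immediate and manifestly natural in $\mathcal{F}$.

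For the inductive step I would embed $\mathcal{F}$ into a flasque sheaf functorially, using the Godement construction: the canonical resolution furnishes a short exact sequence $0 \to \mathcal{F} \to \mathcal{F}_{\mathrm{God}} \to \mathcal{Q} \to 0$ with $\mathcal{F}_{\mathrm{God}}$ flasque, where $\mathcal{Q} = \mathcal{F}_{\mathrm{God}}/\mathcal{F}$, and this whole sequence depends functorially on $\mathcal{F}$. Feeding it into both long exact sequences and using $\check{H}^{j}(X,\mathcal{F}_{\mathrm{God}}) = H^{j}(X,\mathcal{F}_{\mathrm{God}}) = 0$ for all $j \geq 1$, I get, for $i \geq 2$, that the connecting homomorphisms $\partial\colon \check{H}^{i-1}(X,\mathcal{Q}) \to \check{H}^{i}(X,\mathcal{F})$ and $\partial\colon H^{i-1}(X,\mathcal{Q}) \to H^{i}(X,\mathcal{F})$ are flanked by zeros, hence are isomorphisms. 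The inductive hypothesis applied to $\mathcal{Q}$ in degree $i - 1$ gives $\check{H}^{i-1}(X,\mathcal{Q}) \simeq H^{i-1}(X,\mathcal{Q})$ naturally, and composing the three maps yields $\check{H}^{i}(X,\mathcal{F}) \simeq H^{i}(X,\mathcal{F})$.

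The degree $i = 1$ rung of the induction must be handled on its own, since there the connecting map out of $\mathcal{Q}(X)$ is only surjective: the two long exact sequences become $\mathcal{F}_{\mathrm{God}}(X) \to \mathcal{Q}(X) \to \check{H}^{1}(X,\mathcal{F}) \to 0$ and $\mathcal{F}_{\mathrm{God}}(X) \to \mathcal{Q}(X) \to H^{1}(X,\mathcal{F}) \to 0$ with the same left-hand map, so both $\check{H}^{1}(X,\mathcal{F})$ and $H^{1}(X,\mathcal{F})$ are canonically the cokernel of that map, and thus canonically isomorphic. The main thing to be careful about is not any single estimate but the bookkeeping: checking that the isomorphism built by this dimension-shifting recursion is genuinely natural, i.e. that for a sheaf morphism $\phi\colon \mathcal{F} \to \mathcal{G}$ the induced squares on $\check{H}^{i}$ and $H^{i}$ commute. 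This holds because the Godement embedding, the induced morphism $\mathcal{Q}_{\mathcal{F}} \to \mathcal{Q}_{\mathcal{G}}$, and the connecting homomorphisms are all natural, so the passage $\mathcal{F} \rightsquigarrow \mathcal{Q}$ is itself functorial and the recursion preserves naturality; once this is set up, everything reduces to a formal diagram chase.
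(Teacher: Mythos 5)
Your proposal is correct and follows essentially the same dimension-shifting induction as the paper: identify degree $0$ with global sections, embed $\mathcal{F}$ in a flasque sheaf, handle $i=1$ separately via the tail of the two long exact sequences, and for $i\geq 2$ use the vanishing on flasque sheaves to turn the connecting maps into isomorphisms and apply the inductive hypothesis to the quotient. The only refinement beyond the paper's argument is your explicit use of the Godement embedding to make the recursion functorial, which cleanly justifies the naturality claim that the paper leaves implicit.
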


\begin{proof}
Vamos provar o resultado usando indução em $i$. Se $i=0$ já vimos que, dado um feixe $\mathcal{S}$, tanto $H^0(X,\mathcal{S})$ quanto $\check{H}^0(X,\mathcal{S})$ são naturalmente identificados com o espaço das seções globais de $\mathcal{S}$ (veja equações (\ref{eq:H0cech=global-sec}) e (\ref{eq:H0res=global-sec})). Vemos assim que $\check{H}^0(X,\mathcal{S}) \simeq H^0(X,\mathcal{S})$ para todo feixe $\mathcal{S}$.

Seja agora $\mathcal{F}$ um feixe e considere $0 \to \mathcal{F} \to \mathcal{F}^0 \to \cdots$ uma resolução de $\mathcal{F}$ por feixes flasque. Denotando  $\mathcal{F}' = \mathcal{F}^0 / \mathcal{F}$ temos uma sequência exata curta de feixes $0 \to \mathcal{F} \to \mathcal{F}^0 \to \mathcal{F}' \to 0$. Obtemos assim duas sequências exatas longas na cohomologia, uma para a cohomologia via resoluções e outra para a cohomologia de \v{C}ech:
\begin{equation*}
\begin{matrix}
 0 & \longrightarrow  & H^0(X,\mathcal{F}) & \longrightarrow & H^0(X,\mathcal{F}^0) & \longrightarrow  & H^0(X,\mathcal{F}') & \longrightarrow & H^1(X,\mathcal{F}) & \longrightarrow  0 \\
   & & \downarrow~\simeq & & \downarrow~\simeq & & \downarrow~\simeq & & &  \\
 0 & \longrightarrow  & \check{H}^0(X,\mathcal{F}) & \longrightarrow & \check{H}^0(X,\mathcal{F}^0) &  \longrightarrow  & \check{H}^0(X,\mathcal{F}') & \longrightarrow & \check{H}^1(X,\mathcal{F}) & \longrightarrow  0
 \end{matrix}
\end{equation*}
onde usamos que $H^1(X,\mathcal{F}^0) = 0 =  \check{H}^1(X,\mathcal{F}^0)$ pois $\mathcal{F}^0$ é flasque. Pelo que vimos acima, as flechas verticais são isomorfismos naturais, obtidos da identificação de $H^0$ e $\check H ^0$ com o espaço das seções globais. Vemos portanto que o diagrama acima é comutativo, de onde obtemos o isomorfismo  $\check{H}^1(X,\mathcal{F}) \simeq H^1(X,\mathcal{F})$.\\

Suponha agora que $\check{H}^i(X,\mathcal{S}) \simeq H^i(X,\mathcal{S})$ para algum $i \geq 1$ e para todo feixe $\mathcal{S}$. Dado um feixe  $\mathcal{F}$ tome, como acima, feixes $\mathcal{F}^0$ e $\mathcal{F}'$ com $\mathcal{F}^0$ flasque e de modo que exista a sequência exata $0 \to \mathcal{F} \to \mathcal{F}^0 \to \mathcal{F}' \to 0$. Usando o fato que $H^k(X,\mathcal{F}^0) = \check{H}^k(X,\mathcal{F}^0) = 0$ para $k=i,i+1$ obtemos, das duas sequências longas associadas, as sequências
\begin{equation*}
\begin{matrix}
 0 & \longrightarrow  & H^i(X,\mathcal{F}') & \longrightarrow & H^{i+1}(X,\mathcal{F}) &  \longrightarrow  & 0 \\
   & & & &  &  &  \\
0 & \longrightarrow  & \check{H}^i(X,\mathcal{F}') & \longrightarrow & \check{H}^{i+1}(X,\mathcal{F}) &  \longrightarrow  & 0
 \end{matrix}
\end{equation*}
Da hipótese de indução temos que $\check{H}^i(X,\mathcal{F}') \simeq H^i(X,\mathcal{F}')$ e portanto concluímos que $\check{H}^{i+1}(X,\mathcal{F})$ $\simeq H^{i+1}(X,\mathcal{F})$, terminando a demonstração.
\end{proof}

\chapter{Fibrados de Linha e Divisores} \label{ch:div-lb}

Neste capítulo introduziremos os fibrados de linha e os divisores, dois conceitos centrais na Geometria Complexa. Os dois conceitos surgem naturalmente quando estudamos hipersuperfícies complexas (i.e., subvariedades complexas de codimensão $1$). Um divisor por exemplo é uma soma formal de hipersuperfícies com coeficientes inteiros. Já os fibrados de linha (que são fibrados vetoriais complexos de posto $1$) aparecem, nesse contexto, como sendo o fibrado normal da hipersuperfície.

\section{Fibrados Vetoriais Complexos} \label{sec:cxvb}
Um \emph{fibrado vetorial complexo} sobre uma variedade diferenciável $X$ é uma variedade $E$ com uma aplicação diferenciável $\pi: E \to X$ satisfazendo as seguintes condições:

\begin{enumerate}
	\item Para cada $x \in X$ a fibra $E_x = \pi^{-1}(x)$ é um espaço vetorial complexo.
	\item Existe uma cobertura aberta $\{U_i\}_i$ de $X$ e difeomorfismos
   	  \begin{equation}
	     \varphi_i: \pi^{-1}(U_i) \to U_i \times \C^r
	  \label{eq:triv-vb}
    \end{equation}
    tal que  $\text{pr}_1 \circ \varphi_i = \pi|_{\pi^{-1}(U_i)}$, onde $\text{pr}_1:U_i \times \C^r \to U_i$ é a projeção na primeira coordenada, e de modo que $ \varphi_i|_{E_x}:E_x \to \{x\} \times \C^r$ é um isomorfismo $\C$-linear.
\end{enumerate}

Os difeomorfismos $ \varphi_i$ são chamados \textit{trivializações locais} com respeito a cobertura $\{U_i\}_i$ e o número $r=\dim_{\C}E_x$ (que, quando $X$ é conexa, não depende de $x$ pela condição (\ref{eq:triv-vb})) é chamado \textit{posto} do fibrado $E$. Quando $r=1$ dizemos que $E$ é um \textit{fibrado de linha}.\\

Pela condição 2 temos que as funções
\begin{equation*}
 \varphi_{ij} (x) = ( \varphi_i \circ  \varphi_j^{-1})|_{\{ x\} \times \C^r}
\end{equation*}
são transformações lineares inversíveis, e portanto definem aplicações diferenciáveis
\begin{equation}
 \varphi_{ij}: U_i \cap U_j \to \GL(r,\C)
\label{eq:trans-fcts}
\end{equation}
chamadas \textit{funções de transição}, ou \textit{cociclos} \index{cociclos!de um fibrado vetorial} com respeito a $\{U_i\}_i$.\\

Note que os cociclos satisfazem as seguintes condições
\begin{equation}
 \begin{split}
   \varphi_{ij} \cdot \varphi_{ji} &= I \;\; \text{ em } U_i \cap U_j \\
   \varphi_{ij} \cdot \varphi_{jk} \cdot \varphi_{ki} &= I \;\; \text{ em } U_i \cap U_j \cap U_k
 \end{split}
 \label{eq:cocycle-cond}
\end{equation}

\begin{example}
O fibrado complexo $E = X \times \C^r$ com a projeção na primeira coordenada $\pi: X \times \C^r \to X$ é chamado fibrado trivial. Neste caso os cociclos são constantes iguais à matriz identidade $I \in \GL(r,\C)$.
\end{example}

Dados dois fibrados $\pi_E: E\to X$ e $\pi_F: F \to X$, um \textit{morfismo de $E$ em $F$} é uma aplicação diferenciável $\varphi:E \to F$ tal que $\pi_E = \pi_F \circ \varphi$, a restrição $\varphi_x = \varphi|_{E_x}:E_x \to F_x$ é linear e seu posto independe de $x$. Um \textit{isomorfismo} é um morfismo bijetor. Denotamos por $\Hom(E,F)$ o conjunto dos morfismos de $E$ em $F$ e por $\End(E) = \Hom(E,E)$ o conjunto dos endomorfismos de $E$.

Dado um morfismo $\varphi:E \to F$ definimos $\ker \varphi = \{ v \in E : \varphi_x(v) = 0_x \text{ se } v \in E_x\}$. É fácil ver que $\ker \varphi$ é um subfibrado de $E$. Podemos mostrar também que a imagem $\im \varphi \subset F$ é um subfibrado de $F$.

Dizemos que uma sequência de feixes e morfismos $E \stackrel{\varphi}{\to} F \stackrel{\psi}{\to} G$ é exata em $F$ se $\im \varphi = \ker \psi$. Mais geralmente uma sequência $\cdots \to E_{i-1} \to E_i \to E_{i+1}$ é exata se é exata em cada $E_i$. Note que $0 \to E \to F$ é exata em $E$ se e somente se $E \to F$ é injetora e $E \to F \to 0$ é exata se e somente se $E \to F$ é sobrejetora.

 Da condição (\ref{eq:triv-vb}) vemos que dado um fibrado $\pi: E \to X$, uma trivialização $\pi^{-1}(U) \to U \times \C^r$ é um isomorfismo entre $\pi^{-1}(U) \to U$ e o fibrado trivial sobre $U$. Por essa razão dizemos que todo fibrado vetorial complexo é localmente trivial.
 
  A proposição a seguir, cuja demonstração é simples, dá uma condição em termos de seus cociclos, para que dois fibrados sejam isomorfos.
\begin{proposition} \label{prop:iso-diffvb}
Dois fibrados $E \to X$ e $F \to X$ de posto $r$ são isomorfos se e somente se existe uma cobertura $\mathcal{U} = \{U_i\}_i$ de $X$ e funções diferenciáveis $\lambda_i:U_i \to \GL (r,\C)$ tais que $\phi_{ij} = \lambda_i \psi_{ij} \lambda_j^{-1}$ em $U_i \cap U_j$, onde $\phi_{ij}$ e $\psi_{ij}$ são os cociclos de $E$ e $F$ com relação à $\mathcal{U}$, respectivamente.
\end{proposition}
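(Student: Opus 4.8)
The plan is to prove the two implications separately, the whole argument resting on one elementary observation: a bundle isomorphism between two \emph{trivial} rank-$r$ bundles over an open set $U$ is precisely the datum of a smooth map $U \to \GL(r,\C)$. Indeed, if $\Psi \colon U \times \C^r \to U \times \C^r$ is such an isomorphism, then $\Psi(x,v) = (x,\mu(x)v)$ where $\mu(x) \in \GL(r,\C)$ is the linear automorphism $\Psi|_{\{x\}\times\C^r}$ written in the standard basis; smoothness of $\Psi$ forces $x \mapsto \mu(x)$ to be smooth, and conversely any smooth $\mu \colon U \to \GL(r,\C)$ defines such a $\Psi$.

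For the forward implication, suppose $\Phi \colon E \to F$ is an isomorphism. First I would pass to a common trivializing cover: if $\{V_a\}$ trivializes $E$ and $\{W_b\}$ trivializes $F$, the cover $\{V_a \cap W_b\}$ trivializes both, and the cocycles of $E$ and $F$ relative to it are just the restrictions of the original ones, so it is harmless to assume that a single cover $\mathcal{U} = \{U_i\}$ trivializes both bundles, via $\varphi_i \colon \pi_E^{-1}(U_i) \to U_i \times \C^r$ and $\psi_i \colon \pi_F^{-1}(U_i) \to U_i \times \C^r$. Then $\psi_i \circ \Phi \circ \varphi_i^{-1}$ is an isomorphism of trivial bundles over $U_i$, hence of the form $(x,v) \mapsto (x,\mu_i(x)v)$ for a smooth $\mu_i \colon U_i \to \GL(r,\C)$. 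On $U_i \cap U_j$ we have $\psi_i^{-1}(\id\times\mu_i)\varphi_i = \Phi = \psi_j^{-1}(\id\times\mu_j)\varphi_j$; rewriting this as $(\id\times\mu_i)\,\varphi_i\varphi_j^{-1} = \psi_i\psi_j^{-1}\,(\id\times\mu_j)$ and evaluating both sides at $(x,v)$, using $\varphi_i\varphi_j^{-1}(x,v)=(x,\phi_{ij}(x)v)$ and $\psi_i\psi_j^{-1}(x,v)=(x,\psi_{ij}(x)v)$, gives $\mu_i\,\phi_{ij} = \psi_{ij}\,\mu_j$ on $U_i\cap U_j$. Setting $\lambda_i = \mu_i^{-1}$ yields $\phi_{ij} = \lambda_i\,\psi_{ij}\,\lambda_j^{-1}$, as desired.

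For the converse, suppose we are given a trivializing cover $\mathcal{U} = \{U_i\}$ for both bundles and smooth $\lambda_i \colon U_i \to \GL(r,\C)$ with $\phi_{ij} = \lambda_i\psi_{ij}\lambda_j^{-1}$. On each $U_i$ define $\Phi_i \colon \pi_E^{-1}(U_i) \to \pi_F^{-1}(U_i)$ by $\Phi_i = \psi_i^{-1} \circ (\id \times \lambda_i^{-1}) \circ \varphi_i$; this is a fibrewise-linear diffeomorphism covering the identity. Running the overlap computation of the previous paragraph in reverse, the relation $\lambda_i\psi_{ij} = \phi_{ij}\lambda_j$ is exactly what is needed for $\Phi_i = \Phi_j$ on $U_i \cap U_j$. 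Hence the $\Phi_i$ glue to a globally defined map $\Phi \colon E \to F$, which is smooth (being locally smooth), satisfies $\pi_F \circ \Phi = \pi_E$, and restricts to a linear isomorphism on each fibre; thus $\Phi$ is a bundle isomorphism.

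The routine but slightly fiddly part is keeping the cocycle conventions straight — in particular placing the inverse correctly so that the final relation reads $\phi_{ij} = \lambda_i \psi_{ij}\lambda_j^{-1}$ and not a variant — and checking that the locally defined $\Phi_i$ genuinely agree on overlaps; everything else (smoothness, bijectivity, compatibility with the projections) is immediate from the construction. There is no real obstacle here: the proposition is in essence a restatement of the cocycle description of vector bundles.
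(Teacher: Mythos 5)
Your argument is correct and is precisely the standard cocycle computation that the paper has in mind when it declares the proof ``simples'' and omits it: reduce to a common trivializing cover, observe that an isomorphism of trivial bundles over $U_i$ is a smooth map $\mu_i\colon U_i\to\GL(r,\C)$, and translate the overlap condition into $\mu_i\,\phi_{ij}=\psi_{ij}\,\mu_j$, i.e.\ $\phi_{ij}=\lambda_i\,\psi_{ij}\,\lambda_j^{-1}$ with $\lambda_i=\mu_i^{-1}$. The conventions are kept straight in both directions and the gluing check for the converse is exactly the same identity read backwards, so there is nothing to add.
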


\begin{remark} \label{rmk:cocycles} Os cociclos determinam o fibrado no seguinte sentido: dada uma cobertura $\{U_i\}_i$ de $X$ e funções diferenciáveis $\varphi_{ij} : U_i \cap U_j \to \GL(r,\C)$ satisfazendo as condições (\ref{eq:cocycle-cond}) então
\begin{equation*}
E \simeq \frac{\coprod_i U_i \times \C^r}{(x,v) \in U_i \times \C^r \sim (x,\varphi_{ij}(x)\cdot v) \in U_j \times \C^r } 
\end{equation*}

Isso nos dá uma maneira bastante prática de descrever os fibrados complexos: precisamos apenas especificar funções em abertos de $X$ com valores em $\GL(r,\C)$ satisfazendo certas condições de compatibilidade. Esta descrição também é útil quando queremos construir novos fibrados a partir de fibrados conhecidos, como veremos mais adiante.

 A notação $E \sim \{(U_i,\varphi_{ij})\}$ (ou simplesmente $E \sim \{\varphi_{ij}\}$ quando os abertos estiverem subentendidos) significa que $E$ é o fibrado dado pelos cociclos $\varphi_{ij}$ com relação a cobertura $\{U_i\}_i$
\end{remark}

\begin{example} \label{ex:tang-bundle} \textbf{O Fibrado Tangente de uma Variedade Complexa}

O fibrado tangente de uma variedade complexa admite naturalmente uma estrutura de fibrado vetorial complexo, como veremos a seguir.

Seja $X$ uma variedade complexa e $TX$ o seu fibrado tangente. Vimos na seção \ref{sec:complex-manifolds} que existe um endomorfismo $J:TX\to TX$ satisazendo $J^2 = -\id$ e assim cada $T_pX$ pode ser visto como um espaço vetorial complexo se definirmos a multiplicação por escalar via $(a + \smo b) \cdot v = av +bJ_p v$.

 Se $X = \bigcup U_i$ e $\varphi_i : U_i \to \varphi_i(U_i) \subset \C^n$ são cartas holomorfas, obtemos trivializações
\begin{equation*}
 \begin{split}
 \phi_i: \pi^{-1}(U_i) &\longrightarrow U_i \times \C^n \\ 
 (p,v) &\longmapsto (p, (d \varphi_i)_p \cdot v)
 \end{split}
\end{equation*}

Em coordenadas $\phi_i$, leva a base $\big \{ \frac{\del}{\del x_1}\big|_p,\cdots,\frac{\del}{\del x_n}\big|_p, \frac{\del}{\del y_1}\big|_p \cdots, \frac{\del}{\del y_n}\big|_p \big \} \subset T_p X$ na base $\big \{ \frac{\del}{\del x_1},\cdots, \frac{\del}{\del x_n},$ $\frac{\del}{\del y_1}\cdots, \frac{\del}{\del y_n} \big \}$ de $\C^n$.\\

 É claro que $\phi_i$ é um difeomorfismo e, pela própria definição de $J_p$, temos que $\phi_i|_{T_p X}$ é $\C$-linear. Logo o fibrado tangente $TX \to X$ é naturalmente um fibrado vetorial complexo de posto $n = \dim _{\C}X$.\\

Os cociclos com relação a essa trivialização são dados por
\begin{equation} \label{eq:cocycles-tb}
 \begin{split}
   \phi_{ij} : U_i \cap U_j & \longrightarrow \GL (n,\C)\\
   x & \longmapsto d(\varphi_i \circ \varphi_j^{-1})_{\varphi_j (x)},
 \end{split}
\end{equation}
ou seja, pelo jacobiano da mudança de coordenadas. Aqui identificamos $\GL(n,\C)$ como subgrupo de $\GL(2n,\R)$ via $A + \smo B \mapsto \left ( \begin{smallmatrix} A & B \\ -B & A \end{smallmatrix} \right )$.

Note que, em relação à base $\left \lbrace \frac{\del}{\del x_1},\cdots,\frac{\del}{\del x_n}, \frac{\del}{\del y_1}\cdots, \frac{\del}{\del y_n} \right \rbrace$ de $\C^n$, o jacobiano de $\varphi = \varphi_{ij}$ é representado pela matriz
\begin{equation*}
 J_{\R}\varphi_{ij} = \begin{pmatrix} \left ( \frac{\del u^k_{ij}}{\del x_l} \right )_{k,l} & \left ( \frac{\del u^k_{ij}}{\del y_l} \right )_{k,l} \\
 \left ( \frac{\del v^k_{ij}}{\del x_l} \right )_{k,l} & \left ( \frac{\del v^k_{ij}}{\del y_l} \right )_{k,l}  \end{pmatrix}, \text{ onde } \varphi_{ij} = (u^1_{ij} + \smo v^1_{ij},\cdots,u^n_{ij} + \smo v^n_{ij}).
\end{equation*}
 
Existe uma outra maneira de fazer do fibrado tangente um fibrado vetorial complexo, que é através da complexificação de cada fibra. Vamos relacionar estas duas construções.\\

Denote por $T_{\C}X = TX \otimes \C$ a complexificação do fibrado tangente, ou seja, a fibra sobre $p \in X$ é $T_p X \otimes \C$. Obtemos assim um fibrado vetorial complexo de posto $2n$.

Uma maneira de obter trivializações de $T_{\C}X$ é simplesmente a complexificar as trivializações de $TX$, i.e., se $\varphi(p,v) = (p,\phi \cdot v)$ é uma trivialização de $TX$ então $\varphi_c(p,v \otimes \lambda) = (p,(\varphi \cdot v)\otimes \lambda)$ é uma trivialização de $T_{\C}X$. Mas há uma outra maneira também natural, que veremos a seguir.\\

Da base exibida em (\ref{eq:baseTpX}) obtemos uma base
\begin{equation*}
\left \lbrace \frac{\del}{\del x_1} \bigg|_p \otimes 1, \cdots, \frac{\del}{\del x_n} \bigg|_p \otimes 1 ,\frac{\del}{\del y_1} \bigg|_p \otimes 1 , \cdots, \frac{\del}{\del y_n} \bigg|_p \otimes 1 \right \rbrace \subset T_p X \otimes \C
\end{equation*}
Existe uma outra base natural de $T_p X \otimes \C$, dada por

\begin{equation} \label{eq:baseTpXC}
   \left \lbrace \frac{\del}{\del z_1} \bigg|_p , \cdots, \frac{\del}{\del z_n} \bigg|_p, \frac{\del}{\del \zbar_1} \bigg|_p , \cdots, \frac{\del}{\del \zbar_n} \bigg|_p \right \rbrace,
\end{equation}
onde
\begin{equation} \label{eq:def-partialz}
\frac{\del}{\del z_i} = \frac{1}{2} \left ( \frac{\del}{\del x_i} - \smo \frac{\del}{\del y_i} \right) \text{ e } \frac{\del}{\del \zbar_i} = \frac{1}{2} \left ( \frac{\del}{\del x_i} + \smo \frac{\del}{\del y_i} \right ).
\end{equation}

A aplicação $J$ se estende naturalmente a um endomorfismo $\C$-linear $J:T_{\C}X \to T_{\C}X$ fazendo $J(v \otimes \lambda) = J(v) \otimes \lambda$.

 Como $J$ satisfaz $J^2 = -\text{id}$ temos que cada $J_p$ é diagonalizável, com autovalores $\pm \smo$. Defina
\begin{equation} \label{eq:def-T1,0}
T^{1,0}X = \{ v \in T_{\C}X : Jv = \smo v \} \;\; \text{ e } \;\; T^{0,1}X = \{ v \in T_{\C}X : Jv = -\smo v \}
\end{equation}
Temos então que $T^{1,0}X$ e $T^{1,0}X$ são subfibrados de $T_{\C}X$ tais que 
\begin{equation} \label{eq:dec-Tc}
T_{\C}X = T^{1,0}X \oplus T^{0,1}X
\end{equation}
 e a conjugação complexa $v \otimes \lambda \mapsto v \otimes \overline{\lambda}$ permuta os dois fatores.\\

Note que $\{ \del / \del z_i\}_{i=1,\ldots,n}$ é uma base de $T^{1,0}X$ e $\{ \del / \del \zbar_i\}_{i=1,\ldots,n}$ é uma base de $T^{0,1}X$. Podemos assim obter trivializações de $T^{1,0}X$ e $T^{0,1}X$.\\

As duas construções estão relacionadas: o fibrado tangente $TX$ com a estrutura complexa induzida é isomorfo ao fibrado $T^{1,0}X$. O isomorfismo é dado por
\begin{equation} \label{eq:iso-tang-bundle}
TX \ni v \longmapsto \frac{1}{2}(v - \smo Jv) \in T^{1,0}X.
\end{equation}
segundo o qual a base $\{ \del / \del z_i\}_{i=1,\ldots,n}$ se transforma como na equação (\ref{eq:def-partialz}).\\

Os cociclos de $T_{\C}X$ são as complexificações dos cociclos  $\phi_{ij}$ de $TX$ (cf. \ref{eq:cocycles-tb}), ou seja, são as matrizes $J_{\R}\varphi_{ij}$ vistas como matrizes em $\GL(2n,\C)$.

 Agora, com relação à base $\{\del/ \del z_1, \cdots,\del / \del z_n, \del/ \del \zbar_1, \cdots \del/ \del \zbar_n \}$ de $\R^{2n} \otimes \C = \C^{2n}$ a matriz de $d(\varphi_i \circ \varphi_j^{-1})_{\C}$ fica

\begin{equation*}
\begin{pmatrix} \left ( \frac{\del \varphi^k_{ij}}{\del z_l} \right )_{k,l} & \left ( \frac{\del \varphi^k_{ij}}{\del \zbar_l} \right )_{k,l} \\
 \left ( \frac{\del \overline{\varphi}^k_{ij}}{\del z_l} \right )_{k,l} & \left ( \frac{\del \overline{\varphi}^k_{ij}}{\del \zbar_l} \right )_{k,l}  \end{pmatrix} = \begin{pmatrix} \left ( \frac{\del \varphi^k_{ij}}{\del z_l} \right )_{k,l} & 0 \\
 0 & \left ( \frac{\del \overline{\varphi}^k_{ij}}{\del \zbar_l} \right )_{k,l}  \end{pmatrix}, \text{ onde } \varphi_{ij} = (\varphi^1_{ij},\cdots,\varphi^n_{ij}).
\end{equation*}

Note que isso mostra que os cociclos de $T^{1,0}X$ são dados pelo jacobiano complexo da mudança de coordenadas
\begin{equation*}
J_{\C} \varphi_{ij} = \left ( \frac{\del \varphi^k_{ij}}{\del z_l} \right )_{k,l}
\end{equation*}
e os cociclos de $T^{0,1}X$ são obtidos por conjugação dos cociclos de $T^{1,0}X$.

\end{example}

\subsection{Seções}
Uma seção \index{seção!de um fibrado vetorial} diferenciável de um fibrado complexo $\pi: E \to X$ é uma aplicação que associa a cada ponto $x \in X$ um vetor $s(x)$ na fibra $E_x=\pi^{-1}(x)$ de maneira diferenciável. Mais precisamente, \textit{uma seção diferenciável de~$E$} é uma aplicação diferenciável $s:X \to E$ tal que $\pi \circ s = \text{id}_X$. Com frequência iremos omitir o adjetivo diferenciável.

Todo fibrado complexo admite uma seção canônica, a seção nula, definida por $s(x) = 0_x \in E_x$ para todo~$x$. As trivializações definem isomorfismos locais $\psi_i|_{s(U_i)}:s(U_i) \to U_i \times \{0\}$ que colam a um isomorfismo global $s(X) \simeq X \times \{0\}$, mostrando que a imagem da seção nula é uma subvariedade de $E$ isomorfa a $X$.

Usando partições da unidade é fácil ver que o conjunto de seções de um fibrado é um espaço vetorial de dimensão infinita.\\

Seja $s:X \to E$ uma seção. Restringindo $s$ ao aberto de uma trivialização obtemos $s|_{U_i}:U_i \to \pi^{-1}(U_i)$ e portanto podemos compor com uma trivialização $\varphi_i$ definida em $U_i$. Obtemos assim funções $\widetilde{s}_i = \varphi_i \circ (s|_{U_i}):U_i \to U_i \times \C^r$, que têm a forma
\begin{equation*}
\widetilde{s}_i (x) = (x,s_i(x))\,, \;\;\;\;\; s_i : U_i \to \C^r.
\end{equation*}
Se $x \in U_i \cap U_j$ temos que
\begin{equation*}
\varphi_i^{-1} \circ \widetilde{s}_i (x) = s(x) = \varphi_j^{-1} \circ \widetilde{s}_j (x)
\end{equation*}
e portanto as funções $s_i$ devem satisfazer $s_i = \varphi_{ij} \cdot s_j$ em $U_i \cap U_j$.\\

Reciprocamente, dadas funções $s_i : U_i \to \C^r$ satisfazendo $s_i = \varphi_{ij} \cdot s_j$ em $U_i \cap U_j$ podemos definir uma seção $s:X \to E$ por $s(x) = \varphi_i^{-1}(x,s_i(x))$ se $x \in U_i$.\\

Obtemos assim uma correspondência biunívoca
\begin{equation}
\left \lbrace {\begin{array}{c}
 \text{Seções diferenciáveis}  \\
 \text{ de }  E \sim \{(U_i,\varphi_{ij})\} \\
 \end{array} } \right \rbrace
 \longleftrightarrow
\left \lbrace {\begin{array}{c}
 \text{Funções diferenciáveis}   \\
 s_i: U_i \to \C^r \text{ satisfazendo }  \\
 s_i = \varphi_{ij} \cdot s_j \text{ em } U_i \cap U_j \\
 \end{array} } \right \rbrace
\end{equation}

Essa correspondência nos dá uma outra maneira de pensarmos nas seções de um fibrado complexo: uma seção é uma coleção de funções vetoriais localmente definidas em $X$ satisfazendo certas condições de compatibilidade (que dependem do fibrado).
  
\subsection{Construindo novos fibrados} \label{sec:newbundles}

Usando a observação \ref{rmk:cocycles} podemos estender aos fibrados vetoriais complexos algumas construções feitas com espaços vetoriais.

 Sejam $E \to X$ e $F \to X$ dois fibrados vetoriais complexos sobre $X$ de posto $r$ e $s$ respectivamente. Suponha que $E$ e $F$ sejam trivializados por uma mesma cobertura\footnote{Sempre podemos supor isto pois duas coberturas sempre possuem um refinamento comum.} $\{U_i\}_i$ e sejam $\{\varphi_{ij}\}$ e $\{\psi_{ij}\}$ os cociclos correspondentes.
\begin{enumerate}
	\item A \textit{soma direta} $E \oplus F$ é o fibrado dado pelos cociclos
	\begin{equation*}
	 \phi_{ij}(x) = \begin{pmatrix} \varphi_{ij}(x) & 0 \\ 0 & \psi_{ij} (x)  \end{pmatrix} \in \GL(r+s,\C).
	\end{equation*}
	
	\item O \textit{produto tensorial} $E \otimes F$ é o fibrado dado pelos cociclos
	\begin{equation*}
	\phi_{ij}(x) = \varphi_{ij}(x) \otimes \psi_{ij}(x) \in \GL(rs,\C).
	\end{equation*}
	
	\item O \textit{dual} $E^*$ é o fibrado dado pelos cociclos
	\begin{equation*}
	\phi_{ij}(x) = (\varphi_{ij}(x)^t)^{-1} \in \GL(r,\C).
	\end{equation*}
	
	Utilizando este item e o anterior podemos dar uma estrutura de fibrado vetorial a $\Hom(E,F) = E^*\otimes F$ e $\End(E) = E^* \otimes E$.
	
	\item A \textit{$k$-ésima potência exterior} $E^{\wedge k}$ é o fibrado dado pelos cociclos
	\begin{equation*}
	\phi_{ij}(x) = \wedge^k \varphi_{ij}(x) \in \GL(m,\C), \;\; m = \binom{r}{k}
	\end{equation*}
	No caso em particular em que $k=r$, $\bigwedge^r E = \det E$ é um fibrado de linha, chamado \textit{fibrado determinante de $E$}.

 \item Se $f:Y \to X$ é uma aplicação diferenciável, definimos o \textit{pullback} de $E$ por $f$ como sendo o fibrado $f^*E$ sobre $Y$ dado pelos cociclos $\phi(x) = \varphi_{ij} \circ f (x) \in \GL(r,\C)$. Se $Y \subset X$ e $\iota:Y \to X$ denota a inclusão dizemos que $i^*E$ é a restrição de $X$ a $Y$, e a denotamos por $E|_Y$.
\end{enumerate}

Há ainda uma outra construção importante que podemos aplicar aos fibrados vetoriais complexos, que é a projetivização. Dado um fibrado complexo $E$ de posto $r$ consideremos $s:X \to E$ sua seção nula. A multiplicação fibra a fibra define uma ação livre e própria de $\C^*$ em $E \setminus s(X)$. Definimos a \textit{projetivização de $E$} como sendo o quociente
\begin{equation*}
\pr (E) = \frac{E \setminus s(X)}{\C^*}.
\end{equation*}
A projeção $\pi: E \to X$ define uma aplicação $\Pi:\pr(E) \to X$ cuja fibra sobre $x$ é $\Pi^{-1}(x) = \pr(E_x) \simeq \pr^{r-1}$. Vemos assim que $\pr(E)$ é um fibrado sobre $X$ com fibra típica $\pr^{r-1}$.

Analogamente ao caso de fibrados vetoriais, a projetivização de um fibrado complexo de posto $r$ pode ser dada por meio de cociclos com valores em $\text{PGL}(r,\C)$, \footnote{O grupo $\text{PGL}(r,\C)$ é o quociente de $\GL(r,\C)$ pelo seu centro, formado pelos múltiplos da identidade.} isto é, aplicações $\varphi_{ij}:U_i \cap U_j \to \text{PGL}(r,\C)$ satisfazendo $\varphi_{ij} \cdot \varphi_{ji} = I$ em $U_i \cap U_j$ e $\varphi_{ij} \cdot \varphi_{jk} \cdot \varphi_{ki} = I$ em $U_i \cap U_j \cap U_k$.

\section{Fibrados Holomorfos} \label{sec:hol-vb}
 Suponha agora que $X$ seja uma variedade complexa. Um \textit{fibrado vetorial holomorfo} sobre $X$ é um fibrado vetorial complexo $\pi: E \to X$ tal que $E$ é uma variedade complexa e as trivializações
 \begin{equation*}
   \varphi_i: \pi^{-1}(U_i) \to U_i \times \C^r
 \end{equation*}
são biholomorfismos.

 Equivalentemente, podemos pedir que os cociclos
\begin{equation*}
 \varphi_{ij}: U_i \cap U_j \to \GL(r,\C)
\end{equation*}
sejam aplicações holomorfas.\\

Assim como na categoria diferenciável, podemos recuperar o fibrado holomorfo $E$ a partir de aplicações holomorfas $\varphi_{ij}: U_i \cap U_j \to \GL(r,\C)$ satisfazendo as condições (\ref{eq:cocycle-cond}). Além disso, todas as construções discutidas na seção \ref{sec:cxvb} também funcionam na categoria holomorfa, isto é, a soma direta, o produto tensorial, o dual e a potência exterior de fibrados holomorfos ainda são fibrados holomorfos.\\

\begin{example} \label{ex:hol-tangent-bundle} \textbf{O fibrado tangente holomorfo, o fibrado cotangente e o fibrado canônico}\\
\index{fibrado!tangente holomorfo} \index{fibrado!canônico} Seja $X$ uma variedade complexa e $\varphi_i:U_i \to \C^n$ cartas holomorfas cobrindo $X$. O fibrado tangente holomorfo $\mathcal{T}_X$ é o fibrado holomorfo de posto $n$ definido pelos cociclos
\begin{equation*}
\left ( \frac{\del \varphi^k_{ij}}{\del z_l}\bigg|_{\varphi_j(x)} \right )_{k,l},\; x \in U_i \cap U_j,
\end{equation*}
onde $\varphi_{ij} = (\varphi^1_{ij},\ldots,\varphi^1_{ij})$ é a mudança de coordenadas $\varphi_{ij}=\varphi_i \circ \varphi_j^{-1}:\varphi_j(U_i \cap U_j) \to \varphi_i(U_i \cap U_j)$.

 O fibrado dual $\Omega_X = \mathcal{T}_X^*$ é chamado \textbf{fibrado cotangente holomorfo} e suas potências $\Omega_X^p = \bigwedge^p \Omega_X$ \index{Omega@$\Omega_X^p$} são os \textbf{fibrados de $p$-formas holomorfas}.
 
  Note que $K_X = \bigwedge^n \Omega_X = \det (\Omega_X)$ é um fibrado de linha, chamado \textbf{fibrado canônico de $X$}. Este nome vem do fato de que $K_X$ é um fibrado de linha naturalmente associado a qualquer variedade complexa, e sua definição não requer nehuma informação adicional sobre $X$, como por exemplo a presença de uma métrica riemanniana ou existência de algum mergulho $X \to \pr^N$.\\

Da definição do fibrado tangente holomorfo, vemos que $\mathcal{T}_X$ tem os mesmos cociclos que o fibrado $T^{1,0}X$ (veja o exemplo \ref{ex:tang-bundle}), o que mostra que este fibrado é holomorfo e $\mathcal{T}_X \simeq T^{1,0}X$.

 Da decomposição (\ref{eq:dec-Tc}) obtemos uma decomposição no dual $T_{\C}X^* \doteq TX^* \otimes \C = (T^{1,0}X)^* \oplus (T^{0,1}X)^*$. O espaço $(T^{1,0}X)^*$ é formado pelas 1-formas complexas que se anulam em $T^{0,1}X$, e analogamente para o outro somando, e do isomorfismo $T^{1,0}X \simeq \mathcal{T}_X$ obtemos um isomorfismo $(T^{1,0}X)^* \simeq \Omega_X$.
 
  A decomposição (\ref{eq:dec-Tc}) fornece ainda uma decomposição das potências exteriores
\begin{equation} \label{eq:decomp-ext-power}
\textstyle \bigwedge^k_{\C}X \doteq \bigwedge^k(T_{\C}X)^* = \displaystyle \bigoplus_{p+q=k} \textstyle \bigwedge^{p,q} X, \text{ onde } \textstyle \bigwedge^{p,q} X = \textstyle \bigwedge^p (T^{1,0}X)^* \otimes \bigwedge^q (T^{0,1}X)^*,
\end{equation}
e o isomorfismo $(T^{1,0}X)^* \simeq \Omega_X$ induz isomorfismos $\bigwedge^{p,0}X \simeq \Omega^p_X$.

 Note que temos projeções naturais $\Pi^{p,q}: \bigwedge^k_{\C}X \to \bigwedge^{p,q}X$.
 
  Um caso particular que nos será útil mais adiante é o caso $k=2$. Temos
\begin{equation}
\textstyle \bigwedge^2_{\C}X = \bigwedge^{2,0} X \oplus \bigwedge^{1,1} X \oplus \bigwedge^{0,2} X
\end{equation}
\indent O espaço $\bigwedge^{2,0} X$ (resp. $\bigwedge^{0,2} X$) é formado pelas 2-formas $\alpha(\cdot,\cdot)$ que se anulam se um dos argumentos pertence a $T^{0,1}X$ (resp.  $T^{1,0}X$). Já $\bigwedge^{1,1} X$ é formado pelas 2-formas que se anulam se ambos os argumentos pertencem a $T^{0,1}X$ ou ambos a $T^{0,1}X$.

Os feixes de seções de $\bigwedge^k_{\C} X$ e $\bigwedge^{p,q} X$ serão denotados, respectivamente, por $\mathcal{A}^k_{X,\C}$ e \index{Apq@$\mathcal A_X^{p,q}$} $\mathcal{A}^{p,q}_X$. Também temos projeções $\Pi^{p,q}: \mathcal{A}^k_{X,\C}\to \mathcal{A}^{p,q}_X$.

Em coordenadas, a base $\{\del/\del z_1,\ldots,\del/\del z_n,\del/\del \bar{z}_1,\ldots,\del/\del \bar{z}_n \}$ induz uma base $\{dz_1 \ldots,dz_n,$ $d\bar{z}_1,\cdots,d\bar{z}_n\}$ de $T_{\C}X^*$. Um elemento de $\mathcal{A}^{p,q}_X$ será então uma soma de elementos da forma
\begin{equation*}
\alpha =  f\: dz_{i_1} \wedge \cdots \wedge d z_{i_p} \wedge d\bar{z}_{j_1} \wedge d\bar{z}_{j_q},
\end{equation*}
com $f$ uma função suave.

Os elementos de $\mathcal{A}^{p,q}_X$ são chamados de \textbf{$(p,q)$-formas} \index{formas diferenciais!de tipo $(p,q)$} ou formas de tipo $(p,q)$. Da decomposição (\ref{eq:decomp-ext-power}) vemos que uma $k$-forma complexa se escreve de modo único com uma soma se $(p,q)$-formas com $p+q=k$.
\end{example}

\subsubsection{Os operadores $\del$ e $\overline{\del}$} \index{operador! $\del$ e $\delbar$}
A diferencial exterior $d:\mathcal{A}^k_X \to \mathcal{A}^{k+1}_X$ se estende de modo $\C$-linear a $d:\mathcal{A}^k_{X,\C} \to \mathcal{A}^{k+1}_{X,\C}$. Note que $d$ aumenta o grau total da forma em 1, mas não sabemos em geral como ele afeta o bigrau $(p,q)$. Os operadores $\del$ e $\overline{\del}$ são definidos restringindo $d$ a $\mathcal{A}^{p,q}$ e tomando as partes $(p+1,q)$ e $(p,q+1)$, respectivamente. Mais precisamente:
\begin{equation*}
\del: \mathcal{A}^{p,q}_X \longrightarrow \mathcal{A}^{p+1,q}_X \;\text{ e } \; \overline{\del}: \mathcal{A}^{p,q}_X \longrightarrow \mathcal{A}^{p,q+1}_X
\end{equation*}
são definidos por 
\begin{equation} \label{eq:def-d-dbar}
\del = \Pi^{p+1,q} \circ d \;\; \text{ e } \;\; \overline{\del} = \Pi^{p,q+1} \circ d.
\end{equation}

\indent Em coordenadas, se $\alpha =  f\: dz_{i_1} \wedge \cdots \wedge d z_{i_p} \wedge d\bar{z}_{j_1} \wedge d\bar{z}_{j_q}$, então
\begin{equation*}
\del \alpha  = \sum^n_{k=1} \frac{\del f}{\del z_k} dz_k \wedge dz_{i_1} \wedge \cdots \wedge d z_{i_p} \wedge  d\bar{z}_{j_1} \wedge d\bar{z}_{j_q} \;\; \text{ e }\;\; \overline{\del} \alpha  = \sum^n_{k=1} \frac{\del f}{\del \bar{z}_k} d\bar{z}_k \wedge dz_{i_1} \wedge \cdots \wedge d z_{i_p} \wedge  d\bar{z}_{j_1} \wedge d\bar{z}_{j_q}
\end{equation*}
e para uma $(p,q)$-forma qualquer usamos a linearidade de $\del$ e $\overline{\del}$.\\
\indent Note que, da descrição em coordenadas acima, temos que $d = \del + \delbar$, de onde vemos que $0=d^2=\del^2 + \del \delbar + \delbar\del + \delbar^2$. Como a soma \ref{eq:decomp-ext-power} é direta concluimos vemos que
\begin{equation*}
\del^2 = 0, \;\;\; \delbar^2 = 0 \;\;\; \text{ e } \;\;\; \del \delbar = -\delbar\del.
\end{equation*}

\subsubsection{Cohomologia de Dolbeaut} \label{sec:dolbeaut}

Devido a relação $\delbar^2 = 0$, temos, para cada $p\geq 0$, um complexo de feixes
\begin{equation} \label{eq:dolbeaut-complex}
\mathcal{A}_X^{p,0} \stackrel{\delbar}{\longrightarrow} \mathcal{A}_X^{p,1} \stackrel{\delbar}{\longrightarrow} \mathcal{A}_X^{p,2} \stackrel{\delbar}{\longrightarrow} \cdots
\end{equation}
chamado \textit{complexo de Dolbeaut} \index{complexo!de Dolbeaut}, e os espaços de  cohomologia no nível de seções globais
\begin{equation} \label{eq:dolbeaut-cohomology}
H^{p,q}_{\delbar}(X) = \frac{\ker\{\delbar: \mathcal{A}^{p,q}(X) \to \mathcal{A}^{p,q+1}(X)\}}{\im \{\delbar: \mathcal{A}^{p,q-1}(X) \to \mathcal{A}^{p,q}(X)\}}
\end{equation}
são chamados espaços de cohomologia de Dolbeaut. \index{cohomologia!de Dolbeaut} De certa forma, eles são um análogo complexo aos espaços de cohomologia de de Rham. Assim como no caso da cohomologia de de Rham, temos um análogo ao Lema de Poincaré, o que mostra que $H^{p,q}_{\delbar}(B)=0$ para polidiscos $B \subset \C^n$.

\begin{proposition} \label{prop:poincare-lemma} \index{delbarrapoincare@$\delbar$-Lema de Poincaré} \textbf{$\delbar$-Lema de Poincaré.} Seja $B \subset \C^n$ um polidisco. Se $\alpha \in \mathcal{A}^{p,q}(B)$ satisfaz $\delbar \alpha = 0$ então existe $\beta \in \mathcal{A}^{p,q-1}(B)$ tal que $\alpha = \delbar \beta$.
\end{proposition}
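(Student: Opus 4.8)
The plan is to reduce everything to the one-variable inhomogeneous Cauchy formula and then induct on the number of anti-holomorphic differentials that actually occur in $\alpha$, finishing with an exhaustion argument to pass from subpolydiscs compactly contained in $B$ to all of $B$.

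The analytic building block is the following one-variable fact: if $g$ is smooth on a neighbourhood of the closed disc $\{|w|\le r\}\subset\C$, then
\[
u(z)=\frac{1}{2\pi\smo}\int_{|w|<r}\frac{g(w)}{w-z}\,dw\wedge d\bar w
\]
is smooth and satisfies $\del u/\del\bar z=g$ on $\{|z|<r\}$; moreover, if $g$ depends holomorphically on auxiliary parameters, then so does $u$, by differentiating under the integral sign. This follows from the change of variables $w\mapsto w+z$ together with Stokes' theorem, and I will take it as the local input.

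Since the factor $\bigwedge^p(T^{1,0}B)^*$ is inert under $\delbar$, I may assume $p=0$ and write $\alpha=\sum_{|J|=q}f_J\,d\bar z_J$. Call $\alpha$ \emph{supported on} $d\bar z_1,\dots,d\bar z_k$ if $f_J=0$ whenever $J\not\subseteq\{1,\dots,k\}$. Every form is supported on $d\bar z_1,\dots,d\bar z_n$, so it suffices to prove, by induction on $k$: a $\delbar$-closed $(0,q)$-form supported on $d\bar z_1,\dots,d\bar z_k$ is $\delbar$-exact on every polydisc $B'$ with $\overline{B'}\subset B$. For $k<q$ there is nothing to do since then $\alpha=0$. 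For the inductive step, write $\alpha=d\bar z_k\wedge\gamma+\rho$ with $\gamma=\sum_{K}g_K\,d\bar z_K$ and $\rho$ both supported on $d\bar z_1,\dots,d\bar z_{k-1}$; inspecting the coefficient of $d\bar z_l\wedge d\bar z_k\wedge(\cdots)$ in the equation $\delbar\alpha=0$ for $l>k$ shows that every $f_J$, hence every $g_K$, is holomorphic in $z_{k+1},\dots,z_n$. Using the one-variable formula in the variable $z_k$ on a slightly smaller polydisc, solve $\del h_K/\del\bar z_k=g_K$ with $h_K$ holomorphic in $z_{k+1},\dots,z_n$, and set $\psi=\sum_K h_K\,d\bar z_K$. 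Then $\delbar\psi=d\bar z_k\wedge\gamma+(\text{terms supported on }d\bar z_1,\dots,d\bar z_{k-1})$, so $\alpha-\delbar\psi$ is $\delbar$-closed and supported on $d\bar z_1,\dots,d\bar z_{k-1}$, and the inductive hypothesis applies.

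Finally, exhaust $B=\bigcup_m B_m$ by polydiscs with $\overline{B_m}\subset B_{m+1}$; the previous step gives $\beta_m\in\mathcal{A}^{p,q-1}(B_m)$ with $\delbar\beta_m=\alpha|_{B_m}$. If $q\ge 2$, then $\beta_{m+1}-\beta_m$ is $\delbar$-closed on $B_m$, hence $\delbar$-exact on $B_{m-1}$ by the part already proved, and a cutoff correction makes $\beta_{m+1}$ agree with $\beta_m$ on $B_{m-1}$; the $\beta_m$ then stabilize locally and define a global $\beta$. If $q=1$, then $\beta_{m+1}-\beta_m$ is holomorphic on $B_m$ and can be approximated uniformly on $\overline{B_{m-1}}$ by its Taylor polynomials; subtracting such a polynomial from $\beta_{m+1}$ arranges $\|\beta_{m+1}-\beta_m\|_{\overline{B_{m-1}}}<2^{-m}$, and the telescoping series converges uniformly on compact sets to a $\beta$ with $\delbar\beta=\alpha$. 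The main obstacle is precisely this last patching step: the one-variable computation is routine and the induction is bookkeeping, but one must choose the cutoffs and corrections carefully so that $\delbar$ of the corrected forms is still $\alpha$, and the case $q=1$ genuinely requires polynomial approximation because $\delbar$-closed $(p,0)$-forms are in general not $\delbar$-exact.
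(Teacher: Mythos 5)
Your proof is correct and follows essentially the same route the paper sketches for this proposition (reduction to the one-variable inhomogeneous Cauchy integral formula, induction on the anti-holomorphic differentials occurring in $\alpha$, and an exhaustion of the polydisc); it is precisely the standard argument of Griffiths--Harris, Ch.~0, which is the reference the paper defers to for the details. Your treatment of the exhaustion step is in fact more careful than the paper's sketch, since it correctly separates the case $q\ge 2$ (cutoff correction) from the case $q=1$ (uniform polynomial approximation of the holomorphic differences).
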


A demonstração se reduz essencialmente a resolver equações do tipo $\frac{\del f}{\del \bar z_k} = g$ para $f$, onde $g:B \to \C$ é uma função suave dada, e esse tipo de equação sempre pode ser resolvido em $B$.

No caso unidimensional por exemplo, se $g:B(R) \subset \C \to \C$ é uma função suave em $B(R)$, uma solução para a equação $\frac{\del f}{\del \bar z} = g$ na bola $B(r)$, $r<R$ é dada pela fórmula
\begin{equation*}
f_r(z) = \frac{1}{2\pi \smo}\int_{B(r)} \frac{g(w)}{w-z} dw \wedge d \bar w,
\end{equation*}

Considerando uma sequência $r_n \to R$ e funções $\rho_n:B(R)\to \R$ que valem $1$ em $B(r_n)$ e $0$ fora de $B(r_{n+1})$, as funções $\rho_n f_{r_n}$ convergem para uma solução da equação em $B(R)$.

Para os detalhes e a demonstração completa do lema veja por exemplo \cite{g-h}, cap 0.

\subsubsection{Seções holomorfas}
\indent Podemos também definir seções holomorfas de um fibrado holomorfo $\pi:E \to X$: são aplicações holomorfas $s:X \to E$ satisfazendo $\pi \circ s = \text{id}_X$. Assim como no caso diferenciável temos a correspondência
\begin{equation} \label{eq:hol-sec}
\left \lbrace {\begin{array}{c}
 \text{Seções holomorfas}  \\
 \text{ de }  E \sim \{(U_i,\varphi_{ij})\} \\
 \end{array} } \right \rbrace
 \longleftrightarrow
\left \lbrace {\begin{array}{c}
 \text{Funções holomorfas}   \\
 s_i: U_i \to \C^r \text{ satisfazendo }  \\
 s_i = \varphi_{ij} \cdot s_j \text{ em } U_i \cap U_j \\
 \end{array} } \right \rbrace
\end{equation}

Denotamos por $\mathcal O(E)$ o feixe de seções holomorfas de $E$.

\begin{remark}
O feixe $\mathcal O(E)$ é de um tipo especial. Dizemos que um feixe de $\mathcal O_X$-módulos $\mathcal F$ sobre $X$ é \emph{localmente livre} se para todo ponto $x \in X$ existe uma vizinhança $U$ de $x$ e um inteiro $r$ tal que $\mathcal F|_U \simeq \mathcal O_X^{\oplus r}|_U$. Se o inteiro $r$ não depende de $x$ dizemos que $\mathcal F$ é um feixe localmente livre de posto $r$.

Se $E$ é um fibrado holomorfo de posto $r$, uma trivialização $\psi:E|_U \to U \times \C^r$ fornece um isomorfismo $E|_U \simeq U \times \C^r$ e portanto $\mathcal O(E) \simeq \mathcal O_X^{\oplus r}|_U$, o que mostra que $\mathcal O(E)$ é um feixe localmente livre de posto $r$.

Reciprocamente, dado um feixe $\mathcal F$ sobre $X$ que é localmente livre de posto $r$, podemos cobrir $X$ por abertos $U_i$ de modo que  existam isomorfismos $\psi_i: \mathcal F|_{U_i} \simeq \mathcal O_X^{\oplus r}|_{U_i}$. As composições $\psi_{ij} = \psi_i \circ \psi_j^{-1}: \mathcal O_X^{\oplus r}|_{U_i \cap U_j} \to \mathcal O_X^{\oplus r}|_{U_i \cap U_j}$ são dadas por matrizes de funções holomorfas e portanto são cociclos de um fibrado holomorfo $E$.

É fácil ver que as duas construções acima são mutuamente inversas e portanto obtemos uma correspondência biunívoca
\begin{equation*}
\left \lbrace {\begin{array}{c}
 \text{Fibrados vetoriais holomorfos}  \\
 \text{de posto } r \text{ sobre } X 
 \end{array} } \right \rbrace
 \longleftrightarrow
\left \lbrace {\begin{array}{c}
 \text{Feixes de } \mathcal O_X-\text{módulos}   \\
  \text{localmente livres de posto }r
 \end{array} } \right \rbrace\end{equation*}

Por esse motivo denotaremos também por $E$ o feixe de seções holomorfas de $E$. Note que, com essa convenção, o espaço das seções holomorfas globais de $E$ é $H^0(X,E)$.
\end{remark}

Diferentemente do caso diferenciável, em que o conjunto das seções é um espaço vetorial de dimensão infinita, um fibrado holomorfo não admite muitas seções holomorfas. Por exemplo, se $E=X \times \C$ então suas seções holomorfas são simplesmente funções holomorfas globais em $X$, que devem ser constantes se $X$ é compacta. Logo, nesse caso o espaço das seções é unidimensional.\\
\indent Pode ocorrer ainda que um fibrado holomorfo não admita nehnuma seção holomorfa não trivial. Veja por exemplo o Corolário \ref{cor:dual-lb} e a equação (\ref{eq:sec-Ok<0}).\\

\indent Um \textit{morfismo entre fibrados holomorfos} $\pi_E: E\to X$ e $\pi_F: F \to X$ é uma aplicação holomorfa $\varphi:E \to F$ tal que $\pi_E = \pi_F \circ \varphi$ e induz uma aplicação linear $\varphi_x = \varphi|_{E_x}:E_x \to F_x$  cujo posto independe de $x$, e um \textit{isomorfismo} é um morfismo bijetor. Assim como no caso diferenciável temos

\begin{proposition} \label{prop:iso-holvb}
Dois fibrados holomorfos de posto $r$, $E \to X$ e $F:X \to X$  são isomorfos se e somente se existe uma cobertura $\mathcal{U} = \{U_i\}_i$ de $X$ e funções holomorfas $\lambda_i:U_i \to \GL (r,\C)$ tais que $\phi_{ij} = \lambda_i \psi_{ij} \lambda_j^{-1}$ em $U_i \cap U_j$, onde $\phi_{ij}$ e $\psi_{ij}$ são os cociclos de $E$ e $F$ com relação à $\mathcal{U}$, respectivamente.
\end{proposition}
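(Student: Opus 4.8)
The plan is to follow the proof of Proposition \ref{prop:iso-diffvb}, the analogous statement in the differentiable category, checking holomorphy at each step. First, since any two open covers of $X$ admit a common refinement and the cocycle description of a bundle (Remark \ref{rmk:cocycles}) behaves well under refinement, I may assume from the outset that $E$ and $F$ are both trivialized over the same cover $\mathcal{U} = \{U_i\}_i$, with holomorphic trivializations $\varphi_i\colon E|_{U_i} \to U_i \times \C^r$ and $\psi_i\colon F|_{U_i} \to U_i \times \C^r$ and corresponding holomorphic cocycles $\phi_{ij},\psi_{ij}$.

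For the implication ($\Rightarrow$): given a holomorphic isomorphism $\Phi\colon E \to F$, I consider on each $U_i$ the composite $\psi_i \circ \Phi \circ \varphi_i^{-1}\colon U_i \times \C^r \to U_i \times \C^r$. Since $\Phi$ covers $\mathrm{id}_X$ and restricts to linear isomorphisms on fibers, this composite has the form $(x,v) \mapsto (x, \lambda_i(x)v)$ for a uniquely determined map $\lambda_i\colon U_i \to \GL(r,\C)$; it is holomorphic because $\Phi$, $\varphi_i^{-1}$ and $\psi_i$ are (bi)holomorphic, and its values lie in $\GL(r,\C)$ because $\Phi$ is an isomorphism. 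On $U_i \cap U_j$, inserting $\varphi_i^{-1}\circ\varphi_i = \mathrm{id}$ and $\psi_j^{-1}\circ\psi_j = \mathrm{id}$ and comparing $\psi_i\circ\Phi\circ\varphi_i^{-1}$ with $\psi_j\circ\Phi\circ\varphi_j^{-1}$ reads off the relation between $\lambda_i$, $\lambda_j$, $\phi_{ij}$ and $\psi_{ij}$; up to replacing each $\lambda_i$ by $\lambda_i^{-1}$ this is exactly $\phi_{ij} = \lambda_i \psi_{ij}\lambda_j^{-1}$. Equivalently, one may argue through the correspondence \eqref{eq:hol-sec}: $\Phi$ sends a section with local data $(s_i)$ to one with local data $(\lambda_i s_i)$, and the compatibility conditions $\lambda_i s_i = \psi_{ij}\lambda_j s_j$ together with $s_i = \phi_{ij} s_j$ force the asserted cocycle identity.

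For the implication ($\Leftarrow$): given holomorphic $\lambda_i\colon U_i \to \GL(r,\C)$ with $\phi_{ij} = \lambda_i\psi_{ij}\lambda_j^{-1}$, I use the model of Remark \ref{rmk:cocycles}, realizing $E$ and $F$ as the quotients of $\coprod_i U_i \times \C^r$ by the equivalence relations determined respectively by $\{\phi_{ij}\}$ and $\{\psi_{ij}\}$. I define $\Phi$ chart by chart by $(x,v) \in U_i \times \C^r \mapsto (x, \lambda_i(x)^{-1}v) \in U_i \times \C^r$. The cocycle identity is precisely what is needed for these chartwise maps to be compatible with the gluings, hence to descend to a well-defined map $\Phi\colon E \to F$; it is holomorphic because it is holomorphic in each chart and the charts cover $E$; fiberwise it is the linear isomorphism $v \mapsto \lambda_i(x)^{-1}v$; and it covers $\mathrm{id}_X$. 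The map built the same way from the $\lambda_i$ themselves (rather than $\lambda_i^{-1}$) is a two-sided inverse, so $\Phi$ is an isomorphism of holomorphic bundles.

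There is no serious obstacle here; the content is a bookkeeping exercise. The only points requiring genuine care are the placement of the $\lambda_i$ and their inverses relative to the cocycle convention fixed earlier (so that the identity comes out in the stated form), and the two holomorphy checks: that the locally defined transition matrices $\lambda_i$ arising from an abstract holomorphic isomorphism are themselves holomorphic — which is where the hypothesis that trivializations are biholomorphisms is used — and, conversely, that a map defined holomorphically on each trivializing chart and compatible with the transition functions is holomorphic on the total space $E$.
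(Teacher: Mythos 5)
Your proof is correct and is exactly the standard bookkeeping argument: the paper itself omits the proof, stating the result as "just like the differentiable case" (Proposition \ref{prop:iso-diffvb}), whose proof it also declares simple and leaves out. You correctly identify the only delicate points — the placement of the $\lambda_i$ versus $\lambda_i^{-1}$ relative to the cocycle convention, and the two holomorphy checks — and both directions (reading off $\lambda_i = (\psi_i\circ\Phi\circ\varphi_i^{-1})^{-1}$ from an isomorphism, and gluing the chartwise maps $v\mapsto\lambda_i(x)^{-1}v$ using Remark \ref{rmk:cocycles}) go through as you describe.
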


\indent Da descrição local do operador $\overline{\del}$ (eq.~(\ref{eq:def-d-dbar})), e lembrando que temos um isomorfismo $\bigwedge^{p,0}X \simeq \Omega^p_X$ obtemos o seguinte resultado
\begin{proposition} \label{prop:hol-forms}
O espaço das seções holomorfas globais de $\Omega^p_X$ (isto é, das $p$-formas holomorfas) é dado por $H^0(X,\Omega_X^p) = \{\alpha \in \mathcal{A}^{p,0}(X): \overline{\del}\alpha =0 \}$.
\end{proposition}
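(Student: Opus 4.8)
The plan is to reduce the statement to a local computation in holomorphic coordinates, using the smooth identification $\bigwedge^{p,0}X\simeq\Omega^p_X$ of Example \ref{ex:hol-tangent-bundle} together with the description (\ref{eq:hol-sec}) of holomorphic sections via cocycles. First recall that, by the identification of $H^0$ with global sections and of a holomorphic bundle with its sheaf of holomorphic sections, $H^0(X,\Omega^p_X)$ is the space of global holomorphic sections of the bundle $\Omega^p_X$. Now fix a holomorphic atlas $\{(U_i,\varphi_i)\}$ of $X$ with coordinates $z_1,\ldots,z_n$ on a chart $U=U_i$. I would observe that the $p$-forms $dz_K=dz_{k_1}\wedge\cdots\wedge dz_{k_p}$ with $k_1<\cdots<k_p$ constitute a holomorphic local frame of $\Omega^p_X$ over $U$: each $dz_k=\del z_k$ is a holomorphic $1$-form since $z_k$ is a holomorphic function, and wedge products of holomorphic sections are holomorphic. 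On an overlap $U_i\cap U_j$ the change of coframe $\{dz^{(j)}_K\}\mapsto\{dz^{(i)}_K\}$ is given exactly by the (holomorphic) transition matrices of $\Omega^p_X$ coming from the constructions of Section \ref{sec:newbundles} applied to the holomorphic cotangent bundle of Example \ref{ex:hol-tangent-bundle}. Hence, if a smooth $(p,0)$-form $\alpha$ is written on $U_i$ as $\alpha|_{U_i}=\sum_{|K|=p}f^{(i)}_K\,dz^{(i)}_K$, the coefficient vectors $s_i=(f^{(i)}_K)_K$ are precisely the local data of $\alpha$ under $\bigwedge^{p,0}X\simeq\Omega^p_X$ and transform by $s_i=\varphi_{ij}\cdot s_j$ with $\varphi_{ij}$ the cocycles of $\Omega^p_X$; so by (\ref{eq:hol-sec}) $\alpha$ determines an element of $H^0(X,\Omega^p_X)$ if and only if every local coefficient $f^{(i)}_K$ is holomorphic.

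The second step is the $\delbar$ computation. From the coordinate formula for $\delbar$ recalled in Example \ref{ex:hol-tangent-bundle} and $\C$-linearity, on the chart $U$ with $\alpha=\sum_{|K|=p}f_K\,dz_K$ one has
\[
\delbar\alpha=\sum_{|K|=p}\ \sum_{k=1}^{n}\frac{\del f_K}{\del\bar{z}_k}\,d\bar{z}_k\wedge dz_K.
\]
Since the family $\{dz_K\wedge d\bar{z}_k : |K|=p,\ 1\le k\le n\}$ is part of the standard basis of $\bigwedge^{p,1}X$ over $U$, hence linearly independent, the condition $\delbar\alpha|_U=0$ is equivalent to $\del f_K/\del\bar{z}_k=0$ for all $K$ and all $k$, that is, to every $f_K$ being holomorphic on $U$.

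Combining the two steps, for $\alpha\in\mathcal{A}^{p,0}(X)$ the equation $\delbar\alpha=0$ holds if and only if the local coefficients of $\alpha$ in every holomorphic chart are holomorphic, which by the first step is exactly the condition $\alpha\in H^0(X,\Omega^p_X)$; this is the asserted equality. I do not expect a genuine obstacle here: the only point that deserves care is that ``holomorphic local coefficients'' is a chart-independent condition, but this is immediate from the holomorphy of the transition functions of $\Omega^p_X$ (and is in any case subsumed in the first step through (\ref{eq:hol-sec})). Matching the natural coframe $\{dz_K\}$ with the trivializations appearing in (\ref{eq:hol-sec}) is the most bookkeeping-heavy part, but it is entirely routine.
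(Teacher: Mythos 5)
Your argument is correct and follows exactly the route the paper itself indicates (the paper states this proposition with only the one-line justification ``from the local description of $\delbar$ and the isomorphism $\bigwedge^{p,0}X\simeq\Omega^p_X$''): you identify $\alpha$ locally with its coefficient functions in the holomorphic coframe $\{dz_K\}$, observe via the holomorphy of the transition matrices and (\ref{eq:hol-sec}) that ``holomorphic coefficients'' is chart-independent and characterizes holomorphic sections of $\Omega^p_X$, and then check that $\delbar\alpha=0$ is equivalent to that condition by the coordinate formula for $\delbar$. Nothing is missing; your write-up simply supplies the routine details the paper leaves implicit.
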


\indent Da proposição acima, vemos que o núcleo de $\delbar: \mathcal{A}^{p,0}_X \to \mathcal{A}^{p,1}_X$ é o feixe $\Omega_X^p$ das $p$-formas holomorfas, de onde obtemos um complexo de feixes
\begin{equation*}
0 \longrightarrow \Omega^p_X \longrightarrow \mathcal{A}^{p,0}_X \longrightarrow \mathcal{A}^{p,1}_X \longrightarrow \cdots .
\end{equation*}

\indent Do $\delbar$-lema de Poincaré, temos que o complexo acima é exato, e portanto o complexo $(\mathcal{A}^{p,\bullet}_X,\delbar)$ é uma resolução do feixe $\Omega^p_X$. Como os feixes $\mathcal{A}^{p,q}_X$ são acíclicos, obtemos (veja a proposição \ref{prop:acyclic-res}) o chamado \textbf{isomorfismo de Dolbeaut}
\begin{equation} \label{eq:dolbeaut-iso} \index{isomorfismo de Dolbeaut}
H^{p,q}_{\delbar}(X) \simeq H^q(X,\Omega^p_X).
\end{equation}

\section{Fibrados de linha e o grupo de Picard} \label{sec:lb}

Nesta seção estudaremos fibrados de linha holomorfos. Esses fibrados são importantes por diversas razões. A primeira delas é que estes são os fibrados holomorfos mais simples, e portanto mais fáceis de serem estudados. Uma outra vantagem é que o produto tensorial de fibrados de linha sobre uma variedade $X$ é novamente um fibrado de linha, o que nos permite definir um grupo associado a $X$: o grupo de Picard $\Pic(X)$.\\
\indent Veremos adiante também que fibrados de linha estão intimamente relacionados com subvariedades complexas de codimensão 1 (i.e. hipersuperfícies) e mais geralmente com divisores. Além disso, seções desses fibrados permitem definir aplicações $X \to \pr^N$ e portanto são centrais na interface entre a geometria analítica e a geometria algébrica complexa.\\

\indent Vamos começar estudando fibrados de linha sobre o espaço projetivo $\pr^n$.

\begin{example} \textbf{O fibrado tautológico}\\
\indent Considere o subconjunto de $\pr^n \times \C^{n+1}$ definido por
\begin{equation*}
\mathcal{O}(-1) = \{(\ell,z) \in \pr^n \times \C^{n+1} : z \in \ell\}
\end{equation*}
e seja $\pi: \mathcal{O}(-1) \to \pr^n$ a restrição a $\mathcal{O}(-1)$ da projeção na primeira coordenada.\\
\indent Note que $\pi^{-1}(\ell) = \{ \ell\} \times \ell$, onde vemos o segundo fator como $\ell \subset \C^{n+1}$. Assim a fibra sobre uma reta $\ell$ é naturalmente isomorfa a $\ell$. 

\begin{proposition} \label{prop:taut-lb} O conjunto $\mathcal{O}(-1)$ é uma subvariedade complexa de $\pr^n \times \C^{n+1}$ e  $\pi: \mathcal{O}(-1) \to \pr^n$ é um fibrado de linha holomorfo sobre $\pr^n$.
\end{proposition}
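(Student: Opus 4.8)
The plan is to produce explicit holomorphic trivializations of $\pi : \mathcal{O}(-1) \to \pr^n$ over the standard affine cover $U_i = \{[z_0:\cdots:z_n] : z_i \neq 0\}$, $i = 0,\ldots,n$, and to deduce from them simultaneously that $\mathcal{O}(-1)$ is a complex submanifold of $\pr^n \times \C^{n+1}$ and that $\pi$ is a holomorphic line bundle. Since $\mathcal{O}(-1)$ carries the subspace topology from $\pr^n \times \C^{n+1}$, it is automatically Hausdorff and second countable, so the entire content lies in the local picture over each $U_i$.

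First I would note that $\pi^{-1}(U_i) = (U_i \times \C^{n+1}) \cap \mathcal{O}(-1)$ is open in $\mathcal{O}(-1)$. For $\ell = [a_0:\cdots:a_n] \in U_i$ let $\hat{v}_i(\ell) = (a_0/a_i, \ldots, a_{i-1}/a_i, 1, a_{i+1}/a_i, \ldots, a_n/a_i)$ be the unique representative of $\ell$ whose $i$-th coordinate equals $1$; this defines a holomorphic map $\hat{v}_i : U_i \to \C^{n+1}$. I would then set $\psi_i : U_i \times \C \to \pr^n \times \C^{n+1}$, $\psi_i(\ell,\lambda) = (\ell, \lambda\,\hat{v}_i(\ell))$. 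This $\psi_i$ is holomorphic, its image is exactly $\pi^{-1}(U_i)$ because every $z \in \ell$ is uniquely a scalar multiple of $\hat{v}_i(\ell)$, and postcomposing $\psi_i$ with the holomorphic map $\pr^n \times \C^{n+1} \to \pr^n \times \C$, $(\ell, z) \mapsto (\ell, z_i)$, yields the identity of $U_i \times \C$. A holomorphic map admitting a holomorphic left inverse is an embedding, so $\psi_i$ embeds $U_i \times \C$ onto the submanifold $\pi^{-1}(U_i) \subset \pr^n \times \C^{n+1}$; as these opens cover $\mathcal{O}(-1)$, the latter is a complex submanifold of $\pr^n \times \C^{n+1}$.

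Next I would take the trivializations $\varphi_i := \psi_i^{-1} : \pi^{-1}(U_i) \to U_i \times \C$, which are biholomorphisms given explicitly by $\varphi_i(\ell, z) = (\ell, z_i)$; they satisfy $\text{pr}_1 \circ \varphi_i = \pi$ and restrict on each fiber $\pi^{-1}(\ell) = \{\ell\} \times \ell$ to the map $z \mapsto z_i$, which is a $\C$-linear isomorphism $\ell \to \C$ precisely because $\ell \in U_i$. Finally, a direct computation gives, for $\ell = [a_0 : \cdots : a_n] \in U_i \cap U_j$, $\varphi_i \circ \varphi_j^{-1}(\ell,\mu) = (\ell, (a_i/a_j)\,\mu)$, so the transition functions are $\varphi_{ij}(\ell) = z_i / z_j$: well-defined (homogeneous of degree $0$), holomorphic, and nowhere vanishing on $U_i \cap U_j$, hence taking values in $\GL(1,\C) = \C^*$ and automatically satisfying the cocycle conditions. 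Together these facts say exactly that $\pi : \mathcal{O}(-1) \to \pr^n$ is a holomorphic line bundle.

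I expect the only slightly delicate point to be justifying that $\psi_i$ is an honest embedding, and hence that $\pi^{-1}(U_i)$, and then $\mathcal{O}(-1)$, is an embedded submanifold of $\pr^n \times \C^{n+1}$ rather than merely an immersed image; the observation that $\psi_i$ has the holomorphic left inverse $(\ell, z) \mapsto (\ell, z_i)$ settles this at once, since such a left inverse forces $\psi_i$ to be injective, an immersion, and a homeomorphism onto its image. Everything else is routine bookkeeping with homogeneous coordinates.
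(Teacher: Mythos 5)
Your proof is correct and follows essentially the same route as the paper: the same affine cover $U_i$, the same trivializations $(\ell,z)\mapsto(\ell,z_i)$ with explicit inverse $(\ell,\lambda)\mapsto(\ell,\lambda\,\hat v_i(\ell))$, and the same cocycles $z_i/z_j$. The only difference is that you spell out why $\pi^{-1}(U_i)$ is an embedded submanifold via the holomorphic left inverse, a point the paper passes over by simply declaring the $\psi_i$ to be homeomorphisms serving simultaneously as charts and trivializations.
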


\begin{proof}
Vamos provar as duas afirmações simultaneamente.\\
\indent Considere a cobertura $\pr^n = \bigcup_{i=0}^n U_i$, onde $U_i =\{z_i \neq 0\}$ e sejam
\begin{equation}
 \begin{split}
  \psi_i: \pi^{-1}(U_i) &\longrightarrow U_i \times \C \\
  (\ell,z) &\longmapsto (\ell,z_i)
 \end{split}
\end{equation}

É claro que $\psi_i$ é contínua. A inversa é dada por $\psi_i^{-1}:(\ell,w) \mapsto (\ell,(w \frac{z_0}{z_i},\ldots,w,\ldots,w \frac{z_n}{z_i}))$, onde $\ell=[z_0:\cdots:z_n]$, e é claramente contínua. Logo as $\psi_i$ são homeomorfismos, que dão ao mesmo tempo trivializações locais e coordenadas em $\mathcal{O}(-1)$.

Para ver que o fibrado e as coordenadas são holomorfos basta notar que os cociclos são dados por
\begin{equation}
 \begin{split}
 \psi_{ij}: U_i \cap U_j &\longrightarrow \GL(1,\C) \simeq \C^* \\
  [ z_0:\cdots:z_n ] &\longmapsto \frac{z_i}{z_j}
 \end{split}
\label{eq:cocyle-taut}
\end{equation}
que são claramente holomorfos.
\label{eq:triv-taut}
\end{proof}

O fibrado $\mathcal{O}(-1)$ é chamado \textit{fibrado tautológico sobre $\pr^n$} \index{fibrado!tautológico}. Essa nomenclatura vem do fato de que a fibra sobre uma reta $\ell \in \pr^n$ é naturalmente identificada com a própria reta $\ell$. O dual de $\mathcal{O}(-1)$ é denotado por $\mathcal{O}(1)$ e, mais geralmente, denotamos
\begin{equation} \index{Ok@$\mathcal O(k)$}
\mathcal{O}(k) = \left \lbrace
\begin{split}
&\mathcal{O}(1)^{\otimes k} = \mathcal{O}(1)\otimes \cdots \otimes \mathcal{O}(1) & \text{ se } k>0 \\
&\mathcal{O}_{\pr^n} = \pr^n \times \C & \text{ se } k=0 \\
&\mathcal{O}(-1)^{\otimes k} = \mathcal{O}(-1)\otimes \cdots \otimes \mathcal{O}(-1) & \text{ se } k<0
\end{split}
\right .
\label{eq:def-Ok}
\end{equation}

\end{example}

\begin{proposition} \label{prop:dual-lb}
Seja $L\to X$ um fibrado de linha holomorfo e $L^*$ o seu dual. Então $L \otimes L^* \simeq \mathcal{O}_X$, onde $ \mathcal{O}_X$ é o fibrado de linha trivial sobre $X$.
\end{proposition}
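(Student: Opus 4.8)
The plan is to reduce everything to cocycles, using the constructions of Section~\ref{sec:newbundles} together with the isomorphism criterion of Proposition~\ref{prop:iso-holvb}. First I would fix an open cover $\mathcal{U} = \{U_i\}$ of $X$ over which $L$ trivializes, with holomorphic transition functions $\varphi_{ij}\colon U_i \cap U_j \to \GL(1,\C)\simeq\C^*$ satisfying the cocycle conditions~(\ref{eq:cocycle-cond}). Since a $1\times 1$ matrix equals its own transpose, the dual bundle $L^*$ is given, over the same cover, by the cocycles $(\varphi_{ij}^t)^{-1} = \varphi_{ij}^{-1}$. Then the tensor product $L\otimes L^*$ has cocycles $\varphi_{ij}\otimes\varphi_{ij}^{-1}$, which for line bundles is simply the product $\varphi_{ij}\cdot\varphi_{ij}^{-1}\equiv 1$.

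Thus $L\otimes L^*$ is the bundle determined by the constant cocycle $1\in\GL(1,\C)$ relative to $\mathcal{U}$, which is precisely the cocycle description of the trivial bundle $\mathcal{O}_X = X\times\C$ (cf. Remark~\ref{rmk:cocycles}). Invoking Proposition~\ref{prop:iso-holvb} with $\lambda_i\equiv 1$ then yields $L\otimes L^* \simeq \mathcal{O}_X$.

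A slightly more intrinsic alternative I would also mention is to construct the isomorphism by hand: the fiberwise evaluation pairing $L_x\otimes L_x^*\to\C$, $v\otimes\xi\mapsto\xi(v)$, assembles into a holomorphic bundle morphism $L\otimes L^*\to\mathcal{O}_X$, and since $V\otimes V^*\to\C$ is an isomorphism for any one-dimensional vector space $V$, this morphism is a fiberwise isomorphism, hence an isomorphism of line bundles; holomorphy is clear because in local trivializations the pairing is just multiplication $\C\times\C\to\C$. There is no real obstacle here: the entire content is the commutativity of $\GL(1,\C)$ and the identity $g\cdot g^{-1}=1$. The only mild point to verify is that transposition acts trivially on $1\times 1$ matrices, so that the cocycles of $L^*$ are literally the inverses of those of $L$.
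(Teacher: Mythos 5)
Sua demonstra��o est� correta e segue essencialmente o mesmo caminho da prova do texto: os cociclos de $L^*$ s�o $\varphi_{ij}^{-1}$, e os do produto tensorial s�o $\varphi_{ij}\cdot\varphi_{ij}^{-1}\equiv 1$, que s�o os cociclos do fibrado trivial. O argumento alternativo via o pareamento de avalia��o � um complemento v�lido, mas n�o muda a ess�ncia do racioc�nio.
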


\begin{proof}
Sejam $\{ \varphi_{ij} \}$ os cociclos de $L$ associados a uma cobertura $\{U_i\}$. Então, em relação a mesma cobertura, o fibrado $L^*$ é dado pelos cociclos $\{ \varphi_{ij}^{-1} \}$. O produto tensorial então é dado pelos cociclos $\{\varphi_{ij}\cdot \varphi_{ij}^{-1} \equiv 1\}$, que são os cociclos do fibrado trivial.
\end{proof}

\begin{corollary} \label{cor:dual-lb}
Seja $L$ um fibrado de linha sobre uma variedade compacta $X$. Se $L$ e $L^*$ admitem seções não triviais então $L$ é trivial.
\end{corollary}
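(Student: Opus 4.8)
The plan is to pair the two sections into a single global holomorphic function. Let $s \in H^0(X,L)$ and $t \in H^0(X,L^*)$ be the given nontrivial holomorphic sections. Since $L^*_x = (L_x)^*$, at each point $x$ the evaluation $\langle t(x), s(x) \rangle \in \C$ makes sense, and under the isomorphism $L \otimes L^* \simeq \mathcal{O}_X$ of Proposition \ref{prop:dual-lb} this is exactly the function determined by $s \otimes t \in H^0(X, L \otimes L^*) \simeq H^0(X,\mathcal{O}_X)$. We may assume $X$ connected (otherwise we argue on each connected component), so by Corollary \ref{cor:compact-constant} this function is a constant $c \in \C$.

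The heart of the argument is to show that $c \neq 0$. Suppose $c = 0$. First I would check that the zero locus $Z(s) = \{x : s(x) = 0\}$ has empty interior: the set of points around which $s$ vanishes identically is open and closed --- it is closed because on a connected coordinate neighborhood $V$ where $s$ is represented by a holomorphic function $s_V$, if $s_V$ vanishes on some nonempty open subset then $s_V \equiv 0$ on $V$ by the Identity Principle --- hence it is empty, since $s \not\equiv 0$. Thus $X \setminus Z(s)$ is dense. Now for $x \notin Z(s)$ the vector $s(x)$ spans the line $L_x$, so $\langle t(x), s(x) \rangle = 0$ forces $t(x) = 0$; therefore $t$ vanishes on a dense set and, by continuity, $t \equiv 0$, contradicting the hypothesis. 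Hence $c \neq 0$.

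It follows that $s$ is nowhere vanishing: if $s(x) = 0$ then $\langle t(x), s(x) \rangle = 0 \neq c$. Finally I would use a nowhere-vanishing holomorphic section to trivialize $L$. In cocycle terms, using the correspondence (\ref{eq:hol-sec}), $s$ is given by holomorphic functions $s_i : U_i \to \C$ with $s_i = \varphi_{ij} s_j$ on $U_i \cap U_j$, where $\{\varphi_{ij}\}$ are the cocycles of $L$; since $s$ never vanishes, each $s_i$ takes values in $\C^*$, and the relation reads $\varphi_{ij} = s_i \cdot 1 \cdot s_j^{-1}$. By Proposition \ref{prop:iso-holvb}, applied with $\lambda_i = s_i$ and $\psi_{ij} \equiv 1$ (the cocycles of $\mathcal{O}_X$), this gives $L \simeq \mathcal{O}_X$, as desired.

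The main obstacle is the middle step --- ruling out $c = 0$ --- which is where one genuinely uses that both sections are nontrivial and where the Identity Principle (i.e. the analyticity of $X$) enters; the first and last steps are essentially bookkeeping with definitions already set up in this chapter.
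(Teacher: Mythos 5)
Your proof is correct and follows essentially the same route as the paper's: form the global holomorphic function $s\otimes t \in H^0(X,L\otimes L^*)\simeq H^0(X,\mathcal O_X)$, use compactness to see it is constant, and use the Identity Principle to rule out the constant being zero when both sections are nontrivial. You merely reorganize the logic (arguing directly that $c\neq 0$ rather than by contradiction from $L$ nontrivial) and fill in the two steps the paper leaves implicit, namely the density of $X\setminus Z(s)$ and the cocycle trivialization of $L$ by a nowhere-vanishing section.
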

\begin{proof}
Sejam $s$ e $t$ seções de $L$ e $L^*$ respectivamente. Então $f=s\otimes t$ é uma seção de $L\otimes L^* \simeq \mathcal{O}_X$, isto é, uma função holomorfa global. Como $X$ é compacta segue que $f$ é constante.\\
\indent Suponha $L$ não trivial. Então $s(x)=0$ para algum $x$ e portanto $f \equiv 0$. Mas então $s \otimes t = 0$ e pelo princípio da identidade devemos ter $s=0$ ou $t=0$, ou seja uma das seções deve ser trivial.
\end{proof}
\indent A proposição \ref{prop:dual-lb} nos permite definir um grupo abeliano associado a $X$

\begin{definition}
O \textbf{grupo de Picard de $X$}, \index{grupo de Picard} denotado por $\Pic (X)$, é o grupo formado pelas classes de isomorfismo de fibrados de linha holomorfos cuja operação de grupo é o produto tensorial $(L,L') \mapsto L \otimes L'$, o elemento neutro é o fibrado trivial $\mathcal{O}_X$ e a inversão é a dualização $L \mapsto L^*$.
\end{definition}

\indent Em termos de trivializações locais, se $L \sim \{\varphi_{ij}\}$ e $L' \sim \{\varphi'_{ij}\}$, temos
\begin{equation*}
L \otimes L' \sim \{\varphi_{ij} \cdot \varphi'_{ij}\} \;\; \text{ e } \;\; L^* = \{ \varphi_{ij}^{-1}\}
\end{equation*}

\begin{remark} \label{rmk:cocyclesOk}
Da demonstração da proposição \ref{prop:taut-lb}, vemos que o fibrado tautológico $\mathcal{O}(-1)$ é dado pelos cociclos $\{z_i/z_j\}$, com relação à cobertura padrão de $\pr^n$. Logo, os fibrados $\mathcal{O}(k)$ são dados pelos cociclos $\{z_j^k/z_i^k\}$. Note que $\mathcal{O}(m) \otimes \mathcal{O}(n) = \mathcal{O}(m+n)$.\\
\end{remark}

\begin{example} \label{ex:sec-Ok} \index{seções de $\mathcal O(k)$} \textbf{Seções holomorfas de $\mathcal{O}(k)$}\\
\indent Seja $s \in \C[z_0,\cdots,z_n]$ um polinômio homogêneo de grau $k>0$. A partir de $s$ podemos definir uma seção holomorfa global de $\mathcal{O}(k)$ sobre $\pr^n$ da seguinte maneira.\\
\indent Se $U_i=\{z_i \neq 0\} \subset \pr^n$, defina $s_i: U_i \to \C$ por $s_i([z_0:\cdots:z_n]) = s\left(\frac{z_0}{z_i},\cdots,1,\cdots,\frac{z_n}{z_i} \right)$. É claro que $s_i$ está bem definida e é holomorfa em $U_i$.\\
\indent Da homogeneidade de $s$ vemos que
\begin{equation*}
s_i = s\left(\frac{z_0}{z_i},\cdots,1,\cdots,\frac{z_n}{z_i} \right) = \frac{1}{z_i^k}s(z_0,\cdots,z_n) = \frac{z_j^k}{z_i^k}s\left(\frac{z_0}{z_j},\cdots,1,\cdots,\frac{z_n}{z_j} \right) = \frac{z_j^k}{z_i^k}s_j
\end{equation*}
e como $z_j^k/z_i^k$ são os cociclos de $\mathcal{O}(k)$ com relação a cobertura $\{U_i,i=0\cdots,n\}$, obtemos, pela correspondência (\ref{eq:hol-sec}) uma seção $s \in H^0(\pr^n,\mathcal{O}(k))$.\\

\indent Reciprocamente, seja $t \in H^0(\pr^n,\mathcal{O}(k))$ com $k > 0$. Fixe $s_0 \in H^0(\pr^n,\mathcal{O}(k))$ dada por um polinômio como acima. Note que a razão $F = t/s_0$ define uma função meromorfa em $\pr^n$. Compondo com a projeção $\pi:\C^{n+1}\setminus \{0\} \to \pr^n$ obtemos uma função meromorfa em $\C^{n+1}\setminus\{0\}$, $\widetilde{F}=F \circ \pi$. A função $G=s_0 \widetilde{F}$ é holomorfa em $\C^{n+1}\setminus\{0\}$ e portanto, pelo teorema de Hartogs (Teorema \ref{thm:hartogs}), se estende a $\C^{n+1}$.\\
\indent Note que $\widetilde{F}$ é invariante pela ação de $\C^*$ e portanto, da homogeneidade de $s_0$, vemos que $G(\lambda z) = \lambda^k G(z)$ para todo $\lambda \in \C$, $z \in \C^{n+1}$. Olhando para a expansão de $G$ em série de potências em torno de $z=0$ vemos que $G$ deve ser um polinômio homogêneo de grau $k$, que induz a seção $t$.\\

\indent Obtemos assim uma identificação
\begin{equation} \label{eq:sec-Ok>0}
H^0(\pr^n,\mathcal{O}(k)) \simeq \left\lbrace \begin{split} \text{Polinômios } &\text{homogêneos } \\ \text{ de grau } &k \text{ em } \C^{n+1} \end{split} \right\rbrace
\end{equation}
para $k > 0$.\\

\indent Observe que de um lado temos um objeto analítico, isto é, seções holomorfas de um fibrado de linha, enquanto de outro um objeto algébrico (polinômios). Esse tipo de correspondência é bastante importante e muito útil na geometria complexa, sendo responsável pela transformação da geometria complexa analítica em $\pr^n$ em geometria algébrica projetiva. Esses pensamentos podem ser levados mais adiante, levando a resultados como o teorema de Chow (toda subvariedade complexa fechada de $\pr^n$ é algébrica) e, em um nível mais sofisticado a correspondência GAGA (\textit{Géometrie Algébrique et Géométrie Analytique}) de Serre.\\

\indent Do corolário \ref{cor:dual-lb} vemos também que
\begin{equation} \label{eq:sec-Ok<0}
H^0(\pr^n,\mathcal{O}(k))=0 \text{ para } k<0.
\end{equation}
\end{example}

\indent Observe que, das equações (\ref{eq:sec-Ok>0}) e (\ref{eq:sec-Ok<0}), vemos que nenhum dos fibrados $\mathcal{O}(k), k \neq 0$ é trivial, pois $\dim H^0(\pr^n,\mathcal{O}_{\pr^n}) = 1$.\\

\indent A seguinte proposição mostra que o grupo de Picard de $X$ também pode ser visto como um grupo de cohomologia.

\begin{proposition} \label{prop:isopic}
Existe um isomorfismo natural
\begin{equation*}
\Pic (X) \simeq H^1(X,\mathcal{O}^*_X)
\end{equation*}
\end{proposition}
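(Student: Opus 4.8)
The plan is to realize both $\Pic(X)$ and $H^1(X,\mathcal O^*_X)$ as the Čech cohomology group $\check H^1(X,\mathcal O^*_X)$, working first relative to a fixed cover and then passing to the direct limit. Fix an open cover $\mathcal U=\{U_i\}_{i\in I}$ of $X$. A holomorphic line bundle $L$ trivialized over $\mathcal U$ is determined, up to isomorphism, by its transition functions $\varphi_{ij}\in\mathcal O^*_X(U_i\cap U_j)$, and by \eqref{eq:cocycle-cond} these satisfy $\varphi_{ij}\varphi_{ji}=1$ on $U_i\cap U_j$ and $\varphi_{ij}\varphi_{jk}\varphi_{ki}=1$ on $U_i\cap U_j\cap U_k$. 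Using the first relation to express the reversed-index terms, the essential data is a family $(\varphi_{ij})_{i<j}$, i.e.\ an element of $\check C^1(\mathcal U,\mathcal O^*_X)$; writing the coboundary of Section~\ref{sec:cech-cohom} multiplicatively (since $\mathcal O^*_X$ is a sheaf of multiplicative groups), $(\delta\varphi)_{ijk}=\varphi_{jk}\cdot\varphi_{ik}^{-1}\cdot\varphi_{ij}$, the triple cocycle condition is exactly $\delta\varphi=0$. Conversely, by the holomorphic version of Remark~\ref{rmk:cocycles}, every $1$-cocycle $(\varphi_{ij})\in\check Z^1(\mathcal U,\mathcal O^*_X)$ is the cocycle of a holomorphic line bundle. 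So there is a surjection from $\check Z^1(\mathcal U,\mathcal O^*_X)$ onto the set of isomorphism classes of line bundles trivialized over $\mathcal U$.

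Next I would identify the fibers of this surjection. By Proposition~\ref{prop:iso-holvb}, two line bundles with cocycles $\{\varphi_{ij}\}$ and $\{\psi_{ij}\}$ relative to $\mathcal U$ are isomorphic iff there are $\lambda_i\in\mathcal O^*_X(U_i)$ with $\varphi_{ij}=\lambda_i\psi_{ij}\lambda_j^{-1}$ on $U_i\cap U_j$; since $\lambda_i\lambda_j^{-1}=(\delta\lambda)_{ij}$, this says precisely that $\varphi$ and $\psi$ differ by a Čech coboundary. Hence the isomorphism classes of line bundles trivialized over $\mathcal U$ are in bijection with $\check H^1(\mathcal U,\mathcal O^*_X)=\check Z^1(\mathcal U,\mathcal O^*_X)/\check B^1(\mathcal U,\mathcal O^*_X)$. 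This bijection is a group homomorphism: the tensor product of line bundles multiplies transition functions, which is the group law of $\check H^1$ with values in the multiplicative sheaf $\mathcal O^*_X$; the trivial bundle corresponds to the class of the constant cocycle $1$; and dualization corresponds to inversion of cocycles, compatibly with Proposition~\ref{prop:dual-lb}.

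Finally I would pass to the limit. Every holomorphic line bundle is locally trivial, hence trivialized over some cover, and if $\mathcal V\preceq\mathcal U$ then restricting a trivialization of $L$ over $\mathcal U$ to one over $\mathcal V$ corresponds, under the bijections above, to the refinement map $\check H^1(\mathcal U,\mathcal O^*_X)\to\check H^1(\mathcal V,\mathcal O^*_X)$ of Section~\ref{sec:cech-cohom}; thus the isomorphisms for the various covers are compatible and assemble, after taking the direct limit \eqref{eq:cech-cohom}, into a group isomorphism $\Pic(X)\simeq\check H^1(X,\mathcal O^*_X)$. Since $X$ is a complex manifold it is paracompact, so Proposition~\ref{prop:iso-cech-res} gives $\check H^1(X,\mathcal O^*_X)\simeq H^1(X,\mathcal O^*_X)$, and composing the two isomorphisms proves the proposition. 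The main obstacle is purely bookkeeping: reconciling the transition-function data, which a priori lives over all ordered pairs $(i,j)$, with the paper's Čech complex, whose cochains are indexed only by strictly increasing tuples (this is where the relations $\varphi_{ij}\varphi_{ji}=1$ and the normalization $\varphi_{ii}=1$ are used), and checking that the per-cover identifications are genuinely natural and commute with refinement so that the direct limit is well defined.
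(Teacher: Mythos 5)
Your proposal is correct and follows essentially the same route as the paper's proof: cocycles of a trivialized bundle give a class in $\check H^1(\mathcal U,\mathcal O^*_X)$, Proposi\c c\~ao \ref{prop:iso-holvb} identifies isomorphism with differing by a coboundary, Observa\c c\~ao \ref{rmk:cocycles} gives surjectivity, and one passes to the direct limit and invokes Proposi\c c\~ao \ref{prop:iso-cech-res}. The only addition is your explicit bookkeeping about ordered versus strictly increasing index tuples, which the paper leaves implicit.
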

\begin{proof}
Dada uma cobertura $\mathcal{U}$ que trivializa um fibrado de linha, os cociclos associados são funções holomorfas $\varphi_{ij}:U_i \cap U_j \to \C^*$, ou seja, $\varphi_{ij} \in \mathcal{O}^*_X(U_i\cap U_j)$. Das condições de cociclo (\ref{eq:cocycle-cond}) vemos que $\delta (\varphi_{ij}) = 0$, e portanto $(\varphi_{ij})$ define uma classe em $\check{H}^1(\mathcal{U},\mathcal{O}^*_X)$.\\
\indent Construimos assim um homomorfismo
\begin{equation*}
\Pic (X) \to \check{H}^1(\mathcal{U},\mathcal{O}^*_X).
\end{equation*}
\indent Pela proposição \ref{prop:iso-holvb} vemos que dois fibrados $L$ e $L'$ são isomorfos se e somente se seus cociclos em relação a alguma cobertura satisfazem $\varphi_{ij} = \lambda_i \psi_{ij} \lambda_j^{-1}$ em $U_i \cap U_j$ com $\lambda_i \in \mathcal{O}^*(U_i)$ e $\lambda_j \in \mathcal{O}^*(U_j)$, ou seja, se somente se $\varphi_{ij}$ e $\psi_{ij}$ diferem pelo cobordo $\lambda_i \lambda_j^{-1} \in \mathcal{O}^*(U_i \cap U_j)$. Isso mostra que o homomorfismo acima é injetor se tomarmos uma cobertura suficientemente fina.\\
\indent O homomorfismo acima também é sobrejetor: pela observação \ref{rmk:cocycles}, dados cociclos $\{\varphi_{ij}\}$ com relação a $\mathcal{U}$ existe um fibrado de linha $L$ que tem $\{\varphi_{ij}\}$ como funções de transição, e pela proposição \ref{prop:iso-holvb}, a classe de isomorfismo de $L$ independe do representante de $\{\varphi_{ij}\}$ em $\check{H}^1(\mathcal{U},\mathcal{O}^*_X)$.\\
\indent Temos então uma coleção de homomorfismos $\Pic (X) \to \check{H}^1(\mathcal{U},\mathcal{O}^*_X)$ que comutam com as aplicações  $\check{H}^1(\mathcal{U},\mathcal{O}^*_X) \to \check{H}^1(\mathcal{V},\mathcal{O}^*_X)$ induzidas por um refinamento $\mathcal{V} \prec \mathcal{U}$. E além disso $\Pic (X) \simeq \check{H}^1(\mathcal{U},\mathcal{O}^*_X)$ tomando $\mathcal{U}$ suficientemente fina. Assim, tomando o limite direto obtemos 
\begin{equation*}
\Pic (X) \simeq \lim_{\longrightarrow} \check{H}^1(\mathcal{U},\mathcal{O}^*_X) = \check{H}^1(X,\mathcal{O}^*_X).
\end{equation*}

\indent Como $\check{H}^q(X,\mathcal{F}) \simeq H^q(X,\mathcal{F})$ (cf. Proposição \ref{prop:iso-cech-res}), o resultado segue.
\end{proof}

\subsection{A sequência exponencial} \label{subsec:expseq} \index{sequência!exponencial}

A exponencial de funções holomorfas em um aberto $U \subset X$ define um morfismo de feixes $\mathcal{O}_X \to \mathcal{O}^*_X$, dado por
\begin{equation*}
\mathcal{O}(U) \ni f \longmapsto \exp(2 \pi \smo f) \in \mathcal{O}^*(U),
\end{equation*}
lembrando que vemos $\mathcal{O}_X$ como feixe aditivo e $\mathcal{O}^*_X$ como feixe multiplicativo.\\
\indent Se $U$ é conexo então o núcleo de $\exp:\mathcal{O}_X (U)\to \mathcal{O}_X^*(U)$ é formado pelas funções constantes em $U$ com valores inteiros e portanto o núcleo do homomorfismo $\exp: \mathcal{O}_X \to \mathcal{O}^*_X$ é o feixe localmente constante $\Z$. A existência do logaritmo assegura que $\exp:\mathcal{O} (U)\to \mathcal{O}^*(U)$ é sobrejetora para todo aberto simplesmente conexo suficientemente pequeno.\\

\indent Obtemos assim uma sequência exata de feixes
\begin{equation} \label{eq:expseq}
0 \longrightarrow \Z \longrightarrow \mathcal{O}_X \longrightarrow \mathcal{O}^*_X\longrightarrow 0
\end{equation}
chamada \textbf{sequência exponencial} em $X$.\\

\indent A sequência exata longa associada à sequência exponencial
\begin{equation*}
\begin{split}
0 &\longrightarrow H^0(X,\Z) \longrightarrow H^0(X,\mathcal{O}_X) \longrightarrow H^0(X,\mathcal{O}^*_X) \longrightarrow \\
&\longrightarrow H^1(X,\Z) \longrightarrow H^1(X,\mathcal{O}_X) \longrightarrow H^1(X,\mathcal{O}^*_X) \longrightarrow \\
&\longrightarrow H^2(X,\Z) \longrightarrow \cdots
\end{split}
\end{equation*}
contém algumas informações interessantes sobre $X$.\\

\indent Destaquemos a seguinte parte da sequência
\begin{equation} \label{eq:explongseq}
H^1(X,\Z) \longrightarrow H^1(X,\mathcal{O}_X) \longrightarrow H^1(X,\mathcal{O}^*_X) \longrightarrow H^2(X,\Z)
\end{equation}

\indent Primeiramente note que, quando $X$ é compacta, a aplicação $H^1(X,\Z) \to H^1(X,\mathcal{O}_X)$ é injetora. De fato: para $X$ compacta temos que $H^0(X,\mathcal{O}_X) = \C$ e $H^0(X,\mathcal{O}^*_X) = \C^*$, e $\exp:H^0(X,\mathcal{O}_X) \to H^0(X,\mathcal{O}^*_X)$ é a exponencial complexa usual, que é sobrejetora. Portanto $H^0(X,\mathcal{O}^*_X)$ $\to H^1(X,\Z)$ é a aplicação nula, de onde segue a injetividade de $H^1(X,\Z) \to H^1(X,\mathcal{O}_X)$.\\

\indent A sequência (\ref{eq:explongseq}) nos dá uma maneira de calcular o grupo de Picard, $\Pic (X) = H^1(X,\mathcal{O}^*_X)$ conhecendo os grupos $H^1(X,\mathcal{O}_X)$, $H^i(X,\Z)$, $i=1,2$, e as aplicações induzidas entre eles. Note que $H^i(X,\Z)$ é o $i$-ésimo grupo de cohomologia singular de $X$ (ver exemplo \ref{ex:sing-cohom}), que é usualmente calculado por métodos topológicos (como por exemplo pela sequência de Mayer-Vietoris).\\

\indent A aplicação de cobordo $H^1(X,\mathcal{O}^*_X) \to H^2(X,\Z)$ nos dá um invariante importante associado aos fibrados de linha:

\begin{definition} \label{def:chernclass}
A \textbf{primeira classe de Chern} \index{primeira classe de Chern!de um fibrado de linha} de um fibrado $L \in \Pic(X)$, denotada por $c_1(L)$, é a imagem de $L$ pela aplicação de cobordo
\begin{equation*}
\Pic(X) = H^1(X,\mathcal{O}^*_X) \ni L \longmapsto c_1(L) \in H^2(X,\Z).
\end{equation*}
\end{definition}

\indent Existem algumas definições equivalentes da primeira classe de Chern, como por exemplo como sendo a classe de cohomologia da forma de curvatura de $L$ com relação a alguma métrica hermitiana. Falaremos disso  no capítulo \ref{ch:vb}.\\

É possível dar uma descrição explícita da primeira classe de Chern de um fibrado de linha holomorfo em termos de seus cociclos. Suponha que $L$ seja trivializado sobre uma cobertura $\mathcal U = (U_i)_i$ e sejam $\varphi_{ij} \in \mathcal O_X^*(U_i \cap U_j)$ os cociclos de \v{C}ech associados. Da sobrejetividade da sequência exponencial podemos supor, tomando $\mathcal U$ suficientemente fina, que $\varphi_{ij} = \exp(2\pi \smo f_{ij})$. Da condição de cociclo $\varphi_{ij} \varphi_{jk} \varphi_{ki} = 1$ temos que $f_{ij} + f_{jk} + f_{ki} \in \Z$ e pela definição da aplicação de cobordo $\check H^1(X,\mathcal O_X^*) \to \check H^2(X,\Z)$ (veja a Proposição \ref{prop:cech-long-exact}), o elemento $(\varphi_{ij})$ corresponde a classe de $\delta (f_{ij}) = (z_{ijk})$ em $\check{H}^2(X,\Z)$, onde $z_{ijk} = f_{jk} - f_{ik} + f_{ij}$, e portanto vemos que
\begin{equation*}
c_1(L) =  \left[f_{jk} - f_{ik} + f_{ij}\right] = \left[\frac{1}{2 \pi \smo} \left( \log \varphi_{jk} - \log \varphi_{ik} + \log \varphi_{ij} \right)\right] \in \check{H}^2(X,\Z),
\end{equation*}
onde $\log$ denota um ramo do logaritmo.\\

\indent Vemos da sequência (\ref{eq:explongseq}) que para determinar os fibrados de linha holomorfos sobre $X$, que são parametrizados por $H^1(X,\mathcal{O}^*_X)$, devemos entender a parte desse grupo vinda de  $H^1(X,\mathcal{O}_X)$, e sua imagem pela aplicação $c_1:H^1(X,\mathcal{O}^*_X) \to H^2(X,\Z)$. De certa maneira, podemos ver $\Pic (X)$ como sendo constituido de uma parte contínua, associada ao feixe $\mathcal{O}_X$, e uma discreta, associada ao feixe $\Z$. A parte contínua, isto é, a imagem de $H^1(X,\mathcal{O}_X) \to H^1(X,\mathcal{O}^*_X)$ é chama variedade jacobiana. Algumas de suas propriedades são estudadas na seção \ref{sec:jacobi-albanese}\\

\begin{remark} \label{rmk:irregularity}
Note que quando $H^1(X,\mathcal{O}_X)=0$ temos que $c_1:H^1(X,\mathcal{O}^*_X) \to H^2(X,\Z)$ é injetora, o que quer dizer que a primeira classe de Chern determina (a menos de isomorfismo) o fibrado de linha holomofo.\\
\indent Classicamente, no caso em que $\dim X = 2$, o número $q= \dim H^1(X,\mathcal{O}_X)$ é chamado \textit{irregularidade} da superfície complexa $X$. 
\end{remark}

\subsection{Construindo aplicações $X\to \pr^N$}

 Vimos no capítulo \ref{ch:cap0} que uma variedade complexa compacta $X$ não pode ser mergulhada holomorficamente no espaço euclideano $\C^n$ (Corolário \ref{cor:compact-submanifold}).
 
  Em vez disso, podemos então tentar obter mergulhos de $X$ em algum espaço projetivo $\pr^N$. Uma maneira de construir tais mergulhos é por meio de seções de fibrados de linha sobre $X$. Este será o assunto desta seção.\\

Seja $L$ um fibrado de linha sobre $X$.
\begin{definition}
Dizemos que $x \in X$ é um ponto base de $L$ se $s(x)=0$ para toda seção $s \in H^0(X,L)$. O conjunto dos pontos base é denotado por $\text{Bs}(L)$.
\end{definition}

Suponha que $H^0(X,L)$ tenha dimensão finita e seja $s_0,\ldots,s_N \in H^0(X,L)$ uma base. Note que $\text{Bs}(L) = Z(s_0)\cap \cdots \cap Z(s_N)$ e é uma subvariedade analítica de $X$.

 Se $U \subset X \setminus \text{Bs}(L)$ é um aberto e $\psi$ uma trivialização sobre $U$, a aplicação
\begin{equation*}
U \ni x \longmapsto [\psi(s_0(x)):\cdots:\psi(s_n(x))] \in \pr^N
\end{equation*}
está bem definida, pois as coordenadas homogêneas não são simultaneamente nulas.

 Agora, outra trivialização sobre $U$ será da forma $\lambda \psi$, com $\lambda \in \mathcal{O}^*_X$ e nesta trivialização a aplicação acima fica
\begin{equation*}
 x \longmapsto [\lambda(x) \psi(s_0(x)):\cdots:\lambda(x) \psi(s_n(x))] = [\psi(s_0(x)):\cdots:\psi(s_n(x))]
\end{equation*}

Assim, podemos definir a aplicação globalmente, e obtemos
\begin{equation*}
\varphi_L:X\setminus \text{Bs}(L) \longrightarrow \pr^N
\end{equation*}

 Observe que ainda dependemos da escolha de uma base de $H^0(X,L)$, mas uma outra escolha produziria uma aplicação que difere da original por um automorfismo linear de $\pr^N$.\\
 
 Mais geralmente poderiamos considerar aplicações induzidas por seções pertencentes a um subespaço $Q \subset H^0(X,L)$. Dizemos que $Q$ é um \textit{sistema linear de $L$} e $Q = H^0(X,L)$ é chamado \textit{sistema linear completo de $L$}.

\begin{definition}
Dizemos que um fibrado de linha $L$ é \textbf{amplo} se para algum $k>0$ e algum sistema linear de $L^k$ a aplicação induzida $X \to \pr^N$ é um mergulho e é \textbf{muito amplo} se $k=1$.
\end{definition}

\begin{example}\label{ex:veronese} \textbf{O mergulho de Veronese}

 Se $X=\pr^n$ e $L = \mathcal{O}(d)$, vimos que os polinômios homogêneos de grau $d$ podem ser vistos como seções globais de $L$. Nesse caso $\text{Bs}(L)=\emptyset$, pois dado $p \in \pr^n$, alguma coordenada homogênea $z_k$ de $p$ é não nula e daí $z_k^d$ é uma seção que não se anula em $p$.
 
  A aplicação induzida é
\begin{equation*}
\begin{split}
\varphi_{\mathcal{O}(d)}: \pr^n &\longrightarrow \pr^N\\
[z_0:\cdots:z_n] &\longmapsto [\cdots:z_0^{i_0} \cdots z_n^{i_n}:\cdots]\
\end{split}
\end{equation*}
onde $(i_0,\ldots,i_n)$ percorrem todos os multi-índices com $\sum^n_{k=0} i_k = d$. Vamos denotar por $z_{i_0 \cdots i_n}$ as coordenadas homogêneas em $\pr^N$. Note que o número dessas coordenadas é o número de monômios de grau $d$ em $n$ variáveis e portanto $N = \binom{n+d}{n}-1$.

 Para ver que $\varphi_{\mathcal{O}(d)}$ é um mergulho, primeiro reordenamos as coordenadas em $\pr^N$ de modo que $\varphi_{\mathcal{O}(d)}$ é dada por $[z_0:\cdots:z_n] \mapsto [z_0^d:z_0^{d-1}z_1: z_0^{d-1}z_2\cdots:z_0^{d-1}z_n:\cdots]$. Assim, em $U_0 = \{z_0 \neq 0\} \subset \pr^n$, $[1:z_1:\cdots:z_n] \mapsto [1:z_1:z_2\cdots:z_n:\cdots]$ e portanto, em coordenadas a aplicação de Veronese é dada por $(w_1, \ldots, w_n) \mapsto (w_1, \ldots, w_n,\ldots)$, o que mostra que $\varphi_{\mathcal{O}(d)}$ é uma imersão em $U_0$. De modo análogo mostramos que $\varphi_{\mathcal{O}(d)}$ é uma imersão nos demais abertos coordenados de $\pr^n$. É fácil ver também que  $\varphi_{\mathcal{O}(d)}$ é injetora e como $\pr^n$ é compacto concluímos que a aplicação de Veronese é um mergulho. Em particular isso mostra que os fibrados $\mathcal{O}(d)$ são muito amplos.
 
 A aplicação $\varphi_{\mathcal{O}(d)}$ é chamada \index{mergulho de Veronese} \textbf{mergulho de Veronese}.
 
  No caso $n=1$, $d=2$ por exemplo temos que $\pr^1$ é mapeado na curva plana $\{x_1^2 = x_0 x_2\} \subset \pr^2$, via $[z_0:z_1] \mapsto [z_0^2:z_0z_1:z_2^2]$.
 
 Uma propriedade interessante da aplicação de Veronese $\varphi_{\mathcal{O}(d)}$ é que ela transforma hipersuperfícies projetivas de grau $d$ em $\pr^n$ em seções de hiperplanos da imagem $P = \varphi_{\mathcal{O}(d)}(\pr^n) \subset \pr^N$. De fato, uma hipersuperfície de grau $d$ em $\pr^n$ é dada pelos zeros de um polinômio homogêneo de grau $d$, que é da forma $\sum a_{i_0 \cdots i_n} z_0^{i_0} \cdots z_n^{i_n}$ e portanto sua imagem será dada pela intersecção de $P$ com o conjunto de zeros do polinômio $\sum a_{i_0 \cdots i_n} z_{i_0 \cdots i_n}$, que é linear nas coordenadas $z_{i_0 \cdots i_n}$ de $\pr^N$.

\end{example}

\section{Divisores} \label{sec:div}

\begin{definition} \label{def:divisors} \index{divisor!de Weil}
Um \textbf{divisor de Weil} $D$ em uma variedade complexa $X$ é uma soma formal localmente finita
\begin{equation*}
D = \sum a_i Y_i,
\end{equation*}
 onde $Y_i \subset X$ são hipersuperfícies irredutíveis e $a_i \in \Z$. Com localmente finita queremos dizer que para todo $x \in X$ existe uma vizinhança aberta $U$ tal que $Y_i \cap U \neq  \emptyset$ apenas para um número finito de índixes $i$.\\
\indent Um divisor é \textit{efetivo} se $a_i \geq 0$ para todo $i$.\\
\indent O conjunto de todos os divisores de Weil, denotado por, $\Div (X)$ é o grupo abeliano livre gerado pelas hipersuperfícies de $X$, chamado \textit{grupo de divisores de $X$}.

Podemos definir uma ordem parcial em $\Div (X)$: se $D = \sum a_i Y_i $ e $E = \sum b_i Y_i$, dizemos que $D\leq E$ se $a_i \leq b_i$ para todo $i$.
\end{definition}

\begin{example}
Toda hipersuperfície $Y \subset X$ pode ser vista como um divisor de Weil efetivo.
\end{example}

\begin{example} \textbf{Divisores principais}\\
\indent Seja $X$ uma variedade complexa e $Y \subset X$ uma hipersuperfície irredutível. Dada uma função holomorfa $f$ em uma vizinhança de $x \in Y$, ela define um germe $f \in \mathcal{O}_{X,x}$. Se $g$ é um elemento irredutível de $\mathcal{O}_{X,x}$ induzido pela equação que define $Y$, a ordem de $f$ em $x$ é o maior inteiro $m=\Ord_{Y,x}$ tal que $g^m$ divide $f$ em $\mathcal{O}_{X,x}$ ou seja, é o maior inteiro tal que a equação
\begin{equation*}
f = g^m \cdot h,\;\; h \in \mathcal{O}_{X,x},
\end{equation*} 
vale em $\mathcal{O}_{X,x}$.

A partir da estrutura algébrica de $\mathcal{O}_{X,x}$ (é um anel local, noetheriano e de fatorização única) podemos ver que a definição acima independe da escolha da equação $g$ que define $Y$. Além disso, elementos relativamente primos em  $\mathcal{O}_{X,x}$ continuam relativamente primos em  $\mathcal{O}_{X,y}$ para $y$ suficientemente próximos de $x$.\footnote{Para uma demonstração de todos esses fatos consulte o capítulo 0 de \cite{g-h}.} Isso permite definir a \textbf{ordem de $f$ ao longo de $Y$} por
\begin{equation*}
\Ord_Y(f) = \Ord_{Y,x} (f)\; \text{ para algum } x \in Y.
\end{equation*}

Mais geralmente, se $f$ é meromorfa, podemos escrever $f = g/h$ localmente, e definimos $\Ord_{Y,x} (f) = \Ord_{Y,x} (g) - \Ord_{Y,x} (h)$ e em seguida $\Ord_Y(f) = \Ord_{Y,x} (f)$ para algum $x \in Y.$

Note que a ordem é aditiva, isto é, $\Ord_Y(f_1 f_2) = \Ord_Y(f_1) + \Ord_Y(f_2) $ e que $\Ord_Y(f)=0$ se e só se $f$ é holomorfa não se anula identicamente em $Y$.

Dada uma função meromorfa global $f \in K(X)$, o \textit{divisor associado a} $f$ é o divisor de Weil
\begin{equation*}
(f) = \sum \Ord (f) _Y \cdot Y,
\end{equation*}
onde a soma é tomada sobre todas as hipersuperfícies irredutíveis de $X$.

Note que $(f)$ pode ser escrito como a diferença de dois divisores efetivos:
\begin{equation*}
(f) = Z(f) - P(f)\;\;\; \text{ onde } \;\;\; Z(f) = \sum_{\Ord_Y >0} \Ord _Y (f) \cdot Y \;\; \text{ e } \;\; P(f) = \sum_{\Ord_Y <0} \Ord _Y (f) \cdot Y.
\end{equation*}

Os divisores $Z(f)$ e $P(f)$ são chamados, respectivamente, de \textit{divisor de zeros e de pólos de} $f$. Note que $Z(f) \geq 0$ e $P(f) \geq 0$.
\end{example}

\begin{definition} \label{def:princ-div} \index{divisor!principal}
Um divisor de Weil $D$ é dito um \textbf{divisor principal} se $D = (f)$ para alguma $f \in K(X)$. O conjunto dos divisores principais em $X$ é denotado por $\PDiv (X)$.
\end{definition}
\begin{remark}
Note que $\PDiv (X)$ é um subgrupo de $\Div (X)$, pois se $D = (f)$ e $D'= (g)$ então $D + D' = (fg)$ e $-D = (f^{-1})$.\\
\end{remark}

Seja $Y \subset X$ uma hipersuperfície irredutível. Localmente, $Y$ é dada como zeros de funções holomorfas, isto é, existe uma cobertura $X = \bigcup U_i$ e funções holomorfas $f_i \in \mathcal{O}(U_i)$ tais que $Y \cap U_i = Z(f_i)$. Se $U_i \cap U_j \neq  \emptyset$ temos que $f_i$ e $f_j$ definem a mesma hipersuperfície irredutível e portanto $f_i$ e $f_j$ diferem por uma função holomorfa que não se anula, isto é, $f_i f_j^{-1} \in \mathcal{O}^*(U_i \cap U_j)$. Isso nos permite ver $Y$ como uma seção global do feixe quociente $\mathcal{O}_X / \mathcal{O}^*_X$.

 De maneira análoga, qualquer divisor de Weil efetivo pode ser visto como uma seção global de $\mathcal{O}_X / \mathcal{O}^*_X$. No caso de divisores de Weil em geral, não basta considerar feixe das funções holomorfas, devido a presença de coeficientes negativos, o que motiva a seguinte definição.

\begin{definition} \index{divisor!de Cartier}
Um \textbf{divisor de Cartier} em $X$ é uma seção global do feixe $\mathcal{K}^*_X/\mathcal{O}^*_X$, isto é, um elemento de $H^0(\mathcal{K}^*_X/\mathcal{O}^*_X)$. Aqui $\mathcal{K}^*_X$ é o feixe das funções meromorfas não identicamente nulas em $X$.
\end{definition}

Ocorre que, no caso das variedades complexas, as definições de divisor de Cartier e de Weil coincidem:

\begin{proposition} \label{prop:weil-cartier}
Existe um isomorfismo natural
\begin{equation*}
H^0(X,\mathcal{K}^*_X/\mathcal{O}^*_X) = \Div (X)
\end{equation*}
\end{proposition}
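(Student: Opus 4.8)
The strategy is to construct maps in both directions between $H^0(X,\mathcal{K}^*_X/\mathcal{O}^*_X)$ and $\Div(X)$ and check that they are mutually inverse group homomorphisms. The key local input is the algebraic structure of the stalk $\mathcal O_{X,x}$: it is a Noetherian local UFD, so every nonzero germ factors uniquely into irreducibles, and for a meromorphic germ $f = g/h$ one has a well-defined order $\Ord_{Y,x}(f)$ along any irreducible hypersurface $Y$ through $x$, which (as discussed in the excerpt) is independent of $x\in Y$, giving $\Ord_Y(f)\in\Z$.

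First I would define the map $H^0(X,\mathcal{K}^*_X/\mathcal{O}^*_X)\to\Div(X)$. A Cartier divisor is represented by a collection $\{(U_i,f_i)\}$ with $f_i\in\mathcal{K}^*_X(U_i)$ and $f_i/f_j\in\mathcal{O}^*_X(U_i\cap U_j)$. For an irreducible hypersurface $Y$ meeting some $U_i$, set $\Ord_Y$ of the divisor to be $\Ord_Y(f_i)$; since $f_i/f_j$ is a unit in $\mathcal O_X(U_i\cap U_j)$, additivity of the order gives $\Ord_Y(f_i)=\Ord_Y(f_j)$, so this is well-defined and independent of the index. Local finiteness of $\{U_i\}$ (we may assume the cover locally finite, $X$ being a manifold hence paracompact) together with the fact that each $f_i$ has only finitely many zero/polar hypersurfaces in a relatively compact piece shows the resulting formal sum $D=\sum_Y \Ord_Y(f_\bullet)\,Y$ is a legitimate Weil divisor. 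One then checks this assignment does not depend on the representing data $\{(U_i,f_i)\}$ (two representatives differ by elements of $\mathcal O^*_X$, which have order zero everywhere) and is a group homomorphism (multiplying the $f_i$ adds the orders).

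Next I would construct the inverse $\Div(X)\to H^0(X,\mathcal{K}^*_X/\mathcal{O}^*_X)$. Given $D=\sum a_k Y_k$, use that each $Y_k$ is locally cut out by a single holomorphic equation: cover $X$ by opens $U_i$ on which every $Y_k$ meeting $U_i$ is the zero set of an irreducible $g_{k,i}\in\mathcal O_X(U_i)$, and set $f_i=\prod_k g_{k,i}^{a_k}$ (a finite product on each $U_i$ by local finiteness), a nonzero meromorphic function on $U_i$. On overlaps $U_i\cap U_j$ the germs $g_{k,i}$ and $g_{k,j}$ generate the same prime ideal, hence differ by a unit, so $f_i/f_j\in\mathcal O^*_X(U_i\cap U_j)$, and $\{(U_i,f_i)\}$ defines a global section of $\mathcal{K}^*_X/\mathcal{O}^*_X$; checking independence of all choices and homomorphy is routine. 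Finally I would verify that the two constructions compose to the identity on each side — in one direction because $\Ord_{Y_k}(f_i)=a_k$ by unique factorization (distinct irreducibles $g_{k,i}$ are coprime in the UFD $\mathcal O_{X,x}$), in the other because a Cartier divisor is recovered from its orders.

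The main obstacle is the local-to-global bookkeeping: making sure that the equations $g_{k,i}$ and the orders $\Ord_Y$ behave consistently across overlaps. This rests entirely on two facts about $\mathcal O_{X,x}$ stated in the excerpt — that it is a UFD (so irreducible local equations are well-defined up to units and coprimality is well-behaved) and that coprimeness and the order function are stable under moving the base point within $Y$ — so the proof proper is mostly an exercise in threading these facts through the sheaf-theoretic definitions; the substantive content is imported rather than proved here.
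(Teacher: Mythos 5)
Your proposal is correct and follows essentially the same route as the paper: the forward map assigns to a Cartier datum $\{(U_i,f_i)\}$ the Weil divisor $\sum_Y \Ord_Y(f_i)\cdot Y$ (well-defined because $f_i f_j^{-1}\in\mathcal{O}^*_X(U_i\cap U_j)$ has order zero), and the inverse takes $D=\sum a_k Y_k$ to the local products $f_i=\prod_k g_{k,i}^{a_k}$ of defining equations, with mutual inversion resting on additivity of the order and the UFD structure of $\mathcal{O}_{X,x}$. Your extra attention to local finiteness of the cover and of the resulting formal sum is a point the paper passes over silently, but it does not change the argument.
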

\begin{proof}
A ideia é simples: associar a uma função meromorfa seu divisor. Como a multiplicação por uma função holomorfa que não se anula não altera o divisor, essa associação fica definida em $K^*_X$ módulo $\mathcal{O}^*_X$.

Mais precisamente, um divisor de Cartier $f = H^0(X,\mathcal{K}^*_X/\mathcal{O}^*_X)$ é dado em uma cobertura por funções meromorfas $f_i \in \mathcal{K}^*_X(U_i)$ com $f_i f_j^{-1} \in \mathcal{O}^*_X(U_i \cap U_j)$. Assim, para toda hipersuperfície $Y \subset X$, temos que $\Ord_Y(f_i) = \Ord_Y(f_j)$ e portanto podemos definir $\Ord_Y (f)$ como sendo $\Ord_Y(f_i)$ para algum $i$. Associamos a $f$ o divisor de Weil $D = \sum \Ord_Y (f) \cdot Y$.

Reciprocamente, seja $D = \sum a_i Y_i$ um divisor de Weil. Existe uma cobertura $\mathcal{U}=\{U_j\}_j$ tal que $Y_i \cap U_j = Z(g_{ij})$, onde $g_{ij} \in \mathcal{O}_X(U_j)$ e tais funções são únicas a menos de elementos de $\mathcal{O}^*_X(U_j)$. Defina $f_j = \prod_i g_{ij}^{a_i} \in \mathcal{K}^*_X(U_j)$. Como $Z(g_{ij})\cap U_k = Y_i \cap U_j \cap U_k = Z(g_{ik})\cap U_j$ e $Y_i$ é irredutível segue que $g_{ij}$ e $g_{ik}$ diferem por uma função em $\mathcal{O}^*_X(U_j\cap U_k)$. Consequentemente $f_j$ e $f_k$ diferem, em $U_j\cap U_k$, por um elemento de $\mathcal{O}^*_X(U_j\cap U_k)$ e portanto definem uma seção $f = H^0(X,\mathcal{K}^*_X/\mathcal{O}^*_X)$.

É fácil ver, pela aditividade da ordem, que tais construções preservam a estrutura de grupo e são mutuamente inversas, dando o isomorfimso desejado.
\end{proof}

Devido a proposição acima, não faremos mais distinção entre divisores de Weil e divisores de Cartier e com frequência os chamaremos apenas de divisores. Assim, um divisor será tanto uma soma $D = \sum a_i Y_i$ como uma seção global de $\mathcal{K}^*_X/\mathcal{O}^*_X$.\\

\subsection{A relação entre divisores e fibrados de linha} \label{sec:rel-div-lb}

Nesta seção veremos que os conceitos de divisores em $X$ e de fibrados de linha sobre $X$ estão intimamente ligados. A ideia básica é associar a cada divisor um fibrado de linha. Veremos também como definir divisores a partir de fibrados de linha que admitem seções globais.

Seja $D \in \Div (X)$. Pela Proposição \ref{prop:weil-cartier}, $D$ corresponde a uma seção global do feixe $\mathcal{K}^*_X/\mathcal{O}^*_X$, ou seja, a uma coleção de funções meromorfas $f_i$ definidas em abertos $U_i$ de uma cobertura tais que $f_i f_j^{-1} \in \mathcal{O}_X^*(U_i \cap U_j)$. Denotamos por $\mathcal{O}(D)$ o fibrado de linha associado aos cociclos $(f_{ij} = f_i f_j^{-1})$.

Se tomarmos outro divisor $D'$, associado a $f'_i$, a soma $D+D'$ corresponde ao produto $f_i f_i'$ e portanto o fibrado associado a $D + D'$ é dado pelos cociclos
\begin{equation*}
g_{ij} = \frac{f_i f'_i}{f_j f'_j} = \frac{f_i}{f_j} \cdot \frac{f'_i}{f'_j},
\end{equation*}
que são os cociclos do fibrado $\mathcal{O}(D) \otimes \mathcal{O}(D)$.

Obtemos assim um homomorfismo
\begin{equation} \label{eq:div-pic}
 \begin{split}
 \Div(X) &\longrightarrow \Pic(X) \\
 D &\longmapsto \mathcal{O}(D).
 \end{split}
\end{equation}

A proposição seguinte mostre que o núcleo desse homomorfismo é formado justamente pelos divisores principais:

\begin{proposition} \label{prop:kerOD}
O fibrado $\mathcal{O}(D)$ é trivial se e somente se $D$ é um divisor principal.
\end{proposition}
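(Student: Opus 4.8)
The plan is to argue directly from the cocycle description of $\mathcal{O}(D)$, using the isomorphism criterion for holomorphic line bundles (Proposition \ref{prop:iso-holvb}) together with the Weil--Cartier correspondence (Proposition \ref{prop:weil-cartier}).

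First I would dispatch the easy implication. If $D = (f)$ with $f \in K(X)^*$, then, viewed as a Cartier divisor, $(f)$ is represented on \emph{any} open cover $\{U_i\}$ of $X$ by the collection $f_i = f|_{U_i}$; hence the transition functions of $\mathcal{O}(D)$ are $f_{ij} = f_i f_j^{-1} = 1$ on every $U_i \cap U_j$, so $\mathcal{O}(D)$ is the trivial line bundle $\mathcal{O}_X$.

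For the converse, suppose $\mathcal{O}(D) \simeq \mathcal{O}_X$. Using Proposition \ref{prop:weil-cartier}, write $D$ by a collection of meromorphic functions $f_i \in \mathcal{K}^*_X(U_i)$ with $f_i f_j^{-1} \in \mathcal{O}^*_X(U_i \cap U_j)$, so that $\mathcal{O}(D)$ has cocycles $f_{ij} = f_i f_j^{-1}$; after passing to a common refinement I may assume $\mathcal{O}_X$ is also trivialized over $\{U_i\}$, with cocycles $\equiv 1$. By Proposition \ref{prop:iso-holvb} there are holomorphic $\lambda_i : U_i \to \GL(1,\C) = \C^*$, i.e.\ $\lambda_i \in \mathcal{O}^*_X(U_i)$, with $f_{ij} = \lambda_i \cdot 1 \cdot \lambda_j^{-1}$, that is, $f_i \lambda_i^{-1} = f_j \lambda_j^{-1}$ on $U_i \cap U_j$. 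I would then set $g_i = f_i \lambda_i^{-1} \in \mathcal{K}^*_X(U_i)$: the $g_i$ agree on overlaps, hence glue to a single global meromorphic function $f \in K(X)^*$, not identically zero since none of the $g_i$ is. It remains to check $(f) = D$: for an irreducible hypersurface $Y$ meeting $U_i$, since $\lambda_i \in \mathcal{O}^*_X(U_i)$ one has $\Ord_Y(f) = \Ord_Y(g_i) = \Ord_Y(f_i) - \Ord_Y(\lambda_i) = \Ord_Y(f_i)$, and by the construction of the isomorphism in Proposition \ref{prop:weil-cartier} the divisor $D$ is precisely $\sum_Y \Ord_Y(f_i)\cdot Y$; therefore $(f) = D$ and $D$ is principal.

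The only point that needs care is the gluing step combined with the bookkeeping of orders: one must check that the locally defined $g_i$ patch to a well-defined \emph{nonzero} global meromorphic function and that replacing $f_i$ by $f_i\lambda_i^{-1}$ changes no order along any hypersurface. Both follow at once from $\lambda_i$ and $\lambda_i^{-1}$ being holomorphic, so I do not expect a genuine obstacle; the argument also makes transparent the kernel of the map \eqref{eq:div-pic}.
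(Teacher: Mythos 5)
Your proof is correct and follows essentially the same route as the paper: the forward direction reads off the cocycle $f/f\equiv 1$, and the converse uses Proposi��o \ref{prop:iso-holvb} to write $f_if_j^{-1}=\lambda_i\lambda_j^{-1}$ with $\lambda_i\in\mathcal O_X^*(U_i)$, glues the $f_i\lambda_i^{-1}$ to a global meromorphic function, and notes that multiplying by a nowhere-vanishing holomorphic function does not change any order $\Ord_Y$. The only cosmetic difference is notation (the paper calls your $\lambda_i$ by $g_i$).
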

\begin{proof}
Suponha primeiro que $D = (f)$ é um divisor principal. Então $D$ corresponde à imagem da seção global $f \in H^0(X,\mathcal{K}^*_X)$ pela aplicação induzida pela projeção natural $\mathcal{K}^*_X \to \mathcal{K}^*_X/\mathcal{O}^*_X$. Assim, o cociclo que define $\mathcal{O}(D)$ é $f/f \equiv 1$, o que mostra que $\mathcal{O}(D)\simeq \mathcal{O}_X$.

Reciprocamente, suponha que $\mathcal{O}(D)\simeq \mathcal{O}_X$. Pela proposição \ref{prop:iso-holvb} existe uma cobertura $\mathcal{U} = \{U_i\}$ e funções $g_i \in \mathcal{O}^*_X (U_i)$ tais que os cociclos $(\varphi_{ij})$ de $\mathcal{O}(D)$ com relação a $\mathcal{U}$ satisfazem $\varphi_{ij} = g_i g_j^{-1}$ em $U_i \cap U_j$. Se $D$ é dado por $\{f_i \in \mathcal{K}^*_X (U_i) \}$, então $\varphi_{ij} = f_i f_j^{-1}$, e portanto temos que $f_i f_j^{-1} =  g_i g_j^{-1}$ de onde segue que 
\begin{equation*}
f_i g_i^{-1} =  f_j g_j^{-1} \;\;\;\ \text{ em } \;\; U_i \cap U_j.
\end{equation*}

 Assim, as seções locais $f_i g_i^{-1} \in \mathcal{K}^*_X (U_i)$ se colam a uma seção global $f \in K(X)$, definida por $f|_{U_i} = f_i g_i^{-1}$.
 
Como as $g_i$'s são holomorfas e não se anulam temos que $\Ord_Y (f) = \Ord_Y (f_i)$ se $Y \cap U_i \neq  \emptyset$, de onde vemos que $(f) = D$, e portanto $D$ é principal.
\end{proof}

A proposição acima mostra que há uma injeção
\begin{equation} \label{eq:inj-pic}
\frac{\Div(X)}{\PDiv(X)} \hookrightarrow \Pic(X).
\end{equation}

\begin{definition} \label{def:lin-equiv}
Dizemos que dois divisores $D,D' \in \Div(X)$ são linearmente equivalentes se eles representam a mesma classe em $\Div(X)/ \PDiv(X)$, isto é, se $D-D'$ é um divisor principal.
\end{definition}

Vemos então que $\mathcal{O}(D) \simeq \mathcal{O}(D')$ se e somente se $D$ e $D'$ são linearmente equivalentes.

\begin{example} \textbf{Divisores de hiperplano em $\pr ^n$}

Seja $H_0 \subset \pr^n$ o hiperplano dado pela equação $z_0 = 0$. Temos que $H_0$ é uma hipersuperfície de $\pr ^n$ e portanto define um divisor de Weil $H_0 \in \Div (\pr^n)$. Note que $H_0 \cap U_0 =  \emptyset$ e $H_0 \cap U_i \neq  \emptyset$ para $i=1,\cdots,n$ e que
\begin{equation*}
H_0 \cap U_0 = Z(1) \;\;\ \text{ e } \;\; H_0 \cap U_i = Z(z_0/z_i),\; i=1,\cdots,n.
\end{equation*}
onde vemos aqui $1$ como a função constante em $U_0$ e $z_0/z_i \in \mathcal{O}_{\pr^n}(U_i)$.

 Assim, o divisor de Cartier associado a $H_0$ é $\{1 \in \mathcal{O}^*_{\pr^n}(U_0), z_0/z_i \in \mathcal{O}^*_{\pr^n}(U_i)\}$, e portanto o fibrado $\mathcal{O}(H_0)$ é dado pelos cociclos $z_j/z_i \in \mathcal{O}_{\pr^n}^* (U_i \cap U_j)$, que são os cociclos do fibrado $\mathcal{O}(1)$ (ver observação \ref{rmk:cocyclesOk}), ou seja
\begin{equation*}
\mathcal{O}(H_0) \simeq \mathcal{O}(1).
\end{equation*}

 O mesmo isomorfismo é obtido se tomarmos outro hiperplano $H \subset \pr^n$. Se $H$ é dado pelos zeros de uma equação linear $P \in \C[z_0,\cdots,z_n]$ então $f = P/z_0$ define uma função meromorfa global em $\pr^n$ e $(f) = H - H_0$. Assim $H$ e $H_0$ são linearmente equivalentes e portanto $\mathcal{O}(H_0) \simeq \mathcal{O}(H)$. Mostramos então que
\begin{equation*}
\mathcal{O}(H) \simeq \mathcal{O}(1) \;\; \text{ para todo hiperplano } \;\; H \subset \pr^n. 
\end{equation*}

 Por essa razão, o fibrado $\mathcal{O}(1)$ também é chamado de fibrado de hiperplanos sobre $\pr ^n$.\\
\end{example}

\begin{example} \textbf{Funções meromorfas com pólos limitados por um divisor}

 Seja $D = \sum a_i Y_i$ um divisor em $X$ e considere o espaço.
\begin{equation*}
L(D) = \{f \in K(X) : \Ord_{Y_i} (f) \geq -a_i \} = \{f \in K(X) :(f) + D \geq 0\}.
\end{equation*}

 Note que se $D' \leq D$ então $L(D') \subset L(D)$ e que $L(0) = H^0(X,\mathcal{O}_X)$ é o espaço das funções holomorfas globais em $X$. \\

 Da demonstração da proposição \ref{prop:weil-cartier} vemos que $D$ corresponde a um elemento $f = \{f_i \in K^*_X(U_i) \}\in H^0(X,\mathcal{K}^*_X/\mathcal{O}^*_X)$ tal que $D = \sum \Ord_Y (f) \cdot Y$.\\

Seja $g$ uma função em $L(D)$. Da desigualdade $\Ord_{Y_i} (g) \geq -a_i$ temos que $g_i = gf_i \in \mathcal{O}_X (U_i)$ e essas funções satisfazem $g_i = f_i f_j^{-1} \cdot g_j$. Como $f_i f_j^{-1}$ são os cociclos de $\mathcal{O}(D)$, as $g_i$'s definem uma seção holomorfa $gf \in H^0(X,\mathcal{O}(D))$.

 Reciprocamente dada $s \in H^0(X,\mathcal{O}(D))$, obtemos $s_i \in \mathcal{O}(U_i)$ satisfazendo $s_i = f_i f_j^{-1} \cdot s_j$. Podemos então definir $g \in K(X)$ por $g|_{U_i} = s_i/f_i$ e como as $f_i$'s definem $D$ temos que $g$ é holomorfa em $X \setminus D$ e $\Ord_{Y_i}(g) = \Ord_{Y_i}(s_i) - \Ord_{Y_i}(f_i) \geq - \Ord_{Y_i}(f_i) = -a_i$, ou seja, $g \in L(D)$.\\

Obtemos assim um isomorfismo
\begin{equation*}
L(D) \cong H^0(X, \mathcal{O}(D)),
\end{equation*}
 dado pela multiplicação pela seção de $\mathcal{K}^*_X/\mathcal{O}^*_X$ que define $D$.\\
\end{example}

\begin{example} \textbf{Hipersuperfícies e funções meromorfas em $\C^n$ e o Problema de Cousin.} \index{Problema de Cousin}

A sequência exponencial em $\C^n$ induz a sequência exata
\begin{equation*}
H^1(\C^n,\Z) \longrightarrow H^1(\C^n,\mathcal{O}_{\C^n}) \longrightarrow \Pic(\C^n) \longrightarrow H^2(X,\Z).
\end{equation*}
Como $\C^n$ é contrátil temos que $H^2(X,\Z) = 0$. Além disso, do $\delbar$-Lema de Poincaré (proposição \ref{prop:poincare-lemma}) e do isomorfismo de Dolbeaut temos que $H^1(X,\mathcal{O}_{\C^n}) \simeq H^{0,1}_{\delbar}(\C^n) = 0$ de onde concluímos que $\Pic(\C^n) = 0$, ou seja, \textbf{todo fibrado de linha sobre $\C^n$ é trivial}.

Em particular, se $D$ é um divisor em $\C^n$ então $\mathcal{O}(D)$ é trivial e portanto, pela proposição \ref{prop:kerOD}, concluímos que $D$ é principal. Provamos assim que todo divisor em $\C^n$ é principal, o que quer dizer que \emph{podemos prescrever zeros e pólos de uma função meromorfa}, isto é, dada uma família localmente finita de hipersuperfícies $Y_i \subset \C^n$ e inteiros $a_i$ existe uma função meromorfa global $f \in K(\C^n)$ com $\Ord _{Y_i}(f) = a_i$. Este é um problema clássico na teoria de funções analíticas e o caso $n=1$ foi resolvido por Weierstrass.
Considerando o caso particular em que $D=Y$ é uma hipersuperfície, concluímos que toda hipersuperfíce em $\C^n$ é dada pelos zeros de uma única função holomorfa global.\\

Uma outra consequência interessante é a de que \emph{toda função meromorfa em $\C^n$ pode ser escrita, globalmente, como razão de duas funções holomorfas}. De fato, seja $f \in K(X)$ não nula e $(f) = Z(f) - P(f)$ seu divisor. Como todo divisor é principal segue que $P(f) = (h)$ para alguma $h \in K(\C^n)$, mas como $P(f) \geq 0$ vemos que na verdade $h \in \mathcal{O}(\C^n)$. Definindo $g = fh$ temos que $(g) = (f) + (h) = Z(f) \geq 0$ de onde vemos que $g \in \mathcal{O}(\C^n)$ e portanto $f = \frac{g}{h}$ é razão de duas funções holomorfas globais.

Esse resultado foi provado para $\C^2$ por Poincaré em 1883, em um trabalho intitulado `` Sur les fonctions de deux variables''.\\

O problema de decidir se todos divisores em uma variedade complexa são principais é chamado \emph{Problema de Cousin} e o problem de decidir se as funções meromorfas podem ser escritas globalmente como razão de funções holomorfas é chamado \emph{Problema de Poincaré}. Vimos que o primeiro implica o segundo e ambos podem ser resolvidos em $\C^n$. Note que para tirar essa conclusão só usamos o fato de que $\Pic(\C^n) = 0$. Obtemos portanto a seguinte generalização.

\begin{proposition}
Seja $X$ uma variedade complexa tal que $\Pic(X)=0$. Então
\begin{itemize}
\item[1.] O problema de Cousin pode ser resolvido em $X$, isto é, todo divisor em $X$ é principal. Em particular toda hipersuperfície é dada pelos zeros de uma função holomorfa global.
\item[2.] O problema de Poincaré pode ser resolvido em $X$, isto é, toda função meromorfa global em $X$ pode ser escrita como razão de duas funções holomorfas globais.
\end{itemize}
\end{proposition}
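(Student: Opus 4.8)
O plano é essencialmente repetir, no contexto geral, os argumentos já feitos para $\C^n$ no exemplo anterior, isolando exatamente onde se usou que $\Pic(\C^n)=0$. Primeiro provarei o item 1. Seja $D = \sum a_i Y_i$ um divisor em $X$. Pela relação entre divisores e fibrados de linha (Proposição \ref{prop:kerOD}), o fibrado $\mathcal{O}(D)$ é trivial se e somente se $D$ é principal. Como estamos supondo $\Pic(X)=0$, todo fibrado de linha sobre $X$ é trivial, em particular $\mathcal{O}(D) \simeq \mathcal{O}_X$, logo $D$ é principal. Isso mostra que todo divisor em $X$ é principal. O caso particular em que $D = Y$ é uma hipersuperfície irredutível dá que $Y = (f)$ para alguma $f \in K(X)$; como $Y$ é efetivo temos $(f) \geq 0$, o que força $f$ a ser holomorfa (pois $\Ord_{Z}(f) \geq 0$ para toda hipersuperfície $Z$), e portanto $Y$ é o conjunto de zeros da função holomorfa global $f$.

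Para o item 2, seja $f \in K(X)$ não nula, com divisor $(f) = Z(f) - P(f)$, onde $Z(f) \geq 0$ e $P(f) \geq 0$ são os divisores de zeros e de pólos. Pelo item 1 aplicado ao divisor $P(f)$, existe $h \in K(X)$ com $(h) = P(f)$; como $P(f) \geq 0$, a função $h$ não tem pólos, isto é, $h \in H^0(X,\mathcal{O}_X)$ é holomorfa global. Defina $g = fh$. Então, pela aditividade da ordem,
\begin{equation*}
(g) = (f) + (h) = \big(Z(f) - P(f)\big) + P(f) = Z(f) \geq 0,
\end{equation*}
de onde $g$ também é holomorfa global. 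Logo $f = g/h$ exibe $f$ como razão de duas funções holomorfas globais em $X$, concluindo a demonstração.

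Não vejo nenhum obstáculo sério: toda a dificuldade já foi absorvida nos resultados anteriores, sobretudo na Proposição \ref{prop:kerOD} (trivialidade de $\mathcal{O}(D)$ equivale a $D$ principal) e na aditividade de $\Ord_Y$. O único ponto que exige um pequeno cuidado é a passagem ``$(f) \geq 0 \Rightarrow f$ holomorfa'', que segue diretamente da caracterização $\Ord_Y(f) = 0$ se e só se $f$ é holomorfa e não se anula identicamente ao longo de $Y$, junto com o fato de que os pólos de uma função meromorfa são exatamente as hipersuperfícies onde $\Ord_Y(f) < 0$; se não há tais hipersuperfícies, $f$ se estende holomorficamente (aqui usa-se implicitamente que $X$ é suave, de modo que $\mathcal{O}_{X,x}$ é fatorial, o que já foi invocado na seção sobre divisores). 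A demonstração é, portanto, uma transcrição quase literal do argumento do exemplo de $\C^n$, com $\Pic(\C^n)=0$ substituído pela hipótese.
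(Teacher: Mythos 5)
Sua demonstração está correta e segue exatamente o mesmo caminho do texto: o exemplo sobre $\C^n$ usa apenas $\Pic(\C^n)=0$ junto com a Proposição \ref{prop:kerOD} e a aditividade da ordem, e a proposição geral é obtida substituindo essa hipótese por $\Pic(X)=0$, tal como você fez. O cuidado adicional que você explicita na passagem ``$(f)\geq 0 \Rightarrow f$ holomorfa'' é pertinente, mas não altera o argumento.
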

\end{example}

 Grande parte da discussão desta seção, como a definição da aplicação $\Div(X) \to \Pic (X)$ e a proposição \ref{prop:kerOD}, pode ser resumida na existência da sequência exata de feixes
\begin{equation} \label{eq:seq-mero}
0 \longrightarrow \mathcal{O}^*_X \longrightarrow \mathcal{K}^*_X \longrightarrow \mathcal{K}^*_X / \mathcal{O}^*_X \longrightarrow 0
\end{equation}

 Olhando para a sequência longa associada temos

\begin{equation}
 \begin{matrix}
 H^0(X,\mathcal{K}^*_X) & \longrightarrow & H^0(X,\mathcal{K}^*_X/\mathcal{O}^*_X) & \longrightarrow & H^1(X,\mathcal{O}^*_X) \\
 \parallel &  & \parallel & & \parallel \\
 K(X)^* &  & \Div(X) & & \Pic(X)
 \end{matrix}
\end{equation}

 A sequência acima nos diz que após as identificações $\Div(X) = H^0(X,\mathcal{K}^*_X/\mathcal{O}^*_X)$ e $\Pic(X) = H^1(X,\mathcal{O}^*_X)$, a  aplicação $D \to \mathcal{O}(D)$ nada mais é que a aplicação de cobordo $H^0(X,\mathcal{K}^*_X/\mathcal{O}^*_X) \to H^1(X,\mathcal{O}^*_X)$, e que seu núcleo é a imagem da aplicação $K(X)^* \to \Div(X)$ que associa a cada função meromorfa global seu divisor, ou seja, são os divisores principais.
\chapter{Geometria de Kähler} \label{ch:kahler}

Nesse capítulo estudaremos os aspectos métricos das variedades complexas. A primeira ideia é analisar as relações entre as métricas riemannianas e a estrutura complexa da variedade. Uma primeira condição de compatibiliadade entre essas duas estruturas (a saber, que a estrutura complexa induzida $J_x:T_x X \to T_x X$ é uma isometria) leva a noção de métrica hermitiana. Essa condição não é muito restritiva, pois é muito simples obter uma métrica hermitiana a partir de uma métrica riemanniana. Uma outra condição de compatibilidade é a chamada condição de Kähler, que pode ser formulada de diversas maneiras equivalentes. A mais geométrica delas é, possivelmente, exigir que o tensor $J$ seja paralelo com respeito a conexão de Levi-Civita. A condição de Kähler é bem mais forte, impondo diversas restrições sobre $X$, quer sobre a estrutura complexa ou até mesmo sobre a sua topologia, mas não é tão forte a ponto de limitar muito a quantidade de exemplos. Toda variedade projetiva, por exemplo, admite uma métrica de Kähler.

\section{Métricas hermitianas}
Se $X$ é uma variedade complexa, vimos no capítulo \ref{ch:cap0} que a estrutura complexa de $X$ induz uma estrutura complexa em cada espaço tangente $T_x X$, que na sua totalidade dão um automorfismo $J:TX \to TX$ tal que $J^2 = - \text{id}$.

Se $g$ é uma métrica riemanniana em $X$, dizemos que $g$ é uma métrica hermitiana se $J$ é uma isometria, isto é, se
\begin{equation*} \index{métrica!hermitiana}
g_x(J_x u,J_x v) = g_x (u,v) \; \text{ para todo } u,v \in T_x X.\\
\end{equation*}

As métricas hermitianas são o análogo natural das métricas riemannianas para variedades complexas.\\

\begin{example} O produto interno usual em $\C^n \simeq \R^{2n}$, dado por $\langle z,w \rangle = \re \sum_{j=1}^n z_j \bar{w}_j$ é uma métrica hermitiana, pois é claramente invariante pela multiplicação por $\smo$.
\end{example}

\begin{example}
Se $f:X \to Y$ é uma imersão holomorfa e $g$ é uma métrica hermitiana em $Y$ então $f^*g$ é uma métrica hermitiana em $X$.
\end{example}

A partir de uma métrica riemanniana $g$ sempre podemos definir uma métrica hermitiana $\widetilde{g}$, fazendo
\begin{equation*}
\widetilde{g}(X,Y) = \frac{1}{2}\lbrace g(X,Y) + g(JX,JY) \rbrace.
\end{equation*}

Como toda variedade admite uma métrica riemanniana, segue que toda variedade complexa admite também uma métrica hermitiana.\\

\begin{remark} Note que se $Y \in TX$ então $Y$ e $JY$ são ortogonais com respeito a qualquer métrica hermitiana, pois
\begin{equation*}
g(Y,JY) = g(JY,J^2 Y) = - g(JY,Y) = -g(Y,JY).
\end{equation*}
\end{remark}

\begin{remark} \label{rmk:herm-riem}
Se $g$ é uma métrica hermitiana podemos definir um produto hermitiano em cada espaço tangente, fazendo
\begin{equation*}
h = g + \smo g(\cdot,J\cdot),
\end{equation*}
isto é, $h$ é uma forma sesquilinear em $(TX,J)$ positiva definida. Reciprocamente, se $h$ é um produto hermitiano em $TX$, então $g = \re h$ é uma métrica hermitiana em $X$.

Por essa razão, daqui em diante, o termo métrica hermitiana designará tanto um produto hermitiano em $(TX,J)$ quanto uma métrica em $TX$ compatível com $J$.\\
\end{remark}

\begin{definition}
A \textbf{forma fundamental} \index{forma fundamental} ou forma de Kähler de uma métrica hermitiana $g$ é a 2-forma
\begin{equation} \label{eq:fundamental-form}
\omega = g(J\cdot,\cdot)
\end{equation}
\end{definition}

Note que $\omega$ é de fato antisimétrica:
\begin{equation*}
\omega(X,Y) = g(JX,Y) = -g(X,JY) = -g(JY,X) = - \omega(Y,X).
\end{equation*}

Além disso $\omega$ é não degenerada, pois se $\omega(X,Y)=0$ para todo $Y$, então $0 = \omega(JX,X) = -g(X,X)$ e portanto $X=0$, ou seja, $\omega$ é uma forma quase-simplética (uma 2-forma não degenerada).

\begin{lemma}
Dois dos tensores $J,g$ e $\omega$ determinam o terceiro. 
\end{lemma}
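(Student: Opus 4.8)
O plano � tratar separadamente as tr�s implica��es, exibindo em cada caso uma f�rmula expl�cita que reconstr�i o tensor que falta a partir dos outros dois. A primeira � imediata: $g$ e $J$ determinam $\omega$, j� que por defini��o $\omega = g(J\cdot,\cdot)$. A segunda implica��o que verificarei � que $g$ e $\omega$ determinam $J$. Fixados $x \in X$ e $v \in T_x X$, a identidade $\omega_x(v,w) = g_x(J_x v, w)$, v�lida para todo $w \in T_x X$, diz que o covetor $w \mapsto \omega_x(v,w)$ coincide com $w \mapsto g_x(J_x v,w)$; como $g_x$ � n�o degenerada (por ser uma m�trica riemanniana), a aplica��o $u \mapsto g_x(u,\cdot)$ � um isomorfismo $T_x X \to T_x^* X$, e portanto $J_x v$ fica univocamente determinado como a pr�-imagem desse covetor. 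A linearidade de $v \mapsto J_x v$ segue da bilinearidade de $g_x$ e $\omega_x$. Logo $J$ fica determinado por $g$ e $\omega$.

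Resta mostrar que $J$ e $\omega$ determinam $g$. Afirmo que vale a f�rmula
\begin{equation*}
g(X,Y) = -\omega(JX,Y).
\end{equation*}
De fato, usando apenas a defini��o de $\omega$ e a rela��o $J^2 = -\id$,
\begin{equation*}
-\omega(JX,Y) = -g\big(J(JX),Y\big) = -g(-X,Y) = g(X,Y),
\end{equation*}
o que estabelece a f�rmula e conclui a demonstra��o. Como o enunciado pede apenas que o terceiro tensor seja \emph{determinado} pelos outros dois, basta-nos essa express�o; vale notar que ela usa somente $J^2 = -\id$ e a defini��o de $\omega$, e n�o a hip�tese de que $g$ � hermitiana.

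N�o antecipo dificuldades reais nesta prova: ela � essencialmente um exerc�cio de rastrear sinais a partir das defini��es. O �nico ponto que exige um m�nimo de cuidado � a segunda implica��o, onde � preciso invocar a n�o degenerescncia de $g$ como m�trica riemanniana (e n�o a de $\omega$ como forma quase-simpl�tica) para garantir que $J_x v$ est� bem definido a partir do covetor $\omega_x(v,\cdot)$.
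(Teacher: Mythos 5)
A sua demonstra\c{c}\~ao est\'a correta, mas toma um caminho genuinamente diferente do texto no passo mais delicado, a saber, recuperar $J$ a partir de $g$ e $\omega$. Voc\^e argumenta por unicidade: como $g_x$ \'e n\~ao degenerada, o isomorfismo $u \mapsto g_x(u,\cdot)$ de $T_xX$ em $T_x^*X$ identifica $J_x v$ como a \'unica pr\'e-imagem do covetor $\omega_x(v,\cdot)$, o que basta para concluir que $J$ fica \emph{determinado} pelo par $(g,\omega)$ --- exatamente o que o enunciado pede --- e \'e mais elementar que o argumento do texto. O texto, em vez disso, escreve $\omega(X,Y)=g(X,SY)$ com $S$ antissim\'etrico e n\~ao degenerado e constr\'oi $J=SR^{-1}$, onde $R$ \'e a raiz quadrada positiva de $-S^2$ que comuta com $S$; essa decomposi\c{c}\~ao polar produz um endomorfismo com $J^2=-\id$ a partir de \emph{qualquer} par $(g,\omega)$ com $\omega$ quase-simpl\'etica, mesmo sem supor de antem\~ao que exista uma estrutura complexa compat\'ivel, e \'e essa constru\c{c}\~ao que sustenta a igualdade $\text{U}(n)=O(2n)\cap\GL(n,\C)\cap\text{Sp}(n,\R)$ comentada logo ap\'os o lema. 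As outras duas implica\c{c}\~oes coincidem essencialmente com as do texto; a sua f\'ormula $g(X,Y)=-\omega(JX,Y)$ \'e equivalente \`a f\'ormula $g=\omega(\cdot,J\,\cdot)$ do texto, com a pequena vantagem, que voc\^e mesmo observa, de usar apenas $J^2=-\id$ e n\~ao a hip\'otese de que $g$ \'e hermitiana.
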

\begin{proof}
Se $J$ é conhecida, então $g$ determina $\omega$ por (\ref{eq:fundamental-form}) e $\omega$ determina $g$, pois $g = g(J \: \cdot, J \: \cdot) = \omega(\:\cdot,J \: \cdot)$.

Suponha agora que temos uma métrica $g$ e uma forma quase-simplética $\omega$. Podemos, em cada espaço tangente, representar $\omega$ por uma matriz anti-simétrica não degenerada $S$, isto é,
\begin{equation*}
\omega(X,Y) = g(X,SY)
\end{equation*}

Agora $-S^2$ é simétrica e positiva definida e portanto existe $R$ tal que $R^2 = -S^2$ e $RS=SR$. Defina $J = SR^{-1}$.
Então
\begin{equation*}
J^2 = S^2 (R^{-1})^2 = S^2 (R^2)^{-1} = - S^2 (S^2)^{-1} = -I
\end{equation*}
e portanto $J$ é uma estrutura complexa, segundo a qual $g$ é uma métrica hermitiana.
\end{proof}

O lema acima pode ser resumido na igualdade de grupos de Lie
\begin{equation*}
\text U(n) = O(2n) \cap \GL(n,\C) \cap \text{Sp}(n,\R)
\end{equation*}
em que a interseção entre dois grupos quaisquer no membro direito já é igual a $U(n)$.

Aqui vemos as matrizes de $\text U(n)$ e $GL(n,\C)$ como matrizes reais de tamanho $2n$ segundo a identificação $A + \smo B \mapsto \begin{pmatrix} A & B \\ -B & A\end{pmatrix}$.

Assim, o grupo $GL(n,\C)$ é formado pelas matrizes que comutam com a estrura complexa padrão $J_0$ em $\R^{2n}$ e $U(n)$ é o grupo das matrizes que comutam com $J_0$  preservam a forma hermitiana padrão. O grupo $\text{Sp}(n,\R)$ é, por definição, o grupo das matrizes que preservam a forma simplética padrão de $\R^{2n}$, $\omega_0 = \sum_{i=0}^n dx_i \wedge dy_i$.

\subsection{Expressão em coordenadas}

Tanto a forma fundamental $\omega$ quanto a forma hermitiana $h = g - \smo \omega$ se estendem de forma $\C$-bilinear a $T_{\C}X$. Denotaremos também por $\omega$ e $h$ essas extensões. Note que $h$ é $J$-sesquilinear, isto é,
\begin{equation*}
h(Ju,v) = \smo h(u,v) \;\; \text{ e } \;\; h(u,Jv) = -\smo h(u,v),
\end{equation*}
mas é bilinear com relação a multiplicação por $\smo$, isto é
\begin{equation*}
h(\smo u,v) = \smo h(u,v) = h(u,\smo v).
\end{equation*}

\begin{lemma}
Em um sistema de coordenadas $(x_1,\cdots,x_n,y_1,\cdots,y_n)$ valem as seguintes relações
\begin{enumerate}
\item $h \left( \frac{\partial}{\partial x_i},\frac{\partial}{\partial x_j} \right) = h \left(\frac{\partial}{\partial y_i},\frac{\partial}{\partial y_j} \right)$ e  $h \left( \frac{\partial}{\partial x_i},\frac{\partial}{\partial y_j} \right) = - h \left( \frac{\partial}{\partial y_i},\frac{\partial}{\partial x_j} \right) = -\smo\: h \left( \frac{\partial}{\partial x_i},\frac{\partial}{\partial x_j} \right)$;
\item $h \left( \frac{\partial}{\partial z_i},\frac{\partial}{\partial z_j} \right) = h \left( \frac{\partial}{\partial \bar{z}_i},\frac{\partial}{\partial \bar{z}_j} \right) = 0$ e $h \left( \frac{\partial}{\partial z_i},\frac{\partial}{\partial \bar{z}_j} \right) = h \left( \frac{\partial}{\partial x_i},\frac{\partial}{\partial x_j} \right)$.
\end{enumerate}
\end{lemma}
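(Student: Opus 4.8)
The plan is to verify each identity by direct computation using the definitions $\frac{\del}{\del z_i} = \frac12\left(\frac{\del}{\del x_i} - \smo \frac{\del}{\del y_i}\right)$ and $\frac{\del}{\del \zbar_i} = \frac12\left(\frac{\del}{\del x_i} + \smo \frac{\del}{\del y_i}\right)$ together with the $\C$-bilinearity of the extension of $h$ to $T_\C X$ and the sesquilinearity relations $h(Ju,v) = \smo h(u,v)$, $h(u,Jv) = -\smo h(u,v)$. Item (1) is the cheap step: since $J$ is an isometry for $g$ and $\omega = g(J\cdot,\cdot)$, we have $h = g - \smo\omega$, and because $J\frac{\del}{\del x_i} = \frac{\del}{\del y_i}$, $J\frac{\del}{\del y_i} = -\frac{\del}{\del x_i}$ (eq. \ref{eq:J-coord}), applying sesquilinearity in the first slot gives $h\!\left(\frac{\del}{\del y_i},\frac{\del}{\del x_j}\right) = h\!\left(J\frac{\del}{\del x_i},\frac{\del}{\del x_j}\right) = \smo\, h\!\left(\frac{\del}{\del x_i},\frac{\del}{\del x_j}\right)$, and similarly $h\!\left(\frac{\del}{\del y_i},\frac{\del}{\del y_j}\right) = h\!\left(J\frac{\del}{\del x_i},J\frac{\del}{\del x_j}\right) = \smo\cdot(-\smo)\,h\!\left(\frac{\del}{\del x_i},\frac{\del}{\del x_j}\right) = h\!\left(\frac{\del}{\del x_i},\frac{\del}{\del x_j}\right)$. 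The relation $h\!\left(\frac{\del}{\del x_i},\frac{\del}{\del y_j}\right) = -h\!\left(\frac{\del}{\del y_i},\frac{\del}{\del x_j}\right)$ follows the same way acting on the second slot, and combining the two sign computations yields the stated $= -\smo\, h\!\left(\frac{\del}{\del x_i},\frac{\del}{\del x_j}\right)$.

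For item (2) I would simply expand. For the first assertion,
\begin{equation*}
h\!\left(\frac{\del}{\del z_i},\frac{\del}{\del z_j}\right) = \tfrac14\left[ h\!\left(\tfrac{\del}{\del x_i},\tfrac{\del}{\del x_j}\right) - \smo\, h\!\left(\tfrac{\del}{\del x_i},\tfrac{\del}{\del y_j}\right) - \smo\, h\!\left(\tfrac{\del}{\del y_i},\tfrac{\del}{\del x_j}\right) - h\!\left(\tfrac{\del}{\del y_i},\tfrac{\del}{\del y_j}\right)\right],
\end{equation*}
and by item (1) the first and last terms cancel while the two middle terms also cancel, giving $0$; the computation for $h\!\left(\frac{\del}{\del \zbar_i},\frac{\del}{\del \zbar_j}\right)$ is identical with the conjugate signs. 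For the mixed term,
\begin{equation*}
h\!\left(\frac{\del}{\del z_i},\frac{\del}{\del \zbar_j}\right) = \tfrac14\left[ h\!\left(\tfrac{\del}{\del x_i},\tfrac{\del}{\del x_j}\right) + \smo\, h\!\left(\tfrac{\del}{\del x_i},\tfrac{\del}{\del y_j}\right) - \smo\, h\!\left(\tfrac{\del}{\del y_i},\tfrac{\del}{\del x_j}\right) + h\!\left(\tfrac{\del}{\del y_i},\tfrac{\del}{\del y_j}\right)\right],
\end{equation*}
and substituting the identities from item (1) — namely $h\!\left(\frac{\del}{\del y_i},\frac{\del}{\del y_j}\right) = h\!\left(\frac{\del}{\del x_i},\frac{\del}{\del x_j}\right)$ and $h\!\left(\frac{\del}{\del x_i},\frac{\del}{\del y_j}\right) = -h\!\left(\frac{\del}{\del y_i},\frac{\del}{\del x_j}\right) = -\smo\, h\!\left(\frac{\del}{\del x_i},\frac{\del}{\del x_j}\right)$ — the bracket collapses to $4\,h\!\left(\frac{\del}{\del x_i},\frac{\del}{\del x_j}\right)$, so $h\!\left(\frac{\del}{\del z_i},\frac{\del}{\del \zbar_j}\right) = h\!\left(\frac{\del}{\del x_i},\frac{\del}{\del x_j}\right)$ as claimed.

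There is no real obstacle here; the only thing to be careful about is sign bookkeeping, in particular being consistent about whether $h$ is taken $\C$-linear (the convention adopted just above the lemma in the text, "se estendem de forma $\C$-bilinear") versus sesquilinear. Since the paragraph preceding the lemma explicitly records the sesquilinearity relations $h(Ju,v) = \smo h(u,v)$ and $h(u,Jv) = -\smo h(u,v)$ for the $J$-action while asserting $\C$-bilinearity for scalar multiplication by $\smo$, I would use precisely those to drive item (1), and then only $\C$-bilinearity (expanding $\frac{\del}{\del z_i}$, $\frac{\del}{\del \zbar_i}$ as complex linear combinations) to drive item (2). I expect the whole proof to be three or four lines once item (1) is in hand, so the write-up would state item (1) from the sesquilinearity relations and then present the two expansions above, citing item (1) to simplify.
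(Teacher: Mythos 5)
Your proposal is correct and follows essentially the same route as the paper: item (1) from the $J$-invariance/$J$-sesquilinearity of $h$ together with $J(\del/\del x_i)=\del/\del y_i$, and item (2) by $\C$-bilinear expansion of $\del/\del z_i$, $\del/\del\zbar_i$ followed by substitution of the identities from item (1). The sign bookkeeping in both expansions checks out, and your explicit distinction between $\C$-bilinearity in $\smo$ and $J$-sesquilinearity is exactly the point the paper's computation relies on.
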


\begin{proof}
1. A primeira relação segue do fato de $h$ ser $J$-invariante e de $J(\partial \slash \partial x_i) = \partial \slash \partial y_i$ (cf. eq. \ref{eq:J-coord}).

Para a segunda relação também usamos a $J$-invariância de $h$:
\begin{equation*}
h \left( \frac{\partial}{\partial x_i},\frac{\partial}{\partial y_j} \right) = h \left( \frac{\partial}{\partial x_i},J \frac{\partial}{\partial x_j} \right) = h \left( J \frac{\partial}{\partial x_i}, J^2 \frac{\partial}{\partial x_j} \right) = - h \left( \frac{\partial}{\partial y_i},\frac{\partial}{\partial x_j} \right)
\end{equation*}
e o fato de $h$ ser $J$-sesquilinear.

2. Lembrando da definição de $\partial \slash \partial z_i$ (veja eq. \ref{eq:def-partialz}) e usando a parte 1 temos
\begin{equation*}
 \begin{split}
  h \left( \frac{\partial}{\partial z_i},\frac{\partial}{\partial z_j} \right) &= \frac{1}{4} h \left( \frac{\partial}{\partial x_i} - \smo \frac{\partial}{\partial y_i}, \frac{\partial}{\partial x_j} - \smo \frac{\partial}{\partial y_j} \right)\\
  &=\frac{1}{4} \left \lbrace h \left( \frac{\partial}{\partial x_i},\frac{\partial}{\partial x_j} \right) - \smo\: h \left( \frac{\partial}{\partial x_i},\frac{\partial}{\partial y_j} \right) - \smo \: h \left( \frac{\partial}{\partial y_i},\frac{\partial}{\partial x_j} \right)  - h \left( \frac{\partial}{\partial y_i},\frac{\partial}{\partial y_j} \right) \right \rbrace \\
  &=0,
 \end{split}
\end{equation*}
e analogamente $h \left( \frac{\partial}{\partial \bar{z}_i},\frac{\partial}{\partial \bar{z}_j} \right) = 0$.

Para a segunda relação temos
\begin{equation*}
 \begin{split}
  h \left( \frac{\partial}{\partial z_i},\frac{\partial}{\partial \bar{z}_j} \right) &= \frac{1}{4} h \left( \frac{\partial}{\partial x_i} - \smo \frac{\partial}{\partial y_i}, \frac{\partial}{\partial x_j} + \smo \frac{\partial}{\partial y_j} \right)\\
  &=\frac{1}{4} \left \lbrace h \left( \frac{\partial}{\partial x_i},\frac{\partial}{\partial x_j} \right) + \smo \: h \left( \frac{\partial}{\partial x_i},\frac{\partial}{\partial y_j} \right) - \smo \: h \left( \frac{\partial}{\partial y_i},\frac{\partial}{\partial x_j} \right) + h \left( \frac{\partial}{\partial y_i},\frac{\partial}{\partial y_j} \right) \right \rbrace \\
  &= \frac{1}{2} \left \lbrace h \left( \frac{\partial}{\partial x_i},\frac{\partial}{\partial x_j} \right) + \smo \: h\left( \frac{\partial}{\partial x_i},\frac{\partial}{\partial y_j} \right) \right \rbrace \\
  & =  h \left( \frac{\partial}{\partial x_i},\frac{\partial}{\partial x_j} \right).
 \end{split}
\end{equation*}
\end{proof}

Do lema acima vemos que na expressão do 2-tensor $h$ na base $\{dz_i \otimes d z_j, dz_i \otimes d \bar{z}_j, d\bar{z}_i \otimes d z_j,d\bar{z}_i \otimes d \bar{z}_j\}_{i,j=1}^n$ os termos $dz_i \otimes d z_j$ e $d\bar{z}_i \otimes d \bar{z}_j$ não aparecem. Portanto, uma métrica hermitiana é dada em coordenadas por
\begin{equation} \label{eq:metric-coord}
h = \sum_{ij}  h_{ij} \; dz_i \otimes d\bar{z}_j, 
\end{equation}
onde $h_{ij} = h(\frac{\partial}{\partial z_i}, \frac{\partial}{\partial \bar{z}_j})$ é uma matriz hermitiana positiva definida.

Nessas mesmas coordenadas, a forma fundamental é dada por
\begin{equation} \label{eq:fund-form-coord}
\omega = \frac{\smo}{2} \sum_{ij}  h_{ij} \; dz_i \wedge d\bar{z}_j.
\end{equation}

De fato, como $\partial \slash \partial x_i$ e $\partial \slash \partial y_i$ são vetores reais temos que
\begin{equation*}
\omega \left( \frac{\partial}{\partial x_i},\frac{\partial}{\partial x_j} \right) = -\im h \left( \frac{\partial}{\partial x_i},\frac{\partial}{\partial x_j} \right) = -\im h_{ij},
\end{equation*}
e
\begin{equation*}
\omega \left( \frac{\partial}{\partial x_i},\frac{\partial}{\partial y_j} \right) = g \left( \frac{\partial}{\partial y_i},\frac{\partial}{\partial y_j} \right) = \re h \left( \frac{\partial}{\partial y_i},\frac{\partial}{\partial y_j} \right) = \re h_{ij}
\end{equation*}
de onde vemos que
\begin{equation*}
\begin{split}
 \omega \left( \frac{\partial}{\partial z_i},\frac{\partial}{\partial \bar{z}_j} \right) &= \frac{1}{4} \left \lbrace \omega \left( \frac{\partial}{\partial x_i},\frac{\partial}{\partial x_j} \right) + \smo \: \omega \left( \frac{\partial}{\partial x_i},\frac{\partial}{\partial y_j} \right) - \smo \: \omega \left( \frac{\partial}{\partial y_i},\frac{\partial}{\partial x_j} \right) + \omega \left( \frac{\partial}{\partial y_i},\frac{\partial}{\partial y_j} \right) \right \rbrace \\
 &=\frac{1}{2} \left \lbrace - \im h_{ij} + \smo \: \re h_{ij} \right \rbrace = \frac{\smo}{2} \left \lbrace  \re h_{ij} + \smo \: \im h_{ij} \right \rbrace \\
 &=\frac{\smo}{2} h_{ij},
\end{split}
\end{equation*}
demonstrando a representação (\ref{eq:fund-form-coord}).

Este cálculo mostra também que $\omega$ é uma $(1,1)$-forma e do fato de $(h_{ij})$ ser hermitiana, vemos que $\omega$ é real, isto é, $\overline{\omega}=\omega$.

\begin{example} \label{ex:std-metric} No caso em que $X = \C^n$ e $h=h_0$ é a métrica hermitiana padrão temos que $h_{ij} = \delta_{ij}$, ou seja,
\begin{equation*}
h_0 = \sum_{i=1}^n dz_i \otimes d \bar{z}_i = \sum_{i=1}^n dx_i \otimes dx_i + \sum_{i=1}^n dy_i \otimes dy_i
\end{equation*}
e a forma fundamental
\begin{equation*}
\omega_0 = \frac{\smo}{2}\sum_{i=1}^n dz_i \wedge d \bar{z}_i = \sum_{i=1}^n dx_i \wedge dy_i
\end{equation*}
é a forma simplética padrão em $\C^n \simeq \R^{2n}$, onde usamos as substituições $dz_i = dx_i + \smo dy_i$ e $d\bar{z}_i = dx_i - \smo dy_i$, de modo que $dz_i \wedge d\bar{z}_i = -2\smo dx_i \wedge dy_i$.
\end{example}

Essa descrição de $\omega$ também nos permite obter uma expressão simples para a forma volume de uma métrica hermitiana.

\begin{proposition} \label{prop:volume-form}
Seja $g$ uma métrica hermitiana em $X$ e $\omega$ a forma fundamental associada. Então a forma volume de $g$ é dada por
\begin{equation} \label{eq:volume-form}
\text{vol}_X = \frac{\omega^n}{n!}.
\end{equation}
\end{proposition}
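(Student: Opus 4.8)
The plan is to reduce the global statement to a pointwise computation at an arbitrary point $p \in X$, since both $\text{vol}_X$ and $\omega^n/n!$ are top-degree forms and agreeing pointwise forces agreement everywhere. At $p$, the Hermitian inner product $h_p$ on $T_pX$ can be diagonalized: by the standard linear algebra of Hermitian forms there is a $\C$-basis of $(T_pX, J_p)$ in which $h_p$ is the identity. Concretely, I would choose complex coordinates $z_1,\dots,z_n$ centered at $p$ such that $h_{ij}(p) = \delta_{ij}$; this is possible because any positive-definite Hermitian matrix is conjugate to the identity via an element of $\GL(n,\C)$, and a $\C$-linear change of the coordinate differentials realizes this conjugation. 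In such coordinates the situation at $p$ is exactly that of Example \ref{ex:std-metric}.

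Next I would compute both sides in these adapted coordinates. By formula (\ref{eq:fund-form-coord}), at $p$ we have $\omega_p = \frac{\smo}{2}\sum_{i=1}^n dz_i \wedge d\bar z_i$. Using $dz_i = dx_i + \smo\, dy_i$ and $d\bar z_i = dx_i - \smo\, dy_i$, each term satisfies $\frac{\smo}{2} dz_i \wedge d\bar z_i = dx_i \wedge dy_i$, so $\omega_p = \sum_{i=1}^n dx_i \wedge dy_i$, the standard symplectic form. Then I would expand $\omega_p^n$. Since the $n$ two-forms $dx_i \wedge dy_i$ mutually commute (each has even degree) and $(dx_i \wedge dy_i)^2 = 0$, the multinomial expansion of $\left(\sum_i dx_i \wedge dy_i\right)^n$ collapses to $n!$ times the single ``squarefree'' term $dx_1 \wedge dy_1 \wedge \cdots \wedge dx_n \wedge dy_n$. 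Hence $\omega_p^n/n! = dx_1 \wedge dy_1 \wedge \cdots \wedge dx_n \wedge dy_n$.

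Finally I would identify this with the Riemannian volume form. Since $h_{ij}(p) = \delta_{ij}$, formula (\ref{eq:metric-coord}) gives $g_p = \sum_i (dx_i \otimes dx_i + dy_i \otimes dy_i)$, i.e. $\{ \partial/\partial x_1,\dots,\partial/\partial x_n, \partial/\partial y_1,\dots,\partial/\partial y_n\}$ is an orthonormal basis of $T_pX$ for $g$. Moreover this basis is positively oriented with respect to the canonical orientation of the complex manifold $X$ (recall from the remark in Section \ref{sec:complex-manifolds} that complex manifolds carry a natural orientation, and that $J_p(\partial/\partial x_i) = \partial/\partial y_i$; one checks the ordering $x_1,y_1,\dots,x_n,y_n$ is the correct orientation). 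Therefore the Riemannian volume form of $g$ at $p$ is precisely $\text{vol}_X|_p = dx_1 \wedge dy_1 \wedge \cdots \wedge dx_n \wedge dy_n$, which matches $\omega_p^n/n!$. As $p$ was arbitrary, $\text{vol}_X = \omega^n/n!$ globally.

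The only genuinely delicate point is the orientation bookkeeping: one must make sure that the ordering of the real coordinates used when writing $\omega^n$ as a wedge of $dx_i \wedge dy_i$'s is the one inducing the standard complex orientation, so that no sign $(-1)^{\binom{n}{2}}$ or similar slips in; this is where I would be most careful, tracking the permutation that reorders $dx_1 \wedge dy_1 \wedge \cdots \wedge dx_n \wedge dy_n$ and confirming it is consistent with the convention that $(T^{1,0}X, \text{its natural orientation})$ orients $X$. Everything else is the routine linear algebra of diagonalizing a Hermitian form and the nilpotency of the $dx_i \wedge dy_i$.
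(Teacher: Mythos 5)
Your proof is correct and follows essentially the same route as the paper's: reduce to a pointwise statement, normalize the Hermitian matrix to the identity at the chosen point, and expand $\left(\sum_i dx_i \wedge dy_i\right)^n = n!\, dx_1 \wedge dy_1 \wedge \cdots \wedge dx_n \wedge dy_n$ using that the $dx_i\wedge dy_i$ commute and square to zero. If anything you are more careful than the paper on the one delicate point, the orientation convention: the paper writes the result in the ordering $dx_1\wedge\cdots\wedge dx_n\wedge dy_1\wedge\cdots\wedge dy_n$, which differs from the interleaved ordering by a sign $(-1)^{n(n-1)/2}$, and your version is the one consistent with the canonical complex orientation.
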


\begin{proof}
A igualdade (\ref{eq:volume-form}) pode ser verificada pontualmente. Podemos supor então que $X=\C^n$ e $h= h_0 =\sum dz_i \otimes d\bar{z}_i$ é a metrica padrão. Temos nesse caso que
\begin{equation*}
\omega_0^n = \left( \sum_{i=1}^n dx_i \wedge dy_i \right)^n = n! \: dx_1 \wedge \cdots \wedge dx_m \wedge dy_1 \wedge \cdots \wedge dy^n = n! \: \text{vol}_{\C^n},
\end{equation*}
de onde o resultado segue.
\end{proof}

Note que como consequência temos que a forma volume $\text{vol}_X$ é uma $(n,n)$-forma.

Se  $Y \subset X$ é uma subvariedade complexa de dimensão $d$ e $\omega$ é a forma fundamental de $X$, a forma fundamental de $Y$ é simplesmente a restrição de $\omega$ a $Y$, de modo que, pela proposição \ref{prop:volume-form}, a sua forma volume é $\text{vol}_Y = (\omega^d)|_Y / d!$, de onde obtemos a seguinte consequência interessante.

\begin{proposition} \textbf{(Teorema de Wirtinger)} Seja $Y \subset X$ uma subvariedade complexa de dimensão $d$ de uma variedade hermitiana $X$, ambas compactas. O volume de $Y$, com a métrica induzida é
\begin{equation*}
\text{vol}(Y) = \int_Y \frac{\omega^d}{d!}.
\end{equation*}
\end{proposition}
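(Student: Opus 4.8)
A estrat�gia � reduzir o c�lculo do volume de $Y$ (com a m�trica riemanniana induzida de $X$) a uma aplica��o direta da Proposi��o \ref{prop:volume-form}, notando que tudo se passa intrinsecamente em $Y$. Primeiro observo que $Y$, sendo subvariedade complexa de $X$, � ela pr�pria uma variedade complexa de dimens�o $d$, e a inclus�o $\iota: Y \hookrightarrow X$ � uma imers�o holomorfa. A estrutura complexa $J^Y$ de $Y$ � a restri��o de $J^X$ aos espa�os tangentes $T_yY \subset T_yX$, que s�o $J^X$-invariantes precisamente por $Y$ ser subvariedade complexa. Em seguida, a m�trica induzida em $Y$ � $g_Y = \iota^* g$, e como $J^X$ restrito a $T_yY$ coincide com $J^Y$, a m�trica $g_Y$ � hermitiana em $Y$: de fato $g_Y(J^Y u, J^Y v) = g(J^X u, J^X v) = g(u,v) = g_Y(u,v)$ para $u,v \in T_yY$.

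O passo seguinte � identificar a forma fundamental de $(Y, g_Y)$. Por defini��o ela � $\omega_Y = g_Y(J^Y \cdot, \cdot)$, e para $u,v \in T_yY$ temos $\omega_Y(u,v) = g(J^X u, v) = \omega(u,v)$, onde $\omega$ � a forma fundamental de $X$. Ou seja, $\omega_Y = \iota^* \omega = \omega|_Y$, exatamente como afirmado no texto imediatamente antes do enunciado. Aplicando agora a Proposi��o \ref{prop:volume-form} � variedade hermitiana $(Y, g_Y)$ de dimens�o complexa $d$, obtenho que a forma volume de $Y$ �
\begin{equation*}
\text{vol}_Y = \frac{\omega_Y^d}{d!} = \frac{(\omega|_Y)^d}{d!} = \frac{(\omega^d)|_Y}{d!},
\end{equation*}
usando na �ltima igualdade que o pullback comuta com o produto exterior, i.e. $\iota^*(\omega^d) = (\iota^*\omega)^d$.

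Finalmente, como $Y$ � compacta, seu volume riemanniano � a integral da forma volume sobre $Y$, e portanto
\begin{equation*}
\text{vol}(Y) = \int_Y \text{vol}_Y = \int_Y \frac{\omega^d}{d!},
\end{equation*}
o que conclui a demonstra��o. A parte mais delicada, embora conceitualmente simples, � justificar cuidadosamente que a forma fundamental da m�trica induzida coincide com a restri��o de $\omega$ — isto depende crucialmente de $Y$ ser uma subvariedade \emph{complexa} (e n�o apenas real), pois s� assim os espa�os tangentes s�o invariantes por $J^X$ e a restri��o de $J^X$ a $T_yY$ � de fato a estrutura complexa induzida de $Y$ como variedade complexa. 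Uma vez estabelecido isso, o resto � apenas invocar a Proposi��o \ref{prop:volume-form} e a defini��o de volume via integra��o da forma volume.
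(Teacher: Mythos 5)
A sua demonstra��o est� correta e segue essencialmente o mesmo caminho do texto, que deduz o resultado na frase imediatamente anterior ao enunciado: a forma fundamental da m�trica induzida em $Y$ � a restri��o $\iota^*\omega$, a Proposi��o \ref{prop:volume-form} d� $\text{vol}_Y = (\omega^d)|_Y/d!$ e a integra��o conclui. Voc� apenas explicita com mais cuidado os pontos que o texto deixa impl�citos (a $J$-invari�ncia de $T_yY$ e a compatibilidade das estruturas complexas), o que � bem-vindo mas n�o altera o argumento.
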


Vemos portanto que o volume de $Y$ é dado pela integral de uma forma globalmente definida em $X$. Esse fenômeno não ocorre em geral no caso de variedades riemannianas.

\subsection{Decomposição de Lefschetz} \label{subsec:lefschetz}
Na seção \ref{sec:hol-vb} vimos que se $X$ é uma variedade hermitiana então existe uma decomposição do fibrado de $k$-formas
\begin{equation*}
\textstyle \bigwedge^k_{\C}X = \displaystyle \bigoplus_{p+q=k} \textstyle \bigwedge^{p,q} X.
\end{equation*}

Existe ainda uma outra decomposição de $\bigwedge^k_{\C}X$, chamada decomposição de Lefschetz, que é obtida construindo uma representação dá álgebra de Lie $\mathfrak{sl}(2,\C)$ na álgebra exterior $\bigwedge^* (TX)^*$.\\

Todos os cálculos serão pontuais, e portanto vamos supor que $X=V$ é um espaço vetorial de dimensão real $d=2n$ com estrutura complexa $I$ e $g= \langle \cdot,\cdot \rangle$ um produto escalar compatível.

Definimos o \textit{operador de Lefschetz} por
\begin{equation*} \index{operador!de Lefschetz}
 \begin{split}
 L: \textstyle \bigwedge^* V^* &\longrightarrow \textstyle \bigwedge^* V^*\\
 \alpha &\longmapsto \omega \wedge \alpha.
 \end{split}
\end{equation*}

Note que $L$ tem grau $2$, isto é, $L(\bigwedge^k V^*) \subset \bigwedge^{k+2} V^*$.\\

O produto escalar $\langle \cdot, \cdot \rangle$ em $V$ define um isomorfismo $V \ni v \mapsto \langle\: \cdot, v \rangle \in V^*$ e com isso define naturalmente um produto em $V^*$. Temos assim um produto escalar induzido em todas as potências exteriores $\bigwedge^k V^*$ definido da seguinte maneira: se $\{e^1,\ldots,e^d \}$ é uma base ortonormal de $V^*$ declaramos $\{e^{i_1} \wedge \cdots \wedge e^{i_k}: i_1<\cdots<i_k\}$ como base ortonormal de $\bigwedge^k V^*$. Finalmente, temos um produto em $\bigwedge^* V^* = \bigoplus_k \bigwedge^k V^*$, pedindo que os fatores sejam mutuamente ortogonais. Todos esses produtos serão denotados por $\langle \cdot, \cdot \rangle$.\\

O \textit{operador dual de Lefschetz} $\Lambda$ é definido como sendo o adjunto de $L: \bigwedge^* V^* \to \bigwedge^* V^*$ com respeito ao produto $\langle \cdot, \cdot \rangle$, ou seja, é o operador $\Lambda: \bigwedge^* V^* \to \bigwedge^* V^*$ definido pela condição
\begin{equation*}
\langle \Lambda \alpha, \beta \rangle = \langle \alpha, L \beta \rangle \; \text{ para todos } \alpha,\beta \in \textstyle \bigwedge^* V^*.
\end{equation*}

Denotaremos também por $\Lambda$ a sua extensão $\C$-linear a $\bigwedge^* V_{\C}$.

Note que, como $L$ tem grau $2$, $\Lambda$ tem grau $-2$, isto é, $\Lambda (\bigwedge^k V^*) \subset \bigwedge^{k-2} V^*$.\\

Como a estrutura complexa induz uma orientação natural em $V$, podemos definir o operador de Hodge $*:\bigwedge^* V^* \to \bigwedge^* V^*$, pela fórmula
\begin{equation*}
\alpha \wedge * \beta = \langle \alpha,\beta\rangle \text{vol}
\end{equation*}

Note que, como os $\bigwedge^k V^*$ são mutualmente ortogonais, o operador $*$ mapeia $\bigwedge^k V^*$ em $\bigwedge^{d-k} V^*$.

O operador dual de Lefschetz tem uma descrição simples em termos de $L$ e $*$:
\begin{lemma}
O dual do operador de Lefschetz é dado, em $\bigwedge^* V^*$, por $\Lambda = *^{-1} \circ L \circ *$.
\end{lemma}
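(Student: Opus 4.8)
The plan is to prove the identity $\Lambda = *^{-1} \circ L \circ *$ by a direct adjointness argument, using the defining properties of $*$ and of $\Lambda$. First I would recall the two facts at our disposal: the Hodge star $*:\bigwedge^k V^* \to \bigwedge^{d-k}V^*$ is characterized by $\alpha \wedge *\beta = \langle \alpha,\beta \rangle \,\text{vol}$ for all $\alpha,\beta$ of the same degree, and $\Lambda$ is by definition the adjoint of $L$, i.e. $\langle \Lambda\alpha,\beta\rangle = \langle \alpha, L\beta\rangle$ for all $\alpha,\beta$. Since both $\Lambda$ and $*^{-1}\circ L\circ *$ are linear operators of degree $-2$, it suffices to check that they agree on each $\bigwedge^k V^*$, and for that it is enough to show that $*^{-1}\circ L\circ *$ satisfies the same adjointness property that uniquely characterizes $\Lambda$ — namely that $\langle (*^{-1}L*)\alpha,\beta\rangle = \langle \alpha,L\beta\rangle$ for all $\alpha \in \bigwedge^k V^*$ and $\beta \in \bigwedge^{k-2}V^*$.

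The key computation would proceed as follows. One first establishes the elementary fact that $*$ is an isometry (up to sign, but in fact exactly an isometry here since the orientation is fixed): $\langle *\gamma, *\delta\rangle = \langle \gamma,\delta\rangle$, which follows from the defining relation of $*$ together with $** = (-1)^{k(d-k)}\mathrm{id}$ on $\bigwedge^k V^*$. Using this, for $\alpha \in \bigwedge^k V^*$ and $\beta \in \bigwedge^{k-2}V^*$ I would write
\begin{equation*}
\langle (*^{-1}L*)\alpha, \beta\rangle = \langle *(*^{-1}L*)\alpha, *\beta\rangle = \langle L*\alpha, *\beta\rangle = \langle \omega \wedge *\alpha, *\beta\rangle.
\end{equation*}
Now $*\alpha \in \bigwedge^{d-k}V^*$ and $*\beta \in \bigwedge^{d-k+2}V^*$, so $\omega \wedge *\alpha$ and $*\beta$ both lie in $\bigwedge^{d-k+2}V^*$ and the pairing makes sense. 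The plan is then to use the self-adjointness of wedging against a form relative to the induced inner product — more precisely, that $\langle \omega \wedge \mu, \nu\rangle = \langle \mu, \iota_\omega \nu\rangle$ where $\iota_\omega$ is the appropriate contraction — but a cleaner route avoiding contractions is to invoke directly that $L$ is self-adjoint with respect to $\langle\cdot,\cdot\rangle$ composed with $*$, i.e. to manipulate $\langle \omega\wedge *\alpha, *\beta\rangle$ back to $\langle *\alpha, \omega \wedge(\text{something})\rangle$. The honest way is: $\langle L \mu,\nu\rangle = \langle \mu, \Lambda\nu\rangle$ is what we want to prove, so I must not use it circularly; instead I would compute $\langle \omega\wedge *\alpha,*\beta\rangle$ in an orthonormal basis adapted to the symplectic normal form of $\omega$ and compare with $\langle *\alpha, *(L\beta)\rangle = \langle \alpha, L\beta\rangle$.

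Concretely, the cleanest argument: pick a Darboux-type orthonormal basis $\{e^1,\dots,e^n,f^1,\dots,f^n\}$ of $V^*$ with $\omega = \sum_i e^i\wedge f^i$, and on monomials in this basis one checks both $L = \omega\wedge(-)$ and $*$ explicitly; then $*^{-1}L*$ and $\Lambda$ are both computed on monomials and seen to coincide. Alternatively, and more in the spirit of the rest of the text, one uses that for any $\alpha$ of degree $k$ and $\beta$ of degree $k-2$,
\begin{equation*}
\langle \alpha, L\beta\rangle\,\text{vol} = \alpha \wedge *(L\beta), \qquad \langle \Lambda\alpha, \beta\rangle\,\text{vol} = (\Lambda\alpha)\wedge *\beta,
\end{equation*}
and one shows $\alpha \wedge *(\omega\wedge\beta) = (*^{-1}L*\alpha)\wedge *\beta$ by moving $\omega$ across the wedge product, using that $\omega$ is a real $2$-form so wedging with it commutes with everything up to sign $(-1)^{2\cdot(\deg)} = 1$, together with $** = \pm\mathrm{id}$. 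The main obstacle I anticipate is bookkeeping the sign $(-1)^{k(d-k)}$ in $**$ and making sure it cancels correctly across the two applications of $*$ implicit in $*^{-1}$; since $d=2n$ is even this sign simplifies to $(-1)^k = (-1)^{d-k}$, and the two occurrences (one from $*^{-1}$ acting on a degree-$(d-k+2)$ form, paired against the degree shift) should cancel, leaving the clean identity. Once the adjointness $\langle (*^{-1}L*)\alpha,\beta\rangle = \langle\alpha,L\beta\rangle$ is verified, uniqueness of the adjoint forces $*^{-1}\circ L\circ * = \Lambda$, completing the proof.
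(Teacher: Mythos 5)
Your overall framework is sound — both $\Lambda$ and $*^{-1}\circ L\circ *$ have degree $-2$, so it suffices to verify that the latter satisfies the adjointness identity, and uniqueness of the adjoint then finishes the argument — but the verification itself is never actually carried out, and the one concrete manipulation you propose does not work as stated. You want to show $\alpha\wedge *(\omega\wedge\beta)=(*^{-1}L*\alpha)\wedge *\beta$ ``by moving $\omega$ across the wedge product'', but on the left-hand side $\omega$ is trapped \emph{inside} the Hodge star: $*(\omega\wedge\beta)$ is not $\omega\wedge *\beta$ (the degrees do not even agree, $d-k$ versus $d-k+4$), so the commutativity of wedging with a $2$-form gives you nothing here. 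Likewise your first chain $\langle(*^{-1}L*)\alpha,\beta\rangle=\langle L*\alpha,*\beta\rangle=\langle\omega\wedge*\alpha,*\beta\rangle$ dead-ends exactly where you say it does, since converting $\langle\omega\wedge\mu,\nu\rangle$ into $\langle\mu,\cdot\rangle$ is precisely the statement being proved. The Darboux-basis computation on monomials would of course succeed, but it is not executed and is far heavier than necessary.

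The missing one-line idea, which is how the paper proceeds, is to use the symmetry of the (real) inner product to place the star on $\alpha$ rather than on $L\beta$, so that $\omega$ never enters a Hodge star:
\begin{equation*}
\langle \alpha, L\beta\rangle\,\text{vol} = \langle L\beta,\alpha\rangle\,\text{vol} = L\beta\wedge *\alpha = \omega\wedge\beta\wedge *\alpha = \beta\wedge\omega\wedge *\alpha = \beta\wedge L(*\alpha) = \langle\beta, *^{-1}(L(*\alpha))\rangle\,\text{vol},
\end{equation*}
where the middle steps use only that a $2$-form commutes in the exterior algebra, and the last step is the defining relation $\gamma\wedge *\delta=\langle\gamma,\delta\rangle\,\text{vol}$ applied to $\delta=*^{-1}L*\alpha$, which has the same degree $k-2$ as $\beta$. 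No sign bookkeeping with $**=(-1)^{k(d-k)}\id$ and no isometry property of $*$ are needed. With this identity in hand your appeal to uniqueness of the adjoint closes the proof.
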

\begin{proof}
A demonstração segue diretamente do seguinte cálculo
\begin{equation*}
\langle \alpha, L \beta \rangle \text{vol} = \langle L\beta,\alpha \rangle \text{vol} = L\beta \wedge * \alpha = \omega \wedge \beta \wedge * \alpha = \beta \wedge \omega \wedge * \alpha = \beta \wedge L(* \alpha) = \langle \beta, *^{-1}(L  (* \alpha) ) \rangle \text{vol}.
\end{equation*}
\end{proof}

Finalmente definimos um último operador $H: \bigwedge^* V^* \to \bigwedge^* V^*$, dado por
\begin{equation*}
H(\alpha) = (k-n)\alpha\; \text{ se }\: \alpha \in \textstyle \bigwedge^k V^*,
\end{equation*}
ou de modo mais sucinto, $H = \sum_{k=0}^{2n} (k-n)\Pi^k$.\\

Temos então três operadores, $L$,$\Lambda$ e $H$ agindo em $\bigwedge^* V^*$. Note que se $\alpha \in \bigwedge^k V^*$ então
\begin{equation*}
[H,L] \alpha = (H \circ L - L \circ H) \alpha = (k+2 - n)(\omega \wedge \alpha) - \omega \wedge ((k-n)\alpha) = 2\: \omega \wedge \alpha = 2 L \alpha,
\end{equation*}
e também
\begin{equation*}
[H,\Lambda]\alpha = (H \circ \Lambda - \Lambda \circ H) \alpha = (k-2- n)(\Lambda \alpha) - \Lambda ((k-n)\alpha) = -2 \Lambda \alpha.
\end{equation*}

Para o cálculo do comutador $[L,\Lambda]$ podemos usar indução sobre a dimensão de $V$, e concluimos que $[L,\Lambda] = H$. Para uma demonstração desse fato veja \cite{huybrechts}, prop. 1.2.26. Resumindo temos que
\begin{equation*}
[H,L] = 2L,\;\; [H,\Lambda] = -2\Lambda \; \text{ e } \; [L,\Lambda] = H.
\end{equation*}

Lembre que a álgebra de Lie $\mathfrak{sl}(2,\C)$ é formada pelas matrizes complexas de traço $0$, e tem como base as matrizes
\begin{equation*}
X = \begin{pmatrix} 0&1\\0&0 \end{pmatrix}, \;\; Y = \begin{pmatrix} 0&0\\1&0 \end{pmatrix}\; \text{ e }\; B=\begin{pmatrix} 1&0\\0&-1 \end{pmatrix},
\end{equation*}
com as relações
\begin{equation*}
[B,X] = 2L,\;\; [B,Y] = -2Y \; \text{ e } \; [X,Y] = B.
\end{equation*}

Assim, a associação
\begin{equation*}
X \mapsto L,\;\; Y \mapsto \Lambda \; \text{ e } \; H \mapsto B
\end{equation*}
define uma respresentação de $\mathfrak{sl}(2,\C)$ em $\bigwedge^* V^*$.\\

Como $\mathfrak{sl}(2,\C)$ é semisimples temos, pelo Teorema de Weyl, que toda representação de $\mathfrak{sl}(2,\C)$ é a soma direta de representações irredutíveis de dimensão finita. Além disso, da teoria de representações de $\mathfrak{sl}(2,\C)$ (veja por exemplo  \cite{serre} cap. IV) toda representação irredutível é gerada por um conjunto de vetores da forma $\{\alpha,L \alpha, L^2 \alpha, \ldots \}$, onde $\alpha \in \bigwedge^* V^*$ é um elemento primitivo, isto é, um autovetor de $H$ tal que\footnote{Note que os autoespaços de $H$ são justamente os $\bigwedge^k V^* \subset \bigwedge^* V^*$.} $\Lambda \alpha=0$.

Obtemos assim a decomposição de Lefschetz linear.

\begin{equation} \label{eq:lefschetz-linear}
\textstyle \bigwedge^k V^* = \displaystyle \bigoplus_{j \geq 0} L^j(P^{k-2j}),
\end{equation}
onde $P^k = \{ \alpha \in \bigwedge^k V^* : \Lambda \alpha = 0 \}$ é o espaço das $k$-formas primitivas.\\

Podemos dizer ainda um pouco mais
\begin{proposition}1. Se $k>n$ então $P^k=0$, isto é, não existem formas primitivas de grau maior que $n$.

2. A aplicação $L^{n-k}:P^k \to \bigwedge^{2n-k}V^*$ é injetora para $k\leq n$.

3. A aplicação $L^{n-k}: \bigwedge^k V^* \to \bigwedge^{2n-k}V^*$ é bijetora para $k\leq n$
\end{proposition}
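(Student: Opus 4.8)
The plan is to read all three statements off the structure of the $\mathfrak{sl}(2,\C)$-representation $\rho$ on $\bigwedge^* V^*$ constructed above, in which $X$ acts by $L$, $Y$ by $\Lambda$ and $B$ by $H$. The first thing I would record is the dictionary between the grading and the weights: since $H$ acts on $\bigwedge^k V^*$ as the scalar $k-n$, the subspace $\bigwedge^k V^*$ is precisely the weight space of weight $k-n$; moreover $L$ raises the weight by $2$ and $\Lambda$ lowers it by $2$, so in $\mathfrak{sl}(2,\C)$-language $L$ is the raising operator and $\Lambda$ the lowering operator. In particular a primitive $k$-form is by definition a weight vector of weight $k-n$ killed by the lowering operator, i.e. a \emph{lowest}-weight vector. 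I would then invoke Weyl's theorem (already used to state the decomposition) to write $\bigwedge^* V^*$ as a direct sum of irreducible subrepresentations $W$, and recall the basic facts about an irreducible $W$ of highest weight $m\ge 0$: its weight spaces are one-dimensional and supported exactly on $m,m-2,\dots,-m$; $\Lambda$ kills exactly the $(-m)$-weight line; and applying $L$ repeatedly to a lowest-weight vector produces nonzero vectors all the way up, i.e. $L^a$ carries the $(-m)$-weight line isomorphically onto the $(-m+2a)$-weight line for $0\le a\le m$, and $L^{m+1}$ annihilates it.

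For item 1, I would argue that a nonzero $\alpha\in P^k$ decomposes as a sum of lowest-weight vectors coming from several irreducible summands, all of weight $k-n$; since the lowest weight of any irreducible is $\le 0$, this forces $k-n\le 0$, i.e. $k\le n$, so $P^k=0$ for $k>n$. For item 2, I would keep the same decomposition $\alpha=\sum_i\alpha_i$ with $\alpha_i$ in the lowest-weight line (weight $k-n$) of an irreducible $W_i$ of highest weight $n-k$; then each $L^{n-k}\alpha_i$ is the (nonzero) highest-weight vector of $W_i$, these live in distinct irreducible summands and are therefore linearly independent, so $L^{n-k}\alpha=\sum_i L^{n-k}\alpha_i=0$ forces every $\alpha_i=0$, hence $\alpha=0$. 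This gives injectivity of $L^{n-k}:P^k\to\bigwedge^{2n-k}V^*$.

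For item 3, fix $k\le n$ and examine one irreducible summand $W$ of highest weight $m$ at a time. The weight-$(k-n)$ space of $W$ is nonzero iff $-m\le k-n$ together with the correct parity, which using $k\le n$ amounts to $m\ge n-k$ and $m\equiv n-k\pmod 2$; but that is exactly the condition for the weight-$(n-k)$ space of $W$ to be nonzero, i.e. for $W\cap\bigwedge^{2n-k}V^*\ne 0$. When the condition holds both weight spaces are one-dimensional and $L^{n-k}$ sends the first isomorphically onto the second (since $n-k\le m$); summing over all summands, $L^{n-k}:\bigwedge^kV^*\to\bigwedge^{2n-k}V^*$ is an isomorphism. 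Alternatively, once injectivity is in hand I could finish by the dimension count $\binom{2n}{k}=\binom{2n}{2n-k}$. I expect the only real pitfall to be bookkeeping: one must not swap the roles of $L$ and $\Lambda$ — a primitive form is annihilated by the \emph{lowering} operator, so it is a lowest- rather than a highest-weight vector, opposite to the usual convention — and one must track the grade $k\leftrightarrow$ weight $k-n$ correspondence carefully; with those two points pinned down, everything reduces to the standard classification of finite-dimensional $\mathfrak{sl}(2,\C)$-modules.
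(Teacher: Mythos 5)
Your argument is correct, but it takes a genuinely different route from the paper. The paper proves the auxiliary commutator formula $[L^i,\Lambda]\alpha = i(k-n+i-1)L^{i-1}\alpha$ for $\alpha \in \bigwedge^k V^*$ by induction on $i$ from $[L,\Lambda]=H$, and then derives all three items by considering, for a primitive $\alpha$, the minimal $i$ with $L^i\alpha=0$: for $k>n$ the coefficient $i(k-n+i-1)$ never vanishes, forcing $i=0$ and $\alpha=0$; for $k\le n$ it forces $i=n-k+1$, whence $L^{n-k}\alpha\neq 0$; item 3 then follows from injectivity on each summand of the Lefschetz decomposition plus the dimension count $\binom{2n}{k}=\binom{2n}{2n-k}$. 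You instead invoke the full classification of finite-dimensional $\mathfrak{sl}(2,\C)$-modules: primitive forms are lowest-weight vectors, the lowest weight of an irreducible is $\le 0$ (giving item 1), $L^{n-k}$ carries the lowest-weight line of an irreducible of highest weight $n-k$ onto its highest-weight line (giving item 2), and the weight-$(k-n)$ and weight-$(n-k)$ lines of any irreducible are simultaneously nonzero and matched by $L^{n-k}$ (giving item 3 summand by summand). Your version is conceptually more transparent and, for item 3, arguably cleaner: the paper's step from "injective on each $L^j(P^{k-2j})$" to "injective on the direct sum" tacitly needs the images to remain in direct sum, which your per-irreducible argument handles automatically. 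The trade-off is that the paper's proof is self-contained modulo only $[L,\Lambda]=H$, whereas yours leans on Weyl's theorem and the structure of irreducibles (which the paper does cite, but only to motivate the decomposition). One small caution on your "alternative" finish for item 3: the dimension count closes the argument only once you have injectivity on all of $\bigwedge^k V^*$, not merely on $P^k$; your per-summand analysis supplies this, but injectivity on $P^k$ alone (item 2) would not suffice without also feeding in the Lefschetz decomposition.
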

\begin{proof}
As três afirmações seguirão da seguinte fórmula
\begin{equation*}
[L^i,\Lambda]\alpha = i(k-n+i-1)L^{i-1}\alpha, \: \text{ para } \alpha \in \textstyle \bigwedge^k V^*,
\end{equation*}
cuja demonstração segue facilmente usando $[L,\Lambda] = H$ e aplicando indução sobre $i$.

1. Seja $\alpha \in P^k$, $k>n$ e tome o menor $i\geq 0$ tal que $L^i \alpha = 0$. Então, da fórmula acima, temos que $0 = [L^i,\Lambda]\alpha = i(k-n+i-1)L^{i-1}\alpha$, de onde vemos que $i=0$ e portanto $\alpha = 0$.

2. Para mostrar a injetividade vamos mostrar que $L^{n-k}\alpha \neq 0$ se $\alpha \in P^k\setminus\{0\}$. Tomando novamente o menor $i\geq0$ tal que $L^i \alpha = 0$, obtemos, repetindo o cálculo acima que $k-n+i-1 = 0$ (pois como $\alpha$ é não nulo, $i\geq1$). Logo temos que $L^{n-k}\alpha = L^{i-1}\alpha \neq 0$.

3. Usando a parte 2, temos a injetividade de $L^{n-k}$ em $L^j(P^{k-2j}) \subset \bigwedge^k V^*$ e como a soma (\ref{eq:lefschetz-linear}) é direta, segue a injetividade em $\bigwedge^k V^*$. Como $\bigwedge^k V^*$ e $\bigwedge^{2n-k} V^*$ têm a mesma dimensão, segue que $L^{n-k}: \bigwedge^k V^* \to \bigwedge^{2n-k}V^*$ é bijetora.
\end{proof}

Toda a discussão acima pode ser feita fibra a fibra no fibrado de formas diferenciais de uma variedade hermitiana. Obtemos assim
\begin{theorem} \label{thm:lefschetz-forms} \index{decomposição de Lefschetz} \textbf{Decomposição de Lefschetz para formas diferenciais.} Seja $X$ uma variedade hermitiana de dimensão $n$ e defina os seguintes morfismos de fibrados vetoriais:
\begin{enumerate}
\item O operador de Lefschetz
\begin{equation*}
 \begin{split}
 L: \textstyle \bigwedge^k X &\longrightarrow \textstyle \bigwedge^{k+2} X\\
 \alpha &\longmapsto \omega \wedge \alpha.
 \end{split}
\end{equation*}
\item O operador $*$ de Hodge
\begin{equation*}
\begin{split}
*: \textstyle \bigwedge^k X &\longrightarrow \textstyle \bigwedge^{2n-k} X\\
\alpha \wedge * \beta &= \langle \alpha, \beta \rangle \text{vol}_X.
\end{split}
\end{equation*}
\item O operador dual de Lefschetz
\begin{equation*}
\Lambda = *^{-1} \circ L \circ * : \textstyle \bigwedge^k X \longrightarrow \textstyle \bigwedge^{k+2} X.
\end{equation*}
\end{enumerate}

Então existe uma decomposição em soma direta de fibrados vetoriais
\begin{equation*}
\textstyle \bigwedge^k X = \displaystyle \bigoplus_{j \geq 0} L^j(P^{k-2j}X),
\end{equation*}
onde $P^k = \ker \big(\Lambda:\bigwedge^k X \to \bigwedge^{k-2}X \big)$ é o fibrado de $k$-formas primitivas.

Além disso temos que $L^{n-k}:\textstyle \bigwedge^k X \to \textstyle \bigwedge^{2n-k} X$ é um isomorfismo para $k \leq n$.

\end{theorem}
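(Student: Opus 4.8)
The plan is to reduce the statement about the fibered bundle decomposition to the pointwise linear statement established just before (the "decomposição de Lefschetz linear", equation \eqref{eq:lefschetz-linear}, together with the three-part proposition on injectivity/bijectivity of $L^{n-k}$), and then to observe that everything in that pointwise discussion depends smoothly on the point. So first I would fix a point $x \in X$ and apply the linear theory to the vector space $V = T_x X$ with the complex structure $I = J_x$ and the inner product $g_x$; this immediately gives $\bigwedge^k_\C (T_xX)^* = \bigoplus_{j\geq 0} L^j(P^{k-2j}(T_xX))$ and the isomorphism $L^{n-k}: \bigwedge^k (T_xX)^* \to \bigwedge^{2n-k}(T_xX)^*$ for $k \leq n$. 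The content of the theorem is then just that these fiberwise decompositions and isomorphisms assemble into morphisms of vector bundles and a direct-sum decomposition of bundles.

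The key steps, in order, would be: (1) verify that $L$, $*$ and $\Lambda = *^{-1}\circ L \circ *$ are genuine smooth bundle morphisms — for $L$ this is clear since $\omega$ is a smooth $2$-form and wedging with it is $C^\infty$-linear; for $*$ one checks in a local orthonormal coframe (which exists smoothly by Gram–Schmidt applied to a local frame) that the matrix of $*$ has smooth entries, since it is built algebraically from the smoothly-varying inner product and the orientation; and $\Lambda$ is then a smooth morphism as a composite. (2) Note that $P^k X = \ker(\Lambda : \bigwedge^k X \to \bigwedge^{k-2} X)$ is a subbundle: its fiber at $x$ is the primitive space $P^k(T_xX)$, whose dimension is independent of $x$ (it equals $\dim \bigwedge^k V^* - \dim \bigwedge^{k-2} V^*$ for $k\leq n$ by part 3 of the linear proposition, hence is constant), so the kernel of a bundle morphism of locally constant rank is a subbundle. (3) The maps $L^j : P^{k-2j}X \to \bigwedge^k X$ are bundle morphisms, and by the pointwise decomposition \eqref{eq:lefschetz-linear} the induced map $\bigoplus_{j\geq 0} L^j(P^{k-2j}X) \to \bigwedge^k X$ is a fiberwise isomorphism, hence a bundle isomorphism. (4) Finally, $L^{n-k}: \bigwedge^k X \to \bigwedge^{2n-k}X$ is a bundle morphism which is a fiberwise isomorphism by part 3 of the linear proposition, hence a bundle isomorphism; this is exactly the "além disso" clause.

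The main obstacle — really the only non-formal point — is step (1): making sure the Hodge star operator $*$ genuinely varies smoothly with the point, and hence that $\Lambda$ does. The cleanest way is to work locally: around any $x_0$ pick a local frame of $TX$, orthonormalize it by the Gram–Schmidt process to get a smooth local orthonormal frame $\{e_1,\dots,e_{2n}\}$ compatible with the orientation, pass to the dual coframe $\{e^1,\dots,e^{2n}\}$, and observe that in the induced bases of $\bigwedge^k X$ and $\bigwedge^{2n-k}X$ the operator $*$ sends $e^{i_1}\wedge\cdots\wedge e^{i_k}$ to $\pm\, e^{j_1}\wedge\cdots\wedge e^{j_{2n-k}}$ (complementary index set, sign fixed by the orientation), so its matrix is constant — in particular smooth. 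Once $*$ is known to be a smooth bundle morphism, $L$ obviously is, and therefore $\Lambda$, $P^kX$, and all the $L^j$ are, and the rest is purely formal: a fiberwise-bijective bundle morphism is a bundle isomorphism. I would also remark that over $\R$ one takes the real bundle $\bigwedge^k X$ and complexifies where convenient, exactly as in the linear case, since $\Lambda$ was defined first on the real exterior algebra and then extended $\C$-linearly.
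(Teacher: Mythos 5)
Your proposal is correct and follows essentially the same route as the paper: the paper develops the entire linear-algebraic Lefschetz decomposition pointwise on a Hermitian vector space $(V,I,g)$ and then simply declares that "toda a discuss\~ao acima pode ser feita fibra a fibra", which is exactly your reduction. The only difference is that you spell out the (routine but worth noting) smoothness checks — that $*$ and hence $\Lambda$ are smooth bundle morphisms and that $P^kX$ has locally constant rank — which the paper leaves implicit.
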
 

\begin{remark} \label{rmk:lefschetz-forms}
A decomposição de Lefschetz para o fibrado $\bigwedge^k X$ induz naturalmente uma decomposição nas seções globais
\begin{equation} \label{eq:lefschetz-forms}
\mathcal{A}^k (X) = \displaystyle \bigoplus_{j \geq 0} L^j(\mathcal{P}^{k-2j}(X)),
\end{equation}
onde $\mathcal{P}^k(X)$ é o espaço das $k$-formas que são primitivas em cada ponto (podendo se anular em alguns desses pontos).

Temos ainda que $L^{n-k}:\mathcal{A}^k(X) \to \mathcal{A}^{2n-k}(X)$ é um isomorfismo para todo $k\leq n$.
\end{remark}

\section{Métricas de Kähler}

Uma métrica de Kähler em uma variedade complexa é uma métrica hermitiana que satisfaz uma condição extra de compatibilade com a estrutura complexa. Existem diversas maneiras equivalentes de enunciarmos essa condição. 

\begin{proposition} \label{prop:kahler-equivalence}
Sejam $M$ uma variedade complexa, $J:TM \to TM$ a estrutura complexa induzida, $g$ uma métrica hermitiana em $M$ e $\omega$ a forma fundamental associada. São equivalentes
\begin{enumerate}
\item $J$ é paralela com relação a conexão de Levi-Civita, isto é, $\nabla J = 0$.
\item $\omega$ é fechada, isto é, $d \omega = 0$.
\item Para todo $x \in M$ existem coordendas holomorfas tais que, nessas coordenadas, $h_{ij}(x) = \delta_{ij}$ e $\frac{\partial h_{ij}}{\partial z_k} (x) = \frac{\del h_{ij}}{\del \bar{z}_k} (x)=0$, ou seja, a métrica $g$ é, a menos de termos de ordem maior ou igual a $2$, a métrica padrão de $\C^n$. Tais coordenadas são ditas normais em $x$.
\end{enumerate}
\end{proposition}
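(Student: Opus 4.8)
The plan is to prove the three equivalences in a cycle, say $(3) \Rightarrow (2) \Rightarrow (1) \Rightarrow (3)$, exploiting the fact that $(3)$ is the strongest-looking condition but actually the easiest to extract consequences from, while $(1)$ contains the real geometric content. Before starting the cycle it is worth recording the key computational identity: in any holomorphic coordinate system, using $\omega = \frac{\smo}{2} \sum_{ij} h_{ij}\, dz_i \wedge d\bar z_j$ (equation \eqref{eq:fund-form-coord}), one has
\begin{equation*}
d\omega = \del \omega + \delbar \omega = \frac{\smo}{2} \sum_{ijk} \left( \frac{\del h_{ij}}{\del z_k} dz_k \wedge dz_i \wedge d\bar z_j + \frac{\del h_{ij}}{\del \bar z_k} d\bar z_k \wedge dz_i \wedge d\bar z_j \right),
\end{equation*}
so that $d\omega = 0$ is equivalent, by the linear independence of the wedge monomials, to the symmetry relations $\frac{\del h_{ij}}{\del z_k} = \frac{\del h_{kj}}{\del z_i}$ and their conjugates, for all $i,j,k$.

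For $(3) \Rightarrow (2)$: this is immediate once we have the formula above. If normal coordinates exist at a point $x$, then all first derivatives of $h_{ij}$ vanish at $x$, hence $(d\omega)_x = 0$; since $x$ is arbitrary, $d\omega = 0$ everywhere. For $(2) \Rightarrow (1)$: here I would compare the Levi-Civita connection with the Chern connection, or argue directly. The cleanest route is to use the well-known formula expressing $\nabla J$ in terms of $d\omega$ and the Nijenhuis tensor $N_J$; since $J$ is integrable (it comes from a complex structure), $N_J = 0$, and one is left with $\nabla J$ being controlled entirely by $d\omega$, so $d\omega = 0$ forces $\nabla J = 0$. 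Alternatively, one can write $2g((\nabla_X J)Y, Z) = d\omega(X, Y, Z) - d\omega(X, JY, JZ) + 2g(N_J(Y,Z), JX)$ (Kobayashi--Nomizu), which reduces to the claim. I would state which reference I follow and carry out only the short manipulation that isolates $d\omega$.

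For $(1) \Rightarrow (3)$: this is the substantive step and where I expect the main obstacle. Starting from a point $x$, first choose holomorphic coordinates centered at $x$ in which $h_{ij}(x) = \delta_{ij}$ (a $\C$-linear change of coordinates diagonalizing the positive Hermitian matrix $(h_{ij}(x))$ suffices). The task is then to kill the first derivatives $\frac{\del h_{ij}}{\del z_k}(x)$ by a further holomorphic coordinate change of the form $w_i = z_i + \frac{1}{2}\sum_{jk} a^i_{jk} z_j z_k$ with $a^i_{jk} = a^i_{kj}$. Computing how $h_{ij}$ transforms under such a substitution and demanding that the new first derivatives vanish at $x$ yields a linear system for the coefficients $a^i_{jk}$; the point is that this system is solvable precisely when $\frac{\del h_{ij}}{\del z_k}(x)$ is symmetric in $i$ and $k$ (equivalently, the index $k$ and $i$ roles match), which is exactly the condition furnished by $d\omega = 0$. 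So I would first need the implication $(1) \Rightarrow (2)$ — but since I am proving the cycle, I already have $(2)$ available at this stage (indeed $(1)\Rightarrow(2)$ can be read off from the same Kobayashi--Nomizu identity, or one simply proves the full equivalence of $(1)$ and $(2)$ together, then derives $(3)$ from $(2)$). The clean logical structure is therefore: prove $(1) \Leftrightarrow (2)$ via the connection formula, and prove $(2) \Rightarrow (3)$ by the explicit quadratic coordinate change together with the symmetry of $\del h_{ij}/\del z_k$, with $(3) \Rightarrow (2)$ being the trivial direction already noted. The bookkeeping in the coordinate change — tracking which derivatives appear and verifying the solvability condition matches the $d\omega = 0$ relations — is the part that requires care, and I would present it as the core of the argument; the conjugate relations $\frac{\del h_{ij}}{\del \bar z_k}(x) = 0$ then follow automatically since $h_{ij}$ is Hermitian, $\overline{h_{ij}} = h_{ji}$.
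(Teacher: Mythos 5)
Your proposal is correct and, for the equivalence of (2) and (3), follows exactly the route of the paper: $(3)\Rightarrow(2)$ is read off from $\omega=\frac{\smo}{2}\sum h_{ij}\,dz_i\wedge d\bar z_j$, and $(2)\Rightarrow(3)$ is the linear normalization $h_{ij}(x)=\delta_{ij}$ followed by the quadratic holomorphic change of coordinates, whose solvability hinges precisely on the symmetry $\frac{\del h_{ij}}{\del z_k}(x)=\frac{\del h_{kj}}{\del z_i}(x)$ extracted from $d\omega=0$; your observation that the $\bar z_k$-derivatives vanish automatically by hermiticity is also the one the paper uses. The only real divergence is in $(1)\Leftrightarrow(2)$: you propose to quote the Kobayashi--Nomizu identity $2g((\nabla_X J)Y,Z)=d\omega(X,Y,Z)-d\omega(X,JY,JZ)+2g(N_J(Y,Z),JX)$ and set $N_J=0$, whereas the paper derives the same content from scratch --- it proves that $\nabla_X J$ anticommutes with $J$ and is antisymmetric, establishes the lemma $(\nabla_{JX}J)Y=J(\nabla_X J)Y$ by computing the Nijenhuis tensor in two ways (once in holomorphic coordinates to see it vanishes, once via the torsion-free connection), and then kills the tensor $B(X,Y,Z)=g((\nabla_X J)Y,Z)$ by combining these symmetries with $d\omega(X,Y,Z)=B(X,Y,Z)+B(Y,Z,X)+B(Z,X,Y)$. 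Your route is shorter but not self-contained: if you cite the identity you must be careful to match the sign and normalization conventions of your reference (for $\omega$, for $N_J$, and for the factor of $2$), and you should at least indicate why $N_J=0$ for an integrable $J$, since that vanishing is the point where integrability of the complex structure actually enters. If you instead derive the identity, you essentially reproduce the paper's argument.
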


\begin{proof}
($ 1\Rightarrow 2$). Da fórmula intrínseca para a diferencial exterior temos
\begin{equation} \label{eq:d-omega}
 d \omega(X,Y,Z) = X \omega(Y,Z) - Y \omega(X,Z) + Z \omega(X,Y) - \omega([X,Y],Z) + \omega([X,Z],Y) - \omega([Y,Z],X).
\end{equation}

Do fato de $\nabla$ ser compatível com a métrica temos que
\begin{equation*}
X \omega(Y,Z) = X g(JY,Z) = g(\nabla_X JY,Z) + g(JY,\nabla_X Z),
\end{equation*}
e analogamente para o segundo e terceiro termos.

Como $J$ é paralela temos que $\nabla_X JY = J \nabla_X Y$. Usando o fato de $\nabla$ ser livre de torção ($\nabla_X Y - \nabla_Y X = [X,Y]$) e o fato de $g$ ser hermitiana, os três primeiros termos de (\ref{eq:d-omega}) ficam
\begin{equation*}
 \begin{split}
 &g(J \nabla_X Y,Z) + g(JY, \nabla_X Z) - g(J \nabla_Y X,Z) - g(JX, \nabla_Y Z) + g(J \nabla_Z X,Y) + g(J X, \nabla_Z Y) \\
 &= g(J[X,Y],Z) - g(J[X,Z],Y) + g(J[Y,Z],X)\\
 &= \omega([X,Y],Z) - \omega([X,Z],Y) + \omega([Y,Z],X),
 \end{split}
\end{equation*}
de onde vemos que $d \omega(X,Y,Z)=0$.\\

(2$\Rightarrow$ 1) Primeiramente note que para todo $X \in TM$ os endomorfismos  $J$ e $\nabla_X J$ anitcomutam. De fato,
\begin{equation*}
J(\nabla_XJ)Y +(\nabla_XJ)JY  = J[\nabla_X(JY)-J\nabla_X Y] + \nabla_X (JJY) - J\nabla_X(JY) = 0.
\end{equation*}
Além disso o operador $\nabla_XJ$ é anti-simétrico, o que pode ser visto derivando na direção $X$ a igualdade $g(JY,Z) + g(Y,JZ) = 0$  e usando a compatibilidade de $\nabla$ com a métrica.

Vamos precisar ainda do seguinte resultado
\begin{lemma}
Para todos $X,Y \in TM$ vale que $(\nabla_{JX}J)Y = J(\nabla_X J)Y$.
\end{lemma}
\begin{proof}
Considere o tensor $A(X,Y,Z)=g(J(\nabla_X J)Y - (\nabla_{JX}J)Y,Z)$. Nosso objetivo é mostrar que $A=0$.

O tensor de Nijenhuis associado a $J$ é definido por
\begin{equation*}
N^J(X,Y) = [X,Y] + J[JX,Y] + J[X,JY] - [JX,JY],~~X,Y \in TM.
\end{equation*}
Note que esse tensor é trivial, isto é, $N^J(X,Y) = 0$ para todos $X,Y \in TM$. Uma maneira de ver isso é calcular $N^J$ em uma base $\{\del \slash \del x_1,\ldots,\del \slash \del x_n,\del \slash \del y_1,\ldots,\del \slash \del y_n\}$ induzida por um sistema de coordenadas holomorfas e usar a expressão de $J$ em coordenadas (eq. \ref{eq:J-coord}).

Usando o fato que $\nabla$ é livre de torção obtemos uma expressão alternativa para $N^J$:
\begin{equation} \label{eq:nijenhuis}
N^J(X,Y) = [J(\nabla_X J)Y - (\nabla_{JX}J)Y] - [J(\nabla_Y J)X -(\nabla_{JY}J)X]. 
\end{equation}

A equação acima mostra que $A(X,Y,Z) = A(Y,X,Z)$ e do fato de $\nabla_XJ$ e $J$ serem operadores anti-siméticos que anticomutam temos que $A(X,Y,Z) = -A(X,Z,Y)$. Juntando esses dois fatos vemos que
\begin{equation*}
A(X,Y,Z) = -A(X,Z,Y) = -A(Z,X,Y) = A(Z,Y,X) = A(Y,Z,X) = -A(Y,X,Z) = -A(X,Y,Z)
\end{equation*}
e portanto $A=0$.
\end{proof}

Para mostrar que $\nabla J=0$ considere o tensor $B(X,Y,Z) = g((\nabla_XJ)Y,Z)$. Temos que mostrar que $B=0$.

Do fato de $J$ e $\nabla_X J$ anitcomutarem temos que $B(X,Y,JZ) = B(X,JY,Z)$ e do lema acima temos que $B(X,Y,JZ) + B(JX,Y,Z) = 0$. Juntando essas duas equações vemos também que $B(X,JY,Z) + B(JX,Y,Z) = 0$.

A fórmula (\ref{eq:d-omega}) pode ser reescrita como $d \omega (X,Y,Z)= B(X,Y,Z) + B(Y,Z,X) + B(Z,X,Y)$ e como estamos supondo $d \omega =0$ temos que
\begin{equation*}
\begin{split}
B(X,Y,JZ) + B(Y,JZ,X) + B(JZ,X,Y) &= 0\\
B(X,JY,Z) + B(JY,Z,X) + B(Z,X,JY) &=0.
\end{split}
\end{equation*}

Somando as duas equações e usando as relações obtidas acima vemos que $2B(X,Y,JZ) = 0$ para todos $X,Y,Z \in TM$ e portanto $B=0$.

(3 $\Rightarrow$ 2) Se $x \in M$ e $\{z_1,\ldots,z_n\}$ são coordenadas normais em $x$, temos usando a expressão (\ref{eq:fund-form-coord}) para $\omega$, que
\begin{equation*}
d \omega (x) = \frac{\smo}{2} \sum_{ijk} \left( \frac{\del h_{ij}}{\del z_k}(x) d z_k + \frac{\del h_{ij}}{\del \bar{z}_k}(x) d \bar{z}_k \right)\wedge dz_i \wedge d\bar{z}_j = 0.
\end{equation*}

Como $x \in M$ é arbitrário segue que $d\omega=0$.\\

(2 $\Rightarrow$ 3) Seja $x \in M$. Fazendo uma mudança linear de coordenadas podemos supor que $h_{ij}(x)=\delta_{ij}$, e portanto $\omega$ é dada por
\begin{equation*}
\omega = \frac{\smo}{2}\sum_{ijk} \left( \delta_{ij} + a_{ijk}z_k + a_{ij\bar{k}}\bar{z}_k) + O(2)\right) dz_i \wedge d\bar{z}_j,
\end{equation*}
onde $O(2)$ denota uma função tal que ela e suas derivadas primeiras se anulam em $x$.

Note que, como $h_{ij} = \overline{h_{ji}}$ temos que $a_{ij\bar{k}} = \overline{a_{ijk}}$ e a condição $d \omega (x) = 0$ implica que $a_{ijk}=a_{kji}$.

Vamos procurar uma mudança de coordenadas holmorfas da forma 
\begin{equation*}
z_k = w_k + \frac{1}{2} \sum_{lm} b_{klm}w_l w_m
\end{equation*}
com $b_{klm}=b_{kml}$ constantes.

Como
\begin{equation*}
dz_k = d w_k + \sum_{lm} b_{klm} w_l d w_m,
\end{equation*}
temos que, a menos de termos de ordem maior ou igual a $2$,
\begin{equation*}
\begin{split}
\frac{2}{\smo} \omega &= \sum  \big( dw_i + \sum b_{ilm} w_l dw_m  \big)  \wedge \sum \big( d\bar{w}_i + \sum \overline{b_{ipq}} \bar{w}_p d\bar{w}_q\big) + \sum \big( a_{ijk} w_k + a_{ij\bar{k}} \bar{w}_k\big)dw_i \wedge d\bar{w}_j\\
& = \sum \left( \delta_{ij} + \sum_{k}\left( a_{ijk}w_k + a_{ij\bar{k}}\bar{w}_k + b_{jki} w_k + \overline{b_{ikj}}\bar{w}_k \right) \right) dw_i \wedge d\bar{w}_j.
\end{split}
\end{equation*}

Defina então $b_{jki}=-a_{ijk}$. Note que essa definição é compatível com a escolha $b_{jki}=b_{jik}$, pois
\begin{equation*}
b_{jki} = -a_{ijk} = -a_{kji} = b_{jik},
\end{equation*}
e além disso temos
\begin{equation*}
\overline{b_{ikj}}=-\overline{a_{jik}} = -a_{ij\bar{k}},
\end{equation*}
de onde vemos que, a menos de termos de ordem maior ou igual a $2$, $h_{ij}(x)=\delta_{ij}$, provando $(3)$.
\end{proof}

\begin{remark} \label{rmk:nijenhuis}
O tensor de Nijenhuis $N^J$ definido acima (eq. \ref{eq:nijenhuis}) mede a integrabilidade do endomorfismo $J:TM \to TM$. Uma \textit{estrutura quase complexa} em uma variedade diferenciável $M$ é um endomorfismo $J:TM\to TM$ satisfazendo $J^2=-\id$. Dizemos que $J$ é integrável se é induzida por uma estrutura complexa em $M$.

Na demonstração acima vimos que se $J$ é integrável então $N^J = 0$. O celebrado Teorema de Newlander-Niremberg (veja \cite{newlander-nierenberg}) diz que vale a recíproca: se $J$ é uma estrutura quase complexa em $M$ e $N^J = 0$ então existe uma única estrutura complexa em $M$ que induz $J$. Não é difícil ver que a condição $N^J=0$ é equivalente a $[T^{1,0}X,T^{1,0}X] \subset T^{1,0}X$ e portanto $J$ é integrável se e somente se $T^{1,0}X$ é uma distribuição involutiva.\\ 
\end{remark}

\begin{definition} \index{métrica!de Kähler} \index{variedade!de Kähler}
Dizemos que uma métrica hermitiana é uma \textbf{métrica de Kähler} se uma das condições da proposição \ref{prop:kahler-equivalence} é satisfeita. Uma variedade $X$ é uma \textbf{variedade de Kähler} se $X$ admite uma métrica de Kähler.
\end{definition}

\begin{example}
A métrica padrão em $\C^n$ é uma métrica de Kähler, pois a forma fudamental $\omega_0 = \frac{\smo}{2} \sum dz_i \wedge d\bar{z}_i$ é claramente fechada. Mais ainda, $\omega_0$ é exata, pois $\omega_0 = \frac{\smo}{2} d(\sum_i z_i d\bar{z}_i)$
\end{example}

\begin{example} \textbf{Superfícies de Riemann.} Uma superfície de Riemann é uma variedade complexa de dimensão 1. Se $X$ é uma superfície de Riemann, então $X$ é uma variedade diferenciável de dimensão $2$. Qualquer que seja a métrica hermitiana em $X$, $d \omega$ é uma 3-forma e portanto $d \omega = 0$. Logo toda métrica hermitiana em $X$ é de Kähler e portanto toda superfície de Riemann é uma variedade de Kähler.
\end{example}

Já em dimensão 2 veremos que existem exemplos de variedades complexas que não são de Kähler (veja o exemplo \ref{ex:hopf}).

\begin{example} \textbf{Toros complexos.} No exemplo \ref{ex:complex-tori} definimos um toro complexo como sendo um quociente $X= \C^n/L$ de $\C^n$ por um subgrupo da forma $L = \left \lbrace \alpha = \sum_i n_i \alpha_i : n_i \in \Z , i=1,\cdots,2n \right \rbrace$ onde $\alpha_1,\cdots,\alpha_{2n} \in \C^n$ são linearmente independentes sobre $\R$. Equivalentemente podemos ver $X$ como sendo o quociente de $\C^n$ pelo grupo gerado pelas $2n$ translações $z \mapsto z + \alpha_j$.

Vimos também que a projeção natural $\pi: \C^n \to X$ é um recobrimento e que $X$ admite uma única estrutura complexa que torna $\pi$ holomorfa.

Além disso, as translações são isometrias de $(\C^n,h_0)$ e portanto induzem uma métrica $h$ em $X$ e como $h_0$ é de Kähler, $h$ também é.

Mais precisamente: como $\pi$ é um biholomorfismo local, podemos definir uma métrica em $X$ localmente. Em um aberto $U = \pi(V)$ tal que $\pi: V \to U$ é biholomorfa defina
\begin{equation*}
h|_U = \varphi^* h_0, \text{ onde } \varphi = (\pi|_V)^{-1}:U \to V.
\end{equation*}

Em outro aberto $U'=\pi(V')$ com $U \cap U' \neq \emptyset$ temos que $h|_{U'} = \psi^* h_0$, onde $\psi = (\pi|_{V'})^{-1}$. Agora, para $z \in V' \cap \pi^{-1}(U)$ temos $\varphi \circ \psi^{-1} (z) = z + \alpha$ para algum $\alpha \in L$, que é uma isometria, e portanto $(\varphi \circ \psi^{-1})^*h_0 = h_0$ em $V' \cap \pi^{-1}(U)$.

Logo, em $U \cap U'$ temos
\begin{equation*}
 \varphi^* h_0 = \psi^* (\varphi \circ \psi^{-1})^*h_0 =\psi^* h_0,
\end{equation*}
e portanto podemos definir $h$ globalmente em $X$.

A métrica $h$ é hermitiana, pois 
\begin{equation*}
\begin{split}
h_{\pi(p)}(J_{\pi(p)} d\pi_p u,J_{\pi(p)} d \pi_p v) &= h_{\pi(p)}(d\pi_p J_0 u,d\pi_p J_0 v) \\
 &= (h_0)_p (J_0 u, J_0 v)\\
 &= (h_0)_p (u,v) \\
 &= h_{\pi(p)}( d\pi_p u, d \pi_p v),
\end{split}
\end{equation*}
onde usamos que $\pi$ é holomorfa e uma isometria local.

A forma fundamental de $h$ é
\begin{equation*}
\omega_{\pi(p)}(d\pi_p u, d \pi_p v) = h_{\pi(p)}(J_{\pi(p)} d\pi_p u, d \pi_p v) = h_{\pi(p)}(d\pi_p J_0 u,d\pi_p v) = (h_0)_p (J_0 u, v) = \omega_0 (u,v),
\end{equation*}
ou seja, $\pi^* \omega = \omega_0$ é a forma simplética padrão. Logo temos que $\pi^*(d \omega) = d (\pi^* \omega) = d \omega_0 = 0$ e como $\pi^*$ é injetora segue que $d \omega = 0$, ou seja, $h$ é uma métrica de Kähler em $X$.\\

Mostramos portanto que todo toro complexo $X = \C^n/L$ é uma variedade de Kähler.\\
\end{example}

Decidir se uma variedade $X$ é ou não de Kähler em geral não é uma tarefa fácil. No entanto, a existência de uma métrica de Kähler em $X$ impõe fortes condições sobre sua topologia e sua estrutura complexa, o que acarretam em algumas condições necessárias para a existencia de tais métricas. Deduziremos algumas delas ao longo do trabalho.

Como consequência da condição $d \omega = 0$ obtemos uma primeira obstrução.
\begin{proposition} \label{prop:cohom-kahler}
Seja $X$ uma variedade complexa compacta de dimensão $n$. Se $X$ admite uma métrica de Kähler então $H^{2k}(X,\R) \neq 0$ para $k=0,\cdots,n$.
\end{proposition}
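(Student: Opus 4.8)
O plano é usar a própria forma fundamental $\omega$ da métrica de Kähler para exibir, em cada grau par até $2n$, uma classe de cohomologia de de Rham não nula. O primeiro passo é observar que, sendo $X$ uma variedade complexa, ela é orientável, e, sendo compacta, o Teorema de Stokes garante que a integração $\alpha \mapsto \int_X \alpha$ se anula em formas exatas de grau $2n$; em outras palavras, uma $2n$-forma fechada com integral não nula nunca é exata. Pela Proposição \ref{prop:volume-form} temos $\text{vol}_X = \omega^n/n!$, e como $\text{vol}_X$ é uma forma volume positiva numa variedade compacta, segue que $\int_X \omega^n = n!\,\text{vol}(X) > 0$; em particular $\int_X \omega^n \neq 0$.

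Em seguida uso a condição de Kähler $d\omega = 0$, da qual se obtém imediatamente $d(\omega^j) = j\,\omega^{j-1}\wedge d\omega = 0$ para todo $j$, isto é, todas as potências de $\omega$ são fechadas. Fixado $1 \leq k \leq n$, se $\omega^k$ fosse exata, digamos $\omega^k = d\eta$, teríamos
\begin{equation*}
\omega^n = \omega^k \wedge \omega^{n-k} = d\eta \wedge \omega^{n-k} = d\big(\eta \wedge \omega^{n-k}\big),
\end{equation*}
onde usei que $\omega^{n-k}$ é fechada; isso contradiz $\int_X \omega^n \neq 0$. Logo $\omega^k$ define uma classe não nula $[\omega^k] \in H^{2k}_{dR}(X)$, e portanto $H^{2k}_{dR}(X) \neq 0$ para $k = 1,\dots,n$. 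O caso $k = 0$ é trivial, pois $H^0_{dR}(X)$ contém as funções constantes.

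Finalmente, pela identificação $H^i(X,\underline{\R}) \simeq H^i_{dR}(X)$ (eq. \ref{eq:derhamcohom}), concluímos que $H^{2k}(X,\R) \neq 0$ para $k = 0,\dots,n$, o que encerra a demonstração. Não há aqui nenhuma dificuldade substancial: todos os ingredientes — a fórmula da forma volume, o Teorema de Stokes em variedades compactas orientáveis e o Teorema de de Rham — já estão disponíveis, e o cerne do argumento é apenas a observação elementar de que $\omega^n$ não pode ser exata e, por conseguinte, nenhuma potência intermediária $\omega^k$ pode sê-lo. O único ponto que merece atenção é o uso correto de $d(\omega^{n-k}) = 0$ na passagem acima, de modo a garantir que $\eta \wedge \omega^{n-k}$ seja de fato uma primitiva de $\omega^n$.
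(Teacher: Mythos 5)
A sua demonstração está correta e segue essencialmente o mesmo caminho da prova do texto: usa-se que $\omega^k$ é fechada, que $\omega^k\wedge\omega^{n-k}=\omega^n=n!\,\text{vol}_X$ tem integral positiva e que, portanto, nenhuma potência $\omega^k$ pode ser exata. O detalhamento da primitiva $\eta\wedge\omega^{n-k}$ e o uso de $d(\omega^{n-k})=0$ apenas explicitam um passo que o texto deixa impl\'icito.
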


\begin{proof}
Como $d \omega = 0$ a 2-forma $\omega$ define uma classe $[\omega] = H^2(X,\R)$. Essa classe é não nula, pois se $\omega$ fosse exata, $\omega=d \eta$, sua n-ésima potência também seria exata, $\omega^n = d(\eta \wedge \omega^{n-1})$. Mas isso não é possível pois, pelo teorema de Stokes teríamos,
\begin{equation*}
\int_X \omega^n  = \int_X d(\eta \wedge \omega^{n-1}) = 0,
\end{equation*}
enquanto que, pela proposição \ref{prop:volume-form}, temos
\begin{equation*}
\int_X \omega^n = n! \int_X \text{vol}_X = n! \text{ vol}(X).
\end{equation*}

Isso mostra que $H^2(X,\R) \neq 0$.

Agora, as formas $\omega^k,k=1,\cdots,n$ também são fechadas e nenhuma é exata, pois $\omega^k \wedge \omega^{n-k} = \omega^n = n! \text{ vol}_X $, e portanto definem elementos não nulos $[\omega^k] \in H^{2k}(X,\R)$.
\end{proof}

\begin{remark}
Do resultado acima vemos que a única esfera que pode admitir uma estrutura kähleriana é $S^2 \simeq \pr^1$ e  veremos mais adiante que essa estrutura de fato existe. Na verdade, como mencionado no capítulo \ref{ch:cap0}, as esferas $S^{2n}$ não admitem sequer um estrutura complexa para $n>3$ e $n=2$.
\end{remark}

A proposição \ref{prop:cohom-kahler} nos permite construir exemplos de variedade complexas que não são de Kähler.
\begin{example} \label{ex:hopf} \textbf{Variedades de Hopf} \index{variedade!de Hopf}

Seja $\alpha$ um número complexo com $0<|\alpha|<1$. Temos uma ação de $\Z$ em $\C^n \setminus \{0\}$ definida por
\begin{equation*}
 \begin{split}
 \Z \times \C^n \setminus \{0\} &\longrightarrow \C^n \setminus \{0\} \\
 (k,z) &\longmapsto \alpha^k z
 \end{split}
\end{equation*}
que é livre e propriamente descontínua. Obtemos assim uma variedade
\begin{equation*}
X^n_{\alpha} = \frac{\C^n \setminus \{0\}}{\Z},
\end{equation*}
chamada variedade de Hopf.

Note que, como $\Z$ age em $\C^n \setminus \{0\}$ por biholomorfismos, existe uma única estrutura complexa em $X^n_{\alpha}$ que torna a projeção $\pi: \C^n \setminus \{0\} \to X^n_{\alpha}$ holomorfa (veja o exemplo \ref{ex:coverings}).

Todo elemento de $\C^n \setminus \{0\}$ se escreve de modo único como $z = r \xi$, com $r > 0$ e $\xi \in S^{2n-1}$. Isso dá um isomorfismo $\C^n \setminus \{0\} \simeq \R^{>0} \times S^{2n-1}$. A ação de $\Z$ segundo esse isomorfismoo se traduz em $
k \cdot (r,\xi) = (\lambda^k r,\xi)$ e portanto é trivial no segundo fator. É facil ver que no primeiro fator o quociente $\R^{>0}/\Z$ é difeomorfo a $S^1$, de onde conluimos que $X^n_{\alpha}$ é difeomorfa a $S^1 \times S^{2n-1}$. 

Em particular, para um dado $n$, as $X^n_{\alpha}$ são todas difeomorfas. No entanto, a estrutura complexa de $X^n_{\alpha}$ depende do parâmetro $\alpha$.

Do teorema de Künneth temos que $H^*(X^n_{\alpha},\R) = H^*(S^1,\R) \otimes H^*(S^{2n-1},\R)$ e em particular
\begin{equation*}
 H^k(X^n_{\alpha},\R) = \bigoplus_{p+q = k} H^p(S^1,\R) \otimes H^q(S^{2n-1},\R).
\end{equation*}

Para $k=2$ obtemos
\begin{equation*}
\begin{split}
 H^2(X^n_{\alpha},\R) &= [H^0(S^1,\R) \otimes H^2(S^{2n-1},\R)] \oplus [H^1(S^1,\R) \otimes H^1(S^{2n-1},\R)] \\
 &\simeq  H^2(S^{2n-1},\R) \oplus H^1(S^{2n-1},\R),
\end{split}
\end{equation*}
de onde concluimos que $H^2(X^n_{\alpha},\R) = 0$ para $n\geq 2$.

Assim, pela proposição \ref{prop:cohom-kahler}, concluimos que nenhuma variedade de Hopf de dimensão maior ou igual a 2 é uma variedade de Kähler.

\begin{remark}
Embora não admitam métricas de Kähler, as variedades de Hopf podem ser munidas de uma classe importante de métricas hermitianas, as chamadas métricas localmente conformes de Kähler (LCK-metrics). Tais métricas podem ser caracterizadas pela equação $d \omega = \eta \wedge \omega$, onde $\eta$ é uma 1-forma fechada, chamada forma de Lee. Veja \cite{dragomir-ornea} para um estudo detalhado de tais métricas.
\end{remark}
\end{example}

Note que, dada uma métrica hermitiana $g$, podemos recuperá-la a partir da sua forma fundamental, pois $g(u,v) = g(Ju,Jv) = \omega(u,Jv)$. Podemos então partir de uma $(1,1)$-forma real e tentar definir uma métrica, fazendo $g = \omega(\cdot,J\: \cdot)$. Dessa maneira obtemos uma forma bilinear compatível com $J$, mas não sabemos se é positiva definida. Isso leva a seguinte definição
\begin{definition} \label{def:positive-form}
Uma $(1,1)$-forma real $\alpha$ é positiva se $\alpha(v,Jv)>0$ para todo $v \in TX$ não nulo, ou equivalentemente, se $\alpha$ é da forma
\begin{equation*}
\alpha = \frac{\smo}{2} \sum_{ij}  h_{ij} \; dz_i \wedge d\bar{z}_j,
\end{equation*}
onde $(h_{ij})$ é uma matriz hermitiana positiva definida.
\end{definition}

\begin{remark} \label{rmk:positive-form}
Note que, usando o isomorfismo $TX \simeq T^{1,0}X$, $v \mapsto \frac{1}{2}(v - \smo Jv)$, a condição $\alpha(v,Jv)>0$ para todo $v \in TX$ não nulo é equivalente a condição $-\smo \alpha(u,\bar u) > 0$ para todo $u \in T^{1,0}X$ não nulo, pois $J$ age em $T^{1,0}X$ pela multiplicação por $\smo$.
\end{remark}

Temos o seguinte resultado, cuja demonstração é imediata.
\begin{proposition} \label{prop:positive-form}
Dada uma $(1,1)$-forma real e positiva $\omega$ em uma variedade complexa $X$, $g = \omega(\cdot,J\: \cdot)$ define uma métrica hermitiana em $X$ cuja forma fundamental associada é $\omega$. Além disso, se $\omega$ for fechada, então a métrica $g$ é de Kähler.
\end{proposition}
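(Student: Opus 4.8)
The plan is to verify, pointwise on each tangent space $T_xX$, that $g := \omega(\cdot,J\cdot)$ is a symmetric, $J$-invariant, positive-definite bilinear form --- i.e.\ a hermitian metric --- then that its fundamental form is $\omega$, and finally to read off the K\"ahler statement from Proposition~\ref{prop:kahler-equivalence}. Note that $g$ is real-valued (since $\omega$ is a real form and $J$ a real endomorphism) and smooth (since $\omega$ and $J$ are smooth), so the whole argument reduces to linear algebra on $(T_xX,J_x)$.

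The one step worth isolating is the elementary identity that a $2$-form $\alpha$ of type $(1,1)$ satisfies $\alpha(Ju,Jv) = \alpha(u,v)$ for all $u,v$ (we only need this direction). To see it, extend $\alpha$ $\C$-bilinearly to $T_\C X$ and write $u = u^{1,0}+u^{0,1}$, $v = v^{1,0}+v^{0,1}$ according to $T_\C X = T^{1,0}X\oplus T^{0,1}X$; since by definition $\alpha$ vanishes on pairs of vectors of the same type, $\alpha(u,v) = \alpha(u^{1,0},v^{0,1}) + \alpha(u^{0,1},v^{1,0})$, and because $J$ acts as $\smo$ on $T^{1,0}X$ and as $-\smo$ on $T^{0,1}X$, each of the two surviving terms is multiplied by $(\smo)(-\smo) = 1$ when $(u,v)$ is replaced by $(Ju,Jv)$. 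Hence $\alpha(Ju,Jv) = \alpha(u,v)$.

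With this in hand the three properties follow immediately, using $J^2 = -\id$. Symmetry: $g(u,v) = \omega(u,Jv) = \omega(Ju,J^2v) = -\omega(Ju,v) = \omega(v,Ju) = g(v,u)$. $J$-invariance: $g(Ju,Jv) = \omega(Ju,J^2v) = -\omega(Ju,v) = \omega(v,Ju) = g(v,u) = g(u,v)$. Positive-definiteness: for $0 \neq v \in T_xX$ we get $g(v,v) = \omega(v,Jv) > 0$, which is exactly the positivity of $\omega$ from Definition~\ref{def:positive-form}; equivalently, comparing the local expression $\omega = \tfrac{\smo}{2}\sum_{ij} h_{ij}\,dz_i\wedge d\bar z_j$ with $(h_{ij})$ hermitian positive-definite against~(\ref{eq:metric-coord}) exhibits $g$ as the hermitian metric with matrix $(h_{ij})$. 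Thus $g$ is a hermitian metric; and its fundamental form $g(J\cdot,\cdot)$ satisfies $g(Ju,v) = \omega(Ju,Jv) = \omega(u,v)$ by the identity above, so the fundamental form of $g$ is $\omega$ itself.

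Finally, if moreover $d\omega = 0$, then the fundamental form of the hermitian metric $g$ is closed, which is precisely condition~2 of Proposition~\ref{prop:kahler-equivalence}; hence $g$ is a K\"ahler metric. I do not expect any real obstacle here: beyond routine bookkeeping, the only substantive point is the $(1,1)$-versus-$J$-invariance identity, and that is a two-line eigenspace computation as above.
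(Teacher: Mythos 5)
Your proof is correct and is exactly the verification the paper leaves implicit (the text only remarks that the proof is "imediata"): the single non-routine ingredient is the $J$-invariance $\omega(Ju,Jv)=\omega(u,v)$ of a $(1,1)$-form, which you establish cleanly via the eigenspace decomposition, and the rest (symmetry, positivity, recovery of $\omega$ as the fundamental form, and the appeal to Proposition~\ref{prop:kahler-equivalence} for the K\"ahler case) follows as you say.
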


\begin{example} \label{ex:fubini-study} \index{métrica!de Fubini-Study} \textbf{A métrica de Fubini-Study em $\pr ^n$}\\
Seja $\pi:\C^{n+1}\setminus \{0\} \to \pr ^n$ a projeção natural e $Z:U \to \C^{n+1} \setminus \{0\}$ uma aplicação holomorfa definida em um aberto $U \subset \pr^n$ tal que $\pi \circ Z = \text{id}_U$ (ou seja, uma seção sobre $U$ do fibrado tautológico $\mathcal{O}(-1)$).

Defina uma $(1,1)$-forma em $U$ por
\begin{equation*}
\omega = \frac{\smo}{2 \pi} \del \delbar \log ||Z||^2
\end{equation*}

Se $W:U' \to  \C^{n+1} \setminus \{0\}$ é outra seção então $W = \lambda Z$ para uma função holomorfa $\lambda:U\cap U' \to \C^*$ (pois $Z(p)$ e $W(p)$ pertencem a mesma reta complexa). E portanto
\begin{equation*}
\del \delbar \log ||W||^2 = \del \delbar \log ||\lambda Z||^2 =  \del \delbar \log \lambda +  \del \delbar \log \bar{\lambda} + \del \delbar \log ||Z||^2 = \del \delbar \log ||Z||^2,
\end{equation*}
onde, para fazer o cálculo acima usamos algum ramo do logaritmo e o fato de $\log \lambda$ ser holomorfa e $\log \bar{\lambda}$ anti-holomorfa.\\

Como tais seções sempre existem localmente, obtemos uma $(1,1)$-forma globalmente definida em $\pr ^n$, chamada \textbf{forma de Fubini-Study}, denotada por $\omega_{FS}$.

Note que se $U_i = \{z_i \neq 0 \} \subset \pr ^n$ e $Z_i = \left(\frac{z_0}{z_i},\cdots,1,\cdots\frac{z_n}{z_i} \right)$ então,
\begin{equation} \label{eq:omegaFS-local}
\omega_{FS} = \frac{\smo}{2\pi} \del \delbar \log \bigg( 1 + \sum_{j \neq i} \left| \frac{z_j}{z_i} \right|^2 \bigg),\; \text{ em } U_i.
\end{equation}

Note que $\omega_{FS}$ é uma $(1,1)$-forma fechada. Além disso, como $\del \delbar = - \delbar\del$, segue que $\overline{\omega_{FS}} = \omega_{FS}$ e portanto $\omega_{FS}$ é real. Logo, para ver que $\omega_{FS}$ define uma métrica de Kähler em $\pr ^n$, só resta verificar a positividade.

Para isso, note primeiramente que $\pi^*\omega_{FS} = \widetilde{\omega}$, onde $\widetilde{\omega}$ é a $(1,1)$-forma em $\C^{n+1}\setminus \{0\}$ dada por
\begin{equation} \label{eq:pullback-fubini-study}
\widetilde{\omega} = \frac{\smo}{2\pi} \del \delbar \log ||z||^2
\end{equation}

Seja agora $A \in U(n+1)$. Como $A:\C^{n+1}\to \C^{n+1}$ é holomorfa, $A^*$ comuta com $\del$ e $\delbar$ e portanto temos que
\begin{equation*}
A^* \widetilde{\omega} = \frac{\smo}{2\pi} \del \delbar A^*(\log ||z||^2) = \frac{\smo}{2\pi} \del \delbar \log ||Az||^2 = \frac{\smo}{2\pi} \del \delbar \log ||z||^2 = \widetilde{\omega},
\end{equation*}
ou seja, $\widetilde{\omega}$ é $U(n+1)$-invariante.

Seja agora $F_A:\pr^n \to \pr^n$ a aplicação induzida por $A$, isto é, $F_A [z] = [Az]$. Temos então que $\pi \circ A = F_A \circ \pi$ e portanto $\pi^* F_A^* \omega_{FS} = A^* \pi^* \omega_{FS} = A^* \widetilde{\omega} = \widetilde{\omega} = \pi^* \omega_{FS}$ e como $\pi^*$ é injetora segue que $F_A^* \omega_{FS} = \omega_{FS}$, ou seja, $\omega_{FS}$ é $U(n+1)$-invariante.

Como $U(n+1)$ age transitivamente em $\pr^n$, para verificarmos que $\omega_{FS}$ é positiva, basta fazê-lo num ponto. Da expressão (\ref{eq:omegaFS-local}) temos que, nas coordenadas $w_i = z_i/z_0$ em $U_0$ temos
\begin{equation*}
\begin{split}
\omega_{FS} &= \frac{\smo}{2\pi} \del \delbar \log \bigg( 1 + \sum_{i=1}^n w_i \bar{w}_i \bigg) =  \frac{\smo}{2\pi} \del \left( \frac{\sum_i w_i d\bar{w}_i}{1 + \sum_i w_i \bar{w}_i} \right)\\
&= \frac{\smo}{2\pi} \left[ \frac{\sum_i dw_i \wedge d\bar{w}_i}{1 + \sum_i w_i \bar{w}_i} + \frac{\left( \sum_i w_i d\bar{w}_i\right) \wedge \left( \sum_i \bar{w}_i dw_i\right)}{\left( 1 + \sum_i w_i \bar{w}_i\right)^2}\right].
\end{split}
\end{equation*}

Calculando no ponto $p=[1:0:\cdots:0]$, cujas coordenadas são $w_i=0$, vemos que
\begin{equation*}
(\omega_{FS})_p = \frac{\smo}{2\pi} \sum_i  \; dw_i \wedge d \bar{w}_i,
\end{equation*}
que é claramente positiva (é um múltiplo da forma simplética padrão nas coordenadas $w_i,\bar{w}_i$).

Logo $\omega_{FS}$ é positiva em toda parte e portanto $g_{FS} = \omega_{FS}(\cdot,J\cdot)$ define uma métrica em $\pr^n$, chamada \textbf{métrica de Fubini-Study}. O produto hermitiano associado será denotado por $h_{FS}$.

Há também uma maneira geométrica de definir a métrica $g_{FS}$, exigindo que a restrição de $\pi:\C^{n+1}\setminus \{0\} \to \pr^n$ a $S^{2n+1}$ seja uma submersão riemanniana. Os detalhes dessa construção são dados na seção \ref{sec:proj-var}.
\end{example}

Mais exemplos de variedade de Kähler podem ser obtidos de subvariedades complexas de variedades de Kähler. Esse fato, enunciado na proposição abaixo, juntamente com o exemplo acima oferecerá uma classe importante de exemplos de variedades de Kähler, que são as chamadas variedades projetivas (veja a seção \ref{sec:proj-var}). 
\begin{proposition} \label{prop:kahler-submanifold}
Se $Y \subset X$ é uma variedade complexa de uma variedade de Kähler $X$ então $Y$ também é de Kähler.
\end{proposition}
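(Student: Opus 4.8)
O plano é restringir a métrica de Kähler de $X$ à subvariedade $Y$ por meio de um pullback. Denote por $\omega$ a forma fundamental de uma métrica de Kähler em $X$ e por $\iota: Y \hookrightarrow X$ a inclusão, que é uma imersão holomorfa. Definiria $\omega_Y = \iota^* \omega$ e mostraria que $\omega_Y$ é uma $(1,1)$-forma real, positiva e fechada em $Y$; pela Proposição \ref{prop:positive-form} isso garante que $g_Y = \omega_Y(\cdot, J\, \cdot)$ é uma métrica de Kähler em $Y$, concluindo o resultado.

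Os passos seriam os seguintes. Primeiro, $\omega_Y$ é fechada, pois a diferencial exterior comuta com o pullback: $d\omega_Y = d(\iota^*\omega) = \iota^*(d\omega) = 0$, já que $g$ é de Kähler. Em seguida, usaria que $\iota$ é holomorfa, de modo que a diferencial $d\iota_x : T_x Y \to T_x X$ é $\C$-linear; sua complexificação leva então $T^{1,0}_x Y$ em $T^{1,0}_x X$ e $T^{0,1}_x Y$ em $T^{0,1}_x X$, e comuta com a conjugação complexa $v \otimes \lambda \mapsto v \otimes \overline{\lambda}$. Disso segue de imediato que $\overline{\omega_Y} = \iota^*\overline{\omega} = \iota^*\omega = \omega_Y$, isto é, $\omega_Y$ é real, e também que $\omega_Y$ é de tipo $(1,1)$: se $u,v$ são ambos de tipo $(1,0)$ (ou ambos de tipo $(0,1)$) em $T_{\C}Y$ no ponto $x$, então $d\iota_x u$ e $d\iota_x v$ são do mesmo tipo em $T_{\C}X$, e portanto $\omega_Y(u,v) = \omega(d\iota_x u, d\iota_x v) = 0$, pois $\omega$ é de tipo $(1,1)$.

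Finalmente, verificaria a positividade. Dado $v \in T_x Y$ não nulo, o vetor $d\iota_x v \in T_x X$ também é não nulo (pois $\iota$ é uma imersão) e, usando que $\iota$ é holomorfa (logo $d\iota_x \circ J = J \circ d\iota_x$), obtém-se
\begin{equation*}
\omega_Y(v, Jv) = \omega\big(d\iota_x v,\, d\iota_x(Jv)\big) = \omega\big(d\iota_x v,\, J(d\iota_x v)\big) > 0,
\end{equation*}
já que $\omega$ é positiva. Assim $\omega_Y$ satisfaz todas as hipóteses da Proposição \ref{prop:positive-form}. Não vejo obstáculo real neste argumento: o único ponto que exige atenção é o uso sistemático da holomorfia da inclusão — concretamente, o fato de que $d\iota$ comuta com $J$ e respeita a decomposição $T_{\C} = T^{1,0}\oplus T^{0,1}$ — que é precisamente o que faz a estrutura de tipo e a positividade passarem bem pelo pullback. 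Vale notar que uma abordagem alternativa via a condição $\nabla J = 0$ da Proposição \ref{prop:kahler-equivalence} também seria possível, mas exigiria comparar as conexões de Levi-Civita de $X$ e de $Y$ (via a segunda forma fundamental), sendo portanto menos direta do que a via $d\omega = 0$.
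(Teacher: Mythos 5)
Sua prova está correta e segue essencialmente o mesmo caminho do texto: restringir a estrutura de Kähler de $X$ a $Y$ via o pullback pela inclusão holomorfa, usando que $d$ comuta com $\iota^*$ para obter $d(\iota^*\omega)=0$. A única diferença é de embalagem — o texto puxa diretamente a métrica $g$ e observa que sua forma fundamental é $\iota^*\omega$, enquanto você puxa $\omega$ e invoca a Proposição \ref{prop:positive-form}; os dois argumentos são equivalentes.
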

\begin{proof}
Denote por $\iota:Y \to X$ a inclusão. Seja $g$ uma métrica de Kähler em $X$ e $\omega$ sua forma fundamental.

A restrição $\iota^* g$ é uma métrica riemanniana em $X$. Usando o fato que $\iota$ é holomorfa é fácil ver que $\iota^* g$ é hermitiana e que sua forma fundamental é $\iota^* \omega$. Como $g$ é de Kähler temos que $d (\iota^* \omega) = \iota^*(d \omega) = 0$ e portanto $\iota^*g$ é uma métrica de Kähler em $Y$.
\end{proof}

\subsection{Identidades de Kähler} \label{sec:kahler-identities} \index{identidades de Kähler}
Em uma variedade Riemanniana $(X,g)$  orientada de dimensão $m$, considere os espaços $\mathcal{A}_c^k(X)$ de formas diferenciais com suporte compacto. Definimos o operador
\begin{equation*}
d^* = (-1)^{m(k+1)+1} * \circ d \circ *: \mathcal{A}_c^k(X) \longrightarrow \mathcal{A}_c^{k-1}(X),
\end{equation*}
que é o adjunto formal de $d:\mathcal{A}_c^k(X) \longrightarrow \mathcal{A}_c^{k-1}(X)$ com a métrica $L^2$ por $g$ (mais detalhes na seção \ref{sec:hodge-decomp}. Note que no caso em que $X$ é hermitiana, $m=2n$ e portanto $d^* = - * \circ d \circ *$.

O \textit{operador de Laplace} é então definido por
\begin{equation*}
\Delta = d d^* + d^* d:  \mathcal{A}_c^k(X) \longrightarrow \mathcal{A}_c^k(X)
\end{equation*}

Esses operadores se estendem de forma $\C$-linear a $\mathcal{A}^k_{\C,c}(X)$.
\begin{definition}
Se $(X,g)$ é uma variedade hermitiana definimos os operadores
\begin{equation*}
\del^* = - * \circ \delbar \circ * : \mathcal{A}_c^{p,q}(X) \longrightarrow \mathcal{A}_c^{p-1,q}(X)
\end{equation*}
e
\begin{equation*}
\delbar^* = - * \circ \del \circ * : \mathcal{A}_c^{p,q}(X) \longrightarrow \mathcal{A}_c^{p,q-1}(X),
\end{equation*}
e os Laplacianos associados $\Delta_{\del}, \Delta_{\delbar}: \mathcal{A}_c^{p,q}(X) \to \mathcal{A}_c^{p,q}(X)$, dados por
\begin{equation*}
\Delta_{\del} = \del \del^* + \del^* \del \;\; \text{ e } \;\;\ \Delta_{\delbar} = \delbar \delbar^* + \delbar^* \delbar.
\end{equation*}
\end{definition}

\begin{remark}
Nessa definição consideramos a extensão $\C$-linear do operador $*$ para formas complexas. Essa extensão satisfaz $*\big( \bigwedge^{p,q} X \big ) \subset \bigwedge^{n-q,n-p} X$, de modo que de fato $\del^*$ leva $\mathcal{A}^{p,q}(X)$ em  $\mathcal{A}^{p-1,q}(X)$ e $\delbar^*$ leva $\mathcal{A}^{p,q}(X)$ em $\mathcal{A}^{p,q-1}(X)$.
\end{remark}

Os operadores $\del^*$ e $\delbar^*$ são adjuntos formais de $\del$ e $\delbar$ com relação ao produto hermitiano $L^2$ definido por
\begin{equation} \label{eq:L^2-compact-support}
(\alpha,\beta) = \int_X ( \alpha,\beta ) \text{vol} ~,\;\;\; \alpha,\beta \in \mathcal{A}_{\C,c}^k(X).
\end{equation}
Para mais detalhes e uma demonstração desse fato consulte a seção \ref{sec:hodge-decomp}.\\

Em geral, não há relação entre  os três Laplacianos definidos acima. Porém, se a métrica $g$ é de Kähler, eles coincidem a menos de uma constante multiplicativa. Esse fato é uma consequência das chamadas identidades de Kähler\footnote{Alguns autores as designam por identidades de Hodge.}, relacionando os operadores $\del, \delbar,\del^*, \delbar^*$ com os operadores de Lefschetz $L$ e $\Lambda$ (veja a seção \ref{subsec:lefschetz} para a definição desses operadores).\\
\begin{theorem} \textbf{Identidades de Kähler.} \label{thm:kahler-identities} Se $X$ é uma variedade complexa com uma métrica de Kähler então valem as seguintes relações de comutação
\begin{itemize}
\item[1.] $[L,\del] = [L,\delbar] = 0$ ~e~~ $[\Lambda,\del^*] = [\Lambda,\delbar^*] = 0$
\item[2.] $[\Lambda,\delbar] = - \smo \del^*$ ~e~~ $[\Lambda,\del] = \smo \delbar^*$,
\item[3.] $[\delbar^*,L] = \smo \del$ ~e~~ $[\del^*,L] = -\smo \delbar$.

\end{itemize}
\end{theorem}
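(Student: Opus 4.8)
The plan is to prove the Kähler identities by reducing everything to a pointwise verification in $\C^n$ with the standard metric, using characterization (3) of the Kähler condition from Proposition \ref{prop:kahler-equivalence}: around any point $x$ there exist normal coordinates in which the metric agrees with the flat metric up to second order. The key observation is that all the operators appearing in the identities ($\del$, $\delbar$, $\del^*$, $\delbar^*$, $L$, $\Lambda$) are expressed in terms of the metric and its first derivatives only: $\del$ and $\delbar$ involve no metric at all, $L$ and $\Lambda$ involve the coefficients $h_{ij}$ and the Hodge star involves $h_{ij}$ and $\det(h_{ij})$, while $\del^*,\delbar^*$ involve first derivatives of these through the composition with $d$. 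Hence the commutators $[L,\del]$, $[\Lambda,\delbar]+\smo\del^*$, etc., are tensorial expressions (no derivatives of the metric survive after the dust settles) or expressions depending only on first derivatives, which therefore vanish at $x$ in normal coordinates if and only if they vanish identically for the flat metric. Since $x$ is arbitrary, the identities hold on all of $X$.

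First I would establish the purely algebraic relations, item 1. The identity $[L,\del]=[L,\delbar]=0$ follows immediately from $d\omega=0$: for a form $\alpha$, $L(\del\alpha)=\omega\wedge\del\alpha$ and $\del(L\alpha)=\del(\omega\wedge\alpha)=\del\omega\wedge\alpha+\omega\wedge\del\alpha$, and since $d\omega=0$ with $\omega$ of type $(1,1)$ we get $\del\omega=\delbar\omega=0$; so $[L,\del]=0$, and likewise for $\delbar$. Taking formal adjoints with respect to the $L^2$ product (using that $\Lambda=L^*$, $\del^*$ and $\delbar^*$ are the adjoints of $\del,\delbar$, noting the bidegree bookkeeping) converts $[L,\del]=0$ into $[\del^*,\Lambda]=0$ and $[L,\delbar]=0$ into $[\delbar^*,\Lambda]=0$, which is item 1's second half. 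Similarly, items 2 and 3 are adjoint to each other: taking the adjoint of $[\Lambda,\delbar]=-\smo\del^*$ yields $[\del,L]=-\smo\,\delbar^{**}$, i.e. $[\del^*,L]$... — here I must be careful with signs and with the fact that $**=\pm\id$; the cleanest route is to prove item 2 directly and then deduce item 3 by taking adjoints, using $\Lambda^*=L$, $(\del^*)^*=\del$, $(\delbar^*)^*=\delbar$, and conjugating. So the whole theorem reduces to the single identity $[\Lambda,\delbar]=-\smo\del^*$ (the companion $[\Lambda,\del]=\smo\delbar^*$ then follows by complex conjugation, since $\overline{\delbar}=\del$, $\overline{\Lambda}=\Lambda$).

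The heart of the proof, and the main obstacle, is the flat-model computation: verify $[\Lambda,\delbar]=-\smo\del^*$ directly on $(\C^n,h_0)$. Here one uses the explicit formulas: $\del^*=-*\delbar\,*$, $\Lambda=*^{-1}L\,*$, and $\omega_0=\frac{\smo}{2}\sum dz_i\wedge d\bar z_i$. One introduces the creation/annihilation operators $e_i(\alpha)=dz_i\wedge\alpha$, $\bar e_i(\alpha)=d\bar z_i\wedge\alpha$, and their adjoints $\iota_i,\bar\iota_i$ (contraction), expresses $L=\frac{\smo}{2}\sum e_i\bar e_i$, $\Lambda=-\frac{\smo}{2}\sum\bar\iota_i\iota_i$ (up to a sign I'd pin down by the definition $\langle\Lambda\alpha,\beta\rangle=\langle\alpha,L\beta\rangle$), $\delbar=\sum\bar e_i\frac{\del}{\del\bar z_i}$, $\del^*=-\sum\iota_i\frac{\del}{\del\bar z_i}$ (the last via the star-conjugate formula), and then computes the graded commutator using the canonical anticommutation relations $\{e_i,\iota_j\}=\delta_{ij}$, etc. This is a bounded but genuinely fiddly bracket calculation where sign conventions (the $(-1)^{...}$ in $d^*$, the bidegree-dependent sign of $**$, the factor $\frac{\smo}{2}$ versus $\smo$) are the real danger; I would fix all conventions up front, cite \cite{huybrechts} (Ch. 1, §1.2 and §3.1) or \cite{g-h} for the detailed verification, and present the computation in the flat model as a lemma. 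Once that lemma is in hand, the transfer to a general Kähler manifold is exactly the normal-coordinates argument sketched above: write out $[\Lambda,\delbar]\alpha$ and $\del^*\alpha$ at $x$ in normal coordinates, observe that every term either matches the flat-model term or carries a factor that is a first derivative of $h_{ij}$ at $x$ (hence vanishes), and conclude.
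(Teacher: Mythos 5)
Your proposal is correct and follows essentially the same route as the paper: item 1 from $d\omega=0$ plus adjoints, the core identity proved first on flat $\C^n$ via the operators $e_k,\bar e_k$ and their contractions $i_k,\bar i_k$ with the canonical anticommutation relations, then transferred to a general K\"ahler metric by the normal-coordinates argument (all operators involved depend only on the metric and its first derivatives), and finally items 2 and 3 completed by conjugation and adjunction. The only cosmetic difference is that the paper computes $[\Lambda,\del]=\smo\,\delbar^*$ first and conjugates to get $[\Lambda,\delbar]=-\smo\,\del^*$, whereas you propose the reverse order.
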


\begin{proof} 1. Como a métrica é de Kähler temos que $d \omega= 0$ e portanto $\del \omega = \delbar \omega = 0$. Assim, para $\alpha \in \mathcal{A}^*_{\C}(X)$ temos que
\begin{equation*}
\del (L (\alpha)) = \del (\omega \wedge \alpha) = \omega \wedge \del \alpha = L (\del (\alpha))~\text{ e }~ \delbar (L (\alpha)) = \delbar (\omega \wedge \alpha) = \omega \wedge \delbar \alpha = L (\delbar (\alpha))
\end{equation*}
o que mostra que $[L,\del] = [L,\delbar] = 0$. Tomando o adjunto obtemos $0 = [L,\del]^* = - [\Lambda,\del^*]$  e do mesmo modo mostramos que $[\Lambda,\delbar^*]=0$.\\

2. Vamos demonstrar primeiro o resultado para $X=\C^n$ com a métrica hermitiana padrão.

Defina os operadores
\begin{equation*}
\begin{split}
e_k:\mathcal{A}^{p,q}_c (\C^n) &\longrightarrow \mathcal{A}^{p+1,q}_c (\C^n)\\
\alpha &\longmapsto dz_k \wedge \alpha
\end{split}
~~~~~e~~~~~
\begin{split}
\bar{e}_k:\mathcal{A}^{p,q}_c (\C^n) &\longrightarrow \mathcal{A}^{p,q+1}_c (\C^n)\\
\alpha &\longmapsto d\bar{z}_k \wedge \alpha
\end{split}
\end{equation*}
para $k=1,\ldots,n$ e sejam $i_k$ e $\bar{i}_k$ os respectivos adjuntos pontuais.

\begin{lemma} \label{lemma:ki1}
Os operadores definidos acima satisfazem
\begin{itemize}
\item[a.] $e_k i_l + i_l e_k = 0$ se $k \neq l$,
\item[b.] $e_k i_k + i_k e_k = 2$ para todo $k$
\item[c.] $e_k \bar i_l + \bar i_l e_k =0$ para todo $k,l$.
\end{itemize}
\end{lemma}
\begin{proof}
Note que todos os operadores são $\mathcal{C}^{\infty}(\C^n)$-lineares, e portanto basta mostrarmos as identidades acima calculando os operadores nas formas $dz_I \wedge d \bar z _J$, o que segue de cálculos elementares (veja por exemplo \cite{g-h} p.112).
\end{proof}

Defina agora
\begin{equation*}
\begin{split}
\del_k:\mathcal{A}^{p,q}_c (\C^n) &\longrightarrow \mathcal{A}^{p,q}_c (\C^n)\\
\del_k \left (\sum f_{IJ} dz_I \wedge d \bar{z}_J \right) &= \sum \frac{\del f_{IJ}}{\del z_k} dz_I \wedge d \bar{z}_J
\end{split}
~~~~~e~~~~~
\begin{split}
\delbar_k:\mathcal{A}^{p,q}_c (\C^n) &\longrightarrow \mathcal{A}^{p,q}_c (\C^n)\\
\delbar_k \left (\sum f_{IJ} dz_I \wedge d \bar{z}_J \right) &= \sum \frac{\del f_{IJ}}{\del \bar{z}_k} dz_I \wedge d \bar{z}_J
\end{split}\end{equation*}
Note que $\del_k$ e $\delbar_k$ comutam e ambos comutam com $e_l$ e $\bar e_l$. Usando integração por partes é fácil ver que.

\begin{lemma}
O adjunto de $\del_k$ com respeito à métrica $L^2$ (eq.~\ref{eq:L^2-compact-support}) é $-\delbar_k$ e o adjunto de $\delbar_k$ é $-\del_k$.
\end{lemma}

Como consequência do lema acima temos que $[\del_k,i_l] = -[\delbar_k^*,e_l^*] = [\delbar_k,e_l]^* = 0$ e analogamente $[\delbar_k,i_l] = [\del_k,\bar i_l] = [\delbar_k,\bar i_l] =0$.

Como $\del = \sum_k \del_k e_k$ e $\delbar = \sum_k \delbar_k e_k$  temos, tomando o adjunto, que
\begin{equation*}
\del^* = \sum_k (\del_k e_k)^* =  \sum_k (e_k \del_k)^* = \sum_k \del_k^* e_k^* = - \sum_k \delbar_k i_k ~~\text{ e }~~ \delbar^* = - \sum_k \del_k \bar{i}_k.
\end{equation*}

A forma de Kähler em $\C^n$ é dada por $\omega = \frac{\smo}{2} \sum_k dz_k \wedge d\bar{z}_k$ de modo que o operador de Lefschetz se escrever como $L = \frac{\smo}{2} \sum_k e_k \bar{e}_k$ e portanto, tomando o adjunto obtemos uma expressão para o operador dual
\begin{equation*}
\Lambda = -\frac{\smo}{2} \sum_k \bar{i}_k i_k.
\end{equation*}

Temos portanto que
\begin{equation*}
\Lambda \del = - \frac{\smo}{2} \sum_{k,l} \bar{i}_k i_k \del_l e_l = -\frac{\smo}{2} \sum_{k,l} \del_l \bar{i}_k i_k e_l = -\frac{\smo}{2} \left( \sum_k \del_k \bar{i}_k i_k e_k +  \sum_{k\neq l} \del_l \bar{i}_k i_k e_l \right).
\end{equation*}

Usando o lema \ref{lemma:ki1} temos que o primeiro termo da soma acima é
\begin{equation*}
-\frac{\smo}{2} \sum_k \del_k \bar{i}_k i_k e_k = -\frac{\smo}{2} \sum_k \del_k \bar{i}_k (- e_k i_k + 2) = -\frac{\smo}{2} \sum_k \del_k e_k \bar{i}_k i_k - \smo \sum_k \del_k \bar i_k,
\end{equation*}
e o segundo fica
\begin{equation*}
-\frac{\smo}{2} \sum_{k\neq l} \del_l \bar{i}_k i_k e_l = \frac{\smo}{2} \sum_{k\neq l} \del_l \bar{i}_k e_l i_k = -\frac{\smo}{2} \sum_{k \neq l} \del_l e_l \bar{i}_k  i_k\
\end{equation*}
de onde concluimos que
\begin{equation*}
\Lambda \del = -\frac{\smo}{2} \sum_{k,l} \del_l e_l \bar i_k i_k - \smo \sum_k \del_k \bar i_k = \del \Lambda + \smo \delbar^*,
\end{equation*}
ou seja, $[\Lambda,\del] = \Lambda \del - \del \Lambda = \smo \delbar^*$.\\

Seja agora $X$ uma variedade com uma métrica de Kähler $h$. Fixe $x \in X$ e considere coordenadas normais holomorfas centradas em $x$ definidas em um aberto $U$ (cf. item 3. Proposição da \ref{prop:kahler-equivalence}). Sejam $h_{kl}$ os coeficientes da métrica nesse sistema de coodenadas.

Como acima podemos definir os operadores $e_k:\mathcal{A}^{p,q}_c(U) \to \mathcal{A}^{p+1,q}_c (U)$, $\bar e_k:\mathcal{A}^{p,q}_c(U) \to \mathcal{A}^{p,q+1}_c (U)$, seus adjuntos $i_k$, $\bar i_k$ e os operadores $\del_k,\delbar_k:\mathcal{A}^{p,q}_c(U) \to \mathcal{A}^{p,q}_c (U)$. Assim, o operador de Lefschetz é dado por $L = \frac{\smo}{2} \sum_{k,l} h_{kl} e_k \bar e_l$ e o seu dual é dado por $\Lambda = - \frac{\smo}{2} \sum_{k,l} h_{kl} i_k \bar i_l$.

Note que os operadores $e_k,\bar e_k, i_k$ e $\bar i_k$ são algébricos, isto é, não envolvem nenhuma derivada parcial dos coeficientes das formas diferenciais e os operadores $\del_k$ e $\delbar_k$ são de primeira ordem, isto é, envolvem derivadas parciais de ordem $1$. Sendo assim, como $h_{kl}(x) = \delta_{kl}$ e $\frac{\partial h_{kl}}{\partial z_i} (x) = \frac{\del h_{kl}}{\del \bar{z}_j} (x)=0$, as fórmulas obtidas acima para $\Lambda$,$\del$ e $\delbar^*$ são válidas em $x$ e portanto temos que $[\Lambda,\del] (x)= \smo \delbar^* (x)$.

Como coordenadas normais holomorfas existem em torno de cada ponto de $X$ concluímos que $[\Lambda,\del] = \smo \delbar^*$.\\

Conjugando a relação acima e lembrando que $\Lambda$ é um operador real obtemos
\begin{equation*}
-\smo \del^* = \overline{\smo \delbar^*} = \overline{[\Lambda,\del]} = [\overline{\Lambda},\delbar] = [\Lambda,\delbar],
\end{equation*}
terminando a demonstração de 2.\\

3. Tomando o adjunto da relação $[\Lambda,\delbar] = -\smo \del^*$ obtemos
\begin{equation*}
\smo \del = (-\smo \del^*)^* = [\Lambda,\delbar]^* =-[\Lambda^*,\delbar^*] = -[L,\delbar^*] = [\delbar^*,L],
\end{equation*}
o que demonstra a primeira relação. A relação $[\del^*,L] = -\smo \delbar$ é obtida por conjugação e usando que $L$ é um operador real.

\end{proof}

\begin{corollary} \label{cor:laplacian}
Em uma variedade de Kähler os diferentes Laplacianos são relacionados por
\begin{equation*}
\Delta_{\del} = \Delta_{\delbar} = \frac{1}{2} \Delta.
\end{equation*}
\end{corollary}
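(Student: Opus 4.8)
O objetivo é deduzir, a partir das identidades de Kähler (Teorema \ref{thm:kahler-identities}), a igualdade $\Delta_{\del} = \Delta_{\delbar} = \frac{1}{2}\Delta$. A estratégia é provar primeiro que $\Delta_\del = \Delta_\delbar$ usando as identidades do item 2, e depois mostrar que $\Delta = 2\Delta_\delbar$ expandindo $\Delta = dd^* + d^*d$ com $d = \del + \delbar$ e $d^* = \del^* + \delbar^*$, cancelando os termos cruzados novamente via as identidades de Kähler.

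Primeiro eu escreveria $\Delta_\del = \del\del^* + \del^*\del$ e substituiria $\del^* = \smo[\Lambda,\delbar]$ (que segue de $[\Lambda,\delbar] = -\smo\del^*$). Expandindo,
\begin{equation*}
\Delta_\del = \smo\big( \del\Lambda\delbar - \del\delbar\Lambda + \Lambda\delbar\del - \delbar\Lambda\del \big).
\end{equation*}
Analogamente, usando $\delbar^* = -\smo[\Lambda,\del]$ (de $[\Lambda,\del] = \smo\delbar^*$),
\begin{equation*}
\Delta_\delbar = -\smo\big( \delbar\Lambda\del - \delbar\del\Lambda + \Lambda\del\delbar - \del\Lambda\delbar \big).
\end{equation*}
Usando $\del\delbar = -\delbar\del$ (relação estabelecida na seção sobre os operadores $\del$ e $\delbar$) e comparando os dois lados termo a termo, verifica-se que $\Delta_\del - \Delta_\delbar = 0$; esse é um cancelamento puramente formal, apenas reorganizando os quatro termos de cada expressão.

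Em seguida, para a relação com $\Delta$, eu expandiria
\begin{equation*}
\Delta = (\del+\delbar)(\del^*+\delbar^*) + (\del^*+\delbar^*)(\del+\delbar) = \Delta_\del + \Delta_\delbar + (\del\delbar^* + \delbar^*\del) + (\delbar\del^* + \del^*\delbar).
\end{equation*}
O ponto central é mostrar que os dois termos cruzados $\del\delbar^* + \delbar^*\del$ e $\delbar\del^* + \del^*\delbar$ se anulam. Para o primeiro, escrevo $\delbar^* = -\smo[\Lambda,\del] = -\smo(\Lambda\del - \del\Lambda)$ e calculo $\del\delbar^* + \delbar^*\del = -\smo(\del\Lambda\del - \del^2\Lambda + \Lambda\del^2 - \del\Lambda\del)$; como $\del^2 = 0$, isso é zero. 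O segundo termo cruzado se anula de modo inteiramente análogo usando $\del^* = \smo[\Lambda,\delbar]$ e $\delbar^2 = 0$. Logo $\Delta = \Delta_\del + \Delta_\delbar$, e combinando com $\Delta_\del = \Delta_\delbar$ obtém-se $\Delta = 2\Delta_\delbar = 2\Delta_\del$, ou seja, $\Delta_\del = \Delta_\delbar = \frac{1}{2}\Delta$.

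**Principal obstáculo.** Não há dificuldade conceitual séria — tudo se reduz a manipulações algébricas com os comutadores do Teorema \ref{thm:kahler-identities} e as relações $\del^2 = \delbar^2 = 0$, $\del\delbar = -\delbar\del$. O cuidado necessário é puramente de contabilidade: organizar os termos de forma que os cancelamentos fiquem transparentes, em particular tratando corretamente os sinais ao passar entre $\del\delbar$ e $\delbar\del$ e ao tomar os comutadores. Vale também observar (embora não seja estritamente necessário para o enunciado) que, por densidade e pelo fato de os operadores se estenderem às formas de suporte compacto, as identidades valem no nível relevante; como o corolário é uma identidade entre operadores diferenciais locais, basta verificá-la formalmente como acima.
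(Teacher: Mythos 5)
A sua demonstração está correta e segue essencialmente o mesmo caminho da prova do texto: ambas usam as identidades $[\Lambda,\delbar]=-\smo\del^*$ e $[\Lambda,\del]=\smo\delbar^*$ junto com $\del^2=\delbar^2=0$ e $\del\delbar=-\delbar\del$ para anular os termos cruzados de $\Delta$ e identificar $\Delta_\del$ com $\Delta_\delbar$. A única diferença é de ordem (o texto cancela os termos cruzados primeiro e obtém o segundo por conjugação), o que é irrelevante.
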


\begin{proof}
Primeiramente note que $\del \delbar^* + \delbar^* \del = 0$, pois como $[\Lambda,\del] = \smo\delbar^*$ temos que
\begin{equation*}
\smo (\del \delbar^* + \delbar^* \del) = \del [\Lambda,\del] + [\Lambda,\del] \del = \del \Lambda \del - \del \Lambda \del =0,
\end{equation*}
e conjugando obtemos $\delbar \del^* + \del^* \delbar=0$.

Temos portanto que
\begin{equation*}
\begin{split}
\Delta &= (\del + \delbar)(\del^* + \delbar^*) +  (\del^* + \delbar^*)(\del + \delbar) \\
 &= (\del \del^* + \del^* \del) + (\delbar \delbar^* + \delbar^* \delbar) + (\del \delbar^* + \delbar \del^* + \del^* \delbar + \delbar^* \del) \\
 &= (\del \del^* + \del^* \del) + (\delbar \delbar^* + \delbar^* \delbar)\\
 &= \Delta_{\del} + \Delta_{\delbar}.
\end{split}
\end{equation*}

Portanto, só resta mostrar que $\Delta_{\del} = \Delta_{\delbar}$.

Usando a identidade $[\Lambda,\delbar] = - \smo \del^*$, temos
\begin{equation*}
-\smo \Delta_{\del} = \del(\Lambda \delbar- \delbar\Lambda) + (\Lambda \delbar- \delbar\Lambda) \del = \del \Lambda \delbar - \del \delbar\Lambda + \Lambda \delbar \del - \delbar \Lambda \del.
\end{equation*}
de onde segue, usando a identidade $[\Lambda, \del] =\smo \delbar^*$ e lembrando que $\del \delbar =- \delbar \del$, que
\begin{equation*}
\begin{split}
\smo \Delta_{\delbar} &= \delbar(\Lambda \del - \del \Lambda) + (\Lambda \del - \del \Lambda)\delbar\\
&= \delbar \Lambda \del - \delbar \del\Lambda + \Lambda \del \delbar - \del \Lambda \delbar\\
&= \smo \Delta_{\del}
\end{split}
\end{equation*}
terminando a demonstração.
\end{proof}

O corolário acima pode servir também como uma outra motivação, mais analítica, para a definição de uma métrica de Kähler. Por um cálculo simples sabemos, que para funções em $\C^n$, o Laplaciano $\Delta_{\delbar}$ é igual ao Laplaciano usual a menos de uma constante. Essencialmente o mesmo cálculo mostra que isso também vale em uma variedade plana $(X,g)$. No entanto, como os operadores de Laplace envolvem apenas derivadas de primeira ordem da métrica, ainda teremos que  $\Delta = 2\Delta_{\delbar}$ se $g$ for, a menos de termos de ordem maior ou igual a $2$, a métrica padrão de $\C^n$, o que leva a uma das possíveis definções de mértica de Kähler (item 3. proposição \ref{prop:kahler-equivalence}).\\

\section{Variedades projetivas} \label{sec:proj-var}

Nesta seção estudaremos com mais detalhes alguma propriedades do espaço projetivo e de suas subvariedades.

Uma subvariedade complexa do espaço projetivo $\pr^n$ é chamada \textbf{variedade projetiva}.\index{variedade!projetiva} 

Como o espaço projetivo é uma variedade de Kähler (exemplo \ref{ex:fubini-study}) temos, como consequência da proposição \ref{prop:kahler-submanifold}:
\begin{corollary}
Toda variedade projetiva é de Kähler.
\end{corollary}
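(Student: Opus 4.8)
The statement is an immediate consequence of two facts already established in the excerpt, so the plan is short. The key observation is definitional: a projective variety is, by definition, a complex submanifold $Y \subset \pr^n$ of projective space. Thus the argument is just a matter of invoking the right results in the right order.

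First I would recall Example \ref{ex:fubini-study}, where the Fubini-Study metric $g_{FS}$ was constructed on $\pr^n$ and shown to be Kähler: its fundamental form $\omega_{FS}$ is a real, closed, positive $(1,1)$-form, hence by Proposition \ref{prop:positive-form} the associated metric $g_{FS} = \omega_{FS}(\cdot, J\cdot)$ is a Kähler metric on $\pr^n$. Consequently $\pr^n$ is a Kähler manifold.

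Second, given a projective variety $Y$, viewed as a complex submanifold of $\pr^n$ via an inclusion $\iota: Y \hookrightarrow \pr^n$, I would apply Proposition \ref{prop:kahler-submanifold} directly to the pair $Y \subset \pr^n$. That proposition guarantees that the pullback metric $\iota^* g_{FS}$ is a Kähler metric on $Y$ (it is Hermitian since $\iota$ is holomorphic, and its fundamental form $\iota^*\omega_{FS}$ is closed because $d(\iota^*\omega_{FS}) = \iota^*(d\omega_{FS}) = 0$). Hence $Y$ admits a Kähler metric, namely the restriction of the Fubini-Study metric, and is therefore a Kähler manifold.

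There is essentially no obstacle here: the content has been done in the preceding Proposition \ref{prop:kahler-submanifold} and Example \ref{ex:fubini-study}. The only thing to be careful about is stating clearly that "projective variety" means "complex submanifold of $\pr^n$", so that Proposition \ref{prop:kahler-submanifold} applies verbatim. The proof is thus a one-line deduction, and I would present it as such.
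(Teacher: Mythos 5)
Your proof is correct and is exactly the argument the paper intends: $\pr^n$ is Kähler by the Fubini--Study construction (Example \ref{ex:fubini-study}), and Proposition \ref{prop:kahler-submanifold} transfers the Kähler metric to any complex submanifold. Nothing further is needed.
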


\indent Como consequência interessante vemos, da proposição \ref{prop:cohom-kahler}, que se $X$ é uma variedade projetiva de dimensão $d$ então os grupos de cohomologia $H^2(X,\R),\ldots,H^{2d}(X,\R)$ são não triviais, o que não pode ser tão facilmente obtido por métodos puramente topológicos.

Os exemplos clássicos de variedades projetivas provém da Geometria Algébrica\footnote{Na verdade, por um teorema de W.L. Chow toda subvariedade complexa fechada de $\pr^n$ é algébrica e portanto as variedades algébricas esgotam todos os exemplos de variedades projetivas fechadas.}. Se $f$ é um polinômio homogêneo de grau $d$ em $n+1$ variáveis e $p = [z_0:\cdots:z_n]$ é um ponto do espaço projetivo, não faz sentido calcularmos $f(p)$, pois  temos que $f(\lambda z_0,\ldots,\lambda z_n) = \lambda^d f(z_0,\ldots,z_n)$ para todo $\lambda \in \C^*$ enquanto $[z_0:\cdots:z_n] = [\lambda z_0:\cdots:\lambda z_n]$. No entanto podemos dizer quando $f(p) = 0$ independentemente do representante de $p$.

Em outras palavras o conjunto
\begin{equation} \label{eq:zero-set-pol}
Z(f) = \{p \in \pr^n : f(p) = 0\} \subset \pr^n
\end{equation}
está bem definido.

Uma variedade algébrica será um conjunto dado pelos zeros simultâneos de polinômios homogêneos.
\begin{definition}
Um subconjunto $V \subset \pr^n$ é uma \textbf{variedade algébrica (projetiva)} se existem polinômios homogêneos $f_1,\ldots,f_k \in \C[z_0,\cdots,z_n]$ tal que $V = Z(f_1,\cdots,f_k) = Z(f_1) \cap \cdots \cap Z(f_k)$.
\end{definition}

Nem toda variedade algébrica é uma variedade projetiva\footnote{Inelizmente, essa nomenclatura é usual, embora a semelhança entre os termos possa ser fonte de confusão. No inglês por exemplo o nome \textit{variety} é usado para designar as variedades algébricas, enquanto o nome \textit{manifold} é usado para designar as variedades complexas. Ocorre que os dois termos são traduzidos como \textit{variedade} para o português.}, pois pode haver singularidades.

\begin{example}
Denote por $[x:y:z]$ as coordenadas homogêneas em $\pr^2$. A variedade algébrica em $\pr^2$ dada pelo polinômio homogêneo $f = x^2z - y^3$  tem uma singularidade no ponto $p=[0:0:1]$. De fato, tomando as coordenadas $u=x/z$ e $v=y/z$ definidas em $U=\{z \neq 0\}$ e centradas em $p$ vemos que $Z(f) \cap U \simeq \{(u,v):u^2 = v^3\}$ que é singular em $u=0,v=0$.
\end{example}

Quando uma variedade algébrica é de fato uma subvariedade complexa de $\pr^n$ (e portanto uma variedade projetiva no sentido acima) dizemos que $V$ é \textit{suave} ou \textit{não singular}. O seguinte resultado é uma consequência imediata do teorema da função implícita.

\begin{proposition}
Se $f \in \C[z_0,\ldots,z_n]$ é um polinômio homogêneo e $0$ é um valor regular de $f:\C^{n+1} \setminus \{0\} \to \C$ então $X = Z(f) \subset \pr ^n$ é suave. Neste caso dizemos que $V$ é uma \textbf{hipersuperfície}.

Mais geralmente se $f_1,\ldots,f_k \in \C[z_0,\ldots,z_n]$ são polinômios homogêneos e $0$ é um valor regular de $(f_1,\ldots,f_k):\C^{n+1} \setminus \{0\} \to \C^k$ então  $V = Z(f_1,\ldots,f_k)$ é suave. Nesse caso dizemos que $V$ é uma \textbf{intersecção completa}.
\end{proposition}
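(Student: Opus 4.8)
O plano é usar o teorema da função implícita holomorfo, reduzindo a questão ao caso afim nas cartas coordenadas $U_j = \{z_j \neq 0\}$ de $\pr^n$. Primeiro observaria que, pela homogeneidade dos $f_i$, o conjunto $V = Z(f_1,\ldots,f_k)$ está bem definido, e que sob a carta $\varphi_j: U_j \to \C^n$ o subconjunto $\varphi_j(V \cap U_j)$ coincide com o conjunto de zeros $Z(g_1,\ldots,g_k) \subset \C^n$ das desomogeneizações $g_i(w) = f_i(w_1,\ldots,1,\ldots,w_n)$, obtidas fixando a $j$-ésima coordenada igual a $1$. Como os $U_j$ cobrem $\pr^n$, bastaria mostrar que $0$ é valor regular de cada $G = (g_1,\ldots,g_k): \C^n \to \C^k$; o teorema da função implícita daria então que $Z(G)$ é uma subvariedade complexa lisa de $\C^n$ de codimensão $k$, e a compatibilidade dessas descrições locais (as mudanças de coordenadas de $\pr^n$ são biholomorfismos) garantiria que $V$ é uma subvariedade complexa de $\pr^n$.

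O passo central — e o único que não é puramente rotineiro — seria deduzir a regularidade de $G$ a partir da hipótese de que $0$ é valor regular de $F = (f_1,\ldots,f_k)$ em $\C^{n+1}\setminus\{0\}$. Dado $[z] \in V \cap U_j$, tomo o representante $p$ com $p_j = 1$ e escrevo $\bar p = \varphi_j([z])$. Como $f_l(p) = 0$ para todo $l$, a relação de Euler $\sum_i z_i \frac{\del f_l}{\del z_i} = (\deg f_l)\, f_l$ dá $dF_p(p) = 0$, ou seja, o vetor radial $p$ pertence a $\ker dF_p$. Escrevendo $G = F \circ \iota$, onde $\iota:\C^n \to \C^{n+1}$ é a inclusão afim $w \mapsto (w_1,\ldots,1,\ldots,w_n)$ sobre o hiperplano $\{z_j=1\}$, tem-se $dG_{\bar p} = dF_p \circ d\iota_{\bar p}$, e $d\iota_{\bar p}$ identifica $\C^n$ com o subespaço linear $W = \{v : v_j = 0\}$. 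Como $p \notin W$, vale $\C^{n+1} = W \oplus \C p$; logo $dF_p(W) = dF_p(\C^{n+1}) = \C^k$, pois $p \in \ker dF_p$ e $0$ é valor regular de $F$. Portanto $dG_{\bar p}$ é sobrejetora.

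Essa conta trata de uma só vez o caso de hipersuperfícies ($k=1$) e o de interseções completas ($k$ arbitrário). A única sutileza real é essa passagem do domínio $\C^{n+1}\setminus\{0\}$ para as cartas afins, onde a identidade de Euler é essencial — é ela que coloca o vetor radial no núcleo de $dF_p$ e assim garante que restringir $F$ ao hiperplano $\{z_j=1\}$ preserva a submersividade ao longo de $V$. Tudo o mais é aplicação direta do teorema da função implícita e a observação, já usada no exemplo do espaço projetivo, de que subvariedades descritas de modo compatível em cada carta colam a uma subvariedade global.
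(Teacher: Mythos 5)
Sua demonstra\c{c}\~ao est\'a correta e segue exatamente o caminho que o texto indica --- a proposi\c{c}\~ao \'e enunciada no cap\'itulo como ``consequ\^encia imediata do teorema da fun\c{c}\~ao impl\'icita'', sem demonstra\c{c}\~ao escrita. Voc\^e apenas preenche os detalhes omitidos, e o faz corretamente: a identidade de Euler \'e de fato o ponto que coloca o vetor radial em $\ker dF_p$ e garante que a restri\c{c}\~ao de $F$ ao hiperplano afim $\{z_j=1\}$ permanece submersiva ao longo de $V$.
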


\begin{example} \textbf{Hipersuperfíces de Fermat.}
Considere o polinômio homogêneo $F_d = z_0^d + z_1^d + \cdots - z_n^d$. Note que $\partial{F_d}/\partial{z_i} = \pm d\cdot z_i^{d-1}$ não se anula em $\C^{n+1}\setminus \{0\}$ e portanto $F_d:\C^{n+1}\setminus \{0\} \to \C$ não tem pontos críticos. Portanto, da proposição acima, temos que $X_d = Z(F_d) \subset \pr^n$ é uma hipersuperfície projetiva, chamada \emph{hipersuperfíce de Fermat}.

Quando $n=2$ obtemos uma família de curvas $X_d$ em $\pr^2$, chamadas de curvas de Fermat, dadas pela equação $x^d + y^d - z^d =0$. A existência de pontos racionais em $X_d$ (i.e., pontos em que todas as coordenadas homogêneas são números racionais) é equivalente a existência de soluções inteiras da equação $x^d + y^d  = z^d $. Pelo famoso Último Teorema de Fermat, demonstrado por Andrew Wiles em 1995, as curvas $X_d$ com $d\geq 3$ só contém os dois pontos racionais $[0:1:1]$ e $[1:0:1]$ se $d$ é par e além deles o ponto $[1:-1:0]$ se $d$ é ímpar.
\end{example}

\subsubsection{O fibrado tangente e o fibrado canônico de $\pr^n$}
Nesta seção iremos estudar com um pouco mais de detalhe o fibrado tangente do espaço projetivo complexo. Veremos que ele se insere em uma sequência exata curta de fibrados holomorfos envolvendo outros fibrados já conhecidos, o que permitirá descrever explicitamente o fibrado canônico $K_{\pr^n}$.\\

Considere a projeção natural $\pi: \C^{n+1} \setminus \{0\} \to \pr^n$. Dado $x = \pi(z) \in \pr^n$, a diferencial (real)
\begin{equation*}
d\pi_z:T_z (\C^{n+1} \setminus \{0\}) \simeq \R^{2(n+1)} \to T_x \pr^n
\end{equation*}
é sobrejetora e seu núcleo é exatamente o espaço tangente a fibra $\pi^{-1}(x) = x \subset \C^{n+1}$, que é identificado com a própria reta $x$.

Complexificando os espaços tangentes e a diferencial obtemos $d\pi_z: \R^{2(n+1)} \otimes \C \to T_x \pr^n \otimes \C$. Como $\pi$ é holomorfa, $d\pi_z$ leva $(\R^{2(n+1)} \otimes \C)^{1,0} \simeq \C^{n+1}$ em $(T_x \pr^n )^{1,0}$. Temos portanto uma aplicação $\C$-linear sobrejetora
\begin{equation*}
d\pi_z: \C^{n+1} \to (T_x \pr^n )^{1,0},
\end{equation*}
cujo núcleo é o subespaço complexo $x \subset \C^{n+1}$. Com respeito a base $\{\del / \del z_0, \ldots \del / \del z_n\}$ de $\C^{n+1}$ a aplicação acima é dada por $\del / \del z_i \mapsto d \pi_z (\del / \del z_i)$ e o núcleo é gerado pelo vetor radial $Z = \sum z_i ~\del / \del z_i$.

Poderíamos tentar realizar essa construção fibra a fibra a fim de obter um morfismo de fibrados complexos $\pr^n \times \C^{n+1} \to T^{1,0} \pr^n$, definido por $(x,v) \mapsto d \pi_z \cdot v$, mas essa tentativa não produz uma aplicação bem definida, pois embora $x = \pi(z) = \pi(\lambda z)$ temos que $d \pi_z \neq d \pi_{\lambda z}$. Derivando a equação $\pi(z) = \pi(\lambda z)$ vemos na verdade que $\lambda d \pi_{\lambda z} \cdot v =d \pi_z \cdot v$ e portanto
\begin{equation} \label{eq:hom-dpi}
d\pi_{\lambda z} \cdot v = \lambda^{-1} d \pi_z \cdot v~\text{ para } \lambda \neq 0.
\end{equation}
Vemos portanto que $d\pi_z$ tem um comportamento homogêneo de grau $-1$ em $z$. Dessa observação podemos tornar bem definida a aplicação acima, tomando o produto tensorial do fibrado trival com o fibrado de linha $\mathcal{O}(1)$. Obtemos assim uma sequência exata envolvendo o fibrado tangente de $\pr^n$.

\begin{proposition} \label{prop:euler-seq} \textbf{A sequência de Euler.} \index{sequência!de Euler}
Existe uma sequência exata de fibrados holomorfos sobre $\pr^n$
\begin{equation} \label{eq:euler-seq}
0 \longrightarrow \mathcal{O}_{\pr^n} \longrightarrow \mathcal{O}_{\pr^n}^{\oplus n+1} \otimes \mathcal{O}(1) \longrightarrow \mathcal{T}_{\pr^n} \longrightarrow 0.
\end{equation}
\end{proposition}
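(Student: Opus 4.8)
A ideia é construir explicitamente os três morfismos da sequência e verificar a exatidão. Primeiro definirei o morfismo central $\mathcal{O}_{\pr^n}^{\oplus n+1} \otimes \mathcal{O}(1) \to \mathcal{T}_{\pr^n}$ usando a discussão que precede o enunciado: sobre o aberto $U_i = \{z_i \neq 0\}$, uma seção $Z_i$ do fibrado tautólogico $\mathcal{O}(-1)$ (isto é, um levantamento holomorfo $U_i \to \C^{n+1}\setminus\{0\}$) permite transportar a diferencial $d\pi_z$ e a equação de homogeneidade \eqref{eq:hom-dpi} garante que a expressão $(x, v\otimes \ell) \mapsto \ell\cdot d\pi_z(v)$ — onde $\ell$ é um elemento da fibra de $\mathcal{O}(1)$ sobre $x$, vista como funcional linear em $x\subset\C^{n+1}$ — está bem definida independentemente do levantamento. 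Mais concretamente, em coordenadas, $\mathcal{O}(1)$ tem cociclos $z_i/z_j$ e a trivialização sobre $U_i$ identifica a seção $s$ do fibrado $\mathcal{O}_{\pr^n}^{\oplus n+1}\otimes\mathcal{O}(1)$ com uma $(n+1)$-upla de funções holomorfas $(s^0,\dots,s^n)$ em $U_i$, às quais associo o campo $\sum_k s^k \, d\pi(\partial/\partial z_k)$, e checo que a compatibilidade nas interseções vem precisamente da homogeneidade de grau $-1$ de $d\pi$.

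Em seguida defino o morfismo $\mathcal{O}_{\pr^n} \to \mathcal{O}_{\pr^n}^{\oplus n+1}\otimes\mathcal{O}(1)$. Sobre $U_i$, ele deve enviar a função constante $1$ no vetor radial $Z = \sum_k z_k\,\partial/\partial z_k$ tensorizado com a seção canônica de $\mathcal{O}(1)$ dada por $z_i$; ou seja, em coordenadas a seção $1 \in \mathcal{O}(U_i)$ vai para $(z_0/z_i, \dots, z_n/z_i)$, que são funções holomorfas em $U_i$. A compatibilidade nas interseções $U_i\cap U_j$ é imediata pois multiplicar o vetor radial por $z_j/z_i$ é exatamente a regra de cociclo de $\mathcal{O}(1)$. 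Esse morfismo é injetor fibra a fibra porque o vetor radial $Z$ nunca se anula em $\C^{n+1}\setminus\{0\}$, logo, pelo Lema \ref{lemma:stalk-hom}, é injetor como morfismo de feixes.

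O passo seguinte é verificar a exatidão no termo central e no termo da direita. No termo da direita, a sobrejetividade de $d\pi_z : \C^{n+1} \to (T_x\pr^n)^{1,0}$ (que tem posto $n$, pois seu núcleo é a reta $x$, de dimensão $1$) dá sobrejetividade fibra a fibra do segundo morfismo; de novo pelo Lema \ref{lemma:stalk-hom} isto traduz-se em sobrejetividade de feixes. No termo central: a composição dos dois morfismos é nula porque $d\pi_z(Z) = 0$ (o vetor radial gera o núcleo de $d\pi_z$). Reciprocamente, se uma seção local de $\mathcal{O}_{\pr^n}^{\oplus n+1}\otimes\mathcal{O}(1)$ é levada a zero, então fibra a fibra ela pertence ao núcleo de $d\pi_z$, que é a reta gerada por $Z$; como $Z$ não se anula, tal seção é um múltiplo holomorfo de $Z$, isto é, está na imagem do primeiro morfismo. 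Uma contagem de postos ($1 + n = n+1$) confirma a consistência. O obstáculo principal — mais conceitual do que técnico — é ser cuidadoso ao identificar $d\pi_z$ restrita a $(\R^{2(n+1)}\otimes\C)^{1,0} \simeq \C^{n+1}$ e ao verificar que o fator de torção $\mathcal{O}(1)$ é de fato o correto (e não $\mathcal{O}(-1)$): isto segue de \eqref{eq:hom-dpi}, que mostra que $d\pi_z$ se comporta como uma seção de $\Hom(\C^{n+1}, T^{1,0}\pr^n)$ com peso $-1$ em $z$, o que força tensorizar o domínio por $\mathcal{O}(1)$ para cancelar esse peso. Todo o resto é rotina de cociclos.
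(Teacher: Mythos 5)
Sua demonstração está correta e segue essencialmente o mesmo caminho do texto: o morfismo central é definido via $d\pi_z$, a homogeneidade \eqref{eq:hom-dpi} justifica o fator $\mathcal{O}(1)$, e o núcleo é identificado com a reta gerada pelo vetor radial, isto é, com $\mathcal{O}(-1)\otimes\mathcal{O}(1)\simeq\mathcal{O}_{\pr^n}$. A única diferença é de apresentação — você trabalha com trivializações e cociclos explícitos enquanto o texto define $P_x$ invariantemente em $\C^{n+1}\otimes\Hom(x,\C)$ — e a invocação do Lema \ref{lemma:stalk-hom} é supérflua, pois o que de fato se usa (e você verifica) é a injetividade/sobrejetividade fibra a fibra com posto constante.
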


\begin{proof}
O primeiro passo é definir a sequência de Euler fibra a fibra. Como a fibra de $\mathcal{O}(1) = \mathcal{O}(-1)^*$ sobre um ponto $x \in \pr^n$ é $x^*= \text{Hom}(x,\C)$ temos que a fibra de  $\mathcal{O}_{\pr^n}^{\oplus n+1} \otimes \mathcal{O}(1)$ sobre $x$ é $\C^{n+1}\otimes \text{Hom}(x,\C)$. Definimos então
\begin{equation*}
\begin{split}
P_x: \C^{n+1}\otimes \text{Hom}(x,\C) &\longrightarrow (T_x \pr^n)^{1,0} \\
P_x(v \otimes \sigma) &= d \pi_z(\sigma(z) \cdot v)
\end{split}
\end{equation*}
onde $z$ é qualquer elemento de $x$. Da linearidade de $\sigma$ e da equação (\ref{eq:hom-dpi}) vemos que $P_x$ independe da escolha de $z$.

Da discussão que precede o enunciado vemos que $P_x$ é sobrejetora e $P_x(v \otimes \sigma)=\sigma(z) d \pi_z(v) = 0 $ se e somente se $v \in x$ ou $\sigma = 0$, isto é, se e somente se $v \otimes \sigma \in x \otimes x^*$. Em outras palavras o núcleo de $P_x$ é $x \otimes x^*$, que é a fibra sobre $x$ do fibrado $\mathcal{O}(-1) \otimes \mathcal{O}(1) \simeq \mathcal{O}_{\pr^n}$.

Fazendo essa construção fibra a fibra obtemos um morfismo sobrejetor  $P: \C^{n+1} \otimes \mathcal{O}(1) \to T^{1,0} \pr^n$ com $\ker P = \mathcal{O}(-1) \otimes \mathcal{O}(1) \simeq \mathcal{O}_{\pr^n}$. É claro da definção que $P_x$ depende holomorficamente de $x$. Sendo assim $P$ é um morfismo de fibrados holomorfos e como  $\mathcal{T}_{\pr^n}$ é simplesmente $T^{1,0} \pr^n$ com a estrutura holomorfa obtemos a sequência (\ref{eq:euler-seq}).
\end{proof}

\begin{corollary} \label{cor:canonical-bundle-Pn} \index{fibrado!canônico!de $\pr^n$}
O fibrado canônico de $\pr^n$ é isomorfo a $\mathcal{O}(-(n+1))$.
\end{corollary}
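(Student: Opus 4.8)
The plan is to extract the canonical bundle $K_{\pr^n} = \bigwedge^n \Omega_{\pr^n} = \det \Omega_{\pr^n}$ from the Euler sequence of Proposition \ref{prop:euler-seq}. Dualizing that sequence gives a short exact sequence of holomorphic vector bundles
\begin{equation*}
0 \longrightarrow \Omega_{\pr^n} \longrightarrow \mathcal{O}_{\pr^n}^{\oplus n+1} \otimes \mathcal{O}(-1) \longrightarrow \mathcal{O}_{\pr^n} \longrightarrow 0,
\end{equation*}
using that $\mathcal{T}_{\pr^n}^* = \Omega_{\pr^n}$, that $(\mathcal{O}_{\pr^n}^{\oplus n+1}\otimes\mathcal{O}(1))^* \simeq \mathcal{O}_{\pr^n}^{\oplus n+1}\otimes\mathcal{O}(-1)$, and that $\mathcal{O}_{\pr^n}^* \simeq \mathcal{O}_{\pr^n}$ (Proposition \ref{prop:dual-lb}).

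First I would establish the general principle that for a short exact sequence $0 \to A \to B \to C \to 0$ of holomorphic vector bundles one has a natural isomorphism $\det B \simeq \det A \otimes \det C$. This is most easily proved via cociclos: choosing a common trivializing cobertura where the sequence splits smoothly, the transition matrices of $B$ can be put in block upper-triangular form with blocks the transition matrices of $A$ and $C$, so $\det$ of the $B$-cociclo equals the product of the determinants of the $A$- and $C$-cociclos, which by the constructions in Section \ref{sec:newbundles} are exactly the cociclos of $\det A \otimes \det C$; then apply Proposition \ref{prop:iso-holvb}. Alternatively one can just cite that $c_1$ is additive on short exact sequences together with Remark \ref{rmk:irregularity}, but since we want an honest isomorphism of bundles the cociclo argument is cleaner.

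Applying this to the dualized Euler sequence with $A = \Omega_{\pr^n}$, $C = \mathcal{O}_{\pr^n}$, and $B = \mathcal{O}_{\pr^n}^{\oplus n+1}\otimes \mathcal{O}(-1)$ yields
\begin{equation*}
\det\!\big(\mathcal{O}_{\pr^n}^{\oplus n+1}\otimes \mathcal{O}(-1)\big) \simeq \det \Omega_{\pr^n} \otimes \det \mathcal{O}_{\pr^n} = K_{\pr^n} \otimes \mathcal{O}_{\pr^n} \simeq K_{\pr^n}.
\end{equation*}
It then remains to compute the left-hand side. Here I would use that for a line bundle $L$ one has $\det(E \otimes L) \simeq (\det E)\otimes L^{\otimes r}$ when $E$ has rank $r$ — again a one-line cociclo check, since $(\varphi_{ij}\cdot\lambda_{ij})$ has determinant $\det(\varphi_{ij})\cdot\lambda_{ij}^r$ — applied to $E = \mathcal{O}_{\pr^n}^{\oplus n+1}$ (rank $n+1$, trivial determinant) and $L = \mathcal{O}(-1)$. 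This gives $\det(\mathcal{O}_{\pr^n}^{\oplus n+1}\otimes\mathcal{O}(-1)) \simeq \mathcal{O}(-1)^{\otimes(n+1)} = \mathcal{O}(-(n+1))$ by Remark \ref{rmk:cocyclesOk}. Combining, $K_{\pr^n} \simeq \mathcal{O}(-(n+1))$.

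The only mild obstacle is justifying that the dual of an exact sequence of vector bundles is exact and that dualizing commutes with the identifications of the bundles appearing (i.e. $(B^{\oplus k}\otimes L)^* \simeq B^{*\oplus k}\otimes L^*$); both are routine from the fiberwise linear algebra plus the cociclo descriptions in Section \ref{sec:newbundles}, since exactness of a sequence of bundles can be checked fiberwise and dualizing is an exact functor on finite-dimensional vector spaces. Everything else is bookkeeping with determinants of cociclos, so no genuinely hard step is expected.
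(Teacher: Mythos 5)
Your proof is correct and follows essentially the same route as the paper: both rest on the Euler sequence together with the fact that $\det$ is multiplicative on short exact sequences of holomorphic bundles, proved by putting the cociclos in block-triangular form. The only cosmetic difference is that you dualize the Euler sequence before taking determinants, whereas the paper computes $\det(\mathcal T_{\pr^n}) \simeq \mathcal O(n+1)$ first and dualizes at the very end.
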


\begin{proof}
O resultado segue aplicando o seguinte lema à sequência de Euler.
\begin{lemma} \label{lemma:det-bundle}
Se $0 \to E \to F \to G \to 0$ é exata então $\det (F) \simeq \det(E) \otimes \det(G)$.
\end{lemma}
\begin{proof}
Como $E \to F$ é injetora, podemos ver $E$ como subfibrado holomorfo de $F$. Podemos então escolher trivializações de $F$ de modo que seus cociclos sejam da forma $\varphi_{ij} = \begin{pmatrix} \phi_{ij} & * \\ 0 & \psi_{ij} \end{pmatrix}$, onde $\{\phi_{ij}\}$ são os cociclos de $E$ e $\{\psi_{ij}\}$ são os cociclos de $F/E \simeq G$. Os cociclos de $\det(F)$ são portanto $\det (\varphi_{ij}) = \det (\phi_{ij})\cdot \det(\psi_{ij})$, que são os cociclos de $\det (E) \otimes \det(G)$.
\end{proof}
Note que, em particular, se $F = E\oplus G$ então temos que $\det (F) \simeq \det(E) \otimes \det(G)$ e por indução vemos que, se $F = L_1 \oplus \cdots \oplus L_k$ é uma soma direta de fibrados de linha temos que $\det(F) \simeq L_1 \otimes \cdots \otimes L_k$.

Usando este fato e aplicando o lema à sequência (\ref{eq:euler-seq}) obtemos
\begin{equation*}
K_{\pr^n}^* = \det(\mathcal{T}_{\pr^n}) \simeq \det(\mathcal{T}_{\pr^n}) \otimes \det(\mathcal{O}_{\pr^n}) \simeq \det(\mathcal{O}_{\pr^n}^{\oplus n+1} \otimes \mathcal{O}(1)) \simeq \det(\mathcal{O}(1)^{\oplus n+1}) \simeq \mathcal{O}(n+1),
\end{equation*}
e portanto, dualizando, obtemos $K_{\pr^n} \simeq \mathcal{O}(-(n+1))$.
\end{proof}

\subsubsection{O fibrado canônico de variedades projetivas}
Podemos usar o corolário \ref{cor:canonical-bundle-Pn} para determinar também o fibrado canônico de algumas variedades projetivas. Para fazê-lo precisamos entender como o fibrado canônico de uma subvariedade está relacionado com o fibrado canônico da variedade ambiente.\\

Seja $X$ uma variedade complexa $Y \subset X$ uma subvariedade complexa de $X$. Existe então uma inclusão natural $\mathcal{T}_Y \subset \mathcal{T}_X$ entre os fibrados tangentes holomorfos de $X$ e $Y$. O \textbf{fibrado normal} de $Y$ em $X$, denotado por $\mathcal{N}_{Y|X}$, é um fibrado sobre $Y$ definido pela sequência exata
\begin{equation} \label{eq:normal-bundle}
0 \longrightarrow \mathcal{T}_Y \longrightarrow (\mathcal{T}_X)\big|_Y \longrightarrow \mathcal{N}_{Y|X} \longrightarrow 0
\end{equation}
ou equivalentemente é o fibrado quociente $\mathcal{N}_{Y|X} = (\mathcal{T}_X)|_Y / \mathcal{T}_Y$.

Note que $\mathcal{N}_{Y|X}$ é um fibrado holomorfo cujo posto é igual a codimensão de $Y$ em $X$. Em particular, se $Y$ é uma hipersuperfície, então $\mathcal{N}_{Y|X}$ é um fibrado de linha.

\begin{remark}
Na presença de uma métrica hermitiana em $X$ esta definição de fibrado normal coincide com a usual. De fato, se equiparmos $Y$ com a métrica induzida e denotarmos por $\mathcal{T}_Y^{\perp} \subset \mathcal{T}_X$ o complemento ortogonal de $\mathcal{T}_Y$ em $\mathcal{T}_X$ temos que $\mathcal{T}_X|_Y = \mathcal{T}_Y \oplus \mathcal{T}_Y^
{\perp}$ e portanto $\mathcal{N}_{Y|X} = (\mathcal{T}_X)|_Y / \mathcal{T}_Y \simeq \mathcal{T}_Y^{\perp}$.
\end{remark}

A partir do fibrado normal e do fibrado canônico do ambiente, podemos calcular o fibrado canônico da subvariedade.

\begin{proposition} \label{prop:adjunction} \textsl{(Fórmula de Adjunção)} \index{fórmula de adjunção}
Se $Y \subset X$ é uma subvariedade complexa então existe um isomorfismo $K_Y \simeq (K_X)|_Y \otimes \det \mathcal{N}_{Y|X}$.
\end{proposition}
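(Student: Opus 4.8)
The plan is to deduce the adjunction formula directly from the two short exact sequences already available, namely the defining sequence of the normal bundle
\begin{equation*}
0 \longrightarrow \mathcal{T}_Y \longrightarrow (\mathcal{T}_X)\big|_Y \longrightarrow \mathcal{N}_{Y|X} \longrightarrow 0,
\end{equation*}
together with Lemma \ref{lemma:det-bundle}, which says that for an exact sequence $0 \to E \to F \to G \to 0$ one has $\det F \simeq \det E \otimes \det G$. First I would apply that lemma to the sequence above, obtaining $\det\big((\mathcal{T}_X)|_Y\big) \simeq \det \mathcal{T}_Y \otimes \det \mathcal{N}_{Y|X}$. The only subtlety here is that the construction of the $\det$ bundle in the excerpt was given for fibrados over a fixed base, and all three bundles in this sequence live over $Y$, so there is nothing to worry about; I would just note that restriction commutes with $\det$, i.e. $\det\big((\mathcal{T}_X)|_Y\big) = \big(\det \mathcal{T}_X\big)\big|_Y$, which is immediate from the cocycle description of both operations.

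Next I would rewrite the isomorphism in terms of canonical bundles. Recall from Example \ref{ex:hol-tangent-bundle} that $K_X = \bigwedge^n \Omega_X = \det(\Omega_X) = \det(\mathcal{T}_X^*) = \big(\det \mathcal{T}_X\big)^*$, and similarly $K_Y = \big(\det \mathcal{T}_Y\big)^*$; here I use Proposition \ref{prop:dual-lb} to identify the dual of a line bundle with its inverse in $\Pic$, so that $\det \mathcal{T}_X \simeq K_X^*$ and $\det \mathcal{T}_Y \simeq K_Y^*$. Substituting into the relation from the previous paragraph gives
\begin{equation*}
\big(K_X\big|_Y\big)^* \simeq K_Y^* \otimes \det \mathcal{N}_{Y|X}.
\end{equation*}
Dualizing this isomorphism of line bundles — again using Proposition \ref{prop:dual-lb} and the fact that dualization is a group homomorphism on $\Pic(Y)$, so that $(A\otimes B)^* \simeq A^* \otimes B^*$ and $(A^*)^* \simeq A$ — yields
\begin{equation*}
K_X\big|_Y \simeq K_Y \otimes \big(\det \mathcal{N}_{Y|X}\big)^*,
\end{equation*}
and tensoring both sides with $\det \mathcal{N}_{Y|X}$ gives the desired $K_Y \simeq (K_X)|_Y \otimes \det \mathcal{N}_{Y|X}$.

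I do not expect any real obstacle: the argument is essentially bookkeeping with line bundles in $\Pic(Y)$. The one point that deserves a sentence of care is the compatibility of $\det$ with restriction to a subvariety and the identification $\big(\det \mathcal{T}_X\big)|_Y \simeq \det\big((\mathcal{T}_X)|_Y\big)$; both follow at once by writing everything in terms of cocycles $\varphi_{ij}$ and observing that restricting a cocycle to $Y$ and then taking determinants is the same as taking determinants and then restricting. Once that is noted, the chain of isomorphisms above is forced, and the proof is complete.
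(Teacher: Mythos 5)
Your argument is correct and is essentially the proof the paper gives: apply Lemma \ref{lemma:det-bundle} to the normal bundle sequence (\ref{eq:normal-bundle}) and then dualize, using that $\det$ commutes with restriction and that dualization inverts line bundles in $\Pic(Y)$. The extra remark about checking $\det\big((\mathcal{T}_X)|_Y\big) \simeq \big(\det \mathcal{T}_X\big)\big|_Y$ via cocycles is a harmless (and welcome) bit of added care.
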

\begin{proof}
Aplicando o Lema \ref{lemma:det-bundle} à sequência (\ref{eq:normal-bundle}) temos que $\det \mathcal (T_X)|_Y \simeq \det \mathcal T_Y \otimes \det \mathcal N_{Y|X}$ e portanto $\det \mathcal T_Y \simeq \det \mathcal (T_X)|_Y \otimes \det \mathcal N_{Y|X}^*$ de onde obtemos $K_Y = \det T_Y^* \simeq \det \mathcal (T_X^*)|_Y \otimes \det \mathcal N_{Y|X} = (K_X)|_Y \otimes \det \mathcal{N}_{Y|X}$.
\end{proof}

Uma hipersuperfície algébrica pode ser vista como um caso particular de uma hipersuperfície complexa $Y \subset X$ dada pelos zeros de uma seção holomorfa de um fibrado de linha sobre $X$. No capítulo \ref{ch:div-lb} vimos que se $L \to X$ é um fibrado de linha holomorfo trivializado sobre uma cobertura $\{U_i\}$, uma seção holomorfa global $s \in H^0(X,L)$ corresponde uma coleção de funções holomorfas $s_i:U_i \to \C$ satisfazendo $s_i = \psi_{ij}s_j$, onde $\psi_{ij} \in \mathcal O_X^*(U_i \cap U_j)$ são os cociclos de $L$. Em particular vemos que se $x \in U_i \cap U_j$ então $s_i(x) = 0$ se o só se $s_j(x) = 0$. Com isso podemos definir o \textbf{conjunto de zeros} de $s$, pela condição
\begin{equation*}
Z(s) \cap U_i = Z(s_i).
\end{equation*}
O conjunto $Z(s)$ é um subconjunto analítico de $X$. Em geral $Z(s)$ não será uma subvariedade complexa pois, assim como no caso algébrico, podem existir pontos singulares.

No caso particular em que $X=\pr^n$ e $L= \mathcal O(d)$ uma seção $f \in H^0(X,\mathcal O (d))$ corresponde a um polinômio homogêneo de grau $d$ (veja o exemplo \ref{ex:sec-Ok}) e a definição $Z(f)$ coincide com a definção do conjunto de zeros de um polinômio homogêneo em $\pr^n$ (eq \ref{eq:zero-set-pol}).

\begin{lemma}
Se $Y = Z(s) \subset X$ uma subvariedade dada pelos zeros de uma seção holomorfa $s \in H^0(X,L)$ então o fibrado normal $N_{Y|X}$ é isomorfo a $L|_Y$. Consequentemente o fibrado canônico de $Y$ é dado por $K_Y \simeq (K_X \otimes L)|_Y$.
\end{lemma}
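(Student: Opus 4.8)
\emph{Plano da demonstração.} A estratégia é construir um morfismo de fibrados de linha $\alpha : \mathcal{N}_{Y|X} \to L|_Y$ a partir da diferencial de $s$ ao longo de $Y$ e verificar, fibra a fibra, que ele é um isomorfismo. Primeiro escolheria uma cobertura $\{U_i\}$ de $X$ que trivializa $L$, com cociclos $\psi_{ij} \in \mathcal{O}_X^*(U_i \cap U_j)$, de modo que $s$ corresponde a funções holomorfas $s_i : U_i \to \C$ satisfazendo $s_i = \psi_{ij}\, s_j$ em $U_i\cap U_j$ e com $Y \cap U_i = Z(s_i)$. Para cada $i$, a diferencial $ds_i$ fornece um morfismo $(\mathcal{T}_X)|_{U_i} \to \underline{\C}$; como $s_i \equiv 0$ em $Y\cap U_i$, sua restrição a $Y$ se anula em $\mathcal{T}_Y$, e portanto desce ao quociente, definindo um morfismo $\alpha_i : \mathcal{N}_{Y|X}|_{Y\cap U_i} \to \underline{\C}$.

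Em seguida mostraria que os $\alpha_i$ colam. Derivando $s_i = \psi_{ij} s_j$ obtém-se $ds_i = s_j\, d\psi_{ij} + \psi_{ij}\, ds_j$; restringindo a $Y$, onde $s_j = 0$, resulta $ds_i|_Y = \psi_{ij}\, ds_j|_Y$, e logo $\alpha_i = \psi_{ij}\, \alpha_j$ em $Y\cap U_i \cap U_j$. Essa é exatamente a condição de compatibilidade (cf. (\ref{eq:hol-sec})) para que os $\alpha_i$ se colem a um único morfismo de fibrados $\alpha : \mathcal{N}_{Y|X} \to L|_Y$.

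Restaria verificar que $\alpha$ é um isomorfismo. Como $L$ tem posto $1$, $Y = Z(s)$ é uma hipersuperfície e portanto $\mathcal{N}_{Y|X}$ (definido pela sequência (\ref{eq:normal-bundle})) é um fibrado de linha; assim basta checar que $\alpha_x \neq 0$ para cada $x \in Y$. Na fibra sobre $x \in Y\cap U_i$, o morfismo $\alpha_x$ é, a menos da trivialização de $L$ em $U_i$, a aplicação $v + (\mathcal{T}_Y)_x \mapsto ds_i(x)(v)$; como $ds_i(x)$ já anula o hiperplano $(\mathcal{T}_Y)_x \subset (\mathcal{T}_X)_x$, tem-se $\alpha_x \neq 0$ se e só se $ds_i(x) \neq 0$. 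Ora, a hipótese de que $Y$ é uma subvariedade cortada por $s$ --- entendida, como na proposição anterior, no sentido de que $0$ é valor regular das equações locais $s_i$ (isto é, de que $s$ se anula transversalmente ao longo de $Y$) --- garante precisamente que $ds_i$ não se anula em $Y$. Concluo que $\alpha$ é um isomorfismo, donde $\mathcal{N}_{Y|X} \simeq L|_Y$.

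A última afirmação é então imediata: sendo $\mathcal{N}_{Y|X}$ um fibrado de linha, $\det \mathcal{N}_{Y|X} = \mathcal{N}_{Y|X} \simeq L|_Y$, e a Fórmula de Adjunção (Proposição \ref{prop:adjunction}) dá $K_Y \simeq (K_X)|_Y \otimes \det \mathcal{N}_{Y|X} \simeq (K_X)|_Y \otimes L|_Y = (K_X \otimes L)|_Y$. O passo mais delicado de todos é a injetividade fibra a fibra de $\alpha$: tudo depende de que a suavidade (transversal) de $Y = Z(s)$ force $ds$ a não se anular ao longo de $Y$; o restante é manipulação direta de cociclos.
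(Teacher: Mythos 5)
Sua demonstração está correta e segue essencialmente o mesmo caminho do texto: mesmas trivializações locais, mesma conta de colagem $ds_i|_Y = \psi_{ij}\, ds_j|_Y$, e o mesmo uso da suavidade de $Y$ para garantir que $ds_i$ não se anula ao longo de $Y$. A única diferença é de apresentação — você desce o morfismo ao quociente $\mathcal{N}_{Y|X}$ e verifica que é isomorfismo fibra a fibra, enquanto o texto constrói $\Psi:\mathcal{T}_X|_Y \to L|_Y$ e identifica $\ker\Psi = \mathcal{T}_Y$ para reconhecer a sequência (\ref{eq:normal-bundle}); são formulações equivalentes do mesmo argumento.
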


\begin{proof}
Sejam $\psi_i:L|_{U_i} \to U_i \times \C$ trivializações de $L$ e seja $s_i:U_i \to \C$ a representação local de $s$ (isto é, $\psi_i s(x) = (x,s_i(x))$ para $x \in U_i$). Considere o morfismo de fibrados holomorfos dado por
\begin{equation*}
\Psi: \mathcal T_X\big|_Y \to L\big|_Y,~~ \mathcal T_X \big|_{U_i \cap Y} \ni v \mapsto \psi_i^{-1}ds_i(v) \in L\big|_{U_i \cap Y}.
\end{equation*}

Note que $\Psi$ está bem definida. De fato, em $U_i \cap U_j$ temos que $s_i = \psi_{ij} s_j$ e portanto $ds_i = d \psi_{ij}s_j + \psi_{ij}s_j$. Restringindo a $Y$ e usando o fato de que $Y = Z(s)$ temos que $ds_i = \psi_{ij} ds_j$ e portanto $\psi_i^{-1}ds_i = \psi_j^{-1}ds_j$ em $U_i \cap U_j \cap Y$.

Como $s$ é constante em $Y$ segue que $\Psi|_{\mathcal T_Y} = 0$, isto é, $\mathcal T_Y \subset \ker \Psi$. Do fato de  $Y$ ser suave, a diferencial $ds_i$ nunca se anula e portanto $\Psi$ é não nula, de onde vemos, por razões dimensionais, que $\ker \Psi= \mathcal T_Y$.

Obtemos assim uma sequência exata de fibrados holomorfos
\begin{equation*}
0 \longrightarrow \mathcal{T}_Y \longrightarrow (\mathcal{T}_X)\big|_Y \longrightarrow L\big|_Y \longrightarrow 0,
\end{equation*}
que é exatamente a sequência que define $\mathcal{N}_{Y|X}$, mostrando que  $\mathcal{N}_{Y|X} \simeq L|_Y$.

O isomorfismo $K_Y \simeq (K_X \otimes L)|_Y$ segue da fórmula de adjunção (Proposição \ref{prop:adjunction}).
\end{proof}

\begin{corollary}
Se $X \subset \pr^n$ é uma hipersuperfície suave de grau $d$ então seu fibrado canônico é  $K_X \simeq \mathcal O(d-n-1)|_X$. Mais geralmente, se $X = Z(f_1)\cap \cdots \cap Z(f_k)$ é uma intersecção completa com $\deg(f_k)=d_k$ então $K_X \simeq \mathcal O(d_1 + \ldots + d_k -n-1)|_X$.
\end{corollary}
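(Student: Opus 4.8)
The plan is to deduce both statements from the preceding lemma, which identifies the canonical bundle of a zero locus, together with the computation $K_{\pr^n} \simeq \mathcal{O}(-(n+1))$ from Corollary \ref{cor:canonical-bundle-Pn}. First I would treat the hypersurface case. If $X \subset \pr^n$ is a smooth hypersurface of degree $d$, then by definition $X = Z(f)$ for a homogeneous polynomial $f$ of degree $d$, which by Example \ref{ex:sec-Ok} corresponds to a global holomorphic section $f \in H^0(\pr^n, \mathcal{O}(d))$. Applying the lemma with $L = \mathcal{O}(d)$ gives $K_X \simeq (K_{\pr^n} \otimes \mathcal{O}(d))|_X$. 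Substituting $K_{\pr^n} \simeq \mathcal{O}(-(n+1)) = \mathcal{O}(-n-1)$ and using $\mathcal{O}(a) \otimes \mathcal{O}(b) \simeq \mathcal{O}(a+b)$ (Remark \ref{rmk:cocyclesOk}) yields $K_X \simeq \mathcal{O}(d - n - 1)|_X$, which is the first assertion.

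For the complete intersection case $X = Z(f_1) \cap \cdots \cap Z(f_k)$ with $\deg f_i = d_i$, I would argue by induction on $k$, intersecting one hypersurface at a time. Set $X_0 = \pr^n$ and $X_j = Z(f_1) \cap \cdots \cap Z(f_j)$, so $X_k = X$. The definition of a complete intersection (that $0$ is a regular value of $(f_1,\ldots,f_k)$ on $\C^{n+1}\setminus\{0\}$) guarantees that at each stage $X_{j}$ is a smooth hypersurface inside $X_{j-1}$, cut out by the restriction of the section corresponding to $f_j$, which lies in $H^0(X_{j-1}, \mathcal{O}(d_j)|_{X_{j-1}})$. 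Applying the lemma at each step gives
\begin{equation*}
K_{X_j} \simeq \left(K_{X_{j-1}} \otimes \mathcal{O}(d_j)\right)\big|_{X_j}.
\end{equation*}
Iterating this relation from $j = k$ down to $j = 1$ and using $K_{X_0} = K_{\pr^n} \simeq \mathcal{O}(-n-1)$, together with the fact that restriction commutes with tensor product, produces $K_X \simeq \mathcal{O}\big(-n-1 + d_1 + \cdots + d_k\big)\big|_X = \mathcal{O}(d_1 + \cdots + d_k - n - 1)|_X$.

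The only genuinely delicate point is justifying that the inductive setup is legitimate: one must check that the regularity of $(f_1,\ldots,f_k)$ as a map on $\C^{n+1}\setminus\{0\}$ implies, at each stage, that $f_{j+1}$ restricts to a section of $\mathcal{O}(d_{j+1})|_{X_j}$ whose zero locus in $X_j$ is smooth of the expected codimension, so that the preceding lemma applies. This follows from the implicit function theorem exactly as in the proposition on complete intersections stated earlier in this section: the Jacobian condition ensures the successive zero loci are transverse, hence each $X_{j+1} = Z(f_{j+1}) \cap X_j$ is a smooth hypersurface in $X_j$, and the section $f_{j+1}|_{X_j}$ has nonvanishing differential along $X_{j+1}$, which is precisely the hypothesis needed to invoke the lemma. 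Once this bookkeeping is in place the rest is a routine telescoping of line bundle isomorphisms, so I would keep that part brief.
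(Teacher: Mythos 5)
Your proposal is correct and follows essentially the same route as the paper: the hypersurface case via the preceding lemma together with $K_{\pr^n} \simeq \mathcal O(-(n+1))$, and the complete intersection case by induction on the number of defining equations, peeling off one hypersurface at a time. The point you flag about the intermediate loci $X_j$ being smooth so that the lemma applies at each stage is exactly the (implicit) step in the paper's induction, so no new idea is needed.
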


\begin{proof}
Uma hipersuperfície de grau $d$ é dada pelos zeros de uma seção $f \in H^0(\pr^n, \mathcal O (d))$. Assim, pela proposição anterior, temos que $K_X \simeq (K_{\pr^n} \otimes \mathcal O (d))|_X$. Do corolário \ref{cor:canonical-bundle-Pn} temos que $K_{\pr^n} \simeq \mathcal O (-n-1)$ de onde vemos que $K_X \simeq (\mathcal O (-n-1) \otimes \mathcal O (d))|_X \simeq \mathcal O(d-n-1)|_X$.

Para demonstrar o resultado para as intersecções completas vamos usar indução sobre número $k$ de equações. Se $k=1$ temos que $X$ é uma hipersuperfície e o resultado foi provado acima.

Suponha portanto que $X = Z(f_1) \cap \ldots \cap Z(f_k) \subset \pr^n$. Como $X$ é uma intersecção completa, temos que $X$ é uma subvariedade de $\widetilde X = Z(f_1) \cap \ldots \cap Z(f_{k-1})$ e além disso $X$ é dada pelos zeros da seção $f_k \in H^0(X,\mathcal O (d_k)|_{\widetilde X})$.

Da hipótese de indução temos que $K_{\widetilde X} \simeq \mathcal O(d_1 + \ldots + d_{k-1} -n-1)|_{\widetilde X}$ e da proposição acima concluimos que
\begin{equation*}
K_X \simeq (K_{\widetilde X} \otimes \mathcal O (d_k))|_X \simeq (\mathcal O(d_1 + \ldots + d_{k-1} -n-1) \otimes \mathcal O (d_k))|_X \simeq \mathcal O(d_1 + \ldots + d_k -n-1)|_X.
\end{equation*}
\end{proof}

\begin{example}
Seja $X \subset \pr^3$ uma quártica suave, isto é, uma variedade projetiva dada pelos zeros de um polinômio de grau $4$ em $\pr^3$. Usando a proposição acima vemos que o fibrado canônico de $X$ é trivial, pois $K_X \simeq \mathcal O (4-3-1)|_X \simeq \mathcal O_X$. A variedade $X$ é simplesmente conexa\footnote{Esse fato segue, por exemplo, de uma versão para grupos de homotopia do Teorema de Hiperplanos de Lefschetz. Uma versão desse teorema para os grupos de homologia será apresentada no capítulo \ref{ch:stein-lesfchetz}} e é um exemplo particular das chamadas superfícies K3, que são superfícies complexas simplesmente conexas com $H^1(X,\mathcal O_X) = 0$ e fibrado canônico trivial.

Mais geralmente, qualquer hipersupefície suave de grau $n+1$ em $\pr^n$ terá fibrado canônico trivial.
\end{example}
\subsubsection{Propriedades geométricas de métrica de Fubini-Study} \index{métrica!de Fubini-Study}

Vamos terminar essa seção discutindo algumas propriedades geométricas da métrica de Fubini-Study.

Nesta seção denotaremos por $(\cdot,\cdot)$ o produto hermitiano canônico em $\C^{n+1}$ e por $\langle \cdot,\cdot \rangle = \re (\cdot,\cdot)$ a métrica riemanniana padrão.

Dado um ponto $z \in \C^{n+1}\setminus \{0\}$, seja $x \subset \C^{n+1}$ a reta gerada por $z$ e considere a decomposição ortogonal $T_z(\C^{n+1} \setminus \{0\}) = x \oplus x^\perp$ com respeito ao produto $(\cdot,\cdot)$. Denote por $X^\perp$ a projeção de $X \in T_z(\C^{n+1} \setminus \{0\})$ em $x^\perp$.

Defina uma forma bilinear $\widetilde h$ em $T(\C^{n+1}\setminus \{0\})$
\begin{equation} \label{eq:h-tilde}
\widetilde h_z (X,Y) = \frac{1}{||z||^2}(X^\perp,Y^\perp),~X,Y \in T_z(\C^{n+1} \setminus \{0\}).
\end{equation}
Note que $\widetilde h$ é positiva mas não é definida, pois se $X \in x$ então $X^\perp = 0$ e portanto $\widetilde h(X,X) = 0$.

\begin{lemma}
A $(1,1)$-forma $\eta = \widetilde h (J\cdot,\cdot)$ associada a $\widetilde h$ é dada por
\begin{equation} \label{eq:eta-fubini}
\eta = \frac{\smo}{2} \del \delbar \log (||z||^2).
\end{equation}
\end{lemma}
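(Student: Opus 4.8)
O plano � fazer um c�lculo direto em $\C^{n+1}\setminus\{0\}$, comparando a express�o de $\eta = \widetilde h(J\cdot,\cdot)$ obtida a partir da defini��o (\ref{eq:h-tilde}) com a express�o expl�cita de $\frac{\smo}{2}\del\delbar\log(\|z\|^2)$. Primeiro eu calcularia o lado direito: escrevendo $\|z\|^2 = \sum_k z_k \bar z_k$, tem-se $\delbar \log \|z\|^2 = \frac{\sum_k z_k\, d\bar z_k}{\|z\|^2}$ e ent�o, aplicando $\del$,
\begin{equation*}
\del\delbar\log\|z\|^2 = \frac{\sum_k dz_k \wedge d\bar z_k}{\|z\|^2} - \frac{\big(\sum_k \bar z_k\, dz_k\big)\wedge\big(\sum_l z_l\, d\bar z_l\big)}{\|z\|^4}.
\end{equation*}
Multiplicando por $\frac{\smo}{2}$ obt�m-se uma $(1,1)$-forma real; resta mostrar que ela coincide com $\eta$.

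Em seguida eu calcularia o lado esquerdo. Identificando $T_z(\C^{n+1}\setminus\{0\})^{1,0}\simeq \C^{n+1}$ via a base $\{\del/\del z_k\}$, a estrutura complexa $J$ age como multiplica��o por $\smo$, e pela Observa��o \ref{rmk:positive-form} basta avaliar $-\smo\, \eta(u,\bar u)$ para $u\in T^{1,0}$, ou seja, comparar $-\smo\,\eta(u,\bar v) = \widetilde h(u,v)$ com o valor da forma do lado direito nos mesmos vetores. Escrevendo a decomposi��o ortogonal $u = u^\parallel + u^\perp$ com $u^\parallel = \frac{(u,z)}{\|z\|^2}z$, tem-se
\begin{equation*}
(u^\perp,v^\perp) = (u,v) - \frac{(u,z)\overline{(v,z)}}{\|z\|^2},
\end{equation*}
e portanto $\widetilde h_z(u,v) = \frac{1}{\|z\|^2}(u,v) - \frac{(u,z)\overline{(v,z)}}{\|z\|^4}$. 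Por outro lado, a $(1,1)$-forma $\frac{\smo}{2}\del\delbar\log\|z\|^2$, avaliada em $u\otimes\bar v$ com a conven��o usada no texto (cf.\ a passagem de (\ref{eq:metric-coord}) para (\ref{eq:fund-form-coord})), d� exatamente $-\smo$ vezes a forma hermitiana de coeficientes $h_{ij} = \frac{\delta_{ij}}{\|z\|^2} - \frac{\bar z_i z_j}{\|z\|^4}$, e reconhecemos $\sum_{ij} h_{ij}\, u_i\bar v_j = \frac{(u,v)}{\|z\|^2} - \frac{(u,z)\overline{(v,z)}}{\|z\|^4}$, que � precisamente $\widetilde h_z(u,v)$. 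Isso encerra a verifica��o.

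O �nico ponto que exige cuidado � a consist�ncia das conven��es: o fator $\frac{\smo}{2}$ e a rela��o entre uma $(1,1)$-forma real e a matriz hermitiana associada devem ser aplicados exatamente como em (\ref{eq:metric-coord})--(\ref{eq:fund-form-coord}), e a identifica��o $TX\simeq T^{1,0}X$ com o fator $\frac12$ de (\ref{eq:iso-tang-bundle}) precisa ser rastreada para garantir que $\widetilde h$, definida com o produto hermitiano $(\cdot,\cdot)$ em $\C^{n+1}$, corresponde de fato � forma bilinear avaliada nos vetores $\del/\del z_k$. N�o h� obst�culo conceitual — apenas �lgebra linear e a regra de Leibniz para $\del$ e $\delbar$ — mas � onde um sinal ou um fator $2$ pode se perder. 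Observo ainda que, como $\del\delbar = -\delbar\del$, a forma $\eta$ � automaticamente fechada e real, o que � coerente com (\ref{eq:eta-fubini}); isso tamb�m serve de verifica��o parcial do resultado.
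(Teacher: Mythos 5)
Sua proposta est� correta, mas segue um caminho genuinamente diferente do texto. O texto observa que ambos os membros de (\ref{eq:eta-fubini}) s�o $U(n+1)$-invariantes e que $U(n+1)$ age transitivamente nas esferas de $\C^{n+1}\setminus\{0\}$, reduzindo assim a verifica��o a um �nico ponto $p=(r,0,\ldots,0)$, onde $\widetilde h_p$ � simplesmente $\tfrac{1}{r^2}$ vezes a m�trica padr�o de $\C^n\subset\C^{n+1}$ e a compara��o � imediata. Voc�, em vez disso, faz a compara��o em um ponto arbitr�rio: a f�rmula $(u^\perp,v^\perp)=(u,v)-\frac{(u,z)\overline{(v,z)}}{||z||^2}$ est� correta (os tr�s termos cruzados da expans�o se combinam em um s�), e a matriz de coeficientes $h_{ij}=\frac{\delta_{ij}}{||z||^2}-\frac{\bar z_i z_j}{||z||^4}$ coincide exatamente com a que se l� na sua express�o de $\del\delbar\log(||z||^2)$, de modo que a identidade segue da correspond�ncia (\ref{eq:metric-coord})--(\ref{eq:fund-form-coord}) entre matriz hermitiana e $(1,1)$-forma. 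O que cada abordagem compra: a do texto quase n�o exige �lgebra (e o argumento de invari�ncia � reaproveitado logo depois para a positividade de $\omega_{FS}$); a sua fornece de gra�a a express�o expl�cita global dos coeficientes de $\widetilde h$, que � mais informativa. Dois cuidados menores na sua reda��o: a frase sobre ``$-\smo$ vezes a forma hermitiana'' est� imprecisa --- a rela��o correta, como em (\ref{eq:fund-form-coord}), � que a $(1,1)$-forma associada tem coeficientes $\frac{\smo}{2}h_{ij}$, e a avalia��o em $(u,\bar v)$ d� $\frac{\smo}{2}\sum h_{ij}u_i\bar v_j$, consistente com a Observa��o \ref{rmk:positive-form}; e conv�m notar que a dedu��o de (\ref{eq:fund-form-coord}) usa apenas a $J$-invari�ncia e a simetria hermitiana $h_{ij}=\overline{h_{ji}}$, n�o a positividade, portanto aplica-se sem problema � forma degenerada $\widetilde h$.
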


\begin{proof}
Note que ambos os membros da equação (\ref{eq:eta-fubini}) são $U(n+1)$-invariantes e como $U(n+1)$ age transitivamente nas esferas de $\C^{n+1}\setminus \{0\}$ basta provarmos que a equação é valida em um ponto da forma $p=(r,0,\ldots,0)$ com $r>0$.

Dados dois vetores $X = (a_0,\ldots,a_n) $ e $Y=(b_0,\ldots,b_n)$, as suas projeções em $p^\perp$ são dadas por $X^\perp = (0,a_1,\ldots,a_n)$ e $Y^\perp = (0,b_1,\ldots,b_n)$ e portanto
\begin{equation*}
\widetilde h_p(X,Y) = \frac{1}{r^2} ((0,a_1,\ldots,a_n),(0,b_1,\ldots,b_n)) = \frac{1}{r^2} \sum_{j=1}^n a_j \bar b_j,
\end{equation*}
ou seja, $\widetilde h_p$ é $1 \slash r^2$ vezes a métrica hermitiana padrão $h_0$ em $\C^n \subset \C^{n+1}$.

Sendo assim, sua forma fundamental é dada por
\begin{equation*}
\eta_p = \frac{1}{r^2}h_0(J\cdot,\cdot) = \frac{1}{r^2} \omega_0 = \frac{1}{r^2} \frac{\smo}{2}\sum_{j=1}^n dz_j \wedge d \bar z_j. 
\end{equation*}

Por outro lado, como $\del (1\slash||z||^2) = - (\sum_i \bar z_i dz_i)\slash |z|^4$, temos que 
\begin{equation*}
\del \delbar \log (||z||^2) = \del \bigg( \frac{1}{||z||^2} \sum_{j=0}^m z_j d\bar z_j\bigg) = - \frac{1}{|z|^4} \bigg( \sum_{i=0}^m \bar z_i d z_i\bigg) \wedge \bigg(\sum_{j=0}^m z_j d\bar z_j\bigg) + \frac{1}{||z||^2} \sum_{j=0}^n dz_j \wedge d \bar z_j
\end{equation*}
e portanto, calculando em $p$ obtemos
\begin{equation*}
\frac{\smo}{2}\del \delbar \log (||z||^2)_p =\frac{\smo}{2} \bigg(- \frac{1}{r^4} (r dz_0) \wedge (r d \bar z_0) + \frac{1}{r^2}\sum_{j=0}^n dz_j \wedge d \bar z_j \bigg) = \frac{\smo}{2} \frac{1}{r^2} \sum_{j=1}^n dz_j \wedge d \bar z_j = \eta_p.
\end{equation*}
\end{proof}

Note que, conforme a equação (\ref{eq:pullback-fubini-study}), o membro direito da equação (\ref{eq:eta-fubini}) é, a menos de um múltiplo,a expressão do pullback da forma de Fubini-Study pela projeção $\pi: \C^{n+1}\setminus \{0\}$, mais precisamente temos que $\pi^* \omega_{FS} = \frac{1}{\pi} \eta$. Como $\pi$ é holomorfa (e portanto preserva $J$) concluimos do lema acima que $\pi^*h_{FS} = \frac{1}{\pi}\widetilde h$.

Em outras palavras, a métrica de Fubini-Study em $\pr^n$ é, a menos de um múltiplo, obtida pela projeção por $\pi$ da forma bilinear (\ref{eq:h-tilde}).\\

A métrica de Fubini-Study admite ainda uma outra decrição, em termos da submersão $S^{2n+1} \to \pr^n$ dada pela restrição da projeção $\pi:C^{n+1} \setminus \{0\} \to \pr^n$ à esfera unitária de $\C^{n+1}$.

Dadas duas variedades riemannianas $M$ e $N$ e uma submersão $\rho:M \to N$, o espaço tangente de $M$ em um ponto $p \in \rho^{-1}(q)$ se decompõe como $T_pM = H_p \oplus V_p$, onde $V_p = T_p (\rho^{-1}(q)) = \ker (d \rho)_p$ e $H_p=(V_p)^\perp$. Os espaços $V_p$ e $H_p$ são chamados, respectivamente de espaço vertical e espaço horizontal em $p$.

Note que, como $(d\rho)_p|_{V_p} = 0$, a restrição da diferencial $(d\rho)_p: T_pM = H_p \oplus V_p \to T_q M$ ao espaço horizontal define um isomorfismo $(d\rho)_p:H_p \simeq T_q N$. Dizemos que $\rho$ é uma \emph{submersão riemanniana} se esse isomorfismo é uma isometria para todo $p \in M$.\\

Considere agora $\rho:S^{2n+1} \to \pr^n$ a restrição da projeção $\pi:\C^{n+1} \setminus \{0\} \to \pr^n$ à esfera unitária $S^{2n+1}~\subset~\C^{n+1}$.

Seja $z \in S^{2n+1}$ e denote por $x$ a reta complexa gerada por $z$. O espaço tangente à esfera no ponto $z$ é dado por $T_zS^{2n+1} = \{w \in \C^{n+1}:\langle z,w\rangle=0\}$, de onde vemos que $x^ \perp \subset T_zS^{2n+1}$. Além disso o subespaço real gerado por $\smo z$ está contido no espaço tangente, pois $\langle z,\smo z\rangle = \re (z,\smo z) = \re(-\smo (z,z)) = 0$. Concluimos então que o epsaço tangente da esfera se decompõe como uma soma direta de subespaços reais
\begin{equation*}
T_z S^{2n+1} = x^{\perp} \oplus \smo z.
\end{equation*}

Na discussão que precede a proposição \ref{prop:euler-seq}, vimos que o núcleo de $d \pi_z$ é a própria reta $x$, de onde concluímos que $\ker (d\rho)_z = x \cap T_z S^{2n+1} = \smo x$. Sendo assim, o espaço espaço vertical da submersão $\rho:S^{2n+1} \to \pr^n$ é dado por  $V_z = \smo x$ e o espaço horizontal é $H_z = x^\perp$, onde $S^{2n+1}$ está equipada com a métrica riemanniana canônica.

Vimos acima que o pullback da métrica $\pi \cdot h_{FS}$ em $\pr^n$ pela projeção canônica é a forma bilinear $\widetilde h$ dada pela equação (\ref{eq:h-tilde}), de modo que $\rho^* (\pi \cdot h_{FS}) = \widetilde h|_{S^{2n+1}}$. Como no espaço horizontal a projeção ortogonal $X \mapsto X^{\perp}$ é a identidade, vemos que $\widetilde h|_{H_z} = \widetilde h|_{x^\perp} = (\cdot,\cdot)$. Em outras palavras, restrita ao espaço horizontal, $\rho$ leva o produto $(\cdot,\cdot)$ em  $H_z$ no produto $\pi \cdot h_{FS}$ em $T_z \pr^n$. Tomando a parte real e lembrando que  $g_{FS} =\re h_{FS}$ concluímos que $d \rho_z$ é uma isometria entre $(H_z,\langle \cdot, \cdot \rangle)$ e $(T_z \pr^n, \pi \cdot g_{FS})$.

Provamos então o seguinte resultado
\begin{proposition} \label{prop:FS-submersion}
A aplicação $\rho:S^{2n+1} \to \pr^n$ obtida pela restrição da projeção canônica é uma submersão riemanniana, onde consideramos $S^{2n+1}$ com a métrica induzida de $\C^{n+1}$ e $\pr^n$ com a métrica $\pi \cdot g_{FS}$.
\end{proposition}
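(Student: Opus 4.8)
O plano é verificar diretamente, a partir das definições, que $d\rho_z$ restrita ao espaço horizontal $H_z = x^\perp$ é uma isometria entre $(H_z, \langle \cdot,\cdot\rangle)$ e $(T_{\rho(z)}\pr^n, \pi \cdot g_{FS})$, para todo $z \in S^{2n+1}$. Primeiro recordo os fatos já estabelecidos na discussão precedente: o núcleo de $d\pi_z$ é a reta complexa $x$ gerada por $z$ (isto apareceu antes da Proposição \ref{prop:euler-seq}), de modo que $\ker(d\rho)_z = x \cap T_zS^{2n+1} = \smo x$; consequentemente $V_z = \smo x$ e $H_z = x^\perp$ (o complemento ortogonal de $V_z$ dentro de $T_zS^{2n+1} = x^\perp \oplus \smo z$). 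Também já sabemos, do lema que precede esta proposição, que $\pi^* h_{FS} = \tfrac{1}{\pi}\widetilde h$, onde $\widetilde h$ é a forma bilinear dada por (\ref{eq:h-tilde}); restringindo à esfera, $\rho^*(\pi \cdot h_{FS}) = \widetilde h|_{S^{2n+1}}$.

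O passo central é observar que, no espaço horizontal, a projeção ortogonal $X \mapsto X^\perp$ sobre $x^\perp$ é simplesmente a identidade, pois $H_z = x^\perp$ já está contido em $x^\perp$. Logo, para $X, Y \in H_z$ temos
\begin{equation*}
\widetilde h_z(X,Y) = \frac{1}{\|z\|^2}(X^\perp,Y^\perp) = \frac{1}{\|z\|^2}(X,Y) = (X,Y),
\end{equation*}
usando que $\|z\| = 1$ em $S^{2n+1}$. Assim $\rho^*(\pi \cdot h_{FS})$ coincide em $H_z$ com o produto hermitiano canônico $(\cdot,\cdot)$ de $\C^{n+1}$ restrito a $H_z$. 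Tomando partes reais e lembrando que $g_{FS} = \re h_{FS}$ e que a métrica riemanniana canônica em $S^{2n+1}$ é $\langle\cdot,\cdot\rangle = \re(\cdot,\cdot)$, concluo que $d\rho_z$ leva $\langle\cdot,\cdot\rangle|_{H_z}$ na métrica $\pi \cdot g_{FS}$ em $T_{\rho(z)}\pr^n$, ou seja, a restrição $d\rho_z|_{H_z}: H_z \to T_{\rho(z)}\pr^n$ é uma isometria. Como isto vale para todo $z \in S^{2n+1}$, a submersão $\rho$ é riemanniana por definição.

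Não antevejo obstáculos sérios: a demonstração é essencialmente uma cadeia de identificações já feitas nos lemas anteriores. O único ponto que merece cuidado é a bookkeeping da normalização $\rho^*(\pi \cdot g_{FS}) = \widetilde h|_{S^{2n+1}}$ (o fator $\pi$ vindo da definição de $\omega_{FS}$ com $\tfrac{\smo}{2\pi}\del\delbar\log\|z\|^2$), e a observação, talvez digna de uma frase, de que $d\rho_z$ é de fato sobrejetora sobre $T_{\rho(z)}\pr^n$ — o que segue de $\rho$ ser submersão, já que $\pi$ o é e $T_zS^{2n+1} \supset H_z$ aplica isomorficamente (por razões dimensionais, $\dim_\R H_z = 2n = \dim_\R \pr^n$, e $d\rho_z|_{H_z}$ é injetora pelo cálculo acima). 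Fechado isto, a conclusão é imediata.
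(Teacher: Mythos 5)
Sua demonstração é correta e segue essencialmente o mesmo caminho do texto: usa-se $\rho^*(\pi \cdot h_{FS}) = \widetilde h|_{S^{2n+1}}$ do lema anterior, observa-se que em $H_z = x^\perp$ a projeção ortogonal é a identidade e que $\|z\|=1$, e toma-se a parte real. A observação adicional sobre a sobrejetividade de $d\rho_z|_{H_z}$ por contagem de dimensões é um complemento bem-vindo, mas não altera a estrutura do argumento.
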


\section{A decomposição de Hodge} \label{sec:hodge-decomp}

Um resultado central em Geometria Riemanniana é o chamado Teorema de Hodge, que prova que em uma variedade Riemanniana compacta e orientada existe uma bijeção entre o espaço das $k$-formas harmônicas e o $k$-ésimo espaço de cohomologia de de Rham. Em outras palavras, cada classe de cohomologia $c \in H^k(X,\R)$ pode ser representada por uma única $k$-forma harmônica $\alpha \in c$.

Agora, se além disso, $X$ admite uma estrutura de Kähler, a igualdade dos laplacianos permite decompor ainda uma classe $c \in H^k(X,\C)$ em componentes de tipo $(p,q)$ com $p+q=k$, resultando na chamada decomposição de Hodge. Esta seção se ocupará da demonstração desses fatos.

\subsubsection{O caso riemanniano}

Comecemos considerando uma variedade Riemanniana $(X,g)$ compacta e orientada de dimensão $m$. A métrica $g$ induz uma métrica nos fibrados $\bigwedge^k TX^*$ da seguinte maneira: se $\{e_1,\cdots,e_m\}$ é uma referencial ortonormal local e $\{e^1,\ldots, e^m \}$ é o referencial dual, declaramos $\{e^{i_1} \wedge \cdots \wedge e^{i_k}:i_1<\cdots<i_k\}$ como referencial ortononormal de $\bigwedge^k TX^*$.\\
\indent Denotando essa métrica por $\langle \cdot,\cdot \rangle$ obtemos uma métrica $L^2$ no espaço das $k$-formas em $X$, fazendo
\begin{equation*}
(\alpha,\beta) = \int_X \langle \alpha,\beta \rangle \text{vol} = \int_X \alpha \wedge *\beta ~,\;\;\; \alpha,\beta \in \mathcal{A}^k(X).
\end{equation*}

\indent Considere o operador
\begin{equation*}
d^* = (-1)^{m(k+1)+1} * \circ d \circ *: \mathcal{A}^k(X) \longrightarrow \mathcal{A}^{k-1}(X),
\end{equation*}
e o laplaciano
\begin{equation*}
\Delta = dd^*+d^* d: \mathcal{A}^k(X) \longrightarrow \mathcal{A}^k(X)
\end{equation*}

Uma $k$-forma $\alpha$ é dita harmônica se $\Delta \alpha = 0$.
\begin{lemma}
O operador $d^*$ é o adjunto formal de $d:\mathcal{A}^k(X) \longrightarrow \mathcal{A}^{k+1}(X)$ com respeito à métrica $L^2$, isto é, $(\alpha,d^* \beta)=(d \alpha,\beta)$ para toda $\alpha,\beta \in \mathcal{A}^k(M)$. Consequentemente, o laplaciano $\Delta$ é auto-adjunto.
\end{lemma}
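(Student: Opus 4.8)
The plan is to prove that $d^*$ is the formal adjoint of $d$ by the standard integration-by-parts argument using Stokes' theorem, and then deduce self-adjointness of $\Delta$ formally. First I would fix $\alpha \in \mathcal{A}^{k-1}(X)$ and $\beta \in \mathcal{A}^k(X)$ and compute $d(\alpha \wedge *\beta)$ using the Leibniz rule: $d(\alpha \wedge *\beta) = d\alpha \wedge *\beta + (-1)^{k-1}\alpha \wedge d(*\beta)$. The goal is to massage the second term into $-\alpha \wedge *(d^*\beta)$ up to the correct sign, so that the whole expression becomes $(d\alpha \wedge *\beta) - (\alpha \wedge *d^*\beta)$, i.e.\ a total derivative whose integral vanishes by Stokes (since $X$ is compact and oriented, $\int_X d\eta = 0$).

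The key computational step is bookkeeping the signs coming from the Hodge star. I would use the two standard facts: on $\mathcal{A}^j(X)$ in dimension $m$ one has $** = (-1)^{j(m-j)}$, and $\alpha \wedge *\beta = \beta \wedge *\alpha = \langle \alpha, \beta\rangle \,\mathrm{vol}$ for forms of equal degree. Writing $\gamma = *\beta \in \mathcal{A}^{m-k}(X)$, the term $(-1)^{k-1}\alpha \wedge d\gamma$ must be rewritten as $(-1)^{?}\alpha \wedge *(*^{-1} d * \beta)$; plugging in the definition $d^* = (-1)^{m(k+1)+1} * d *$ and carefully tracking $** = (-1)^{k(m-k)}$ on the appropriate degree, the exponents collapse and one gets exactly $-\alpha \wedge *(d^*\beta)$. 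Thus $\int_X d\alpha \wedge *\beta = \int_X \alpha \wedge *(d^*\beta)$, which is precisely $(d\alpha,\beta) = (\alpha, d^*\beta)$. The main (and only real) obstacle here is getting the sign exponent $(-1)^{m(k+1)+1}$ to come out consistent; it is a routine but slightly delicate parity computation, and the factor is designed precisely so that the boundary-free term has the right sign in every degree and every dimension.

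Finally, self-adjointness of $\Delta = dd^* + d^*d$ is purely formal: for $\alpha,\beta \in \mathcal{A}^k(X)$,
\begin{equation*}
(\Delta\alpha,\beta) = (dd^*\alpha,\beta) + (d^*d\alpha,\beta) = (d^*\alpha,d^*\beta) + (d\alpha,d\beta),
\end{equation*}
using the adjunction twice, and the right-hand side is visibly symmetric in $\alpha$ and $\beta$, hence equals $(\alpha,\Delta\beta)$. One should also remark that everything extends $\C$-linearly (or Hermitian-sesquilinearly) to $\mathcal{A}^k_\C(X)$ with the induced Hermitian $L^2$-product, and that the compactness of $X$ is used exactly once, to kill the Stokes boundary term. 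I expect no conceptual difficulty beyond the sign computation; the argument is otherwise a direct application of Stokes' theorem.
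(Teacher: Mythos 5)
Your proposal is correct and follows essentially the same route as the paper: Leibniz plus Stokes on $d(\alpha\wedge *\beta)$, with the sign bookkeeping done via $**=(-1)^{j(m-j)}$ and the defining exponent of $d^*$ (and the parity computation you defer does indeed close up: with $\alpha\in\mathcal A^{k-1}$, $\beta\in\mathcal A^k$ one gets $*(d^*\beta)=(-1)^k d*\beta$, so the Leibniz term is exactly $-\alpha\wedge *(d^*\beta)$). Your formal derivation of the self-adjointness of $\Delta$ from the adjunction is also correct and slightly more explicit than what the paper records.
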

\begin{proof}
Sejam $\alpha \in \mathcal{A}^k(X)$ e $\beta \in \mathcal{A}^{k+1}(X)$. Da regra de Leibniz, temos que
\begin{equation*}
d(\alpha \wedge *\beta) = d\alpha \wedge * \beta + (-1)^k\alpha \wedge d*\beta
\end{equation*}

Integrando ambos os membros da equação acima e usando o teorema de Stokes vemos que
\begin{equation*}
(d\alpha,\beta) = \int_X d\alpha \wedge *\beta = (-1)^{k+1}\int_X \alpha \wedge d* \beta
\end{equation*}
Usando que $*^2 = (-1)^{l(m-l)} \text{id}$ em $\mathcal{A}^l(X)$ temos que $**d*\beta = (-1)^{(m-k)k} d*\beta$, pois $d* \beta \in \mathcal{A}^{m-k}(X)$ e, da definição de $d^*$, temos $d^* \beta = (-1)^{m(k+2)+1} *d* \beta = (-1)^{mk+1} *d* \beta$. Juntando essas observações obtemos
\begin{equation*}
\begin{split}
(\alpha,d^* \beta) &= \int_X \alpha \wedge *d^* \beta = (-1)^{mk+1} \int_X \alpha \wedge **d*\beta = (-1)^{mk+1} (-1)^{(m-k)k} \int_X \alpha \wedge d* \beta \\
 &= (-1)^{1-k^2} \int_X \alpha \wedge d* \beta = (-1)^{1+k} \int_X \alpha \wedge d* \beta = (d\alpha,\beta).
\end{split}
\end{equation*}
\end{proof}

\begin{corollary}
Uma forma $\alpha$ é harmônica se e somente se $d \alpha = 0$ e $d^* \alpha=0$.
\end{corollary}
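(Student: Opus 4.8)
The statement to prove is the corollary: a form $\alpha$ on a compact oriented Riemannian manifold is harmonic if and only if $d\alpha = 0$ and $d^*\alpha = 0$. The plan is to derive this directly from the fact that $\Delta = dd^* + d^*d$ is formally self-adjoint with respect to the $L^2$-inner product $(\cdot,\cdot)$, which was just established in the preceding lemma.

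The implication $(\Leftarrow)$ is immediate: if $d\alpha = 0$ and $d^*\alpha = 0$ then $\Delta\alpha = d(d^*\alpha) + d^*(d\alpha) = d(0) + d^*(0) = 0$, so $\alpha$ is harmonic. No compactness or orientability is needed here. For the converse $(\Rightarrow)$, the key move is to pair $\Delta\alpha$ with $\alpha$ itself and use the adjointness relations. Explicitly, I would write
\begin{equation*}
(\Delta\alpha,\alpha) = (dd^*\alpha,\alpha) + (d^*d\alpha,\alpha) = (d^*\alpha,d^*\alpha) + (d\alpha,d\alpha) = \|d^*\alpha\|^2 + \|d\alpha\|^2,
\end{equation*}
where the middle equality uses that $d^*$ is the formal adjoint of $d$ (applied once in each term, noting $d^*$ is the adjoint of $d$ going one way and $d$ is the adjoint of $d^*$ going the other). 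If $\Delta\alpha = 0$, the left-hand side vanishes, so $\|d\alpha\|^2 + \|d^*\alpha\|^2 = 0$; since the $L^2$-norm is positive definite, both $d\alpha$ and $d^*\alpha$ must vanish.

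The one genuine subtlety — the main obstacle, such as it is — is justifying that the $L^2$-inner product is actually positive definite, i.e. that $(\beta,\beta) = \int_X \langle\beta,\beta\rangle\,\mathrm{vol} = 0$ forces $\beta = 0$. This is where compactness and orientability (hence the existence of the volume form and the finiteness of the integral) enter: the integrand $\langle\beta,\beta\rangle$ is a continuous nonnegative function on $X$, so its integral against the volume form vanishes only if the integrand is identically zero, and then fiberwise positive-definiteness of the induced metric on $\bigwedge^k T^*X$ gives $\beta = 0$. I would state this explicitly as the reason both summands vanish. Everything else is a two-line formal manipulation, so I would keep the write-up short, emphasizing the pairing identity $(\Delta\alpha,\alpha) = \|d\alpha\|^2 + \|d^*\alpha\|^2$ as the crux.
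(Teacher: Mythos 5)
Your proof is correct and follows essentially the same route as the paper: the easy direction is immediate, and the converse comes from the pairing identity $(\Delta\alpha,\alpha) = \|d\alpha\|^2 + \|d^*\alpha\|^2$ via the adjointness of $d$ and $d^*$. Your extra remark on why the $L^2$-inner product is positive definite is a detail the paper leaves implicit, but it does not change the argument.
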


\begin{proof}
É claro que se $d \alpha = 0$ e $d^* \alpha=0$ então $\Delta \alpha = (dd^* + d^*d)\alpha= 0$.\\
\indent Suponha agora que $\Delta \alpha = 0$. Então temos que
\begin{equation*}
0 = (\Delta \alpha,\alpha) = (dd^* \alpha,\alpha) + (d^*d \alpha,\alpha) = (d^*\alpha,d^*\alpha) + (d\alpha,d \alpha) = ||d^* \alpha||^2 + ||d\alpha||^2,
\end{equation*}
de onde segue que $d \alpha = 0$ e $d^* \alpha = 0$.
\end{proof}

\indent Denotando por $\mathcal{H}^k(X,g) = \{\alpha \in \mathcal{A}^k(X): \Delta \alpha = 0\}$ o espaço das $k$-formas harmônicas (ou simplesmente por $\mathcal{H}^k(X)$ quando a métrica estiver subbentendida), obtemos então uma aplicação $\mathcal{H}^k(X,g) \to H^k(X,\R)$, que associa a cada $\alpha$ harmônica sua classe de cohomologia.

É fácil ver que esta aplicação é injetora. De fato se $\alpha \in \mathcal{H}^k(X)$ e $[\alpha] = 0$ então existe $\beta \in \mathcal{A}^{k-1}(X)$ tal que $\alpha = d \beta$. Como $\alpha$ é harmônica temos que $0 = d^* \alpha = d^* d \beta$ e portanto $(\alpha,\alpha) = (d \beta,d \beta) = (\beta, d^* d \beta) = 0$ e assim $\alpha = 0$.

Além disso, há uma maneira de distinguir a representante harmônica em uma dada classe de cohomologia (se esta existir).

\begin{lemma} \label{lemma:harm-min}
Se $\alpha$ é harmônica então $||\beta|| \geq ||\alpha||$ para toda $\beta \in [\alpha]$. Reciprocamente, se $\alpha$ tem norma mínima em sua classe $[\alpha]$ então $\alpha$ é harmônica. Em outras palavras as formas harmônicas são as que tem a menor norma em sua classe de cohomologia.
\end{lemma}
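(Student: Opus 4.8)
The statement to prove is Lemma \ref{lemma:harm-min}: $\alpha$ harmonic implies $\|\beta\| \geq \|\alpha\|$ for all $\beta$ in the same de Rham class, and conversely a norm-minimizer in its class is harmonic.

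Here is my plan. First I would write any $\beta \in [\alpha]$ as $\beta = \alpha + d\gamma$ for some $\gamma \in \mathcal{A}^{k-1}(X)$. Then I compute $\|\beta\|^2 = (\alpha + d\gamma, \alpha + d\gamma) = \|\alpha\|^2 + 2(\alpha, d\gamma) + \|d\gamma\|^2$, where I use that everything is real-valued so the inner product is symmetric. The cross term is $(\alpha, d\gamma)$, and by the adjointness lemma proved just above (the one asserting $(d^*\mu,\nu)=(\mu,d\nu)$, suitably applied) this equals $(d^*\alpha, \gamma)$. Since $\alpha$ is harmonic, the corollary preceding this lemma gives $d^*\alpha = 0$, so the cross term vanishes. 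Therefore $\|\beta\|^2 = \|\alpha\|^2 + \|d\gamma\|^2 \geq \|\alpha\|^2$, which is the first claim. I would also note equality holds iff $d\gamma = 0$, i.e. $\beta = \alpha$, giving uniqueness of the harmonic representative (already essentially observed before the lemma, but worth recording).

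For the converse, suppose $\alpha$ has minimal norm in $[\alpha]$. The idea is a standard first-variation argument: for any $\gamma \in \mathcal{A}^{k-1}(X)$ and $t \in \R$, the form $\alpha + t\, d\gamma$ lies in $[\alpha]$, so the function $f(t) = \|\alpha + t\,d\gamma\|^2 = \|\alpha\|^2 + 2t(\alpha, d\gamma) + t^2\|d\gamma\|^2$ has a minimum at $t=0$. Hence $f'(0) = 2(\alpha, d\gamma) = 0$ for all $\gamma$. Again by adjointness $(\alpha, d\gamma) = (d^*\alpha, \gamma)$, so $(d^*\alpha, \gamma) = 0$ for all $\gamma \in \mathcal{A}^{k-1}(X)$. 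Taking $\gamma = d^*\alpha$ (which is itself a $(k-1)$-form) forces $\|d^*\alpha\|^2 = 0$, so $d^*\alpha = 0$. Since $\alpha$ is automatically closed ($d\alpha = 0$, being a representative of a cohomology class — here I should be careful: a representative of a de Rham class is by definition closed), the corollary above gives $\Delta\alpha = dd^*\alpha + d^*d\alpha = 0$, so $\alpha$ is harmonic.

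I do not anticipate a serious obstacle here; this is a routine Hilbert-space/first-variation argument and all the analytic inputs (the adjointness of $d^*$ and the characterization of harmonic forms as simultaneously closed and coclosed) are already established immediately before the lemma. The only point requiring a little care is to make sure that "$\beta \in [\alpha]$'' is interpreted as $\beta$ closed and cohomologous to $\alpha$, so that $\beta - \alpha = d\gamma$ is exact, and that one is allowed to plug $\gamma = d^*\alpha$ into the variational identity — both are immediate. No completeness or elliptic-regularity machinery is needed for this particular lemma; that only enters later when one proves existence of a harmonic representative.
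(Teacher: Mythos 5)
Your proof is correct and follows essentially the same route as the paper: expand $\|\alpha+d\gamma\|^2$, use the adjointness $(\alpha,d\gamma)=(d^*\alpha,\gamma)$ to kill the cross term, and for the converse apply the first-variation argument to conclude $d^*\alpha=0$ (your explicit choice $\gamma=d^*\alpha$ just makes precise the paper's "como $\eta$ � arbitr�ria"). The added remarks on the equality case and on $d\alpha=0$ being automatic for a class representative are consistent with the text.
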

\begin{proof}
Se $\alpha$ é harmônica então $d^* \alpha = 0$ e portanto, para $\beta = \alpha + d \eta \in [\alpha]$ temos
\begin{equation*}
\begin{split}
||\beta||^2 & = (\alpha + d \eta,\alpha + d \eta) = ||\alpha||^2 + ||d \eta||^2 + 2(\alpha,d \eta) \\
 & = || \alpha||^2 + ||d \eta||^2 + 2(d^*\alpha,\eta) = ||\alpha||^2 + ||d \eta||^2 \\
 & \geq ||\alpha||^2.
\end{split}
\end{equation*}

Reciprocamente, se $\alpha$ tem norma mínima em sua classe temos, para $\eta \in \mathcal{A}^{k-1}(X)$,
\begin{equation*}
0 = \frac{d}{dt} ||\alpha + t d\eta||^2 \bigg|_{t=0} = \frac{d}{dt} [ ||\alpha||^2 + t^2||d \eta||^2 + 2t(\alpha,d \eta) ] \bigg|_{t=0} = 2(\alpha,d \eta) = 2(d^* \alpha, \eta),
\end{equation*}
 e como $\eta$ é arbitrária segue que $d^* \alpha = 0$ e portanto $\alpha$ é harmônica.
\end{proof}

O lema acima nos dá uma possível maneira de encontrar a única forma harmônica $\alpha^h$ em uma dada classe $[\alpha] \in H^k(X,\R)$, basta encontrarmos um elemento de norma mínima no subespaço afim $[\alpha] = \alpha + d \mathcal{A}^{k-1}(X) \subset \mathcal{A}^k(X)$.

Infelizmente, o argumento acima não é tão simples, uma vez que o espaço $\mathcal{A}^k(X)$ não é um espaço de Hilbert com relação ao produto $(\cdot,\cdot)$ definido acima, e portanto não podemos garantir a existência de um elemento de norma mínima em $\alpha + d \mathcal{A}^{k-1}(X)$. No entanto podemos remediar essa situação considerando o completamento $\mathcal{L}^k(X)$ de $\mathcal{A}^k(X)$ com relação a norma $L^2$ e tomando um elemento de norma mínima no fecho $\overline{\alpha + d \mathcal{A}^{k-1}(X)}$. Note que ainda temos um problema, pois mesmo que $\alpha \in \mathcal{A}^k(X)$ só podemos garantir que este elemento está no fecho de $\alpha + d \mathcal{A}^{k-1}(X)$.

Essa é a grande dificuldade do teorema de Hodge, a saber, mostrar que o elemento obtido desta maneira é suave, isto é, pertence a $\alpha + d \mathcal{A}^{k-1}(X)$. O ingrediente principal é a chamada regularidade elíptica, que de maneira simplificada diz que se $\alpha \in \mathcal{L}^k(X)$ e $\Delta \alpha = 0$ (em um sentido fraco) então na verdade $\alpha \in \mathcal{A}^k(X)$.

O teorema completo é enunciado abaixo. Uma demonstração pode ser encontrada em \cite{demailly}

\begin{theorem} \textbf{Decomposição de Hodge para formas diferenciais} \label{thm:hodge-decomp}\\
Seja $X$ uma variedade riemanniana compacta e orientada. Então existe uma decomposição ortogonal
\begin{equation} \label{eq:hodge-dec-riemann}
\mathcal{A}^k(X) = \mathcal{H}^k(X) \oplus d \mathcal{A}^{k-1}(X) \oplus d^*\mathcal{A}^{k+1}(X)
\end{equation}
\end{theorem}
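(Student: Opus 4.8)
O enunciado é o Teorema de Decomposição de Hodge para formas diferenciais numa variedade riemanniana compacta e orientada $X$ de dimensão $m$. A estratégia é mostrar primeiro as inclusões e ortogonalidades \emph{fáceis}, isto é, que os três subespaços $\mathcal{H}^k(X)$, $d\mathcal{A}^{k-1}(X)$ e $d^*\mathcal{A}^{k+1}(X)$ são dois a dois ortogonais com respeito ao produto $L^2$, e reduzir a afirmação principal a um fato analítico sobre a equação $\Delta u = \alpha$. Começaria observando, usando o fato de que $d^*$ é o adjunto formal de $d$ (Lema já demonstrado no excerto), que se $\alpha$ é harmônica, $\beta = d\gamma$ e $\mu = d^*\eta$, então $(\alpha,\beta) = (d^*\alpha,\gamma) = 0$, $(\alpha,\mu) = (d\alpha,\eta) = 0$ e $(\beta,\mu) = (d\gamma, d^*\eta) = (d^2\gamma,\eta) = 0$; a última usa $d^2 = 0$. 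Isso estabelece que a soma dos três subespaços é direta e ortogonal; resta mostrar que eles \emph{geram} todo o $\mathcal{A}^k(X)$.

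O passo central e o principal obstáculo é a afirmação de que a equação $\Delta u = \alpha$ tem solução $u \in \mathcal{A}^k(X)$ sempre que $\alpha$ é ortogonal a $\mathcal{H}^k(X)$. Esse é exatamente o conteúdo analítico não elementar do teorema: o espaço $\mathcal{A}^k(X)$ com a norma $L^2$ não é completo, de modo que não se pode simplesmente minimizar a norma na classe afim $\alpha + d\mathcal{A}^{k-1}(X)$ e esperar que o minimizante seja suave. A maneira padrão de contornar isso — e a que eu seguiria — é invocar a teoria de operadores elípticos: $\Delta$ é um operador diferencial elíptico autoadjunto de segunda ordem sobre um fibrado vetorial sobre uma variedade compacta, logo (i) $\ker\Delta = \mathcal{H}^k(X)$ tem dimensão finita, (ii) vale a estimativa de Gårding e a regularidade elíptica (se $\Delta u = \alpha$ no sentido fraco e $\alpha$ é suave, então $u$ é suave), e (iii) $\Delta$ tem imagem fechada em $\mathcal{A}^k(X)$ igual a $(\mathcal{H}^k(X))^\perp$. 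Como o excerto diz explicitamente que "uma demonstração pode ser encontrada em \cite{demailly}" e apresenta o teorema como um resultado a citar, eu apresentaria (ii) e (iii) como o \emph{input} externo, construindo o operador de Green $G$ com $G|_{\mathcal{H}^k} = 0$ e $\Delta G = G \Delta = \mathrm{id} - H$, onde $H$ é a projeção ortogonal sobre $\mathcal{H}^k(X)$.

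Tendo esse maquinário, a conclusão é puramente formal. Dada $\alpha \in \mathcal{A}^k(X)$, escrevo
\begin{equation*}
\alpha = H\alpha + \Delta G\alpha = H\alpha + d(d^* G\alpha) + d^*(d\, G\alpha).
\end{equation*}
O primeiro termo está em $\mathcal{H}^k(X)$, o segundo em $d\mathcal{A}^{k-1}(X)$, o terceiro em $d^*\mathcal{A}^{k+1}(X)$; note que $G$ comuta com $d$ e com $d^*$ (pois comuta com $\Delta$), o que garante a suavidade de todas as peças. Junto com a ortogonalidade já verificada, isso produz exatamente a decomposição em soma direta ortogonal $\mathcal{A}^k(X) = \mathcal{H}^k(X) \oplus d\mathcal{A}^{k-1}(X) \oplus d^*\mathcal{A}^{k+1}(X)$. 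Finalmente, eu observaria como corolário imediato que a aplicação $\mathcal{H}^k(X) \to H^k_{dR}(X)$ é um isomorfismo: a injetividade já foi mostrada no excerto (via o argumento $(\alpha,\alpha) = (\beta, d^*d\beta) = 0$), e a sobrejetividade segue porque, dada uma $k$-forma fechada $\alpha$, sua decomposição de Hodge tem a peça $d^*(dG\alpha)$ nula — pois $0 = (d\alpha, dG\alpha) = (d^*dG\alpha, \ldots)$ força esse termo a desaparecer — de modo que $\alpha = H\alpha + d(d^*G\alpha)$ é cohomóloga à sua parte harmônica.
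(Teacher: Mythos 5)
Sua proposta está correta e segue essencialmente o mesmo caminho do texto: as ortogonalidades duas a duas vêm da adjunção formal entre $d$ e $d^*$, e todo o conteúdo analítico fica concentrado na teoria elíptica (regularidade e operador de Green), exatamente o ponto que o texto identifica como a dificuldade central e delega a \cite{demailly}. A dedução formal $\alpha = H\alpha + d(d^*G\alpha) + d^*(d G\alpha)$ a partir de $\Delta G = \mathrm{id} - H$ é a demonstração padrão da referência citada, de modo que não há lacuna no seu argumento (apenas o passo final do corolário, $d^*dG\alpha = 0$ para $\alpha$ fechada, está escrito de forma um pouco truncada: o argumento limpo é aplicar $d$ à decomposição e usar $\|d^*dG\alpha\|^2 = (dd^*dG\alpha, dG\alpha) = 0$).
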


\begin{remark}
A hipótese de orientabilidade no teorema acima pode ser descartada. Se $X$ é não orientável não possuimos uma forma volume, e portanto não podemos definir o operador de Hodge. Mesmo assim ainda é possível definir o operador $d^*$, da seguinte maneira: a métrica riemanniana induz o que se chama uma \textit{densidade} em $X$, e com ela obtemos uma medida $\mu$ associada (veja por exemplo \cite{lang}, cap.XI). Definindo $(\alpha,\beta) = \int_X \langle \alpha, \beta \rangle d\mu$ obtemos os espaços $\mathcal L^k(X)$. O operador $d^*$ neste caso é o adjunto de $d$. Definido o laplaciano pela fórmula usual podemos dizer quem são as formas harmônicas e nesse contexto a decomposição (\ref{eq:hodge-dec-riemann}) continua válida.
\end{remark}

\begin{corollary} \emph{(Teorema de Hodge)} \\
Se $X$ é uma variedade riemanniana compacta e orientada então existe um isomorfismo
\begin{equation*}
H^k(X,\R) \simeq \mathcal{H}^k(X)
\end{equation*}
\end{corollary}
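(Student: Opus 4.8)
The plan is to derive this corollary directly from the Hodge decomposition theorem (Theorem \ref{thm:hodge-decomp}), which I may assume. The strategy is to exhibit an explicit linear map $\mathcal{H}^k(X) \to H^k(X,\R)$ sending a harmonic form to its de Rham cohomology class, and then to show it is both injective and surjective using the orthogonal decomposition (\ref{eq:hodge-dec-riemann}).

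First I would define the map: if $\alpha \in \mathcal{H}^k(X)$ then by the corollary preceding the statement, a harmonic form satisfies $d\alpha = 0$, so $\alpha$ is closed and determines a class $[\alpha] \in H^k(X,\R)$. Linearity is immediate. Next, injectivity: this was already essentially verified in the text just before Lemma \ref{lemma:harm-min}; namely, if $\alpha \in \mathcal{H}^k(X)$ with $[\alpha] = 0$, write $\alpha = d\beta$, use $d^*\alpha = 0$ to get $(\alpha,\alpha) = (d\beta, d\beta) = (\beta, d^*d\beta) = (\beta, d^*\alpha) = 0$, whence $\alpha = 0$. I would reproduce this short argument.

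The remaining point is surjectivity, and this is where I would lean on Theorem \ref{thm:hodge-decomp}. Given a class $c \in H^k(X,\R)$, pick any closed representative $\omega \in c$. By the orthogonal decomposition $\mathcal{A}^k(X) = \mathcal{H}^k(X) \oplus d\mathcal{A}^{k-1}(X) \oplus d^*\mathcal{A}^{k+1}(X)$, write $\omega = \alpha + d\eta + d^*\zeta$ with $\alpha$ harmonic. I then need to check the last term vanishes: since $\omega$ and $\alpha$ are both closed, $d(d^*\zeta) = d\omega - d\alpha - d(d\eta) = 0$, so $(d^*\zeta, d^*\zeta) = (dd^*\zeta, \zeta) = 0$, giving $d^*\zeta = 0$. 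Hence $\omega = \alpha + d\eta$, so $[\omega] = [\alpha]$, meaning $c$ is in the image. Combining, the map $\mathcal{H}^k(X) \to H^k(X,\R)$ is an isomorphism.

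The main obstacle here is not in the corollary itself — once Theorem \ref{thm:hodge-decomp} is granted, everything is a routine diagram-chase with the $L^2$ inner product. The genuinely hard analytic content (elliptic regularity, existence of the minimizer in the $L^2$ completion) is precisely what is packaged into Theorem \ref{thm:hodge-decomp}, whose proof the text defers to \cite{demailly}. So I would simply be careful to invoke the orthogonality of the three summands at the right moments (to kill $d^*\zeta$ in the surjectivity step and to kill $\alpha$ in the injectivity step) and to note that harmonicity is equivalent to being simultaneously closed and coclosed, which the preceding corollary provides.
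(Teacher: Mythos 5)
Your proposal is correct and follows essentially the same route as the paper: both rest on the orthogonal decomposition of Theorem \ref{thm:hodge-decomp} together with the observation that a closed form lying in $d^*\mathcal{A}^{k+1}(X)$ must vanish (your surjectivity step is exactly the paper's proof that $\ker d \cap d^*\mathcal{A}^{k+1}(X) = \{0\}$, and your injectivity step is the orthogonality of $\mathcal{H}^k(X)$ and $d\mathcal{A}^{k-1}(X)$). The paper merely packages the two checks as the single quotient computation $H^k(X,\R) = \ker d / d\mathcal{A}^{k-1}(X) = (\mathcal{H}^k(X) \oplus d\mathcal{A}^{k-1}(X))/d\mathcal{A}^{k-1}(X) \simeq \mathcal{H}^k(X)$.
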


\begin{proof}
Primeiramente note que $\ker d \cap d^*\mathcal{A}^{k+1}(X) = \{0\}$ pois se $\alpha = d^* \beta$ e $0 = d \alpha = d d^* \beta$ então $0 = (d d^*\beta,\beta) = (d^* \beta,d^* \beta)$ e portanto $\alpha = d^* \beta = 0$.

Temos então, da decomposição de Hodge, que $\ker d = \mathcal{H}^k(X) \oplus d \mathcal{A}^{k-1}(X)$ de onde vemos que
\begin{equation*}
H^k(X,\R) = \frac{\ker d}{d \mathcal{A}^{k-1}(X)} = \frac{\mathcal{H}^k(X) \oplus d \mathcal{A}^{k-1}(X)}{d \mathcal{A}^{k-1}(X)} \simeq \mathcal{H}^k(X)
\end{equation*}
\end{proof}

\begin{corollary} \label{cor:poincare-duality} \emph{(Dualidade de Poincaré para a cohomologia com coeficientes reais)}\\ 
Seja $X$ uma variedade compacta e orientada. Então o pareamento
\begin{equation*}
H^k(X,\R) \times H^{m-k}(X,\R) \longrightarrow \R~,~~([\alpha],[\beta]) \longmapsto \int_X \alpha \wedge \beta
\end{equation*}
é não degenerado. Consequentemente existe um isomorfismo
\begin{equation*}
H^k(X,\R) \simeq H^{m-k}(X,\R)^*
\end{equation*}

\end{corollary}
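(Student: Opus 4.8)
The plan is to derive Poincaré duality directly from the Hodge isomorphism just established, so the proof is essentially a linear-algebra argument on the finite-dimensional spaces of harmonic forms, together with the non-degeneracy of the Hodge star. First I would reduce the statement that the pairing $([\alpha],[\beta]) \mapsto \int_X \alpha \wedge \beta$ is well-defined on cohomology: if $\alpha' = \alpha + d\gamma$ is another representative and $\beta$ is closed, then $\int_X (\alpha+d\gamma)\wedge\beta = \int_X \alpha\wedge\beta + \int_X d(\gamma\wedge\beta)$ by the Leibniz rule (using $d\beta = 0$), and the last integral vanishes by Stokes's theorem since $X$ is compact and oriented without boundary; symmetrically in $\beta$. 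So the pairing descends to $H^k(X,\R)\times H^{m-k}(X,\R)\to\R$.

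Next I would use the Hodge theorem (the corollary just proved) to represent each class by its unique harmonic representative, so that it suffices to show the bilinear form $(\alpha,\beta)\mapsto \int_X \alpha\wedge\beta$ is non-degenerate on $\mathcal{H}^k(X)\times\mathcal{H}^{m-k}(X)$. The key observation is that the Hodge star $*:\mathcal{A}^k(X)\to\mathcal{A}^{m-k}(X)$ commutes with the Laplacian $\Delta$, hence restricts to an isomorphism $*:\mathcal{H}^k(X)\xrightarrow{\sim}\mathcal{H}^{m-k}(X)$. To see that $*$ commutes with $\Delta$ one can either invoke that $*$ is (up to sign) its own inverse and that $d^* = \pm * d *$, so $*$ conjugates $d$ into $d^*$ and vice versa and therefore preserves $\ker\Delta = \ker d \cap \ker d^*$; alternatively one checks directly $\Delta * = * \Delta$ from the definitions of $d^*$ and $\Delta$. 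Given a nonzero harmonic $\alpha\in\mathcal{H}^k(X)$, take $\beta = *\alpha \in \mathcal{H}^{m-k}(X)$; then $\int_X \alpha\wedge *\alpha = \int_X \langle\alpha,\alpha\rangle\,\text{vol} = \|\alpha\|^2 > 0$, so the pairing is non-degenerate in the first variable, and by symmetry in the second.

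From non-degeneracy of the pairing the claimed isomorphism $H^k(X,\R)\simeq H^{m-k}(X,\R)^*$ is immediate: a non-degenerate bilinear pairing between finite-dimensional vector spaces induces injective maps in both directions, hence (by equality of dimensions, which also follows, giving $b_k = b_{m-k}$) an isomorphism $H^{m-k}(X,\R)\to H^k(X,\R)^*$, and dually $H^k(X,\R)\to H^{m-k}(X,\R)^*$. I should note that finite-dimensionality of the $H^k(X,\R)$ is needed here and follows from the Hodge theorem together with the (standard, elliptic-regularity) fact that $\mathcal{H}^k(X)$ is finite-dimensional, which is part of the statement of Theorem~\ref{thm:hodge-decomp}.

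The main obstacle, such as it is, will be handling the sign bookkeeping cleanly: verifying $*\Delta = \Delta*$ and the relation $d^* = (-1)^{m(k+1)+1}*d*$ interact correctly requires care with the exponents $(-1)^{l(m-l)}$ coming from $*^2$, exactly the computation already carried out in the lemma preceding the Hodge theorem. I would organize this by proving the single clean statement ``$*$ maps harmonic forms to harmonic forms and is an isometry for the $L^2$ inner product'' and then never touch signs again, since positivity of $\int_X\alpha\wedge*\alpha = \|\alpha\|^2$ is sign-free. Everything else — well-definedness, the passage from non-degenerate pairing to dual isomorphism — is routine.
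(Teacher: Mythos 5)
Your proof is correct and follows essentially the same route as the paper: pass to the harmonic representative, use that $*$ commutes with $\Delta$ so that $*\alpha$ is again harmonic, and conclude non-degeneracy from $\int_X \alpha \wedge *\alpha = \|\alpha\|^2 > 0$. The extra details you supply (well-definedness of the pairing via Stokes, finite-dimensionality of $\mathcal{H}^k(X)$ to pass from a non-degenerate pairing to the dual isomorphism) are sound and merely make explicit what the paper leaves implicit.
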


\begin{proof}
Seja $[\alpha] \in H^k(X,\R)$, precisamos mostrar que existe $[\beta] \in H^{m-k}(X,\R)$ tal que $\int_X \alpha \wedge \beta \neq 0$.

Fixe uma métrica riemanniana em $X$ e seja $\alpha^h$ a representante harmônica de $\alpha$. Como $\Delta * = * \Delta$, $*\alpha^h$ também é harmônica, e portanto $\beta=*\alpha$ define uma classe $[\beta] \in H^{m-k}(X,\R)$. Da definição do operador $*$ temos que
\begin{equation*}
\int_X \alpha \wedge \beta = \int_X \alpha \wedge *\alpha = \int_X (\alpha,\alpha) \text{vol}_X = ||\alpha||^2 \text{Vol}(X) > 0
\end{equation*}
o que mostra que o pareamento é não degenerado.

O isomorfismo $H^k(X,\R) \simeq H^{m-k}(X,\R)^*$ é dado por $\alpha \mapsto \int_X \alpha \wedge (~\cdot~)$.
\end{proof}

\begin{remark} \label{rmk:complex-hodge}
Podemos estender de modo $\C$-linear os operadores $d,d^*$ e $\Delta$ aos espaços de $k$-formas com coeficientes complexos $\mathcal{A}^k_{\C}(X)$ e o produto interno $L^2$ em $\mathcal{A}^k(X)$ se estende a um produto hermitiano em $\mathcal{A}^k_{\C}(X)$ (mais detalhes na próxima seção). Podemos repetir, \textit{mutatis mutandis}, os argumentos desta seção, e obtemos uma decomposição de Hodge
\begin{equation*}
\mathcal{A}_{\C}^k(X) = \mathcal{H}_{\C}^k(X) \oplus d \mathcal{A}_{\C}^{k-1}(X) \oplus d^*\mathcal{A}_{\C}^{k+1}(X)
\end{equation*}
e um isomorfismo de Hodge
\begin{equation} \label{eq:complex-hodge}
H^k(X,\C) \simeq \mathcal{H}^k_{\C}(X)
\end{equation}
para formas complexas.
\end{remark}

\subsubsection{O caso hermitiano}

No caso em que $X$ é uma variedade complexa e compacta com uma métrica hermitiana podemos aplicar os pensamentos acima para os operadores $\del$ e $\delbar$, obtendo assim uma decomposição semelhante à do teorema \ref{thm:hodge-decomp} para os espaços $\mathcal{A}^{p,q}(X)$. Como consequência obtemos um isomorfismo de Hodge para a cohomologia de Dolbeaut e a dualidade de Serre, um análogo complexo da dualidade de Poincaré.\\

Se $g$ é uma métrica hermitiana em $X$, temos produtos hermitianos induzidos nos fibrados de formas complexas $\bigwedge_{\C}^k X$. O operador de Hodge $*:\bigwedge^* X \to \bigwedge^* X$ se estende de forma $\C$-linear a um operador $*:\bigwedge_{\C}^* X \to \bigwedge_{\C}^* X$, satisfazendo
\begin{equation} \label{eq:c-hodgestar}
\alpha \wedge \overline{* \beta}  = (\alpha,\beta) \text{vol}
\end{equation}
para toda $\alpha \in \bigwedge_{\C}^k X$ e $\beta \in \bigwedge_{\C}^{n-k} X$. Do fato de $\text{vol}$ ser uma forma de tipo $(n,n)$, a igualdade acima mostra que $*\big( \bigwedge^{p,q} X \big ) \subset \bigwedge^{n-q,n-p} X$.

Em analogia ao caso riemanniano, podemos definir um produto hermitiano $L^2$ em $\mathcal{A}^k_{\C}(X)$:
\begin{equation*}
(\alpha,\beta) = \int_X ( \alpha,\beta ) \text{vol} = \int_X \alpha \wedge \overline{*\beta} ~,\;\;\; \alpha,\beta \in \mathcal{A}_{\C}^k(X).
\end{equation*}

Com essa definição temos que os espaços $\mathcal{A}^{p,q}(X)$ são dois a dois ortogonais e portanto a soma direta $\mathcal{A}^*_{\C}(X) = \bigoplus_{k=0}^{2n} \mathcal{A}^k_{\C}(X) = \bigoplus_{k=0}^{2n}\bigoplus_{p+q=k} \mathcal{A}^{p,q}(X)$ é ortogonal.

Relembrando a definição dos operadores $\del^* = - *  \delbar  *$ e $\delbar^* = - *  \del  *$ dadas na seção \ref{sec:kahler-identities} temos o seguinte lema.

\begin{lemma}
Os operadores $\del^*$ e $\delbar^*$ são, respectivamente, os adjuntos formais de $\del^*$ e $\delbar^*$ com respeito ao produto $L^2$. Consequentemente os Laplacianos $\Delta_{\del}$ e $\Delta_{\delbar}$ são autoadjuntos. 
\end{lemma}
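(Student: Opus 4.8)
The plan is to imitate the proof of the Riemannian lemma from the previous subsection, adapting the integration-by-parts argument to the operators $\del$ and $\delbar$. Recall that $\del^* = -*\,\delbar\,*$ and $\delbar^* = -*\,\del\,*$, where here $*$ denotes the $\C$-linear Hodge operator, which satisfies $\alpha \wedge \overline{*\beta} = (\alpha,\beta)\,\text{vol}$ and $*\big(\bigwedge^{p,q}X\big) \subset \bigwedge^{n-q,n-p}X$. We want to show, for instance, that $(\del\alpha,\beta) = (\alpha,\del^*\beta)$ for all $\alpha \in \mathcal{A}^{p-1,q}(X)$ and $\beta \in \mathcal{A}^{p,q}(X)$, and the analogous identity for $\delbar$.

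First I would set up the Leibniz rule in the form $d(\alpha \wedge \overline{*\beta}) = d\alpha \wedge \overline{*\beta} + (-1)^{\deg\alpha}\,\alpha \wedge d\overline{*\beta}$, and observe that since $d = \del + \delbar$ and the wedge $\alpha \wedge \overline{*\beta}$ can be arranged (by the bidegree bookkeeping) to have total degree $2n-1$, only one of the two summands $\del(\alpha\wedge\overline{*\beta})$, $\delbar(\alpha\wedge\overline{*\beta})$ is a top form while the other vanishes; this is what lets us isolate $\del$ (or $\delbar$) from $d$. Then, exactly as in the Riemannian case, I would integrate over $X$, apply Stokes' theorem to kill the exact term, and rearrange. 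The signs that appear are controlled by the two standard facts: $**=(-1)^{l(2n-1)}\,\text{id}$ on $\bigwedge^l$ (here $m=2n$, so $**=(-1)^l\,\text{id}$), and the bidegree shift of $*$. Keeping careful track of the conjugation bar (the $L^2$ product is hermitian, not bilinear) is the only genuinely delicate bookkeeping: one uses that $\overline{*\,\bar\gamma}$ behaves well and that $\del$, $\delbar$ are exchanged under conjugation, so that e.g. $\overline{\del^*\beta}$ relates to $\delbar^*\bar\beta$.

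Once $(\del\alpha,\beta) = (\alpha,\del^*\beta)$ and $(\delbar\alpha,\beta) = (\alpha,\delbar^*\beta)$ are established, the self-adjointness of the Laplacians is immediate and formal: for $\Delta_{\del} = \del\del^* + \del^*\del$ one writes, for any $\alpha,\beta$ of the same bidegree,
\begin{equation*}
(\Delta_{\del}\alpha,\beta) = (\del\del^*\alpha,\beta) + (\del^*\del\alpha,\beta) = (\del^*\alpha,\del^*\beta) + (\del\alpha,\del\beta) = (\alpha,\del\del^*\beta) + (\alpha,\del^*\del\beta) = (\alpha,\Delta_{\del}\beta),
\end{equation*}
and identically for $\Delta_{\delbar}$. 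No further input is needed here.

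The main obstacle is purely the sign and conjugation chase in the first step: matching the exponent in $d^* = (-1)^{m(k+1)+1}*d*$ type formulas against the intrinsic definitions $\del^* = -*\delbar*$, $\delbar^* = -*\del*$, and making sure the complex conjugation in $\alpha\wedge\overline{*\beta}$ is handled consistently when one moves the bar across $*$ and across $\del\leftrightarrow\delbar$. A clean way to organize this is to first prove the identity $*\,*\,= (-1)^{k}$ on $\mathcal{A}^k_\C$, then compute $\overline{*\del*\beta}$ and $\overline{*\delbar*\beta}$ explicitly in terms of $\delbar^*\bar\beta$, $\del^*\bar\beta$, and only at the end substitute into $\int_X d(\alpha\wedge\overline{*\beta}) = 0$. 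Everything else is routine, mirroring the already-proven Riemannian lemma line for line.
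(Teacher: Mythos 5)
Your proposal is correct and follows essentially the same route as the paper's proof: Leibniz plus Stokes applied to $\alpha\wedge *\bar\beta$, using that this form has type $(n-1,n)$ so that $d$ acting on it reduces to $\del$ (resp. $\delbar$), then the sign identities $**=(-1)^{p+q}\id$ and $\del\eta=\overline{\delbar\bar\eta}$ with $\overline{*}=*$ to identify the remaining term with $(\alpha,\del^*\beta)$. The formal deduction of self-adjointness of the Laplacians is likewise identical.
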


\begin{proof}
Vamos mostar que $\del^*$ é o adjunto de $\del$. A demonstração para $\delbar$ é análoga.

Se $\alpha \in \mathcal A^{p-1,q}(X)$ e $\beta \in \mathcal A^{p,q}(X)$ temos que $\del(\alpha \wedge *\bar \beta) = \del \alpha \wedge * \bar \beta + (-1)^{p+q-1} \alpha \wedge \del(*\bar \beta)$ e portanto
\begin{equation*}
(\del \alpha,\beta) = \int_X \del \alpha \wedge * \bar \beta = \int_X \del (\alpha \wedge * \bar \beta) - (-1)^{p+q-1} \int_X \alpha \wedge \del(*\bar \beta).
\end{equation*}
Note que $\alpha \wedge * \bar \beta$ é uma forma de tipo $(n-1,n)$ e portanto $\del (\alpha \wedge * \bar \beta) = d (\alpha \wedge * \bar \beta)$. Assim, pelo teorema de Stokes, a primeira integral acima se anula.

Usando que $*^2 = (-1)^{p+q}\id$ em $\mathcal A^{p,q}(X)$ e o fato que $\del *\bar \beta$ é de tipo $(n-p+1,n-q)$ temos que $**\del*\bar \beta = (-1)^{p+q+1} \del *\bar \beta$ e portanto a segunda integral acima fica
\begin{equation*}
\int_X \alpha \wedge \del(*\bar \beta) = (-1)^{p+q+1} \int_X \alpha \wedge **\del*\bar \beta = (-1)^{p+q+1} \int_X \alpha \wedge \overline{**\delbar * \beta} = -(-1)^{p+q+1} (\alpha,\del^* \beta),
\end{equation*}
onde usamos que $\del \eta = \overline{\delbar \eta}$ e $\overline * =*$.

Combinando com a primeira equação vemos finalmente que
\begin{equation*}
(\del \alpha,\beta) = - (-1)^{p+q-1} \cdot(-(-1)^{p+q+1} (\alpha,\del^* \beta)) = (\alpha,\del^* \beta).
\end{equation*}
\end{proof}

Definimos os espaços de formas $\delbar$-harmônicas por
\begin{equation*}
\mathcal{H}^k_{\delbar}(X) = \{\alpha \in \mathcal{A}^k(X) : \Delta_{\delbar} \alpha = 0\} ~~~ \text{ e } ~~~ \mathcal{H}^{p,q}_{\delbar}(X) = \{\alpha \in \mathcal{A}^{p,q}(X) : \Delta_{\delbar} \alpha = 0\}
\end{equation*}
e analogamente os espaços de formas $\del$-harmônicas $\mathcal{H}^k_{\del}(X)$ e $\mathcal{H}^{p,q}_{\del}(X)$.

\begin{proposition} \label{prop:*-harm}
Se $X$ é uma variedade hermitiana então
\begin{enumerate}
\item Os espaços de formas harmônicas se decompõem como
\begin{equation} \label{eq:harm-pq}
\mathcal{H}^k_{\delbar}(X) = \bigoplus_{p+q=k} \mathcal{H}^{p,q}_{\delbar}(X)~ \text{ e } ~~ \mathcal{H}^k_{\del}(X) = \bigoplus_{p+q=k} \mathcal{H}^{p,q}_{\del}(X)
\end{equation}
\item O operador de Hodge induz um isomorfismo
\begin{equation*}
*: \mathcal{H}^{p,q}_{\delbar}(X) \simeq  \mathcal{H}^{n-q,n-p}_{\del}(X).
\end{equation*}
\end{enumerate}
\end{proposition}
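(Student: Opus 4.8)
The plan is to prove both statements by carefully unwinding the definitions of the Laplacians $\Delta_\del$, $\Delta_\delbar$ and the Hodge star, using two elementary facts about $*$ on a hermitian manifold: its $\C$-linear extension satisfies $*\bigl(\bigwedge^{p,q}X\bigr)\subset\bigwedge^{n-q,n-p}X$, and $**=(-1)^{p+q}\,\id$ on $\mathcal A^{p,q}(X)$. Neither statement requires the K\"ahler hypothesis; both hold for any compact hermitian $X$.

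For part (1), first I would observe that $\Delta_\delbar = \delbar\,\delbar^* + \delbar^*\delbar$ maps $\mathcal A^{p,q}(X)$ into itself, since $\delbar$ raises $q$ by one and $\delbar^*$ lowers it by one, and similarly $\Delta_\del$ preserves bidegree. Hence for $\alpha\in\mathcal A^k(X)$ written as $\alpha=\sum_{p+q=k}\alpha^{p,q}$ with $\alpha^{p,q}\in\mathcal A^{p,q}(X)$, we have $\Delta_\delbar\alpha=\sum_{p+q=k}\Delta_\delbar\alpha^{p,q}$, and since the sum $\mathcal A^k_{\C}(X)=\bigoplus_{p+q=k}\mathcal A^{p,q}(X)$ is direct, $\Delta_\delbar\alpha=0$ if and only if $\Delta_\delbar\alpha^{p,q}=0$ for every $(p,q)$. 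This gives $\mathcal H^k_\delbar(X)=\bigoplus_{p+q=k}\mathcal H^{p,q}_\delbar(X)$, and the argument for $\del$ is identical. Here one should be a little careful to note that a real $k$-form in $\mathcal H^k_\delbar(X)$ decomposes into $(p,q)$-pieces that are individually $\delbar$-harmonic but need not be real — so the statement is really about the complexified spaces, but since the excerpt wrote $\mathcal H^k_{\delbar}(X)$ for the complex harmonic forms this is exactly what is wanted.

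For part (2), the key computation is that conjugating $\Delta_\delbar$ by the Hodge star turns it into $\Delta_\del$. Concretely, I would show $*\,\Delta_\delbar = \Delta_\del\,*$ as operators, whence $*$ carries $\ker\Delta_\delbar$ isomorphically onto $\ker\Delta_\del$; combined with the bidegree shift $*\colon\bigwedge^{p,q}\to\bigwedge^{n-q,n-p}$ and the fact that $*$ is invertible (its inverse being $\pm*$), this yields $*\colon\mathcal H^{p,q}_\delbar(X)\xrightarrow{\sim}\mathcal H^{n-q,n-p}_\del(X)$. To get $*\Delta_\delbar=\Delta_\del *$, recall $\del^*=-*\delbar*$ and $\delbar^*=-*\del*$; then one computes, using $**=\pm\id$ on each bidegree, that $*\,\delbar = -\del^*\,*$ and $*\,\delbar^* = -\del\,*$ (up to bookkeeping of the signs $(-1)^{p+q}$, which cancel in the end because $\Delta_\delbar$ is degree-preserving). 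Substituting into $\Delta_\delbar=\delbar\delbar^*+\delbar^*\delbar$ and regrouping gives $*\Delta_\delbar = (\del\del^*+\del^*\del)* = \Delta_\del *$. The main obstacle — really the only place care is needed — is the sign arithmetic: one must track the exponents in $**=(-1)^{p+q}\id$ as the forms move through the chain $\mathcal A^{p,q}\to\mathcal A^{p,q+1}\to\cdots$, and verify that all the sign ambiguities genuinely cancel so that no stray $\pm$ survives in the identity $*\Delta_\delbar=\Delta_\del *$. Once that bookkeeping is done, the isomorphism follows immediately, and one may note as a corollary that on a K\"ahler manifold, where $\mathcal H^{p,q}_\delbar=\mathcal H^{p,q}_\del$, this specializes to the Hodge-star symmetry $\mathcal H^{p,q}(X)\cong\mathcal H^{n-q,n-p}(X)$ underlying Serre duality.
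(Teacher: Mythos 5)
Your proposal is correct and follows essentially the same route as the paper: part (1) is the identical bidegree-preservation argument for $\Delta_{\del}$ and $\Delta_{\delbar}$ combined with the directness of $\mathcal{A}^k_{\C}(X)=\bigoplus_{p+q=k}\mathcal{A}^{p,q}(X)$, and part (2) rests on the same conjugation identity between the two Laplacians via $*$, with the same sign bookkeeping through $*^2=(-1)^{p+q}\id$ (the paper writes it as $*\Delta_{\del}=\Delta_{\delbar}*$, which is equivalent to your $*\Delta_{\delbar}=\Delta_{\del}*$ since $*$ is invertible). The sign cancellation you flag as the only delicate point does go through exactly as you expect, so there is no gap.
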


\begin{proof}
1. Vamos demonstrar a primeira decomposição, a demonstração da segunda é análoga.

Seja $\alpha \in \mathcal \mathcal{A}^k_{\C}(X)$ e seja $\alpha = \sum_{p,q} \alpha^{p,q}$ sua decomposição em tipos. Se $\alpha \in \mathcal{H}^k_{\delbar}(X)$ temos $0 = \Delta_{\delbar} (\alpha) = \sum_{p,q} \Delta_{\delbar} (\alpha^{p,q})$, e como $\Delta_{\delbar}$ leva o espaço $\mathcal{A}^{p,q}(X)$ nele mesmo cada $\Delta_{\delbar} (\alpha^{p,q})$ pertence a $\mathcal{A}^{p,q}(X)$. Como a soma $\mathcal{A}_{\C}^k (X)= \bigoplus_{p+q = k} \mathcal{A}^{p,q}(X)$ é direta segue que cada $\Delta_{\delbar} (\alpha^{p,q}) = 0$, isto é $\alpha^{p,q} \in \mathcal{H}^{p,q}_{\delbar}(X)$. Reciprocamente, se $\alpha^{p,q} \in \mathcal{H}^{p,q}_{\delbar}(X)$ então, por linearidade $\Delta(\alpha) = 0$ e portanto $\alpha \in \mathcal \mathcal{H}^k_{\delbar}(X)$.\\

2. O isomorfismo segue da igualdade $* \Delta_{\del} = \Delta_{\delbar} *$ e do fato de $*: \mathcal{A}^{p,q}(X) \to  \mathcal{A}^{n-q,n-p}(X)$ ser um isomorfismo. Para ver a igualdade acima note primeiro que se $\alpha \in \mathcal{A}^k(X)$ então
\begin{equation*}
\delbar * \del **(\alpha) = (-1)^k \delbar * \del (\alpha) = (-1)^{2n-k} \delbar * \del (\alpha) = ** \delbar * \del (\alpha)
\end{equation*}
pois $ \delbar * \del (\alpha) \in \mathcal{A}^{2n-k}(X)$ e $*^2=(-1)^l\text{id}$ em $\mathcal{A}^l(X)$. Logo $\delbar * \del ** = ** \delbar * \del$ e portanto
\begin{equation*}
* \Delta_{\del} = *(\del \del^* + \del^* \del) = - *\del * \delbar* - ** \delbar * \del = - *\del * \delbar* -  \delbar * \del ** = (\delbar^* \delbar + \delbar \delbar^*)* = \Delta_{\delbar}*
\end{equation*}
\end{proof}

Como no caso riemanniano temos que que $\alpha$ é $\delbar$-harmônica se e somente se $\delbar(\alpha) = 0$ e $\delbar^*(\alpha)=0$ e, em analogia ao lema \ref{lemma:harm-min}, podemos mostrar que as formas $\delbar$-harmônicas são as que tem a menor norma em sua classe de cohomologia de Dolbeaut (veja a equação (\ref{eq:dolbeaut-cohomology}) para a definição).
\begin{lemma} \label{lemma:del-harm-min}
Se $\alpha$ é uma $(p,q)$-forma que é $\delbar$-harmônica então $||\alpha + \delbar \eta|| \geq ||\alpha||$ para toda $\eta \in \mathcal{A}^{p,q-1}(X)$. Reciprocamente, se $\alpha$ tem norma mínima em sua classe de cohomologia de Dolbeaut então $\alpha$ é  $\delbar$-harmônica. 
\end{lemma}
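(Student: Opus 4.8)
O plano é repetir, \emph{mutatis mutandis}, a demonstração do Lema~\ref{lemma:harm-min}, trocando o operador $d$ pelo operador $\delbar$ e a métrica $L^2$ real pelo produto hermitiano $L^2$ em $\mathcal{A}^{p,q}(X)$. Os dois ingredientes de que vou precisar já foram estabelecidos acima: que $\delbar^*$ é o adjunto formal de $\delbar$ com respeito a esse produto (consequência do Teorema de Stokes, já que $X$ é compacta e não há termos de bordo) e que $\alpha$ é $\delbar$-harmônica se e somente se $\delbar \alpha = 0$ e $\delbar^* \alpha = 0$.

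Primeiro mostro a desigualdade. Suponho que $\alpha$ seja $\delbar$-harmônica, em particular $\delbar^* \alpha = 0$. Dada $\eta \in \mathcal{A}^{p,q-1}(X)$, expando
\begin{equation*}
||\alpha + \delbar \eta||^2 = ||\alpha||^2 + 2 \re (\alpha, \delbar \eta) + ||\delbar \eta||^2,
\end{equation*}
e observo que o termo cruzado se anula, pois $(\alpha, \delbar \eta) = (\delbar^* \alpha, \eta) = 0$. Logo $||\alpha + \delbar \eta||^2 = ||\alpha||^2 + ||\delbar \eta||^2 \geq ||\alpha||^2$, com igualdade exatamente quando $\delbar \eta = 0$.

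Para a recíproca, suponho que $\alpha$ — que por hipótese representa uma classe de cohomologia de Dolbeaut, logo satisfaz $\delbar \alpha = 0$ — tenha norma mínima em sua classe. Então, para cada $\eta \in \mathcal{A}^{p,q-1}(X)$, a função real $t \mapsto ||\alpha + t \delbar \eta||^2 = ||\alpha||^2 + 2t \re (\alpha, \delbar \eta) + t^2 ||\delbar \eta||^2$ tem mínimo em $t = 0$, donde $\re (\alpha, \delbar \eta) = 0$. Aqui aparece a única diferença em relação ao caso riemanniano: como $\delbar$ é $\C$-linear, posso aplicar isso também a $\smo \, \eta$ no lugar de $\eta$, obtendo $\re \big( \smo \, (\alpha, \delbar \eta) \big) = - \im (\alpha, \delbar \eta) = 0$. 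Portanto $(\alpha, \delbar \eta) = 0$, isto é, $(\delbar^* \alpha, \eta) = 0$ para toda $\eta$, e assim $\delbar^* \alpha = 0$; junto com $\delbar \alpha = 0$, isto mostra que $\alpha$ é $\delbar$-harmônica.

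A parte mais delicada — ainda que modesta — é não esquecer que o produto $L^2$ é hermitiano e não apenas bilinear real: a variação na direção $t \delbar \eta$ só captura a parte real de $(\alpha, \delbar \eta)$, sendo necessária a variação na direção $\smo \, t \delbar \eta$ para anular também sua parte imaginária. Fora isso, o argumento é formalmente idêntico ao caso riemanniano e não utiliza nenhuma hipótese de Kähler.
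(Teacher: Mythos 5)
Sua demonstração está correta e segue exatamente o caminho que o texto indica (adaptar, \emph{mutatis mutandis}, o Lema~\ref{lemma:harm-min} trocando $d$ por $\delbar$ e usando a adjunção $(\alpha,\delbar\eta)=(\delbar^*\alpha,\eta)$), sendo que o texto nem sequer escreve essa verificação. O cuidado adicional de variar também na direção $\smo\,\eta$ para anular a parte imaginária do termo cruzado é justamente o único ponto em que o caso hermitiano difere do riemanniano, e você o tratou corretamente (a menos de um sinal irrelevante, já que a conclusão $\im(\alpha,\delbar\eta)=0$ vale de qualquer forma).
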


A partir do lema acima obtemos, com as mesmas técnicas usadas para demonstrar o teorema \ref{thm:hodge-decomp}, uma decomposição de Hodge para as formas de tipo $(p,q)$. Uma demonstração pode ser encontrada em \cite{demailly}.

\begin{theorem} \textbf{Decomposição de Hodge para $(p,q)$-formas} \label{thm:herm-hodge-decomp}\\
Se $X$ é uma variedade hermitiana compacta, existem duas decomposições ortogonais
\begin{equation*}
\mathcal{A}^{p,q}(X) = \mathcal{H}_{\del}^{p,q}(X) \oplus \del \mathcal{A}^{p-1,q}(X) \oplus \del^*\mathcal{A}^{p+1,q}(X)
\end{equation*}
e
\begin{equation*}
\mathcal{A}^{p,q}(X) = \mathcal{H}_{\delbar}^{p,q}(X) \oplus \delbar \mathcal{A}^{p,q-1}(X) \oplus \delbar^*\mathcal{A}^{p,q+1}(X).
\end{equation*}
\end{theorem}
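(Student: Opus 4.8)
The statement to prove is the Hodge decomposition for $(p,q)$-forms on a compact Hermitian manifold (Theorem \ref{thm:herm-hodge-decomp}): the two orthogonal decompositions
$$\mathcal{A}^{p,q}(X) = \mathcal{H}_{\del}^{p,q}(X) \oplus \del \mathcal{A}^{p-1,q}(X) \oplus \del^*\mathcal{A}^{p+1,q}(X)$$
and the analogous one for $\delbar$.

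Let me think about how to prove this. This is the Hodge-theoretic result - the standard approach mirrors the Riemannian case (Theorem \ref{thm:hodge-decomp}). The key input is elliptic regularity and the analysis of the Laplacian $\Delta_{\delbar}$ (or $\Delta_{\del}$). Let me sketch the plan.

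The approach: Focus on the $\delbar$ case (the $\del$ case is identical, or follows by conjugation using that $\del \alpha = \overline{\delbar \bar\alpha}$). The Laplacian $\Delta_{\delbar} = \delbar\delbar^* + \delbar^*\delbar$ is a second-order elliptic, self-adjoint, non-negative operator on the space $\mathcal{A}^{p,q}(X)$ of smooth $(p,q)$-forms. The three summands are pairwise orthogonal for the $L^2$ product: $(\delbar\alpha, \delbar^*\beta) = (\delbar^2\alpha, \beta) = 0$ since $\delbar^2 = 0$; orthogonality of each of $\delbar\mathcal{A}^{p,q-1}$ and $\delbar^*\mathcal{A}^{p,q+1}$ with $\mathcal{H}^{p,q}_{\delbar}$ follows because a harmonic form $\alpha$ satisfies $\delbar\alpha = 0$ and $\delbar^*\alpha = 0$ (the $\delbar$-analogue of the corollary following the lemma on $d^*$ being the adjoint of $d$): if $\Delta_{\delbar}\alpha = 0$ then $0 = (\Delta_{\delbar}\alpha,\alpha) = \|\delbar\alpha\|^2 + \|\delbar^*\alpha\|^2$. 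So e.g. $(\alpha, \delbar\eta) = (\delbar^*\alpha, \eta) = 0$.

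The main content is that these three summands span $\mathcal{A}^{p,q}(X)$. The plan is to invoke the general theory of elliptic operators on compact manifolds: since $\Delta_{\delbar}$ is elliptic and self-adjoint, the Fredholm alternative / spectral theory gives $\mathcal{A}^{p,q}(X) = \ker\Delta_{\delbar} \oplus \operatorname{im}\Delta_{\delbar}$ as an orthogonal decomposition, with $\ker\Delta_{\delbar} = \mathcal{H}^{p,q}_{\delbar}(X)$ finite-dimensional; this is exactly where elliptic regularity enters (a priori the orthogonal complement of the kernel is only the $L^2$-closure of the image, and one must show weak solutions are smooth). Then I would observe that $\operatorname{im}\Delta_{\delbar} \subseteq \delbar\mathcal{A}^{p,q-1}(X) + \delbar^*\mathcal{A}^{p,q+1}(X)$ trivially from the definition $\Delta_{\delbar}\gamma = \delbar(\delbar^*\gamma) + \delbar^*(\delbar\gamma)$, while the reverse inclusion follows from the harmonic-form characterization: if $\alpha = \delbar\eta$ is $L^2$-orthogonal to $\mathcal{H}^{p,q}_{\delbar}$, decompose $\alpha = \Delta_{\delbar}\gamma + h$ with $h$ harmonic; pairing with $h$ shows $h=0$, so $\delbar\mathcal{A}^{p,q-1}(X)\subseteq\operatorname{im}\Delta_{\delbar}$, and similarly for $\delbar^*\mathcal{A}^{p,q+1}(X)$. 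Combining, $\operatorname{im}\Delta_{\delbar} = \delbar\mathcal{A}^{p,q-1}(X) \oplus \delbar^*\mathcal{A}^{p,q+1}(X)$, which yields the desired decomposition. For the $\del$-statement one repeats verbatim with $\del$ in place of $\delbar$, or applies complex conjugation together with Proposition \ref{prop:*-harm}(2) and the Hodge-star isomorphism.

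The hard part is the elliptic analysis — proving that $\Delta_{\delbar}$ has the Fredholm property on smooth forms, i.e. establishing elliptic regularity and the finite-dimensionality of the kernel. This is genuine PDE theory (Sobolev spaces, Gårding's inequality, the parametrix construction) and is exactly the step the excerpt itself defers, citing \cite{demailly}. So in the write-up I would not reprove it: I would state precisely the general elliptic package being used (self-adjoint elliptic operator on a compact manifold $\Rightarrow$ $L^2 = \ker \oplus \overline{\operatorname{im}}$ with finite-dimensional smooth kernel and closed image on smooth sections), refer to \cite{demailly} (as the excerpt does for Theorems \ref{thm:hodge-decomp} and \ref{thm:herm-hodge-decomp} themselves), and then give the short, purely formal argument above — verifying ellipticity of $\Delta_{\delbar}$ via its principal symbol, checking the pairwise orthogonality, and identifying $\operatorname{im}\Delta_{\delbar}$ with $\delbar\mathcal{A}^{p,q-1}(X) \oplus \delbar^*\mathcal{A}^{p,q+1}(X)$ — which is all that distinguishes the Hermitian $(p,q)$ case from the Riemannian case already treated.
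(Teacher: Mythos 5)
Your proposal is correct and follows essentially the same route as the paper, which itself does not reprove the analytic core: the text states that the theorem is obtained ``com as mesmas t\'ecnicas usadas para demonstrar o teorema da decomposi\c{c}\~ao de Hodge riemanniana'' and defers the elliptic regularity to \cite{demailly}, exactly as you propose. Your explicit verification of the pairwise orthogonality and the identification of $\operatorname{im}\Delta_{\delbar}$ with $\delbar\mathcal{A}^{p,q-1}(X)\oplus\delbar^*\mathcal{A}^{p,q+1}(X)$ is a correct and slightly more detailed account of the formal part than the paper gives.
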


\begin{corollary} \label{cor:hodge-pq}
Se $X$ é uma variedade hermitiana compacta então existe um isomorfismo
\begin{equation} \label{eq:hodge-pq}
H^{p,q}_{\delbar}(X) \simeq \mathcal{H}_{\delbar}^{p,q}(X)
\end{equation}
\end{corollary}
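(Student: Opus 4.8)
The plan is to deduce the isomorphism $H^{p,q}_{\delbar}(X) \simeq \mathcal{H}_{\delbar}^{p,q}(X)$ directly from the Hodge decomposition for $(p,q)$-forms (Theorem \ref{thm:herm-hodge-decomp}), mimicking exactly the argument given for the Riemannian case in the proof of the Teorema de Hodge above. The key observation is that the Dolbeaut cohomology group $H^{p,q}_{\delbar}(X)$ is the quotient $\ker\{\delbar\} / \im\{\delbar\}$ at the spot $\mathcal{A}^{p,q}(X)$, so I need to identify $\ker\{\delbar:\mathcal{A}^{p,q}(X) \to \mathcal{A}^{p,q+1}(X)\}$ in terms of the orthogonal decomposition.

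First I would show that $\delbar^*\mathcal{A}^{p,q+1}(X) \cap \ker\delbar = \{0\}$: if $\alpha = \delbar^*\beta$ and $\delbar\alpha = 0$, then $0 = (\delbar\delbar^*\beta,\beta) = (\delbar^*\beta,\delbar^*\beta) = \|\delbar^*\beta\|^2$, hence $\alpha = 0$. This uses that $\delbar^*$ is the formal adjoint of $\delbar$ with respect to the $L^2$ product, which was established in the lemma preceding Proposition \ref{prop:*-harm}. Next, since $\mathcal{H}_{\delbar}^{p,q}(X) \subset \ker\delbar$ (a $\delbar$-harmonic form is $\delbar$-closed, again by the same adjointness argument: $\Delta_{\delbar}\alpha = 0 \Rightarrow \delbar\alpha = 0$ and $\delbar^*\alpha = 0$) and $\delbar\mathcal{A}^{p,q-1}(X) \subset \ker\delbar$ trivially (because $\delbar^2 = 0$), combining with Theorem \ref{thm:herm-hodge-decomp} gives
\begin{equation*}
\ker\{\delbar: \mathcal{A}^{p,q}(X) \to \mathcal{A}^{p,q+1}(X)\} = \mathcal{H}_{\delbar}^{p,q}(X) \oplus \delbar\mathcal{A}^{p,q-1}(X).
\end{equation*}

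Finally I would take the quotient by $\delbar\mathcal{A}^{p,q-1}(X) = \im\{\delbar:\mathcal{A}^{p,q-1}(X)\to\mathcal{A}^{p,q}(X)\}$, obtaining
\begin{equation*}
H^{p,q}_{\delbar}(X) = \frac{\ker\delbar}{\im\delbar} = \frac{\mathcal{H}_{\delbar}^{p,q}(X) \oplus \delbar\mathcal{A}^{p,q-1}(X)}{\delbar\mathcal{A}^{p,q-1}(X)} \simeq \mathcal{H}_{\delbar}^{p,q}(X),
\end{equation*}
where the last isomorphism is the projection onto the harmonic summand, which is well-defined because the sum is direct (indeed orthogonal). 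This shows every Dolbeaut class has a unique $\delbar$-harmonic representative. There is essentially no obstacle here: the entire analytic difficulty (elliptic regularity, existence of the decomposition) has already been absorbed into Theorem \ref{thm:herm-hodge-decomp}, so the corollary is a short formal consequence. The only point requiring a little care is to make sure one uses the decomposition at the correct bidegree and that the three summands are pairwise orthogonal, which is immediate from the statement of the theorem; I would state this explicitly but not belabor it.
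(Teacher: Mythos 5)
Your proposal is correct and is exactly the argument the paper intends: the corollary is stated without proof precisely because it follows from Theorem \ref{thm:herm-hodge-decomp} by the same two-step argument ($\ker\delbar \cap \delbar^*\mathcal{A}^{p,q+1}(X) = \{0\}$, hence $\ker\delbar = \mathcal{H}_{\delbar}^{p,q}(X)\oplus\delbar\mathcal{A}^{p,q-1}(X)$) that the paper spells out explicitly for the Riemannian case in the proof of the Teorema de Hodge. No gaps.
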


\begin{corollary} \label{cor:serre-dual} \textsl{Dualidade de Serre para formas harmônicas}\\
Se $X$ é uma variedade hermitiana compacta então o pareamento \index{Dualidade de Serre!para formas harmônicas}
\begin{equation*}
\mathcal{H}_{\delbar}^{p,q}(X) \times \mathcal{H}_{\delbar}^{n-p,n-q}(X) \longrightarrow \C~,~~ (\alpha,\beta) \longmapsto \int_X \alpha \wedge \beta
\end{equation*}
é não degenerado. Consequentemente existe um isomorfismo $\mathcal{H}_{\delbar}^{p,q}(X) \simeq \mathcal{H}_{\delbar}^{n-p,n-q}(X)^*$
\end{corollary}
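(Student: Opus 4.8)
The plan is to deduce the statement from the Hodge isomorphism for $(p,q)$-forms (Corollary \ref{cor:hodge-pq}) together with the isomorphism $*:\mathcal{H}^{p,q}_{\delbar}(X)\simeq \mathcal{H}^{n-q,n-p}_{\del}(X)$ from Proposition \ref{prop:*-harm}, in close analogy with the proof of Poincar\'e duality (Corollary \ref{cor:poincare-duality}). First I would fix a Hermitian metric on $X$ and, given a nonzero class represented by a harmonic form $\alpha\in\mathcal{H}^{p,q}_{\delbar}(X)$, produce an explicit partner $\beta\in\mathcal{H}^{n-p,n-q}_{\delbar}(X)$ with $\int_X\alpha\wedge\beta\neq0$. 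The natural candidate is $\beta = \overline{*\alpha}$. Indeed, from the defining identity of the complex Hodge star (eq.\ \ref{eq:c-hodgestar}), $\alpha\wedge\overline{*\alpha}=(\alpha,\alpha)\,\mathrm{vol}$, so that
\begin{equation*}
\int_X \alpha\wedge\overline{*\alpha} = \int_X (\alpha,\alpha)\,\mathrm{vol} = \|\alpha\|^2 > 0,
\end{equation*}
which already shows the pairing is nondegenerate on the left factor.

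The remaining point is to verify that $\beta=\overline{*\alpha}$ really lies in $\mathcal{H}^{n-p,n-q}_{\delbar}(X)$. Here I would argue in two steps. By Proposition \ref{prop:*-harm}, $*\alpha\in\mathcal{H}^{n-q,n-p}_{\del}(X)$; applying complex conjugation, and using that conjugation exchanges $\del$ and $\delbar$ (so it carries $\del$-harmonic forms to $\delbar$-harmonic forms) and sends a form of type $(a,b)$ to one of type $(b,a)$, we get $\overline{*\alpha}\in\mathcal{H}^{n-p,n-q}_{\delbar}(X)$. One small check is that $\overline{\Delta_{\del}\gamma}=\Delta_{\delbar}\overline{\gamma}$, which follows from $\overline{\del\gamma}=\delbar\overline{\gamma}$, $\overline{*\gamma}=*\overline{\gamma}$, and the definitions $\del^*=-{*}\,\delbar\,{*}$, $\delbar^*=-{*}\,\del\,{*}$. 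With $\beta$ thus a genuine harmonic representative, $\int_X\alpha\wedge\beta$ is the value of the stated pairing on the corresponding Dolbeault classes via the isomorphism (\ref{eq:hodge-pq}), and the computation above shows it is nonzero whenever $\alpha\neq0$.

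Finally, to pass from ``nondegenerate in the first slot'' to the asserted isomorphism $\mathcal{H}^{p,q}_{\delbar}(X)\simeq\mathcal{H}^{n-p,n-q}_{\delbar}(X)^*$, I would note that the map $\alpha\mapsto\bigl(\beta\mapsto\int_X\alpha\wedge\beta\bigr)$ is linear and injective by what we just proved, and then invoke finite-dimensionality of the harmonic spaces (a consequence of elliptic theory, already used implicitly in Corollary \ref{cor:hodge-pq}) together with the symmetric role of $(p,q)$ and $(n-p,n-q)$ to conclude the two spaces have equal dimension, whence the injection is an isomorphism. I expect the only genuinely delicate point to be the bookkeeping around conjugation — tracking how $\overline{\;\cdot\;}$ interacts with $*$, with the bidegree, and with the exchange of $\del$ and $\delbar$ — since everything else reduces to the single identity (\ref{eq:c-hodgestar}) and results already established in this section.
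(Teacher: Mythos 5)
Your proposal is correct and follows essentially the same route as the paper: given a nonzero $\alpha\in\mathcal{H}_{\delbar}^{p,q}(X)$, take $\beta=\overline{*\alpha}$, use Proposi\c{c}\~ao \ref{prop:*-harm} together with conjugation to see that $\beta\in\mathcal{H}_{\delbar}^{n-p,n-q}(X)$, and compute $\int_X\alpha\wedge\overline{*\alpha}=\|\alpha\|^2>0$. The extra bookkeeping you supply on how conjugation interacts with $*$, the bidegree, and the exchange of $\del$ and $\delbar$ is sound and only makes explicit what the paper leaves to the cited proposition.
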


\begin{proof}
Se $\alpha \in \mathcal{H}_{\delbar}^{p,q}(X)$ é não nula então $\overline{*\,\alpha} \in \mathcal{H}_{\delbar}^{n-p,n-q}(X)$ (ver proposição \ref{prop:*-harm}) e $\int_X \alpha \wedge \overline{*\,\alpha} = ||\alpha||^2 >0$, mostrando que o pareamento é não degenerado.
\end{proof}

Seja agora $X$ uma variedade complexa compacta. Fixada uma métrica hermitiana em $X$ temos, combinando os dois últimos resultados, que $H_{\delbar}^{p,q}(X) \simeq \mathcal{H}_{\delbar}^{p,q}(X) \simeq \mathcal{H}_{\delbar}^{n-p,n-q}(X)^* \simeq H_{\delbar}^{n-p,n-q}(X)^*$. Do isomorfismo de Dolbeaut (eq. \ref{eq:dolbeaut-iso}) obtemos.

\begin{corollary} \label{cor:serre-dolbeaut-duality} \index{Dualidade de Serre}
Se $X$ é uma variedade complexa compacta de dimensão $n$ então $H^q(X,\Omega_X^p) \simeq H^{n-q}(X,\Omega^{n-p}_X)^*$.
\end{corollary}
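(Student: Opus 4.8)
**Plan for proving Serre duality $H^q(X,\Omega_X^p)\simeq H^{n-q}(X,\Omega_X^{n-p})^*$.**

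The statement is an immediate consequence of three results already established in the excerpt, so the proof is a short chain of isomorphisms rather than a new computation. First I would fix an auxiliary Hermitian metric on the compact complex manifold $X$; this is harmless since the target groups $H^q(X,\Omega_X^p)$ and $H^{n-q}(X,\Omega_X^{n-p})$ are purely holomorphic invariants and do not depend on the metric. With the metric in hand I invoke the Dolbeault isomorphism (eq.~\ref{eq:dolbeaut-iso}), $H^q(X,\Omega_X^p)\simeq H^{p,q}_{\delbar}(X)$, to transport the problem entirely into the world of Dolbeault cohomology. Likewise $H^{n-q}(X,\Omega_X^{n-p})\simeq H^{n-p,n-q}_{\delbar}(X)$.

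Next I would use the Hodge-theoretic identification for Hermitian manifolds: Corollary~\ref{cor:hodge-pq} gives $H^{p,q}_{\delbar}(X)\simeq \mathcal{H}^{p,q}_{\delbar}(X)$ and $H^{n-p,n-q}_{\delbar}(X)\simeq \mathcal{H}^{n-p,n-q}_{\delbar}(X)$, replacing Dolbeault classes by their unique $\delbar$-harmonic representatives. Finally, Serre duality at the level of harmonic forms — Corollary~\ref{cor:serre-dual} — supplies a nondegenerate pairing $\mathcal{H}^{p,q}_{\delbar}(X)\times\mathcal{H}^{n-p,n-q}_{\delbar}(X)\to\C$, $(\alpha,\beta)\mapsto\int_X\alpha\wedge\beta$, hence an isomorphism $\mathcal{H}^{p,q}_{\delbar}(X)\simeq\mathcal{H}^{n-p,n-q}_{\delbar}(X)^*$. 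Composing the four isomorphisms
\begin{equation*}
H^q(X,\Omega_X^p)\simeq H^{p,q}_{\delbar}(X)\simeq\mathcal{H}^{p,q}_{\delbar}(X)\simeq\mathcal{H}^{n-p,n-q}_{\delbar}(X)^*\simeq H^{n-p,n-q}_{\delbar}(X)^*\simeq H^{n-q}(X,\Omega_X^{n-p})^*
\end{equation*}
yields the claim. I would also remark that, although the chain passes through a choice of metric, the resulting pairing $H^q(X,\Omega_X^p)\times H^{n-q}(X,\Omega_X^{n-p})\to\C$ can be described metric-independently: under the Dolbeault isomorphism it is induced by the cup product $\alpha\wedge\beta$ followed by integration of the resulting class in $H^{n,n}_{\delbar}(X)\simeq\C$, the last identification being $[\omega]\mapsto\int_X\omega$ (which makes sense because the $\delbar$-Lemma of Poincar\'e together with Stokes shows the integral descends to cohomology).

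The main obstacle is not in this argument — which is purely formal once the earlier results are granted — but in making sure the two ``$n-p,n-q$'' bookkeeping is consistent and in justifying that $\mathcal{H}^{n,n}_{\delbar}(X)\simeq\C$ on a connected $X$, so that the pairing lands correctly; this follows from Corollary~\ref{cor:serre-dual} with $(p,q)=(n,n)$ (or directly from the fact that a $\delbar$-harmonic $(n,n)$-form is a constant multiple of $\text{vol}_X$ on each connected component). If $X$ is required connected the target $H^{n,n}_{\delbar}(X)$ is one-dimensional; otherwise one argues componentwise. No further analysis (elliptic regularity, the Hodge decomposition itself) needs to be redone here, since Theorem~\ref{thm:herm-hodge-decomp} and its corollaries are quoted as black boxes.
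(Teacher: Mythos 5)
Your proof is correct and follows exactly the route the paper takes: fix an auxiliary Hermitian metric, pass through the Dolbeault isomorphism, identify Dolbeault classes with $\delbar$-harmonic representatives via Corollary~\ref{cor:hodge-pq}, and apply the harmonic-form duality of Corollary~\ref{cor:serre-dual}. The additional remarks on the metric-independence of the induced pairing and on $\mathcal{H}^{n,n}_{\delbar}(X)\simeq\C$ are sound but not needed for the statement as given.
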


O resultado acima também é conhecido como Dualidade de Serre, e possue uma versão um pouco mais geral para fibrados vetoriais holomorfos (veja a Observação \ref{rmk:serre-duality-vb}).

\subsubsection{O caso kähleriano}

Consideremos agora $X$ uma variedade compacta com uma métrica de Kähler $g$. Em particular $g$ é uma métrica hermitiana em $X$ e portanto temos as decomposições
\begin{equation*}
\begin{split}
\mathcal{A}_{\C}^k(X) &= \mathcal{H}_{\C}^k(X) \oplus d \mathcal{A}_{\C}^{k-1}(X) \oplus d^*\mathcal{A}_{\C}^{k+1}(X)\\
\mathcal{A}^{p,q}(X) &= \mathcal{H}_{\del}^{p,q}(X) \oplus \del \mathcal{A}^{p-1,q}(X) \oplus \del^*\mathcal{A}^{p+1,q}(X)\\
\mathcal{A}^{p,q}(X) &= \mathcal{H}_{\delbar}^{p,q}(X) \oplus \delbar \mathcal{A}^{p,q-1}(X) \oplus \delbar^*\mathcal{A}^{p,q+1}(X)
\end{split}
\end{equation*}
bem como os isomorfismos $H^k(X,\C) \simeq \mathcal{H}^k_{\C}(X)$ e $H^{p,q}_{\delbar} \simeq \mathcal{H}_{\delbar}^{p,q}(X)$.

Em princípio, para uma variedade hermitiana qualquer, as decomposições acima não estão relacionadas. No entanto, como $X$ é de Kähler, a igualdade entre os laplacianos $\Delta_{\del} = \Delta_{\delbar} = \frac{1}{2} \Delta$ implica na igualdade entre os espaços
\begin{equation*}
\mathcal{H}_{\C}^k(X) =  \mathcal{H}_{\del}^k(X) = \mathcal{H}_{\delbar}^k(X)
\end{equation*}
que aparecem nas diferentes decomposições. Esse fato permite relacionar o grupo de cohomologia de de Rham $H^k(X,\C)$ com os grupos de cohomologia de Dolbeaut $H_{\delbar}^{p,q}(X)$.

A ferramenta essencial para fazermos essa comparação é o chamado $\del \delbar$-Lema.

\begin{lemma} \index{deldelbarra@$\del \delbar$-Lema} \textbf{$\del \delbar$-Lema}. Se $X$ é uma variedade de Kähler compacta e $\alpha \in \mathcal{A}^{p,q}(X)$ é $d$-fechada são equivalentes
\begin{itemize}
\item[a.] $\alpha$ é $d$-exata, i.e., existe $\beta \in \mathcal{A}^{p+q-1}_{\C}(X)$ tal que $\alpha = d \beta$.
\item[b.] $\alpha$ é $\del$-exata, i.e., existe $\beta \in \mathcal{A}^{p-1,q}(X)$ tal que $\alpha = \del \beta$.
\item[c.] $\alpha$ é $\delbar$-exata, i.e., existe $\beta \in \mathcal{A}^{p,q-1}(X)$ tal que $\alpha = \delbar \beta$.
\item[d.] $\alpha$ é $\del \delbar$-exata, i.e., existe $\beta \in \mathcal{A}^{p-1,q-1}(X)$ tal que $\alpha = \del \delbar \beta$.
\end{itemize}
\end{lemma}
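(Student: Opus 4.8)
# Proof proposal for the $\del\delbar$-Lemma

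The plan is to prove the cycle of implications (d) $\Rightarrow$ (a), (d) $\Rightarrow$ (b), (d) $\Rightarrow$ (c), and then close the loop by showing (a) $\Rightarrow$ (d), (b) $\Rightarrow$ (d), (c) $\Rightarrow$ (d). The first three are trivial: if $\alpha = \del\delbar\beta$, then writing $\gamma = \del\beta \in \mathcal{A}^{p,q-1}(X)$ gives $\alpha = \delbar\gamma$, so (c) holds; similarly with $\gamma' = -\delbar\beta$ we get $\alpha = \del\gamma'$, so (b) holds; and since $d = \del + \delbar$ and $\del(\del\delbar\beta) = 0 = \delbar(\del\delbar\beta)$, we have $\alpha = d(\del\beta)$ up to sign bookkeeping, giving (a). So the content is entirely in showing that each of (a), (b), (c) implies (d). By symmetry (conjugation swaps $\del$ and $\delbar$, and $d$-exactness is conjugation-invariant), it suffices to treat, say, (b) $\Rightarrow$ (d) and (a) $\Rightarrow$ (d); in fact I will organize everything around one master computation.

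The key tool is the identity of Laplacians $\Delta_{\del} = \Delta_{\delbar} = \frac12\Delta$ from Corollary \ref{cor:laplacian}, together with the Hodge decompositions of Theorems \ref{thm:hodge-decomp} and \ref{thm:herm-hodge-decomp} and the fact (used in their proofs) that a form is harmonic iff it is both closed and co-closed for the relevant operator; crucially, since all three Laplacians agree, $\ker\Delta_{\del} = \ker\Delta_{\delbar} = \ker\Delta$ as subspaces of $\mathcal{A}^{p,q}(X)$. Suppose $\alpha \in \mathcal{A}^{p,q}(X)$ is $d$-closed and $\del$-exact, say $\alpha = \del\gamma$ with $\gamma \in \mathcal{A}^{p-1,q}(X)$. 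Apply the $\delbar$-Hodge decomposition to $\gamma$: write $\gamma = \gamma_0 + \delbar\eta + \delbar^*\xi$ with $\gamma_0$ being $\delbar$-harmonic, $\eta \in \mathcal{A}^{p-1,q-1}(X)$, $\xi \in \mathcal{A}^{p-1,q+1}(X)$. Then $\alpha = \del\gamma_0 + \del\delbar\eta + \del\delbar^*\xi$. Now $\del\gamma_0 = 0$: since $\gamma_0$ is $\delbar$-harmonic it is also $\del$-harmonic (Laplacians agree), hence $d$-harmonic, hence $d$-closed, so $\del\gamma_0$ and $\delbar\gamma_0 = 0$ together force $\del\gamma_0 = 0$ — alternatively, $\del\gamma_0$ is $\del$-exact and $\del$-harmonic (as $\gamma_0$ is), and the only form that is both $\del$-exact and $\del$-harmonic is zero. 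For the third term, I claim $\del\delbar^*\xi = 0$: here I will invoke the Kähler identity $[\Lambda,\del] = \smo\delbar^*$ from Theorem \ref{thm:kahler-identities}, or more directly the relation $\del\delbar^* + \delbar^*\del = 0$ established in the proof of Corollary \ref{cor:laplacian}; combining this with the hypothesis that $\alpha = \del\gamma$ is $d$-closed (so $\delbar\del\gamma = -\del\delbar\gamma = 0$ gives control on $\del\delbar\eta$, and a norm computation $\|\del\delbar^*\xi\|^2 = (\del\delbar^*\xi, \alpha - \del\delbar\eta)$ pairs $\del\delbar^*\xi$ against the harmonic/exact pieces) pins down $\del\delbar^*\xi = 0$. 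What remains is $\alpha = \del\delbar\eta$, which is precisely (d).

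The main obstacle — and the step I would be most careful about — is the vanishing of the "mixed" term $\del\delbar^*\xi$, because it is exactly where Kählerness enters in an essential way: on a general Hermitian manifold $\del$ and $\delbar^*$ do not anticommute, and the anticommutation $\del\delbar^* + \delbar^*\del = 0$ is a consequence of the Kähler identities. The clean way to run the argument is: since $\del\delbar^* = -\delbar^*\del$, we get $\del\delbar^*\xi = -\delbar^*\del\xi$, and then $\del\delbar^*\xi$ is simultaneously in the image of $\del$ and in the image of $\delbar^*$; I will argue these two subspaces of $\mathcal{A}^{p,q}(X)$ meet only in $0$ by an orthogonality/adjointness computation (if $\del a = \delbar^* b$, then $\|\del a\|^2 = (\del a, \delbar^* b) = (\delbar\del a, b) = -(\del\delbar a, b)$, and another iteration together with $\del^2 = \delbar^2 = 0$ drives the norm to zero — this is the routine calculation I will fill in). Once that lemma is in place, the three implications (a)/(b)/(c) $\Rightarrow$ (d) all follow from the one computation above (with the obvious conjugate version for (c)), and the implication (a) $\Rightarrow$ (d) needs the small extra remark that a $d$-exact form of pure type $(p,q)$ has a primitive whose $(p-1,q)$-component we can feed into the $\del$-exact case after peeling off, using again the Hodge decomposition to isolate the correct bidegree. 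I would close by stating that this establishes the equivalence of all four conditions.
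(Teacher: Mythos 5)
Your overall architecture is the right one, and for the implication (b) $\Rightarrow$ (d) your "master computation" is essentially the paper's: apply the $\delbar$-Hodge decomposition to a $\del$-primitive $\gamma$ of $\alpha$, kill the harmonic piece using the equality of the Laplacians, and kill the mixed term $\del\delbar^*\xi$. The genuine gap is in how you propose to kill that mixed term. The "clean way" you describe --- that the images of $\del$ and of $\delbar^*$ inside $\mathcal{A}^{p,q}(X)$ meet only in $0$ --- is false. On $X=\pr^1$ one has $\mathcal{H}^{1,0}(X)=0$ and $\mathcal{A}^{2,0}(X)=\mathcal{A}^{1,-1}(X)=0$, so the two Hodge decompositions of $\mathcal{A}^{1,0}(X)$ collapse to $\mathcal{A}^{1,0}(X)=\del\,\mathcal{A}^{0,0}(X)$ and $\mathcal{A}^{1,0}(X)=\delbar^*\mathcal{A}^{1,1}(X)$: \emph{every} $(1,0)$-form on $\pr^1$ is simultaneously $\del$-exact and $\delbar^*$-exact. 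Correspondingly, your "routine calculation" stalls: from $\|\del a\|^2=(\del a,\delbar^* b)=(\delbar\del a,b)=-(\del\delbar a,b)$ there is nothing to iterate, and this quantity is not zero in general. (This does not contradict the lemma, because a generic $(1,0)$-form on $\pr^1$ is not $d$-closed; it shows that the hypothesis $d\alpha=0$ cannot be discarded at precisely this step.)

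The repair is the paper's computation, and it is exactly where $\delbar\alpha=0$ enters. From $\alpha=\del\delbar\eta+\del\delbar^*\xi$ and $\delbar\alpha=0$ one gets $0=\delbar\del\delbar^*\xi=-\delbar\delbar^*\del\xi$ (using the anticommutation $\del\delbar^*=-\delbar^*\del$), whence $\|\delbar^*\del\xi\|^2=(\del\xi,\delbar\delbar^*\del\xi)=0$ and so $\del\delbar^*\xi=-\delbar^*\del\xi=0$. Your own sketch of a norm computation, $\|\del\delbar^*\xi\|^2=(\del\delbar^*\xi,\alpha-\del\delbar\eta)$, is vacuous, since $\alpha-\del\delbar\eta$ \emph{is} $\del\delbar^*\xi$. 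Separately, your reduction of (a) to the $\del$-exact case is only gestured at; it can be completed (write $\alpha=\del\beta^{p-1,q}+\delbar\beta^{p,q-1}$ from the $(p,q)$-component of $d\beta$ and check that each summand is $d$-closed using the vanishing of the other bidegree components of $d\beta$), but the paper's device of the intermediate condition ``$\alpha$ is orthogonal to the harmonic $(p,q)$-forms'', shown equivalent to each of (a), (b), (c) via the three orthogonal Hodge decompositions, handles all three implications uniformly and avoids this extra bookkeeping.
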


\begin{proof}
Fixe uma métrica de Kähler em $X$. Dada $\alpha \in \mathcal{A}^{p,q}(X)$ uma forma $d$-fechada considere a afirmação
\begin{center}
$(\star)$ $\alpha$ é ortogonal ao espaço das $(p,q)$-formas harmônicas \footnote{Como $X$ é de Kähler, não há necessidade  em especificar com relação a qual operador ($d$, $\del$ ou $\delbar$) a forma $\alpha$ é harmônica.}.
\end{center}
Vamos mostrar primeiro que $(\star) \Leftrightarrow (a)$. Se $\alpha \perp \mathcal{H}^{p,q}_d$, da decompoisção de Hodge para $d$ temos que $\alpha = d \beta + d^* \beta'$. Como $\alpha$ é fechada segue que $0 = d \alpha = d d^* \beta'$ e portanto $(d^*\beta',d^*\beta') = (\beta',dd^*\beta')=0$. Logo $d^* \beta'=0$ e $\alpha = d \beta$. A reciproca segue do fato da decomposição de Hodge ser ortogonal.

Usando a decomposição de Hodge para $\del$ e $\delbar$ obtemos de maneira análoga que $(\star) \Leftrightarrow (b)$ e $(\star) \Leftrightarrow (c)$.

As implicações $(d) \Rightarrow (b)$ e $(d) \Rightarrow (c)$ são triviais e para ver que $(d) \Rightarrow (a)$ basta notar que se $\alpha = \del \delbar \beta$ então $\alpha = (\del + \delbar) \delbar \beta = d (\delbar \beta)$. Portanto só resta mostrar que $(\star) \Rightarrow (d)$.

Suponha $\alpha$ $d$-fechada e ortogonal ao espaço das formas harmônicas. Usando decomposição de Hodge para $\del$ temos que $\alpha = \alpha_0 + \del \beta + \del^* \gamma$ com $\alpha_0$ harmônica. Como $\alpha$ é $d$-fechada, é em particular $\del$-fechada e portanto $0 = \del \del^* \gamma$ e assim $\del^* \gamma = 0$. Como $\alpha$ é ortogonal às formas harmônicas temos que $0 = (\alpha,\alpha_0) = ||\alpha_0||^2$. Logo temos que $\alpha = \del \beta$.

Aplicando agora a decomposição de Hodge para $\delbar$ a $\beta$ temos que $\beta = \beta_0 + \delbar \beta' + \delbar^* \beta''$ com $\beta_0$ harmônica. Temos então que $\alpha = \del \delbar \beta' + \del \delbar^* \beta''$. Como $\alpha$ é $\delbar$-fechada concluimos que $0 = \delbar \del \delbar^* \beta'' = -\delbar \delbar^* \del \beta ''$, onde usamos que $\del \delbar = -\delbar \del$ e $\del \delbar^* = - \delbar^* \del$ (veja a demonstração do Corolário \ref{cor:laplacian}).

Concluimos então que $(\del \delbar^* \beta'',\del \delbar^* \beta'') = (\delbar^* \del \beta'', \delbar^* \del \beta'') = (\del \beta'',  \delbar \delbar^* \del \beta'') = 0$ e portanto $\alpha = \del \delbar \beta'$.
\end{proof}

\begin{example} \label{ex:del-delbar}
Seja $X$ uma variedade de Kähler compacta e $\omega$ e $\omega'$ duas formas de Kähler em uma mesma classe $c \in H^2(X,\R)$. Temos então que $\omega - \omega'$ é $d$-exata e portanto, pelo $\del \delbar$-Lema, existe uma função $\varphi \in C^{\infty}(X,\C)$ tal que $\omega = \omega' + \del \delbar \varphi $. Agora, como $\omega$ e $\omega'$ são reais temos que $\omega = \overline{\omega} = \overline{\omega'} + \overline{\del \delbar \varphi} = \omega' + \delbar \del \overline{\varphi} = \omega' - \del \delbar \overline{\varphi}$. Vemos então que $\overline{\varphi} = - \varphi$, ou seja, $\varphi$ toma valores imaginários puros.

 Escrevendo $\varphi = \smo f$ para $f \in C^{\infty}(X;\R)$ concluimos que duas formas de Kähler $\omega$ e $\omega'$ são cohomólogas se e somente se $\omega = \omega' +\smo \del \delbar f$ para alguma função real $f$.
\end{example}

Denote por $H^{p,q}(X) \subset H^k(X,\C)$ o conjunto de classes em $H^k(X,\C)$ que são representadas por formas de tipo $(p,q)$.
Se $c = [\alpha] \in H^{p,q}(X)$ com $\alpha \in \mathcal{A}^{p,q}(X)$ temos que $d \alpha = 0$ e consequentemente $\delbar \alpha = 0$. Portanto $\alpha$ define uma classe em $H^{p,q}_{\delbar}(X)$. Obtemos assim uma aplicação $H^{p,q}(X) \to H^{p,q}_{\delbar}(X)$, que associa a cada classe de de Rham a sua classe de Dolbeaut.
\begin{lemma}
A aplicação $H^{p,q}(X) \to H^{p,q}_{\delbar}(X)$ definida acima é um isomorfismo.
\end{lemma}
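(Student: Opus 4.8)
The plan is to show that the natural map $H^{p,q}(X) \to H^{p,q}_{\delbar}(X)$ sending a de Rham class representable by a $(p,q)$-form to its $\delbar$-cohomology class is both injective and surjective, using the $\del\delbar$-Lemma as the essential tool (together with the Hodge decompositions and the identification $\mathcal H^k_{\C}(X) = \mathcal H^k_{\del}(X) = \mathcal H^k_{\delbar}(X)$ coming from the K\"ahler identities).

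First I would address well-definedness and injectivity. The map is well-defined: if $\alpha \in \mathcal A^{p,q}(X)$ is $d$-closed then it is $\delbar$-closed, and if $\alpha = d\beta$ with $\alpha$ of type $(p,q)$, then by the $\del\delbar$-Lemma $\alpha$ is in fact $\delbar$-exact, so its Dolbeaut class vanishes; hence the assignment descends to a map on cohomology. For injectivity, suppose $c = [\alpha] \in H^{p,q}(X)$ maps to zero in $H^{p,q}_{\delbar}(X)$, i.e.\ $\alpha = \delbar\gamma$ for some $\gamma \in \mathcal A^{p,q-1}(X)$. Then $\alpha$ is $\delbar$-exact and $d$-closed, so by the $\del\delbar$-Lemma (the implication $(c)\Rightarrow(a)$) it is $d$-exact, hence $c = 0$ in $H^{p,q}(X)$.

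Next I would prove surjectivity. Given a Dolbeaut class in $H^{p,q}_{\delbar}(X)$, represent it by a $\delbar$-harmonic form $\alpha \in \mathcal H^{p,q}_{\delbar}(X)$, using the Hodge isomorphism $H^{p,q}_{\delbar}(X) \simeq \mathcal H^{p,q}_{\delbar}(X)$ of Corollary~\ref{cor:hodge-pq}. Since $X$ is K\"ahler, $\alpha$ is also $\Delta$-harmonic, hence $d$-closed, so it defines a de Rham class $[\alpha] \in H^k(X,\C)$ with $k = p+q$; being represented by a $(p,q)$-form, this class lies in $H^{p,q}(X)$, and by construction it maps to the given Dolbeaut class. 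This shows the map is onto.

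The main obstacle is purely in invoking the $\del\delbar$-Lemma correctly: everything hinges on the equivalences $(a)\Leftrightarrow(b)\Leftrightarrow(c)\Leftrightarrow(d)$ for $d$-closed $(p,q)$-forms, which in turn rely on the coincidence of the three Laplacians and the orthogonality of the Hodge decompositions. There is no hard estimate left to do here — the analytic content is entirely packaged inside the $\del\delbar$-Lemma and the Hodge decomposition theorems already established — so the proof is a short formal argument, the one subtlety being to keep track that a $d$-exact $(p,q)$-form need not a priori be $\delbar$-exact without the K\"ahler hypothesis (which is exactly what the lemma supplies). I would also remark, as a corollary worth stating, that this isomorphism is the key step toward the Hodge decomposition $H^k(X,\C) = \bigoplus_{p+q=k} H^{p,q}(X)$.
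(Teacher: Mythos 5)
Sua prova está correta e segue essencialmente o mesmo caminho do texto: injetividade via a implicação $(c)\Rightarrow(a)$ do $\del\delbar$-Lema e sobrejetividade tomando a representante $\delbar$-harmônica, que é $d$-fechada pela igualdade dos laplacianos. A observação adicional sobre a boa defini\c{c}\~ao (dois representantes de tipo $(p,q)$ da mesma classe de de Rham diferem por uma forma $d$-exata, logo $\delbar$-exata pelo mesmo lema) é um cuidado a mais que o texto deixa impl\'icito, mas n\~ao altera o argumento.
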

\begin{proof}

Seja $c \in H^{p,q}(X)$ e $\alpha \in c$ uma $(p,q)$-forma. Se a classe de Dolbeaut de $\alpha$ em $H^{p,q}_{\delbar}(X)$ é zero então $\alpha$ é $\delbar$-exata e portanto, pelo $\del \delbar$-lema, $\alpha$ é $d$-exata. Logo $c=0$ e portanto $H^{p,q}(X) \to H^{p,q}_{\delbar}(X)$ é injetora.

Seja agora $a \in H^{p,q}_{\delbar}(X)$ uma classe de Dolbeaut e seja $\eta \in a$ a única representante $\delbar$-harmônica. Como $X$ é de Kähler temos que $\eta$ também é $d$-harmônica. Em particular $\eta$ é fechada e assim $\eta$ define uma classe $[\eta] \in H^{p,q}(X)$ que é levada em $a$ pela aplicação $H^{p,q}(X) \to H^{p,q}_{\delbar}(X)$ acima.
\end{proof}

\begin{proposition} (\textsl{Dualidade de Serre em cohomologia}) \label{prop:serre-duality} \index{Dualidade de Serre!em variedades de Kähler}
Se $X$ é uma variedade de Kähler compacta então existe um isomorfismo $H^{p,q}(X) \simeq H^{n-p,n-q}(X)^*$.
\end{proposition}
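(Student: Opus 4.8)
The plan is to chain together the three isomorphisms already established in this section. The key fact is that when $X$ is compact Kähler, the map $H^{p,q}(X) \to H^{p,q}_{\delbar}(X)$ sending a de Rham class representable by a $(p,q)$-form to its Dolbeaut class is an isomorphism (this is the lemma immediately preceding the statement). So I would first invoke this to identify $H^{p,q}(X) \simeq H^{p,q}_{\delbar}(X)$ and $H^{n-p,n-q}(X) \simeq H^{n-p,n-q}_{\delbar}(X)$.

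Next I would apply the Dolbeaut isomorphism (equation \eqref{eq:dolbeaut-iso}), $H^{p,q}_{\delbar}(X) \simeq H^q(X,\Omega^p_X)$, together with Serre duality in the form of Corollary \ref{cor:serre-dolbeaut-duality}, which gives $H^q(X,\Omega^p_X) \simeq H^{n-q}(X,\Omega^{n-p}_X)^*$. Chasing back through the Dolbeaut isomorphism once more yields $H^{n-q}(X,\Omega^{n-p}_X) \simeq H^{n-p,n-q}_{\delbar}(X)$, and composing everything produces $H^{p,q}(X) \simeq H^{n-p,n-q}(X)^*$. Alternatively, and more directly, one can bypass the Dolbeaut cohomology altogether: fix a Kähler metric, represent each class $c \in H^{p,q}(X)$ by its unique harmonic representative $\alpha$, note that since $X$ is Kähler harmonicity does not depend on whether we use $d$, $\del$ or $\delbar$, and use Corollary \ref{cor:serre-dual} (Serre duality for harmonic forms), which states that the pairing $(\alpha,\beta) \mapsto \int_X \alpha \wedge \beta$ on $\mathcal{H}^{p,q}_{\delbar}(X) \times \mathcal{H}^{n-p,n-q}_{\delbar}(X)$ is non-degenerate; combined with the isomorphisms $H^{p,q}(X) \simeq \mathcal{H}^{p,q}_{\delbar}(X)$ (which follows from the preceding lemma plus Corollary \ref{cor:hodge-pq}) this gives the result immediately.

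I would probably present the harmonic-forms version since it is the shortest and keeps everything at the level of genuine cohomology classes rather than Dolbeaut cohomology. The one point to be slightly careful about is that the pairing $H^{p,q}(X) \times H^{n-p,n-q}(X) \to \C$ given by cup product (integration of the wedge) is well-defined on cohomology classes regardless of chosen representatives — this is the standard fact that $\int_X \alpha \wedge \beta$ depends only on $[\alpha]$ and $[\beta]$ when both are closed, via Stokes' theorem — and that under the harmonic identification this pairing corresponds to the pairing in Corollary \ref{cor:serre-dual}. There is no real obstacle here: the substantive analytic work (existence of harmonic representatives, the Kähler identities forcing $\Delta_\del = \Delta_\delbar = \tfrac12\Delta$, and non-degeneracy of the Serre pairing on harmonic forms) has all been done in the earlier parts of this chapter, so the proof is a short formal assembly. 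If anything needs care it is merely bookkeeping of the bidegrees $(n-p,n-q)$ versus $(n-q,n-p)$ when passing through the Hodge star, but that has already been handled in Proposition \ref{prop:*-harm}.

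\begin{proof}
Fix a Kähler metric on $X$. By the lemma preceding the statement together with Corollary \ref{cor:hodge-pq}, the natural map gives an isomorphism $H^{p,q}(X) \simeq \mathcal{H}^{p,q}_{\delbar}(X)$, and likewise $H^{n-p,n-q}(X) \simeq \mathcal{H}^{n-p,n-q}_{\delbar}(X)$. Since $X$ is Kähler, these harmonic spaces do not depend on the choice of operator among $d,\del,\delbar$. Now Corollary \ref{cor:serre-dual} asserts that the pairing
\begin{equation*}
\mathcal{H}^{p,q}_{\delbar}(X) \times \mathcal{H}^{n-p,n-q}_{\delbar}(X) \longrightarrow \C, \qquad (\alpha,\beta) \longmapsto \int_X \alpha \wedge \beta,
\end{equation*}
is non-degenerate, hence induces an isomorphism $\mathcal{H}^{p,q}_{\delbar}(X) \simeq \mathcal{H}^{n-p,n-q}_{\delbar}(X)^*$. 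On the other hand, for closed forms the value $\int_X \alpha \wedge \beta$ depends only on the cohomology classes $[\alpha]$ and $[\beta]$: if $\alpha' = \alpha + d\gamma$ then $\int_X (\alpha + d\gamma)\wedge \beta = \int_X \alpha \wedge \beta + \int_X d(\gamma \wedge \beta) = \int_X \alpha \wedge \beta$ by Stokes' theorem, since $\beta$ is closed, and similarly in the second variable. Therefore, under the identifications above, this pairing coincides with the cup-product pairing $H^{p,q}(X) \times H^{n-p,n-q}(X) \to \C$, which is consequently non-degenerate as well. This gives the desired isomorphism $H^{p,q}(X) \simeq H^{n-p,n-q}(X)^*$.
\end{proof}
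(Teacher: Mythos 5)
Your proof is correct and follows essentially the same route as the paper: chain the lemma identifying $H^{p,q}(X)$ with $H^{p,q}_{\delbar}(X)$, the Hodge isomorphism with harmonic forms, and Serre duality for harmonic forms (Corollary \ref{cor:serre-dual}). The extra observation that the duality is realized by the cup-product pairing, well-defined on classes by Stokes, is a pleasant addition but not needed for the statement.
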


\begin{proof}
Usando o lema acima, o isomorfismo (\ref{eq:hodge-pq}) e a dualidade de Serre para formas harmônicas (Corolário \ref{cor:serre-dual}) temos
\begin{equation*}
H^{p,q}(X) \simeq H^{p,q}_{\delbar}(X) \simeq \mathcal{H}_{\delbar}^{p,q}(X) \simeq \mathcal{H}_{\delbar}^{n-p,n-q}(X)^* \simeq H_{\delbar}^{n-p,n-q}(X)^* \simeq H^{n-p,n-q}(X)^*.
\end{equation*}
\end{proof}

\begin{theorem} \index{Teorema!da decomposição de Hodge} \label{thm:hodge-decomposition} \textbf{Decomposição de Hodge para a cohomologia de variedades de Kähler}.

Se $X$ é uma variedade de Kähler compacta então existe uma decomposição
\begin{equation} \label{eq:kahler-hodge}
H^k(X,\C) = \bigoplus_{p+q=k} H^{p,q}(X) ~\text{ com }~H^{p,q}(X) = \overline{H^{q,p}(X)}.
\end{equation}
Além disso, essa decomposição independe da métrica de Kähler escolhida.
\end{theorem}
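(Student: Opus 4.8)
The plan is to deduce the Hodge decomposition (\ref{eq:kahler-hodge}) from the Hodge theory already developed in this section, so I would begin by fixing a K\"ahler metric $g$ on $X$. The key input is the isomorphism $H^k(X,\C)\simeq\mathcal H^k_{\C}(X)$ from Remark \ref{rmk:complex-hodge} together with the decomposition $\mathcal H^k_{\delbar}(X)=\bigoplus_{p+q=k}\mathcal H^{p,q}_{\delbar}(X)$ of Proposition \ref{prop:*-harm}, and the crucial fact that, because $g$ is K\"ahler, Corollary \ref{cor:laplacian} gives $\Delta=2\Delta_{\del}=2\Delta_{\delbar}$, so that the three spaces of harmonic forms coincide: $\mathcal H^k_{\C}(X)=\mathcal H^k_{\del}(X)=\mathcal H^k_{\delbar}(X)$. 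First I would chase through these identifications: $H^k(X,\C)\simeq\mathcal H^k_{\C}(X)=\mathcal H^k_{\delbar}(X)=\bigoplus_{p+q=k}\mathcal H^{p,q}_{\delbar}(X)\simeq\bigoplus_{p+q=k}H^{p,q}_{\delbar}(X)$, the last step by Corollary \ref{cor:hodge-pq}. Then I would invoke the lemma proved just above (that the natural map $H^{p,q}(X)\to H^{p,q}_{\delbar}(X)$ is an isomorphism, via the $\del\delbar$-Lemma) to replace each Dolbeaut group $H^{p,q}_{\delbar}(X)$ with the subspace $H^{p,q}(X)\subset H^k(X,\C)$ of de Rham classes representable by a form of type $(p,q)$. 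It remains to check that this abstract direct sum decomposition is the internal direct sum of the subspaces $H^{p,q}(X)$ inside $H^k(X,\C)$: given a class $c$, represent it by its harmonic representative $\alpha$ (unique, by Hodge theory), split $\alpha=\sum_{p+q=k}\alpha^{p,q}$ into types, observe each $\alpha^{p,q}$ is again harmonic (same $\Delta$, which preserves type) hence closed, and so $c=\sum[\alpha^{p,q}]$ with $[\alpha^{p,q}]\in H^{p,q}(X)$; directness follows because the harmonic representative is unique and the type decomposition of forms is direct.

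Next I would prove the conjugation symmetry $H^{p,q}(X)=\overline{H^{q,p}(X)}$. Complex conjugation of forms sends $\mathcal A^{p,q}(X)$ to $\mathcal A^{q,p}(X)$ and commutes with $d$, hence descends to an antilinear map $H^k(X,\C)\to H^k(X,\C)$; since a $(p,q)$-form has $(q,p)$-conjugate, this map carries $H^{p,q}(X)$ onto $H^{q,p}(X)$, and applying it twice is the identity, which gives the stated equality. (Here one uses that conjugation on $H^k(X,\C)=H^k(X,\R)\otimes\C$ is the standard one, so it is well defined on cohomology classes.)

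Finally I would address independence of the chosen K\"ahler metric. The point is that the subspace $H^{p,q}(X)\subset H^k(X,\C)$ was \emph{defined} intrinsically as the set of classes admitting a representative of type $(p,q)$ --- this definition does not mention the metric at all. What a priori could depend on the metric is the \emph{statement} that these subspaces sum to everything and do so directly; but once we have established, for one metric, that $H^k(X,\C)=\bigoplus_{p+q=k}H^{p,q}(X)$ as subspaces, the decomposition is a purely cohomological statement about fixed subspaces of a fixed vector space, and hence holds regardless of which metric was used to prove it. So independence is essentially automatic once the decomposition is phrased in terms of the metric-free subspaces $H^{p,q}(X)$.

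The main obstacle --- really the only nontrivial content --- is the $\del\delbar$-Lemma and, through it, the identification $H^{p,q}(X)\simeq H^{p,q}_{\delbar}(X)$: this is where K\"ahlerness is genuinely used (beyond the equality of Laplacians), to guarantee that a $\delbar$-exact $d$-closed $(p,q)$-form is $d$-exact, so that the map from de Rham $(p,q)$-classes to Dolbeaut classes is injective. Since that lemma is already proved in the excerpt, the remaining work is the bookkeeping of assembling the chain of isomorphisms and checking the sum is internal and direct, which is routine. I would also remark that Serre duality $H^{p,q}(X)\simeq H^{n-p,n-q}(X)^*$ (Proposition \ref{prop:serre-duality}) is consistent with, though not needed for, the decomposition.
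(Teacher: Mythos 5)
Your proposal is correct and follows essentially the same route as the paper's proof: composing the Hodge isomorphism with the type decomposition of harmonic forms (using $\Delta=2\Delta_{\delbar}$) and the identification $H^{p,q}(X)\simeq H^{p,q}_{\delbar}(X)$, then observing that the composite is an internal direct sum via harmonic representatives, with the conjugation symmetry and metric-independence handled exactly as you describe.
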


\begin{proof}
Seja $g$ uma métrica de Kähler em $X$. Compondo os isomorfismos (\ref{eq:complex-hodge}) e (\ref{eq:hodge-pq}) e usando a soma direta (\ref{eq:harm-pq}) obtemos um isomorfismo
\begin{equation*}
H^k(X,\C) \simeq \mathcal{H}^k_{\C}(X) = \bigoplus_{p+q=k} \mathcal{H}^{p,q}(X) = \bigoplus_{p+q=k} \mathcal{H}_{\delbar}^{p,q}(X) \simeq  \bigoplus_{p+q=k} H_{\delbar}^{p,q}(X) \simeq \bigoplus_{p+q=k} H^{p,q}(X)
\end{equation*}
Os isomorfismos acima são de dois tipos, um que associa a uma classe sua representante harmônica e outro que associa a uma forma harmônica sua classe. É facil ver então que a composição dos três isomorfismos acima é na verdade uma igualdade, resultando na decomposição desejada.

Como a própria definição dos subespaços $H^{p,q}(X)$ (classes de de Rham representáveis por formas de tipo $(p,q)$) independe da métrica, a decomposição (\ref{eq:kahler-hodge}) também idependende da escolha de $g$.

Para ver que $H^{p,q}(X) = \overline{H^{q,p}(X)}$ basta notar que se $c \in H^{q,p}(X)$ é representada por $\alpha \in \mathcal{A}^{q,p}(X)$ então $\overline{\alpha} \in \mathcal{A}^{p,q}(X)$ é uma representante de $\overline{c} \in H^{p,q}(X)$.
\end{proof}

\begin{definition}
Os números $h^{p,q}(X) = \dim H^{p,q}(X)$ são chamados de \textbf{números de Hodge} de $X$.
\end{definition}
Do isomorfismo de Dolbeaut \ref{eq:dolbeaut-iso} temos que $h^{p,q}(X) = \dim H^q(X,\Omega_X^p)$.\footnote{Essa identidade pode ser usada para definir os números $h^{p,q}(X)$ quando $X$ não é de Kähler.} Em particular o número $h^{p,0}(X)$ é a dimensão do espaço de $p$-formas holomorfas em $X$.

\begin{corollary}
Seja $X$ uma variedade de Kähler compacta e $b_k(X) = \dim H^k(X,\C),~k=1,2,\ldots$ os números de Betti de $X$. Então
\begin{itemize}
\item[a.] $h^{p,q}(X) = h^{q,p}(X)$
\item[b.] $b_k(X) = \sum_{p+q=k} h^{p,q}(X)$
\item[c.] $b_k(X)$ é par se $k$ é ímpar
\item[d.] $h^{1,0}(X) = \dim H^0(X,\Omega_X) = \frac{1}{2} b_1(X)$ é um invariante topológico
\item[e.] $h^{n-p,n-q}(X)=h^{p,q}(X)$
\item[f.] $h^{p,p}(X) \geq 1$ para $p=1,\ldots,n$.
\end{itemize}
\end{corollary}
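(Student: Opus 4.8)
The final statement is a corollary collecting six standard consequences of the Hodge decomposition theorem (Theorem \ref{thm:hodge-decomposition}), the Dolbeaut isomorphism, and Serre duality. The plan is to derive each item directly from results already established in the excerpt, so this is essentially a bookkeeping argument with no genuinely new ideas required.

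First I would prove item (a). Since $X$ is K\"ahler, Theorem \ref{thm:hodge-decomposition} gives $H^{p,q}(X) = \overline{H^{q,p}(X)}$, and complex conjugation is an $\mathbb{R}$-linear isomorphism of the underlying real vector spaces that is conjugate-linear over $\mathbb{C}$; in either case it preserves complex dimension, so $h^{p,q}(X) = \dim_{\mathbb{C}} \overline{H^{q,p}(X)} = \dim_{\mathbb{C}} H^{q,p}(X) = h^{q,p}(X)$. Item (b) is immediate from taking dimensions in the direct sum decomposition $H^k(X,\mathbb{C}) = \bigoplus_{p+q=k} H^{p,q}(X)$ of Theorem \ref{thm:hodge-decomposition}, together with the fact that $b_k(X) = \dim_{\mathbb{C}} H^k(X,\mathbb{C})$. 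For item (c), if $k$ is odd then in the sum $b_k(X) = \sum_{p+q=k} h^{p,q}(X)$ the terms pair up: $(p,q) \leftrightarrow (q,p)$ with $p \neq q$ (equality is impossible since $p+q$ is odd), and by (a) each such pair contributes $2h^{p,q}(X)$, so $b_k(X)$ is even.

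Next, item (d): taking $k=1$ in (b) gives $b_1(X) = h^{1,0}(X) + h^{0,1}(X)$, and by (a) these two summands are equal, hence $h^{1,0}(X) = \tfrac12 b_1(X)$; the identification $h^{1,0}(X) = \dim H^0(X,\Omega_X)$ follows from the Dolbeaut isomorphism (\ref{eq:dolbeaut-iso}), which gives $H^{1,0}(X) \simeq H^{0,\,0}$ --- more precisely $h^{p,0}(X) = \dim H^0(X,\Omega_X^p)$ as noted just before the corollary, and Proposition \ref{prop:hol-forms} identifies $H^0(X,\Omega_X)$ with the holomorphic $1$-forms. Since $b_1(X)$ is a topological invariant, so is $h^{1,0}(X)$. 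Item (e) follows from Serre duality: by Corollary \ref{cor:serre-dolbeaut-duality} (or Proposition \ref{prop:serre-duality}) we have $H^{p,q}(X) \simeq H^{n-p,n-q}(X)^*$, and a finite-dimensional vector space has the same dimension as its dual, so $h^{p,q}(X) = h^{n-p,n-q}(X)$. Finally, item (f) uses the K\"ahler form $\omega$: as shown in the proof of Proposition \ref{prop:cohom-kahler}, each power $\omega^p$ is a closed $(p,p)$-form (being a $(1,1)$-form wedged with itself, it is of type $(p,p)$) which is not exact for $p = 1,\dots,n$, so $[\omega^p]$ is a nonzero class in $H^{p,p}(X)$, whence $h^{p,p}(X) \geq 1$.

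No step here presents a real obstacle; the only points requiring a word of care are that complex conjugation preserves $\mathbb{C}$-dimension (used in (a)), that the pairing in (c) genuinely has no fixed point when $k$ is odd, and that in (f) one must invoke the already-proven non-exactness of $\omega^p$ rather than re-deriving it. I would present the six items in the order (a), (b), (c), (d), (e), (f), since (c) depends on (a), (d) depends on (b) and (a), and the rest are independent; each item is a one- or two-line deduction from a theorem stated earlier in the text.
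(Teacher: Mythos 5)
Your proof is correct and follows essentially the same route as the paper: (a) via the conjugation symmetry $H^{p,q}=\overline{H^{q,p}}$ from the Hodge decomposition, (b) by taking dimensions in the direct sum, (c) and (d) as formal consequences of (a) and (b) plus the Dolbeaut isomorphism, (e) via Serre duality, and (f) via the non-exactness of the powers $\omega^p$ established in Proposi\c{c}\~ao \ref{prop:cohom-kahler}. The only blemish is the garbled intermediate phrase ``$H^{1,0}(X)\simeq H^{0,0}$'' in item (d), which you immediately correct with the right statement $h^{p,0}(X)=\dim H^0(X,\Omega_X^p)$, so nothing is actually missing.
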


\begin{proof}
a. Temos a igualdade $H^{q,p}(X) = \overline{H^{p,q}(X)}$ e a conjugação complexa estabelece um isomorfismo $\C$-antilinear $\overline{H^{p,q}(X)} \simeq H^{p,q}(X)$.

b. Segue imediatamente da existência da soma direta (\ref{eq:kahler-hodge}).

c. Usando os itens a. e b., vemos que se $k=2r+1$ então $b_k(X) = 2 \displaystyle \sum_{p=0}^r h^{p,2r+1-p}(X)$.

d. Dos a. e b. temos que $b_1(X) = h^{1,0}(X) + h^{0,1}(X) = 2h^{1,0}(X)$ e a igualdade $h^{1,0}(X) = \dim H^0(X,\Omega_X)$ segue do isomorfismo de Dolbeaut.

e. Da dualidade de Serre temos que $H^{p,q}(X) \simeq H^{n-p,n-q}(X)^*$. Em particular $h^{n-p,n-q}(X)=h^{p,q}(X)$.

f. Note que a forma de Kähler e suas potências definem classes não triviais $[\omega^p] \in H^{p,p}(X)$ para $p \leq n$ (veja a demonstração da Proposição \ref{prop:cohom-kahler}). Em particular $H^{p,p}(X)\neq 0$;
\end{proof}

A proposição acima mostra que os números de Hodge \index{números de Hodge} uma variedade de Kähler apresenta diversas simetrias. Uma maneira visual de representar estas simetria é através do chamado \textit{diamante de Hodge} \index{diamante de Hodge}
\begin{equation*}
\begin{matrix}
          &          &         & h^{0,0} &         &         &\\
          &          & h^{1,0} &         & h^{0,1} &         &\\
          &  h^{2,0} &         & h^{1,1} &         & h^{0,2} & \\
          &          &         & \vdots  &         &         & \\
 h^{n,0}  &          &         & \cdots  &         &         &h^{0,n}\\
          &          &         & \vdots  &         &         & \\
          &          &h^{n,n-1}&         &h^{n-1,n}&         &\\
          &          &         & h^{n,n} &         &         &  
 \end{matrix}
\end{equation*}
Note que a soma dos números da $k$-ésima linha é igual ao $k$-ésimo numero de Betti $b_k$.

A simetria $h^{p,q} = h^{q,p}$ dada pela conjugação complexa implica que o diamante é simétrico com respeito a refelxão na reta vertical passando por $h^{0,0}$ e $h^{n,n}$. A dualidade de Serre ($h^{p,q} = h^{n-p,n-q}$) mostra que o diamante é invariante pela rotação de $180^o$. Compondo essas duas simetrias temos que $h^{p,q} = h^{n-p,n-q} = h^{n-q,n-p}$, o que mostra que o diamante de Hodge também é invariante pela relexão na reta horizontal passando por $h^{n,0}$ e $h^{0,n}$.

\begin{example} \label{ex:hodge-supriem}
Seja $X$ uma superfície de Riemann compacta. Em particular $X$ é uma variedade diferenciável compacta e orientável de dimensão $2$. Assim, pela classificação das superfícies, $X$ é homeomorfa a uma esfera com $g$ alças. O número $g$ é chamado \textit{genus} de $X$.

A superfície $X$ pode ser representada como um polígono $\mathcal{P}$ de $4g$ lados $a_i,b_i,a'_i,b'_i~i=1,\ldots,g$ com $a_i$ identificado com $a'_i$ com orientação oposta  e fazemos o mesmo com $b_i$ e $b'_i$. Com isso, a imagem dos lados pela aplicação quociente $\pi:\mathcal{P}\to X$ gera livremente o primeiro grupo de homologia com coeficientes inteiros $H_1(X,\Z)$.

Temos então que $b_1(X) = 2g$ e portanto, pelo item d. da proposição acima, segue que $h^{1,0} (X)=g$, isto é, o espaço das $1$-formas holomorfas globais em $X$ tem dimensão $g$. Em particular, se $X$ é homeomorfa a esfera então $X$ não tem formas holomorfas globais.

Da discussão acima vemos também que $\dim H^1(X,\mathcal{O}_X) = g$.
\end{example}

\begin{example} \label{ex:picard-Pn} \index{grupo de Picard!de $\pr^n$} \index{números de Hodge!de $\pr^n$} \textbf{Números de Hodge  e o grupo de Picard do espaço projetivo.}

Os grupos de cohomologia de $\pr^n$ são $H^{2p}(\pr^n,\C) = \C$ para $p=0,\ldots,n$ e $H^k(\pr^n,\C) =0$ caso contrário. Portanto, como $H^{p,p}(\pr^n) \neq 0$ vemos que na decomposição de Hodge $H^{2p}(\pr^n,\C) = \bigoplus^n_{k=0} H^{k,2p-k}(X)$ o único termo que aparece é $H^{p,p}(\pr^n)$ (e este deve ter dimensão $1$). Concluimos então que
\begin{equation*}
h^{p,q}(\pr^n) = \left \lbrace\begin{split}
 &1~~\text{ se }~p=q=1,\ldots,n\\
 &0~~\text{ se }~p \neq q
\end{split}
\right.
\end{equation*}

Em particular vemos que $h^{p,0}(\pr^n) = \dim H^0(\pr^n,\Omega_{\pr^n}^p) = 0$ se $p\geq 1$, ou seja \textbf{$\pr^n$ não possui formas holomorfas globais}.

O conhecimento dos números de Hodge de $\pr^n$ nos permite calcular o grupo de Picard $\Pic(\pr^n)$.

Da sequência exponencial (veja a seção \ref{subsec:expseq}) obtemos a sequência exata
\begin{equation*}
H^1(\pr^n,\mathcal{O}_{\pr^n}) \longrightarrow H^1(\pr^n,\mathcal{O}^*_{\pr^n}) \longrightarrow H^2(\pr^n,\Z) \longrightarrow H^2(\pr^n,\mathcal{O}_{\pr^n})
\end{equation*}
Usando o isomorfismo de Dolbeaut e o fato de que $h^{0,q}(\pr^n) = 0$ temos que $H^q(\pr^n,\mathcal{O}_{\pr^n}) \simeq H^{0,q}(\pr^n) = 0$ e portanto a sequência acima induz um isomorfismo $\Pic(\pr^n)=H^1(\pr^n,\mathcal{O}^*_{\pr^n}) \simeq  H^2(X,\Z) = \Z $, ou seja, \textbf{o grupo de Picard do espaço projetivo é isomorfo a $\Z$}.
Note que o isomorfismo acima é a aplicação $c_1:\Pic(X) \to H^2(X,\R)$ (veja a definição \ref{def:chernclass}) que associa a cada $L$ sua classe de Chern $c_1(L)$. Isso mostra que a classe de Chern $c_1(L)$ determina a estrutura holomorfa de $L$.
\end{example}

\section{Toros complexos associados a variedades de Kähler} \label{sec:jacobi-albanese}

A existência da decomposição de Hodge nos permite associar alguns invariantes contínuos a variedades de Kähler compactas. Nesta seção vamos estudar os exemplos das variedade de Jacobi e de Albanese.

\subsection{A variedade Jacobiana}

\begin{definition} \index{variedade!Jacobiana}
O \textbf{Jacobiano} de uma variedade complexa $X$, denotado por $\Pic^0(X)$, é o núcleo da aplicação $c_1:\Pic(X) \to H^2(X,\Z)$, ou seja, é o espaço dos fibrados de linha com primeira classe de Chern nula. 
\end{definition}

Vimos na seção \ref{subsec:expseq} que quando $X$ é compacta a aplicação $H^1(X,\Z) \to H^1(X,\mathcal{O}_X)$ é injetora, de modo que, da sequência exponencial, temos a seguinte sequência exata
\begin{equation*}
0 \longrightarrow H^1(X,\Z) \longrightarrow H^1(X,\mathcal{O}_X) \longrightarrow \Pic(X) \stackrel{c_1}{\longrightarrow} H^2(X,\Z),
\end{equation*}
de onde vemos que
\begin{equation*}
\Pic^0(X) = \ker c_1 =  \im (H^1(X,\mathcal{O}_X) \to \Pic(X)) \simeq \frac{H^1(X,\mathcal{O}_X)}{H^1(X,\Z)}.
\end{equation*}

Quando $X$ é uma variedade de Kähler compacta podemos dizer ainda mais sobre a estrutura de $\Pic^0(X)$.\\

Antes de seguir vamos relembrar alguns fatos de topologia algébrica.
\begin{remark} \label{rmk:tensor-product}
Se $G$ é um grupo abeliano podemos considerar o produto tensorial $G \otimes_{\Z} \R$, onde vemos aqui $G$ e $\R$ como $\Z$-módulos. Para um grupo finito de ordem $m$ esse produto é trivial, pois $g \otimes x = g \otimes (m m^{-1}x) = (m\cdot g) \otimes (m^{-1}x) = 0\otimes (m^{-1}x) = 0$.

Se $G$ é finitamente gerado temos, da classificação de grupos abelianos finitamente gerados, que $G \simeq \Z^r \times \widetilde{G}$, com $\widetilde{G}$ finito e portanto $G \otimes \R \simeq \Z^r \otimes \R \simeq \R^r$. Temos uma aplicação natural $G \to G \otimes \R$ dada por $g \mapsto g \otimes 1$, cuja imagem em $G \otimes \R \simeq \R$ é o reticulado $\Z^r$.

Podemos aplicar essa construção para os grupos de cohomologia de uma variedade compacta. Obtemos assim uma aplicação
\begin{equation*}
H^k(X,\Z) \longrightarrow H^k(X,\Z) \otimes \R \simeq H^k(X,\R)
\end{equation*}
cuja imagem é um reticulado em $H^k(X,\R)$.
\end{remark}

\begin{remark} \label{rmk:cohomology-coeficients}
O isomorfismo $H^k(X,\Z) \otimes \R \simeq H^k(X,\R)$ exige uma explicação. No nível de co-cadeias de \v{C}ech, se $\mathcal{U}$ é uma cobertura finita, o grupo $\check{C}^k(\mathcal{U},\Z)$ consiste de um produto finito de cópias de $\Z$, enquanto $\check{C}^k(\mathcal{U},\R)$ é um produto do mesmo número de cópias de  $\R$. Temos portanto um isomorfismo $\check{C}^k(\mathcal{U},\Z) \otimes \R \simeq \check{C}^k(\mathcal{U},\R)$, que induz um isomorfismo $\check{H}^k(\mathcal{U},\Z) \otimes \R \simeq \check{H}^k(\mathcal{U},\R)$. Tomando uma cobertura finita que é acíclica para ambos os feixes (por exemplo uma cobertura formada por abertos cujas interseções são contráteis) temos $H^k(X,\Z) \otimes \R \simeq \check{H}^k(\mathcal{U},\Z) \otimes \R \simeq \check{H}^k(\mathcal{U},\R) \simeq H^k(X,\R)$.

O mesmo argumento acima mostra que $H^k(X,\Z) \otimes \mathbf k \simeq H^k(X,\mathbf k)$ se $\mathbf k$ corpo de característica zero
\end{remark}

\begin{proposition} \label{prop:jacobian-torus}
Se $X$ é uma variedade de Kähler compacta então o Jacobiano $\Pic^0(X) = H^1(X,\mathcal{O}_X)/H^1(X,\Z)$ é um toro complexo de dimensão $h^{0,1}(X) = \frac{1}{2}b_1(X)$.
\end{proposition}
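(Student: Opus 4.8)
A prova se reduz a mostrar que $\Pic^0(X)$ é um quociente de um espaço vetorial complexo de dimensão finita por um reticulado de posto máximo. Já sabemos, da sequência exponencial e da injetividade de $H^1(X,\Z) \to H^1(X,\mathcal{O}_X)$ (discutida na seção \ref{subsec:expseq}), que $\Pic^0(X) \simeq H^1(X,\mathcal{O}_X)/H^1(X,\Z)$. O espaço $H^1(X,\mathcal{O}_X)$ é um $\C$-espaço vetorial; pelo isomorfismo de Dolbeaut e pela teoria de Hodge (Corolário \ref{cor:hodge-pq}), temos $H^1(X,\mathcal{O}_X) \simeq H^{0,1}_{\delbar}(X) \simeq H^{0,1}(X)$, cuja dimensão complexa é exatamente o número de Hodge $h^{0,1}(X)$. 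Assim o primeiro passo é observar que $H^1(X,\mathcal{O}_X)$ tem dimensão complexa finita $h^{0,1}(X)$.

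O passo central é identificar a imagem de $H^1(X,\Z)$ como um reticulado completo em $H^1(X,\mathcal{O}_X)$, isto é, um subgrupo discreto que gera $H^1(X,\mathcal{O}_X)$ como $\R$-espaço vetorial. Primeiro considero a aplicação natural $H^1(X,\Z) \to H^1(X,\R) \simeq H^1(X,\Z)\otimes \R$ (conforme as Observações \ref{rmk:tensor-product} e \ref{rmk:cohomology-coeficients}); sua imagem é um reticulado de posto $b_1(X) = \dim_{\R} H^1(X,\R)$. Agora uso a decomposição de Hodge $H^1(X,\C) = H^{1,0}(X) \oplus H^{0,1}(X)$ (Teorema \ref{thm:hodge-decomposition}), com $H^{1,0}(X) = \overline{H^{0,1}(X)}$, para observar que a projeção $\pi^{0,1}: H^1(X,\C) \to H^{0,1}(X)$, restrita ao subespaço real $H^1(X,\R) \subset H^1(X,\C)$, é um isomorfismo $\R$-linear de $H^1(X,\R)$ sobre $H^{0,1}(X)$ visto como $\R$-espaço vetorial. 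Isso segue do fato de que $H^1(X,\R)$ é o conjunto de pontos fixos da conjugação complexa em $H^1(X,\C)$, e como os dois somandos $H^{1,0}$ e $H^{0,1}$ são trocados pela conjugação, nenhum elemento real não nulo pode estar no núcleo de $\pi^{0,1}$, enquanto uma contagem de dimensões reais ($\dim_\R H^1(X,\R) = b_1(X) = 2h^{0,1}(X) = \dim_\R H^{0,1}(X)$) garante a sobrejetividade. Resta então checar que a composição $H^1(X,\Z) \to H^1(X,\R) \xrightarrow{\sim} H^{0,1}(X)$ coincide, a menos de identificação, com a aplicação $H^1(X,\Z) \to H^1(X,\mathcal{O}_X)$ vinda da sequência exponencial — isso decorre de comparar as resoluções que calculam essas cohomologias, ou mais concretamente do diagrama comutativo relacionando o feixe constante $\underline{\Z}$, o feixe $\mathcal{A}^{0,\bullet}_X$ e o complexo de de Rham complexificado.

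Com isso, $H^1(X,\Z)$ mergulha em $H^1(X,\mathcal{O}_X)$ como a imagem de um reticulado de posto $b_1(X) = 2h^{0,1}(X)$ por um isomorfismo $\R$-linear, logo como um reticulado completo em $H^1(X,\mathcal{O}_X)$, que tem dimensão complexa $h^{0,1}(X)$ e dimensão real $2h^{0,1}(X)$. Portanto o quociente $\Pic^0(X) = H^1(X,\mathcal{O}_X)/H^1(X,\Z)$ satisfaz a definição de toro complexo (Exemplo \ref{ex:complex-tori}), de dimensão complexa $h^{0,1}(X)$, que por sua vez é igual a $\frac{1}{2}b_1(X)$ pelo item d. do corolário que segue o Teorema \ref{thm:hodge-decomposition}.

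O ponto mais delicado — e o verdadeiro obstáculo técnico — é a identificação entre a aplicação induzida pela sequência exponencial $H^1(X,\Z)\to H^1(X,\mathcal{O}_X)$ e a projeção de Hodge $H^1(X,\R)\to H^{0,1}(X)$; sem ela, saberíamos apenas que ambos os lados são ``reticulado em espaço vetorial'' sem garantir que o reticulado proveniente de $\Z$ é completo em $H^1(X,\mathcal{O}_X)$. A verificação é essencialmente diagramática: a inclusão $\underline{\Z} \hookrightarrow \mathcal{O}_X$ fatora-se por $\underline{\C}$, e a cohomologia com coeficientes em $\underline{\C}$ é a cohomologia de de Rham complexa, na qual a classe de uma forma fechada projeta-se na sua componente $(0,1)$ harmônica; comparando com o complexo de Dolbeaut $(\mathcal{A}^{0,\bullet}_X,\delbar)$, que resolve $\mathcal{O}_X$, obtém-se a compatibilidade desejada. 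Uma vez estabelecido isso, o resto é uma aplicação direta dos resultados de teoria de Hodge e da definição de toro complexo já disponíveis no texto.
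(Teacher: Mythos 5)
Your proof is correct and follows essentially the same route as the paper: both reduce the statement to showing that the composition $H^1(X,\R)\to H^1(X,\C)\to H^{0,1}(X)$ is an $\R$-linear isomorphism (injectivity via conjugation swapping $H^{1,0}$ and $H^{0,1}$, surjectivity by counting real dimensions), and both hinge on the same technical lemma identifying the sheaf-theoretic map $H^1(X,\C)\to H^1(X,\mathcal{O}_X)$ with the Hodge projection $\Pi^{0,1}$, proved by comparing the de Rham and Dolbeault resolutions. The delicate point you single out at the end is precisely the content of the paper's Lemma accompanying this proposition, and your sketch of its diagrammatic verification matches the paper's argument.
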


\begin{proof}
Note primeiro que a composição
\begin{equation*}
H^1(X,\R) \longrightarrow H^1(X,\C) \longrightarrow H^1(X,\mathcal{O}_X) \simeq H^{0,1}(X)
\end{equation*}
induzida pelas inclusões de feixes $\R \subset \C \subset \mathcal{O}_X$ é um isomorfismo.

Para ver isso considere um elemento $\alpha \in H^1(X,\R)$. Sua imagem em $H^1(X,\C)$, também denotada por $\alpha$, satisfaz $\overline{\alpha} = \alpha$ e portanto, se escrevermos $\alpha = \alpha^{1,0} + \alpha^{0,1}$, devemos ter $\alpha^{0,1} = \overline{\alpha^{1,0}}$.

\begin{lemma} \label{lemma:jacobian-torus}
A aplicação $H^k(X,\C) \to H^k(X,\mathcal{O}_X)$ induzida pela inclusão $\C \subset \mathcal O_X$ coincide, segundo o isomorfismo de Dolbeaut $H^{0,k}(X) \simeq H^k(X,\mathcal{O}_X)$, com a projeção $\Pi^{0,k}: H^k(X,\C) \to H^{0,k}(X)$ dada pela decomposição de Hodge.
\end{lemma}

\begin{proof}
A aplicação natural $\varphi: H^k(X,\C) \to H^k(X,\mathcal{O}_X)$ é obtida do seguinte diagrama comutativo de feixes (veja a seção \ref{sec:derived-cohomology}).
\begin{equation*}
\begin{matrix}
 0 & \longrightarrow  & \C & \longrightarrow & \mathcal{A}^0_{X,\C} &  \stackrel{d}{\longrightarrow}  & \mathcal{A}^1_{X,\C} &  \stackrel{d}{\longrightarrow} & \mathcal{A}^2_{X,\C} & \stackrel{d}{\longrightarrow} \cdots\\
   & &\cap & & \parallel &  &\downarrow & & \downarrow&  \\
0 & \longrightarrow  & \mathcal O_X & \longrightarrow &  \mathcal{A}^0_{X,\C} &  \stackrel{\delbar}{\longrightarrow}  &  \mathcal{A}^{0,1}_X &  \stackrel{\delbar}{\longrightarrow} & \mathcal{A}^{0,2}_X& \stackrel{\delbar}{\longrightarrow} \cdots
 \end{matrix}
\end{equation*}
onde as flechas verticais são as projeções $\Pi^{0,k}:\mathcal A^k_{X,\C} \to \mathcal A^{0,k}_X$.

Para ver que $\varphi$ coincide com a projeção $H^k(X,\C) \to H^{0,k}(X)$ seja $c \in H^k(X,\C)$ e considere $\alpha \in c$ sua representante harmônica com relação a uma métrica de Kähler fixada. Da comutatividade do diagrama temos que $\alpha^{0,k}$ é $\delbar$-fechada e por definição $\varphi(c)$ é sua classe de Dolbeaut, que é justamente a componente $(0,k)$ de $c$ (veja a demonstração do Teorema \ref{thm:hodge-decomposition}). 
\end{proof}

Do lema acima, imagem de $\alpha$ pela composição acima é $\Pi^{0,1}\alpha = \alpha^{0,1}$. Se $\alpha^{0,1} = 0$ temos também que $\alpha^{1,0} = 0$ e portanto $\alpha = 0$, o que mostra que $H^1(X,\R) \to H^{0,1}(X)$ é injetora. Como $\dim_{\R} H^1(X,\R) = b_1(X) = \dim_{\R} H^{0,1}(X)$ segue $H^1(X,\R) \to H^{0,1}(X)$ é um isomorfismo.

Agora, da observação \ref{rmk:tensor-product}, temos que $H^1(X,\Z)$ é um reticulado em $H^1(X,\R)$ e portanto, pelo isomorfismo acima corresponde a um reticulado em $H^1(X,\mathcal{O}_X)$, o que mostra que $\Pic^0(X) = H^1(X,\mathcal{O}_X)/H^1(X,\Z)$ é um toro complexo, e sua dimensão é $\dim H^1(X,\mathcal{O}_X) = h^{0,1}(X)$.
\end{proof}

\begin{remark}
Em geral, se não exigirmos que $X$ seja de Kähler, não podemos garantir que $\Pic^0(X)$ é um toro. Se $X$ é uma superfície de Hopf por exemplo (ver Exemplo \ref{ex:hopf}) temos que $H^1(X,\Z) \simeq \Z$ enquanto $H^1(X,\mathcal{O}_X) \simeq \C$.
\end{remark}

\begin{example}
Se $X$ é uma superfície de Riemann compacta de \textit{genus} $g$ vimos no exemplo \ref{ex:hodge-supriem} que $h^{0,1}(X) = g$ e portanto a variedade Jacobiana de $X$ é um toro complexo de dimensão $g$.

Em particular, se $g=1$ , isto é se $X$ é um toro, então $\text{Pic}^0(X)$ também é um toro com a mesma dimensão de $X$. Esse toros na verdade são isomorfos, isto é, $\Pic^0(X) \simeq X$. Uma demonstração desse fato será dada na próxima seção.
\end{example}

\begin{example} \textbf{Estruturas holomorfas em fibrados de linha.} \label{ex:hol-structures-lb}
Decidir se um fibrado vetorial complexo admite uma estrutura holomorfa não é, em geral, uma tarefa fácil. Pode ocorrer por exemplo que um fibrado admita uma, nenhuma ou até mesmo muitas estruturas holomorfas não isomorfas. Já no caso dos fibrados de linha sobre variedades de Kähler podemos observar alguns exemplos desse fenômeno.

Em geral, a primeira classe de Chern de um fibrado de linha complexo o determina a menos de isomorfismos diferenciáveis. Isso pode ser visto da seguinte maneira.

Considere a sequência exata 
\begin{equation*}
0 \longrightarrow \Z \longrightarrow \mathcal{C}_X^{\infty} \longrightarrow (\mathcal{C}_X^{\infty})^* \longrightarrow 0
\end{equation*}
induzida pela exponencial de funções suaves a valores complexos.

Os fibrados de linha complexos são classificados, a menos de isomorfismos diferenciávies, pelo grupo de cohomologia $H^1(X,(\mathcal{C}_X^{\infty})^*)$ (a demonstração é análoga a da proposição \ref{prop:isopic}). Olhando para a sequencia exata longa associada a sequência exponencial acima temos
\begin{equation*}
H^1(X,\mathcal{C}_X^{\infty}) \longrightarrow H^1(X,(\mathcal{C}_X^{\infty})^*) \longrightarrow H^2(X,\Z) \longrightarrow H^2(X,\mathcal{C}_X^{\infty})
\end{equation*}
onde a segunda aplicação é, por definição, a primeira classe de Chern\footnote{
Note que essa definição é compatível com a definição \ref{def:chernclass} para fibrados de linha holomorfos, pois as inclusões
de feixes $\mathcal O_X \subset \mathcal C_X ^ \infty$ e $\mathcal O_X^* \subset \mathcal (C_X ^ \infty)^*$ induzem um diagrama comutativo de feixes
\begin{equation*}
\begin{matrix}
0 & \longrightarrow  & \Z & \longrightarrow & \mathcal O_X & \longrightarrow  & \mathcal O_X^* & \longrightarrow & 0 \\
   & & \parallel & & \downarrow & & \downarrow & &  \\
0 & \longrightarrow  & \Z & \longrightarrow & \mathcal{C}_X^{\infty} & \longrightarrow  & (\mathcal{C}_X^{\infty})^* & \longrightarrow & 0
 \end{matrix}
\end{equation*}
e portanto as sequências induzidas na cohomologia também comutam, mostrando que a composição da aplicação de cobordo $c_1:H^1(X,(\mathcal{C}_X^{\infty})^*) \to H^2(X,\Z)$ com a aplicação natural $H^1(X,\mathcal O_X^*) \to H^1(X,(\mathcal{C}_X^{\infty})^*)$ é a aplicação de cobordo $c_1:H^1(X,\mathcal O_X^*) \to H^2(X,\Z)$.}. \index{primeira classe de Chern!de um fibrado de linha}

Como o feixe $\mathcal{C}_X^{\infty}$ é acíclico (veja o exemplo \ref{ex:acyclic}) temos que $H^1(X,\mathcal{C}_X^{\infty}) = H^2(X,\mathcal{C}_X^{\infty}) = 0$ e portanto temos que $c_1:H^1(X,(\mathcal{C}_X^{\infty})^*) \to H^2(X,\Z)$ é um isomorfismo, ou seja, a primeira classe de Chern de $L$ determina sua classe de isomorfismo diferenciável.\\

O exemplo \ref{ex:picard-Pn} mostra que todo fibrado de linha sobre $\pr^n$ admite uma única estrutura holomorfa. De fato, como $c_1: \Pic(\pr^n) \to H^2(\pr^n,\Z)$ é um isomorfismo, se $L$ é um fibrado de linha complexo existe um único fibrado holomorfo $L' \in \Pic(X)$ com $c_1(L)=c_1(L')$. Vemos então que $L$ e $L'$ são diferenciavelmente isomorfos e através desse isomorfismo podemos transferir a estrutura holomorfa de $L'$ a $L$.\\

Considere agora $X$ uma variedade de Kähler compacta com $b_1 > 0$. Da proposição \ref{prop:jacobian-torus}, a variedade de Jacobi $\Pic^0(X)$ é um toro complexo de dimensão $b_1$. Sejam $L$ e $L'$ dois pontos distintos em $\Pic^0(X)$. Temos então que $L$ e $L'$ são fibrados de linha que não são holomorficamente isomorfos. No entanto, como $L$ e $L'$ tem a primeira classe de Chern nula existe, pela observação acima, um isomorfismo diferenciável entre eles. Logo $L$ é um exemplo de fibrado de linha que admite mais de uma estrutura holomorfa. Na verdade, vemos que $L$ admite uma família a $b_1$ parâmetros de estruturas holomorfas, uma para cada ponto do jacobiano de $X$.\\

Para obter um exemplo de fibrado de linha que não admite estrutura holomorfa note que a imagem de $\Pic(X) \to H^2(X,\Z)$ é formada pelas classes de cohomologia que são a primeira classe de Chern de algum fibrado de linha holomorfo. Assim, se esta aplicação não for sobrejetora, qualquer elemento $c \in H^2(X,\Z)$ que não esteja na imagem será a classe de Chern de um fibrado que não admite estrutura holomorfa.

Considere a sequencia exata $\Pic(X) \to H^2(X,\Z) \to H^2(X,\mathcal{O}_X)$ induzida pela sequência exponencial. Então  a aplicação $\Pic(X) \to H^2(X,\Z)$ não será sobrejetora se e somente se a aplicação $H^2(X,\Z) \to H^2(X,\mathcal{O}_X)$ é não nula. Agora a aplicação $H^2(X,\Z) \to H^2(X,\mathcal{O}_X) \simeq H^{0,2}(X)$ é a composição da aplicação $H^2(X,\Z) \to H^2(X,\C)$ induzida pela inclusão de feixes $\Z \subset \C$ com a projeção $H^2(X,\C) \to H^{0,2}(X)$ e portanto só será nula se $H^{0,2}(X,\Z) = 0$. Nesse caso teremos também $H^{2,0}(X,\Z) = 0$ e portanto $H^2(X,\Z) = H^{1,1}(X,\Z)$.

Seja então $X$ uma variedade de Kähler com $H^{2,0}(X,\Z) \neq 0$. Podemos tomar por exemplo $X$ como sendo um toro complexo de dimensão maior ou igual a $2$ pois nesse caso, como os grupos de cohomologia são livres, temos que  $H^{2,0}(X,\Z)\otimes \C =  H^{2,0}(X,\C) = H^0(X,\Omega_X^2) \neq 0$ e portanto $H^{2,0}(X,\Z) \neq 0$. Da discussão acima, $X$ possuirá um fibrado de linha que não adimte nenhuma estrutura holomorfa. 

\end{example}

\subsection{A variedade de Albanese}

Existe um outro toro associado a uma variedade de Kähler compacta, o chamado toro de Albanese.

Comecemos com uma observação.
\begin{lemma}
Se $X$ é uma variedade de Kähler compacta então toda forma holomorfa em $X$ é fechada.
\end{lemma}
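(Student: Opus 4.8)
The statement to prove is: on a compact Kähler manifold $X$, every holomorphic form is closed. Let $\alpha \in H^0(X,\Omega_X^p)$ be a holomorphic $p$-form; by Proposition~\ref{prop:hol-forms} this means $\alpha \in \mathcal{A}^{p,0}(X)$ with $\delbar \alpha = 0$. I want to show $d\alpha = 0$, i.e. (since $d = \del + \delbar$ and $\del\alpha \in \mathcal{A}^{p+1,0}(X)$, $\delbar\alpha = 0$) that $\del \alpha = 0$.

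First I would fix a Kähler metric on $X$ and use the identity of Laplacians from Corollary~\ref{cor:laplacian}, $\Delta_{\del} = \Delta_{\delbar}$. Since $\alpha$ is holomorphic, it is $\delbar$-closed; moreover, $\delbar^* \alpha = 0$ trivially because $\delbar^* $ maps $\mathcal{A}^{p,0}(X)$ to $\mathcal{A}^{p,-1}(X) = 0$. Hence $\alpha$ is $\delbar$-harmonic: $\Delta_{\delbar}\alpha = (\delbar\delbar^* + \delbar^*\delbar)\alpha = 0$. By the equality of Laplacians, $\Delta_{\del}\alpha = 0$, so $\alpha$ is also $\del$-harmonic. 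The analogue of the corollary characterizing harmonic forms (the computation $0 = (\Delta_{\del}\alpha,\alpha) = \|\del\alpha\|^2 + \|\del^*\alpha\|^2$, valid by self-adjointness of $\del^*$ and compactness of $X$) then forces $\del\alpha = 0$ and $\del^*\alpha = 0$. In particular $\del\alpha = 0$, and combined with $\delbar\alpha = 0$ we get $d\alpha = (\del + \delbar)\alpha = 0$.

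An alternative, slightly more elementary route that avoids invoking the full Laplacian identity: use the Kähler identity $[\Lambda,\del] = \smo\,\delbar^*$ from Theorem~\ref{thm:kahler-identities}. Since $\alpha \in \mathcal{A}^{p,0}(X)$, both $\delbar\alpha$ and (for degree reasons, as $\del^*$ lowers the holomorphic degree below zero when applied again) the relevant adjoint terms vanish, and one can directly show $\|\del\alpha\|^2$ equals an inner product that vanishes. Concretely, $(\del\alpha,\del\alpha) = (\alpha, \del^*\del\alpha)$; using $\del^* = -\smo[\Lambda,\delbar]$ on $\mathcal{A}^{p+1,0}$ and $\delbar\alpha = 0$ one reduces this to a term involving $\Lambda$ applied to a form that is forced to vanish. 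I would present the first route as the main argument since it is cleanest given that Corollary~\ref{cor:laplacian} is already available.

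The main obstacle is essentially bookkeeping rather than a deep point: I must be careful that the adjoint operators $\del^*, \delbar^*$ appearing in Corollary~\ref{cor:laplacian} and Theorem~\ref{thm:kahler-identities} were, strictly speaking, defined on compactly supported forms, but since $X$ is compact this restriction is vacuous and the integration-by-parts identities $( \del\alpha,\beta) = (\alpha,\del^*\beta)$ hold globally — this is exactly what was established in the Hodge-theory section. The only real content is the observation that a holomorphic form automatically sits in the kernel of $\delbar^*$ for trivial degree reasons, which upgrades "$\delbar$-closed" to "$\delbar$-harmonic" and then, via the Kähler condition, to "$d$-harmonic" hence "$d$-closed." I expect the proof to be three or four lines.
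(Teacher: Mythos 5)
Your argument is correct and is essentially the paper's own proof: both observe that $\delbar \alpha = 0$ and $\delbar^* \alpha = 0$ for degree reasons, so $\alpha$ is $\delbar$-harmonic, and then invoke Corollary \ref{cor:laplacian} (the paper via $\Delta = 2\Delta_{\delbar}$, you via $\Delta_{\del} = \Delta_{\delbar}$) to conclude $d\alpha = 0$. The difference is only cosmetic.
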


\begin{proof}
Seja $\alpha = H^0(X,\Omega^p_X)$ uma $p$-forma holomorfa. Então, da proposição \ref{prop:hol-forms}, temos que $\bar{\partial} \alpha = 0$. Por outro lado, como $\alpha$ é uma forma de grau $(p,0)$, segue que $\bar{\partial}^* \alpha = 0$ e portanto $\Delta_{\bar{\partial}} \alpha = 0$. Como $X$ é de Kähler, temos, pelo corolário \ref{cor:laplacian}, que $\Delta \alpha = 2 \Delta_{\bar{\partial}} \alpha = 0$, ou seja, $\alpha$ é harmônica. Em particular $\alpha$ é fechada.
\end{proof}

Devido a essa observação, temos uma aplicação bem definida
\begin{equation*}
\begin{split}
\Phi: H_1(X,\Z) &\longrightarrow H^0(X,\Omega_X)^* \\
[\gamma] &\longmapsto \left( \alpha \mapsto \int_{\gamma} \alpha \right).
\end{split}
\end{equation*}

Denote por $\Lambda$ a imagem de $H_1(X,\Z)$ pela aplicação $\Phi$.
\begin{definition} \index{variedade!de Albanese}
A variedade de albanese de $X$ é o quociente
\begin{equation*}
\text{Alb}(X) = \frac{H^0(X,\Omega_X)^*}{\Lambda}.
\end{equation*}
\end{definition}

\begin{proposition} \label{prop:albanese-torus}
Quando $X$ é uma variedade de Kähler compacta $\text{Alb}(X)$ é um toro complexo de dimensão $h^{1,0}(X)$.
\end{proposition}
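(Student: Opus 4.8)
\textbf{Plano de demonstra\c{c}\~ao.} A estrat\'egia \'e mostrar que $\Lambda = \Phi(H_1(X,\Z))$ \'e um reticulado no espa\c{c}o vetorial complexo $V = H^0(X,\Omega_X)^*$, pois um toro complexo \'e por defini\c{c}\~ao o quociente de um $\C$-espa\c{c}o vetorial de dimens\~ao finita por um reticulado. Primeiro observo que $V$ \'e de fato um $\C$-espa\c{c}o vetorial de dimens\~ao $h^{1,0}(X) = \dim_\C H^0(X,\Omega_X)$, e pela Proposi\c{c}\~ao \ref{prop:hol-forms} junto ao lema que precede o enunciado, toda forma holomorfa \'e $d$-fechada, de modo que a aplica\c{c}\~ao $\Phi$ est\'a bem definida (a integral $\int_\gamma \alpha$ depende apenas da classe de homologia de $\gamma$, pelo teorema de Stokes). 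Resta ent\~ao verificar que $\Phi(H_1(X,\Z))$ \'e um subgrupo discreto de posto m\'aximo, isto \'e, de posto real igual a $\dim_\R V = 2 h^{1,0}(X)$.

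Primeiro eu identificaria $V$ com um espa\c{c}o de cohomologia usando a decomposi\c{c}\~ao de Hodge. Pelo isomorfismo de Dolbeaut (\ref{eq:dolbeaut-iso}), $H^0(X,\Omega_X) = H^{1,0}(X)$, e como $X$ \'e de K\"ahler, o Teorema \ref{thm:hodge-decomposition} d\'a $H^1(X,\C) = H^{1,0}(X) \oplus H^{0,1}(X)$ com $H^{0,1}(X) = \overline{H^{1,0}(X)}$. Logo, a restri\c{c}\~ao $H^1(X,\R) \to H^{1,0}(X)$ (composta $H^1(X,\R) \hookrightarrow H^1(X,\C) \twoheadrightarrow H^{1,0}(X)$) \'e um isomorfismo de $\R$-espa\c{c}os vetoriais, exatamente como no argumento da Proposi\c{c}\~ao \ref{prop:jacobian-torus} (aquele argumento se adapta trocando $H^{0,1}$ por $H^{1,0}$ via conjuga\c{c}\~ao complexa). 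Assim, dualizando, $V = H^{1,0}(X)^* \simeq H^1(X,\R)^* \simeq H_1(X,\R)$, onde o \'ultimo passo usa a dualidade de Poincar\'e com coeficientes reais (Corol\'ario \ref{cor:poincare-duality}) e o teorema dos coeficientes universais, ou simplesmente o pareamento de integra\c{c}\~ao $H^1(X,\R) \times H_1(X,\R) \to \R$.

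O passo chave \'e ent\~ao notar que, sob essa identifica\c{c}\~ao $V \simeq H_1(X,\R)$, a aplica\c{c}\~ao $\Phi: H_1(X,\Z) \to V$ \'e precisamente a aplica\c{c}\~ao natural $H_1(X,\Z) \to H_1(X,\Z)\otimes_\Z \R \simeq H_1(X,\R)$ (cf. Observa\c{c}\~ao \ref{rmk:tensor-product} e \ref{rmk:cohomology-coeficients}): o pareamento $[\gamma] \mapsto (\alpha \mapsto \int_\gamma \alpha)$ \'e, por constru\c{c}\~ao, a imagem de $[\gamma]$ pelo mergulho de $H_1(X,\Z)$ em seu tensorizado com $\R$, visto como funcional em $H^1$. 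Como $X$ \'e uma variedade compacta, seus grupos de homologia s\~ao finitamente gerados, e portanto a imagem de $H_1(X,\Z)$ em $H_1(X,\R) = H_1(X,\Z)\otimes \R$ \'e um reticulado (de posto igual ao posto livre de $H_1(X,\Z)$, que por Poincar\'e-de Rham \'e $b_1(X) = \dim_\R H^1(X,\R) = \dim_\R V$). Isso mostra que $\Lambda$ \'e um reticulado em $V$, donde $\text{Alb}(X) = V/\Lambda$ \'e um toro complexo, e sua dimens\~ao complexa \'e $\dim_\C V = h^{1,0}(X)$.

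\textbf{Principal obst\'aculo.} A parte mais delicada \'e justificar o isomorfismo $H^1(X,\R) \xrightarrow{\sim} H^{1,0}(X)$ (equivalentemente, $H^{0,1}(X)$) de modo compat\'ivel com as estruturas integrais, pois \'e a\'i que a hip\'otese de K\"ahler entra de forma essencial (via a decomposi\c{c}\~ao de Hodge e a simetria $h^{p,q}=h^{q,p}$); sem ela n\~ao se tem sequer que $\dim_\R H^1(X,\R) = 2h^{1,0}(X)$, como ilustra a Observa\c{c}\~ao ap\'os a Proposi\c{c}\~ao \ref{prop:jacobian-torus} sobre superf\'icies de Hopf. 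Uma vez estabelecido esse isomorfismo e a identifica\c{c}\~ao de $\Phi$ com o mergulho canônico $H_1(X,\Z) \to H_1(X,\R)$, o fato de a imagem ser um reticulado \'e formal, seguindo da finitude do tipo de $H_1(X,\Z)$ e do c\'alculo de postos. Conv\'em tamb\'em registrar que a n\~ao-degenerescência do pareamento de per\'iodos $H_1(X,\Z) \times H^0(X,\Omega_X) \to \C$ modulo reticulado --- isto \'e, que nenhuma combina\c{c}\~ao inteira n\~ao trivial de ciclos anula todas as formas holomorfas --- \'e exatamente a afirma\c{c}\~ao de que $\Phi$ \'e injetiva no reticulado, que segue da injetividade de $H^1(X,\R) \to H^{1,0}(X)$ j\'a provada.
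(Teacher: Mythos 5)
A sua demonstra\c{c}\~ao est\'a correta, mas segue um caminho genuinamente diferente do adotado no texto. O texto usa a dualidade de Poincar\'e para identificar $H_1(X,\Z)$ com $H^{2n-1}(X,\Z)$ e a dualidade de Serre para identificar $H^0(X,\Omega_X)^* = H^{1,0}(X)^*$ com $H^{n-1,n}(X)$, reduzindo tudo ao mesmo argumento da Proposi\c{c}\~ao \ref{prop:jacobian-torus} aplicado \`a proje\c{c}\~ao $H^{2n-1}(X,\R)\to H^{n-1,n}(X)$. Voc\^e, em vez disso, permanece em grau $1$: identifica $V=H^{1,0}(X)^*$ com $H_1(X,\R)$ via o pareamento de integra\c{c}\~ao e o isomorfismo real $H^1(X,\R)\simeq H^{1,0}(X)$, e observa que $\Phi$ se torna essencialmente o mergulho can\^onico $H_1(X,\Z)\to H_1(X,\Z)\otimes_\Z\R$, cuja imagem \'e um reticulado por finitude do tipo de $H_1(X,\Z)$ e pela igualdade de postos. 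Isso evita por completo a dualidade de Serre e o espa\c{c}o $H^{n-1,n}$, ao custo de um pequeno cuidado de contabilidade que voc\^e deixa impl\'icito: $\Phi([\gamma])$ \'e um funcional $\C$-linear em $H^{1,0}(X)$, enquanto o elemento correspondente de $H_1(X,\R)$ age em classes reais; a tradu\c{c}\~ao entre os dois passa pela identidade $\int_\gamma\alpha = 2\,\re \int_\gamma\alpha^{1,0}$ para $\alpha$ real, o que introduz apenas um fator $1/2$ e n\~ao afeta a conclus\~ao de que a imagem \'e discreta e de posto m\'aximo. Ambos os argumentos usam a hip\'otese de K\"ahler exatamente no mesmo ponto essencial (a decomposi\c{c}\~ao de Hodge em grau $1$ e a consequente igualdade $b_1=2h^{1,0}$); o seu tem a vantagem de ser mais direto e autocontido, enquanto o do texto torna expl\'icita a compatibilidade com as dualidades que ser\'a \'util em outras partes do cap\'itulo.
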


\begin{proof}
Primeiramente note que, pela dualidade de Poincaré, temos um isomorfismo $H_1(X,\Z) \simeq H^{2n-1}(X,\Z)$, segundo o qual um elemento $[\gamma] \in H_1(X,\Z)$ corresponde o seu dual de Poincaré $[\eta_{\gamma}]$, caracterizado pela equação $
\int_{\gamma} \alpha = \int_{X} \alpha \wedge \eta_{\gamma}$, para toda 1-forma fechada $\alpha$ (mais detalhes no final da seção \ref{sec:lefschetz-1,1}).

Por um argumento similar ao da demonstração da proposição \ref{prop:jacobian-torus}, vemos que a composição $H^{2n-1}(X,\Z) \to H^{2n-1}(X,\C) \to H^{n-1,n}(X)$ é injetora e a imagem de $H^{2n-1}(X,\Z)$ é um reticulado $\Gamma$ em $H^{n-1,n}(X)$.

Da dualidade de Serre (Proposição \ref{prop:serre-duality}) temos  que $H^0(X,\Omega_X)^*=H^{1,0}(X)^* \simeq H^{n-1,n}(X)$ e esse isomorfismo é compatível com a dualidade de Poincaré, isto é, a inclusão $\Lambda \subset H^0(X,\Omega_X)^*$ corresponde à inclusão $\Gamma \subset H^{n-1,n}(X)$. Temos portanto que
\begin{equation*}
\text{Alb}(X) = \frac{H^0(X,\Omega_X)^*}{\Lambda} \simeq \frac{H^{n-1,n}(X)}{\Gamma},
\end{equation*}
que é um toro complexo de dimensão $\dim H^0(X,\Omega_X)^* = h^{1,0}(X)$.
\end{proof}

Há uma maneira natural de relacionar $X$ e $\text{Alb}(X)$, através da chamada aplicação de Albanese.

Fixado um ponto base $x_0 \in X$ definimos
\begin{equation*}
\begin{split}
\text{alb}:X &\longrightarrow \text{Alb}(X) \\
x & \longmapsto \left( \alpha \mapsto \int_{x_0}^x \alpha \right)
\end{split}
\end{equation*}
onde a integral é ao longo algum caminhho ligando $x_0$ a $x$.

Note que a integral depende do caminho escolhido, mas os funcionais obtidos escolhendo-se dois caminhos distintos diferem pela integral em um caminho fechado, e portanto obtemos um elemento bem definido do quociente $H^0(X,\Omega_X)^*/\Lambda$.

\begin{proposition}
A aplicação de Albanese é holomorfa e o pullback ~$\text{alb}^*:H^0(\text{Alb}(X),\Omega_{\text{Alb}(X)}) \to H^0(X,\Omega_X)$ é um isomorfismo.
\end{proposition}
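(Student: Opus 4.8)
A estratégia é verificar as duas afirmações — holomorfia de $\text{alb}$ e o fato de $\text{alb}^*$ ser um isomorfismo em $1$-formas holomorfas — de maneira praticamente independente, reduzindo cada uma a um cálculo local. Primeiro eu introduziria uma base $\omega_1,\ldots,\omega_g$ de $H^0(X,\Omega_X)$, com $g = h^{1,0}(X)$, e a base dual $e_1,\ldots,e_g$ de $H^0(X,\Omega_X)^*$. Em coordenadas afins sobre $\text{Alb}(X) = H^0(X,\Omega_X)^*/\Lambda$ vindas do recobrimento $H^0(X,\Omega_X)^* \to \text{Alb}(X)$, a aplicação de Albanese tem como componentes, localmente, as funções $x \mapsto \int_{x_0}^x \omega_j$. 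O ponto a observar é que, fixado um caminho de $x_0$ até um ponto $p$ e uma carta holomorfa $(U,\varphi)$ em torno de $p$, a função $x \mapsto \int_{x_0}^x \omega_j$ definida em $U$ (integrando ao longo do caminho fixo seguido de um caminho dentro de $U$) é uma primitiva local de $\omega_j$; como $\omega_j$ é uma forma holomorfa (logo de tipo $(1,0)$), essa primitiva é uma função holomorfa em $U$. Isso mostra que, composta com o recobrimento universal do toro, $\text{alb}$ se escreve localmente por funções holomorfas, e portanto $\text{alb}$ é holomorfa. É crucial aqui o Lema anterior, que garante que formas holomorfas em $X$ são fechadas, para que a noção de primitiva local faça sentido e a boa-definição módulo $\Lambda$ se sustente.

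O segundo passo é calcular o pullback $\text{alb}^*$ sobre $1$-formas holomorfas. As $1$-formas holomorfas globais de um toro complexo $V/\Lambda$ são exatamente as induzidas pelas formas lineares constantes em $V$; usando a base $e_1,\ldots,e_g$, uma base de $H^0(\text{Alb}(X),\Omega_{\text{Alb}(X)})$ é dada pelos diferenciais $du_1,\ldots,du_g$ das coordenadas lineares $u_1,\ldots,u_g$ duais aos $e_j$. Pela descrição local de $\text{alb}$ obtida no parágrafo anterior, a $j$-ésima coordenada $u_j \circ \text{alb}$ é (localmente) a primitiva $\int_{x_0}^{\,\cdot\,}\omega_j$, e portanto $\text{alb}^*(du_j) = d\left(\int_{x_0}^{\,\cdot\,}\omega_j\right) = \omega_j$. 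Assim $\text{alb}^*$ leva a base $\{du_j\}$ na base $\{\omega_j\}$ de $H^0(X,\Omega_X)$, logo é um isomorfismo $\C$-linear. Note que isso usa também a Proposição que identifica $\text{Alb}(X)$ como toro complexo de dimensão $h^{1,0}(X)$, garantindo que os dois espaços de $1$-formas holomorfas têm a mesma dimensão $g$ e que $\{du_j\}$ é de fato uma base.

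O principal obstáculo técnico, a meu ver, não é nenhum dos dois cálculos acima em si, mas sim a justificativa cuidadosa da boa-definição e da regularidade da aplicação de Albanese: é preciso mostrar que escolhas distintas de caminho produzem elementos de $H^0(X,\Omega_X)^*$ que diferem por um elemento de $\Lambda$ (o que vem de $\omega_j$ ser fechada, via o teorema de Stokes aplicado a uma homotopia entre caminhos, ou diretamente da definição de $\Lambda$ como imagem de $H_1(X,\Z)$), e que a função resultante no quociente é localmente representada, após levantar ao recobrimento, pelas primitivas holomorfas descritas. Um detalhe adicional a checar é que a construção das primitivas locais é consistente sob mudança de carta e sob mudança do ramo do levantamento ao recobrimento de $\text{Alb}(X)$ — mas isso segue porque dois ramos diferem por translação por um elemento de $\Lambda$, que é um difeomorfismo holomorfo que não altera o diferencial. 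Feitos esses cuidados, a holomorfia e a identidade $\text{alb}^*(du_j)=\omega_j$ são imediatas, e a demonstração se encerra.
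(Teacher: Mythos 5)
Sua demonstração está correta e segue essencialmente o mesmo caminho do texto: ambas usam que as formas holomorfas em $X$ são fechadas para obter primitivas locais holomorfas $\int_{x_0}^{\,\cdot\,}\omega_j$ (o que dá a holomorfia de $\text{alb}$), identificam $H^0(\text{Alb}(X),\Omega_{\text{Alb}(X)})$ com $(H^0(X,\Omega_X)^*)^*$ via as formas lineares constantes no recobrimento, e verificam diretamente que o pullback devolve a forma original. A única diferença é de apresentação — você trabalha com uma base $\{\omega_j\}$ e calcula $\text{alb}^*(du_j)=\omega_j$, enquanto o texto faz o mesmo cálculo sem coordenadas, via os funcionais de avaliação e a identidade $d(\text{alb})_x = \text{aval}_x$.
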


\begin{proof}
Como a escolha de um outro ponto base só muda a aplicação de albanese por uma translação, basta verificarmos que $\text{alb}$ é holomorfa em uma vizinhança de $x_0$, pois daí seguirá que é holomorfa em toda $X$.

Agora, para $x$ em uma vizinhança convexa $U$ de $x_0$, a aplicação $A:U \to H^0(X,\Omega_X)^*$ dada por $A(x) = (\alpha \mapsto \int_{x_0}^x \alpha)$ está bem definida e $\text{alb}$ é a composição de $A$ com a projeção $ H^0(X,\Omega_X)^* \to \text{Alb}(X)$.

A verificação do seguinte lema é imediata
\begin{lemma}
Se $Y$ é uma variedade complexa e $V$ é um $\C$-espaço vetorial então uma aplicação $f:Y \to V$ é holomorfa se e somente se $\varphi \circ f : Y \to \C$ é holomorfa para toda $\varphi \in V^*$. 
\end{lemma}

Tomando $Y=U$, $V=H^0(X,\Omega_X)^*$ e $\varphi_{\alpha} = \text{aval}_{\alpha}$ temos que $\varphi_{\alpha} \circ A (x) = \int_{x_0}^x \alpha$, que é holomorfa com respeito a $x$. Como todo funcional linear em $H^0(X,\Omega_X)^*$ é da forma $\varphi_{\alpha}$ segue do lema que $A$ é holomorfa em $U$ e consequentemente $\text{alb}$ é holomorfa em $U$.\\

Para a segunda parte note primeiro que, como consequência do teorema fundamental do cálculo, a diferencial $d(\text{alb})_{x_0} : T_{x_0}X \to T_0(\text{Alb}(X)) \simeq H^0(X,\Omega_X)^*$ é a avaliação em $x_0$, i.e., $(d(\text{alb})_{x_0}\cdot v)(\alpha) = \alpha_{x_0}(v)$, ou em uma notação mais sucinta, $d(\text{alb})_{x_0} = \text{aval}_{x_0}$. Como a escolha de um outro ponto base muda $\text{alb}$ pela adição de uma constante segue que $d(\text{alb})_x = \text{aval}_x$ para todo $x \in X$. \footnote{Isso mostra em particular que $d(\text{alb})_x$ é $\C$-linear para todo $x \in X$, dando uma maneira alternativa de mostrar que $\text{alb}$ é holomorfa.}

Observe que se $T = V/\Gamma$ é um toro complexo então $H^0(T,\Omega_T) \simeq V^*$. Uma maneira de ver isso é a seguinte: equipando $V$ com um produto hermitiano e $T$ com a métrica induzida, as $1$-formas harmônicas em $T$ serão combinações da forma $\sum h_j \varphi_j$ com cada $h_j$ harmônica e $\varphi_j \in V^*$, mas como $T$ é compacta, segue que cada $h_j$ é constante.

Temos portanto que $H^0(\text{Alb}(X),\Omega_{\text{Alb}(X)}) \simeq (H^0(X,\Omega)^*)^* \simeq H^0(X,\Omega_X)$ onde o último isomorfismo é o funcional de avaliação $H^0(X,\Omega_X) \ni \alpha \mapsto \text{aval}_{\alpha} \in (H^0(X,\Omega)^*)^*$.

A demonstração fica concluída notando que o isomorfismo obtido desta maneira nada mais é que o pullback $\text{alb}^*$. De fato, se $\varphi = \text{aval}_{\alpha} \in (H^0(X,\Omega_X)^*)^*$ e $v \in T_xX$ então
\begin{equation*}
(\text{alb}^*\varphi)_x \cdot v = \varphi (d \text{alb}_x \cdot v) = (d(\text{alb})_x\cdot v)(\alpha)= \alpha_x (v).
\end{equation*}
\end{proof}

\section{Teoremas de Lefschetz}
Nesta seção demonstraremos dois teoremas sobre a cohomologia de variedades de Kähler compactas devidos a S. Lefschetz. O primeiro deles, chamado de Teorema ``Difícil'' de Lefschetz (\textit{Hard Lefschetz Theorem} em inglês), diz que, em uma variedade de Kähler compacta, a decomposição de Lefschetz para formas diferenciais passa para a cohomologia. O segundo teorema, chamado Teorema das $(1,1)$-classes diz que toda classe de cohomologia integral e de tipo $(1,1)$ é a classe de Chern de um fibrado de linha holomorfo. Veremos que esse resultado está relacionado com a Conjectura de Hodge, um dos problemas em aberto mais importantes da Geometria Algébrica Complexa.

\subsection{O Teorema ``Difícil'' de Lefschetz}

Na seção \ref{subsec:lefschetz} vimos que o espaço das $k$-formas diferenciais em uma variedade hermitiana $X$ admite uma decomposição
\begin{equation*}
\mathcal{A}^k (X) = \displaystyle \bigoplus_{j \geq 0} L^j(\mathcal{P}^{k-2j}(X)),
\end{equation*}
onde $\mathcal{P}^k(X)$ é o espaço das $k$-formas primitivas e $L: \alpha \mapsto \omega \wedge \alpha$ é o operador de Lefschetz. Se $X$ é uma variedade de Kähler compacta, a condição de Kähler permite que essa decomposição passe para a cohomologia. Este é o conteúdo do chamado Teorema ``Difícil'' de Lefschetz, que demonstraremos a seguir.

Seja $\omega \in \mathcal{A}^{1,1}(X)$ uma forma de Kähler em $X$. Como $\omega$ é fechada podemos definir o operador de Lefschetz
\begin{equation*}
\begin{split}
L: H^{p,q}(X) &\longrightarrow H^{p+1,q+1}(X) \\
[\alpha] &\longmapsto [\omega \wedge \alpha]
\end{split}
\end{equation*}

Para definir o adjunto de $L$ primeiramente note que o operador de Hodge age nas formas harmônicas em $X$. De fato, vimos na proposição \ref{prop:*-harm} que o operador de Hodge establece um isomorfismo $\mathcal{H}^{p,q}_{\delbar}(X) \simeq  \mathcal{H}^{n-q,n-p}_{\del}(X)$, mas como $X$ é de Kähler, temos que $\mathcal{H}^*_{\delbar}(X) = \mathcal{H}^*(X) = \mathcal{H}^*_{\del}(X)$ e portanto temos que $*:\mathcal{H}^{p,q}(X) \simeq  \mathcal{H}^{n-q,n-p}(X)$.

Compondo agora o operador de Hodge com o isomorfismo $H^{p,q}(X) \simeq \mathcal{H}^{p,q}(X)$ obtemos isomorfismos no nível de cohomologia $*:H^{p,q}(X) \simeq  H^{n-q,n-p}(X)$. Definimos então
\begin{equation*}
\Lambda: H^{p,q}(X) \longrightarrow H^{p-1,q-1}(X),~~~ \Lambda = *^{-1} \circ L \circ *
\end{equation*}

Note que $L$ e $\Lambda$ são extensões de operadores reais, de modo que podemos restringir $L:H^k(X,\R) \to H^{k+2}(X,\R)$ e $\Lambda:H^k(X,\R) \to H^{k-2}(X,\R)$.

Definimos os espaços de cohomologia primitiva por

\begin{equation} \label{eq:prim-cohomology}
H^k(X,\R)_p = \{c \in H^k(X,\R) : \Lambda c = 0\}
\end{equation}

Note que $\Lambda[\alpha] = [\Lambda(\alpha)]$ onde no membro direito consideramos o dual de Lefschetz agindo em formas. Com isso vemos que $H^k(X,\R)_p$ é o espaço das classes de cohomologia que são representáveis por $k$-formas que são primitivas no sentido usual (veja o Teorema \ref{thm:lefschetz-forms} e a Observação \ref{rmk:lefschetz-forms}).

\begin{theorem} \label{thm:hard-lefschetz} \textbf{Teorema ``Difícil'' de Lefschetz.} \index{Teorema!``Difícil'' de Lefschetz}
Se $X$ é uma variedade de Kähler compacta então
\begin{equation*}
L^{n-k}:H^k(X,\R) \longrightarrow H^{2n-k}(X,\R) 
\end{equation*}
é um isomorfismo para todo $k \leq n$ e existe uma decomposição
\begin{equation} \label{eq:hard-lefschetz}
H^k(X,\R) = \bigoplus_{i \geq 0} L^i H^{k-2i}(X,\R)_p.
\end{equation}
\end{theorem}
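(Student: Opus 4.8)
The plan is to transport the Lefschetz decomposition for forms (Theorem \ref{thm:lefschetz-forms} and Remark \ref{rmk:lefschetz-forms}) up to the level of cohomology, using the fact that on a compact Kähler manifold the operators $L$ and $\Lambda$ commute appropriately with the Laplacian, so that they preserve the space $\mathcal{H}^k(X)$ of harmonic forms. First I would observe that, by the Kähler identities (Theorem \ref{thm:kahler-identities}), $[L,\Delta_{\delbar}] = 0$; since $\Delta = 2\Delta_{\delbar}$ on a Kähler manifold (Corollary \ref{cor:laplacian}), $L$ commutes with $\Delta$, hence maps harmonic forms to harmonic forms. The same holds for $\Lambda$ (it is the adjoint of $L$, and $\Delta$ is self-adjoint, so $[\Lambda,\Delta]=0$ as well), and for $H$ trivially. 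Therefore the $\mathfrak{sl}(2,\C)$-representation on $\bigwedge^* X$ restricts to an $\mathfrak{sl}(2,\C)$-representation on the finite-dimensional space $\mathcal{H}^*(X) = \bigoplus_k \mathcal{H}^k(X)$, where $\mathcal{H}^k(X)$ is the $(k-n)$-eigenspace of $H$.

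Next I would invoke the purely algebraic structure theory of $\mathfrak{sl}(2,\C)$-representations exactly as in the proof of the linear Lefschetz decomposition in Section \ref{subsec:lefschetz}: any finite-dimensional representation decomposes into irreducibles, each irreducible is generated by a primitive vector (an $H$-eigenvector killed by $\Lambda$), and on such a representation $L^{n-k}$ restricted to the weight-$(k-n)$ space is an isomorphism onto the weight-$(n-k)$ space for $k \leq n$. Applying this to $\mathcal{H}^*(X)$ yields that $L^{n-k}: \mathcal{H}^k(X) \to \mathcal{H}^{2n-k}(X)$ is an isomorphism for $k\leq n$ and a decomposition $\mathcal{H}^k(X) = \bigoplus_{i\geq 0} L^i\big(\mathcal{H}^{k-2i}(X)_p\big)$, where $\mathcal{H}^j(X)_p = \ker(\Lambda: \mathcal{H}^j(X)\to \mathcal{H}^{j-2}(X))$ is the space of harmonic primitive forms.

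Finally I would translate this back via the Hodge isomorphism $H^k(X,\R) \simeq \mathcal{H}^k(X)$ (the real version of Remark \ref{rmk:complex-hodge} / the Hodge Theorem), checking compatibility: since $\omega$ is a harmonic form (it is $d$-closed and, being the Kähler form, $d^*$-closed, as $d^*\omega = -*d*\omega$ and $*\omega = \omega^{n-1}/(n-1)!$ is closed), the cohomological operator $L:[\alpha]\mapsto[\omega\wedge\alpha]$ corresponds under the Hodge isomorphism to $L$ acting on harmonic representatives; likewise $\Lambda = *^{-1}L*$ on cohomology corresponds to $\Lambda$ on harmonic forms, because $*$ preserves $\mathcal{H}^*(X)$ on a Kähler manifold. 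Hence the definition $H^k(X,\R)_p = \ker(\Lambda: H^k(X,\R)\to H^{k-2}(X,\R))$ matches $\mathcal{H}^k(X)_p$, and the isomorphism and decomposition at the harmonic level become precisely the statement of the theorem. I expect the main obstacle to be this last compatibility bookkeeping — verifying that $L$, $\Lambda$, and $*$ on cohomology genuinely agree with their counterparts on harmonic forms — rather than the algebraic representation theory, which is already done in Section \ref{subsec:lefschetz}; the one subtle point there is that $L$ and $\Lambda$ are adjoints with respect to the $L^2$ inner product restricted to $\mathcal{H}^*(X)$, so the $\mathfrak{sl}(2,\C)$-triple relations $[H,L]=2L$, $[H,\Lambda]=-2\Lambda$, $[L,\Lambda]=H$ pass unchanged to the harmonic subspace.
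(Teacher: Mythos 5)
Your proposal is correct and follows essentially the same route as the paper: the key step in both is that the Kähler identities give $[L,\Delta]=0$, so the Lefschetz structure restricts to harmonic forms and then transfers to cohomology via the Hodge isomorphism. The only cosmetic difference is that you decompose $\mathcal{H}^*(X)$ directly as a finite-dimensional $\mathfrak{sl}(2,\C)$-module, whereas the paper intersects the fiberwise decomposition of Theorem \ref{thm:lefschetz-forms} with $\ker\Delta$ and proves the isomorphism $L^{n-k}:\mathcal{H}^k(X)\to\mathcal{H}^{2n-k}(X)$ by a separate injectivity/surjectivity argument; both rest on the same representation-theoretic input from Section \ref{subsec:lefschetz}.
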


\begin{proof}
Seja $\mathcal{H}^k_p(X)=\{\alpha \in \mathcal{H}^k(X) : \Lambda \alpha = 0\}$ o espaço das $k$-formas harmônicas primitivas. Do fato de $H^k(X,\R)_p$ ser o espaço das classes representáveis por formas primitivas e do isomorfismo de Hodge $H^k(X,\R) \simeq \mathcal{H}^k(X)$, a decomposição (\ref{eq:hard-lefschetz}) segue da decomposição
\begin{equation}
\mathcal{H}^k(X) = \bigoplus_{i \geq 0} L^i \mathcal{H}^{k-2i}(X)_p.
\end{equation}
Para demonstrar a decomposição acima note primeiro que o laplaciano comuta com o operador de Lefschetz, i.e., $[L,\Delta]=0$. De fato, como $X$ é de Kähler temos que  $\Delta = 2\Delta_{\delbar}$ e portanto $[L,\Delta]=0$ se e somente se $[\Delta_{\delbar},L] = 0$. Das identidades de Kähler (Teorema \ref{thm:kahler-identities}) temos que $[L,\delbar] = 0$ e $[\delbar^*,L] = \smo \del$ e portanto
\begin{equation*}
\begin{split}
[\Delta_{\delbar},L] &= [\delbar \delbar^*,L] + [\delbar^* \delbar,L] \\
&= \delbar \delbar^*L - L\delbar \delbar^* + \delbar^* \delbar L - L\delbar^* \delbar \\
&= \delbar \delbar^*L - \delbar L \delbar^* + \delbar^* L \delbar - L\delbar^* \delbar \\
&= \delbar[\delbar^*,L] + [\delbar^*,L] \delbar \\
&= \smo (\delbar \del + \del \delbar) \\
&= 0.
\end{split}
\end{equation*}

Usando este fato vemos que o isomorfismo de Hodge é compatível com a decomposição de Lefschetz para formas (cf. Observação \ref{rmk:lefschetz-forms}), de modo que
\begin{equation*}
\mathcal{H}^k(X) = \mathcal{A}^k (X) \cap \ker \Delta = \displaystyle \bigoplus_{i \geq 0} L^i(\mathcal{P}^{k-2i}(X) \cap \ker \Delta ) = \bigoplus_{i \geq 0} L^i \mathcal{H}^{k-2i}(X)_p,
\end{equation*}
resultando na decomposição desejada.

Da decomposição de Lefschetz para formas temos que $L^{n-k}:\mathcal{A}^k(X) \to \mathcal{A}^{2n-k}(X)$ é um isomorfismo para $k \leq n$. Como $L^{n-k} \Delta = \Delta L^{n-k}$ podemos considerar a restrição $L^{n-k}:\mathcal{H}^k(X) \to \mathcal{H}^{2n-k}(X)$. A restrição é claramente injetora. Agora, se $\beta \in \mathcal{H}^{2n-k}(X)$ sabemos que $\beta = L^{n-k} \alpha$ para alguma $\alpha \in \mathcal{A}^k(X)$. Temos então que $L^{n-k} \Delta \alpha = \Delta L^{n-k} \alpha = \Delta \beta =0$ e como $L^{n-k}$ é um isomorfismo vemos que $\Delta \alpha =0$, isto é, $\alpha \in \mathcal{H}^k(X)$, o que mostra que $L^{n-k}:\mathcal{H}^k(X) \to \mathcal{H}^{2n-k}(X)$ é um isomorfismo. Compondo com o isomorfismo de Hodge concluimos que $L^{n-k}:H^k(X,\R) \to H^{2n-k}(X,\R)$ é um isomorfismo. 
\end{proof}

\begin{example}
Seja $X$ uma variedade de Kähler compacta e $Y \subset X$ uma hipersuperfície fechada. Denote por $[\eta_Y] \in H^{1,1}(X,\R)$ a classe fundamental de $Y$ (cf. eq. \ref{eq:fundamental-class} abaixo) e suponha que $\eta_Y$ seja positiva. Vamos mostrar que a aplicação
\begin{equation*}
\iota^*: H^k(X,\R) \longrightarrow H^k(Y,\R)
\end{equation*}
induzida pela inclusão $\iota: Y \to X$ é injetora se $k \leq n-1$.

Como $\eta_Y$ é uma $(1,1)$-forma real positiva e fechada, ela é a forma fundamental de uma métrica de Kähler em $X$ (cf. proposição \ref{prop:positive-form}). O operador de Lefschetz associado é dado por $L_Y: [\alpha] \mapsto [\eta_Y \wedge \alpha]$ e pelo teorema ``difícil'' de Lefschetz $L_Y^{n-k}:H^k(X,\R) \to H^{2n-k}(X,\R)$ é um isomorfismo para $k \leq n$.

Denote por $(\cdot,\cdot):H^k(X,\R) \times H^{2n-k}(X,\R) \to \R$ o pareamento de Poincaré (veja o corólário \ref{cor:poincare-duality}).

Se $[\alpha] \in H^k(X,\R)$, $k\leq n-1$ é tal que $i^*\alpha = \alpha|_Y = 0$  então, para toda $[\beta] \in H^{2n-k}(X,\R)$ temos que 
\begin{equation*}
\begin{split}
([\beta],L^{n-k}[\alpha] ) &= ([\beta],[\alpha \wedge \eta_Y^{n-k}]) = \int_X \beta \wedge \eta_Y^{n-k} \wedge \alpha \\
 &= \int_X \beta \wedge \eta_Y^{n-k-1} \wedge \alpha \wedge \eta_Y \\
 &= \int_Y (\beta \wedge \eta_Y^{n-k-1} \wedge \alpha)\big|_Y \\
 &=0,
\end{split}
\end{equation*}
e portanto, como o pareamento de Poincaré é não degenerado, concluímos que $L^{n-k}[\alpha] = 0$. Como $L^{n-k}$ é um isomorfismo segue que $[\alpha]=0$, mostrando que $i^*$ é injetora.

Esse resultado pode ser melhorado, resultando no chamado Teorema de Hiperplanos de Lefschetz, que será demonstrado no capítulo \ref{ch:stein-lesfchetz}.

\end{example}

\subsection{O Teorema das $(1,1)$-classes e a Conjectura de Hodge} \label{sec:lefschetz-1,1}

Durante toda esta seção $X$ será uma variedade de Kähler compacta.

Dizemos que uma classe de cohomologia $c \in H^k(X,\C)$ é integral se ela pertence à imagem da aplicação natural $H^k(X,\Z) \to H^k(X,\C)$ induzida pela inclusão de feixes $\Z \subset \C$. Denotamos por $H^{p,q}(X,\Z)$ o grupo das $(p,q)$-formas integrais, isto é,
\begin{equation*}
H^{p,q}(X,\Z) = \im (H^{p+q}(X,\Z) \to H^{p+q}(X,\C)) \cap H^{p,q}(X).
\end{equation*}

\begin{lemma}
Se $L$ é um fibrado de linha holomorfo sobre $X$ e $c \in H^2(X,\C)$ denota a imagem de $c_1(L)$ pela aplicação natural $H^2(X,\Z) \to H^2(X,\C)$ então $c \in H^{1,1}(X,\Z)$.
\end{lemma}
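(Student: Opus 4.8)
The plan is to use the commutative diagram of sheaf sequences relating the exponential sequence with the ``smooth'' exponential sequence, exactly as in the footnote of Example \ref{ex:hol-structures-lb} and in the discussion of Section \ref{subsec:expseq}. First I would write down the two short exact sequences of sheaves
\begin{equation*}
0 \longrightarrow \underline{\Z} \longrightarrow \mathcal{O}_X \longrightarrow \mathcal{O}^*_X \longrightarrow 0
\quad\text{and}\quad
0 \longrightarrow \underline{\Z} \longrightarrow \mathcal{C}^{\infty}_X \longrightarrow (\mathcal{C}^{\infty}_X)^* \longrightarrow 0,
\end{equation*}
together with the vertical inclusions $\underline{\Z} = \underline{\Z}$, $\mathcal{O}_X \hookrightarrow \mathcal{C}^{\infty}_X$ and $\mathcal{O}^*_X \hookrightarrow (\mathcal{C}^{\infty}_X)^*$, which form a commutative diagram of sheaves.

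Next I would pass to the long exact cohomology sequences and use the naturality of the connecting homomorphism: the square
\begin{equation*}
\begin{matrix}
H^1(X,\mathcal{O}^*_X) & \stackrel{c_1}{\longrightarrow} & H^2(X,\underline{\Z}) \\
\downarrow & & \parallel \\
H^1(X,(\mathcal{C}^{\infty}_X)^*) & \stackrel{}{\longrightarrow} & H^2(X,\underline{\Z})
\end{matrix}
\end{equation*}
commutes, so $c_1(L) \in H^2(X,\underline{\Z}) \simeq H^2(X,\Z)$ equals the connecting image of the class of $L$ viewed in $H^1(X,(\mathcal{C}^{\infty}_X)^*)$. I would then recall the identification $H^2(X,\underline{\Z}) \simeq H^2(X,\Z) \to H^2(X,\C)$, so that the image $c$ of $c_1(L)$ in $H^2(X,\C)$ is computed by following the diagram one more step: the composite $H^2(X,\underline{\Z}) \to H^2(X,\C) \to H^2(X,\mathcal{O}_X)$ agrees, under the Dolbeault isomorphism (\ref{eq:dolbeaut-iso}) and the Hodge decomposition (Theorem \ref{thm:hodge-decomposition}), with the projection $H^2(X,\C) \to H^{0,2}(X)$ (this is exactly Lemma \ref{lemma:jacobian-torus}). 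But the connecting map $H^1(X,\mathcal{O}^*_X) \to H^2(X,\underline{\Z}) \to H^2(X,\mathcal{O}_X)$ is zero, because it is two consecutive arrows in the long exact sequence of the holomorphic exponential sequence. Hence the $(0,2)$-component of $c$ vanishes; since $c$ is real (it comes from $H^2(X,\Z)$), its $(2,0)$-component is the conjugate of the $(0,2)$-component and also vanishes. Therefore $c \in H^{1,1}(X)$, and being integral by construction, $c \in H^{1,1}(X,\Z)$.

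The main obstacle is bookkeeping rather than conceptual: one must be careful that the two notions of first Chern class (the one from Definition \ref{def:chernclass} via $H^1(X,\mathcal{O}^*_X)$ and the one via $H^1(X,(\mathcal{C}^{\infty}_X)^*)$) agree, which is precisely what the commutativity of the sheaf diagram gives, and that the identification of $H^2(X,\C) \to H^2(X,\mathcal{O}_X)$ with the Hodge projection $\Pi^{0,2}$ is correctly invoked — both points are already established in the excerpt, so the proof reduces to assembling them. I expect the cleanest writeup to track $c_1(L)$ through the exact sequence $H^1(X,\mathcal{O}^*_X) \to H^2(X,\Z) \to H^2(X,\mathcal{O}_X)$ to get exactness (``$\Pi^{0,2}c = 0$''), then invoke reality of integral classes to kill $\Pi^{2,0}c$ as well, and conclude $c = \Pi^{1,1}c \in H^{1,1}(X,\Z)$.
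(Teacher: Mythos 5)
Your argument is correct and is essentially the paper's own proof: both track $c_1(L)$ through the exact sequence $H^1(X,\mathcal{O}^*_X) \to H^2(X,\Z) \to H^2(X,\mathcal{O}_X)$, identify $H^2(X,\C) \to H^2(X,\mathcal{O}_X)$ with the Hodge projection $\Pi^{0,2}$ via Lemma \ref{lemma:jacobian-torus}, and use reality of the integral class to kill the $(2,0)$-part. The only difference is your detour through the smooth exponential sequence and $(\mathcal{C}^{\infty}_X)^*$, which is not needed here since the lemma concerns only the holomorphic $c_1$ of Definition \ref{def:chernclass}.
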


\begin{proof}
Considere o seguinte diagrama comutativo
\begin{equation} \label{eq:lefschetz-1,1}
\begin{matrix}
 \Pic(X) & \stackrel{c_1}{\longrightarrow} & H^2(X,\Z)&\longrightarrow& H^2(X,\mathcal O_X)\\
 & & & \searrow ~~~~~~~~~\nearrow &  \\
 & & & H^2(X,\C)&
\end{matrix}
\end{equation}
onde as aplicações na primeira linha são as induzidas pela sequência exponencial e as flechas diagonais são induzidas pelas inclusões $\Z \subset \C \subset \mathcal O_X$.

Dado $L \in \Pic(X)$ temos, da exatidão da primeira linha do diagrama, que a imagem de $c_1(L)$ em $H^2(X,\mathcal O_X)$ é zero. Denote por $c$ a imagem de $c_1(L)$ em $H^2(X,\C)$. Da comutatividade do diagrama vemos que a imagem de $c$ também é zero em $H^2(X,\mathcal O_X)$ e, lembrando que a aplicação $H^2(X,\C) \to H^2(X,\mathcal O_X) \simeq H^{0,2}(X)$ coincide com a projeção no fator $H^{0,2}(X)$ da decomposição de Hodge (veja o lema \ref{lemma:jacobian-torus}), vemos que $c^{0,2} = 0$. Como $c$ é real, i.e., está contida na imagem de $H^2(X,\R) = H^2(X,\C)$ temos que $\bar c = c$ e portanto $c^{2,0} = \overline {c^{0,2}} = 0$, mostrando que $c \in H^{1,1}(X)$, e portanto $c_1(L) \in H^{1,1}(X,\Z)$.
\end{proof}

O lema cima nos diz que há uma aplicação natural $\Pic(X) \to H^{1,1}(X,\Z) \subset H^2(X,\C)$, que associa a cada fibrado de linha holmorfo sua classe de Chern, vista como um elemento de $H^2(X,\C)$. É natural portanto perguntarmos quais elementos de $H^{1,1}(X,\Z)$ são a primeira classe de Chern de um fibrado de linha holomorfo. A resposta é dada pelo Teorema das $(1,1)$-classes de Lefschetz.

\begin{theorem} \label{thm:lefschetz-1,1} \index{Teorema!das $(1,1)$-classes de Lefschetz} \textbf{Teorema das $(1,1)$-classes de Lefschetz.} Se $X$ é uma variedade de Kähler compacta então a aplicação $\Pic(X) \to H^{1,1}(X,\Z)$ é sobrejetora, isto é, todo elemento de $H^{1,1}(X,\Z)$ é a primeira classe de Chern de um fibrado de linha holomorfo sobre $X$.
\end{theorem}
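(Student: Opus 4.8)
The plan is to read the statement off the exponential sequence \eqref{eq:expseq}, using the Hodge-theoretic description of the sheaf map $H^2(X,\C)\to H^2(X,\mathcal O_X)$ that has already been isolated in Lemma \ref{lemma:jacobian-torus}. Recall from \eqref{eq:explongseq} that the long exact sequence attached to $0\to\underline\Z\to\mathcal O_X\to\mathcal O^*_X\to 0$ contains the piece
\begin{equation*}
H^1(X,\mathcal O_X)\longrightarrow H^1(X,\mathcal O^*_X)\stackrel{c_1}{\longrightarrow} H^2(X,\Z)\stackrel{\iota}{\longrightarrow} H^2(X,\mathcal O_X),
\end{equation*}
where, after the identification $\Pic(X)\simeq H^1(X,\mathcal O^*_X)$ of Proposition \ref{prop:isopic}, the arrow $c_1$ is the Chern class map of Definition \ref{def:chernclass}, and $\iota$ is induced by the inclusion of sheaves $\underline\Z\subset\mathcal O_X$. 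By exactness, a class $a\in H^2(X,\Z)$ lies in the image of $c_1$ — i.e.\ is the first Chern class of some holomorphic line bundle — if and only if $\iota(a)=0$ in $H^2(X,\mathcal O_X)$. Thus the theorem reduces entirely to showing that every $a\in H^2(X,\Z)$ whose image in $H^2(X,\C)$ is of Hodge type $(1,1)$ satisfies $\iota(a)=0$.

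For this I would invoke the commutative diagram of sheaf inclusions $\underline\Z\subset\underline\C\subset\mathcal O_X$ — the same diagram \eqref{eq:lefschetz-1,1} used in the lemma preceding the theorem — which shows that $\iota\colon H^2(X,\Z)\to H^2(X,\mathcal O_X)$ factors as $H^2(X,\Z)\to H^2(X,\C)\to H^2(X,\mathcal O_X)$, the first map being the natural one and the second the map induced by $\underline\C\subset\mathcal O_X$. Now Lemma \ref{lemma:jacobian-torus} identifies this second map, under the Dolbeault isomorphism $H^2(X,\mathcal O_X)\simeq H^{0,2}(X)$, with the Hodge projection $\Pi^{0,2}\colon H^2(X,\C)\to H^{0,2}(X)$. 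Hence, writing $c\in H^2(X,\C)$ for the image of $a$ and assuming $c\in H^{1,1}(X)$, the directness of the Hodge decomposition $H^2(X,\C)=H^{2,0}(X)\oplus H^{1,1}(X)\oplus H^{0,2}(X)$ from Theorem \ref{thm:hodge-decomposition} forces $\Pi^{0,2}(c)=0$, so $\iota(a)=0$; the exactness criterion above then yields $L\in\Pic(X)$ with $c_1(L)=a$, and the image of $c_1(L)$ in $H^2(X,\C)$ is exactly $c$. This shows $c$ lies in the image of $\Pic(X)\to H^{1,1}(X,\Z)$, finishing the argument. (One should note in passing that $\iota(a)$ depends only on the image of $a$ in $H^2(X,\C)$, so the choice of integral lift $a$ of $c$ is irrelevant.)

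There is essentially no computation to carry out: all of the analytic substance has already been absorbed into the Hodge decomposition and into Lemma \ref{lemma:jacobian-torus}. The only delicate point — and the place where compactness and the Kähler hypothesis genuinely enter — is precisely the identification of the edge map $H^2(X,\C)\to H^2(X,\mathcal O_X)$ of the exponential sequence with the concrete projection $\Pi^{0,2}$; once that is granted, as it is by the cited lemma, the proof is a two-line diagram chase. I would also add the standard remark that this surjectivity onto $H^{1,1}(X,\Z)$ has no analogue for $H^{p,p}(X,\mathbb Q)$ in higher codimension — that is exactly the still-open Hodge conjecture discussed in the introduction.
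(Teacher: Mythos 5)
Your argument is correct and is essentially identical to the paper's own proof: both use the exactness of the piece $\Pic(X)\to H^2(X,\Z)\to H^2(X,\mathcal O_X)$ of the exponential sequence, the commutative diagram \eqref{eq:lefschetz-1,1}, and the identification of the edge map with the Hodge projection $\Pi^{0,2}$ from Lemma \ref{lemma:jacobian-torus} to conclude that an integral $(1,1)$-class dies in $H^2(X,\mathcal O_X)$ and hence lifts to $\Pic(X)$. Your closing remark that the choice of integral lift is irrelevant is a small but welcome precision that the paper leaves implicit.
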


\begin{proof}
Considere novamente o diagrama (\ref{eq:lefschetz-1,1}) e denote por $\rho: H^2(X,\Z) \to H^2(X,\C)$ a aplicação induzida pela inclusão de feixes. Dado um elemento $c \in H^{1,1}(X,\Z)$ temos que $c = \rho(\alpha)$ para alguma $\alpha \in H^2(X,\Z)$. Como $c \in H^{1,1}(X)$ sua componente $(0,2)$ segundo a decomposição de Hodge é zero e portanto, do lema \ref{lemma:jacobian-torus}, sua imagem em $H^2(X,\mathcal O_X)$ é zero. Da comutatividade do diagrama vemos que a imagem de $\alpha$ em $H^2(X,\mathcal O_X)$ é zero e como a linha é exata segue que $\alpha = c_1(L)$ para algum $L \in \Pic(X)$. Temos portanto que $c = \rho(c_1(L)) \in \im \Pic(X) \to H^{1,1}(X,\Z)$.
\end{proof}

\subsubsection{A Conjectura de Hodge}

O Teorema das $(1,1)$-classes de Lefschetz responde parcialmente à famosa Conjectura de Hodge, um dos sete problemas do milênio do ``Clay Mathematics Institute''.

Antes de enunciarmos a Conjectura de Hodge vamos relembrar alguns fatos da topologia algébrica de variedades diferenciáveis.

Seja $M$ uma variedade diferenciável compacta e orientada de dimensão $n$ e $V \subset M$ uma subvariedade fechada de codimensão $k$. O dual de Poincaré de $V$ é a classe de cohomologia $[\eta_V] \in H^k(X,\R)$ da $k$-forma fechada definida pela equação
\begin{equation} \label{eq:fundamental-class}
\int_M \alpha \wedge \eta_V = \int_V \alpha|_V,
\end{equation}
para toda $(n-k)$-forma fechada $\alpha$ (veja por exemplo \cite{bott-tu}, cap 1, \S 5). A classe $[\eta_V]$ é chamada de \textbf{classe fundamental} de $V$.

É possível ver que a classe fundamental de uma subvariedade é integral, isto é, pertence a imagem da aplicação natural $H^k(X,\Z) \to H^k(X,\R)$. Uma maneira de ver isso é definindo a classe fundamental de uma outra maneira, através do chamado pareamento de intersecção (veja por exemplo \cite{g-h}, cap. 0.).\\

Suponha agora que $X$ seja um variedade de Kähler compacta de dimensão $n$ e $V \subset X$ uma subvariedade complexa e compacta de codimensão $p$. Note que $X$ tem dimensão real $2n$ e $V$ dimensão real $2n-2p$.

\begin{lemma} \label{lemma:homology-kahler}
A classe de homologia $[V] \in H_{2n-2p}(X,\Z)$ é não trivial, isto é, $V$ não é um bordo em $X$.
\end{lemma}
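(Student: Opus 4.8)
The plan is to show that the fundamental class $[\eta_V] \in H^{2p}(X,\R)$ is nonzero, which by Poincar\'e duality is equivalent to $[V] \neq 0$ in $H_{2n-2p}(X,\R)$, hence a fortiori in $H_{2n-2p}(X,\Z)$. The key idea is to pair $[\eta_V]$ against a suitable closed form coming from the K\"ahler structure, exactly as in the proof of Proposition \ref{prop:cohom-kahler}. Fix a K\"ahler metric $g$ on $X$ with fundamental form $\omega$. The natural candidate to pair against is $\omega^{n-p}$, which is a closed $(2n-2p)$-form, so that $\int_X \eta_V \wedge \omega^{n-p}$ makes sense and, by the defining equation \eqref{eq:fundamental-class} of the fundamental class,
\begin{equation*}
\int_X \omega^{n-p} \wedge \eta_V = \int_V (\omega^{n-p})\big|_V = \int_V (\omega|_V)^{n-p}.
\end{equation*}

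First I would invoke Proposition \ref{prop:kahler-submanifold}: the restriction $g|_V$ is again a K\"ahler metric on $V$, and its fundamental form is precisely $\omega|_V$. Then, by the volume formula of Proposition \ref{prop:volume-form} applied to the $(n-p)$-dimensional K\"ahler manifold $V$, one has $(\omega|_V)^{n-p} = (n-p)!\,\mathrm{vol}_V$, and therefore
\begin{equation*}
\int_V (\omega|_V)^{n-p} = (n-p)!\,\mathrm{vol}(V) > 0,
\end{equation*}
since $V$ is a nonempty compact complex submanifold, hence has strictly positive volume. (One should note that $V$ is assumed to have pure dimension $n-p$, so the integrand is genuinely a top-degree form on $V$.)

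From the strict positivity of $\int_X \omega^{n-p} \wedge \eta_V$ it follows immediately that $[\eta_V] \neq 0$ in $H^{2p}(X,\R)$: if $\eta_V$ were exact, say $\eta_V = d\beta$, then since $d\omega = 0$ the product $\omega^{n-p} \wedge \eta_V = d(\omega^{n-p} \wedge \beta)$ would be exact, and Stokes's theorem on the compact manifold $X$ would force the integral to vanish, a contradiction. By Poincar\'e duality (Corollary \ref{cor:poincare-duality}), the map $[V] \mapsto [\eta_V]$ reflects nontriviality, so $[V] \neq 0$ in $H_{2n-2p}(X,\R)$; since the image of $[V]$ under $H_{2n-2p}(X,\Z) \to H_{2n-2p}(X,\R)$ is nonzero, $[V]$ itself must be nonzero in $H_{2n-2p}(X,\Z)$, i.e.\ $V$ is not a boundary in $X$.

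The only genuinely delicate point — more a bookkeeping issue than a real obstacle — is making sure the fundamental class $[\eta_V]$ is defined for the possibly singular or merely complex-submanifold situation at hand and that equation \eqref{eq:fundamental-class} is available in the exact form used; here $V$ is taken to be a smooth compact complex submanifold, so $[\eta_V] \in H^{2p}(X,\R)$ is the classical Poincar\'e dual and the argument goes through verbatim. Everything else is a direct combination of results already established: the K\"ahler property passes to submanifolds, the volume of a K\"ahler manifold is $\omega^{\dim}/(\dim)!$, and closed-but-exact forms integrate to zero against closed forms on a compact manifold.
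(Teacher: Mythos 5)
Your proof is correct and is essentially the paper's argument: both rest on the single computation $\int_V \omega^{n-p} = (n-p)!\,\mathrm{vol}(V) > 0$ together with Stokes's theorem. The only difference is presentational — the paper applies Stokes directly to a chain $\Omega$ with $\partial\Omega = V$, whereas you route the same vanishing through the Poincar\'e dual $[\eta_V]$ and exactness in cohomology.
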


\begin{proof}
Fixe uma métrica de Kähler em $X$ e seja $\omega$ a forma de Kähler. Se $V=\del \Omega$ fosse um bordo em $X$ teríamos, pelo teorema de Stokes
\begin{equation*}
\int_V \omega^{n-p} = \int_\Omega d \omega^{n-p} = 0,
\end{equation*}
o que é absurdo pois $\omega^{n-p} = (n-p)! \text{vol}_V$.
\end{proof}

 Da discussão acima, podemos associar a $V$ sua classe fundamental, que será um elemento $[\eta_V] \in H^{2p}(X,\Z)$.
\begin{lemma}
Segundo a decomposição de Hodge, a classe fundamental de $V$ é de tipo $(p,p)$, isto é, $[\eta_V] \in H^{p,p}(X,\Z)$.
\end{lemma}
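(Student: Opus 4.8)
O plano é mostrar que, na decomposição de Hodge de $[\eta_V]$, todas as componentes de tipo $(a,b)$ com $(a,b)\neq(p,p)$ se anulam. Combinado com o fato, já observado acima, de que a classe fundamental de uma subvariedade é integral (pertence à imagem de $H^{2p}(X,\Z)\to H^{2p}(X,\C)$), e lembrando que $H^{p,p}(X,\Z)$ é por definição a interseção de $H^{p,p}(X)$ com essa imagem, isso fornecerá $[\eta_V]\in H^{p,p}(X,\Z)$.

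Escreva $[\eta_V]=\sum_{a+b=2p}c^{a,b}$ com $c^{a,b}\in H^{a,b}(X)$. Fixe um par $(r,s)$ com $r+s=2n-2p$ e $(r,s)\neq(n-p,n-p)$ e tome uma classe $[\alpha]\in H^{r,s}(X)$, representada — pela própria definição de $H^{r,s}(X)$ — por uma forma fechada $\alpha$ de tipo puro $(r,s)$. Como a inclusão $\iota:V\to X$ é holomorfa, $\iota^*\alpha=\alpha|_V$ continua sendo de tipo $(r,s)$ em $V$; mas $\dim_{\C}V=n-p$, de modo que uma forma de tipo $(r,s)$ em $V$ com $r>n-p$ ou $s>n-p$ é identicamente nula, e como $r+s=2(n-p)$ isso força $\alpha|_V=0$ a menos que $r=s=n-p$. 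Logo $\int_V\alpha|_V=0$ e, pela equação (\ref{eq:fundamental-class}), $\int_X\alpha\wedge\eta_V=0$.

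Por outro lado, $\alpha\wedge\eta_V$ se decompõe em componentes de tipo $(r+a,s+b)$, das quais somente aquela com $r+a=s+b=n$ — isto é, $a=n-r$, $b=n-s$, e note que automaticamente $a+b=2p$ — pode ser não nula (nas demais, o grau holomorfo ou o anti-holomorfo excede $n$). Assim $\int_X\alpha\wedge\eta_V$ é exatamente o pareamento de Poincaré de $[\alpha]\in H^{r,s}(X)$ com $c^{n-r,n-s}\in H^{n-r,n-s}(X)$. Acabamos de ver que esse pareamento se anula para todo $[\alpha]\in H^{r,s}(X)$; como o pareamento $H^{r,s}(X)\times H^{n-r,n-s}(X)\to\C$ é não degenerado (dualidade de Serre, Proposição \ref{prop:serre-duality}, ou a dualidade de Poincaré do Corolário \ref{cor:poincare-duality} restrita às componentes de Hodge), segue que $c^{n-r,n-s}=0$. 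Variando $(r,s)$ sobre todos os pares com $r+s=2n-2p$ e $(r,s)\neq(n-p,n-p)$ — equivalentemente, $(a,b)=(n-r,n-s)$ sobre todos os pares com $a+b=2p$ e $(a,b)\neq(p,p)$ — concluímos que $c^{a,b}=0$ para todo $(a,b)\neq(p,p)$, donde $[\eta_V]=c^{p,p}\in H^{p,p}(X)$ e portanto $[\eta_V]\in H^{p,p}(X,\Z)$.

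O ponto que exige mais cuidado é a afirmação de que $\int_V\alpha|_V$ só enxerga a parte de tipo $(n-p,n-p)$ de $\alpha$: é preciso usar que o pullback pela inclusão holomorfa respeita a bigraduação e que formas de grau holomorfo ou anti-holomorfo maior que $\dim_{\C}V$ são nulas. O restante é contabilidade de tipos de Hodge, mais a invocação da não-degenerescência do pareamento de dualidade, já estabelecida no texto.
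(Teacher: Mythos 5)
Sua demonstração está correta e segue essencialmente o mesmo caminho da prova do texto: usar a equação (\ref{eq:fundamental-class}) junto com o fato de que toda forma de grau máximo em $V$ é de tipo $(n-p,n-p)$, de modo que o pareamento de $[\eta_V]$ com classes de tipo $(r,s)\neq(n-p,n-p)$ se anula. Você apenas torna expl\'icito o passo final — a invocação da não-degenerescência do pareamento $H^{r,s}(X)\times H^{n-r,n-s}(X)\to\C$ via dualidade de Serre — que o texto deixa impl\'icito ao concluir que $\eta_V$ é de tipo $(p,p)$.
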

\begin{proof}
Da decomposição de bigrau para o fibrado de formas (equação \ref{eq:decomp-ext-power}) vemos que $\bigwedge^{2n-2p}_\C V = \bigwedge^{n-p,n-p} V$, pois $\Lambda^{2n-2p}_\C V$ tem posto $1$ e a forma volume de $V$ é um elemento não nulo de $\bigwedge^{n-p,n-p} V$. Para formas isso significa que  $\mathcal A^{2n-2p}_{\C}(V) = \mathcal A^{n-p,n-p}(V)$, isto é, toda forma de grau máximo em $V$ tem bigrau $(n-p,n-p)$. Sendo assim, a restrição de uma forma $\alpha \in \mathcal A^{2n-2p}_{\C}(X)$ a $V$ só não será zero se sua componente em $\mathcal A^{n-p,n-p}(X)$ for não nula.

Analogamente todo elemento de $\mathcal A^{2n}_{\C}(X)$ é de tipo $(n,n)$ e portanto, da equação (\ref{eq:fundamental-class}) e do fato de $V$ não ser um ciclo trivial (Lema \ref{lemma:homology-kahler}) segue que $\eta_V$ é de tipo $(p,p)$ e portanto $[\eta_V] \in H^{p,p}(X,\Z)$.
\end{proof}

A conjectura de Hodge procura descrever, para uma variedade projetiva, que tipos de classe de cohomologia em $H^{p,p}(X,\Z)$ provém de classes fundamentais de subvariedades $V \subset X$.

A primeira conjectura formulada por Hodge dizia que se $X$ é uma variedade projetiva então toda classe em $H^{p,p}(X,\Z)$ é combinação linear com coeficientes inteiros de classes fundamentais de subvariedades de $X$. Colocada dessa maneira a conjectura é falsa. Um contra-exemplo foi dado por Michael Atiyah e Friedrich Hirzebruch, em seu artigo ``Vector bundles and homogeneous spaces'', no qual construiram um exemplo de uma classe de torção em $H^{p,p}(X,\Z)$ que, pelo Lema \ref{lemma:homology-kahler} por exemplo, não pode ser escrita como combinação linear de classes fundamentais.

Após o aparecimento desse contra-exemplo a conjectura foi modificada, e passou a dizer que toda classe em $H^{p,p}(X,\Z)$ que não é de torção é combinação linear com coeficientes inteiros de classes fundamentais de subvariedades de $X$. Com essa formulação a conjectura ainda é falsa. Um contra exemplo foi dado pelo matemático húngaro János Kollár em 1992.

Tendo em vista esses exemplos a formulação atual da conjectura é dada em termos da cohomologia com coeficientes racionais. Denote por $H^{p,p}(X,\mathbb Q) = H^{p,p}(X) \cap H^{2p}(X,\mathbb Q)$. Dizemos que uma classe em $H^{p,p}(X,\mathbb Q)$ é \emph{analítica} se pode ser escrita como combinação linear com coeficientes racionais de classes fundamentais de subvariedades de $X$.\\

\textbf{Conjectura de Hodge.} \index{Conjectura!de Hodge} Se $X$ é uma variedade projetiva então toda classe de cohomologia em $H^{p,p}(X,\mathbb Q)$ é analítica.\\

A solução da Conjectura de Hodge ainda é desconhecida, mas há evidências de que a resposta é afirmativa. Para $p=1$ por exemplo temos, como consequência do Teorema das $(1,1)$-classes de Lefschetz.

\begin{theorem} \label{thm:hodge-p=1}
Seja $X$ uma variedade projetiva então todo elemento de $H^{1,1}(X,\Z)$ é a classe fundamental de algum divisor\footnote{Se $D = \sum a_i [Y_i]$ é um divisor em $X$ definimos a classe fundamental de $D$ por $[\eta_D]=\sum a_i [\eta_{Y_i}]$.} em $X$.
\end{theorem}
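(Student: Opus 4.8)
O plano � combinar o Teorema das $(1,1)$-classes de Lefschetz (Teorema \ref{thm:lefschetz-1,1}) com a rela��o entre divisores e fibrados de linha desenvolvida na se��o \ref{sec:rel-div-lb}. Dado $c \in H^{1,1}(X,\Z)$, o Teorema \ref{thm:lefschetz-1,1} fornece um fibrado de linha holomorfo $L \in \Pic(X)$ com $c_1(L) = c$. Basta ent�o mostrar duas coisas: (i) que $L \simeq \mathcal{O}(D)$ para algum divisor $D \in \Div(X)$, usando de maneira essencial a hip�tese de que $X$ � projetiva; e (ii) que, para todo divisor $D$, a primeira classe de Chern $c_1(\mathcal{O}(D))$ coincide com a classe fundamental $[\eta_D]$. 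Combinando (i) e (ii) obtemos $c = c_1(L) = c_1(\mathcal{O}(D)) = [\eta_D]$, como desejado.

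Para (i), a ideia � que todo fibrado de linha sobre uma variedade projetiva admite uma se��o meromorfa n�o trivial, de onde segue, pela discuss�o que precede a sequ�ncia (\ref{eq:seq-mero}), que $L$ est� na imagem da aplica��o $\Div(X) \to \Pic(X)$. Concretamente, tomando a restri��o a $X$ do fibrado de hiperplanos, que � amplo, os fibrados $L \otimes \mathcal{O}(m)$ e $\mathcal{O}(m)$ admitem se��es holomorfas globais n�o triviais $s_1$ e $s_2$ para $m$ suficientemente grande; seus divisores de zeros $D_1 = Z(s_1)$ e $D_2 = Z(s_2)$ s�o efetivos, com $L \otimes \mathcal{O}(m) \simeq \mathcal{O}(D_1)$ e $\mathcal{O}(m) \simeq \mathcal{O}(D_2)$, e pondo $D = D_1 - D_2$ obt�m-se $L \simeq \mathcal{O}(D_1) \otimes \mathcal{O}(D_2)^* \simeq \mathcal{O}(D)$.

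A etapa (ii) � o ponto mais delicado, pois n�o foi desenvolvida no texto: trata-se da f�rmula cl�ssica de Poincar�--Lelong. Para uma hipersuperf�cie irredut�vel $Y$ dada localmente por $Y \cap U_i = Z(f_i)$, deve-se mostrar que o cociclo $\{f_i/f_j\}$ que define $\mathcal{O}(Y)$ tem, pela aplica��o de cobordo $\check{H}^1(X,\mathcal{O}^*_X) \to \check{H}^2(X,\Z)$, imagem igual � classe fundamental $[\eta_Y]$. Uma estrat�gia � fixar uma m�trica hermitiana em $\mathcal{O}(Y)$ e uma se��o holomorfa global $s$ com $Z(s) = Y$, exibir a partir delas um representante de de Rham de $c_1(\mathcal{O}(Y))$ da forma $[\eta_Y] - \tfrac{\smo}{2\pi}\del\delbar\log\|s\|^2$, e verificar, por um c�lculo de res�duos numa vizinhan�a tubular de $Y$, que o termo $\tfrac{\smo}{2\pi}\del\delbar\log\|s\|^2$ � exato, de modo que a equa��o caracterizadora (\ref{eq:fundamental-class}) da classe fundamental fica satisfeita. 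Pela aditividade da primeira classe de Chern e do isomorfismo $\mathcal{O}(D) \otimes \mathcal{O}(D') \simeq \mathcal{O}(D+D')$, e pela defini��o $[\eta_D] = \sum a_i [\eta_{Y_i}]$, a f�rmula se estende de hipersuperf�cies irredut�veis a divisores arbitr�rios, completando a demonstra��o. O principal obst�culo � justamente essa identifica��o de $c_1(\mathcal{O}(D))$ com $[\eta_D]$; uma refer�ncia adequada � \cite{g-h}, cap. 1.
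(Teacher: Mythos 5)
Sua demonstra��o segue essencialmente o mesmo caminho do texto: aplicar o Teorema das $(1,1)$-classes para obter $L$ com $c_1(L)=c$ e ent�o usar os dois fatos $\Div(X)\twoheadrightarrow\Pic(X)$ (para $X$ projetiva) e $c_1(\mathcal O(D))=[\eta_D]$, ambos remetidos ao cap�tulo 1 de \cite{g-h}. A �nica diferen�a � que voc� esbo�a esses dois fatos (o argumento do twist por $\mathcal O(m)$ amplo, que ainda pressup�e um teorema de anulamento para garantir as se��es, e a f�rmula de Poincar�--Lelong), enquanto o texto os cita diretamente; a conclus�o e a estrutura l�gica s�o id�nticas.
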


A demonstração deste resultado segue quase imediatamente do Teorema das $(1,1)$-classes e dos seguintes fatos, cujas demonstrações podem ser econtradas no capítulo 1 de \cite{g-h}: 1) para variedades projetivas, a aplicação natural $\Div(X) \to \Pic(X)$ é sobrejetora, isto é, todo fibrado de linha holomorfo sobre $X$ é da forma $\mathcal O (D)$ para algum divisor $D$. 2) A primeira classe de Chern do fibrado $\mathcal O(D)$ coincide com a classe fundamental de $D$, isto é, $c_1(\mathcal O(D)) = [\eta_D]$.

Aceitando esses fatos, a demonstração do teorema acima fica fácil. Dada uma classe em $c \in H^{1,1}(X,\Z)$ temos, do Teorema das $(1,1)$-classes de Lefschetz, que $c = c_1(L)$ para algum $L \in \Pic(X)$. Da afirmação 1 acima temos que $L = \mathcal O (D)$ para algum divisor $D$ e usando a afirmação 2 concluímos que $c = c_1(\mathcal O (D)) = [\eta_D]$.\\

Do teorema \ref{thm:hodge-p=1} e do fato que $H^k(X,\mathbb Q) = H^k(X,\Z)\otimes \mathbb Q$ (veja a observação \ref{rmk:cohomology-coeficients}), vemos facilmente que a conjectura de Hodge é válida para o caso $p=1$. Usando o Teorema ``Difícil'' de Lefschetz não é difícil ver que o resultado vale também para $p=n-1$.

Ainda não existem resultados gerais para outros valores de $p$. É sabido que a conjectura vale para uma classe grande de variedades abelianas (i.e. toros projetivos), mas mesmo para essas variedades não há resultados completos. Houve também tentativas de generalização da Conjectura de Hodge para variedades de Kähler, dizendo que, em uma variedade de Kähler compacta, toda classe em $H^{p,p}(X,\mathbb Q)$ é combinação linear de classes de Chern de fibrados vetoriais sobre $X$, mas essa conjectura foi revelada falsa, como mostraram alguns contra-exemplos devidos a Claire Voisin.
\chapter{Geometria Complexa dos Fibrados Vetorais} \label{ch:vb}
Neste capítulo estudaremos com mais profundidade os fibrados vetoriais complexos sobre uma variedade complexa $X$, introduzidos no capítulo \ref{ch:div-lb}.

Um pensamento recorrente na Geometria Diferencial é a introdução de estruturas geométricas nos fibrados vetoriais sobre uma variedade $X$. Nesta seção vamos estudar algumas delas, como métricas hermitianas, conexões e curvatura. Analisaremos suas interrelações e como elas podem ser usadas para entender a geometria complexa de $X$.\\

\subsubsection{Formas diferenciais com valores em um fibrado vetorial} \index{formas diferenciais!com valores em um fibrado vetorial}

Dado um fibrado vetorial complexo $E$ sobre uma variedade $M$, o fibrado de $k$-formas com valores em $E$ é o fibrado vetorial $\bigwedge^k E = \bigwedge^k(TM)^* \otimes E$ e seu feixe de seções é denotado por $\mathcal{A}^k(E)$. Podemos também considerar formas com valores complexos, definindo $\bigwedge_{\C}^k E = \bigwedge^k(T_{\C}M)^* \otimes E$ e o feixe associado $\mathcal{A}_{\C}^k(E)$.

Um elemento típico da fibra $(\bigwedge^k E)_x$ é da forma $\alpha = \sum \alpha_i \otimes s_i$ onde $\alpha_i$ são $k$-formas em $T_x M$ e $s_i$ são elementos da fibra $E_x$. Sendo assim, uma seção $\alpha \in \mathcal{A}^k(E)(U)$ será da forma $\alpha = \sum \alpha_i \otimes s_i$ onde $\alpha_i$ são $k$-formas diferenciáveis sobre $U$ e $s_i$ seções diferenciáveis de $E$. Note que $\mathcal{A}^0(E)$ é o feixe de seções de $E$.

Dada uma $k$-forma complexa usual $\alpha \in \bigwedge^k(T_{\C}M)^*$ e $k$ vetores $v_1,\ldots,v_k$ podemos calcular $\alpha(v_1,\ldots,v_k)$, obtendo assim um número complexo. Agora se $\alpha = \sum \alpha_i \otimes s_i$ é uma $k$-forma com valores em $E$, quando calculamos $\alpha(v_1,\ldots,v_k)$ obtemos um elemento de $E$, dado  por $\alpha (v_1,\ldots,v_k)= \sum \alpha_i (v_1,\ldots,v_k) s_i$.\\

Se $X$ é uma variedade complexa e $E$ um fibrado vetorial sobre $X$, a decomposição $\bigwedge^k_{\C}X =  \bigoplus_{p+q=k} \bigwedge^{p,q} X$ induz as decomposições
\begin{equation}
\textstyle \bigwedge^k_{\C} E  = \displaystyle \bigoplus_{p+q=k} \textstyle \bigwedge^{p,q} E~~~\text{ e } ~~~ \mathcal{A}_{\C}^k(E) = \displaystyle \bigoplus_{p+q=k} \mathcal{A}^{p,q}(E).
\end{equation}
onde $\bigwedge^{p,q} E = \bigwedge^{p,q} X \otimes E$ e $\mathcal{A}^{p,q}(E)$ é seu feixe de seções.

Durante esse capítulo vamos considerar apenas formas diferenciais complexas. Sendo assim vamos abandonar o subscrito $\C$ na notação acima, denotando por $\mathcal{A}^k(E)$ os feixes de formas complexas com valores em $E$.\\

Existem algumas operações que podemos fazer com formas a valores em um fibrado vetorial:

1) Podemos multiplicar a esquerda uma forma diferencial usual por uma forma com valores em $E$, obtendo uma nova forma com valores $E$, isto é, existe uma aplicação
\begin{equation*}
\begin{split}
\wedge: \mathcal{A}_X^k \times \mathcal{A}^l(E) &\longrightarrow \mathcal{A}^{k+l}(E) \\
(\eta, \alpha \otimes s) &\longmapsto (\eta \wedge \alpha) \otimes s.
\end{split}
\end{equation*}

2) Em geral, se $E$ não possui nenhuma estrutura adicional (como por exemplo uma métrica hermitiana) não há uma maneira natural de definir a multiplicação duas formas com valores em $E$. No entanto, podemos multiplicar uma forma com valores em $E$ e uma forma com valores em $E^*$, isto é, existe um pareamento

\begin{equation*}
\begin{split}
\wedge: \mathcal{A}^k(E) \times \mathcal{A}^l(E^*) &\longrightarrow \mathcal{A}_X^{k+l} \\
( \alpha \otimes s, \beta \otimes \varphi ) &\longmapsto \varphi (s) \alpha \wedge \beta
\end{split}
\end{equation*}

3) Se $\varphi: E \to F$ é um morfismo de fibrados vetoriais, temos morfismos induzidos nos fibrados de formas
\begin{equation}
\begin{split}
\varphi: \mathcal{A}^k(E) &\longrightarrow \mathcal{A}^k(F) \\
 \varphi(\alpha \otimes s)&= \alpha \otimes \varphi(s).
\end{split}
\end{equation}

\subsubsection*{O operador $\delbar_E$}

Se $E=X \times \C^r$ é o fibrado trivial de posto $r$, um seção de $E$ é simplesmente uma $r$-upla de funções complexas. Consequentemente, uma seção de $\mathcal{A}^k_{\C}(E)(U)$ é uma $r$-upla de $k$-formas diferenciais complexas $\alpha = (\alpha_1,\ldots,\alpha_r) = \sum \alpha_i \otimes e_i$, onde $e_i$ é seção sobre $U$ que é constante igual ao $i$-ésimo vetor coordenado. Podemos então definir a derivada exterior $d \alpha = (d \alpha_1,\ldots,d\alpha_r) = \sum d\alpha_i \otimes e_i$.

No entanto, para um fibrado vetorial qualquer não podemos adotar tal definição, pois se $\alpha = \sum \alpha_i \otimes s_i \in \mathcal{A}^k_{\C}(E)(U)$, para qualquer função $\lambda:U \to \C^*$ temos $\alpha = \sum \alpha_i \lambda \otimes \lambda^{-1} s_i $ enquanto
\begin{equation*}
\begin{split}
\sum d(\alpha_i \lambda) \otimes \lambda^{-1} s_i &= \sum ( d(\alpha_i) \lambda \otimes \lambda^{-1} s_i + \alpha_i d\lambda \otimes \lambda^{-1} s_i)\\
&= \sum ( d(\alpha_i) \otimes s_i + \alpha_i d\lambda \otimes \lambda^{-1} s_i) \\
&\neq \sum d(\alpha_i) \otimes s_i.
\end{split}
\end{equation*}

Essa discussão sugere que não há uma maneira natural de definir um operador diferencial $d: \mathcal{A}_{\C}^k(E) \to \mathcal{A}_{\C}^{k+1}(E)$ que estenda a definição de diferencial exterior usual. No entanto, quando $E$ é um fibrado holomorfo, podemos fazer esta extensão para a parte $(0,1)$ do operador $d$, isto é, o operador $\delbar$.

\begin{proposition} \label{prop:delbar-E} \index{operador!$\delbar_E$}
Em um fibrado vetorial holomorfo $E$ existe um operador naturalmente definido $\delbar_E:\mathcal{A}^{p,q}(E) \to \mathcal{A}^{p,q+1}(E)$ que satistfaz $\delbar_E^2 = 0$, a regra de Leibniz $\delbar_E(f \alpha) = \delbar (f) \wedge \alpha + f \delbar_E(\alpha)$ e coincide com o operador $\delbar$ quando $E$ é o fibrado trivial.
\end{proposition}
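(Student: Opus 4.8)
The plan is to construct $\delbar_E$ locally using a holomorphic trivialization and then check that the local definitions patch together. First I would cover $X$ by open sets $U_i$ on which $E$ is holomorphically trivial, fix biholomorphic trivializations $\varphi_i\colon E|_{U_i}\to U_i\times\C^r$, and use them to transport the ordinary operator $\delbar$ acting componentwise on $\C^r$-valued $(p,q)$-forms. Concretely, for a section $\alpha$ of $\mathcal{A}^{p,q}(E)$ over an open subset of $U_i$, its local representative $\varphi_i(\alpha)$ is an $r$-tuple of ordinary $(p,q)$-forms, and I set $\delbar_E^{(i)}\alpha = \varphi_i^{-1}(\delbar(\varphi_i\alpha))$, where $\delbar$ acts entrywise. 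Each $\delbar_E^{(i)}$ manifestly satisfies $\big(\delbar_E^{(i)}\big)^2=0$, the Leibniz rule with respect to multiplication by smooth functions, and reduces to $\delbar$ in the trivial case.

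The key step is showing that $\delbar_E^{(i)}=\delbar_E^{(j)}$ on $U_i\cap U_j$, so that the local operators glue to a globally defined $\delbar_E$. On the overlap, the two local representatives are related by $\varphi_i\alpha=\psi_{ij}\cdot\varphi_j\alpha$, where $\psi_{ij}\colon U_i\cap U_j\to\GL(r,\C)$ are the holomorphic transition functions of $E$ (by the definition of a holomorphic vector bundle in Section~\ref{sec:hol-vb}, the $\psi_{ij}$ are holomorphic). Applying the entrywise $\delbar$ and using the Leibniz rule together with the crucial fact that $\delbar\psi_{ij}=0$ — which holds precisely because $\psi_{ij}$ is holomorphic — one gets $\delbar(\varphi_i\alpha)=\delbar(\psi_{ij}\cdot\varphi_j\alpha)=\psi_{ij}\cdot\delbar(\varphi_j\alpha)$, which is exactly the transformation law saying that $\varphi_i^{-1}\delbar(\varphi_i\alpha)$ and $\varphi_j^{-1}\delbar(\varphi_j\alpha)$ define the same section of $\mathcal{A}^{p,q+1}(E)$ over $U_i\cap U_j$. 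This is the whole point: the holomorphy of the transition functions is what makes the $(0,1)$-part of the ``exterior derivative'' well defined on a holomorphic bundle, whereas the full $d$ would require $d\psi_{ij}=0$, which fails in general.

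Having glued the operators, the remaining properties are inherited from the local model: $\delbar_E^2=0$ because it holds for the entrywise $\delbar$ in each chart; the Leibniz rule $\delbar_E(f\alpha)=\delbar f\wedge\alpha+f\,\delbar_E\alpha$ for $f$ a smooth function follows from the ordinary Leibniz rule applied componentwise and is compatible with gluing since $f$ is scalar; and when $E=X\times\C$ is trivial one may take $\varphi=\mathrm{id}$, recovering $\delbar_E=\delbar$. I would also note uniqueness: any operator satisfying the Leibniz rule and agreeing with $\delbar$ on the trivial bundle is forced, on each trivializing chart, to coincide with $\delbar_E^{(i)}$, since writing $\alpha=\sum(\varphi_i\alpha)_k\otimes \varphi_i^{-1}e_k$ with $\varphi_i^{-1}e_k$ a holomorphic frame and expanding by Leibniz leaves no freedom once we decree that the operator kills holomorphic frames — and it must, because in the trivial bundle the constant frame is annihilated. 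The main obstacle is really just bookkeeping: being careful that the entrywise $\delbar$ genuinely commutes with left multiplication by a matrix of holomorphic functions (a one-line consequence of the Leibniz rule plus $\delbar\psi_{ij}=0$), and that the compatibility identity on triple overlaps is automatic from the cocycle condition, so no new coherence check is needed.
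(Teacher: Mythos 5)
Your proposal is correct and follows essentially the same route as the paper: define $\delbar_E$ locally by applying $\delbar$ componentwise with respect to a holomorphic frame (equivalently, a holomorphic trivialization) and use the holomorphy of the transition matrices — i.e.\ $\delbar\psi_{ij}=0$ — to see that the local definitions agree on overlaps, after which $\delbar_E^2=0$ and the Leibniz rule are inherited from the scalar $\delbar$. The uniqueness remark you add is a small bonus not present in the paper's proof, but the core argument is the same.
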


\begin{proof}
A definição de $\delbar_E$ é feita localmente. Seja $U \subset X$ um aberto e $s_1,\ldots,s_r \in H^0(U,E)$ um referencial holomorfo de $E$ sobre $U$. Dada $\alpha \in \mathcal{A}^{p,q}(E)(U)$,  como $\{s_i\}$ é um referencial, ela se escreve de modo único como $\alpha = \sum_i \alpha_i \otimes s_i$ com $\alpha_i \in \mathcal{A}_X^{p,q}(U)$. \footnote{Mais precisamente: sabemos que $\alpha$ é da forma $\alpha = \sum_j \beta_j \otimes t_j$ para $\beta_j \in \mathcal{A}^{p,q}_X(U)$ e $t_j \in \mathcal{A}^0(E)$. Como $s_1,\ldots,s_r$ é um referencial existem funções univocamente determindas $f^i_j$ tal que $t_j = \sum_i f^i_j s_i$ e portanto $\alpha = \sum_{ij}f^i_j \beta_j \otimes s_i$. Denotando $\alpha_i = \sum_j f^i_j \beta_j$}

Defina
\begin{equation} \label{eq:delbar-vb}
\delbar_E \alpha = \sum_i \delbar(\alpha_i) \otimes s_i.
\end{equation}

Essa definição independe da escolha do referencial holomorfo $\{s_i\}$. De fato, se $\{s'_i\}$ é um outro referencial temos que $s_i = \sum_j \psi_{ij} s'_j$, onde $\psi_{ij}: U \to \GL(r,\C)$ é holomorfa e $\alpha$ se escreve nesse referencial como $\alpha = \sum_j (\sum_i \alpha_i \psi_{ij}) \otimes s'_j$. Assim, o operador $\delbar'_E$ definido usando esse novo referencial é
\begin{equation*}
\begin{split}
\delbar'_E \alpha &= \sum_j \delbar (\sum_i \alpha_i \psi_{ij}) \otimes s'_j = \sum_{ij} (\delbar \alpha_i \cdot \psi_{ij} + \alpha_i \delbar \psi_{ij}) \otimes s'_j = \sum_{ij} \delbar \alpha_i \cdot \psi_{ij} \otimes s'_j \\ &= \sum_{ij} \delbar \alpha_i \cdot \otimes  \psi_{ij} s'_j = \sum_i \delbar(\alpha_i) \otimes s_i = \delbar_E \alpha.
\end{split}
\end{equation*}

Podemos assim definir $\delbar_E$ globalmente: cobrimos $X$ por abertos $U_i$ de modo que existam referenciais holomorfos sobre cada $U_i$ e definimos $\delbar_E|_{U_i}$ pela equação (\ref{eq:delbar-vb}). Da discussão acima as definições concordam em todas as interseções $U_i \cap U_j$ e portanto $\delbar_E$ está bem definido em todo $X$.

A propriedade $\delbar_E^2 = 0$ e a regra de Leibniz para $\delbar_E$ seguem diretamente da expressão local (\ref{eq:delbar-vb}) e das  propriedades correspondentes do operador $\delbar$.
\end{proof}

A definição de $\delbar_E$ também pode ser vista de outra maneira. Note que, como $\delbar$ se anula nas funções holomorfas, o operador $\delbar: \mathcal A^{p,q}_X \to A^{p,q+1}_X$ é uma aplicação $\mathcal O_X$-linear e assim podemos definir
\begin{equation*}
\delbar_E = \delbar \otimes \text{id}: \underbrace{\mathcal A^{p,q}_X \otimes_{\mathcal O_X} E}_{\mathcal A^{p,q}(E)} \to \underbrace{A^{p,q+1}_X \otimes_{\mathcal O_X} E}_{\mathcal A^{p,q+1}(E)}.
\end{equation*}

Escrevendo essa definição de $\delbar_E$ em termos de trivializações locais obtemos a expressão (\ref{eq:delbar-vb}) acima.

A discussão feita na seção \ref{sec:dolbeaut} pode ser estendida a esta situação. Para cada $p \geq 0$ fixado, o operador $\delbar_E$ fornece um complexo de feixes $\mathcal{A}^{p,0}(E) \to \mathcal{A}^{p,1}(E) \to \mathcal{A}^{p,2}(E) \to \cdots$. Os grupos cohomologia de Dolbeaut de $E$ são definidos como a cohomologia desse complexo no nível das seções globais
\begin{equation*}
H^{p,q}(X,E) \doteq H^q(\mathcal{A}^{p,\bullet}(X,E),\delbar_E) = \frac{\ker(\delbar_E:\mathcal{A}^{p,q}(E) \to \mathcal{A}^{p,q+1}(E))}{\im(\delbar_E:\mathcal{A}^{p,q-1}(E) \to \mathcal{A}^{p,q}(E))}
\end{equation*}

Como consequência do $\delbar$-Lema de Poincaré (Proposição \ref{prop:poincare-lemma}) podemos ver que a sequência de feixes $\mathcal{A}^{p,0}(E) \to \mathcal{A}^{p,1}(E) \to \mathcal{A}^{p,2}(E) \to \cdots$ é exata e, pela definição de $\delbar_E$, temos que $\ker(\delbar_E:\mathcal{A}^{p,0}(E) \to \mathcal{A}^{p,1}(E)) = \Omega_X^p \otimes E$ é o feixe de $p$-formas holomorfas com valores em $E$. Vemos portanto que o complexo $(\mathcal{A}^{p,\bullet}(E),\delbar_E)$ é uma resolução do feixe $\Omega_X^p \otimes E$. Como os feixes $\mathcal{A}^{p,q}(E)$ são acíclicos, obtemos o isomorfismo de Dolbeaut para fibrados vetoriais:
\begin{equation}
H^{p,q}(X,E) \simeq H^q(X,\Omega_X^p \otimes E).
\end{equation}

\begin{remark} \label{rmk:serre-duality-vb}
Um teorema de dualidade devido a Jean-Pierre Serre \index{Dualidade de Serre} diz que o dual topológico do espaço $H^q(X,\Omega_X^p \otimes E)$ é naturalmente isomorfo a $H^{n-q}_c(X,\Omega_X^{n-p} \otimes E^*)$, o espaço de cohomologia de $X$ com coeficientes em $\Omega^{n-p}_X \otimes E$ e suporte compacto.

Note que quando $X$ é compacta e $E=\mathcal O_X$ é o fibrado trivial recuperamos o resultado do corolário \ref{cor:serre-dolbeaut-duality}. Diferentemente da demonstração apresentada na seção \ref{sec:hodge-decomp}, a demonstração da Dualidade de Serre pode ser feita, como em \cite{serre-dualite}, de maneira puramente analítica, sem necessitar da introdução de uma métrica hermitiana em $X$.
\end{remark}

\section{Estruturas hermitianas}

Dado um fibrado vetorial complexo $E$ sobre uma variedade $M$, uma \textit{estrutura hermitiana em $E$} \index{métrica!hermitiana!em um fibrado complexo} é um produto hermitiano $h_x$ em cada fibra $E_x$ que varia diferenciavelmente no seguinte sentido: se $\psi:E|_U \to U \times \C^r$ é uma trivialização então as funções $h_{ij}(x) = h_x(\psi_x^{-1}(e_i),\psi_x^{-1}(e_j))$ são diferenciáveis em $U$.

Se $h$ é uma estrutura hermitiana em $E$ diremos que o par $(E,h)$ (ou simplesmente $E$) é um \textit{fibrado vetorial hermitiano}.

Dado um fibrado hermitiano $(E,h)$ de posto $r$, dizemos que um conjunto de seções locais $s_1,\ldots,s_r:U \to E$ é um \textit{referencial unitário} se $h_x(s_i(x),s_j(x)) = \delta_{ij}$ para todo $x \in U$. É fácil ver, usando o processo de Gram-Schmidt, que referenciais unitários locais sempre existem.\\

\begin{remark}
Assim como uma métrica hermitiana determina referenciais unitários locais, os referenciais locais determinam a estrutura hermitiana. De fato: se $r,t \in E_x$ e $s_1,\ldots,s_r$ é um referencial local em torno de $x$ podemos escrever $r = \sum a_i s_i(x)$ e $t = \sum b_j s_j(x)$ e portanto teremos que $h_x(r,t) = \sum a_i \bar{b}_i$.
\end{remark}

\begin{example}
Seja $X$ uma variedade complexa e $J:TX\to TX$ a estrutura complexa induzida. Vimos no capítulo \ref{ch:kahler} que um produto hermitiano em $TX$ é equivalente a uma métrica riemanniana em $X$ compatível com $J$ (veja a observação \ref{rmk:herm-riem}). Sendo assim uma métrica hermitiana no fibrado complexo $(TX,J)$ é simplesmente uma métrica hermitiana em $X$.
\end{example}
\begin{example}
Se $E = U \times \C^r$ então uma estrutura hermitiana em $E$ é dada por $h_x(u,v) = u^t H(x) \bar{v}$, onde $H$ é uma função diferenciável $H:U \to \GL(r,\C)$ tal que $H(x)$ é uma matriz hermitiana positiva definida para todo $x \in U$.
\end{example}

\begin{example} \label{ex:herm-local}
Do exemplo anterior vemos que se $E$ é um fibrado hermitiano qualquer, a estrutura hermitiana é dada, localmente, por funções diferenciáveis com valores no espaço das  matrizes hermitianas positivas definidas. Mais precisamente, se $\psi:E|_U \to U \times \C^r$ é uma trivialização então a métrica em $U$ é dada por $h_x(u,v) = \psi_x(u)^t H(x) \overline{\psi_x (v)}$.

Uma outra trivialização sobre $U$ será da forma $\psi' = F \circ \psi$ com $F(x,v) = (x,\varphi(x)\cdot v)$ e $\varphi:U \to GL(r,\C)$. Com relação a $\psi'$ a métrica é dada por $h_x(u,v) = \psi'_x(u)^t H'(x) \overline{\psi'_x (v)}$, de onde vemos que
\begin{equation*}
H = \varphi^t H' \bar{\varphi}.
\end{equation*}
Vemos então que uma maneira alternativa de definir uma estrutura hermitiana em $E$ é dar, para cada aberto de uma cobertura trivializante $\{U_i\}$, um função $H_i: U_i \to \GL(r,\C)$ cujos valores são matrizes hermitianas positivas definidas e satisfazem a condição de compatibilidade
\begin{equation} \label{eq:herm-cocycles}
H_j = \varphi_{ij}^t H_i \bar{\varphi}_{ij}
\end{equation}
\end{example}

\begin{example} \textbf{Métricas hermitianas em fibrados de linha.}
Seja $L$ um fibrado de linha holomorfo sobre $X$. Pela equação (\ref{eq:herm-cocycles}), uma métrica hermitiana em $L$ é dada por uma coleção de funções $h_i:U_i \to \C^*$ satisfazendo $h_j = |\varphi_{ij}|^2 h_i$, onde $\varphi_{ij}$ são os cociclos de $L$ com relação a cobertura $\{U_i\}$.

Dizemos que um conjunto de seções globais $s_1,\ldots,s_N \in H^0(X,L)$ geram $L$ se para todo $x \in X$ algum $s_j(x) \neq 0$. Dado tal conjunto considere  a função
\begin{equation*}
h_i(x) = \frac{1}{\displaystyle \sum_{\alpha=1}^N \big |\varphi_i(s_{\alpha}(x)) \big |^2}
\end{equation*}
onde $\varphi_i$ é uma trivialização local em $U_i$.

É claro que as funções acima satisfazem $h_j = |\varphi_{ij}|^2 h_i$ e portanto definem uma métrica hermitiana $h$ em $L$. Dizemos que $h$ é a métrica induzida pelas seções $s_1,\ldots,s_N$.\\

Um exemplo importante é o do fibrado $\mathcal{O}(1)$ sobre $\pr^n$. Vimos no exemplo \ref{ex:sec-Ok} que os polinômios lineares $z_0,\ldots,z_n$ podem ser vistos como seções holomorfas de $\mathcal{O}(1)$. Explicitamente, sobre uma reta $\ell \in \pr^n$, a seção $z_j(\ell) \in \ell^*$ associa a cada $v \in \ell$ a sua $j$-ésima coordenada. É claro que essas seções não se anulam simultaneamente em nenhum ponto de $\pr^n$, de modo que $\{z_0,\ldots,z_n\} \subset H^0(\pr^n, \mathcal{O}(1))$ gera $\mathcal{O}(1)$. Temos então uma métrica hermitiana associada, dada por
\begin{equation} \label{eq:metric-O1}
h_i = \frac{1}{\displaystyle \sum_{\alpha=0}^n \bigg | \frac{z_{\alpha}}{z_i} \bigg|^2} = \frac{1}{1 + \displaystyle \sum_{\alpha \neq i} \bigg | \frac{z_{\alpha}}{z_i} \bigg|^2} ~~ \text{ em } ~~ U_i = \{z_i \neq 0\}
\end{equation} 
\end{example}

\subsubsection{Métricas hermitianas induzidas}
Uma métrica hermitiana $h$ em um fibrado complexo $E$ induz um isomorfismo $\C$-antilinear $\Phi: E \to E^*$, dado por $\Phi(s) = h(\cdot,s) \doteq s^*$. Com isso podemos definir uma métrica $h^*$ em $E^*$ pela fórmula
\begin{equation*}
h^*(s^*,t^*) = \overline{h(s,t)}~~s^*,t^* \in E^*
\end{equation*}

Existem também métricas induzidas na soma direta e no produto tensorial de fibrados.

Se $(E_1,h_1)$ e $(E_2,h_2)$ são fibrados hermitianos, definimos a métrica $h_1 \oplus h_2$ em $E_1 \oplus E_2$ por
\begin{equation*}
(h_1 \oplus h_2)(s_1 + s_2, t_1 + t_2) = h_1(s_1,t_1) + h_2(s_2,t_2),~~s_1,t_1 \in E_1,~ s_2,t_2 \in E_2
\end{equation*}
e a métrica $h_1 \otimes h_2$ em $E_1 \otimes E_2$ pela por
\begin{equation*}
(h_1 \otimes h_2) (s_1\otimes s_2,t_1\otimes t_2) = h_1(s_1,t_1)h_2(s_2,t_2),~~s_1,t_1 \in E_1,~ s_2,t_2 \in E_2
\end{equation*}

Considere agora uma sequência exata de fibrados complexos
\begin{equation*}
0 \longrightarrow F \stackrel{\iota}{\longrightarrow} E \stackrel{\varphi}{\longrightarrow} G \longrightarrow 0.
\end{equation*}
e suponha que $E$ possui uma métrica hermitiana $h$.

Identificando $F$ com sua imagem por $\iota$, podemos vê-lo como um subfibrado de $E$ e portanto a restrição $h|_F$ define uma métrica hermitiana em $F$.

Denote por $F^\perp = \{s \in E : h(s,t)=0, \text{ para todo } t \in F\}$ o complemento ortogonal de $F$ em $E$. É fácil ver que $F^\perp$ é um subfibrado de $E$ e que $E = F \oplus F^\perp$. Sendo assim, o morfismo $\varphi$ se restringe a um isomorfismo $F^{\perp} \simeq G$ e portanto a restrição $h|_{F^\perp}$ define uma métrica em $G$.

\begin{example}
Considere a sequência de Euler em $\pr^n$ (veja a proposição \ref{prop:euler-seq})
\begin{equation*}
0 \longrightarrow \mathcal O_{\pr^n} \longrightarrow \mathcal O(1)^{\oplus(n+1)} \longrightarrow \mathcal T_{\pr^n} \to 0.
\end{equation*}
Tomando o produto tensorial com o fibrado tautológico $\mathcal O(-1)$ obtemos a sequência exata
\begin{equation*}
0 \longrightarrow \mathcal O (-1) \longrightarrow \mathcal O_{\pr^n}^{\oplus(n+1)} \longrightarrow \mathcal T_{\pr^n} \otimes \mathcal O(-1) \to 0.
\end{equation*}

O fibrado trivial $\mathcal O_{\pr^n}^{\oplus(n+1)} = \pr^n \times \C^{n+1}$ possui uma métrica canônica constante dada por $h_x(v,w) = (v,w)$, onde $(\cdot,\cdot)$ é o produto hermitiano padrão em $\C^{n+1}$. Da discussão acima, $h$ se restringe a uma métrica em\footnote{Da demonstração da proposição \ref{prop:euler-seq}, vemos que a aplicação induzida $\mathcal O (-1) \to \mathcal O_{\pr^n}^{\oplus(n+1)}$ coincide com a inclusão $\mathcal O (-1) \subset \pr^n \times \C^{n+1}$} $\mathcal O (-1) \subset \pr^n \times \C^{n+1}$ e também induz uma métrica em $\mathcal T_{\pr^n} \otimes \mathcal O (-1) \simeq \mathcal O (-1)^\perp$.

A aplicação $\varphi:\mathcal O_{\pr^n}^{\oplus(n+1)} \to \mathcal T_{\pr^n} \otimes \mathcal O(-1)$ é obtida tomando o produto tensorial da aplicação $\mathcal O(1)^{\oplus(n+1)} \to \mathcal T_{\pr^n}$ com $\id_{\mathcal O(-1)}$ e portanto é dada, sobre um ponto $x \in \pr^n$, por $v \mapsto d \pi_z(v) \otimes z$, para algum $z \in x$.

Podemos supor, sem perda de generalidade, que $z \in S^{2n+1}$ e portanto, usando a proposição \ref{prop:FS-submersion}, vemos que, para $v \in \mathcal O (-1)^\perp$,
\begin{equation*}
|v|^2 = \pi \cdot h_{FS}(d\pi_z(v)) = \pi \cdot h_{FS}(d\pi_z(w)) |z| = \pi \cdot (h_{FS} \otimes h)(d \pi_z(v) \otimes z),
\end{equation*}
o que mostra que a métrica induzida pelo isomorfismo $\varphi:\mathcal O (-1)^\perp \to \mathcal T_{\pr^n} \otimes \mathcal O (-1)$ é $\pi \cdot h_{FS} \otimes h$.
\end{example}

\section{Conexões} \label{sec:connections}
No começo do capítulo vimos que não há uma maneira natural de derivar seções de um fibrado. A ideia de uma conexão é prover uma maneira de definirmos tais derivadas. Veremos que podemos impor condições de compatibilidade de uma conexão com as demais estruturas do fibrado e assim, em alguns casos, veremos que existem conexões privilegidas.

\begin{definition} \label{def:connection} \index{conexão!em um fibrado complexo}
Uma conexão em um fibrado vetorial $E$ é um morfismo de feixes $\nabla: \mathcal{A}^0(E) \to \mathcal{A}^1(E)$ que é $\C$-linear e satisfaz a regra de Leibniz
\begin{equation} \label{eq:leibniz}
\nabla(f s) = df \otimes s + f \nabla(s),
\end{equation}
onde $f$ é um função e $s$ é uma seção de $E$, localmente definidas\footnote{
Observação: Alguns autores definem uma conexão agindo apenas sobre as seções globais de $E$, isto é, como uma aplicação $\C$-linear $\nabla:\mathcal{A}^0(E)(X) \to \mathcal{A}^1(E)(X)$ satisfazendo a regra de Leibniz. Essa definição é equivalente à definição \ref{def:connection}, como veremos a seguir.

Se $\nabla$ é uma conexão em $E$ temos, em particular, uma derivação no espaço das seções globais $\nabla:\mathcal{A}^0(E)(X) \to \mathcal{A}^1(E)(X)$. Reciprocamente, dada uma aplicação $\widetilde{\nabla}:\mathcal{A}^0(E)(X) \to \mathcal{A}^1(E)(X)$ que é $\C$-linear e satisfaz a regra de Leibniz (\ref{eq:leibniz}), podemos definir uma conexão no sentido da definição acima da seguinte maneira.

Seja $U \subset M$ um aberto e considere $s \in \mathcal{A}^0(E)(V)$, onde $V$ um aberto tal que $\overline V \subset U$. Seja $\widetilde s \in \mathcal{A}^0(E)(X)$ uma extensão de $s$ que é trivial fora de $U$. Defina então a aplicação $\nabla^V: \mathcal{A}^0(E)(V) \to \mathcal{A}^1(E)(V)$ por $\nabla^V(s) = \widetilde \nabla (\widetilde s)|_V$. Uma maneira de construir tal extensão é a seguinte: considere $f$ uma função diferenciável em $X$ tal que $f = 1$ em $V$ e $f=0$ em $M \setminus U$ e defina $\widetilde s = f s\in \mathcal{A}^0(E)(X)$.

Note que a definição de $\nabla^V$ independe da extensão de $s$. De fato, por linearidade basta mostrar que se $s \in \mathcal A^0(E)$  e $s|_U = 0$ então $\widetilde \nabla (s)|_V = 0$ e isso vale pois, se $f$ é como acima então $fs = 0$ em $X$  e portanto $0 = \widetilde \nabla(fs) = df \otimes s + f \widetilde \nabla (s)$. Usando o fato de $f$ ser constante igual a $1$ em $V$ obtemos $0 = \widetilde \nabla (s)|_V$.

Temos assim derivações  $\nabla^V: \mathcal{A}^0(E)(V) \to \mathcal{A}^1(E)(V)$ para cada aberto com $\overline V \subset U$. Cobrindo $U$ por tais abertos podemos definir $\nabla:  \mathcal{A}^0(E)(U) \to \mathcal{A}^1(E)(U)$ e por um argumento análogo ao do parágrafo anterior vemos que a definição independe da cobertura escolhida. Definimos assim um morfismo de feixes $\nabla: \mathcal{A}^0(E) \to \mathcal{A}^1(E)$, que é uma conexão no sentido da definição \ref{def:connection}
} .
\end{definition}

Dado um campo de vetores localmente definido $X$ podemos avaliar a $1$-forma $\nabla(s)$ em $X$, obtendo assim uma seção $\nabla(s)(X) \doteq \nabla_X (s) \in \mathcal{A}^0(E)$. Pensamos em $\nabla_X(s)$ como sendo a derivada direcional, ou derivada covariante da seção $s$ na direção do vetor $X$. Note que, por (\ref{eq:leibniz}), temos a regra de derivação
\begin{equation} \label{eq:leibniz2}
\nabla_X(f s) = X(f) \cdot s + f \nabla_X(s).
\end{equation}

\begin{example}
Seja $E = M \times \C^r$ o fibrado trivial. Uma seção de $E$ é da forma $s = (f_1,\ldots,f_r)$, onde cada $f_j$ é uma função diferenciável localmente definida em $M$. Pela regra de Leibniz usual temos que $\nabla s = (d f_1,\ldots,d f_r)$ define uma conexão em $E$. 
\end{example}

Se $\nabla$ e $\nabla'$ são duas conexões em $E$ temos, para toda seção $s$ e função $f$, que
\begin{equation*}
(\nabla - \nabla')(fs) = df \otimes s + f \nabla(s) -  df \otimes s - f \nabla'(s) = f(\nabla - \nabla')(s),
\end{equation*}
e portanto podemos ver $\nabla - \nabla'$ como uma $1$-forma com valores em $\End(E)$. Mais precisamente, $\nabla - \nabla	 \in \mathcal{A}^1(\End(E))$ é a $1$-forma que, avaliada em um vetor tangente $X$, fornece o endomorfismo $s \mapsto \nabla_X s - \nabla'_X s$.

Reciprocamente, se $\nabla$ é uma conexão em $E$ e $a \in \mathcal{A}^1(\End(E))(M)$ então $\nabla + a$ é uma conexão em $E$. Isso mostra que o espaços das conexões em $E$ é um espaço afim modelado no espaço vetorial $\mathcal{A}^1(\End(E))(M)$.

\begin{example} \label{ex:trivial-connection}
No caso do fibrado trivial $E = X \times \C^r$ temos que $\End(E) = X \times \mathfrak{gl}(r,\C)$ onde $\mathfrak{gl}(r,\C)$ é o espaço das matrizes $r \times r$ com coeficientes complexos. Sendo assim o espaço \-$\mathcal{A}^1(\End(E))(M)$ é formado por matrizes cujas entradas são $1$-formas em $M$.

Como a diferencial exterior $d$ define uma conexão em $E$, vemos que qualquer outra conexão será da forma $\nabla s = ds + A\cdot s$, onde $A$ é uma matriz de $1$-formas que age em $s = (s_1,\ldots,s_r)$ por multiplicação. Em suma, vemos que qualquer conexão em $E$ é da forma $\nabla = d + A$.
\end{example}

Utilizando o exemplo acima e as trivializações locais podemos descrever as conexões localmente. Seja $\nabla$ uma conexão em $E$. Uma trivialização de $E$ sobre um aberto $U_i$ fornece um isomorfismo $\psi_i:E_{U_i} \simeq U_i \times \C^r$ e portanto temos uma conexão $\nabla_i$ induzida no fibrado trivial $U_i \times \C^r$ dada explicitamente por $\nabla_i(s) = \psi_i \nabla(\psi_i^{-1}(s))$. Do exemplo \ref{ex:trivial-connection} vemos então que $\nabla_i = d + A_i$, onde $A_i$ é uma matriz de $1$-formas definidas em $U_i$. Agora, se $s \in \mathcal{A}^0(E)(U_i \cap U_j)$ temos que $\nabla (s) = \psi_j^{-1} \circ (d + A_j) \circ \psi_j(s)$ e por outro lado
\begin{equation*}
\begin{split}
\nabla (s) &= \psi_i^{-1} \circ (d + A_i) \circ \psi_i(s)\\
& = \psi_j^{-1} \psi_{ij}^{-1} \circ (d + A_i) \psi_{ij} \circ \psi_j(s) \\
&=  \psi_j^{-1} \psi_{ij}^{-1} \circ d (\psi_{ij} \circ \psi_j(s)) + \psi_j^{-1} \psi_{ij}^{-1} \circ (A_i \psi_{ij}) \circ \psi_j(s)\\
& = \psi_j^{-1} \psi_{ij}^{-1} \circ (d \psi_{ij} + \psi_{ij}d) \circ \psi_j(s) +  \psi_j^{-1} \psi_{ij}^{-1} \circ (A_i \psi_{ij}) \circ \psi_j(s) \\
&=  \psi_j^{-1} \circ (d + \psi_{ij}^{-1}d \psi_{ij} + \psi_{ij}^{-1}A_i \psi_{ij} )  \circ \psi_j(s)
\end{split}
\end{equation*}
e portanto vemos que as matrizes $A_i$ devem satisfazer as condições de compatibilidade
\begin{equation} \label{eq:local-connection}
A_j = \psi_{ij}^{-1}d \psi_{ij} + \psi_{ij}^{-1}A_i \psi_{ij}
\end{equation}
em $U_i \cap U_j$.

Reciprocamente, dadas matrizes de $1$-formas $A_i$ definidas em $U_i$ e satisfazendo as condições (\ref{eq:local-connection}) então podemos definir uma conexão em $E$ por $\nabla|_{U_i} = \psi_i^{-1} \circ (d + A_i) \circ \psi_i$.

Essa descrição da conexão em coordenadas é bastante utilizada na física. No jargão dos físcos, a escolha de uma trivialização é chamada de \textit{gauge}, e a matriz de um formas $A_i$ é chamada de potencial vetor.

\subsubsection{Conexões induzidas}

Se $E_1$ e $E_2$ são fibrados vetoriais com conexões $\nabla_1$ e $\nabla_2$ temos conexões naturalamente induzidas na soma direta $E_1 \oplus E_2$:
\begin{equation*}
\nabla(s_1 \oplus s_2) = \nabla_1(s_1) + \nabla_2 (s_2) \in \mathcal{A}^1(E_1) \oplus \mathcal{A}^1(E_2) = \mathcal{A}^1(E_1 \oplus E_2)
\end{equation*}
e no produto tensorial $E_1 \otimes E_2$:
\begin{equation*}
\nabla(s_1 \otimes s_2) = \nabla_1(s_1) \otimes s_2 + s_1 \otimes \nabla_2 (s_2) \in \mathcal{A}^1(E_1) \otimes \mathcal{A}^1(E_2) = \mathcal{A}^1(E_1 \otimes E_2).
\end{equation*}

Dado um fibrado vetorial $E$ com uma conexão $\nabla$, existem também conexões naturalmente induzidas em $E^*$ e $\End(E)$. Para justificar a naturalidade dessas conexões vamos considerar um contexto um pouco mais geral.

Definimos os fibrados
\begin{equation*}
\mathcal{T}^{(r,s)}_E = \overbrace{E \otimes \cdots \otimes E}^{r \text{ vezes}} \otimes \overbrace{E^* \otimes \cdots \otimes E^*}^{s \text{ vezes}} ~~~ \text{ e } ~~~ \mathcal{T}_E = \bigoplus_{r,s\geq0} \mathcal{T}^{(r,s)}_E,
\end{equation*}
onde adotamos a convenção de que $\mathcal{T}^{(0,0)} = M \times \C$ é fibrado trivial. Os casos que nos interessarão mais são $\mathcal{T}^{(0,1)}_E = E^*$ e $\mathcal{T}^{(1,1)}_E = E \otimes E^* \simeq \End(E)$.

Para cada par de índices $(i,j)$ com $1\leq i \leq r$ e $1 \leq j \leq s$ existe uma aplicação
\begin{equation*}
\begin{split}
C^{i,j}:\mathcal{T}^{(r,s)}_E &\longrightarrow \mathcal{T}^{(r-1,s-1)}_E\\
s_1 \otimes \cdots \otimes s_r \otimes t^1 \otimes \cdots t^s &\longmapsto t^j(s_i)~s_1 \otimes \cdots \hat{s_i} \cdots \otimes s_r \otimes t^1 \otimes \cdots \hat{t^i} \cdots \otimes t^s
\end{split}
\end{equation*}
chamada contração.

Note que, no caso $r=s=1$ só há uma contração, que nada mais é que a avaliação $s \otimes t \mapsto t(s)$.\\

A proposição a seguir mostra que uma conexão em $E$ se estende naturalmente a conexões nos produtos $\mathcal T^{(r,s)}_E $. Uma demonstraçao para o caso em que $E=TM$ pode ser encontrada no capítulo 2 de \cite{k-n} e a demonstração lá apresentada pode ser imediatamente adaptada para o caso geral.

\begin{proposition} \label{prop:tensor-connection}
Seja $\nabla$ uma conexão em $E$. Então $\nabla$ induz uma conexão em cada $\mathcal{T}^{(r,s)}_E$ (também denotada por $\nabla$), que é determinada pelas seguintes propriedades.
\begin{itemize}
\item[1.]  Em $\mathcal{T}_E^{(0,0)} = M \times \C$, $\nabla = d$ é a diferencial exterior.
\end{itemize}
Para todo vetor tangente $X \in TM$ temos
\begin{itemize}
\item[2.] $\nabla_X$ é uma derivação em $\mathcal{T}_E$, isto é, $\nabla_X (s \otimes t) = \nabla_X(s)\otimes t + s \otimes \nabla_X(t)$ para todos $s,t \in \mathcal{T}_E$,
\item[3.] $\nabla_X$ comuta com todas as contrações.
\end{itemize}
\end{proposition}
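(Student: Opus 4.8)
The plan is to prove the proposition by first extracting from properties 1--3 the formulas that $\nabla$ must satisfy (this gives uniqueness), and then checking that those formulas genuinely define a connection with the three stated properties.

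First I would establish uniqueness, since properties 1--3 leave no freedom. Given local sections $s \in \mathcal{A}^0(E)$ and $t \in \mathcal{A}^0(E^*)$, the contraction $C^{1,1}(s \otimes t) = t(s)$ is a function, so by property 1, $\nabla_X$ acts on it as $X$, and combining properties 2 and 3,
\begin{equation*}
X(t(s)) = \nabla_X\big(C^{1,1}(s \otimes t)\big) = C^{1,1}\big(\nabla_X(s \otimes t)\big) = (\nabla_X t)(s) + t(\nabla_X s).
\end{equation*}
Since $s$ is arbitrary, this determines $\nabla_X t$ as the section $s \mapsto X(t(s)) - t(\nabla_X s)$ of $E^*$. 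For a decomposable section $\sigma = s_1 \otimes \cdots \otimes s_r \otimes t^1 \otimes \cdots \otimes t^s$ of $\mathcal{T}^{(r,s)}_E$, property 2 then forces the Leibniz expansion of $\nabla_X \sigma$, and since such decomposable sections locally span $\mathcal{A}^0(\mathcal{T}^{(r,s)}_E)$ over $\mathcal{C}^{\infty}_X$, this pins down $\nabla$ on every tensor bundle.

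For existence I would take these formulas as definitions. The first point to check is that $\nabla t$ on $E^*$, defined by $(\nabla_X t)(s) = X(t(s)) - t(\nabla_X s)$, is indeed a connection: $\C$-linearity is immediate; $\mathcal{C}^{\infty}_X$-linearity in $s$ (so that $\nabla_X t$ is a genuine section of $E^*$, not merely an additive map) follows from a one-line computation; and the Leibniz rule $\nabla(f t) = df \otimes t + f\,\nabla t$ follows from $(\nabla_X(f t))(s) = X(f)\,t(s) + f\big(X(t(s)) - t(\nabla_X s)\big)$. With connections on $E$ and on $E^*$ in hand, the construction on tensor products recalled in the preceding subsection gives, by iteration, a connection on each $\mathcal{T}^{(r,s)}_E$. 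It then remains to check properties 1--3 for it: property 1 is the definition of the induced connection on $\mathcal{T}^{(0,0)}_E = M \times \C$; property 2 is the defining Leibniz identity of the tensor-product construction; and property 3 reduces, via property 2 and $\mathcal{C}^{\infty}_X$-linearity, to the single identity $(\nabla_X t)(s) + t(\nabla_X s) = X(t(s))$ built into the definition of $\nabla$ on $E^*$.

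It is most efficient to carry out the last verification in a local frame: picking a frame $\{s_\alpha\}$ of $E$ over $U$ with dual frame $\{s^\alpha\}$ and connection forms $\nabla s_\alpha = \sum_\beta \omega^\beta_\alpha \otimes s_\beta$, one gets $\nabla s^\alpha = -\sum_\beta \omega^\alpha_\beta \otimes s^\beta$ and explicit expressions for $\nabla$ on the induced frames of $\mathcal{T}^{(r,s)}_E$, after which properties 2 and 3 are pure bookkeeping. The main obstacle is the usual one for tensorial constructions: one must confirm that these local expressions are independent of the frame --- equivalently, that they transform correctly under the transition law (\ref{eq:local-connection}) --- so that they patch to a well-defined global operator $\nabla : \mathcal{A}^0(\mathcal{T}^{(r,s)}_E) \to \mathcal{A}^1(\mathcal{T}^{(r,s)}_E)$. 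This is a routine if somewhat lengthy computation with the transformation rule for connection forms; as the statement indicates, the proof given for $E = TM$ in \cite{k-n}, ch.\ 2, adapts verbatim, so I would either reproduce that computation or simply invoke it.
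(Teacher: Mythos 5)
Your proof is correct and follows exactly the route the paper intends: the paper gives no proof of its own but defers to the argument in \cite{k-n}, cap.\ 2 (for $E=TM$), which is precisely the uniqueness-via-contractions / existence-via-the-dual-and-tensor-product construction you spell out, with the formula $(\nabla_X t)(s) = X(t(s)) - t(\nabla_X s)$ matching the paper's Corol\'ario \ref{cor:dual-connection}. No gaps to report.
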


Com essa proposição podemos descrever as conexões induzidas no fibrado dual e no fibrado dos endomorfismos.

\begin{corollary} \label{cor:dual-connection}
1. A conexão naturalmente induzida em $E^*$ é dada por
\begin{equation*}
(\nabla \varphi)(s) = d (\varphi(s)) - \varphi(\nabla(s)),~~~ \varphi \in \mathcal{A}^0(E^*),~ s \in \mathcal{A}^0(E).
\end{equation*}
2. Segundo o isomorfismo $E \otimes E^* \simeq \End(E)$, a conexão induzida em $\End(E)$ é dada por
\begin{equation*}
(\nabla \psi)(s) = \nabla (\psi(s)) - \varphi(\psi(s)),~~~ \psi \in \mathcal{A}^0(\End(E)),~ s \in \mathcal{A}^0(E).
\end{equation*}
\end{corollary}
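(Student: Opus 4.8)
The final statement is Corollary \ref{cor:dual-connection}, which gives explicit formulas for the induced connections on $E^*$ and $\End(E)$, deduced from Proposition \ref{prop:tensor-connection}. The plan is to derive each formula from the three characterizing properties listed in the proposition: agreement with $d$ on the trivial factor, the derivation (Leibniz) rule with respect to tensor product, and commutation with contractions.

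First I would treat the dual bundle $E^*=\mathcal{T}_E^{(0,1)}$. Fix a locally defined section $\varphi\in\mathcal{A}^0(E^*)$ and $s\in\mathcal{A}^0(E)$, and a tangent vector $X$. The pairing $\varphi(s)$ is exactly the contraction $C^{1,1}(s\otimes\varphi)\in\mathcal{T}_E^{(0,0)}$, a scalar function. By property 1, $\nabla_X$ on $\mathcal{T}_E^{(0,0)}$ is $d$, so $\nabla_X(\varphi(s))=d(\varphi(s))(X)=X(\varphi(s))$. On the other hand, property 3 says $\nabla_X$ commutes with $C^{1,1}$, so $\nabla_X(\varphi(s))=C^{1,1}(\nabla_X(s\otimes\varphi))$, and property 2 expands $\nabla_X(s\otimes\varphi)=\nabla_X(s)\otimes\varphi+s\otimes\nabla_X(\varphi)$. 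Applying $C^{1,1}$ term by term gives $\varphi(\nabla_X s)+(\nabla_X\varphi)(s)$. Equating the two expressions and letting $X$ vary yields $(\nabla\varphi)(s)=d(\varphi(s))-\varphi(\nabla s)$, which is the first formula. One should check this is well defined, i.e.\ that the right-hand side is $\mathcal{C}^\infty$-linear in $s$ — a routine application of the Leibniz rule \eqref{eq:leibniz} for $\nabla$ on $E$.

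For $\End(E)\simeq E\otimes E^*=\mathcal{T}_E^{(1,1)}$, I would use the same device. Given $\psi\in\mathcal{A}^0(\End(E))$ and $s\in\mathcal{A}^0(E)$, the section $\psi(s)$ is obtained by composing $s$ with $\psi$; under the identification $\End(E)\simeq E\otimes E^*$ this is again a contraction, namely $\psi(s)=C^{\,\cdot}(s\otimes\psi)$ contracting the $E^*$-slot of $\psi$ against $s$. Then $\nabla_X(\psi(s))=C^{\,\cdot}(\nabla_X(s\otimes\psi))=C^{\,\cdot}(\nabla_X s\otimes\psi+s\otimes\nabla_X\psi)=\psi(\nabla_X s)+(\nabla_X\psi)(s)$, where in the last step one uses that the induced connection on $E\otimes E^*$ is the tensor-product connection and that contraction of $s$ against $\psi(s')=\nabla_X\psi$ reproduces the endomorphism $(\nabla_X\psi)$ applied to $s$. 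Rearranging gives $(\nabla\psi)(s)=\nabla(\psi(s))-\psi(\nabla(s))$, the second formula (with the typo in the statement, where the last term reads $\varphi(\psi(s))$, corrected to $\psi(\nabla s)$).

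The main obstacle here is not any hard estimate but rather bookkeeping: one must be careful about which slot of $\mathcal{T}_E^{(1,1)}$ is being contracted and make sure the identifications $E\otimes E^*\simeq\End(E)$ and $\mathcal{T}_E^{(0,0)}\simeq M\times\C$ are used consistently, so that "commutes with contractions" is applied to the right contraction map. Once the correct contraction is pinned down, both formulas drop out of properties 1–3 by pure algebra. I would also remark that uniqueness of the induced connection (already part of Proposition \ref{prop:tensor-connection}) is what guarantees these explicit formulas define \emph{the} natural connection and not merely \emph{a} connection, so no separate verification of the connection axioms is strictly needed — though checking the Leibniz rule directly is a quick sanity check worth including.
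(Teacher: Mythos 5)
Your argument is correct and is essentially the paper's own proof: both formulas are extracted by applying the three properties of Proposition \ref{prop:tensor-connection} (that $\nabla=d$ on $\mathcal{T}_E^{(0,0)}$, the derivation rule, and commutation with contractions) to the evaluation contraction $s\otimes\varphi\mapsto\varphi(s)$ and, for $\End(E)\simeq E\otimes E^*$, to the contraction corresponding to $\psi\otimes s\mapsto\psi(s)$. Your observation that the last term in part 2 should read $\psi(\nabla(s))$ is also consistent with what the paper actually proves.
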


\begin{proof}
1. Seja $C:E \otimes E^* \to \mathcal{T}_E^{(0,0)}$ a contração $C(\varphi \otimes s) = \varphi(s)$. Usando as propriedades da proposição acima temos, para todo vetor tangente $X$,
\begin{equation*}
\begin{split}
d (\varphi(s)) (X) &= X (\varphi(s)) = X(C(\varphi \otimes s)) \\
&= C(\nabla_X(\varphi \otimes s)) \\
&= C(\nabla_X(\varphi) \otimes s) + C(\varphi \otimes \nabla_X(s)) \\
&= (\nabla_X \varphi)(s) + \varphi(\nabla_X(s)),
\end{split}
\end{equation*}
e como $X$ é arbitrário segue que $d (\varphi(s)) = (\nabla \varphi)(s) + \varphi(\nabla(s))$.\\

2. Segundo o isomorfismo $E \otimes E^* \simeq \End(E)$ um elemento $s \otimes \phi$ é levado no endomorfismo $(t \mapsto \phi(t) s)$ e portanto a contração $C:E \otimes E^* \otimes E \to E$ dada por $s \otimes \phi \otimes t \mapsto \phi(t)s$ corresponde a aplicação de avaliação $\End(E) \otimes E \to E$. Sendo assim, da propriedade 3. da prosição acima, vemos que
\begin{equation*}
\begin{split}
\nabla_X (\psi(s)) &= \nabla_X(C(\psi \otimes s)) = C \nabla_X(\psi \otimes s)\\
&=  C(\nabla_X(\psi) \otimes s) + C(\psi \otimes \nabla_X(s))\\
&= (\nabla_X \psi)(s) + \psi(\nabla_X(s))
\end{split}
\end{equation*}
para todo $X \in TM$. Como $X$ é arbitrário obtemos $\nabla(\psi(s)) = (\nabla \psi)(s) + \psi(\nabla (s))$.
\end{proof}

\begin{example} \label{ex:connection-trivial-end}
Se $E = M \times \C^r$ e $\nabla = d + A$, a conexão induzida em $\End(E) = M \times \mathfrak{gl}(r,\C)$ é dada por $\nabla T = dT + [A,T]$, de fato, usando o item 2. do corolário acima temos:
\begin{equation*}
\begin{split}
(\nabla T)(s) &= \nabla (T\cdot s)) - T\cdot(\nabla(s)) = d(T\cdot s) + A\cdot(T\cdot s) - T\cdot ds - T\cdot(As) \\
 &=dT \cdot s + T \cdot ds + A \cdot(T \cdot s) - T\cdot ds - T\cdot(A\cdot s) \\
 &=(dT + AT-TA)\cdot s.
\end{split}
\end{equation*}
\end{example}

Existem ainda conexões induzidas por \textit{pullbacks}. Possivemente a melhor maneira de definir tais conexões seja usando a descrição local da conexão. Seja $E \to M$ um fibrado e $f:N \to M$ uma aplicação diferenciável. Lembre que os cociclos do fibrado  $f^*E \to N$ com relação a uma cobertura $\{U_i\}$ são $\varphi_{ij} = \psi_{ij} \circ f$ em $f^{-1}(U_i~\cap~U_j)$, onde $\psi_{ij}$ são os cociclos de $E$. Se $\nabla$ é uma conexão $E$ temos matrizes de $1$-formas, $A_i$ definidads em $U_i$ que satisfazem a condição de compatibilidade (\ref{eq:local-connection}). Sendo assim, as matrizes $f^*A_i$ são matrizes de $1$-formas em $f^{-1}(U_i)$ e satisfazem
\begin{equation*}
f^*A_j = \varphi_{ij}^{-1}d \varphi_{ij} + \varphi_{ij}^{-1}f^*A_i \varphi_{ij}~~ \text{ em }~f^{-1}(U_i \cap U_j)
\end{equation*}
e portanto definem uma conexão em $f^*E$, denotada por $f^* \nabla$.
\subsubsection{Extensão para formas com valores em $E$.}

Assim como a diferencial exterior de funções $d:\mathcal{A}^0_M \to \mathcal{A}^1_M$ se estende para formas diferenciais de grau mais alto, uma conexão $\nabla$ em $E$ pode ser estendida a uma aplicação $d^\nabla: \mathcal{A}^k(E) \to \mathcal{A}^{k+1}(E)$ pela fórmula
\begin{equation} \label{eq:nabla-extension} \index{dnabla@$d^\nabla$}
d^\nabla(\alpha \otimes s) = d \alpha \otimes s + (-1)^k \alpha \wedge \nabla(s),~~~ \alpha \in \mathcal{A}_X^k,~s \in \mathcal{A}^0(E).
\end{equation}
Note que, quando $k=0$, $d^\nabla: \mathcal{A}^0(E) \to \mathcal{A}^1(E)$ coincide com a conexão original.
\begin{example}
Considere o fibrado trivial $E=M \times \C^r$ com a conexão $\nabla = d$. Neste caso o fibrado de $k$-formas com valores em $E$ é simplesmente a soma direta de $r$ cópias de $\mathcal{A}^k(M)$ e a extensão $d^\nabla:\mathcal{A}^k(E) \to \mathcal{A}^{k+1}(E)$ é a diferencial exterior em cada componente.

Para uma outra conexão $\nabla = d + A$, a extensão $d^\nabla:\mathcal{A}^k(E) \to \mathcal{A}^{k+1}(E)$ é dada por $d^\nabla(\alpha) = d \alpha + A \wedge \alpha$, onde $\alpha = (\alpha_1,\ldots,\alpha_r)$ é uma $r$-upla de $k$ formas e $A \wedge \alpha$ denota a multiplicação da matriz de $1$-formas $A$ pelo vetor $\alpha$ e a multiplicação entre coeficientes é a multiplicação exterior.
\end{example}

Da regra de Leibniz usual obtemos uma regra para a derivação do produto de uma $k$-forma, $\beta \in \mathcal{A}^k_X$ por uma $l$-forma com valores em $E$, $t \in \mathcal{A}^l(E)$:
\begin{equation} \label{eq:vb-leibniz}
d^\nabla(\beta \wedge t) = d \beta \wedge t + (-1)^k \beta \wedge d^\nabla t.
\end{equation}

\subsection{Curvatura}
A partira da extensão da conexão para formas diferenciais (eq. \ref{eq:nabla-extension}) podemos definir a curvartura de uma conexão.

\begin{definition} \label{def:curvature} \index{curvatura!de uma conexão}
Seja $\nabla$ uma conexão em $E$. A \textbf{curvatura} de $\nabla$ é a aplicação $F_{\nabla}=d^\nabla \circ d^\nabla: \mathcal{A}^0(E) \to \mathcal{A}^2(E)$.
\end{definition}

A curvatura é linear no seguinte sentido: se $s \in \mathcal{A}^0(E)$ e $f \in \mathcal{A}^0$ então $F_{\nabla}(fs) = fF_{\nabla}(s)$. De fato, usando a definição de $d^\nabla: \mathcal{A}^1(E) \to \mathcal{A}^2(E)$ e a fórmula (\ref{eq:vb-leibniz}) temos
\begin{equation*}
\begin{split}
F_{\nabla}(fs) &= d^\nabla (\nabla (fs)) = d^\nabla(df \otimes s) + d^\nabla(f \nabla(s)) \\
 &=(d^\nabla)^2 f \otimes s - df \wedge \nabla(s) + df \wedge \nabla(s) + f d^\nabla(\nabla(s)) = f d^\nabla(\nabla(s)) \\
 &=f F_{\nabla}(s).
\end{split}
\end{equation*}

Podemos então ver a curvatura como uma $2$-forma com valores no fibrado $\End(E)$: $F_{\nabla} \in \mathcal{A}^2(\End(E))(M)$ é a $2$-forma que avaliada em um par de vetores $(X,Y)$ fornece o endomorfismo $s \mapsto F_{\nabla}(s)(X,Y)$.

\begin{example} \label{ex:curvature-trivial}
No fibrado trivial com a conexão $\nabla = d$ a curvatura é $F_{\nabla} = d^2 = 0$.

Já para a conexão $\nabla = d + A$, vimos que $d^\nabla: \mathcal{A}^1(E) \to \mathcal{A}^2(E)$ é dada por $\beta \mapsto d \beta + A \wedge \beta$, e portanto
\begin{equation*}
\begin{split}
F_{\nabla}(s)&= d^\nabla(ds) + d^\nabla(As) = d^2 s + A \wedge (ds) + d(As) + A \wedge (As) \\
&= A \wedge ds + d(A)s - A\wedge ds + A \wedge (As) \\
&= (dA)s + (A\wedge A)s,
\end{split}
\end{equation*}
onde usamos a regra de Leibniz e o fato de $A$ ser uma matriz de $1$-formas.

Vemos então que a curvatura de $\nabla$ é a matriz de $2$-formas
\begin{equation} \label{eq:curvature-trivial}
F_{\nabla} = dA + A \wedge A.
\end{equation}
\end{example}

Uma conexão em $E$ induz uma conexão em $\End(E)$, que se estende a $d^\nabla:\mathcal{A}^2(\End(E))$ $\to \mathcal{A}^3(\End(E))$ usando a fórmula (\ref{eq:nabla-extension}). Em particular podemos aplicar $d^\nabla$ à forma de curvatura $F_{\nabla} \in \mathcal{A}^2(\End(E))$
\begin{proposition} \label{prop:bianchi}
\textbf{Identidade de Bianchi.} A curvatura satisfaz $d^\nabla (F_{\nabla}) = 0$.
\end{proposition}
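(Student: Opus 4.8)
The plan is to verify the Bianchi identity by a local computation, since curvature is a tensorial object (an element of $\mathcal{A}^2(\End(E))$) and the equation $d^\nabla(F_\nabla) = 0$ can be checked in any trivialization over an open set $U \subset M$. First I would recall from Example \ref{ex:curvature-trivial} that, on the trivial bundle with connection $\nabla = d + A$, the curvature is the matrix of $2$-forms $F_\nabla = dA + A \wedge A$. Next I would use the description from Example \ref{ex:connection-trivial-end} of the induced connection on $\End(E)$: for a matrix-valued form $T$, the extension of the induced connection acts by $d^\nabla T = dT + [A, T]$ (suitably graded when $T$ has positive degree, i.e.\ $d^\nabla T = dT + A \wedge T - (-1)^{\deg T} T \wedge A$).

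The core of the argument is then the direct calculation
\begin{equation*}
d^\nabla(F_\nabla) = d(dA + A \wedge A) + A \wedge (dA + A \wedge A) - (dA + A \wedge A) \wedge A.
\end{equation*}
Here I would expand: $d(dA) = 0$ since $d^2 = 0$; the term $d(A \wedge A) = dA \wedge A - A \wedge dA$ by the graded Leibniz rule for $d$ on $1$-forms; the term $A \wedge dA + A \wedge A \wedge A$ from the bracket; and $-dA \wedge A - A \wedge A \wedge A$ from the other side of the bracket. Adding these, the $dA$-terms cancel in pairs ($dA \wedge A - A \wedge dA + A \wedge dA - dA \wedge A = 0$) and the cubic terms cancel ($A \wedge A \wedge A - A \wedge A \wedge A = 0$), giving $d^\nabla(F_\nabla) = 0$ on $U$.

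Finally, since the identity holds on every trivializing open set and $d^\nabla$ is a globally well-defined morphism of sheaves (the induced connection on $\End(E)$ patches consistently by Proposition \ref{prop:tensor-connection}), the local vanishing glues to the global statement $d^\nabla(F_\nabla) = 0$ on all of $M$. I do not expect a serious obstacle here; the only point requiring mild care is bookkeeping the signs in the graded Leibniz rule and in the action of $d^\nabla$ on a degree-$2$ endomorphism-valued form, so that the cancellations are genuine. An alternative, coordinate-free route would phrase everything in terms of $\nabla_X$ and the identity $F_\nabla(X,Y) = [\nabla_X, \nabla_Y] - \nabla_{[X,Y]}$, deriving the Bianchi identity from the Jacobi identity for commutators, but the local matrix computation above is the most economical.
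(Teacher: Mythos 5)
Your proof is correct and follows essentially the same route as the paper: a local computation in a trivialization with $\nabla = d + A$, $F_\nabla = dA + A\wedge A$, and $d^\nabla \Theta = d\Theta + A\wedge\Theta - (-1)^{\deg\Theta}\Theta\wedge A$, with the same pairwise cancellations of the $dA$-terms and the cubic terms. The sign bookkeeping you flag is handled exactly as in the paper, so nothing further is needed.
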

\begin{proof}
O resultado é local, isto é, se provarmos que $d^\nabla(F_{\nabla}|_U) = 0$ para abertos $U$ cobrindo $M$ seguirá que $d^\nabla (F_{\nabla}) = 0$.

Seja então $U$ um aberto de uma trivialização de $E$. Sendo assim $E|_U \simeq U \times \C^r$ e segundo esse isomorfismo $\nabla = d + A$. Vimos no exemplo \ref{ex:connection-trivial-end} que a conexão induzida em $\End(E)|_U = U \times \mathfrak{gl}(r,\C)$ é dada por $\nabla T = dT + AT - TA$ e portanto a extensão em $d^\nabla: \mathcal{A}^k(\End(E)|_U) \to \mathcal{A}^{k+1}(\End(E)|_U) $ é dada por
\begin{equation*}
d^\nabla \Theta = d \Theta + A \wedge \Theta - (-1)^k \Theta \wedge A,
\end{equation*}
onde $\Theta$ é uma matriz de $k$-formas.

Aplicando a fórmula acima para a curvatura $F_{\nabla}|_U = dA + A\wedge A$ (veja o exemplo \ref{ex:curvature-trivial}) obtemos
\begin{equation*}
\begin{split}
\nabla(F_{\nabla}|_U) &= d F_{\nabla} + A \wedge F_{\nabla} -  F_{\nabla} \wedge A \\
&=d^2 A + d(A \wedge A) + A \wedge dA + A \wedge A \wedge A - dA \wedge A -  A \wedge A \wedge A\\
&=d(A \wedge A) + A \wedge dA - dA \wedge A \\
&=dA \wedge A - A \wedge dA + A \wedge dA - dA \wedge A \\
&=0.
\end{split}
\end{equation*}
\end{proof}

\begin{remark} \label{rmk:bianchi-lb}
Note que no caso de fibrados de linha, o fibrado de endomorfismos é trivial, pois $\End(L) \simeq L^* \otimes L \simeq \mathcal O_X$. Sendo assim, a forma de curvatura de uma conexão $\nabla$ é uma $2$-forma usual $F_\nabla \in \mathcal{A}^2(M)$ e como a conexão induzida em $\mathcal{A}^k(L^*\otimes L)$ é trivial, a identidade de Bianchi diz que $d F_\nabla =0$.
\end{remark}

\begin{proposition} \label{prop:induced-curvatures}
Se $\nabla$ é uma conexão em $E$ e $F_{\nabla}$ é sua curvatura então
\begin{itemize}
\item[1.] A curvatura $F^*$ da conexão induzida em $E^*$ é $F^* = - F_{\nabla}^t$.
\item[2.] Se $f:N \to M$ é uma aplicação diferenciável, a curvatura da conexão pullback $f^*\nabla$ é $F_{f^* \nabla} = f^* F_{\nabla}$.
\end{itemize}
Se $E_1$ e $E_2$ são fibrados complexos munidos de conexões $\nabla_1$ e $\nabla_2$ e se $F_1$ e $F_2$ são as respectivas formas de curvatura então
\begin{itemize}
\item[3.] A curvatura da conexão induzida em $E_1 \oplus E_2$ é dada por $F = F_1 \oplus F_2$.
\item[4.] A curvatura da conexão induzida em $E_1 \otimes E_2$ é dada por $F = F_1 \otimes \id_{E_2} + \id_{E_1} \otimes F_2$.
\end{itemize}
\end{proposition}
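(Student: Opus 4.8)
The plan is to verify each of the four formulas by passing to local trivializations, since curvature is a local object and all the induced connections were defined locally. First I would fix a common trivializing cover $\{U_i\}$ for all bundles involved and recall from Example \ref{ex:curvature-trivial} that on the trivial bundle a connection $\nabla = d + A$ has curvature $F_\nabla = dA + A \wedge A$; all four items will follow by computing the connection matrix of the induced connection and substituting into this formula.

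For item 1, I would use Corollary \ref{cor:dual-connection}: if $\nabla = d + A$ locally, then the induced connection on $E^*$ is $\nabla^* = d - A^t$ (this follows from $(\nabla \varphi)(s) = d(\varphi(s)) - \varphi(\nabla s)$ written in a local frame and its dual frame). Hence $F^* = d(-A^t) + (-A^t)\wedge(-A^t) = -dA^t + A^t \wedge A^t$. Since $A$ is a matrix of $1$-forms, $(A \wedge A)^t = -A^t \wedge A^t$, so $A^t \wedge A^t = -(A\wedge A)^t$, giving $F^* = -(dA + A \wedge A)^t = -F_\nabla^t$. For item 2, the pullback connection $f^*\nabla$ has local connection matrix $f^*A_i$ (as in the discussion preceding this proposition), so its curvature is $d(f^*A) + (f^*A)\wedge(f^*A) = f^*(dA) + f^*(A \wedge A) = f^*(F_\nabla)$, using that $f^*$ commutes with $d$ and with wedge products.

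For items 3 and 4, I would work with the block/tensor structure of the connection matrices. On $E_1 \oplus E_2$ the induced connection is $\nabla_1 \oplus \nabla_2$, whose local matrix is the block-diagonal matrix $\begin{pmatrix} A_1 & 0 \\ 0 & A_2 \end{pmatrix}$; since $d$ and $\wedge$ act blockwise, the curvature is block-diagonal with blocks $F_1$ and $F_2$, i.e. $F = F_1 \oplus F_2$. On $E_1 \otimes E_2$ the induced connection satisfies $\nabla(s_1 \otimes s_2) = \nabla_1 s_1 \otimes s_2 + s_1 \otimes \nabla_2 s_2$, so in terms of local frames the connection matrix is $A_1 \otimes I_2 + I_1 \otimes A_2$. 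Expanding $d(A_1 \otimes I_2 + I_1 \otimes A_2) + (A_1 \otimes I_2 + I_1 \otimes A_2)\wedge(A_1 \otimes I_2 + I_1 \otimes A_2)$ and using that $I_1, I_2$ are constant, the cross terms $(A_1 \otimes I_2)\wedge(I_1 \otimes A_2) + (I_1 \otimes A_2)\wedge(A_1 \otimes I_2)$ cancel (they are $A_1 \otimes A_2 - A_1 \otimes A_2$ up to the sign convention for wedging matrix-valued forms on a tensor product), leaving $F = (dA_1 + A_1\wedge A_1)\otimes I_2 + I_1 \otimes (dA_2 + A_2 \wedge A_2) = F_1 \otimes \id_{E_2} + \id_{E_1}\otimes F_2$.

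The main obstacle I anticipate is purely bookkeeping rather than conceptual: one must be careful with the sign conventions for wedging matrix- (or endomorphism-) valued forms, especially in the tensor product case where the Koszul sign rule interacts with the graded tensor product, to confirm the cross terms genuinely cancel. I would also need to check that all the local formulas patch correctly, but this is automatic since each induced connection was already shown to be globally well-defined, so it suffices to verify the curvature formula on each $U_i$ and invoke uniqueness of the resulting global $2$-form with values in the appropriate endomorphism bundle.
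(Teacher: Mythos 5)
Your proof is correct, but for items 1, 3 and 4 it takes a genuinely different route from the paper's. The paper argues globally and frame-free: for item 1 it differentiates $0 = d^2(\varphi(s))$ and uses two auxiliary Leibniz formulas for $d^{\nabla^*}$ and $d^{\nabla}$ acting on pairings of $E$- and $E^*$-valued forms; for item 3 it observes that $d^{\nabla}$ restricts to $d^{\nabla_1}$ and $d^{\nabla_2}$ on each summand; and for item 4 it proves a small lemma computing $d^{\nabla}(t_1\otimes s_2)$ and $d^{\nabla}(s_1\otimes t_2)$, after which the cross terms $\pm\,\nabla_1 s_1\wedge\nabla_2 s_2$ cancel. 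You instead compute everything through local connection matrices, substituting into $F = dA + A\wedge A$; your key sign identities — $\nabla^* = d - A^t$ in the dual frame, $(A\wedge A)^t = -A^t\wedge A^t$, and the cancellation of $(A_1\otimes I_2)\wedge(I_1\otimes A_2) + (I_1\otimes A_2)\wedge(A_1\otimes I_2)$ — all check out, and your remark that patching is automatic because the identities are tensorial and can be verified chart by chart is sound. Only item 2 coincides with the paper, which also works with the pulled-back connection matrices $f^*A_i$. The trade-off is the usual one: your computation is more elementary and self-contained but concentrates all the difficulty in the graded sign bookkeeping for matrix-valued forms, while the paper's invariant argument avoids that bookkeeping at the cost of first establishing the auxiliary Leibniz rules for the extended operators $d^{\nabla}$.
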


\begin{proof}
1. Sejam $\varphi \in \mathcal{A}^0(E^*)$ e $s \in \mathcal{A}^0(E)$. Usando o corolário \ref{cor:dual-connection} obtemos
\begin{equation*}
0 = d^2(\varphi(s)) = d(\nabla^* \varphi(s)) + d(\varphi (\nabla s)).
\end{equation*}
Da regra de Leibniz e da definição do produto $\wedge$ de uma forma com valores em $E$ e uma forma com valores em $E^*$ obtemos as seguintes fórmulas
\begin{equation*}
\begin{split}
d \alpha(s) = d^{\nabla^*} \alpha (s) - \alpha \wedge \nabla s,\text{ para } ~ \alpha \in \mathcal A^1(E^*),~ s \in \mathcal A^0(E) \\
d \varphi(\gamma) = \nabla^* \varphi \wedge \gamma + \varphi(d^\nabla \gamma),\text{ para } ~ \gamma \in \mathcal A^1(E),~ \varphi \in \mathcal A^0(E^*)
\end{split} 
\end{equation*}

Vemos então que
\begin{equation*}
d(\nabla^* \varphi(s)) = {d^{\nabla^*}}^2 \varphi(s) - \nabla^* \varphi \wedge \nabla s ~~~ \text{ e }~~~ d(\varphi (\nabla s)) = \nabla^* \varphi \wedge \nabla s + \varphi((d^\nabla)^2 s), 
\end{equation*}
e portanto, substituindo na primeira equação, obtemos
\begin{equation*}
0 = {d^{\nabla^*}}^2 \varphi(s) - \nabla^* \varphi \wedge \nabla s + \nabla^* \varphi \wedge \nabla s + \varphi((d^\nabla)^2 s) = F^*(\varphi)(s) + \varphi(F_\nabla (s)),
\end{equation*}
ou seja, $F^*(\varphi)(s) = - \varphi(F_\nabla (s)) = - F_{\nabla}^t(\varphi)(s)$.\\

2. Por definição, as matrizes de $1$-formas associadas a $f^*\nabla$ são  $f^*A_i$, onde $A_i$ são as matrizes associadas a $\nabla$. Assim, a expressão local de $F_{f^*\nabla}$ é
\begin{equation*}
F_{f^*\nabla}|_{U_i} = d f^*A_i + f^*A_i \wedge f^*A_i = f^*(dA_i + A_i \wedge A_i) = f^*(F_{\nabla}|_{U_i}),
\end{equation*}
de onde segue que $F_{f^*\nabla} = f^* F_{\nabla}$.\\

3. A conexão em $E_1 \oplus E_2$ é dada por $\nabla = \nabla_1 \oplus \nabla_2$. Note que se $s_1 \in \mathcal{A}^0(E_1) \subset \mathcal{A}^0(E)$ então $\nabla(s_1) = \nabla(s_1+0) = \nabla_1(s_1)$. Consequentemente, a extensão de $\nabla$ a formas com valores em $E$ satisfaz $d^\nabla(\alpha_1) = d^{\nabla_1}(\alpha_1)$ se $\alpha_1 \in \mathcal{A}^1(E_1)$ e analogamente para $E_2$.

Sendo assim, a curvatura de $\nabla$ é dada por
\begin{equation*}
\begin{split}
F(s_1 + s_2) &= d^\nabla(\nabla(s_1+s_2)) = d^\nabla(\nabla_1 s_1 + \nabla_2 s_2) = d^\nabla (\nabla_1 s_1) + d^\nabla(\nabla_2 s_2)\\
&= d^{\nabla_1}(\nabla_1 s_1) + d^{\nabla_2}(\nabla_2 s_2) =F_1(s_1) + F_2(s_2)\\
&=(F_1 \oplus F_2)(s_1 + s_2).\\
\end{split}
\end{equation*}

4. Lembrando que a conexão induzida em $E_1 \otimes E_2$ é $\nabla = \nabla_1 \otimes \id_2 + \id_1 \otimes \nabla_2$, temos o seguinte resultado.
\begin{lemma}
Se $t_1 \in \mathcal{A}^k(E_1)$ e $s_2 \in \mathcal{A}^0(E_2)$ então $d^\nabla(t_1 \otimes s_2) = d^{\nabla_1} t_1 \otimes s_2 + (-1)^k t_1 \wedge \nabla_2 s_2$ e analogamente se $s_1 \in \mathcal{A}^0(E_1)$ e $t_2 \in \mathcal{A}^k(E_2)$ então $d^\nabla(s_1 \otimes t_2) = (-1)^{k+1}\nabla_1 s_1 \wedge t_2 + s_1 \otimes d^{\nabla_2} t_2$.
\end{lemma}
\begin{proof}
Podemos supor que $t_1 = \alpha \otimes s_1$ com $\alpha \in \mathcal{A}^k_M$ e $s_1 \in \mathcal{A}^0(E_1)$. Nesse caso temos
\begin{equation*}
\begin{split}
d^\nabla(t_1 \otimes s_2) &= d^\nabla(\alpha \otimes (s_1 \otimes s_2)) = d \alpha \otimes (s_1 \otimes s_2) + (-1)^k \alpha \wedge \nabla(s_1 \otimes s_2)\\
&=d \alpha \otimes (s_1 \otimes s_2) + (-1)^k \alpha \wedge (\nabla_1 s_1 \otimes s_2) + (-1)^k \alpha \wedge (s_1 \otimes \nabla_2 s_2)\\
&=d^\nabla_1(\alpha \otimes s_1) \otimes s_2 +  (-1)^k \alpha \wedge (s_1 \otimes \nabla_2 s_2)\\
&=d^\nabla_1(\alpha \otimes s_1) \otimes s_2 + (-1)^k \alpha \otimes s_1 \wedge \nabla_2 s_2 \\
&=d^\nabla_1 t_1 \otimes s_2 + (-1)^k t_1 \wedge \nabla_2 s_2.
\end{split}
\end{equation*}
Para demonstrar a segunda afirmação suponha que $t_2 = \alpha \otimes s_2$ com $\alpha \in \mathcal{A}^k_M$ e $s_2 \in \mathcal{A}^0(E_2)$. Temos então que
\begin{equation*}
\begin{split}
d^\nabla(s_1 \otimes t_2) &= d^\nabla(\alpha \otimes (s_1 \otimes s_2)) = d \alpha \otimes (s_1 \otimes s_2) + (-1)^k \alpha \wedge \nabla(s_1 \otimes s_2)\\
&=d \alpha \otimes (s_1 \otimes s_2) + (-1)^k \alpha \wedge (\nabla_1 s_1 \otimes s_2) + (-1)^k \alpha \wedge (s_1 \otimes \nabla_2 s_2)\\
&= (-1)^{k+1} (\nabla_1 s_1 \wedge \alpha) \otimes s_2 + s_1 \otimes (d \alpha \otimes s_2 + (-1)^k \alpha \wedge \nabla_2 s_2)\\
&= (-1)^{k+1} \nabla_1 s_1 \wedge (\alpha \otimes s_2) + s_1 \otimes d^{\nabla_2}(\alpha \otimes s_2)\\
&= (-1)^{k+1}\nabla_1 s_1 \wedge t_2 + s_1 \otimes d^{\nabla_2} t_2.
\end{split}
\end{equation*}
\end{proof}

Usando o lema acima, vemos que a curvatura de $\nabla$ é dada por
\begin{equation*}
\begin{split}
F(s_1 \otimes s_2) &= d^\nabla(\nabla_1 s_1 \otimes s_2) + d^\nabla(s_1 \otimes \nabla_2 s_2) \\
&= d^{\nabla_1}(\nabla_1 s_1) \otimes s_2 - \nabla_1 s_1 \wedge \nabla_2 s_2  + \nabla_1 s_1 \wedge \nabla_2 s_2 + s_1 \otimes d^{\nabla_2}(\nabla_2 s_2) \\
&=d^{\nabla_1}(\nabla_1 s_1) \otimes s_2 + s_1 \otimes d^{\nabla_2}(\nabla_2 s_2) \\
& = F_1(s_1) \otimes s_2 + s_2 \otimes F_2(s_2),
\end{split}
\end{equation*}
ou seja, $F = F_1 \otimes \id_{E_2} + \id_{E_1} \otimes F_2$.
\end{proof}

\subsection{A conexão de Chern} \label{subsec:chern-connection}
É um fato conhecido da Geometria Riemanniana que no fibrado tangente de uma variede riemanniana $M$ existe uma única conexão, chamada conexão de Levi-Civita, que é compatível com a métrica e é livre de torção\footnote{Uma conexão em $TM$ é livre de torção se satisfaz $\nabla_X Y - \nabla_Y X = [X,Y]$, para todos $X,Y \in TM$.}. Em outras palavras existe uma conexão privilegiada em $TM$.

Veremos que para um fibrado holomorfo $E$ com uma estrutura hermitiana há um fenômeno semelhante, a saber, existe uma conexão privilegiada em $E$, chamada conexão de Chern, que é determinada pela métrica e pela estrutura holomorfa.

\subsubsection{Compatibilidade com a métrica}

Considere $(E,h)$ um fibrado hermitiano. A métrica $h$ nos permite definir uma multiplicação de formas com valores em $E$:
\begin{equation} \label{eq:h-forms}
\begin{split}
h:\mathcal{A}^k(E) \times \mathcal{A}^l(E) &\longrightarrow \mathcal{A}_M^{k+l} \\
h(\alpha \otimes s, \beta \otimes t) &= \alpha \wedge \overline{\beta}~h(s,t)
\end{split}
\end{equation}

Note que $h$ é sesquilinear e  satisfaz $h(s,t)=(-1)^{kl} \overline{h(t,s)}$ para $s \in \mathcal{A}^k(E)$ e $t \in \mathcal{A}^l(E)$.

\begin{definition}
Uma conexão $\nabla$ em um fibrado hermitiano $(E,h)$ é compatível com $h$ se
\begin{equation} \label{eq:h-compatibility}
dh(s,t) = h(\nabla s,t) + h(s,\nabla t)
\end{equation}
para todas seções, $s,t \in \mathcal{A}^0(E)$. É comum dizermos também que $\nabla$ é uma conexão hermitiana.
\end{definition}

Note que a condição (\ref{eq:h-compatibility}) é equivalente a $Xh(s,t) = h(\nabla_X s,t) + h(s,\nabla_X t)$ para todo vetor tangente $X$.

\begin{example} \label{ex:riemann-curvaure} \textbf{Curvatura riemanniana.}
Seja $X$ uma variedade complexa e $g$ uma métrica hermitiana em $X$. O fibrado tangente real $TX$ admite uma única conexão real\footnote{Uma conexão real em $TX$ é uma aplicação $\R$-linear $\nabla: \mathcal A_ \R^{0}(TX) \to \mathcal A_\R^1(TX)$ satisfazendo a regra de Leibniz.} livre de torção $\nabla$ que é compatível com $g$, i.e., que satisfaz $Zg(X,Y) = g(\nabla_Z X,Y) + g(X, \nabla_Z Y)$. Essa conexão é chamada de conexão de Levi-Civita e é determinada pela fórmula de Koszul
\begin{equation*}
g(\nabla_X Y, Z) = \frac{1}{2} \left\lbrace \begin{split} &X g(Y,Z) + Y g(X,Z) - Z g(X,Y) \\&+ g([X,Y],Z) - g([X,Z],Y) - g([Y,Z],X)\end{split}\right \rbrace.
\end{equation*}

O tensor de curvatura da variedade riemanniana $(X,g)$ é definido por\footnote{Veja por exemplo \cite{k-n} cap. 4.}
\begin{equation*}
R(X,Y) = \nabla_X \nabla_Y  - \nabla_Y \nabla_X - \nabla_{[X,Y]}.
\end{equation*}

Podemos estender $\nabla$ a $T_\C X = TX \otimes \C$ de modo $\C$-linear. O tensor de curvatura também se estende e essa extensão coincide com a curvatura de $\nabla$, como veremos a seguir.

Seja $Z \in \mathcal{A}^0(T_\C X)$ e considere $\nabla Z \in \mathcal{A}^1(T_\C X)$. Por simplicidade podemos supor\footnote{Em geral, $\nabla Z$ será uma soma de elementos do tipo $\alpha \otimes W$, mas como a conexão é linear não há perda de generalidade em supor que $\nabla Z$ é indecomponível.} que $\nabla Z = \alpha \otimes W$ onde $\alpha$ é uma $1$-forma e $W$ uma seção de $T_\C X$. Assim, a derivada covariante é dada por $\nabla_X Z = \alpha(X)W$.

Da fórmula intrínseca para a diferencial exterior temos que $d \alpha (X,Y)= X \alpha(Y) - Y\alpha(X) - \alpha([X,Y])$. Sendo assim temos que
\begin{equation*}
\begin{split}
F_\nabla(X,Y)(Z) &= d^\nabla(\alpha \otimes W)(X,Y) = (d \alpha \otimes W - \alpha \wedge \nabla W)(X,Y) \\
&=(d \alpha)(X,Y)W - \alpha(X) \nabla_Y W + \alpha(Y) \nabla_X W\\
&= [X \alpha(Y) - Y\alpha(X) - \alpha([X,Y])]W - \alpha(X) \nabla_Y W + \alpha(Y) \nabla_X W \\
&= (X \alpha(Y))W + \alpha(Y)\nabla_X W - (Y \alpha(X))W - \alpha(X)\nabla_Y W - \alpha([X,Y])W \\
&= \nabla_X (\alpha(Y) W) - \nabla_Y(\alpha(X) W) - \alpha([X,Y])W\\
&=\nabla_X \nabla_Y Z - \nabla_Y \nabla_X Z- \nabla_{[X,Y]}Z \\
&= R(X,Y)(Z).
\end{split}
\end{equation*}

Poderíamos considerar a extensão de $g$ a um produto hermitiano $h$ no fibrado complexo $(TX,J)$ via $h = g - \smo \omega$, e pensarmos na conexão de Levi-Civita como uma conexão em $(TX,J)$. No entanto, essa tentativa não produz em geral uma conexão no sentido da definição \ref{def:connection}, pois $\nabla$ não será $\C$-linear. Na verdade, como a estrutura complexa de $TX$ é dada pela multiplicação por $J$, $\nabla$ será $\C$-linear se e somente se $\nabla JY = J \nabla Y$ para todo campo $Y$, ou seja, se e somente se $\nabla J = 0$. Nesse caso é fácil ver que $\nabla$ preserva $h$ (pois preserva $g$ e $J$), e portanto a conexão de Levi-Civita é uma conexão hermitiana em $(TX,J,h)$ se e somente se a métrica $g$ é de Kähler.

Falaremos com mais detalhes da conexão de Levi-Civita em uma variedade de Kähler no exemplo \ref{ex:chern-levi-civita}.
\end{example}

\subsubsection{Compatibilidade com a estrutura holomorfa}

Consideremos agora um um fibrado holomorfo $E$ sobre uma variedade complexa $X$.

Vimos que o feixe de $1$-formas com valores em $E$ se decompõe como $\mathcal{A}^1(E) = \mathcal{A}^{1,0}(E) \oplus \mathcal{A}^{0,1}(E)$. Sendo assim, uma conexão $\nabla:\mathcal{A}^0(E) \to \mathcal{A}^1(E)$ se escreve como $\nabla = \nabla^{1,0} + \nabla^{0,1}$, onde $\nabla^{1,0}:\mathcal{A}^0(E) \to \mathcal{A}^{1,0}(E)$ e $\nabla^{0,1}:\mathcal{A}^0(E) \to \mathcal{A}^{0,1}(E)$.

Tomando a parte $(0,1)$ da equação (\ref{eq:leibniz}) vemos que $\nabla^{0,1}$ satisfaz~
\begin{equation*}
\nabla^{0,1}(f s) = \delbar f \otimes s + f \nabla^{0,1}(s),~f \in \mathcal{A}^0_X,~ s \in \mathcal{A}^0(E),
\end{equation*}
que é a mesma equação satisfeita pelo operador $\delbar_E: \mathcal{A}^0(E) \to \mathcal{A}^{0,1}(E)$ (veja a Proposição \ref{prop:delbar-E}).

Essa discussão motiva a seguinte definição.

\begin{definition}
Dizemos que uma conexão $\nabla$ em um fibrado holomorfo $E$ é \emph{compatível com a estrutura holomorfa} se $\nabla^{0,1}=\delbar_E$.
\end{definition}

Se além de uma estrutura holomorfa o fibrado possuir uma estrutura hermitiana podemos nos perguntar se existe uma conexão que é compatível com as duas estruturas simultaneamente.

\begin{proposition} \label{prop:chern-connecion} \index{conexão!de Chern}
Seja $E$ um fibrado holomorfo com uma estrutura hermitiana. Então existe uma única conexão hermitiana em $E$ que também é a compatível com a estrutura holomorfa. Essa conexão recebe o nome de \textbf{conexão de Chern}.
\end{proposition}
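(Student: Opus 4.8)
The plan is to establish both existence and uniqueness by passing to a local holomorphic frame, where the two compatibility requirements collapse into a single matrix identity that determines the connection matrix explicitly.

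First I would fix an open set $U$ carrying a holomorphic frame $s_1,\dots,s_r$ of $E$, and set $H=(h_{ij})$ with $h_{ij}=h(s_i,s_j)$, so that $H\colon U\to\GL(r,\C)$ is smooth with positive-definite Hermitian values and $h(u,v)={}^t u\,H\,\bar v$ for component vectors $u,v$ in this frame. Any connection on $E|_U$ is $\nabla=d+\theta$ with $\theta$ a matrix of $1$-forms, acting by $\nabla u=du+\theta u$. Since $s_1,\dots,s_r$ is holomorphic, the operator $\delbar_E$ of Proposition \ref{prop:delbar-E} annihilates each $s_i$, so $\nabla^{0,1}=\delbar_E$ holds iff $\theta$ is of type $(1,0)$. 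Substituting $\nabla=d+\theta$ into the Hermitian compatibility relation $dh(u,v)=h(\nabla u,v)+h(u,\nabla v)$ and cancelling the arbitrary $u,v$ yields the matrix equation $dH={}^t\theta\,H+H\,\bar\theta$.

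For uniqueness I would split this equation by bidegree. As $\theta$ is of type $(1,0)$, the term ${}^t\theta\,H$ is a matrix of $(1,0)$-forms and $H\,\bar\theta$ one of $(0,1)$-forms; the $(1,0)$-part therefore reads $\del H={}^t\theta\,H$, which forces ${}^t\theta=(\del H)H^{-1}$, so $\theta$ is entirely determined on $U$. The $(0,1)$-part $\delbar H=H\,\bar\theta$ is automatically the conjugate transpose of the first relation, using that $H$ is Hermitian, so it imposes nothing new. Uniqueness being a local matter, there is at most one Chern connection on $E$.

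For existence I would turn this around: in each holomorphic trivialization define $\theta$ by ${}^t\theta=(\del H)H^{-1}$. Then $\theta$ is of type $(1,0)$ by construction, so $\nabla^{0,1}=\delbar_E$; and $\del H={}^t\theta\,H$ together with its conjugate transpose $\delbar H=H\,\bar\theta$ sum to $dH={}^t\theta\,H+H\,\bar\theta$, which is exactly Hermitian compatibility in the frame. The one substantive step is to check that these locally defined matrices patch into an honest connection on $E$: for a holomorphic change of frame with transition matrix $\Psi$ one has $H'={}^t\Psi\,H\,\bar\Psi$ by (\ref{eq:herm-cocycles}), and a short computation — in which one uses crucially that $\Psi$ is holomorphic, so $d\Psi=\del\Psi$ and $\del\bar\Psi=\overline{\delbar\Psi}=0$ — gives $\theta'=\Psi^{-1}d\Psi+\Psi^{-1}\theta\,\Psi$, which is precisely the transformation law (\ref{eq:local-connection}) of a connection matrix. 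Hence the $\theta$'s define a global connection with the two required properties. I expect this last change-of-frame verification to be the only real obstacle; everything else is routine bookkeeping, and the whole argument hinges on the holomorphicity of the transition functions, which is what lets one build $\theta$ out of $\del$ and $H$ alone.
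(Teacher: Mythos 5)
Your proof is correct and follows essentially the same route as the paper's: you determine the $(0,1)$-part by the condition $\nabla^{0,1}=\delbar_E$, extract the $(1,0)$-part from the metric compatibility written in a local holomorphic frame (yielding the formula $\del H\cdot H^{-1}$ of equation (\ref{eq:local-chern})), and establish existence by checking the change-of-frame law, exactly as in the text. The only cosmetic difference is that you phrase the whole argument in terms of the connection matrix $\theta$ and split the single matrix identity $dH={}^t\theta\,H+H\,\bar\theta$ by bidegree, whereas the paper works with the operator $\nabla^{1,0}$ and passes to matrices only at the end.
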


\begin{proof}
Suponha que exista uma tal conexão $\nabla$. Tomando a componente $(1,0)$ da equação (\ref{eq:h-compatibility}) e usando que $h$ é conjugado-linear na segunda entrada vemos que
\begin{equation*}
\del h(s,t) = h(\nabla^{1,0}s,t) + h(s, \nabla^{0,1}t) = h(\nabla^{1,0}s,t) + h(s, \delbar_E t),
\end{equation*}
para todas as seções de $E$.

Seja $\sigma = \{\sigma_i\} \subset H^0(U,E)$ um referencial holomorfo local. Como as seções $\sigma_i$ são holomorfas temos que $\delbar_E (\sigma_i) = 0$ e portanto da equação acima temos que
\begin{equation} \label{eq:chern-conn1}
\del h(\sigma_i,\sigma_j) = h(\nabla^{1,0}\sigma_i,\sigma_j),
\end{equation}
o que mostra que $\nabla^{1,0}$ é determinada por $h$ em $U$.

Consequentemente, a conexão $\nabla = \nabla^{1,0} + \delbar_E$ é determinada, em $U$  por $h$ e pela estrutura holomorfa, mostrando a unicidade em $U$ e consequentemente em $X$.

Para ver a existência, defina $\nabla^{1,0}$ em $U$ pela equação (\ref{eq:chern-conn1}). Essa definição não depende do referencial holomorfo escolhido. Para ver isso considere outro referencial holomorfo $\sigma'$. Denote por $\psi$ a trivialização definida por $\{\sigma_i\}$ e por $\psi'$ a trivialização definida por $\{\sigma'_i\}$. Se $\psi$ e $\psi'$ estão relacionadas por $\psi' = F \circ \psi$ com $F(x,v) = (x,\varphi(x)\cdot v)$ e $\varphi:U \to GL(r,\C)$ então os referenciais se relacionam por $(\sigma')^t \varphi = \sigma$, onde vemos $\sigma$ e $\sigma'$ como vetores linha.

Denote por $H$ a matriz $H_{ij}=h(\sigma_i,\sigma_j)$, por $S$ a matriz $S_{ij}= h(\nabla^{1,0}\sigma_i,\sigma_j)$ e por $H'$ e $S'$ as matrizes correspondentes usando o referencial $\sigma'$. Note que a equação (\ref{eq:chern-conn1}) em forma matricial fica $\del H = S$.

Tomando a parte $(1,0)$ da regra de Leibniz (\ref{eq:leibniz}) temos que $\nabla^{1,0} \sigma_k = \sum_i (\nabla^{1,0} \sigma_i' \varphi_{ik} + \sigma_i' \del \varphi_{ik})$ e portanto
\begin{equation*}
S_{kl} = \sum_{ij}\left( \varphi_{ik} h(\nabla^{1,0}\sigma'_i, \sigma'_j) \bar \varphi_{jl} + \del \varphi_{ik} h(\sigma'_i,\sigma'_j) \bar \varphi _{jl} \right),
\end{equation*}
ou em forma matricial $S =  \varphi^t S' \bar \varphi + \del \varphi^t H' \bar \varphi$. Como $H =  \varphi^t H' \bar \varphi$ (veja o exemplo  \ref{ex:herm-local}) vemos que se $S' = \del H'$ então
\begin{equation*}
S = \varphi^t S' \bar \varphi + \del \varphi^t H' \bar \varphi = \varphi^t (\del H') \bar \varphi + \del \varphi^t H' \bar \varphi = \del (\varphi^t H' \bar \varphi) = \del H,
\end{equation*}
o que mostra que a equação (\ref{eq:chern-conn1}) é independente do referencial holomorfo.

Com isso podemos definir $\nabla^{1,0}$ globalmente: usamos a equação (\ref{eq:chern-conn1}) em cada aberto de uma cobertura trivializante e as definições concordam nas intersecções. Tendo definido $\nabla^{1,0}$, definimos a conexão de Chern por $\nabla = \nabla^{1,0} + \delbar_E$ que, por construção, será compatível com a métrica e com  a estrutura holomorfa.
\end{proof}

Podemos usar a fórmula (\ref{eq:chern-conn1}) para obter uma expressão local para a conexão de Chern. Para isso considere $\psi:E|_U \to U \times \C^r$ uma trivialização sobre um aberto $U$.

A trivialização $\psi$ leva a conexão de Chern em uma conexão $\nabla=d + A$ em $U \times \C^r$  e o operador $\delbar_E$ em $E|_U$ corresponde ao operador $\delbar$ usual em $U \times \C^r$. Tomando a parte $(0,1)$ de $\nabla$ e usando a compatibilidade com a estrutura holomorfa vemos que $\delbar = \nabla^{0,1} = \delbar + A^{0,1}$  e portanto $A^{0,1}=0$, ou seja, $A$ é uma matriz de $(1,0)$-formas.

Temos portanto que $\nabla^{1,0} = \del + A$ em $U \times \C^r$. Como $\del e_i = 0$ temos que $\nabla^{1,0} e_i = A e_i$ e portanto a equação (\ref{eq:chern-conn1}) para o referencial $\{e_i\}$ fica $\del h(e_i,e_j) = h(Ae_i,e_j)$. Escrevendo em termos da matriz $H = (h_{ij})$ temos que $\del H = A H$, de onde obtemos a expressão local para a matriz $A$
\begin{equation} \label{eq:local-chern}
A = \del H \cdot H^{-1}.
\end{equation}

\begin{example} \label{ex:chern-levi-civita} \textbf{Conexão de Levi-Civita vs. Conexão de Chern}

Seja $(X,g)$ uma variedade de Kähler, $J:TX \to TX$ a estrutura complexa induzida e $\nabla$ a conexão de Levi-Civita em $TX$. Podemos ver $\nabla$ como  uma aplicação $\R$-linear $\nabla:TX \to \mathcal{A}^1(TX)$ que satisfaz a regra de Leibniz, é compatível com $g$ e é livre de torção.

Podemos estender $\nabla$ de forma $\C$-bilinear a uma aplicação $\nabla:T_\C X \to \mathcal{A}^1(T_\C X)$. Como $g$ é de Kähler temos que $\nabla$ comuta com $J$ e portanto $\nabla$ preserva os auto-espaços $T^{1,0}X$ e $T^{0,1}X$ de $J$. Em particular podemos considerar a restrição $\nabla: T^{1,0}X \to \mathcal A^1(T^{1,0}M)$. É fácil ver que $\nabla$ define uma conexão em $T^{1,0}X$ (o ponto mais delicado é a $\C$-linearidade, que segue do fato de $\nabla$ comutar com $J$ e de $J|_{T^{1,0}X} = \smo \id$).

Considere o isomorfismo $TX \simeq T^{1,0}X$ dado por $Z \mapsto Z^{1,0} = \frac{1}{2}(Z - \smo JZ)$. Vimos que o produto $h = g - \smo \omega$ é $J$ sesquilinear e portanto, segundo esse isomorfismo, define um produto hermitiano $h$ em $T^{1,0}X$. Como $g$ e $J$ são paralelos com relação a conexão de Levi-Civita temos que a conexão $\nabla: T^{1,0}X \to \mathcal A^1(T^{1,0}M)$ preserva $h$.

Vimos também que o fibrado $T^{1,0}X$ admite uma estrutura holomorfa (exemplo \ref{ex:hol-tangent-bundle}) e portanto podemos falar da conexão de Chern de $(T^{1,0}X,h)$.

\begin{proposition}
A conexão $\nabla: T^{1,0}X \to \mathcal A^1(T^{1,0}M)$ induzida pela conexão de Levi-Civita coincide com a conexão de Chern de $(T^{1,0}X,h)$. 
\end{proposition}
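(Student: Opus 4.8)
A estratégia é reduzir o problema à unicidade da conexão de Chern (Proposição \ref{prop:chern-connecion}). Da discussão que precede o enunciado já sabemos que a conexão $\nabla$ induzida em $T^{1,0}X$ pela conexão de Levi-Civita é hermitiana com respeito ao produto $h = g - \smo\,\omega$, pois $\nabla$ é compatível com $g$ e com $J$. Assim, pela Proposição \ref{prop:chern-connecion}, basta mostrar que $\nabla$ é também compatível com a estrutura holomorfa de $T^{1,0}X$, isto é, que $\nabla^{0,1} = \delbar_{T^{1,0}X}$, onde $\delbar_{T^{1,0}X}$ é o operador da Proposição \ref{prop:delbar-E} associado à estrutura holomorfa de $T^{1,0}X$.

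Para isso, começo observando que, por ser $g$ de Kähler, temos $\nabla J = 0$, de modo que a extensão $\C$-bilinear de $\nabla$ a $T_{\C}X$ comuta com $J$ e portanto preserva os autoespaços $T^{1,0}X$ e $T^{0,1}X$. Em particular, para todo campo $Z \in T_{\C}X$ e coordenadas holomorfas locais $z_1,\ldots,z_n$, valem $\nabla_Z(\del/\del z_i) \in T^{1,0}X$ e $\nabla_Z(\del/\del \bar z_j) \in T^{0,1}X$.

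Basta agora verificar a igualdade $\nabla^{0,1} = \delbar_{T^{1,0}X}$ em um referencial holomorfo local, por exemplo $\{\del/\del z_1,\ldots,\del/\del z_n\}$. Como esse referencial é holomorfo, $\delbar_{T^{1,0}X}$ o anula, e portanto preciso mostrar que $\nabla_{\del/\del \bar z_j}(\del/\del z_i) = 0$ para todos $i,j$. Usando que a conexão de Levi-Civita é livre de torção e que $[\del/\del \bar z_j, \del/\del z_i] = 0$ (pois são combinações com coeficientes constantes dos campos coordenados reais, que comutam), obtenho
\begin{equation*}
\nabla_{\del/\del \bar z_j}(\del/\del z_i) = \nabla_{\del/\del z_i}(\del/\del \bar z_j).
\end{equation*}
Pelo parágrafo anterior, o membro esquerdo pertence a $T^{1,0}X$ e o direito a $T^{0,1}X$; como $T^{1,0}X \cap T^{0,1}X = 0$, ambos se anulam. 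Logo $\nabla^{0,1}(\del/\del z_i) = \sum_j \nabla_{\del/\del \bar z_j}(\del/\del z_i)\, d\bar z_j = 0$, e pela regra de Leibniz $\nabla^{0,1}$ coincide com $\delbar_{T^{1,0}X}$ em todo $X$. Reunindo, $\nabla$ é hermitiana e compatível com a estrutura holomorfa, donde, pela unicidade na Proposição \ref{prop:chern-connecion}, $\nabla$ é a conexão de Chern de $(T^{1,0}X,h)$.

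O ponto mais delicado não é o cálculo acima, que é curto, mas sim justificar com cuidado que a extensão da conexão de Levi-Civita realmente define uma conexão em $T^{1,0}X$ no sentido da Definição \ref{def:connection} — em particular a $\C$-linearidade em $T^{1,0}X$ —, o que depende essencialmente da identidade $\nabla J = 0$ válida para métricas de Kähler; feito isso, o restante segue da unicidade da conexão de Chern.
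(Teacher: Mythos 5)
Sua demonstra\c{c}\~ao est\'a correta e \'e genuinamente diferente (e mais curta) da apresentada no texto. Ambas reduzem o problema, via a unicidade da conex\~ao de Chern, a verificar que $\nabla^{0,1}=\delbar_{T^{1,0}X}$, mas a verifica\c{c}\~ao \'e feita por caminhos distintos. Voc\^e trabalha diretamente no referencial holomorfo $\{\del/\del z_1,\ldots,\del/\del z_n\}$: a aus\^encia de tor\c{c}\~ao e o fato de $[\del/\del \bar z_j,\del/\del z_i]=0$ d\~ao $\nabla_{\del/\del \bar z_j}(\del/\del z_i)=\nabla_{\del/\del z_i}(\del/\del \bar z_j)$, e como $\nabla J=0$ for\c{c}a o primeiro membro a pertencer a $T^{1,0}X$ e o segundo a $T^{0,1}X$, ambos se anulam; a regra de Leibniz conclui a igualdade dos operadores. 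O texto, em vez disso, demonstra primeiro um lema que caracteriza as se\c{c}\~oes holomorfas de $T^{1,0}X$ pela condi\c{c}\~ao $[Z,JW]=J[Z,W]$ para todo $W$ (isto \'e, $\mathcal L_Z J=0$), por meio de um c\'alculo em coordenadas consideravelmente mais longo, e depois mostra que $\nabla^{0,1}Z^{1,0}=0$ se e somente se $Z^{1,0}$ \'e holomorfa. O seu argumento \'e mais econ\^omico e evita por completo esse lema; o do texto tem a vantagem de produzir, como subproduto, uma caracteriza\c{c}\~ao intr\'{\i}nseca (livre de coordenadas) dos campos holomorfos, que \'e \'util em outros contextos. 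O ponto delicado que voc\^e aponta ao final --- justificar que a restri\c{c}\~ao de $\nabla$ a $T^{1,0}X$ \'e de fato uma conex\~ao $\C$-linear, o que depende essencialmente de $\nabla J=0$ --- j\'a est\'a tratado na discuss\~ao que precede a proposi\c{c}\~ao no texto, de modo que basta cit\'a-lo.
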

\begin{proof}
Já vimos que $\nabla$ é compatível com $h$ e portanto, para verificar que $\nabla$ é a conexão de Chern só falta verifcar que $\nabla^{0,1}:T^{1,0}X \to \mathcal A^{0,1}(T^{1,0}M)$ coincide com o operador $\delbar$ de $T^{1,0}X$, o que é equivalente a mostrar que $\nabla^{0,1}Z^{1,0} = 0$ se e somente se $Z^{1,0}$ é uma seção holomorfa de $T^{1,0}X$.

Vamos precisar do seguinte resultado.
\begin{lemma}
Uma seção local $Z^{1,0}$ de $T^{1,0}X$ é holomorfa se e somente se $[Z,JW] = J[Z,W]$ para todo campo $W \in TX$.\footnote{Se $\mathcal L$ denota a derivada de Lie temos que $(\mathcal L_Z J)W =  \mathcal L_Z(JW) - J \mathcal L_Z W = [Z,JW] - J[Z,W]$  e portanto o enunciado do lema pode ser reescrito como: $Z^{1,0}$ é holomorfo se e somente se $\mathcal L_Z J = 0$.}
\end{lemma}
\begin{proof}
Seja $Z \in TX$ e escreva $Z = Z^{1,0} + Z^{0,1}$ onde $Z^{1,0} = \frac{1}{2}(Z - \smo JZ)$ e $Z^{0,1} = \overline{Z^{1,0}}$. Analogamente, dado $W \in TX$ decomponha $W \in TX$ como $W = W^{1,0} + W^{0,1}$.

Como $J$ age como a multiplicação por $\smo$ em $T^{1,0}X$ e como a multiplicação por $-\smo$ em $T^{0,1}X$ temos que $JW = \smo (W^{1,0}-W^{0,1})$ e portanto 
\begin{equation*}
\begin{split}
[Z,JW] &= \smo [Z^{1,0} + Z^{0,1},W^{1,0}-W^{0,1}] \\
 &= \smo \left \lbrace [Z^{1,0},W^{1,0}] - [Z^{1,0},W^{0,1}] + [Z^{0,1},W^{1,0}] -[Z^{0,1},W^{0,1}] \right \rbrace.
\end{split}
\end{equation*}

Escolha um sistema de coordenadas holomorfas locais e escreva $Z^{1,0} = \sum_{j=0}^n a_j \frac{\del}{\del z_j}$ e $W^{1,0} = \sum_{j=0}^n b_j \frac{\del}{\del z_j}$ onde $a_j$ e $b_j$ são funções suaves. Note que $Z^{1,0}$ será holomorfo se e somente se os coeficientes $a_j$ forem holomorfos.

Da expressão do colchete em coordenadas temos que\footnote{Para mostrar que $[Z^{1,0},W^{1,0}] \in T^{1,0}X$ poderíamos também usar a integrabilidade da estrutura complexa (veja a observação \ref{rmk:nijenhuis}), que implica que  $[T^{1,0}X,T^{1,0}X] \subset T^{1,0}X$.}
\begin{equation*}
[Z^{1,0},W^{1,0}] = \sum_{i=0}^n \left( \sum_{j=0}^n a_j \frac{\del b_i}{\del z_j} -  b_j \frac{\del a_i}{\del z_j} \right ) \frac{\del}{\del z_i} \in T^{1,0}X
\end{equation*}
e portanto $J[Z^{1,0},W^{1,0}]=\smo [Z^{1,0},W^{1,0}]$. Como $[Z^{0,1},W^{0,1}] = \overline{[Z^{1,0},W^{1,0}]} \in T^{0,1}X$ vemos também que  $J[Z^{0,1},W^{0,1}]=-\smo [Z^{0,1},W^{0,1}]$. Sendo assim, vemos que
\begin{equation*}
\begin{split}
J[Z,W] &= J [Z^{1,0} + Z^{0,1},W^{1,0}+W^{0,1}]\\
 &= \smo [Z^{1,0},W^{1,0}] + J [Z^{1,0},W^{0,1}] + J[Z^{0,1},W^{1,0}] - \smo [Z^{0,1},W^{0,1}],
\end{split}
\end{equation*}
e portanto $[Z,JW] = J[Z,W]$ se e somente se $J ([Z^{1,0},W^{0,1}] + [Z^{0,1},W^{1,0}]) = \smo (-[Z^{1,0},W^{0,1}] + [Z^{0,1},W^{1,0}])$. Chame esta última equação de $(\star)$.

Escrevendo em coordenadas vemos que
\begin{equation*}
[Z^{1,0},W^{0,1}] + [Z^{0,1},W^{1,0}] = \sum_{k=0}^n \left( \sum_{i=0}^n \overline{a_i} \frac{\del b_k}{\del \overline z_i} - \overline{b_i} \frac{\del a_k}{\del \overline z_i} \right) \frac{\del}{\del z_k} + \sum_{l=0}^n \left( \sum_{j=0}^n a_j \frac{\del \overline{b_l}}{\del z _j} - b_j \frac{\del \overline{a_l}}{\del z _j} \right) \frac{\del}{\del \overline{z_l}},
\end{equation*}
e usando novamente o fato de que $J$ age como multiplicação por $\smo$ em $T^{1,0}X$ e a igualdade $\frac{\del \overline{f}}{\del \overline z_i} = \overline{\frac{\del f}{\del z_i}}$, vemos que a equação $(\star)$ é equivalente ao sistema
\begin{equation*}
\sum_{i=0}^n \left(- \overline{a_i} \frac{\del b_k}{\del \overline z_i} + \overline{b_i} \frac{\del a_k}{\del \overline z_i} \right) = \sum_{i=0}^n \left( -\overline{a_i} \frac{\del b_k}{\del \overline z_i} - \overline{b_i} \frac{\del a_k}{\del \overline z_i} \right)~\forall~ k=1,\ldots,n \Leftrightarrow \sum_{i=0}^n \overline{b_i} \frac{\del a_k}{\del \overline z_i} = 0 ~~ \forall ~k=1,\ldots,n.
\end{equation*}

Como $W$ é arbitrário vemos que vale $(\star)$ para todo $W$ se e somente se $\frac{\del a_k}{ \del \overline z_i} =0$ para todo $i$ e todo $k$, ou seja, se e somente se os coecientes de $Z^{1,0}$ são holomorfos.
\end{proof}

Do fato  da conexão de Levi-Civita ser livre de torção, o lema acima diz que $Z^{1,0}$ é holomorfo se e somente se $\nabla_{JW}Z = J\nabla_W Z$ para todo $W \in TX$.

Agora note que $\nabla^{0,1}Z^{1,0} = 0$ se e somente se $\nabla_{W^{0,1}}Z^{1,0} = 0$ para todo $W^{0,1} \in T^{0,1}X$. Escrevendo $Z^{1,0} = \frac{1}{2}(Z - \smo JZ)$ e $W^{0,1} = \frac{1}{2}(W + \smo JW)$ temos
\begin{equation*}
\begin{split}
\nabla_{W^{0,1}}Z^{1,0} &= \frac{1}{4}\nabla_{W + \smo JW}((Z - \smo JZ)\\
&=\frac{1}{4} \left \lbrace (\nabla_W Z + \nabla_{JW}JZ) + \smo(\nabla_{JW}Z - \nabla_W JZ)) \right \rbrace \\
&=\frac{1}{4} \left \lbrace (\nabla_W Z + J\nabla_{JW}Z) + \smo(\nabla_{JW}Z - J\nabla_W Z)) \right \rbrace,
\end{split}
\end{equation*}
e portanto $\nabla_{W^{0,1}}Z^{1,0} = 0$ para todo  $W^{0,1} \in T^{0,1}X$ se e somente se $\nabla_{JW}Z = J\nabla_W Z$ para todo $W \in TX$, ou seja, se e somente se $Z^{1,0}$ é uma seção holomorfa de $T^{1,0}X$, mostrando que $\nabla^{0,1}=\delbar$.
\end{proof}
\end{example}

As condições de compatibilidade de uma conexão têm consequências na sua curvatura, que passa a assumir valores em subfibrados de $\mathcal{A}^2(\End(E))$.\\

\textbf{Curvatura de uma conexão hermitiana.} Um endomorfismo $T$ de um fibrado hermitiano $(E,h)$ é dito \emph{anti-hermitiano} se $h(Ts_1,s_2) + h(s_1, T s_2) = 0$ para todas as seções $s_1,s_2 \in \mathcal{A}^0(E)$. O conjunto dos endomorfismos anti-hermitianos é denotado por $\End(E,h)$. Note que se $T \in \mathcal{A}^0(\End(E,h))$ e $\lambda$ é uma função real então $\lambda T \in \mathcal{A}^0(\End(E,h))$, o que mostra que $\End(E,h)$ é um subfibrado real de $\End(E)$.

Se $\nabla$ é uma conexão hermitiana em $(E,h)$ então sua curvatura é uma $2$-forma com valores no fibrado dos endomorfismos anti-hermitianos, isto é, $F_ \nabla \in \mathcal{A}^2(\End(E,h))$. Para verificar essa afirmação note primeiro que da condição de compatibilidade (\ref{eq:h-compatibility}) e da definição de $h$ agindo em formas com valores em $E$ (eq. \ref{eq:h-forms}) temos que
\begin{equation*}
dh(s_1,s_2) = h(\nabla(s_1),s_2) + (-1)^{k_1}h(s_1,\nabla(s_2))
\end{equation*}
para $s_1 \in \mathcal{A}^{k_1}(E)$ e $s_2 \in \mathcal{A}^{k_2}(E)$.

Sendo assim, para $s_1,s_2 \in \mathcal{A}^0(E)$ temos que
\begin{equation*}
\begin{split}
dh(\nabla (s_1),s_2) &= h(F_\nabla (s_1),s_2) - h(\nabla (s_1), \nabla (s_2)) \\
dh(s_1,\nabla (s_2)) &= h(\nabla (s_1), \nabla (s_2)) + h(s_1,F_\nabla (s_2)).
\end{split}
\end{equation*}
Somando as equações acima e usando novamente a compatibilidade com a métrica obtemos
\begin{equation*}
h(F_\nabla (s_1),s_2) + h(s_1,F_\nabla (s_2)) = dh(\nabla (s_1),s_2) + dh(s_1,\nabla (s_2)) = d(dh(s_1,s_2)) = 0,
\end{equation*}
mostrando que $F_ \nabla \in \mathcal A^2(\End(E,h))$.\\

\textbf{Curvatura de uma conexão compatível com a estrutura holomorfa.} Seja agora $E$ um fibrado holomorfo e $\nabla$ uma conexão compatível com a estrutura holomorfa. Restringindo a extensão $d^\nabla:\mathcal A^k(E) \to \mathcal A ^{k+1}(E)$ a um subespaço $\mathcal A^{p,q}(E)$ com $p+q=k$ obtemos $d^\nabla: \mathcal A^{p,q}(E) \to \mathcal A^{p+1,q}(E) \oplus \mathcal A^{p,q+1}(E)$. Como $\nabla^{0,1}= \delbar_E$, a componente em $\mathcal A^{p,q+1}(E)$ é o operador $\delbar_E: \mathcal A^{p,q}(E) \to \mathcal A^{p,q+1}(E)$. 

Denotando por $(d^\nabla)^{1,0}:\mathcal A^{p,q}(E) \to \mathcal A^{p+1,q}(E)$ a outra componente e usando o fato que $\delbar_E^2 = 0$ (veja a Proposição \ref{prop:delbar-E}) temos que
\begin{equation*}
d^\nabla \circ d^\nabla = ((d^\nabla)^{1,0}+\delbar_E) \circ ((d^\nabla)^{1,0}+\delbar_E) =(d^\nabla)^{1,0} \circ (d^\nabla)^{1,0} + (d^\nabla)^{1,0} \circ \delbar_E + \delbar_E \circ (d^\nabla)^{1,0}.
\end{equation*}
Sendo assim vemos que $F_\nabla:\mathcal{A}^0(E) \to \mathcal A^{2,0} (E)\oplus \mathcal A^{1,1} (E)$, ou seja $F_ \nabla \in (A^{2,0} \oplus \mathcal A^{1,1})(\End(E))$.\\

\textbf{Curvatura da conexão de Chern.} Seja $(E,h)$ um fibrado hermitiano holomorfo e $\nabla$ sua conexão de Chern. Do fato de $\nabla$ ser compatível com a métrica vimos que sua curvatura satisfaz $h(F_ \nabla (s_1),s_2) + h(s_1,F_ \nabla (s_2))= 0$. Como $\nabla$ é compatível com a estrutura holomorfa, vimos que $F_\nabla$ não tem componente $(0,2)$. Sendo assim, tomando a componente $(2,0)$ da equação acima vemos que
\begin{equation*}
0 = h(F_ \nabla^{2,0} (s_1),s_2) + h(s_1,F_ \nabla^{0,2} (s_2)) = h(F_ \nabla^{2,0} (s_1),s_2),
\end{equation*}
para todas as seções $s_1,s_2$ e portanto $F_\nabla^{2,0} = 0$, ou seja, $F_\nabla \in \mathcal{A}^{1,1}(\End(E,h))$.\\

Da discussão acima podemos exibir uma expressão local para a curvatura da conexão de Chern.

Escolha uma trivialização de modo que $E|_U \simeq U \times \C^r$. De acordo com esse isomorfismo, a métrica hermitiana corresponde a uma matriz $H:U \to \GL(r,\C)$ e a conexão de Chern é dada por $\nabla = d + A$ onde $A = \del H \cdot H^{-1}$ é uma matriz de $(1,0)$-formas.

A curvatura dessa conexão é dada pela matriz  $F = dA + A\wedge A$ que, da discussão acima, deve ser uma matriz de $(1,1)$-formas. Em particular o termo $A \wedge A$ deve se anular e no termo $d A = (\del + \delbar)A$ só aparece a derivada $\delbar A$. Concluimos então que a curvatura da conexão de Chern é dada, localmente, por
\begin{equation} \label{eq:local-curv-chern} \index{curvatura!da conexão de Chern}
F = \delbar(\del H \cdot H^{-1})
\end{equation}

\begin{example}
\textbf{Curvatura de fibrados de linha.} Seja $L$ um fibrado de linha holomorfo sobre $X$. Note que o fibrado de endomorfismos de $L$ é trivial, pois $\End(L) \simeq L^* \otimes L \simeq \mathcal O_X$. Sendo assim, a forma de curvatura de uma conexão é uma $2$-forma usual em $X$.

Seja $h$ uma métrica hermitiana em $L$. Um endomorfismo de $L$ é dado pela multiplicação por uma função diferenciável $\lambda$ e esse endomorfismo anti-hermitiano se e só se $\bar \lambda = -\lambda$, isto é, se $\lambda$ assume valores imaginários puros. Desta forma, a curvatura $F$ da conexão de Chern será uma $(1,1)$-forma imaginária pura.

Com relação a uma trivialização $L|_U \simeq U \times \C$, $h$ corresponde a uma função real positiva, e da fórmula (\ref{eq:local-curv-chern}) acima vemos que a curvatura é dada por
\begin{equation} \label{eq:curvature-lb}
F = \delbar(\del h \cdot h^{-1}) = \delbar \del \log (h).
\end{equation}

Um exemplo importante é o do fibrado $\mathcal O(1)$ com a métrica induzida pelas seções $z_0,\ldots,z_n$. Da expressão local desta métrica (veja a equação \ref{eq:metric-O1}), vemos que a curvatura $F$ da conexão de Chern é dada, em $U_i=\{z_i \neq 0 \}$ por
\begin{equation*}
F_{\mathcal O (1)} = \delbar \del \log \frac{1}{1 +  \sum_{j \neq i} \left| \frac{z_j}{z_i} \right|^2} = \del \delbar \log \bigg( 1 + \sum_{j \neq i} \left | \frac{z_j}{z_i} \right|^2 \bigg),
\end{equation*}
que é a mesma expressão da forma de Fubini-Study a menos de um múltiplo (veja a equação \ref{eq:omegaFS-local}). Mais precisamente temos que
\begin{equation} \label{eq:curv-O1}
\frac{\smo}{2 \pi} F_{\mathcal O (1)} =\omega_{FS}.
\end{equation}
\end{example}

O exemplo acima sugere um método para buscarmos métricas de Kähler em uma variedade a partir de fibrados de linha sobre ela.

Seja $X$ uma variedade complexa e $L$ um fibrado de linha holomorfo sobre $X$ com uma métrica hermitiana. A curvatura $F$ da conexão de Chern de $L$ é uma $(1,1)$-forma imaginária pura e portanto $\omega = \smo F$ é uma $(1,1)$-forma real em $X$. Além disso, da identidade de Bianchi, vemos que $\omega$ é fechada (veja a observação \ref{rmk:bianchi-lb}). Sendo assim, para que $\omega$ defina uma métrica de Kähler em $X$, basta verificarmos a positividade de $\omega$ (veja a Proposição \ref{prop:positive-form}).

Note que $\omega = \smo F$ será positiva no sentido da definição \ref{def:positive-form} se e somente se $F(v,\bar v) > 0$ para todo $v \in T^{1,0}X$ não nulo (veja observação \ref{prop:positive-form}). Esse fato motiva a seguinte definição

\begin{definition}
Uma conexão $\nabla$ em um fibrado de linha tem curvatura semi-positiva se $F_\nabla(v , \bar v) \geq 0$ para todo $0 \neq v \in T^{1,0}X$ e  se a desigualdade é estrita dizemos que $\nabla$ tem curvatura positiva. De maneira análoga definimos curvtatura semi-negativa e negativa.
\end{definition}

Da discussão acima temos o seguinte resultado.

\begin{proposition}
Seja $X$ uma variedade complexa e $L$ um fibrado de linha holomorfo hermitiano sobre $X$. Se a conexão de Chern de $L$ tem curvatura positiva então $\omega = \smo F$ define uma métrica de Kähler em $X$. 
\end{proposition}

\section{Classes de Chern}
Nesta seção vamos definir classes características de um fibrado vetorial complexo e suas principais propriedades. A ideia é associar a cada fibrado vetorial complexo $E$ sobre uma variedade $M$, invariantes cohomológicos, isto é, elementos em $H^*(M,\C)$.

 Nosso foco principal será nas chamdadas classes de Chern. A beleza e a utilidade das classes de Chern reside no fato destas poderem ser definidas de diversas maneiras equivalentes. Para fibrados de linha por exemplo já definimos $c_1(L) \in H^2(X,\Z)$ como sendo a imagem de $L \in H^1(X,\mathcal O_X^*)$ pela a aplicação de cobordo $H^1(X,\mathcal O_X^*) \to H^2(X,\Z)$.

A abordagem apresentada aqui se enquadra na chamada Teoria de Chern-Weil, segundo a qual as classes características de $E$ são calculadas a partir de polinômios invariantes aplicados à curvatura de uma conexão em $E$.

\subsubsection{Polinômios invariantes}

Seja $V$ um espaço vetorial sobre $\C$ de dimensão finita. Segundo o isomorfismo $\Hom(V\times \cdots \times V,\C) = V^* \otimes \cdots \otimes V^*$, podemos ver uma aplicação $k$-multilinear simétrica $P:V \times \cdots V \to \C$ como um elemento da álgebra simétrica $S^k(V^*)$.

A um elemento $P \in S^k(V^*)$ corresponde um polinômio homogêneo de grau $k$ em $V$, $\widetilde P:V \to \C$ dado por
\begin{equation*}
\widetilde P (B) = P(B,\ldots,B),~B \in V.
\end{equation*}

Reciprocamente, dada uma aplicação $\widetilde P:V \to \C$ homogênea de grau $k$, podemos obter $P \in S^k(V^*)$ de modo que $P(B,\ldots,B) = \widetilde P(B)$ definindo
\begin{equation*}
P(B_1,\ldots,B_k)=\frac{1}{k!}\frac{\del}{\del \lambda_1} \cdots\frac{\del}{\del\lambda_k} \widetilde P(\lambda_1 B_1 + \cdots +\lambda_k B_k)\bigg|_{\lambda=0}.
\end{equation*}

Para nossos objetivos vamos considerar o caso em que $V = \gl(r,\C)$ é o espaço das matrizes $r\times r$ com coeficientes complexos.
\begin{definition}
Uma forma simétrica $P \in S^k(\gl(r,\C)^*)$ é dito \textbf{invariante} se
\begin{equation*}
P(C B_1 C^{-1},\ldots,C B_k C^{-1}) = P(B_1,\ldots,B_k),~C \in GL(r,\C),~B_j \in \gl(r,\C),
\end{equation*}
ou equivalentemente, se o polinômio associado $\widetilde P$ satisfaz
\begin{equation*}
\widetilde P (C B C^{-1}) = \widetilde P(B), ~C \in GL(r,\C),~B \in \gl(r,\C).
\end{equation*}
\end{definition}

\begin{lemma} \label{lemma:g-invariant}
Se $\widetilde P$ é um polinômio invariante então
\begin{equation} \label{eq:g-invariant}
\sum_{j=1}^k P(B_1,\ldots,B_{j-1},[B,B_j],B_{j+1},\ldots,B_k) = 0
\end{equation}
para todos $B,B_1,\ldots,B_k \in \gl(r,\C)$.
\end{lemma}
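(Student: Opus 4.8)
The plan is to prove the identity by differentiating the invariance relation along a one-parameter subgroup of $\GL(r,\C)$. First I would fix arbitrary matrices $B, B_1, \ldots, B_k \in \gl(r,\C)$ and set $C_t = \exp(tB) \in \GL(r,\C)$ for $t \in \R$, so that $C_0 = \id$ and $\frac{d}{dt}\big|_{t=0} C_t = B$. By the invariance hypothesis on $P$ (equivalently on $\widetilde P$), the function
\begin{equation*}
t \longmapsto P\big(C_t B_1 C_t^{-1}, \ldots, C_t B_k C_t^{-1}\big)
\end{equation*}
is identically equal to the constant $P(B_1,\ldots,B_k)$, and hence its derivative at $t = 0$ vanishes.

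Next I would compute that derivative using the chain rule. Each entry $t \mapsto C_t B_j C_t^{-1}$ is a smooth curve in $\gl(r,\C)$ passing through $B_j$ at $t = 0$, and differentiating $C_t C_t^{-1} = \id$ gives $\frac{d}{dt}\big|_{t=0} C_t^{-1} = -B$, so that
\begin{equation*}
\frac{d}{dt}\Big|_{t=0} \big(C_t B_j C_t^{-1}\big) = B B_j - B_j B = [B, B_j].
\end{equation*}
Since $P$ is a $k$-multilinear map, differentiating the composition and collecting the contributions from each of the $k$ arguments yields
\begin{equation*}
0 = \frac{d}{dt}\Big|_{t=0} P\big(C_t B_1 C_t^{-1}, \ldots, C_t B_k C_t^{-1}\big) = \sum_{j=1}^k P\big(B_1,\ldots,B_{j-1},[B,B_j],B_{j+1},\ldots,B_k\big),
\end{equation*}
which is precisely the identity (\ref{eq:g-invariant}). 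Since $B$ and the $B_j$ were arbitrary, this completes the argument.

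There is essentially no serious obstacle here; the only point requiring a word of care is the justification of the chain rule for the multilinear map $P$ composed with the curves $C_t B_j C_t^{-1}$, which is a routine application of multilinearity and the product rule. As a variant that avoids the exponential altogether, one could instead substitute $C = \id + sB$ into the invariance relation $\widetilde P(CBC^{-1}) = \widetilde P(B)$ (polarized to $P$), expand both sides as polynomials in $s$ using $C^{-1} = \id - sB + O(s^2)$, and read off the vanishing of the coefficient of $s^1$; this gives the same identity with only formal power-series manipulations.
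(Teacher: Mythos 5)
Sua prova está correta e segue essencialmente o mesmo caminho da demonstração do texto: diferenciar a relação de invariância $P(e^{tB}B_1e^{-tB},\ldots,e^{tB}B_ke^{-tB}) = P(B_1,\ldots,B_k)$ em $t=0$, usando $\frac{d}{dt}\big|_{t=0}(e^{tB}B_je^{-tB}) = [B,B_j]$ e a multilinearidade de $P$. A observação final sobre a variante com $C = \id + sB$ é um complemento válido, mas não altera o argumento.
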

\begin{proof}
Considere a aplicação exponencial $\gl(r,\C) \ni B \mapsto e^B \in GL(r,\C)$. Como $\widetilde P$ é invariante, temos, para $C = e^{tB}$, que $P(e^{tB}B_1e^{-tB},\ldots,e^{tB}B_ke^{-tB}) = P(B_1,\ldots,B_k)$ para todo $t$. Da linearidade de $P$ e do fato que $\frac{d}{dt}(e^{tB}B_je^{-tB})|_{t=0} = BB_j - B_jB = [B,B_j]$ obtemos
\begin{equation*}
0 = \frac{d}{dt} P(e^{tB}B_1e^{-tB},\ldots,e^{tB}B_ke^{-tB}) \bigg|_{t=0} = \sum_{j=1}^k P(B_1,\ldots,B_{j-1},[B,B_j],B_{j+1},\ldots,B_k).
\end{equation*}
\end{proof}
\begin{remark}
A definição e o lema acima podem ser enunciados em um contexto mais geral. Seja $G$ um grupo de Lie, $\mathfrak {g}$ sua álgebra de Lie e considere $\text{Ad}:G \to \GL(\mathfrak {g})$ a representação adjunta de $G$. Essa representação induz uma representação em $\mathfrak {g}^*$ via $(\text{Ad}(g) \cdot f)(X) = f(\text{Ad}(g^{-1})X)$ e se estende por derivação a uma ação em $\mathfrak {g} ^* \otimes \cdots \otimes \mathfrak {g} ^*$. Essa ação se restringe ao espaço $S^k(\mathfrak {g}^*)$ dos tensores simétricos. Neste caso, dizemos que $P \in S^k(\mathfrak {g}^*)$ é invariante se $\text{Ad}(g)\cdot P = P$ para todo $g \in G$ e $P$ será invariante se e somente se $\sum_{i=1}^k P(X_1,\ldots,\text{ad}_X(X_i),\ldots, X_k) = 0$ para todos $X,X_1,\ldots,X_k \in \mathfrak{g}$.

No caso particular em que $G = \GL(r,\C)$ e $\mathfrak{g}=\gl(r,\C)$ temos $\text{Ad}(C)\cdot B = CBC^{-1}$ e $\text{ad}_A (B) = [A,B]$, e portanto recuperamos a invariância acima bem como a sua caracterização em termos do colchete.\\
\end{remark}

Seja um fibrado vetorial complexo de posto $r$ sobre uma variedade $M$.

A escolha de uma trivialização $\psi:E|_U \to U \times \C^r$ induz por conjugação um isomorfismo $\Phi: \End(E)|_U \simeq U \times \gl(r,\C)$, isto é, $\Phi(t) = \psi \circ t \circ \psi^{-1}$. Usando esse isomorfismo podemos calcular uma $k$-forma simétrica $P \in S^k(\gl(r,\C))$ em uma $k$-upla de endomorfismos $t_1,\ldots,t_k \in \End(E)(U)$, fazendo
\begin{equation*}
P(t_1,\ldots,t_k) = P(\Phi(t_1),\ldots,\Phi(t_k)),
\end{equation*}
e obtemos assim uma função $P:\End(E)|_U \times \cdots \times \End(E)|_U \to \C$.

Note que a definição acima independe da trivialização, pois uma outra trivialização será da forma $\psi'= F \circ \psi$ onde $F(x,v) = (x,\varphi(x)\cdot v)$ com $\varphi:U \to \GL(r,\C)$ de modo que o isomorfismo induzido em $\End(E)|_U$ é dado por $\Phi' = \varphi \cdot \Phi \cdot \varphi^{-1}$. Como $P$ é invariante concluimos que $P(\Phi(t_1),\ldots,\Phi(t_k)) = P(\Phi'(t_1),\ldots,\Phi'(t_k))$.

O fato da definção independer da trivialização também nos permite definir $P$ globalmente e obtemos uma função bem definida $P:\End(E) \times \cdots \times \End(E) \to \C$.

Podemos estender $P$ a formas com valores em $\End(E)$ e obtemos assim uma aplicação multilinear
\begin{equation*}
\begin{split}
P: \textstyle \bigwedge^{i_1} \End(E) \times \cdots \times \textstyle \bigwedge^{i_k} \End(E) &\longrightarrow \textstyle \bigwedge^{i_1+\cdots +i_k}(T_{\C}M)^* \\
P(\alpha_1 \otimes t_1,\ldots,\alpha_k \otimes t_k) &= (\alpha_1 \wedge \cdots \wedge \alpha_k) P(t_1,\ldots,t_k),
\end{split}
\end{equation*}
que induz uma aplicação nos feixes de formas com valores em $\End(E)$
\begin{equation*}
P:\mathcal A^{i_1} (\End(E)) \times \cdots \times \mathcal A^{i_k} (\End(E)) \longrightarrow \textstyle \mathcal A_{M,\C}^{i_1+\cdots +i_k}.
\end{equation*}

Note que, como $P \in S^k(\gl(r,\C))$ é simétrica, a aplicação definida acima é graduado simétrica e além disso
\begin{equation} \label{eq:wedge-P}
\beta \wedge P(\gamma_1,\ldots, \gamma_k) = (-1)^{l(i_1+\cdots+i_{j-1})}P(\gamma_1,\ldots,\beta \wedge \gamma_j,\ldots,\gamma_k),~\beta \in \mathcal A^l_{M,\C}.
\end{equation}

Antes de continuarmos é conveniente definirmos a operação
\begin{equation*}
\begin{split}
\star: \mathcal{A}^k(\End(E)) \times \mathcal{A}^l(\End(E)) &\longrightarrow \mathcal{A}^{k+l}(\End(E)) \\
( \alpha \otimes s, \beta \otimes t ) &\longmapsto (\alpha \wedge \beta) \otimes (s \circ t)
\end{split}
\end{equation*}
e o colchete de dois elementos $A \in \mathcal{A}^k(\End(E))$ e $B \in \mathcal{A}^l(\End(E))$ por
\begin{equation*}
[A,B] = A \star B - (-1)^{kl} B \star A.
\end{equation*}

Com essa convenção temos, do lema \ref{lemma:g-invariant}, que se $P$ é uma forma invariante então a aplicação induzida nas formas com valores em $\End(E)$ satisfaz
\begin{equation} \label{eq:g-invariant-forms}
\sum_{j=1}^k (-1)^{l(i_1 + \cdots + i_{j-1})}P(\gamma_1,\ldots,\gamma_{j-1},[a,\gamma_j],\gamma_{j+1},\ldots,\gamma_k) = 0,~ \gamma_j \in \mathcal A^{i_j}(\End(E)), a \in \mathcal{A}^l(\End(E)).
\end{equation}
Para verificar a fórmula acima podemos supor que $\gamma_j = \alpha_j \otimes t_j$ e $a = \beta \otimes t$. Fixada uma trivialização local sobre $U$ seja $\Phi: \End(E)|_U \simeq U \times \gl(r,\C)$ o isomorfismo induzido e denote $B_i = \Phi(t_i)$, $B=\Phi(t)$, $\eta_i =\Phi(\gamma_i) = \alpha_i \otimes B_i$ e $ A = \Phi(a)=\beta \otimes B$.

Temos então que
\begin{equation*}
\begin{split}
P(\gamma_1,\ldots,[a,\gamma_j]),\ldots, \gamma_k)|_U &\stackrel{\text{def}}{=} P(\eta_1,\ldots,[A,\eta_j],\ldots, \eta_k)\\
&= P(\eta_1,\ldots, \beta \wedge \alpha_j B B_j - (-1)^{l i_j}  \alpha_j \wedge \beta B_j B,\ldots, \eta_k) \\
&=P(\eta_1,\ldots, \beta \wedge \alpha_j B B_j -  \beta \wedge \alpha_j B_j B,\ldots, \eta_k) \\
&=P(\eta_1,\ldots, \beta \wedge \alpha_j B B_j,\ldots, \eta_k) - P(\eta_1,\ldots, \beta \wedge \alpha_j B_j B,\ldots, \eta_k)\\
&=\alpha_1 \wedge \cdots \wedge \beta \wedge \alpha_j \wedge \cdots \wedge \alpha_k [(P(B_1,\ldots,BB_j,\ldots,B_k) - P(B_1,\ldots,B_j B,\ldots,B_k)] \\
&=(-1)^{l(i_1+\cdots+i_{j-1})}\beta \wedge \alpha_1 \wedge \cdots \wedge \alpha_k P(B_1,\ldots,[B,B_j],\ldots,B_k),
\end{split}
\end{equation*}
e portanto somando e usando o lema \ref{lemma:g-invariant} obtemos
\begin{equation*}
\sum_{j=0}^k (-1)^{l(i_1+\cdots+i_{j-1})} P(\gamma_1,\ldots,[a,\gamma_j]),\ldots, \gamma_k)|_U = \beta \wedge \alpha_1 \wedge \cdots \wedge \alpha_k \sum_{j=0}^k P(B_1,\ldots,[B,B_j],\ldots,B_k) = 0.
\end{equation*}

\begin{lemma} \label{lemma:dP}
Se $P$ é uma forma simétrica invariante e $\gamma_j \in \mathcal{A}^{i_j}(\End(E))$ então, para toda conexão $\nabla$ em $E$, temos que
\begin{equation*}
dP(\gamma_1,\ldots,\gamma_k) = \sum_{j=1}^k (-1)^{i_1 + \cdots + i_{j-1}} P(\gamma_1,\ldots,d^\nabla(\gamma_j),\ldots,\gamma_k),
\end{equation*}
onde $d^\nabla$ denota a extensão de $\nabla$ a $\mathcal A^\bullet(\End(E))$.
\end{lemma}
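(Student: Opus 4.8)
The plan is to verify the identity locally. Both sides are globally defined differential forms on $M$: the left-hand side is the ordinary exterior derivative of the global form $P(\gamma_1,\ldots,\gamma_k) \in \mathcal{A}^{i_1+\cdots+i_k}_{M,\C}$ (which is well defined precisely because $P$ is invariant), and the right-hand side is a sum of global forms, since $\nabla$ being a global connection makes $d^\nabla$ a globally defined operator on $\mathcal{A}^\bullet(\End(E))$. Hence it suffices to check the equality on each open set $U$ of a cover of $M$ over which $E$ is trivial.

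First I would fix such a $U$ together with a trivialization $\psi : E|_U \to U \times \C^r$ and the induced isomorphism $\Phi : \End(E)|_U \simeq U \times \gl(r,\C)$. Under $\Phi$ the connection becomes $\nabla = d + A$ for a matrix of $1$-forms $A$ on $U$, which we regard as a local section $a \in \mathcal{A}^1(\End(E)|_U)$. As in Example \ref{ex:connection-trivial-end} and the local computation in the proof of Proposition \ref{prop:bianchi}, the extension of $\nabla$ to $\mathcal{A}^\bullet(\End(E)|_U)$ is
\begin{equation*}
d^\nabla \gamma = d\gamma + [a,\gamma], \qquad \gamma \in \mathcal{A}^i(\End(E)|_U),
\end{equation*}
where $d$ denotes the componentwise exterior derivative in the trivialization and $[\cdot,\cdot]$ the graded bracket defined above. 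Substituting this into the right-hand side of the lemma and splitting, one obtains
\begin{equation*}
\sum_{j=1}^k (-1)^{i_1+\cdots+i_{j-1}} P(\gamma_1,\ldots,d\gamma_j,\ldots,\gamma_k) \;+\; \sum_{j=1}^k (-1)^{i_1+\cdots+i_{j-1}} P(\gamma_1,\ldots,[a,\gamma_j],\ldots,\gamma_k).
\end{equation*}
The second sum vanishes: it is exactly the left-hand side of equation (\ref{eq:g-invariant-forms}) with $l=1$ and $a$ the connection $1$-form, which holds because $P$ is invariant (the identity is valid here even though $a$ is only locally an $\End(E)$-valued form, since we work on $U$).

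It then remains to identify the first sum with $dP(\gamma_1,\ldots,\gamma_k)|_U$. By $\C$-multilinearity one may assume each $\gamma_j$ decomposable, $\gamma_j = \alpha_j \otimes t_j$ with $\alpha_j \in \mathcal{A}^{i_j}_{M,\C}(U)$ and $t_j \in \End(E)(U)$; writing $B_j = \Phi(t_j) : U \to \gl(r,\C)$ one has $P(\gamma_1,\ldots,\gamma_k)|_U = (\alpha_1\wedge\cdots\wedge\alpha_k)\, p(B_1,\ldots,B_k)$, where $p$ is the pointwise application of the symmetric polynomial. Applying the ordinary Leibniz rule for $d$ to this product, using $d(\alpha_1\wedge\cdots\wedge\alpha_k) = \sum_j (-1)^{i_1+\cdots+i_{j-1}} \alpha_1\wedge\cdots\wedge d\alpha_j\wedge\cdots\wedge\alpha_k$ and the chain rule $d\,p(B_1,\ldots,B_k) = \sum_j p(B_1,\ldots,dB_j,\ldots,B_k)$, and matching these term by term against the componentwise expansion $d(\alpha_j B_j) = d\alpha_j \cdot B_j + (-1)^{i_j}\alpha_j \wedge dB_j$ inserted into the $j$-th slot of $P$ via (\ref{eq:wedge-P}), one recovers exactly $\sum_j (-1)^{i_1+\cdots+i_{j-1}} P(\gamma_1,\ldots,d\gamma_j,\ldots,\gamma_k)|_U$. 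This finishes the local verification and hence the lemma.

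The hard part will be precisely this last matching step: the Koszul-sign bookkeeping when one commutes the $1$-form–valued $dB_j$ past $\alpha_j$ using (\ref{eq:wedge-P}) and checks that the extra factor $(-1)^{i_j}$ coming from $d(\alpha_j B_j)$ is absorbed correctly, so that the sign $(-1)^{i_1+\cdots+i_{j-1}}$ appearing on each side really is the same. Everything else — the reduction to a trivializing cover, the local form of $d^\nabla$ on $\End(E)$-valued forms, and the cancellation of the bracket terms via (\ref{eq:g-invariant-forms}) — is immediate from facts already established in the excerpt.
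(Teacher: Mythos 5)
Your proposal is correct and follows essentially the same route as the paper's own proof: reduce to a trivializing open set, use the local form $d^\nabla = d + [A,\cdot]$ on $\End(E)$-valued forms, kill the bracket terms with equation (\ref{eq:g-invariant-forms}) (with $l=1$), and match the remaining sum with $dP|_U$ via the graded Leibniz rule on decomposable elements. The only difference is cosmetic — you start from the right-hand side while the paper starts from the left — and the sign bookkeeping you flag as the delicate step is exactly the explicit computation the paper carries out.
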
 

\begin{proof}
Por linearidade podemos supor que $\gamma_i = \alpha_i \otimes t_i$ onde $\alpha_i$ é uma forma diferencial e $t_i$ um endomorfismo de $E$. Escolha uma trivialização de $E$ sobre $U$ e denote por $s_i = \Phi(t_i)$ o elemento de $U \times \gl(r,\C)$ obtido de $t_i$ por conjugação. Denote $\beta_i =\alpha_i \otimes s_i = \Phi(\gamma_i)$.

Da definição de $P$ temos que $P(\gamma_1,\ldots,\gamma_k)|_U = (\alpha_1 \wedge \cdots \wedge \alpha_k) P(s_1,\ldots,s_k)$ e portanto, da regra de Leibniz para formas diferenciais temos que
\begin{equation*}
\begin{split}
&dP(\gamma_1,\ldots,\gamma_k)|_U =\\
&\sum_{j=1}^k (-1)^{i_1 + \cdots + i_{j-1}} \alpha_1 \wedge \cdots \wedge d \alpha_j \wedge \cdots \wedge \alpha_k P(s_1,\ldots,s_k) + (-1)^{i_1 + \cdots + i_k} \alpha_1 \wedge \cdots \wedge \alpha_k dP(s_1,\ldots,s_k)\\
&=\sum_{j=1}^k (-1)^{i_1 + \cdots + i_{j-1}} P(\beta_1,\ldots,d\alpha_j \otimes s_j,\ldots,\beta_k) + (-1)^{i_1 + \cdots + i_k} \alpha_1 \wedge \cdots \wedge \alpha_k \sum_{j=1}^k P(s_1,\ldots,d s_j,\ldots s_k) \\
&=\sum_{j=1}^k (-1)^{i_1 + \cdots + i_{j-1}} P(\beta_1,\ldots,d\alpha_j \otimes s_j,\ldots,\beta_k) +(-1)^{i_1 + \cdots + i_k} \sum_{j=1}^k (-1)^{i_{j+1}+\cdots+i_k} P(\beta_1,\ldots,\alpha_j \wedge d s_j,\ldots \beta_k)\\
&= \sum_{j=1}^k (-1)^{i_1 + \cdots + i_{j-1}} P(\beta_1,\ldots,d\alpha_j \otimes s_j + (-1)^{i_j} \alpha_j \wedge ds_j,\ldots,\beta_k) \\
&= \sum_{j=1}^k (-1)^{i_1 + \cdots + i_{j-1}} P(\beta_1,\ldots,d\beta_j,\ldots,\beta_k),
\end{split}
\end{equation*}
onde, na segunda igualdade usamos o fato de $P$ ser multilinear e na terceira igualdade aplicamos a fórmula (\ref{eq:wedge-P}).

A conexão induzida por $\Phi$ em $U \times \gl(r,\C)$ será dada por $\nabla T = dT + AT - TA$ onde $A$ é uma matriz de $1$- formas (veja o exemplo \ref{ex:connection-trivial-end}) e a sua extensão para $k$-formas com valores em $U \times \gl(r,\C)$ é dada por $d^\nabla(\Theta) = d \Theta + A\wedge \Theta - (-1)^k \Theta \wedge A = d \Theta + [A,\Theta]$. Assim, usando a equação (\ref{eq:g-invariant-forms}) e o cálculo acima concluimos que
\begin{equation*}
\begin{split}
dP(\gamma_1,\ldots,\gamma_k)|_U &= \sum_{j=1}^k (-1)^{i_1 + \cdots + i_{j-1}} P(\beta_1,\ldots,d\beta_j,\ldots,\beta_k)\\
&= \sum_{j=1}^k (-1)^{i_1 + \cdots + i_{j-1}} P(\beta_1,\ldots,d\beta_j+[A,\beta_j],\ldots,\beta_k) \\
&= \sum_{j=1}^k (-1)^{i_1 + \cdots + i_{j-1}} P(\beta_1,\ldots,d^\nabla(\beta_j),\ldots,\beta_k)\\
&= \sum_{j=1}^k (-1)^{i_1 + \cdots + i_{j-1}} P(\gamma_1,\ldots,d^\nabla(\gamma_j),\ldots,\gamma_k)|_U,
\end{split}
\end{equation*}
terminando a demonstração.
\end{proof}

\begin{corollary}
Seja $E$ um fibrado complexo de posto $r$ sobre $M$, $\nabla$ uma conexão em $E$ e $F_\nabla \in \mathcal A^2(\End(E))$ sua curvatura. Então, para qualquer $k$-forma simétrica invariante $P$ em $\gl(r,\C)$, a forma $\widetilde P(F_\nabla) = P(F_\nabla,\ldots,F_\nabla) \in \mathcal A_{\C}^{2k}(M)$ é fechada. 
\end{corollary}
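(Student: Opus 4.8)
The plan is to differentiate $\widetilde P(F_\nabla)$ directly and invoke two ingredients already in hand: Lemma \ref{lemma:dP}, which expresses the exterior derivative of $P$ evaluated on $\End(E)$-valued forms in terms of the extended connection $d^\nabla$, and the Bianchi identity (Proposition \ref{prop:bianchi}), which states $d^\nabla F_\nabla = 0$. First I would observe that $\widetilde P(F_\nabla) = P(F_\nabla,\ldots,F_\nabla)$ is already a globally well-defined element of $\mathcal{A}^{2k}_{\C}(M)$: this was arranged when the invariant polynomial $P$ was extended to $\End(E)$-valued forms, the construction being independent of the chosen local trivialization by invariance of $P$. Hence it suffices to verify $d\widetilde P(F_\nabla)=0$.

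Applying Lemma \ref{lemma:dP} with $\gamma_1 = \cdots = \gamma_k = F_\nabla$, each argument has degree $i_j = 2$, so every sign $(-1)^{i_1+\cdots+i_{j-1}} = (-1)^{2(j-1)}$ equals $+1$, and we obtain
\[
d\widetilde P(F_\nabla) = \sum_{j=1}^k P\big(F_\nabla,\ldots,\underbrace{d^\nabla F_\nabla}_{j\text{-th slot}},\ldots,F_\nabla\big).
\]
By the Bianchi identity $d^\nabla F_\nabla = 0$, every summand vanishes, so $d\widetilde P(F_\nabla) = 0$, which is the assertion.

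There is no genuine obstacle in this last step: all the real work has been front-loaded into Lemma \ref{lemma:dP} and the Bianchi identity, and the sign bookkeeping is harmless precisely because the curvature is a form of even degree. If one wished to go further and establish that the de Rham class $[\widetilde P(F_\nabla)] \in H^{2k}(M,\C)$ is independent of the connection $\nabla$ (the point that makes these classes genuine invariants of $E$), one would interpolate $\nabla_t = \nabla_0 + t\,a$ with $a = \nabla_1 - \nabla_0 \in \mathcal{A}^1(\End(E))(M)$ and run the standard Chern--Weil transgression computation, differentiating $\widetilde P(F_{\nabla_t})$ in $t$ and exhibiting the result as $d$ of an explicit form; but that lies beyond what the present statement requires.
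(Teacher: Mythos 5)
Your argument is correct and is exactly the paper's proof: apply Lemma \ref{lemma:dP} with every slot equal to $F_\nabla$ (the signs are trivially $+1$ since the curvature has even degree) and kill each term with the Bianchi identity $d^\nabla F_\nabla = 0$. The closing remark about independence of $\nabla$ is also precisely what the paper does next, in Proposition \ref{prop:nabla-independent}.
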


\begin{proof}
Da identidade de Bianchi temos que $d^\nabla F_\nabla=0$ e portanto, usando o lema anterior, concluímos que $d \widetilde P (F_\nabla) = P(d^\nabla F_\nabla, F_\nabla, \ldots, F_\nabla) + \cdots + P(F_\nabla, F_\nabla, \ldots, d^\nabla F_\nabla) = 0$.
\end{proof}

Vemos então que, dada uma conexão $\nabla$ em $E$ e um polinômio invariante $\widetilde P$ de grau $k$ obtemos uma classe de cohomologia $[\widetilde P (F_\nabla)] \in H^{2k}(M,\C)$. O lema a seguir mostra que esta classe independe da conexão escolhida.

\begin{proposition} \label{prop:nabla-independent}
Se $\nabla$ e $\nabla'$ são duas conexões em $E$ e $\widetilde P$ é um polinômio invariante então $[\widetilde P(F_\nabla)] = [\widetilde P(F_{\nabla'})]$.
\end{proposition}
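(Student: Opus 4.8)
The standard Chern–Weil argument: I want to show that if $\nabla_0$ and $\nabla_1$ are two connections on $E$, then $\widetilde P(F_{\nabla_0})$ and $\widetilde P(F_{\nabla_1})$ differ by an exact form. The idea is to build a path of connections joining them and differentiate along it. Set $\nabla_t = \nabla_0 + t\,a$ where $a = \nabla_1 - \nabla_0 \in \mathcal A^1(\End(E))(M)$; this is a connection for every $t \in [0,1]$ by the affine structure of the space of connections discussed in Section~\ref{sec:connections}. Let $F_t = F_{\nabla_t} \in \mathcal A^2(\End(E))$ be its curvature. The first computational step is to identify $\frac{d}{dt} F_t$: working in a local trivialization where $\nabla_t = d + A_0 + tA$ and using the formula $F_t = d(A_0 + tA) + (A_0+tA)\star(A_0+tA)$ from Example~\ref{ex:curvature-trivial}, one gets $\frac{d}{dt}F_t = d^{\nabla_t} a$, where the right-hand side makes invariant global sense because $a$ is a genuine global $\End(E)$-valued $1$-form and the ambiguity in $d(\cdot)$ versus $d^{\nabla_t}(\cdot)$ involves only $[A_0+tA, a]$, which is exactly the correction term turning $d$ into $d^{\nabla_t}$.

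**Main step.** Using the symmetry and multilinearity of $P$,
\begin{equation*}
\frac{d}{dt}\,\widetilde P(F_t) = \frac{d}{dt}\,P(F_t,\ldots,F_t) = k\,P\!\left(\tfrac{d}{dt}F_t,\,F_t,\ldots,F_t\right) = k\,P\!\left(d^{\nabla_t}a,\,F_t,\ldots,F_t\right).
\end{equation*}
Now I apply Lemma~\ref{lemma:dP} to the $\End(E)$-valued forms $a \in \mathcal A^1(\End(E))$ and $F_t,\ldots,F_t \in \mathcal A^2(\End(E))$: since $d^{\nabla_t}F_t = 0$ by the Bianchi identity (Proposition~\ref{prop:bianchi}), all terms in the Leibniz-type expansion of $d\,P(a,F_t,\ldots,F_t)$ vanish except the one differentiating $a$, giving
\begin{equation*}
d\,P(a,F_t,\ldots,F_t) = P(d^{\nabla_t}a,\,F_t,\ldots,F_t).
\end{equation*}
Combining, $\frac{d}{dt}\widetilde P(F_t) = k\, d\,P(a,F_t,\ldots,F_t) = d\big(k\,P(a,F_t,\ldots,F_t)\big)$.

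**Conclusion.** Integrating over $t\in[0,1]$ and interchanging $d$ with the integral (legitimate since everything is smooth in $t$),
\begin{equation*}
\widetilde P(F_{\nabla_1}) - \widetilde P(F_{\nabla_0}) = \int_0^1 \frac{d}{dt}\widetilde P(F_t)\,dt = d\left(k\int_0^1 P(a,F_t,\ldots,F_t)\,dt\right),
\end{equation*}
so $\widetilde P(F_{\nabla_1})$ and $\widetilde P(F_{\nabla_0})$ are cohomologous, i.e. $[\widetilde P(F_{\nabla})] = [\widetilde P(F_{\nabla'})]$ in $H^{2k}(M,\C)$.

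**Expected obstacle.** The only genuinely delicate point is the identity $\frac{d}{dt}F_{\nabla_t} = d^{\nabla_t}a$: one must check that the naive local computation (differentiating $F_t = dA_t + A_t\star A_t$ in the trivialization) assembles into the globally-defined operator $d^{\nabla_t}$ applied to the global section $a$, rather than depending on the trivialization. This is where the bracket term $[A_t, a]$ enters and matches the local formula for $d^{\nabla_t}$ on $\mathcal A^1(\End(E))$ from Example~\ref{ex:connection-trivial-end}; once this is pinned down, the rest is a formal consequence of Lemma~\ref{lemma:dP} and the Bianchi identity. An alternative, slightly more economical route would avoid the $t$-parameter entirely by the classical ``transgression'' trick — writing $\widetilde P(F_{\nabla_1}) - \widetilde P(F_{\nabla_0})$ directly as $d$ of a transgression form built from $a$ and the curvatures — but the homotopy argument above is the cleanest given the lemmas already available in the text.
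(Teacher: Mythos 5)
Your proof is correct and follows essentially the same route as the paper: the affine path $\nabla_t=\nabla_0+ta$, the identity $\tfrac{d}{dt}F_t=d^{\nabla_t}a$, Lemma~\ref{lemma:dP} combined with the Bianchi identity to recognize $P(d^{\nabla_t}a,F_t,\ldots,F_t)$ as $d\,P(a,F_t,\ldots,F_t)$, and integration in $t$. The "delicate point" you flag is exactly the computation the paper carries out ($d^{\nabla_t}a = d^{\nabla_0}a+[ta,a]=\tfrac{d}{dt}F_t$), so there is nothing missing.
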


\begin{proof}
Precisamos mostrar que $\widetilde P(F_\nabla)$ e $\widetilde P(F_{\nabla'})$ diferem por uma forma diferencial exata.

Vimos na seção \ref{sec:connections} que $\nabla - \nabla' = A \in \mathcal{A}^1(\End(E))$. Considere a família de conexões dada por $\nabla^t = \nabla + tA$. Note que $\nabla^0 = \nabla$ e $\nabla^1 = \nabla'$, ou seja, $\nabla^t$ é uma curva no espaço afim das conexões ligando $\nabla$ a $\nabla'$.

A curvatura da conexão $\nabla^t$ é dada por $F_t = F + d^\nabla(tA) + tA \star tA = F + t d^\nabla A + t^2 A \star A$, onde $F$ é a curvatura de $\nabla$. Lembrando que um elemento $\Theta \in \mathcal A^k(\End(E))$ age em $B \in \mathcal A^l(\End(E))$ via $B \mapsto [\Theta,B] = \Theta \star B - (-1)^{kl} B \star \Theta$ temos que
\begin{equation*}
d^{\nabla^t} A = d^\nabla A + [tA,A] = d^\nabla A + t A \star A + t A \star A = d^\nabla A +2t A \star  A = \frac{d}{dt}F_t.
\end{equation*}

Da identidade de Bianchi para cada $\nabla^t$ temos que $d^{\nabla^t}(F_t) = 0$ e portanto, do lema \ref{lemma:dP} e da linearidade de $P$ temos que
\begin{equation*}
kdP(A,F_t,\ldots,F_t) = kP(d^{\nabla^t}A,F_t,\ldots,F_t) = kP\bigg(\frac{d}{dt}F_t,F_t,\ldots,F_t\bigg) = \frac{d}{dt} P(F_t,\ldots,F_t) = \frac{d}{dt} \widetilde P(F_t),
\end{equation*}
de onde concluimos que
\begin{equation*}
\widetilde P(F_\nabla)-\widetilde P(F_{\nabla'}) = \widetilde P(F_1) - \widetilde P(F_0) = \int_0^1 \frac{d}{dt} \widetilde P(F_t) = kd \int_0^1 P(A,F_t,\ldots,F_t)
\end{equation*}
e portanto $\widetilde P(F_\nabla)$ e $\widetilde P(F_{\nabla'})$ definem a mesma classe em $H^{2k}(M,\C)$.
\end{proof}

Chegamos agora ao ponto que queríamos. Dado um fibrado $E \to M$ de posto $r$ podemos associar a cada polinômio homogêneo invariante $\widetilde P$ em $\gl(r,\C)$ um elemento em $H^{2k}(M,\C)$. Em outras palavras, obtivemos uma aplicação linear
\begin{equation*}
(S^k\gl(r,\C)^*)^{\GL(r,\C)} \longrightarrow H^{2k}(M,\C),
\end{equation*}
onde $(S^k\gl(r,\C)^*)^{\GL(r,\C)}$ denota o espaço dos polinômios invariantes de grau $k$.

Note ainda que essa associação é um homomorfismo de álgebras, isto é, a classe associado ao produto de dois polinômios $\widetilde P$ e $\widetilde Q$ é o produto da classe associada a $\widetilde P$ com a classe associada a $\widetilde Q$. Obtemos assim um homomorfismo de álgebras
\begin{equation*} \index{homomorfismo de Chern-Weil}
(S^*\gl(r,\C)^*)^{\GL(r,\C)} \longrightarrow H^{2*}(M,\C),
\end{equation*}
chamado homomorfismo de \emph{Chern-Weil}.

As classes de cohomologia obtidas dessa forma são chamadas \textbf{classes características} de $E$. \index{classes!características} Um dos exemplos mais importantes de classes características são as chamadas Classes de Chern.

\subsubsection{Classes de Chern}
As classes de Chern são definidas tomando como polinômios invariantes os polinômios simétricos elementares nos autovalores de uma matriz.

Um polinômio $P \in \C[x_0,\ldots,x_r]$ é dito simétrico se $P(x_{\sigma(1)},\ldots,x_{\sigma(r)}) = P(x_1,\ldots,x_n)$ para toda permutação $\sigma:\{1,\ldots,r\} \to \{1,\ldots,r\}$. Os exemplos básicos de polinômios simétricos são os chamados \emph{polinômios simétricos elementares}, definidos por
\begin{equation*}
\begin{split}
e_0(x_1,\ldots,x_r) &= 1\\
e_1(x_1,\ldots,x_r) &= \sum_{1\leq i \leq r} x_i\\
e_2(x_1,\ldots,x_r) &= \sum_{1\leq i < j \leq r} x_i x_j \\
e_3(x_1,\ldots,x_r) &= \sum_{1\leq i < j < k \leq r} x_i x_j x_k \\
 &~~\vdots \\
e_r(x_1,\ldots,x_r) &= x_1\cdots x_r.
\end{split}
\end{equation*}

É um resultado clássico da teoria de polinômios que qualquer polinômio simétrico $P$ pode ser escrito como $P(x_1,\ldots,x_r)=Q(e_1(x_1,\ldots,x_r),\ldots,e_r(x_1,\ldots,x_r))$, onde $Q \in \C[x_1,\ldots,x_r]$, isto é, $P$ é um polinômio nos $e_1,\ldots,e_r$.

Usando os polinômios simétricos elementares podemos definir polinômios invariantes em $\gl(r,\C)$ da seguinte maneira. Seja $B \in \gl(r,\C)$ uma matriz e sejam $\lambda_1,\ldots,\lambda_r$ seus autovalores. Defina os polinômios 
\begin{equation*}
\widetilde P_k (B) = e_k(\lambda_1,\cdots,\lambda_r),~k=0,\ldots,r,
\end{equation*}
e $\widetilde P_k = 0$ se $k > r$.

Note que a definição de $\widetilde P_k (B)$ independe de como ordenamos os autovalores justamente por que os $e_k$ são simétricos.

Como os autovalores de uma matriz são invariantes por conjugação, vemos que os polinômios $\widetilde P_k (B)$ são invariantes.

Note que os $\widetilde P_k$ podem ser caracterizados pela equação
\begin{equation} \label{eq:det-I+B}
\det(I + B) = 1 + \widetilde P_1(B) + \widetilde P_2(B) + \cdots + \widetilde P_r(B).
\end{equation}
Para verificar a igualdade acima note primeiro que se $B=\text{diag}(\lambda_1,\ldots,\lambda_r)$ então
\begin{equation*}
\det(I+B) = \prod_{j=1}^r(1+\lambda_j) = 1 + e_1(\lambda_1,\ldots,\lambda_r) + \cdots + e_r(\lambda_1,\ldots,\lambda_r)
\end{equation*}
e portanto a fórmula (\ref{eq:det-I+B}) vale para matrizes diagonais. Como ambos os membros são invariantes por conjugação vemos que (\ref{eq:det-I+B}) vale também no espaço $\mathcal D \subset \gl(r,\C)$ das matrizes diagonalizáveis e como  $\mathcal D$ é denso em $\gl(r,\C)$ obtemos a igualdade em toda parte.

Note que $e_0(B) = 1$, $e_1(B) = \text{tr} B$ e $e_r(B) = \det B$.

\begin{definition} \label{def:chern-class} \index{classes!de Chern}
Seja $E$ um fibrado complexo de posto $r$ sobre $M$ e $\nabla$ uma conexão em $E$. A $k$-ésima forma de Chern de $E$ com respeito a $\nabla$ é a forma diferencial
\begin{equation*}
c_k(E,\nabla) = \widetilde P_k \bigg( \frac{\smo}{2 \pi} F_\nabla \bigg) \in \mathcal A^{2k}_\C (M),
\end{equation*}
e a \textbf{$k$-ésima classe de Chern} de $E$ é classe de cohomologia
\begin{equation*} \index{primeira classe de Chern}
c_k(E) = [c_k(E,\nabla)] \in H^{2k}(M,\C),
\end{equation*}
que pela proposição \ref{prop:nabla-independent} independe de $\nabla$.

A \textbf{classe de Chern total} de $E$ é o elemento
\begin{equation*}
c(E) = c_0(E) + c_1(E) + \ldots + c_r(E) \in H^{2*}(M,\C).
\end{equation*}
\end{definition}

A proposição a seguir resume as principais propriedades das classes de Chern.
\begin{proposition}
As classes de Chern satisfazem as seguintes propriedades:
\begin{itemize}
\item[1.] Naturalidade - Se $E$ é um fibrado complexo sobre $M$ e $f: M \to N$ é uma aplicação diferenciável então $c(f^*E) = f^*c(E)$, onde $f^*:H^*(M,\C) \to H^*(N,\C)$ denota o pullback na cohomologia.

\item[2.] Fórmula de Whitney - Se $E$ e $F$ são fibrados sobre $M$ então $c(E \oplus F) = c(E)\cdot c(F)$ onde $\cdot$ é o produto na cohomologia induzido pelo produto exterior de formas.

\item[3.] Normalização - A primeira classe de Chern do fibrado tautológico sobre $\pr^1$ é dada por $c_1(\mathcal O(-1)) = - [\omega_{FS}]$.
\end{itemize}
\end{proposition}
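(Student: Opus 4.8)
The plan is to derive all three properties from the Chern--Weil construction together with the formulas for induced curvatures gathered in Proposition \ref{prop:induced-curvatures}. The recurring device is Proposition \ref{prop:nabla-independent}: since $c_k(E) = [c_k(E,\nabla)]$ is independent of the connection, for each item I am free to choose whichever connection makes the curvature transparent, and then I only have to match the corresponding invariant polynomials.

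For naturality, I would start with a connection $\nabla$ on $E$ and use the pullback connection $f^*\nabla$ on $f^*E$. By item 2 of Proposition \ref{prop:induced-curvatures} its curvature is $F_{f^*\nabla} = f^* F_\nabla$. Each invariant polynomial $\widetilde P_k$ is applied pointwise (through a local trivialization), and $f^*$ is a ring homomorphism on forms that commutes with pullback of endomorphism-valued forms, so $c_k(f^*E, f^*\nabla) = \widetilde P_k\!\left(\frac{\smo}{2\pi} f^* F_\nabla\right) = f^*\,\widetilde P_k\!\left(\frac{\smo}{2\pi} F_\nabla\right) = f^* c_k(E,\nabla)$. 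Passing to cohomology and summing over $k$ gives $c(f^*E) = f^* c(E)$. This step is essentially formal once Proposition \ref{prop:induced-curvatures} is in hand.

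For the Whitney formula, I would fix connections $\nabla_E$ on $E$ and $\nabla_F$ on $F$ and take the direct-sum connection $\nabla = \nabla_E \oplus \nabla_F$ on $E \oplus F$. By item 3 of Proposition \ref{prop:induced-curvatures} its curvature is block-diagonal, $F_\nabla = F_{\nabla_E} \oplus F_{\nabla_F}$, so in the frame obtained by juxtaposing local frames of $E$ and $F$ the associated $\gl(r+s,\C)$-valued $2$-form is block-diagonal. The algebraic input is the matrix identity $\det(I + B_1 \oplus B_2) = \det(I+B_1)\det(I+B_2)$, which by (\ref{eq:det-I+B}) amounts to $\widetilde P_m(B_1 \oplus B_2) = \sum_{i+j=m} \widetilde P_i(B_1)\,\widetilde P_j(B_2)$. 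Because curvature $2$-forms have even degree and hence commute under $\wedge$, this identity lifts verbatim to endomorphism-valued forms, so $c_m(E\oplus F, \nabla) = \sum_{i+j=m} c_i(E,\nabla_E)\wedge c_j(F,\nabla_F)$ at the level of forms, and taking classes yields $c(E\oplus F) = c(E)\cdot c(F)$. The one genuinely fiddly point I expect here is justifying rigorously that the matrix identity for the determinant transports to the forms-valued setting — that is, that the Chern--Weil homomorphism respects products on block-diagonal curvatures — but this is precisely the multiplicativity of the Chern--Weil homomorphism recorded just before Definition \ref{def:chern-class}, so it is available rather than an obstacle.

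For the normalization, I would invoke (\ref{eq:curv-O1}): the Chern connection of $\mathcal O(1)$ with the metric induced by the sections $z_0,\dots,z_n$ has curvature $F$ with $\frac{\smo}{2\pi} F = \omega_{FS}$. Since $\mathcal O(1)$ has rank $1$, $\widetilde P_1$ is the trace of a $1\times 1$ matrix, hence $c_1(\mathcal O(1)) = \left[\frac{\smo}{2\pi} F\right] = [\omega_{FS}]$. Now $\mathcal O(-1) = \mathcal O(1)^*$, and by item 1 of Proposition \ref{prop:induced-curvatures} the curvature of the dual connection on $\mathcal O(-1)$ is $-F^t = -F$ (transpose of a $1\times 1$ block); invoking Proposition \ref{prop:nabla-independent} to compute $c_1(\mathcal O(-1))$ with this connection gives $c_1(\mathcal O(-1)) = \left[\frac{\smo}{2\pi}(-F)\right] = -[\omega_{FS}]$, which is the asserted value on $\pr^1$. (One could instead compute directly with the natural metric on $\mathcal O(-1) \subset \pr^1 \times \C^2$ via (\ref{eq:curvature-lb}), but the dual computation is shorter.)
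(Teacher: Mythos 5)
Sua proposta está correta e segue essencialmente o mesmo caminho da demonstração do texto: conexão pullback com $F_{f^*\nabla}=f^*F_\nabla$ para a naturalidade, conexão soma direta com curvatura em blocos e a identidade $\det(I+B_1\oplus B_2)=\det(I+B_1)\det(I+B_2)$ para a fórmula de Whitney, e a curvatura da conexão de Chern de $\mathcal O(1)$ (eq.~\ref{eq:curv-O1}) junto com $F_{\nabla^*}=-F_\nabla$ para a normalização. Não há diferença substancial de método nem lacunas a apontar.
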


\begin{proof}
1. Seja $\nabla$ uma conexão em $E$ e considere a conexão pullback em $f^*E$. Na proposição \ref{prop:induced-curvatures}, vimos que sua curvatura é dada por $F_{f^*\nabla} = f^*F_\nabla$ e portanto
\begin{equation*}
c_k(f^*E) = \left[ \widetilde P_k \left( \frac{\smo}{2\pi} F_{f^*\nabla}\right)\right] = \left[ \widetilde P_k \left( \frac{\smo}{2\pi}  f^*F_\nabla \right)\right] = f^*\left[ \widetilde P_k \left( \frac{\smo}{2\pi} F_{\nabla}\right)\right] = f^*c_k(E).
\end{equation*}

2. Sejam $\nabla_E$, $\nabla_F$ conexões em $E$ e $F$ respectivamente e considere a conexão $\nabla = \nabla_E + \nabla_F$. Da proposição \ref{prop:induced-curvatures} sua curvatura é $F_\nabla = F_{\nabla_E} \oplus F_{\nabla_F}$. Temos então que
\begin{equation*}
\det(I + \textstyle \frac{\smo}{2\pi}F_\nabla) = \det \left( \begin{split} I+\textstyle \frac{\smo}{2\pi} F_{\nabla_E} ~ 0 \\ 0 ~ I+\textstyle\frac{\smo}{2\pi} F_{\nabla_F} \end{split} \right) = \det(I + \textstyle \frac{\smo}{2\pi}F_{\nabla_E}) \cdot \det(I + \textstyle \frac{\smo}{2\pi}F_{\nabla_F}),
\end{equation*}
e portanto $c(E \oplus F) = [\det(I + \textstyle \frac{\smo}{2\pi}F_\nabla)] = [\det(I + \textstyle \frac{\smo}{2\pi}F_{\nabla_E})]\cdot[\det(I + \textstyle \frac{\smo}{2\pi}F_{\nabla_F})] = c(E)\cdot c(F)$.

3. Seja $\nabla$ a conexão de Chern em $\mathcal O(1)$. Usando a equação (\ref{eq:curv-O1}) temos que $c_1(\mathcal O(1)) = [\frac{\smo}{2\pi}F] = [\omega_{FS}]$. Considerando a conexão dual em $\mathcal O(-1)$ e lembrando que $F_{\nabla^*} = -F_\nabla$ (proposição \ref{prop:induced-curvatures}) concluimos que
\begin{equation*}
c_1(\mathcal O(-1)) = \left[ \frac{\smo}{2\pi} F_{\nabla^*}\right] = - \left[ \frac{\smo}{2\pi} F_{\nabla}\right] = - [\omega_{FS}].
\end{equation*}

\end{proof}

\begin{remark}
As propriedades $1-3$ da proposição acima caracterizam as classes de Chern e podem ser usadas como uma definição axiomática. Veja por exemplo \cite{k-n2}, cap. XII.
\end{remark}

\begin{remark} Seja $E$ um fibrado holomorfo sobre uma variedade complexa $X$. Nesse caso, as formas de Chern terão tipo $(k,k)$. Esse fato pode ser visto da seguinte maneira.
Considere uma métrica hermitiana $h$ e seja $\nabla$ a conexão de Chern associada. Na seção \ref{subsec:chern-connection} vimos que $F_\nabla \in \mathcal{A}^{1,1}(\End(E))$, de modo que $\widetilde P_k(F_\nabla) \in \mathcal A^{k,k}(X)$.

Se além disso, $X$ é uma variedade de Kähler, de acordo com a decomposição de Hodge (\ref{eq:kahler-hodge}), as classes de Chern serão de tipo $(k,k)$, isto é, $c_k(E) \in H^{k,k}(X)$.\\
\end{remark}

Para fibrados de linha holomorfos já havíamos encontrado, na seção \ref{subsec:expseq}, uma definição da primeira classe de Chern como sendo a imagem de $L$ pela a aplicação de cobordo $\delta: \Pic(X) \to H^2(X,\Z)$ induzida pela sequência exponencial. Vimos  também que essa definição pode ser feita na categoria diferenciável (veja o exemplo \ref{ex:hol-structures-lb}) e que nesse caso a aplicação induzida $\delta: H^1(M,(\mathcal{C}_X^{\infty})^*) \to H^2(M,\Z)$ é um isomorfismo.

Considerando a aplicação $H^2(M,\Z) \to H^2(M,\C)$ induzida pela inclusão de feixes $\Z \subset \C$ podemos comparar as duas definições. Para uma demonstração do resultado a seguir consulte \cite{huybrechts}.

\begin{proposition}
Seja $L$ um fibrado de linha complexo sobre $M$ visto como um elemento de $H^1(M,(\mathcal{C}_X^{\infty})^*)$  e considere a aplicação de cobordo $\delta: H^1(M,(\mathcal{C}_X^{\infty})^*) \to H^2(M,\Z)$. Então a imagem de $\delta(L)$ pela aplicação natural $H^2(M,\Z) \to H^2(M,\C)$ é $-c_1(L)$.
\end{proposition}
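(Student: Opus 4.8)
The plan is to compare the two constructions through the Čech–de Rham double complex. Fix a Hermitian metric on $L$ and any compatible (Hermitian) connection $\nabla$; this is legitimate because by Proposition \ref{prop:nabla-independent} the Chern–Weil class $c_1(L) = [\widetilde P_1(\tfrac{\smo}{2\pi}F_\nabla)] = [\tfrac{\smo}{2\pi}F_\nabla]\in H^2(M,\C)$ is independent of it (for a line bundle $\widetilde P_1$ is the identity, $\End(L,h)$ is the trivial bundle of purely imaginary scalars, and so $\tfrac{\smo}{2\pi}F_\nabla$ is a real closed $2$-form). Choose a cover $\mathcal U=\{U_i\}$ that trivializes $L$ by sets with contractible finite intersections, with transition functions $\varphi_{ij}\in(\mathcal{C}_X^{\infty})^*(U_i\cap U_j)$. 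Writing $\nabla=d+A_i$ in the $i$-th trivialization, equation (\ref{eq:local-connection}) gives $A_j-A_i=d\log\varphi_{ij}$ on $U_i\cap U_j$ and $F_\nabla|_{U_i}=dA_i$. Writing $\varphi_{ij}=\exp(2\pi\smo\,f_{ij})$ with $f_{ij}\in\mathcal{C}^\infty(U_i\cap U_j)$ (possible since $U_i\cap U_j$ is contractible), the connecting map $\delta(L)$ is, exactly as in the explicit formula displayed right after Definition \ref{def:chernclass}, represented by the integral Čech $2$-cocycle $z_{ijk}=f_{jk}-f_{ik}+f_{ij}$, and its image under $H^2(M,\Z)\to H^2(M,\C)$ is the class of $(z_{ijk})$ viewed as a $\underline{\C}$-valued cocycle.

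Next I would bring in the comparison isomorphism $H^2(M,\C)\simeq H^2_{dR}(M)\simeq\check{H}^2(\mathcal U,\underline{\C})$. Since the sheaves $\mathcal{A}^q_{M,\C}$ are acyclic (Example \ref{ex:acyclic}), the double complex $K^{p,q}=\check{C}^p(\mathcal U,\mathcal{A}^q_{M,\C})$ with the Čech differential $\check{\delta}$ and the exterior differential $d$ computes the same cohomology via either filtration, and the resulting isomorphism between de Rham and Čech cohomology is realized by the standard zig-zag: a closed global $q$-form $\omega$, viewed as the $\check{\delta}$-closed element $(\omega|_{U_i})\in K^{0,q}$, is rewritten successively as $\omega|_{U_i}=d\eta^{(0)}_i$, then $\check{\delta}\eta^{(0)}=d\eta^{(1)}$, and so on, ending with $\check{\delta}\eta^{(q-1)}$ a locally constant cocycle, and $[\omega]_{dR}$ corresponds to its class in $\check{H}^q(\mathcal U,\underline{\C})$.

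I would then run this zig-zag on $\omega=\tfrac{\smo}{2\pi}F_\nabla$. Since $\omega|_{U_i}=d\big(\tfrac{\smo}{2\pi}A_i\big)$, take $\eta^{(0)}_i=\tfrac{\smo}{2\pi}A_i$; then
\[
(\check{\delta}\eta^{(0)})_{ij}=\tfrac{\smo}{2\pi}(A_j-A_i)=\tfrac{\smo}{2\pi}\,d\log\varphi_{ij}=d\big(\tfrac{\smo}{2\pi}\cdot 2\pi\smo\,f_{ij}\big)=d(-f_{ij}),
\]
so $\eta^{(1)}_{ij}=-f_{ij}$, and finally $(\check{\delta}\eta^{(1)})_{ijk}=-(f_{jk}-f_{ik}+f_{ij})=-z_{ijk}$. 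Hence $c_1(L)=[\tfrac{\smo}{2\pi}F_\nabla]$ corresponds to the Čech class $[-z_{ijk}]\in\check{H}^2(\mathcal U,\underline{\C})$, that is, to $-\delta(L)$ after the identification with $H^2(M,\C)$; equivalently, the image of $\delta(L)=[z_{ijk}]$ under $H^2(M,\Z)\to H^2(M,\C)$ is $-c_1(L)$, as claimed.

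The main obstacle is pinning down the sign unambiguously. The comparison isomorphism $H^\bullet_{dR}\simeq\check{H}^\bullet(\underline{\C})$ realized by the zig-zag carries a degree-dependent sign that depends on the convention chosen for the total differential $D=\check{\delta}\pm d$, and one must check that the intermediate cochains $\eta^{(k)}$ really assemble into a $D$-cocycle with the expected signs — a careless treatment can flip the answer. I would resolve this either by (a) fixing Bott–Tu's convention $D=\check{\delta}+(-1)^p d$ and verifying the cocycle conditions at each stage of the zig-zag, confirming that in degree $2$ no extra sign appears beyond the $-1$ produced above; or by (b) cross-checking numerically on the single example $L=\mathcal O(1)$ over $\pr^1$, using a good (acyclic) refinement of the standard two-chart cover — the bare two-chart cover is not acyclic, since $U_0\cap U_1=\C^*$ — where $c_1(\mathcal O(1))=[\omega_{FS}]$ by the Normalization property dualized and $\int_{\pr^1}\omega_{FS}=1$, so that both sides of the asserted identity evaluate to definite integers against the fundamental class. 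The remaining ingredients — the transition rule for the $A_i$, acyclicity of the $\mathcal{A}^q_{M,\C}$, and the zig-zag itself — are all standard.
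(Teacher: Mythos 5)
The paper does not actually prove this proposition: it defers entirely to the reference \cite{huybrechts}. Your \v{C}ech--de Rham zig-zag is the standard argument (and essentially the one in the cited source), and it is correct: for a line bundle equation (\ref{eq:local-connection}) reduces to $A_j-A_i=d\log\varphi_{ij}=2\pi\smo\,df_{ij}$, so the factor $\tfrac{\smo}{2\pi}$ in the Chern--Weil normalization produces exactly the cocycle $-z_{ijk}=-(f_{jk}-f_{ik}+f_{ij})$, which is $-\delta(L)$. Your caution about the degree-dependent sign in the total differential is the right thing to worry about, but with the convention $D=\check{\delta}+(-1)^p d$ the degree-$2$ zig-zag contributes no sign beyond the one you computed, and the cross-check against $\mathcal O(1)\to\pr^1$ (where $c_1(\mathcal O(1))=[\omega_{FS}]$ integrates to $1$, consistent with the paper's own statement $c_1(\mathcal O(1))=-\delta(\mathcal O(1))$ in Example \ref{ex:picard-Pn}) settles it. The only small hygiene points are that the good cover must also be used to guarantee the smooth logarithms $f_{ij}$ exist, and that the cocycle condition is what forces $z_{ijk}\in\Z$; you use both implicitly and correctly.
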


\begin{corollary}
Seja $\omega_{FS}$ a forma de Fubini-Study em $\pr^n$. Então a classe de cohomologia $c=[\omega_{FS}]$ é integral, isto é, $c$ pertence a imagem da aplicação natural $H^2(\pr^n,\Z) \to H^2(\pr^n,\C)$.
\end{corollary}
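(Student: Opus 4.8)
The plan is to realize $[\omega_{FS}] \in H^2(\pr^n,\C)$ as the image of an explicit integral class attached to the tautological line bundle $\mathcal{O}(-1)$, using the comparison between the exponential-sequence definition of $c_1$ and the Chern--Weil definition recorded just above.

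First I would observe that $\mathcal{O}(-1) \to \pr^n$ is a holomorphic line bundle, hence in particular a smooth complex line bundle, so it determines a class in $H^1(\pr^n,(\mathcal{C}_{\pr^n}^{\infty})^*)$ and, via the coboundary map $\delta: H^1(\pr^n,(\mathcal{C}_{\pr^n}^{\infty})^*) \to H^2(\pr^n,\Z)$ of the smooth exponential sequence, an integral class $\delta(\mathcal{O}(-1)) \in H^2(\pr^n,\Z)$. By the proposition immediately preceding this corollary, the image of $\delta(\mathcal{O}(-1))$ under the natural map $H^2(\pr^n,\Z) \to H^2(\pr^n,\C)$ equals $-c_1(\mathcal{O}(-1))$, where $c_1$ denotes the Chern--Weil first Chern class of Definition \ref{def:chern-class}.

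It then remains to identify $c_1(\mathcal{O}(-1))$ with $-[\omega_{FS}]$, which is the normalization property already noted above (stated there for $\pr^1$, but the same argument works in every dimension). Concretely, by Proposition \ref{prop:nabla-independent} the class $c_1(\mathcal{O}(-1))$ may be computed from the connection $\nabla^*$ dual to the Chern connection $\nabla$ of $\mathcal{O}(1)$ equipped with the metric induced by the sections $z_0,\dots,z_n$; since $F_{\nabla^*} = -F_\nabla$ by Proposition \ref{prop:induced-curvatures} and $\frac{\smo}{2\pi} F_\nabla = \omega_{FS}$ by equation (\ref{eq:curv-O1}), we get $c_1(\mathcal{O}(-1)) = \big[\frac{\smo}{2\pi} F_{\nabla^*}\big] = -[\omega_{FS}]$. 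Combining the two facts, the image of the integral class $\delta(\mathcal{O}(-1))$ in $H^2(\pr^n,\C)$ is $-(-[\omega_{FS}]) = [\omega_{FS}]$, which is exactly the assertion that $[\omega_{FS}]$ is integral; equivalently, $[\omega_{FS}] = c_1(\mathcal{O}(1))$ read through $H^2(\pr^n,\Z) \to H^2(\pr^n,\C)$.

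Finally, I do not expect any genuine obstacle: the statement is a direct corollary, since the substantive ingredients --- metric-independence of the Chern classes, the curvature computation for $\mathcal{O}(1)$, and the sign comparison between the two definitions of $c_1$ --- are all already available. The only points deserving care are keeping track of the signs (the dual bundle and the $-c_1$ in the comparison proposition produce two cancelling sign flips) and being explicit that one invokes the comparison proposition rather than silently conflating the a priori distinct coboundary and Chern--Weil definitions of the first Chern class.
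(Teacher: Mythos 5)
Your proof is correct and follows essentially the same route as the paper: the paper applies the comparison proposition and the curvature formula $\frac{\smo}{2\pi}F_{\mathcal O(1)} = \omega_{FS}$ directly to $\mathcal O(1)$, writing $[\omega_{FS}] = c_1(\mathcal O(1)) = -\delta(\mathcal O(1))$, whereas you route through $\mathcal O(-1)$ and its dual connection, which only introduces two sign flips that cancel. The extra care you take in noting that the normalization property extends from $\pr^1$ to $\pr^n$ (via the curvature computation, which is already stated on all of $\pr^n$) is sound but not a substantive departure.
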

\begin{proof}
Considere o fibrado $\mathcal O(1)$ sobre $\pr^n$. Usando o resultado da proposição acima temos que
\begin{equation*}
c=[\omega_{FS}] = c_1(\mathcal O (1)) = -\delta(\mathcal O(1)) \in \im H^2(\pr^n,\Z) \to H^2(\pr^n,\C).
\end{equation*}
\end{proof}

\begin{example} \label{ex:lb-pn} \index{grupo de Picard!de $\pr^n$} \textbf{Fibrados de Linha sobre $\pr^n$.}
No exemplo \ref{ex:picard-Pn} vimos que o grupo de Picard de $\pr^n$ é isomorfo a $\Z$, onde o isomorfismo é dado por $\delta=-c_1:\Pic(\pr^n) \to H^2(X,\Z) \simeq \Z$. Sendo assim, $\Pic(\pr^n)$ é gerado por uma pré-imagem de um gerador de $H^2(\pr^n,\omega)$.

\begin{lemma}
A classe $c = [\omega_{FS}]$ gera $H^2(\pr^n,\Z)$.
\end{lemma}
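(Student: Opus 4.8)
The claim is that $c = [\omega_{FS}]$ generates $H^2(\pr^n,\Z)$, where we identify $H^2(\pr^n,\Z)$ with the image of the natural map $H^2(\pr^n,\Z)\to H^2(\pr^n,\C)$ (recall from Example \ref{ex:picard-Pn} that $H^2(\pr^n,\Z)\simeq \Z$). The strategy is to combine three ingredients already available in the text: (i) the corollary just proved that $c$ is integral, i.e.\ $c\in \im(H^2(\pr^n,\Z)\to H^2(\pr^n,\C))$; (ii) the identification $\Pic(\pr^n)\simeq H^2(\pr^n,\Z)$ via $\delta = -c_1$ from Example \ref{ex:picard-Pn}; and (iii) the fact that the $\mathcal{O}(k)$ exhaust $\Pic(\pr^n)$ with $\mathcal{O}(1)$ a generator, so it suffices to show that $c_1(\mathcal{O}(1))$ generates.

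First I would recall that $c_1(\mathcal{O}(1)) = [\omega_{FS}] = c$ (this is exactly the normalization computation, eq.\ (\ref{eq:curv-O1})), and that under the isomorphism $\delta=-c_1:\Pic(\pr^n)\xrightarrow{\sim} H^2(\pr^n,\Z)$, the fibre $\mathcal{O}(1)$ corresponds to $-c$. So it is enough to prove that $\mathcal{O}(1)$ generates $\Pic(\pr^n)$, equivalently that every holomorphic line bundle on $\pr^n$ is isomorphic to some $\mathcal{O}(k)$. This in turn follows once we know $c_1:\Pic(\pr^n)\to H^2(\pr^n,\Z)\simeq\Z$ is an isomorphism (already established) together with the observation that $c_1(\mathcal{O}(k)) = k\,c_1(\mathcal{O}(1))$, which comes from $\mathcal{O}(m)\otimes\mathcal{O}(n)=\mathcal{O}(m+n)$ (Remark \ref{rmk:cocyclesOk}) and the additivity of $c_1$ under tensor product. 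Hence the subgroup $\{c_1(\mathcal{O}(k)):k\in\Z\}$ is all of $\Z\cdot c_1(\mathcal{O}(1))$, and since $c_1$ is onto, $c_1(\mathcal{O}(1))$ must be a generator of $H^2(\pr^n,\Z)$.

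To tie this back to $\omega_{FS}$: we have $c_1(\mathcal{O}(1)) = c$ by the normalization property, so $c$ generates $H^2(\pr^n,\Z)\simeq\Z$. An alternative (more self-contained) route, which I would mention as a remark, is to integrate: restrict to a line $\pr^1\subset\pr^n$ and use that $\int_{\pr^1}\omega_{FS} = 1$ (a direct computation from eq.\ (\ref{eq:omegaFS-local}) with $n=1$, using that the Fubini--Study form has total area $1$ on $\pr^1$ with this normalization), together with the fact that $H^2(\pr^1,\Z)\to H^2(\pr^1,\R)$ is generated by the class pairing to $1$ against the fundamental class $[\pr^1]$; naturality of $c_1$ under the inclusion $\pr^1\hookrightarrow\pr^n$ then transports this to $\pr^n$.

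The main obstacle is purely bookkeeping: one must be careful that the two descriptions of ``$H^2(\pr^n,\Z)$'' — the abstract group and its image in $H^2(\pr^n,\C)$ — are consistently identified, and that the sign conventions in $\delta = -c_1$ (from the proposition relating $\delta(L)$ to $-c_1(L)$) do not spoil the conclusion; but signs are irrelevant for the statement ``$c$ generates'', since a generator times $-1$ is still a generator. No deep new input is needed beyond what has been assembled; the content is the chain $c = c_1(\mathcal{O}(1))$, $\mathcal{O}(1)$ generates $\Pic(\pr^n)$, $c_1$ is an isomorphism onto $H^2(\pr^n,\Z)$.
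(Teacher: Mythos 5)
Your main argument is circular. The lemma is precisely what the paper (and your own concluding chain) uses to establish that $\mathcal{O}(1)$ generates $\Pic(\pr^n)$, i.e.\ that every holomorphic line bundle on $\pr^n$ is isomorphic to some $\mathcal{O}(k)$; yet you list this exhaustion as ``ingredient (iii), already available in the text''. It is not available at this point: the earlier example only shows that $c_1:\Pic(\pr^n)\to H^2(\pr^n,\Z)\simeq\Z$ is an isomorphism of abstract groups, which says nothing about \emph{which} element of $\Z$ the class $c_1(\mathcal{O}(1))$ is. The step ``since $c_1$ is onto, $c_1(\mathcal{O}(1))$ must be a generator'' is a non sequitur: surjectivity of $c_1$ on all of $\Pic(\pr^n)$ gives no control over the image of the cyclic subgroup $\{\mathcal{O}(k):k\in\Z\}$ unless one already knows that subgroup is all of $\Pic(\pr^n)$ --- which is exactly what is at stake. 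A priori $c_1(\mathcal{O}(1))$ could be, say, twice a generator; the section computations show that $k\mapsto\mathcal{O}(k)$ is an injective homomorphism into $\Pic(\pr^n)\simeq\Z$, but an injective homomorphism $\Z\to\Z$ need not be surjective.

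The ``alternative, more self-contained route'' that you relegate to a remark is in fact the only non-circular argument here, and it is the paper's actual proof: $H_2(\pr^n,\Z)$ is generated by the class of a line $\pr^1\subset\pr^n$, the Universal Coefficients Theorem identifies $H^2(\pr^n,\Z)$ with $\Hom(H_2(\pr^n,\Z),\Z)$ via integration, so a class generates if and only if it pairs to $\pm1$ with $[\pr^1]$, and the explicit computation $\int_{\pr^1}\omega_{FS}=1$ closes the argument. Promote that computation to your main proof and discard the first route.
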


\begin{proof}
O grupo de homologia $H_2(\pr^n,\Z)$ tem posto $1$ e é gerado pela classe de $\pr^1 \subset \pr^n$. Do Teorema dos Coeficientes Universais para a cohomologia\footnote{O Teorema dos Coeficientes Universais para a cohomologia diz que para todo grupo abeliano de coeficientes $G$ existe uma sequência exata $0 \to \text{Ext}(H_{i-1}(X,\Z),G) \to H^i(X,G) \to \Hom(H_i(X,\Z),G) \to 0$. Em particular, se $H_{i-1}(X,\Z)$ é livre então $H^i(X,G) \simeq \Hom(H_i(X,\Z),G)$. Para uma demonstração consulte \cite{hatcher}.} temos que $H^2(\pr^n,\Z) \simeq \Hom(H_2(\pr^n,\Z),\Z)$ e o isomorfismo é dado pela integração $[\alpha] \mapsto \left(V \mapsto \int_V \alpha \right)$. Assim, para mostrar que $[\omega_{FS}]$ gera $H^2(\pr^n,\Z)$ basta mostrar que $\int_{\pr^1} \omega_{FS} = 1$, o que pode ser visto pelo cálculo
\begin{equation*}
\begin{split}
\int_{\pr^1} \omega_{FS} &= \frac{\smo}{2 \pi} \int_{\C} \frac{1}{(1 + |w|^2)^2} dw \wedge d \bar w \\
&= \frac{1}{\pi} \int_{\R^2} \frac{1}{(1 + x^2 + y^2)^2} dx \wedge dy \\
&= 2 \int_0^{\infty} \frac{r dr}{(1+r^2)^2} = 1,
\end{split}
\end{equation*}
onde usamos as substituições $w = x + \smo y$ e $(x,y) = (r\cos \theta,r~ \text{sen}~ \theta )$.

\end{proof}

Como $c_1(\mathcal O(1)) = [\omega_{FS}] $, vemos então que $\Pic(\pr^n)$ é gerado por $\mathcal O(1)$. Em outras palavras, \textbf{todo fibrado de linha sobre $\pr^n$ é isomorfo a algum $\mathcal O (k)$}.
\end{example}

\begin{example} \textbf{A Classe de Chern de $\pr^n$}

Definimos as classes de Chern de uma variedade complexa $X$ por $c_k(X) = c_k (\mathcal T_x)$. Vamos calcular a classe de Chern do espaço projetivo.

Da sequência de Euler $0 \to \mathcal{O}_{\pr^n} \to \mathcal O(1)_{\pr^n}^{\oplus n+1} \to \mathcal{T}_{\pr^n} \longrightarrow 0$ temos que $\mathcal O(1)_{\pr^n}^{\oplus n+1} \simeq \mathcal{O}_{\pr^n} \oplus \mathcal{T}_{\pr^n}$ como fibrados vetoriais complexos e portanto $c(\mathcal O(1)_{\pr^n}^{\oplus n+1}) = c(\mathcal{O}_{\pr^n} \oplus \mathcal{T}_{\pr^n})$.

Da fórmula de Whitney temos que $c(\mathcal{O}_{\pr^n} \oplus \mathcal{T}_{\pr^n}) = c(\mathcal{O}_{\pr^n})\cdot c( \mathcal{T}_{\pr^n}) = c( \mathcal{T}_{\pr^n})$ pois $c(\mathcal{O}_{\pr^n}) = 1$ e como $c(\mathcal O(1)) = 1 + c_1(\mathcal O(1)) = 1 + \omega_{FS}$ temos, usando novamente a fórmula de Whitney, que $c(\mathcal O(1)_{\pr^n}^{\oplus n+1}) = (1+\omega_{FS})^{n+1}$.

Concluímos portanto que a classe de Chern de $\pr^n$ é dada por
\begin{equation*}
c(\pr^n) = (1 + \omega_{FS})^{n+1}.
\end{equation*}
\end{example}

\subsection{A Conjectura de Calabi e métricas de Kähler-Einstein}

Dada uma variedade diferenciável $X$, uma pergunta comum da Geometria Riemanniana é se existem ``boas'' métricas em $X$, como por exemplo métricas com curvatura seccional constante ou métricas de Einstein.

No caso de variedades de Kähler, o mesmo tipo de pergunta pode ser feita: quando uma variedade de Kähler admite ``boas'' métricas? Nesta seção iremos discutir dois resultados nesta direção. O primeiro deles é a chamada Conjectura de Calabi, que diz que, dada uma variedade de Kähler $(X,g)$ com forma de Kähler $\omega$, podemos encontrar uma outra métrica $g'$ de modo que $[\omega']=[\omega]$ e $g'$ tem curvatura de Ricci especificada. Mencionaremos também uma segunda conjectura importante, relacionada à existência das chamadas métricas de Kähler-Einstein.\\

Vamos começar relembrando algumas definições da Geometria Riemanniana. Se $(X,g)$ é uma variedade riemanniana e $D$ é a conexão de Levi-Civita em $TX$, o \emph{tensor de curvatura} de $(X,g)$ é definido por
\begin{equation*}
R(X,Y)Z = D_X D_Y Z  - D_Y D_X Z - D_{[X,Y]} Z,
\end{equation*}
e o \emph{tensor de Ricci} é o tensor
\begin{equation*}
r(X,Y) = \text{tr} (Z \mapsto R(Z,X)Y)).
\end{equation*}

Em um ponto $x \in X$ o tensor de Ricci pode ser escrito, em termos de uma base ortonormal $\{e_1,\ldots,e_n\}$ de $T_x X$, como
\begin{equation*}
r(X,Y) = \sum_{i=1}^n g(R(e_i,X)Y,e_i)
\end{equation*}

O tensor de curvatura satisfaz as seguintes identidades
\begin{equation} \label{eq:R-identities}
\begin{split}
g(R(X,Y)Z,W) = g(R(Z,W)X,Y)& \\
R(X,Y)Z + R(Y,Z)X + R(Z,X)Y = 0& \\
g(R(X,Y)Z,W) + g(Z,R(X,Y)W) = 0&.
\end{split}
\end{equation}

Dessas identidades é fácil ver que $r$ é um tensor simétrico, e portanto tem o mesmo tipo que $g$. Dizemos que a métrica $g$ é uma \emph{métrica de Einstein} se existe $\lambda \in \R$ tal que $r = \lambda g$.\\

Suponha agora que $(X,g)$ seja uma variedade de Kähler.

\begin{definition}
A \emph{forma de Ricci} de $(X,g)$, denotada por $\rho$, é a $2$-forma associada ao tensor de Ricci $r$, isto é,
\begin{equation*} 
\rho = r(J \cdot, \cdot).
\end{equation*}
\end{definition}

Em termos de uma base ortonormal $\{e_1,\ldots,e_{2n} \} \subset T_x X$ a forma de Ricci em $x$ é dada por
\begin{equation} \label{eq:ricci-basis}
\rho(X,Y) = \sum_{i=1}^{2n} g(R(e_i,JX)Y,e_i) = \sum_{i=1}^{2n} g(R(Y,e_i)e_i,JX).
\end{equation}

Tanto o tensor de Ricci quanto a forma de Ricci podem ser estendidos de forma $\C$-bilinear a $TX \otimes \C$. Em particular podemos calcular $r$ e $\rho$ em vetores de $T^{1,0}X$.

No Exemplo \ref{ex:chern-levi-civita} vimos que a conexão de Levi-Civita em $TX$ induz naturalmente uma conexão $\nabla$ em $T^{1,0}X$ que coincide com a conexão de Chern. Além disso, pelos cálculos feitos no Exemplo \ref{ex:riemann-curvaure}, o tensor de curvatura $R$ corresponde à curvatura $F_\nabla$ de $\nabla$.

\begin{lemma}
Se $u,v \in T^{1,0}X$, a forma de Ricci de $(X,g)$ é dada por $\rho(u,v) = \smo \text{tr}_\C (F_\nabla (u,v))$.
\end{lemma}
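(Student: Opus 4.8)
The plan is to compute both sides of the claimed identity in a local holomorphic frame for $T^{1,0}X$ and compare. First I would fix a point $x \in X$ and choose holomorphic normal coordinates $z_1,\dots,z_n$ centered at $x$, so that $\{\partial/\partial z_1,\dots,\partial/\partial z_n\}$ is a local holomorphic frame for $T^{1,0}X$ which is moreover unitary at $x$ (this uses Proposition \ref{prop:kahler-equivalence}, item 3). With respect to this frame the Chern connection $\nabla$ on $(T^{1,0}X,h)$ has curvature $F_\nabla \in \mathcal{A}^{1,1}(\End(T^{1,0}X,h))$, which at $x$ we may regard as a matrix-valued $(1,1)$-form $(F_{k\bar l})$ via the local expression (\ref{eq:local-curv-chern}) for the curvature of the Chern connection.

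Next I would invoke the identification, established in Examples \ref{ex:riemann-curvaure} and \ref{ex:chern-levi-civita}, between the Riemannian curvature tensor $R$ of the Levi-Civita connection and the curvature $F_\nabla$ of the induced connection on $T^{1,0}X$. Concretely, for $u,v \in T^{1,0}X$ the endomorphism $R(u,v)$ of $T^{1,0}X$ equals $F_\nabla(u,v)$. The key computational step is then to express the Ricci form $\rho(u,v) = r(Ju,v)$, written as in (\ref{eq:ricci-basis}) as a sum over a real orthonormal basis of $T_xX$, in terms of the $T^{1,0}X$-$T^{0,1}X$ decomposition. I would pass from a real orthonormal basis $\{e_1,\dots,e_{2n}\}$ of $T_xX$ to the associated complex basis $\{\varepsilon_i = (e_i - \smo J e_i)/2\}$ of $T^{1,0}X$ together with its conjugate in $T^{0,1}X$; after using that $J$ acts on $T^{1,0}X$ by multiplication by $\smo$ and on $T^{0,1}X$ by $-\smo$, and applying the curvature symmetries (\ref{eq:R-identities}) to discard the mixed terms, the real trace collapses to a complex trace: $\rho(u,v) = \smo\,\mathrm{tr}_\C(F_\nabla(u,v))$.

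The main obstacle I expect is the bookkeeping in this change-of-basis computation — keeping track of the factors of $\tfrac12$ and $\smo$ coming from the complexification $v \mapsto \tfrac12(v - \smo Jv)$, and correctly identifying which of the four terms $R(\varepsilon_i,\cdot)$, $R(\bar\varepsilon_i,\cdot)$ contribute and which vanish by the Hermitian symmetry of the curvature on a Kähler manifold. A clean way to organize this is to first verify the formula pointwise for $X = \C^n$ with the standard metric (where everything is explicit and both sides are readily seen to agree), and then argue that both $\rho$ and $\mathrm{tr}_\C F_\nabla$ depend only on the metric up to second order at $x$; since Kähler normal coordinates make $g$ agree with the flat metric to that order, the general case follows from the flat case. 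This is the same strategy used to prove the Kähler identities (Theorem \ref{thm:kahler-identities}) and should transfer without difficulty. Finally, I would note that since the statement is an identity of tensors (both sides evaluated on arbitrary $u,v \in T^{1,0}X$), verification at each point in a suitable frame suffices to conclude it globally.
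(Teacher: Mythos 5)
Your main line of argument---fix a point, take a $J$-adapted orthonormal basis $\{e_i, Je_i\}$ of $T_xX$, pass to the associated frame $\varepsilon_i=\tfrac12(e_i-\smo Je_i)$ of $T^{1,0}_xX$, identify $F_\nabla(u,v)$ with the complexified $R(u,v)$ restricted to $T^{1,0}X$, and use the symmetries (\ref{eq:R-identities}) together with $h=g-\smo\omega$ to collapse the real trace in (\ref{eq:ricci-basis}) into $\smo\,\mathrm{tr}_\C$---is exactly the paper's proof. The only genuine content is the bookkeeping you describe (the factor relating $g_\C|_{T^{1,0}X}$ to $\tfrac12 h$, and the antisymmetry of $R(u,v)$ turning $-\smo\,\omega(R(u,v)e_i,e_i)$ into $\smo\, g(R(u,v)e_i,Je_i)$), and that goes through. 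The normal coordinates in your first paragraph are harmless but unnecessary: the identity is tensorial and pointwise, so any unitary frame at $x$ suffices.

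However, the ``clean way to organize this'' in your last paragraph does not work and you should not rely on it. Reduction to the flat model via K\"ahler normal coordinates is valid only for identities among operators built from at most \emph{first} derivatives of the metric (this is precisely the hypothesis under which the K\"ahler identities, Theorem \ref{thm:kahler-identities}, are proved by that method). Both $\rho$ and $F_\nabla$ are \emph{second-order} invariants of $g$: in normal coordinates at $x$ one has $h_{ij}(x)=\delta_{ij}$ and vanishing first derivatives, but the second derivatives survive and are exactly what the curvature is made of. In the flat model both sides of the identity are identically zero, so the verification there is vacuous, and the claim that ``both sides depend only on the metric up to second order at $x$'' is false in the sense needed (agreement of metrics to second order means agreement of $1$-jets, not of curvatures). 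Stick with the direct frame computation; it closes the argument on its own.
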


\begin{proof}
Denote por $g_\C$ a extensão de $g$ a $T_\C X$ pela fórmula $g_\C(u \otimes \lambda,v \otimes \mu) = \lambda \bar \mu g(u,v)$. É fácil ver que, segundo o isomorfismo $\xi:TX \to T^{1,0}X$, $u \mapsto \frac{1}{2}(u - \smo Ju)$, a restrição de $g_\C$ a $T^{1,0}X$ corresponde a $\frac{1}{2}h = \frac{1}{2}(g - \smo \omega)$.

Fixe $x \in X$ e escolha uma base ortonormal de $TX$ da forma $\{x_1,\ldots,x_n,y_1,\ldots,y_n\}$ com $y_i~=~Jx_i$. Considere os vetores $z_i = \xi(x_i)=\frac{1}{2}(x_i - \smo Jx_i) \in T^{1,0}_x X$. Da observação acima temos que os $z_i$'s são dois a dois ortogonais e $g_{\C}(z_i,z_i) = 2$. Assim, os vetores $\zeta_i = \sqrt 2 z_i = \frac{1}{\sqrt 2}(x_i - \smo Jx_i)$ formam uma base $g_\C$-ortonormal de $T^{1,0}_x X$.

Da discussão feita no Exemplo \ref{ex:riemann-curvaure}, o endomorfismo $F_\nabla (u,v): T^{1,0}X \to T^{1,0}X$ é obtido  primeiro estendendo $R(u,v):TX \to TX$ a $T_\C X$ e depois o restringindo a $T^{1,0}X$. Vemos portanto que
\begin{equation*}
\begin{split}
\text{tr} F_\nabla(u,v) &= \sum_{i=1}^n g_\C (F_\nabla (u,v)\zeta_i,\zeta_i) = 2 \sum_{i=1}^n g_\C (F_\nabla (u,v)z_i,z_i) \\
 &= \sum_{i=1}^n h(R(u,v)x_i,x_i) = \sum_{i=1}^n \left[ g(R(u,v)x_i,x_i) - \smo \omega(R(u,v)x_i,x_i) \right] \\
 &= \sum_{i=1}^n \left[ g(R(u,v)x_i,x_i) + \smo g(R(u,v)x_i,y_i) \right] \\
 &= \smo \sum_{i=1}^n g(R(u,v)x_i,y_i),
\end{split}
\end{equation*}
onde na última igualdade usamos o fato de $R(u,v)$ ser antisimétrico (veja eq.~\ref{eq:R-identities}).

Agora, usando a expressão (\ref{eq:ricci-basis}), as identidades (\ref{eq:R-identities}) e o fato de $R(\cdot,\cdot)$ comutar com $J$ obtemos
\begin{equation*}
\begin{split}
\rho(u,v) &= \sum_{i=1}^{n} \left[ g(R(v,x_i)x_i,Ju) + g(R(v,y_i)y_i,Ju) \right] \\
 &= \sum_{i=1}^{n} \left[- g(R(v,x_i)y_i,u) + g(R(v,y_i)x_i,u) \right] \\
 &= - \sum_{i=1}^{n} g(R(y_i,x_i)v,u) \\
 &= - \sum_{i=1}^{n} g(R(u,v)x_i,y_i),
\end{split}
\end{equation*}
de onde concluimos que $\rho(u,v) = \smo \text{tr}_\C (F_\nabla (u,v))$.
\end{proof}

Lembrando que a primeira classe de Chern de um fibrado complexo é dada por $c_1(E) = [\frac{\smo}{2\pi}\text{tr} F_\nabla]$ e que $c_1(X) = c_1(T^{1,0}X)$ vemos que 

\begin{corollary} \label{cor:Ric=c1}
A forma de Ricci $\rho$ é uma $(1,1)$-forma real e fechada e $2 \pi \rho$ representa  a primeira classe de Chern de $X$, isto é,  $2\pi \rho \in c_1(X)$. 
\end{corollary}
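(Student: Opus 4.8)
The corollary is essentially a repackaging of the lemma just proved together with the definition of the first Chern class, so the plan is assembly rather than genuine new work. Throughout I fix on $X$ the Hermitian metric $h = g - \smo\omega$ on the holomorphic tangent bundle $\mathcal{T}_X \simeq T^{1,0}X$ (Example \ref{ex:hol-tangent-bundle}) and let $\nabla$ be its Chern connection; by the proposition in Example \ref{ex:chern-levi-civita} this is the connection induced on $T^{1,0}X$ by the Levi-Civita connection of $g$, so its curvature $F_\nabla$ is exactly the curvature tensor appearing in the lemma. Since $\nabla$ is compatible with both the holomorphic structure and the metric $h$, Section \ref{subsec:chern-connection} gives $F_\nabla \in \mathcal{A}^{1,1}(\End(T^{1,0}X))$, with values in the anti-Hermitian endomorphisms of $(T^{1,0}X,h)$; hence $\text{tr}_\C F_\nabla$ is a $(1,1)$-form taking purely imaginary values.

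First I would record that $\rho$ is a real closed $(1,1)$-form. By the lemma, $\rho(u,v) = \smo\,\text{tr}_\C(F_\nabla(u,v))$, i.e. $\rho = \smo\,\text{tr}_\C F_\nabla$ as $2$-forms; since $\smo$ times a purely imaginary $(1,1)$-form is a real $(1,1)$-form, this settles realness and type in one stroke. (One can also see realness directly from $\rho = r(J\cdot,\cdot)$ with $r$ real, and the $(1,1)$-type from the $J$-invariance $r(J\cdot,J\cdot) = r$ of the Ricci tensor of a Kähler metric, which annihilates the $(2,0)$- and $(0,2)$-components of $\rho$.) For closedness, note that $\tfrac{\smo}{2\pi}\,\text{tr}_\C F_\nabla = c_1(T^{1,0}X,\nabla)$ is precisely the first Chern form of $\nabla$ in the sense of Definition \ref{def:chern-class}, and every Chern form is closed — this is the corollary following Lemma \ref{lemma:dP}, which in turn rests on the Bianchi identity (Proposition \ref{prop:bianchi}). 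Hence $\rho = 2\pi\,c_1(T^{1,0}X,\nabla)$ is closed.

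Finally I would pass to cohomology: $[\rho] = 2\pi\,[c_1(T^{1,0}X,\nabla)] = 2\pi\,c_1(T^{1,0}X) = 2\pi\,c_1(X)$, where the middle equality is the independence of the Chern class $c_1(T^{1,0}X)$ from the chosen connection (Proposition \ref{prop:nabla-independent}) and the last equality is the definition $c_1(X) = c_1(\mathcal{T}_X)$ together with $\mathcal{T}_X \simeq T^{1,0}X$. Thus $\rho$ represents $2\pi\,c_1(X)$, i.e. $\rho \in 2\pi\,c_1(X)$, which is the assertion. There is no real obstacle here; the only points demanding care are the bookkeeping of the factor $2\pi$ — so that the normalization $c_k(E,\nabla) = \widetilde P_k(\tfrac{\smo}{2\pi}F_\nabla)$ of Definition \ref{def:chern-class} is respected — and reading the lemma's formula (stated for $u,v$ in $T^{1,0}X$) as an identity of $2$-forms rather than merely on a single bidegree, which is what licenses the cohomological conclusion.
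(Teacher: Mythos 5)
Your argument is correct and is exactly the one the paper intends: the corollary is stated as an immediate consequence of the preceding lemma ($\rho = \smo\,\text{tr}_\C F_\nabla$), the identification of $F_\nabla$ with the curvature of the Chern connection of $(T^{1,0}X,h)$, the $(1,1)$/anti-Hermitian properties of that curvature, closedness via Bianchi, and the normalization $c_1(E)=[\tfrac{\smo}{2\pi}\text{tr}\,F_\nabla]$ — the paper gives no further proof. Note that your careful bookkeeping of the $2\pi$ yields $\rho \in 2\pi\,c_1(X)$, which is the version consistent with Definition \ref{def:chern-class} and with the paper's subsequent use in the Calabi conjecture ($[\rho]=2\pi c_1(X)$); the corollary as printed, $2\pi\rho\in c_1(X)$, has the factor on the wrong side and is evidently a slip.
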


Como consequência interessante observamos que a classe de cohomologia da forma de Ricci depende apenas da estrutura complexa de $X$, e não da particular escolha de métrica. 

Do corolário acima é natural perguntar quais $(1,1)$-formas reais e fechadas em $2\pi c_1(X)$ podem ser a forma de de Ricci de uma métrica de Kähler em $X$. A seguinte conjectura foi proposta pelo matemático italiano Eugenio Calabi, nos anos 1950.\\

\textbf{Conjectura de Calabi.} \index{Conjectura!de Calabi} Sejam $X$ uma variedade de Kähler compacta, $g$ uma métrica de Kähler em $X$ e $\omega$ sua forma fundamental. Dada uma $(1,1)$-forma real e fechada $\rho' \in 2\pi c_1(X)$ existe uma única métrica de Kähler $g'$ em $X$ com forma fundamental $\omega'$ tal que $[\omega']=[\omega]$ e cuja forma de Ricci é $\rho'$.\\

A Conjectura de Calabi foi respondida afirmativamente pelo matemático Shing-Tung Yau no final dos anos 1970, o que, entre outros feitos, lhe rendeu a Medalha Fields em 1983. Hoje o resultado é conhecido como \emph{Teorema de Calabi-Yau}\index{Teorema!de Calabi-Yau}. A ideia da demonstração é usar o $\del \delbar$-lema para transformar o problema em uma equação diferencial parcial de segunda ordem não linear, chamada equação de Monge-Ampère.\\

Fixe uma métrica de Kähler $g$ em $X$ e sejam $\omega$ sua forma fundamental e $\rho$ a sua forma de Ricci. Dada uma $(1,1)$-forma real e fechada $\rho' \in 2\pi c_1(X)$ queremos encontrar uma métrica $g'$ em $X$ com forma de Ricci $\rho'$ e forma fundamental $\omega'$ na mesma classe de $\omega$.

Como $[\rho] = 2\pi c_1(X) = [\rho']$, vemos, do $\del \delbar$-lema (cf. Exemplo \ref{ex:del-delbar}), que existe uma função real $f$ em $X$ tal que
\begin{equation*}
\rho' = \rho + \smo \del \delbar f.
\end{equation*}

Pode-se mostrar neste caso que as formas fundamentais satisfazem a relação
\begin{equation*}
(\omega')^n = A e^f \omega^n,
\end{equation*}
onde $n$ é a dimensão de $X$ e $A$ é uma constante positiva.

Como $\omega$ e $\omega'$ são cohomólogas temos, pelo teorema de Stokes, que $\int_X (\omega')^n = \int_X \omega^n$ e portanto, lemabrando que $n! \text{vol}_g = \omega^n$, a equação acima diz que
\begin{equation*}
A \int_X e^f \text{vol}_g = \text{Vol}(X).
\end{equation*}

Esta discussão nos leva a uma segunda versão da Conjecura de Calabi.\\

\textbf{Conjectura de Calabi (II)}. Sejam $X$ uma variedade de Kähler compacta, $g$ uma métrica de Kähler em $X$ e $\omega$ sua forma fundamental. Seja $f \in C^{\infty}(X;\R)$ e defina $A>0$ pela equação $A \int_X e^f \text{vol}_g = \text{Vol}(X)$. Então existe uma única métrica de Kähler $g'$ em $X$ com forma fundamental $\omega'$ tal que $[\omega']=[\omega]$ e $(\omega')^n=Ae^f \omega^n$.\\

Note que a versão acima já não faz menção à forma de Ricci. Em vez de prescrever $\rho'$, podemos pensar no problema de Calabi como o de encontrar métricas de Kähler $g'$ prescrevendo a forma volume $\text{vol}_{g'}$. De fato, a forma volume de $X$ pode ser escrita como $F \text{vol}_g$, onde $F$ é uma função real e a versão acima da Conjectura de Calabi diz que se $F$ for positiva e $F \text{vol}_g$ tiver o mesmo volume total que a métrica original então podemos encontrar uma única $g'$ de Kähler com $\text{vol}_{g'} = F \text{vol}_g$.\\

Podemos simplificar o problema um pouco mais. Da condição $[\omega'] = [\omega]$ obtemos, aplicando novamente o $\del \delbar$-lema, uma função real $\varphi$ em $X$ tal que
\begin{equation*}
\omega' = \omega + \smo \del \delbar \varphi,
\end{equation*}
que é única a menos de uma constante. Assim, se impusermos a condição $\int_X \varphi \text{vol}_g = 0$, determinamos $\varphi$ univocamente. Obtemos assim uma terceira versão da conjectura.\\

\textbf{Conjectura de Calabi (III)}. Sejam $X$ uma variedade de Kähler compacta, $g$ uma métrica de Kähler em $X$ e $\omega$ sua forma fundamental. Seja $f \in C^{\infty}(X;\R)$ e defina $A>0$ pela equação $A \int_X e^f \text{vol}_g = \text{Vol}(X)$. Então existe uma única função $\varphi \in C^{\infty}(X;\R)$ tal que
\begin{itemize}
\item[(i)] $\omega + \smo \del \delbar \varphi$ é uma $(1,1)$-forma positiva,
 
\item[(ii)] $\int_X \varphi \text{vol}_g = 0$,

\item[(iii)] $(\omega + \smo \del \delbar \varphi)^n = A e^f \omega^n$ em $X$.
\end{itemize}

Em termos de um sistema de coordenadas holomorfas $z_1,\ldots,z_n$ a equação $(iii)$ acima pode ser reescrita como
\begin{equation} \label{eq:monge-ampere}
\det \left(h_{ij} + \frac{\del^2 \varphi}{\del z_i \del \bar z _j} \right) = A e^f \det(h_{ij}),
\end{equation}
onde $h_{ij} = h(\frac{\del}{\del z_i}, \frac{\del}{\del \bar z_j})$ são os coeficientes da métrica nessas coordenadas.

A equação (\ref{eq:monge-ampere}) é chamada \emph{equação de Monge-Ampère} e a prova da Conjectura de Calabi dada por Yau consiste mostrar que ela admite uma única solução suave.\\

Uma segunda conjectura importante da Geometria Diferencial Complexa diz respeito à existência de métricas de Kähler-Einstein.

\begin{definition} \index{métrica!de Kähler-Einstein}
Seja $X$ uma variedade complexa. Uma métrica $g$ em $X$ é dita uma \textbf{métrica de Kähler-Einstein} se $g$ é ao mesmo tempo uma métrica de Kähler e uma métrica de Einstein.

Equivalentemente, uma métrica $g$ é de Kähler-Einstein se $g$ é de Kähler e sua forma de Ricci $\rho$ satisfaz
\begin{equation} \label{eq:kahler-einstein-condition}
\rho = \lambda \omega,
\end{equation} 
onde $\omega$ é a forma fundamental de $g$ e $\lambda \in \R$ é uma constante.

Dizemos que $X$ é uma variedade de Kähler-Einstein se $X$ admite uma métrica de Kähler-Einstein.
\end{definition}

Observe que quando $\lambda = 0$ temos que $\rho = 0$ e portanto $r=0$. Neste caso dizemos que a métrica $g$ é \textbf{\textit{Ricci-flat}}.\\

Como $\omega$ é uma $(1,1)$-forma positiva (cf. Definição \ref{def:positive-form}), a condição (\ref{eq:kahler-einstein-condition}) mostra que se $g$ é uma métrica de Kähler-Einstein então vale uma das seguintes condições: a) $\rho = 0$,  b) $\rho$ é positiva ou c) $\rho$ é negativa (i.e., $-\rho$ é positiva).

Essas condições se refletem, pelo corolário \ref{cor:Ric=c1}, em condições na primeira classe de Chern de $X$. Dizemos que uma classe de cohomologia $c \in H^2(X,\R)$ é positiva, e escrevemos $c > 0$, se $c$ pode ser representada por uma forma positiva. Dizemos que $c$ é negativa, e escrevemos $c < 0$, se $-c$ é positiva. Como $[\rho] = 2\pi c_1(X)$, a equação (\ref{eq:kahler-einstein-condition}) implica que $c_1(X) = \frac{\lambda}{2\pi} [\omega]$, de onde obtemos o seguinte resultado.

\begin{proposition}
Se $X$ uma variedade de Kähler-Einstein então vale uma das seguintes condições:
\begin{enumerate}
\item $c_1(X) = 0$
\item $c_1(X) > 0$
\item $c_1(X) < 0$.
\end{enumerate}
\end{proposition}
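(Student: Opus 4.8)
A ideia é extrair informação diretamente da equação de Einstein $\rho = \lambda \omega$ usando o Corolário \ref{cor:Ric=c1}, que diz que $2\pi\rho$ representa a classe $c_1(X)$ em $H^2(X,\R)$. A demonstração é essencialmente uma tradução: a trichotomia $\lambda = 0$, $\lambda > 0$, $\lambda < 0$ corresponde, via a equação $c_1(X) = \frac{\lambda}{2\pi}[\omega]$, à trichotomia $c_1(X) = 0$, $c_1(X) > 0$, $c_1(X) < 0$. O passo central é justificar que multiplicar uma classe positiva por um escalar positivo (resp. negativo) preserva a positividade (resp. troca o sinal), o que é imediato da definição de classe positiva (representável por uma forma positiva), já que a forma fundamental $\omega$ é, por construção, uma $(1,1)$-forma real positiva e fechada.

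Primeiro eu observaria que, sendo $g$ uma métrica de Kähler-Einstein, existe $\lambda \in \R$ com $\rho = \lambda\omega$, onde $\omega$ é a forma fundamental de $g$. Pelo Corolário \ref{cor:Ric=c1}, a forma de Ricci $\rho$ é real, fechada, de tipo $(1,1)$, e satisfaz $2\pi[\rho] = c_1(X)$ em $H^2(X,\R)$. Tomando classes de cohomologia na equação de Einstein, obtenho
\begin{equation*}
c_1(X) = 2\pi[\rho] = 2\pi\lambda[\omega] = \frac{\lambda}{1}\cdot 2\pi[\omega].
\end{equation*}
Note que $\omega$ é uma $(1,1)$-forma real fechada que é positiva no sentido da Definição \ref{def:positive-form} (pois $g(v,v) = \omega(v,Jv) > 0$ para $v\neq 0$), de modo que a classe $[\omega] \in H^2(X,\R)$ é positiva.

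Em seguida eu dividiria em três casos conforme o sinal de $\lambda$. Se $\lambda = 0$, então $c_1(X) = 0$, e estamos no caso 1. Se $\lambda > 0$, então $2\pi\lambda\,\omega$ é ainda uma $(1,1)$-forma real fechada e positiva (multiplicar uma forma positiva por uma constante positiva preserva a positividade pontual), e como ela representa $c_1(X)$, concluímos $c_1(X) > 0$, ou seja o caso 2. Se $\lambda < 0$, escrevo $c_1(X) = \frac{\lambda}{2\pi}\cdot 2\pi[\omega]$; a forma $-2\pi\lambda\,\omega = 2\pi|\lambda|\,\omega$ é real, fechada e positiva, e representa $-c_1(X)$, logo $-c_1(X) > 0$, isto é $c_1(X) < 0$, o caso 3. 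Isso esgota as possibilidades.

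**O principal obstáculo.** Aqui não há obstáculo substancial: o resultado é um corolário direto do Corolário \ref{cor:Ric=c1} combinado com a definição de positividade de uma classe de cohomologia. O único ponto que merece atenção é verificar que a forma fundamental $\omega$ de uma métrica hermitiana é genuinamente positiva no sentido da Definição \ref{def:positive-form} — mas isso já segue da identidade $\omega(v,Jv) = g(Jv,Jv) = g(v,v) > 0$ para $v \neq 0$, que é exatamente a observação feita logo após a Definição \ref{def:positive-form} no texto. Assim, a demonstração é curta e se resume a tomar classes de cohomologia na equação $\rho = \lambda\omega$ e analisar o sinal de $\lambda$.
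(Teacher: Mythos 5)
Sua demonstração está correta e segue exatamente o mesmo caminho do texto: tomar classes de cohomologia na equação $\rho = \lambda\omega$, usar que $[\rho]$ e $c_1(X)$ diferem por um múltiplo positivo de $2\pi$ (o Corolário \ref{cor:Ric=c1}) e que $[\omega]$ é uma classe positiva, e concluir pela tricotomia no sinal de $\lambda$. O único detalhe é o fator $2\pi$ versus $\frac{1}{2\pi}$ na relação entre $[\rho]$ e $c_1(X)$, sobre o qual o próprio texto é inconsistente; como se trata de uma constante positiva, isso não afeta em nada o argumento de sinal.
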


Uma pergunta natural é se a validade de uma das condições acima é suficiente para a existência de métricas de Kähler-Einstein. No caso $c_1(X) = 0$ a resposta é afirmativa, o que segue do Teorema de Calabi-Yau.

\begin{theorem}
Seja $X$ uma variedade de Kähler compacta com $c_1(X) = 0$. Então existe uma métrica de Kähler \textit{Ricci-flat} $g$ em $X$. Além disso, se $\omega$ denota a forma fundamental de $g$, então $g$ é a única métrica \textit{Ricci-flat} em $X$ com forma fundamental na classe de $\omega$.
\end{theorem}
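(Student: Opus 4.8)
O teorema é uma aplicação direta do Teorema de Calabi-Yau (a Conjectura de Calabi), enunciado na seção anterior, combinado com o Corolário \ref{cor:Ric=c1}. O plano é o seguinte: primeiro, observo que como $X$ é uma variedade de Kähler compacta, ela admite ao menos uma métrica de Kähler $g_0$, com forma fundamental $\omega_0$ e forma de Ricci $\rho_0$. Pelo Corolário \ref{cor:Ric=c1}, temos $2\pi\rho_0 \in c_1(X) = 0$, ou seja, $\rho_0$ é uma $(1,1)$-forma real e fechada cohóloga a zero. A ideia central é aplicar o Teorema de Calabi-Yau tomando como forma de Ricci alvo a forma nula $\rho' = 0$, que de fato pertence a $2\pi c_1(X) = 0$.

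Em detalhe: pelo Teorema de Calabi-Yau enunciado acima, dada a métrica de Kähler $g_0$ com forma fundamental $\omega_0$ e dada a $(1,1)$-forma real fechada $\rho' = 0 \in 2\pi c_1(X)$, existe uma única métrica de Kähler $g$ em $X$ com forma fundamental $\omega$ tal que $[\omega] = [\omega_0]$ e cuja forma de Ricci é $\rho' = 0$. Como a forma de Ricci de $g$ é $\rho = 0$, o tensor de Ricci $r$ de $g$ também se anula, isto é, $g$ é \textit{Ricci-flat}. Em particular $r = 0 \cdot g$, de modo que $g$ é uma métrica de Kähler-Einstein com $\lambda = 0$, provando a primeira afirmação.

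Para a unicidade: suponha que $g'$ seja outra métrica de Kähler \textit{Ricci-flat} em $X$ cuja forma fundamental $\omega'$ satisfaz $[\omega'] = [\omega]$. Então a forma de Ricci de $g'$ é $\rho' = 0$, a mesma da métrica $g$, e ambas têm classe de Kähler $[\omega'] = [\omega]$. Pela parte de unicidade do Teorema de Calabi-Yau (aplicado com a métrica de referência $g$, a classe $[\omega]$ e a forma de Ricci alvo $0$), concluímos $g' = g$. Isso encerra a demonstração.

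Não antecipo obstáculos técnicos sérios, já que todo o conteúdo analítico pesado está encapsulado no Teorema de Calabi-Yau, que assumimos como conhecido. O único ponto que requer um pequeno cuidado é verificar que a métrica produzida pelo teorema de fato tem tensor de Ricci identicamente nulo — mas isso é imediato da definição da forma de Ricci $\rho = r(J\cdot,\cdot)$ e do fato de $J$ ser um isomorfismo em cada espaço tangente, de modo que $\rho = 0$ implica $r = 0$. Também vale notar, como comentário adicional que poderia ser incluído, que a existência de $g_0$ segue de $X$ ser Kähler por hipótese, e que $c_1(X) = 0$ é exatamente a hipótese que garante que $0 \in 2\pi c_1(X)$, viabilizando a escolha $\rho' = 0$.
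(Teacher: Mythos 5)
A sua demonstração está correta e segue essencialmente o mesmo caminho da demonstração do texto: aplicar o Teorema de Calabi-Yau com a forma alvo $\rho' = 0 \in 2\pi c_1(X)$, obtendo existência e unicidade da métrica \textit{Ricci-flat} na classe de Kähler dada. Os detalhes adicionais que você inclui (a verificação de que $\rho = 0$ implica $r = 0$ via $J$ e a observação de que $c_1(X)=0$ viabiliza a escolha $\rho'=0$) são corretos e apenas explicitam o que o texto deixa impl\'icito.
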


\begin{proof}
Tomando $\rho' = 0 \in 2\pi c_1(X)$, o Teorema de Calabi-Yau diz que existe uma única métrica $g$ em $X$ com forma de Ricci $\rho = 0$ e com a classe $[\omega]$ especificada.
\end{proof}

\begin{remark}
As variedades complexas compactas com $c_1(X)=0$ são chamadas variedades de Calabi-Yau e são importante na Física. Em uma das possíveis formulações da Teoria de Supercordas o espaço é dado localmente como um produto $M^{3,1} \times X$, onde $M^{3,1}$ é o espaço de Minkowski e $X$ é uma variedade de Calabi-Yau de dimensão (complexa) $3$. A variedade $X$ é ``pequena'' com relação a $M^{3,1}$, o que explica por que alguns fenômenos não podem ser observados nos experimentos atuais.
\end{remark}

A condição $c_1(X) < 0$ também é suficiente para a existência de métricas de Kähler-Einstein. O seguinte resultado foi mostrado independentemente por Thierry Aubin e Shing-Tung Yau.

\begin{theorem} \emph{(Teorema de Aubin-Yau)}
Se $X$ é uma variedade de Kähler compacta com $c_1(X) < 0$ então $X$ admite, a menos de uma constante multiplicativa, uma única métrica de Kähler-Einstein. 
\end{theorem}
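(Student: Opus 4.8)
O plano \'e reduzir a exist\^encia de uma m\'etrica de K\"ahler-Einstein a uma equa\c{c}\~ao de Monge-Amp\`ere complexa, an\'aloga \`aquela da Conjectura de Calabi, e resolv\^e-la pelo m\'etodo da continuidade. Normalizando a constante, basta procurar uma m\'etrica de K\"ahler $g$ com forma fundamental $\omega$ satisfazendo $\rho = -\omega$. Como $c_1(X) < 0$, a classe $-2\pi c_1(X)$ \'e positiva, logo cont\'em a forma fundamental $\omega_0$ de alguma m\'etrica de K\"ahler $g_0$ (Proposi\c{c}\~ao \ref{prop:positive-form}). Pelo Corol\'ario \ref{cor:Ric=c1} temos $[\rho_0] = 2\pi c_1(X) = -[\omega_0]$, de modo que $\rho_0 + \omega_0$ \'e exata; pelo $\del\delbar$-Lema existe $f \in C^{\infty}(X;\R)$ com $\rho_0 = -\omega_0 + \smo\del\delbar f$. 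Procurando $\omega = \omega_0 + \smo\del\delbar\varphi$ na mesma classe de cohomologia e usando a f\'ormula $\rho_\omega = \rho_0 - \smo\del\delbar\log(\omega^n/\omega_0^n)$, a condi\c{c}\~ao $\rho_\omega = -\omega$ se reduz, ap\'os absorver uma constante em $f$, \`a equa\c{c}\~ao de Monge-Amp\`ere
\begin{equation*}
(\omega_0 + \smo\del\delbar\varphi)^n = e^{f + \varphi}\,\omega_0^n, \qquad \omega_0 + \smo\del\delbar\varphi > 0.
\end{equation*}
O ponto decisivo, que distingue este caso do caso $c_1(X) = 0$ (Teorema de Calabi-Yau) e do caso $c_1(X) > 0$, \'e o sinal $+\varphi$ no expoente.

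Em seguida eu introduziria a fam\'ilia $(\omega_0 + \smo\del\delbar\varphi_t)^n = e^{tf + \varphi_t}\,\omega_0^n$, $t \in [0,1]$, observando que $\varphi_0 \equiv 0$ resolve a equa\c{c}\~ao em $t = 0$. Seja $S \subset [0,1]$ o conjunto dos $t$ para os quais existe uma solu\c{c}\~ao suave $\varphi_t$ com $\omega_0 + \smo\del\delbar\varphi_t > 0$. Como $[0,1]$ \'e conexo e $S \neq \emptyset$, basta provar que $S$ \'e aberto e fechado. A abertura segue do teorema da fun\c{c}\~ao impl\'icita em espa\c{c}os de H\"older $C^{k,\alpha}$: a lineariza\c{c}\~ao do operador de Monge-Amp\`ere numa solu\c{c}\~ao \'e $\psi \mapsto \Delta_{\omega_t}\psi - \psi$, onde $\Delta_{\omega_t}$ \'e o laplaciano complexo de $\omega_t$; esse operador \'e invert\'ivel precisamente porque o termo de ordem zero tem o sinal correto --- \'e aqui que a hip\'otese $\lambda < 0$, ou seja $c_1(X)<0$, \'e usada de maneira essencial --- o que permite perturbar $t$.

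A parte t\'ecnica central, e a principal dificuldade, ser\'a a clausura de $S$, isto \'e, obter estimativas \emph{a priori} em $C^{k,\alpha}$ uniformes em $t$. O plano \'e o seguinte: (i) uma estimativa $C^0$ via princ\'ipio do m\'aximo (num ponto de m\'aximo de $\varphi_t$ tem-se $\smo\del\delbar\varphi_t \le 0$, donde $e^{tf+\varphi_t}\le 1$ ali, o que limita $\sup\varphi_t$ em termos de $\sup|f|$; analogamente para o \'infimo --- usando novamente o sinal de $\varphi_t$); (ii) a estimativa de segunda ordem de Aubin-Yau, que limita $\Delta_{\omega_0}\varphi_t$, equivalentemente mostrando que $\omega_t$ e $\omega_0$ s\~ao uniformemente equivalentes, a partir da estimativa $C^0$ e de um c\'alculo com a equa\c{c}\~ao e a curvatura de $g_0$; (iii) uma estimativa $C^{2,\alpha}$ via a teoria de Evans-Krylov para equa\c{c}\~oes el\'ipticas totalmente n\~ao lineares (ou, alternativamente, a estimativa de terceira ordem de Calabi), seguida de \emph{bootstrap} pelas estimativas de Schauder para obter limites $C^{k,\alpha}$ para todo $k$. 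Com isso $S$ \'e fechado, logo $S = [0,1]$ e obtemos uma solu\c{c}\~ao em $t = 1$, isto \'e, uma m\'etrica de K\"ahler-Einstein.

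Finalmente, a unicidade a menos de constante seguiria do princ\'ipio do m\'aximo: se $\varphi$ e $\varphi'$ resolvem a equa\c{c}\~ao de Monge-Amp\`ere com a mesma normaliza\c{c}\~ao, ent\~ao num ponto de m\'aximo de $\varphi - \varphi'$ vale $\smo\del\delbar(\varphi - \varphi') \le 0$, donde $\omega_\varphi^n \le \omega_{\varphi'}^n$ e portanto $e^{f+\varphi} \le e^{f+\varphi'}$ nesse ponto, o que for\c{c}a $\varphi \le \varphi'$ em todo $X$; por simetria $\varphi = \varphi'$. Como a condi\c{c}\~ao $c_1(X) < 0$ obriga a classe de K\"ahler de qualquer m\'etrica de K\"ahler-Einstein a ser um m\'ultiplo positivo de $-2\pi c_1(X)$ (pela equa\c{c}\~ao (\ref{eq:kahler-einstein-condition}) e pelo Corol\'ario \ref{cor:Ric=c1}), a unicidade dentro da classe implica a unicidade a menos de constante multiplicativa.
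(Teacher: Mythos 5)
The paper states this theorem without proof, attributing it to Aubin and Yau, so there is no argument in the text to compare yours against. Your sketch is the standard continuity-method proof and its outline is correct: the reduction to the Monge-Amp\`ere equation $(\omega_0 + \smo\del\delbar\varphi)^n = e^{f+\varphi}\omega_0^n$ is right (using $[\rho_0] = 2\pi c_1(X) = -[\omega_0]$ and o $\del\delbar$-Lema), a abertura via a invertibilidade de $\psi \mapsto \Delta_{\omega_t}\psi - \psi$ est\'a correta e identifica com precis\~ao onde o sinal de $c_1(X)$ entra, a estimativa $C^0$ pelo princ\'ipio do m\'aximo est\'a certa (e \'e de fato o ponto em que o caso negativo \'e dramaticamente mais simples que o caso $c_1 = 0$, onde se precisa de itera\c{c}\~ao de Moser), e o argumento de unicidade no ponto de m\'aximo de $\varphi - \varphi'$ fecha corretamente, usando que $0 < A \le B$ implica $\det A \le \det B$.

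Dito isso, o que voc\^e apresenta \'e um roteiro, n\~ao uma demonstra\c{c}\~ao: os passos (ii) e (iii) da clausura --- a estimativa de segunda ordem de Aubin--Yau e a regularidade $C^{2,\alpha}$ via Evans--Krylov ou Calabi --- s\~ao precisamente o conte\'udo n\~ao trivial do teorema, e voc\^e apenas os nomeia. Isso \'e coerente com o n\'ivel de detalhe que o pr\'oprio texto adota ao discutir a Conjectura de Calabi (onde a equa\c{c}\~ao de Monge-Amp\`ere \'e deduzida mas sua solubilidade \'e citada), mas conv\'em deixar expl\'icito que a parte anal\'itica pesada permanece como refer\^encia externa. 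Um detalhe menor: a f\'ormula $\rho_\omega = \rho_0 - \smo\del\delbar\log(\omega^n/\omega_0^n)$ que voc\^e usa na redu\c{c}\~ao n\~ao aparece enunciada no texto (que s\'o registra a rela\c{c}\~ao $(\omega')^n = Ae^f\omega^n$ na dire\c{c}\~ao oposta); valeria enunci\'a-la e justific\'a-la a partir da express\~ao local $\rho = -\smo\del\delbar\log\det(h_{ij})$ antes de us\'a-la.
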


No caso positivo, a condição $c_1(X) > 0$ nãó é suficiente para existência de métricas de Kähler-Einstein. Nos anos 1980 descobriu-se uma outra obstrução, que é o anulamento do chamado invariante de Futaki, $\text{Fut}_X$. Em 1990, Gang Tian mostrou que, quando $\dim X = 2$, o anulamento de $\text{Fut}_X$ é uma condição suficiente para a existência de métricas de Kähler-Einstein e, em 2004, foi provado por X.J Wang e Z.H.Zhu que ela também é suficiente no caso de variedades tóricas. No entanto, em 1997, o próprio Tian construiu um contraexemplo para a conjectura, exibindo uma variedade compacta com $c_1(X)>0$, $\text{Fut}_X = 0$ mas que não admite nenhuma métrica de Kähler-Einstein.

Atualmente acredita-se que a existência de métricas de Kähler-Einstein em variedades com classe de Chern positiva esteja relacionada com algumas condições de estabilidade no contexto da Teoria de Invariantes Geométrica. Uma discussão sobre o assunto pode ser encontrada em \cite{phong-sturm}.
\chapter{Topologia de Variedades Complexas} \label{ch:stein-lesfchetz}

Neste capítulo vamos estudar alguns aspectos topológicos de variedades complexas. Demonstraremos dois resultados principais. O primeiro diz respeito à homologia de subvariedades complexas fechadas de $\C^N$, as chamadas variedades de Stein, e como consequência dele obteremos o Teorema de Hiperplanos de Lefschetz, que permite estudar a topologia de uma variedade algébrica projetiva por meio das suas seções de hiperplanos. A apresentação dada aqui é inspirada no artigo de Aldo Andreotti e Theodore Frankel \cite{andreotti-frankel}.

A demonstração do primeiro teorema é um dos inúmeros exemplos de aplicação da Teoria de Morse, que recordaremos a seguir. A referência clássica é o livro de J. Milnor \cite{milnor}. Para um tratamento mais moderno consulte o livro de R. Palais e C. Terng \cite{palais-terng}.

\section{Teoria de Morse}
A ideia da Teoria de Morse é estudar a topologia de uma variedade diferencável $M$ olhando para os pontos críticos de uma função suave $f:M \to \R$.\\

Vamos recordar algumas definições básicas.

Seja $M$ uma variedade diferenciável e fixemos $f:M \to \R$ uma função diferenciável em $M$. Um ponto $p \in M$ é um \textbf{ponto crítico} de $f$ se $df_p=0$. É fácil ver que se $p$ é um ponto de máximo ou mínimo local de $f$ então $p$ é um ponto crítico. A imagem $c=f(p)$ de um ponto crítico é chamado \textbf{valor crítico} de $f$. Um \textbf{ponto regular} é um ponto que não é crítico e um número $c \in \R$ é um \textbf{valor regular} se $f^{-1}(c)$ não contém nenhum ponto crítico \footnote{Em paricular todo número $c \in \R \setminus f(M)$ é um valor regular.}.

Dado $c \in \R$ defina os seguintes subconjuntos de $M$
\begin{equation*}
M^c = \{x \in M : f(x) \leq c\}~~\text{ e }~~ M^{c-} = \{x \in M : f(x) < c\}.
\end{equation*}

Do teorema da função implícita é fácil ver que se $c$ é um valor regular então $M^c$ é uma variedade com bordo $\del M = f^{-1}(c)$. O resultado central da Teoria de Morse descreve como a topologia de $M^c$ varia conforma aumentamos $c$ (Teorema \ref{thm:morse-fund}).

Se $p$ é um ponto crítico de $f$ definimos o \textbf{hessiano} de $f$ em $p$ como sendo a forma bilinear
\begin{equation*}
\begin{split}
\Hess_p(f) : T_pM \times T_pM &\longrightarrow \R \\
\Hess_p(f)(X,Y)&= X(\widetilde Yf)(p),
\end{split}
\end{equation*}
onde $\widetilde Y$ é uma extensão de $Y$ a um campo local em $M$.

Note que se $\widetilde X$ é uma extensão de $X$ a um campo local temos que
\begin{equation*}
\widetilde X(\widetilde Yf)(p) - \widetilde Y (\widetilde Xf)(p) = [X,Y]_p(f) = df_p([X,Y]) = 0.
\end{equation*}
Em particular isso mostra que $X(\widetilde Yf)(p) = Y(\widetilde Xf)(p)$ independe da extensão de $Y$ escolhida e que $\Hess_p(f)$ é uma forma bilinear simétrica em $T_pM$.

\begin{remark}
Note que a definição acima só se aplica ao pontos críticos de $f$. Podemos definir o Hessiano de maneira global com a ajuda de uma conexão.

Seja $\nabla$ uma conexão em $TM$ e considere sua extensão ao fibrado de tensores. Definimos então o hessiano de $f$ com respeito a $\nabla$ como sendo o $2$-tensor $\Hess^{\nabla}(f) = \nabla df$.

Do fato de $\nabla_X$ agir como derivação e comutar com as contrações temos que $\widetilde X(\widetilde Yf) = df(\nabla_{\widetilde X} \widetilde Y) + \Hess^\nabla (f)(\widetilde X,\widetilde Y)$ e portanto, se $p$ é um ponto crítico de $f$, recuperamos a definição acima: $\Hess^\nabla(f)(\widetilde X,\widetilde Y)(p) = \widetilde X(\widetilde Yf)(p) = \Hess_p(f)$. 
\end{remark}

\begin{remark}
Se $M$ possuir uma métrica riemanniana $g$ podemos representar $\Hess_p(f)$ por um operador linear autoadjunto em $T_pM$, isto é, existe $\hess_p(f):T_pM \to T_pM$ tal que
\begin{equation*}
\Hess_p(f)(X,Y) = g_p(\hess_p(f)X,Y),~~X,Y \in T_pM. 
\end{equation*}
\end{remark}

\begin{definition}
Um ponto crítico $p$ é \textbf{não degenerado} se a aplicação bilinear $\Hess_p(f)$ é não degenerada, ou equivalentemente se o operador linear $\hess_p(f)$ é um isomorfismo.

O \textbf{índice} de um ponto crítico não degenerado é o indice da forma bilinear $\Hess_p(f)$, isto é, é a máxima dimensão de um subespaço $W \subset T_pM$ tal que $\Hess_p(f)|_{W \times W}$ é negativa definida. Equivalentemente o índice de $p$ é o número de autovalores negativos de $\hess_p(f)$.

Uma função $f:M\to \R$ é dita uma \textbf{função de Morse} se todos os seus pontos críticos são não degenerados.
\end{definition}

\begin{example}
Considere $M = \R^n$ e $f:\R^n\to \R$ a função $f (x_1,\ldots,x_n) = - x_1^2 -x_2^2 - \cdots - x_\lambda^2 + x_{\lambda+1}^2 + \cdots + x_n^2$. Neste caso temos que o único ponto crítico de $f$ é a origem e como $\hess_0(f) = \text{diag}(-2,\ldots,-2,2,\ldots,2)$ (com $-2$ aparecendo $\lambda$ vezes) vemos que o índice de $f$ na origem é igual a $\lambda$.
\end{example}

O lema a seguir mostra que exemplo acima serve de modelo local para o comportamento de uma função em torno de um ponto crítico não degenerado de índice $\lambda$. Para uma demonstração consulte \cite{milnor}.

\begin{lemma} \textbf{Lema de Morse.} Seja $p$ um ponto crítico não degenerado de $f$ de índice $\lambda$. Então existe um  difeomorfismo $\varphi:U \to \varphi(U) \subset \R^n$ definido em uma vizinhança $U$ de $p$ tal que $\varphi(p)=0$ e 
\begin{equation*}
f \circ \varphi^{-1}(x_1,\ldots,x_n) = f(p) - x_1^2 -x_2^2 - \cdots - x_\lambda^2 + x_{\lambda+1}^2 + \cdots + x_n^2.
\end{equation*}
para $(x_1,\ldots,x_n) \in \varphi(U)$.
\end{lemma}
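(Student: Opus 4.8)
The plan is to follow the classical argument: first reduce to a local model in $\R^n$, then use a second-order Taylor expansion with smooth remainder to write $f$ as a quadratic form with variable coefficients, and finally diagonalize that quadratic form by an inductive smooth change of coordinates.

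First I would take any chart $\psi$ around $p$ with $\psi(p) = 0$; replacing $f$ by $f \circ \psi^{-1} - f(p)$ we may assume $M = V$ is a convex neighborhood of $0$ in $\R^n$, $f(0) = 0$, and $df_0 = 0$. The key preliminary step is a Hadamard-type lemma: writing $f(x) = \int_0^1 \tfrac{d}{dt} f(tx)\, dt = \sum_{i=1}^n x_i \int_0^1 \tfrac{\del f}{\del x_i}(tx)\, dt$ yields $f(x) = \sum_i x_i g_i(x)$ with $g_i$ smooth and $g_i(0) = \tfrac{\del f}{\del x_i}(0) = 0$. Applying the same trick to each $g_i$ gives $f(x) = \sum_{i,j} x_i x_j h_{ij}(x)$ with $h_{ij}$ smooth, and after replacing $h_{ij}$ by $\tfrac12(h_{ij} + h_{ji})$ we may assume $h_{ij} = h_{ji}$. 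Differentiating twice at $0$ shows $(h_{ij}(0))_{i,j} = \tfrac12 \Hess_0(f)$, so by hypothesis this symmetric matrix is non-degenerate; by continuity $(h_{ij}(x))$ stays non-degenerate on a possibly smaller neighborhood of $0$.

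Next I would diagonalize by induction, mimicking completion of squares while keeping everything smooth. Suppose that after a smooth change of coordinates one has $f = \epsilon_1 y_1^2 + \cdots + \epsilon_{r-1} y_{r-1}^2 + \sum_{i,j \geq r} y_i y_j H_{ij}(y)$ with $\epsilon_k = \pm 1$, $H_{ij} = H_{ji}$ smooth, and the block $(H_{ij})_{i,j\geq r}$ non-degenerate near $0$. A linear reshuffle of $y_r, \ldots, y_n$ lets us assume $H_{rr}(0) \neq 0$, hence $H_{rr}$ is nowhere zero of constant sign $\epsilon_r = \pm 1$ near $0$ and $\sqrt{|H_{rr}|}$ is smooth. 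Set $z_i = y_i$ for $i \neq r$ and $z_r = \sqrt{|H_{rr}(y)|}\,\bigl(y_r + \sum_{j > r} y_j H_{rj}(y)/H_{rr}(y)\bigr)$; a Jacobian computation shows this is a local diffeomorphism at $0$, and by the usual completion of squares $f = \epsilon_1 y_1^2 + \cdots + \epsilon_{r-1}y_{r-1}^2 + \epsilon_r z_r^2 + \sum_{i,j > r} z_i z_j \widetilde H_{ij}(z)$, where $\widetilde H_{ij}$ is the Schur complement $H_{ij} - H_{ri}H_{rj}/H_{rr}$, still smooth and symmetric, with $(\widetilde H_{ij})_{i,j>r}$ non-degenerate because the full block was. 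After $n$ such steps we land in coordinates $w$ in which $f = \epsilon_1 w_1^2 + \cdots + \epsilon_n w_n^2$; permuting so that the $\epsilon_i = -1$ come first gives $f = -w_1^2 - \cdots - w_\mu^2 + w_{\mu+1}^2 + \cdots + w_n^2$. Finally, since a diffeomorphism fixing the critical point transforms $\Hess$ by the linear isomorphism $d\varphi_0$, which preserves the index, Sylvester's law of inertia applied to the diagonal matrix $\mathrm{diag}(2\epsilon_1,\ldots,2\epsilon_n)$ forces $\mu = \lambda$. Taking $\varphi$ to be the composite coordinate map finishes the proof.

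The main obstacle — the only place needing genuine care rather than bookkeeping — is verifying at each inductive step that the substitution defining $z_r$ is a local diffeomorphism and that the new coefficients $\widetilde H_{ij}$ remain smooth, symmetric, and have a non-degenerate block; one must also keep track that the neighborhood of $0$ may have to be shrunk at each of the finitely many steps. Everything else (the Hadamard lemma, the identification of $(h_{ij}(0))$ with $\tfrac12\Hess_0(f)$, and the inertia count identifying $\mu$ with $\lambda$) is routine.
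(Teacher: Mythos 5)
The paper does not prove this lemma at all; it simply refers the reader to Milnor's \emph{Morse Theory}. Your argument is precisely that classical proof (Hadamard's lemma to write $f=\sum_{i,j}x_ix_jh_{ij}(x)$, inductive completion of squares, Sylvester's law of inertia to identify the number of minus signs with $\lambda$), and it is correct. One small imprecision: at the inductive step, a mere \emph{reshuffle} (permutation) of $y_r,\ldots,y_n$ need not produce $H_{rr}(0)\neq 0$ — all diagonal entries of the non-degenerate block can vanish, as for $\left(\begin{smallmatrix}0&1\\1&0\end{smallmatrix}\right)$ — so you must allow a general invertible linear change of those coordinates (e.g.\ $y_r\mapsto y_r+y_{r+1}$), which is what Milnor does; with that reading the rest of your bookkeeping (smoothness of the Schur complement, non-degeneracy of the remaining block via the determinant factorization, shrinking the neighborhood finitely many times) goes through as you describe.
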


A forma local acima mostra, em particular, que $p$ é o único ponto crítico de $f$ em $U$. 
\begin{corollary}
O conjunto dos pontos críticos não degenerados de $f$ é discreto em $M$.
\end{corollary}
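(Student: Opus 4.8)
The plan is to deduce this corollary directly from the Lema de Morse, exactly as the local normal form makes available. First I would unwind the statement: a critical point $p$ is non-degenerate means $\Hess_p(f)$ is non-degenerate, and the assertion to prove is that the set $C$ of non-degenerate critical points of $f$ is discrete, i.e. every $p \in C$ has a neighbourhood containing no other non-degenerate critical point. In fact one gets slightly more — a neighbourhood containing no other critical point at all — which is the sharper and more useful conclusion, and it is what the local model supplies for free.

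The key step is to apply the Lema de Morse at a given $p \in C$ of index $\lambda$. It produces a chart $\varphi : U \to \varphi(U) \subset \R^n$ with $\varphi(p) = 0$ and
\begin{equation*}
f \circ \varphi^{-1}(x_1,\ldots,x_n) = f(p) - x_1^2 - \cdots - x_\lambda^2 + x_{\lambda+1}^2 + \cdots + x_n^2 .
\end{equation*}
Since $\varphi$ is a diffeomorphism, the critical points of $f$ inside $U$ correspond bijectively (via $\varphi$) to the critical points of $g(x) = f\circ\varphi^{-1}(x)$ inside $\varphi(U)$. But $dg_x = (-2x_1,\ldots,-2x_\lambda, 2x_{\lambda+1},\ldots,2x_n)$, which vanishes only at $x = 0$. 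Hence $p$ is the unique critical point of $f$ in $U$, and in particular $U \cap C = \{p\}$. As $p \in C$ was arbitrary, $C$ is discrete in $M$. I would also remark in passing that this argument shows each non-degenerate critical point is isolated among \emph{all} critical points, and combined with local finiteness considerations (e.g. when $M$ is compact) yields that a Morse function on a compact manifold has only finitely many critical points — a fact used implicitly in the Morse-theoretic arguments that follow.

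There is essentially no obstacle here: the entire content has been front-loaded into the Lema de Morse, whose proof was deferred to \cite{milnor}. The only point requiring a line of care is the observation that ``critical point'' is a diffeomorphism-invariant notion — that $d(f\circ\varphi^{-1})_x = 0$ if and only if $df_{\varphi^{-1}(x)} = 0$ — which is immediate from the chain rule since $d\varphi$ is an isomorphism at every point. So the proof is genuinely short, and the corollary is really just a restatement of the geometric content of the normal form: near a non-degenerate critical point $f$ looks like a non-degenerate quadratic form, whose only critical point is its vertex.
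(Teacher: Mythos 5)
Your proposal is correct and follows exactly the route the paper takes: the text preceding the corollary observes that the Morse normal form shows $p$ is the unique critical point of $f$ in the chart domain $U$, which is precisely your computation that $dg_x = (-2x_1,\ldots,-2x_\lambda,2x_{\lambda+1},\ldots,2x_n)$ vanishes only at the origin. The additional remarks (isolation among all critical points, finiteness on compact manifolds) are accurate but beyond what the corollary asserts.
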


\subsubsection{Existência de funções de Morse para subvariedades do $\R^N$}

Seja $M$ uma subvariedade de $\R^N$ e $q \in \R^n \setminus M$. Vamos denotar por $L_q$ o quadrado da função distância a $q$ em $M$, isto é
\begin{equation*}
\begin{split}
L_q: M &\longrightarrow \R \\
L_q(x) &= \frac{1}{2}|| x-q ||^2.
\end{split}
\end{equation*}
O fator $\frac{1}{2}$ é uma convenção e como veremos simplifica alguns cálculos.

O objetivo é mostrar que $L_q$ é uma função de Morse em $M$ para quase todo $q \in \R^n \setminus M$.\\

Antes de continuar vamos recordar algumas definições e resultados básicos da teoria de subvariedades do $\R^N$.

Dado um ponto $p \in M$, o espaço normal a $M$ é $p$ é o espaço $\nu_pM = T_pM^\perp$ e definimos o fibrado normal de $M$ como sendo $\nu(M)=\bigcup_{p \in M}\nu_pM$. Não é difícil ver que $\nu(M)$ é um fibrado vetorial sobre $M$ cujo posto é igual a codimensão de $M$ em $\R^N$.

Um campo de vetores $\xi$ definido em um aberto $U \subset M$ é um \textit{campo normal} se $\xi(p) \perp T_p M$ para todo $p \in U$. Em outras palavras, um campo normal é uma seção local de $\nu(M)$. Dado um campo normal, definimos o \textit{operador de Weingarten}
\begin{equation*}
\begin{split}
A_\xi: T_pM &\longrightarrow T_pM \\
A_\xi(v) &= - (\nabla_v \widetilde \xi)^\top(p),
\end{split}
\end{equation*}
para $p \in U$, onde $u \mapsto u^\top$ denota a projeção ortogonal de $\R^N$ em $T_p M$, $\nabla$ é a conexão de Levi-Civita em $\R^N$ e $\widetilde \xi$ é uma extensão de $\xi$ a um campo local em $\R^N$.

O operador $A_\xi$ é auto-adjunto e portanto podemos associá-lo a uma forma bilinear simétrica, chamada \textit{segunda forma fundamental} com relação a $\xi$ no ponto $p$
\begin{equation*}
II_\xi(u,v) = \langle A_\xi(u),v\rangle,~~ u,v \in T_pM.
\end{equation*}
 
É fácil ver que a diferencial de $L_q$ em um ponto $p \in M$ é dada por
\begin{equation} \label{eq:der-Lq}
(dL_q)_p \cdot v = \langle p-q,v \rangle,~~v \in T_p M,
\end{equation}
e portanto $p$ é um ponto crítico de $L_q$ se e somente se o vetor $p-q$ é normal a $M$ em $p$.
\begin{lemma} \label{lemma:hess-Lq}
O hessiano da função $L_q$ em um ponto crítico $p \in M$ é dado por $\hess_p(L_q) = I - A_\xi$, onde $\xi = q-p \in \nu_p M.$
\end{lemma}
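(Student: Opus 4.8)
O plano é calcular o hessiano de $L_q$ diretamente a partir da definição, usando a conexão de Levi-Civita $\nabla$ em $\R^N$ para estender a noção de hessiano e depois restringindo ao ponto crítico $p$. Primeiro eu escolheria dois vetores $u,v \in T_pM$ e extensões locais $\widetilde u, \widetilde v$ a campos tangentes a $M$ definidos numa vizinhança de $p$ em $M$. Pela definição do hessiano num ponto crítico temos $\Hess_p(L_q)(u,v) = u(\widetilde v L_q)(p)$, de modo que basta calcular a derivada segunda de $L_q$ ao longo desses campos.

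Em seguida eu usaria a expressão da diferencial primeira obtida em $(\ref{eq:der-Lq})$, a saber $(dL_q)_x \cdot \widetilde v(x) = \langle x - q, \widetilde v(x) \rangle$, e derivaria essa função escalar de $x$ na direção de $u$. Pela regra de Leibniz para a conexão de Levi-Civita $\nabla$ de $\R^N$ (que é compatível com o produto interno usual), obtenho
\begin{equation*}
u \big( \langle \,\cdot\, - q, \widetilde v \rangle \big) = \langle \nabla_u(\mathrm{id} - q), \widetilde v \rangle + \langle p - q, \nabla_u \widetilde v \rangle = \langle u, \widetilde v(p) \rangle + \langle p-q, \nabla_u \widetilde v \rangle,
\end{equation*}
usando que $\nabla_u(x \mapsto x - q) = u$. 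No ponto crítico $p$ o vetor $p - q$ é normal a $M$, logo $\langle p-q, \nabla_u \widetilde v \rangle = \langle p-q, (\nabla_u \widetilde v)^\perp \rangle$; mas a componente normal de $\nabla_u \widetilde v$ é, por definição, a segunda forma fundamental $\mathrm{II}(u,v)$ avaliada em $p$, e para o campo normal $\xi = q - p$ vale $\langle p - q, \mathrm{II}(u,v) \rangle = -\langle \xi, \mathrm{II}(u,v)\rangle = -II_\xi(u,v) = -\langle A_\xi u, v\rangle$. Juntando os dois termos obtenho $\Hess_p(L_q)(u,v) = \langle u, v \rangle - \langle A_\xi u, v \rangle = \langle (I - A_\xi)u, v\rangle$, o que, pela definição do operador $\hess_p$, dá $\hess_p(L_q) = I - A_\xi$.

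O único ponto que exige um pouco de cuidado — o principal obstáculo técnico, ainda que modesto — é justificar que a componente normal de $\nabla_u \widetilde v$ em $p$ coincide de fato com $\mathrm{II}(u,v)$ e que isso independe das extensões $\widetilde u, \widetilde v$ escolhidas; isso segue da fórmula de Gauss $\nabla_u \widetilde v = \nabla^M_u \widetilde v + \mathrm{II}(u,v)$ e da simetria/bilinearidade-tensorial da segunda forma fundamental, fatos padrão da teoria de subvariedades recordados logo antes do enunciado. Com essa verificação o cálculo acima se encerra, estabelecendo a fórmula desejada.
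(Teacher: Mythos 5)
Sua demonstração está correta e segue essencialmente o mesmo caminho da prova do texto: derivar a expressão $(dL_q)_x\cdot v = \langle x-q,v\rangle$ na direção $u$, obter o termo $\langle u,v\rangle$ mais $\langle p-q,\nabla_u\widetilde v\rangle$ e identificar este último com $-\langle A_\xi u,v\rangle$. A única diferença é de apresentação: você invoca a fórmula de Gauss e a relação de Weingarten como fatos conhecidos, enquanto o texto as deduz no lugar, derivando a identidade $\langle\widetilde\xi,\widetilde Y\rangle=0$ — mas é exatamente o mesmo cálculo.
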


\begin{proof}
Sejam $X,Y \in T_pM$ e seja $\widetilde Y$ uma extensão de $Y$ a um campo local em $M$. Por definição temos que $\Hess_p(L_q)(X,Y) = X(\widetilde Y L_q)(p)$. Da fórmula (\ref{eq:der-Lq}) temos que $\widetilde Y(L_q) = (dL_q) \cdot \widetilde Y = \langle I - q,\widetilde Y \rangle$ e portanto
\begin{equation*}
X(\widetilde Y(L_q)) = X \langle I - q,\widetilde Y \rangle = \langle X,\widetilde Y\rangle + \langle I-q, X \widetilde Y\rangle.
\end{equation*}
Seja $\widetilde \xi$ uma extensão de $\xi = q-p \in \nu_p M$ a uma seção local de $\nu(M)$. Como $\langle \widetilde \xi, \widetilde Y\rangle = 0$ temos que $0 = X\langle \widetilde \xi, \widetilde Y\rangle =\langle X \widetilde \xi, \widetilde Y\rangle + \langle \widetilde \xi, X\widetilde Y\rangle$ e portanto
\begin{equation*}
\langle I-q, X \widetilde Y\rangle(p) = \langle -\widetilde \xi, X \widetilde Y\rangle = \langle X \widetilde \xi,\widetilde Y\rangle = - \langle A_\xi(X),Y\rangle,
\end{equation*}
onde usamos que $X \widetilde \xi = \nabla_X \widetilde \xi$ e que  $\widetilde Y$ é tangente a $M$.

Combinando as duas fórmulas acima obtemos
\begin{equation*}
\Hess_p(L_q)(X,Y) = X(\widetilde Y L_q)(p) = \langle X,Y\rangle - \langle A_\xi X,Y\rangle = \langle (I - A_\xi) X,Y\rangle,
\end{equation*}
e portanto $\hess_p(L_q) = I - A_\xi$.
\end{proof}

A \textit{exponencial normal} de $M$ é a restrição $\exp^\perp:\nu(M) \to \R^N$ da exponencial Riemannina $\exp:T\R^N \to \R^N$ ao fibrado normal de $M$. Note que $\exp^\perp$ nada mais é que a translação $\nu_xM \ni v \mapsto x+v \in \R^N$.

\begin{definition}
Um ponto $q=\exp^\perp(p,v)$ na imagem da exponencial normal é dito um \textbf{ponto focal} de $M$ com relação a $x$ se $q$ é um valor crítico de $\exp^\perp$, ou seja, se a diferencial $d \exp^\perp_{(x,v)}:T_{(x,v)}\nu(M) \to \R^N$ não é um isomorfismo.
\end{definition}

O lema a seguir mostra que os pontos focais estão intimamente relacionados com os pontos críticos da função distância.

Antes de continuarmos observe que a conexão $\nabla$ induz uma decomposição natural do espaço tangente a $\nu(M)$ em um ponto $(x,v)$
\begin{equation*}
T_{(x,v)}\nu(M) = T_xM \oplus \nu_xM,
\end{equation*}

O isomorfismo é dado por $T \nu(M) \ni \frac{d}{dt} \xi(t) \mapsto (\frac{d}{dt} \pi (\xi(t)),\frac{\nabla^{\perp}}{dt} \xi(t))$, onde $\pi: \nu(M) \to M$ é a projeção natural e $\frac{\nabla^{\perp}}{dt}$ é a derivada covariante induzida pela conexão normal em $\nu(M)$.

Dado um vetor $z \in T_{(x,v)} \nu(M)$ denotaremos por $z^\top$ sua componente em $T_xM$ e por $z^\perp$ sua componente em $\nu_xM$.
\begin{lemma}
Um ponto $q = \exp^\perp(p,v)$ é focal se e somente se $p$ é um ponto crítico degenerado de $L_q$.
\end{lemma}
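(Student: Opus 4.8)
The plan is to compute the differential of the normal exponential map $\exp^{\perp}:\nu(M)\to\R^N$ at a point $(p,v)$ and compare it with the Hessian of $L_q$, where $q=\exp^{\perp}(p,v)=p+v$. First I would recall that $p$ is a critical point of $L_q$ precisely when $v=q-p\in\nu_pM$, which is the standing assumption; so we only need to understand when such a critical point is degenerate, i.e.\ when $\hess_p(L_q)=I-A_v$ fails to be invertible (by Lemma \ref{lemma:hess-Lq}, writing $A_v$ for the Weingarten operator with respect to the normal vector $\xi=v$, noting $q-p=v$). Thus the statement reduces to showing: $q=p+v$ is a focal point of $M$ along $v$ $\iff$ $I-A_v:T_pM\to T_pM$ is singular.

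Next I would write $\exp^{\perp}$ in the affine form $\exp^{\perp}(x,w)=x+w$ and differentiate at $(p,v)$ using the splitting $T_{(p,v)}\nu(M)=T_pM\oplus\nu_pM$ described just before the lemma. For a curve $\xi(t)=(x(t),w(t))$ in $\nu(M)$ with $\xi(0)=(p,v)$, I would compute $\frac{d}{dt}\big(x(t)+w(t)\big)\big|_{t=0}$ and decompose $\frac{dw}{dt}(0)\in\R^N$ into its tangential and normal parts relative to $T_pM$, using that $\frac{\nabla^{\perp}}{dt}w(0)$ is exactly the normal component of $\frac{dw}{dt}(0)$ while the tangential component is $-A_v(\dot x(0))$ (this is the very definition of the Weingarten operator, since $\langle w,\widetilde Y\rangle=0$ along $M$ forces $(\nabla_{\dot x}\widetilde w)^{\top}=-A_v\dot x$). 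Carrying this out, on a vector $z\in T_{(p,v)}\nu(M)$ with tangential part $z^{\top}$ and normal part $z^{\perp}$ one gets
\begin{equation*}
d\exp^{\perp}_{(p,v)}(z)=z^{\top}-A_v(z^{\top})+z^{\perp}=(I-A_v)(z^{\top})+z^{\perp}.
\end{equation*}

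From this explicit formula the equivalence is immediate: since the image of $z^{\perp}$ lands in $\nu_pM$ and $(I-A_v)(z^{\top})$ lands in $T_pM$, the map $d\exp^{\perp}_{(p,v)}$ is an isomorphism of $T_pM\oplus\nu_pM\simeq\R^N$ onto $\R^N$ if and only if its restriction $I-A_v:T_pM\to T_pM$ is an isomorphism. Hence $q$ is a critical value of $\exp^{\perp}$ (a focal point) if and only if $I-A_v=\hess_p(L_q)$ is singular, i.e.\ if and only if $p$ is a degenerate critical point of $L_q$. I expect the only delicate point to be the careful bookkeeping of the identification $T_{(p,v)}\nu(M)=T_pM\oplus\nu_pM$ and verifying that, under this identification, the tangential part of $d\exp^{\perp}$ really is $I-A_v$ and not $A_v$ or $I+A_v$ — that is, getting the sign of the Weingarten term right, which is exactly the computation already done in the proof of Lemma \ref{lemma:hess-Lq} and should be reused verbatim.
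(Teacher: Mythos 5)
Your proposal is correct and follows essentially the same route as the paper: parametrize a curve in $\nu(M)$, differentiate $\exp^\perp(x,w)=x+w$, identify the tangential part of $\dot w(0)$ as $-A_v(\dot x(0))$ via the definition of the Weingarten operator, and read off that $d\exp^\perp_{(p,v)}$ is block-triangular with blocks $I-A_v$ and the identity, so it degenerates exactly when $\hess_p(L_q)=I-A_v$ does. The sign bookkeeping you flag as the delicate point is indeed the only thing to check, and your formula matches the paper's.
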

\begin{proof}
Primeiramente note que se $q$ está na imagem da exponencial normal, temos que $p-q$ é normal a $M$ em $p$ e portanto $p$ é um ponto crítico de $L_q$. Reciprocamente, se $p$ é um ponto crítico de $L_q$ então $p-q$ é normal a $M$ e portanto $q = p + (q-p) = \exp^\perp_p(q-p)$ está na imagem da exponencial normal.

Seja $\gamma(t)=(X(t),V(t))$ uma curva em $\nu(M)$ onde $X$ é uma curva em $M$ com $X(0)=p$ e $X'(0)=u$ e $V$ é um campo normal ao longo de $X$ com $V(0)=v$.

Temos então que
\begin{equation*}
\begin{split}
d \exp^\perp_{(x,v)}\cdot \gamma'(0) &= \frac{d}{dt} \bigg(\exp^\perp(\gamma(t))\bigg) \bigg|_{t=0} = \frac{d}{dt} \big(X(t) + V(t) \big) \big|_{t=0}\\
&= X'(0) + V'(0) = X'(0) + V'(0)^\top + V'(0)^\perp.
\end{split}
\end{equation*}

Agora, em $\R^N$, a derivada covariante de um campo ao longo de uma curva coincide com a derivada usual, e portanto temos que $V'(t) = \frac{D}{dt} V(t) = \nabla_{X'} \widetilde V (t)$, onde $\frac{D}{dt}$ denota a derivada covariante ao longo de $X$ e $\widetilde V$ é uma extensão local de $V$.

Vemos assim que $V'(0)^\top = (\nabla_{X'} \widetilde V)^\top(0) = - (A_V (X')) (0) = - A_v (X'(0))$, e portanto
\begin{equation*}
d \exp^\perp_{(x,v)}\cdot \gamma'(0) = X'(0) - A_v(X'(0)) +V'(0)^\perp = (I-A_v)(X'(0)) + V'(0)^\perp.
\end{equation*}

Tomando agora $X(t)=p$ constante e $V(t) = v +tz$ com $z \in \nu_xM$ temos que $\gamma'(0)=(0,z)$ e portanto vemos que $d \exp^\perp_{(x,v)}\cdot(0,z) = z$, isto é, $d \exp^\perp_{(x,v)}$ se restringe a identidade em $\nu_pM$. Sendo assim, vemos da equação acima que $d \exp^\perp_{(x,v)}$ não será um isomorfismo se e somente se $I-A_\xi = \hess_p(L_q)$ for degenerada, ou seja, se e somente se $p$ é um ponto crítico degenerado de $L_q$.
\end{proof}

\begin{corollary}
Se $q$ não é um ponto focal de $M$ então $L_q:M \to \R$ é uma função de Morse.
\end{corollary}

\begin{corollary} \label{cor:Lq-morse}
A função $L_q$ é de Morse para quase todo $q \in \R^N \setminus M$.
\end{corollary}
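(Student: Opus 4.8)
The plan is to deduce this immediately from the preceding corollary together with Sard's theorem. First I would record the basic dimension count: the normal bundle $\nu(M)$ is a smooth manifold of dimension $\dim M + (N - \dim M) = N$, the same as that of the ambient Euclidean space, and the normal exponential $\exp^\perp : \nu(M) \to \R^N$, $(x,v) \mapsto x+v$, is a smooth map between manifolds of equal dimension. By the very definition given above, the focal points of $M$ are exactly the critical values of $\exp^\perp$.

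Next I would invoke Sard's theorem. Since $M$ is an embedded submanifold of $\R^N$ it is second countable, hence so is $\nu(M)$, and $\exp^\perp$ is $C^\infty$; therefore the set of its critical values — that is, the set of focal points of $M$ with respect to all points of $M$ — has Lebesgue measure zero in $\R^N$.

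Finally I would combine this with the previous corollary: for every $q \in \R^N \setminus M$ that is not a focal point of $M$, the function $L_q$ is a Morse function on $M$. Since the set of focal points is a null set, $L_q$ is a Morse function for almost every $q \in \R^N \setminus M$, which is the claim.

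I do not expect any genuine obstacle in this step: all of the substantive work has already been carried out in the earlier lemmas, which identify the degenerate critical points of $L_q$ with the focal points of $M$ and express the relevant critical values as critical values of $\exp^\perp$. The only thing to be careful about is the equality $\dim \nu(M) = N$, so that "critical value of $\exp^\perp$" carries the measure-theoretic conclusion supplied by Sard's theorem.
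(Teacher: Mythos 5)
Your argument is correct and is essentially the paper's own proof: both identify the points $q$ for which $L_q$ fails to be Morse with the critical values of the normal exponential $\exp^\perp:\nu(M)\to\R^N$ and then apply Sard's theorem. (The dimension count $\dim\nu(M)=N$ you flag is harmless but not needed, since Sard's theorem gives measure-zero critical values regardless of the dimensions involved.)
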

\begin{proof}
Do lema acima vemos que os pontos $q \in  \R^N \setminus M$ tais que $L_q$ não é de Morse são exatamente os valores críticos da exponecial normal $\exp^\perp: \nu(M) \to \R^N$ e, pelo teorema de Sard\footnote{O Teorema de Sard diz que se $f:M\to N$ é uma função suave entre duas variedades então o conjunto dos valores críticos de $f$ tem medida nula em $M$.}, esse conjunto tem medida nula.
\end{proof}

\subsection{O Teorema Fundamental e as Desigualdades de Morse}

O resultado central da Teoria de Morse descreve como a topologia dos subníveis $M^a$ muda conforme $a$ aumenta.

Não é difícil ver, usando o fluxo do gradiente de $f$, que  se $[a,b]$ não contém nenhum valor crítico então a topologia  não muda quando passamos de $a$ para $b$, pois nesse caso o campo gradiente não tem singularidades em $f^{-1}([a,b])$.

A situação é mais complicada se $[a,b]$ contiver um valor crítico. O que ocorre nesse caso é que $M^a$ é obtido de $M^b$ pela junção de alças, cujo tipo depende dos índices dos pontos críticos em questão.
Se $P \subset N$ são variedades com bordo de mesma dimensão $n$, dizemos que \textit{$N$ é obtida de $P$ pela junção de uma alça de tipo $\lambda$} se existe um fechado $H \subset N$ e uma aplicação $\alpha:\bar D^{n-\lambda} \times \bar D^\lambda \to H$ (onde $\bar D^k$ denota o disco unitário fechado em $\R^k$) satisfazendo as seguintes condições
\begin{itemize}
\item[1.] $N = P \cup H$,
\item[2.] A restrição de $\alpha$ a $\bar D^{n-\lambda} \times S^{\lambda-1}$ define um homeomorfismo $\bar D^{n-\lambda} \times S^{\lambda-1} \simeq H \cap \del P$
\item[3.] A restrição de $\alpha$ a $\bar D^{n-\lambda} \times D^\lambda$ define um homeomorfismo $\bar D^{n-\lambda} \times D^\lambda \simeq N \setminus P$.
\end{itemize}
Note que nesse caso $N$ tem o mesmo tipo de homotopia que $P$ unido com um $\lambda$-célula, pois podemos contrair a imagem do disco fechado $\bar D^{n-\lambda}$ em $N$.

\begin{theorem} \label{thm:morse-fund} \textbf{Teorema Fundamental da Teoria de Morse}
Seja $f$ uma função de Morse limitada inferiormente em $M$ e suponha que $M^a$ é compacto para todo $a \in \R$. Então
\begin{itemize}
\item[A.] Se $f$ não tem nenhum valor crítico no intervalo $[a,b]$ então $M^b$ e $M^a$ são difeomorfos.
\item[B.] Se $c$ é o único valor crítico no intervalo $(a,b)$ e $p_1,\ldots,p_k \in f^{-1}(c)$ são os pontos críticos no nível $c$ com $\text{ind}_{p_i} = \lambda_i$ então $M^b$ é obtido de $M^a$ pela junção de uma $\lambda_i$-alça para cada $p_i$. Em particular $M^b$ tem o mesmo tipo de homotopia que $M^a$ unido com uma $\lambda_i$-célula para cada $p_i$.
\end{itemize}
\end{theorem}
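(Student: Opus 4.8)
O plano é tratar as duas afirmações separadamente, usando em ambos os casos o fluxo de um campo de vetores construído a partir de $f$ e de uma métrica riemanniana auxiliar fixada em $M$. Para a parte A, primeiro escolheria uma métrica riemanniana $g$ em $M$ e consideraria o campo gradiente $\nabla f$. Como $[a,b]$ não contém valores críticos, $\nabla f$ não se anula em $f^{-1}([a,b])$; escolhendo uma função de corte $\rho$ com suporte em uma vizinhança de $f^{-1}([a,b])$, igual a $1/|\nabla f|^2$ nessa faixa, o campo $X = \rho \nabla f$ é completo (aqui usa-se a hipótese de que $M^c$ é compacto para todo $c$, o que garante que o fluxo não escapa em tempo finito). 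O fluxo $\varphi_t$ de $X$ satisfaz $\frac{d}{dt} f(\varphi_t(x)) = 1$ enquanto $\varphi_t(x) \in f^{-1}([a,b])$, de modo que $\varphi_{b-a}$ leva $f^{-1}(a)$ em $f^{-1}(b)$ e, mais geralmente, estabelece um difeomorfismo $M^a \simeq M^b$. Este passo é essencialmente o argumento clássico de \cite{milnor}, e não o espero como obstáculo.

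Para a parte B, o plano é reduzir tudo a um cálculo local em torno de cada ponto crítico $p_i$, usando o Lema de Morse. Pelo Lema de Morse existe uma vizinhança coordenada $U_i$ de $p_i$ na qual $f = c - x_1^2 - \cdots - x_{\lambda_i}^2 + x_{\lambda_i+1}^2 + \cdots + x_n^2$; escrevendo $\xi = x_1^2 + \cdots + x_{\lambda_i}^2$ e $\eta = x_{\lambda_i+1}^2 + \cdots + x_n^2$, a faixa $f^{-1}([c-\varepsilon, c+\varepsilon]) \cap U_i$ fica descrita por $-\varepsilon \le \eta - \xi \le \varepsilon$. A região onde se fará a modificação topológica é o conjunto $H_i = \{\xi \le \varepsilon, \eta - \xi \le \varepsilon\}$ (a ``alça''), e a aplicação $\alpha$ requerida na definição de junção de alça de tipo $\lambda_i$ é obtida parametrizando $H_i$ por $\bar D^{n-\lambda_i} \times \bar D^{\lambda_i}$ via as coordenadas $(x_{\lambda_i+1}, \ldots, x_n)$ e $(x_1, \ldots, x_{\lambda_i})$ convenientemente reescalonadas. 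Fora das vizinhanças $U_i$ não há pontos críticos em $f^{-1}([c-\varepsilon,c+\varepsilon])$, então ali se aplica o argumento do fluxo da parte A para empurrar $M^{c+\varepsilon} \setminus \bigcup_i H_i$ até $M^{c-\varepsilon}$. Combinando as duas coisas, $M^{c+\varepsilon}$ (logo $M^b$, pela parte A aplicada a $[c+\varepsilon,b]$) é difeomorfo a $M^{c-\varepsilon}$ (logo $M^a$) com uma alça de tipo $\lambda_i$ colada para cada $i$.

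A afirmação sobre tipo de homotopia segue então de um fato elementar de topologia algébrica: uma variedade obtida pela junção de uma alça de tipo $\lambda$ tem o tipo de homotopia do espaço original com uma $\lambda$-célula colada, pois a imagem do disco transversal $\bar D^{n-\lambda}$ pode ser deformada retrativamente sobre seu centro, contraindo $H_i$ sobre uma cópia de $\bar D^{\lambda_i}$ ligada a $M^{c-\varepsilon}$ pela esfera bordo $S^{\lambda_i - 1}$. Este passo de deformação retração pode ser feito explicitamente com a mesma parametrização do Lema de Morse.

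O principal obstáculo técnico que antecipo é a parte B: é preciso ter cuidado em garantir que, ao ``cortar'' as alças $H_i$ e aplicar o fluxo no complementar, as duas construções se colam de maneira suave ao longo das fronteiras $H_i \cap \partial M^{c-\varepsilon}$ — isto é, deve-se modificar o campo $\nabla f$ perto dos $p_i$ (por exemplo interpolando entre $\nabla f$ e um campo modelo no sistema de coordenadas do Lema de Morse) para que o fluxo resultante realize exatamente a região $H_i$ como ``a parte nova'' de $M^{c+\varepsilon}$. Feito isso com atenção, o resto é rotina; escolher $\varepsilon$ pequeno o suficiente para que as vizinhanças $U_i$ dos distintos pontos críticos no nível $c$ sejam disjuntas também é imediato, já que o conjunto de pontos críticos de uma função de Morse é discreto e $f^{-1}(c) \cap \{\text{pontos críticos}\}$ é finito por compacidade de $M^c$.
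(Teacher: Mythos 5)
Sua proposta segue exatamente o argumento cl\'assico de Milnor e Palais--Terng, que \'e precisamente a demonstra\c{c}\~ao para a qual o texto remete (o teorema \'e enunciado sem demonstra\c{c}\~ao, com refer\^encia ao cap.~9 de \cite{palais-terng}). O esbo\c{c}o est\'a correto nas duas partes --- fluxo do gradiente reescalonado por $1/|\nabla f|^2$ com corte para A, Lema de Morse e colagem de al\c{c}as para B --- e voc\^e identifica corretamente o \'unico ponto delicado, a saber, que a regi\~ao $H_i$ precisa ser ajustada (a escolha literal $\{\xi \le \varepsilon,\; \eta - \xi \le \varepsilon\}$ n\~ao coloca a esfera de colagem $\bar D^{n-\lambda_i} \times S^{\lambda_i-1}$ dentro de $\partial M^{c-\varepsilon}$, j\'a que ali $f = c - \varepsilon + \eta > c-\varepsilon$ para $\eta>0$), o que se resolve, como voc\^e prop\~oe, interpolando o campo gradiente com o campo modelo nas coordenadas de Morse.
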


Para uma demonstação consulte \cite{palais-terng}, capítulo 9.

As desigualdades de Morse exprimem a diferença dos polinômios
\begin{equation*}
\mathcal M_a(t) = \sum_{i=0}^n \mu_i(a) \cdot t^i~~\text{ e }~~ \mathcal P_a(t) =  \sum_{i=0}^n b_i(a) \cdot t^i,
\end{equation*}
onde $\mu_i(a)$ é o número de pontos críticos de $f$ de índice $i$ cujos valores críticos não excedem $a$ e $b_i(a) = \dim H_i(M^a,\mathbf k)$ é o $i$-ésimo número de Betti do subnível $M^a$ com relação a um corpo de coeficientes fixado $\mathbf k$.

\begin{theorem} \label{thm:morse-inequalities} \textbf{Desigualdades de Morse} \index{desigualdades de Morse}
Nas condições do teorema anterior, se $a$ não é um valor crítico de $f$, a diferença dos polinômios $\mathcal M_a(t)$ e $\mathcal P_a (t)$ satisfaz
\begin{equation} \label{eq:morse-inequalities}
\mathcal M_a (t) - \mathcal P_a (t) = Q_a(t) (1+t),
\end{equation}
onde $Q_a(t)$ é um polinômio com coeficientes inteiros não negativos.
Consequentemente temos que
\begin{equation} \label{eq:morse-inequalities2}
\begin{split}
b_0(a) &\leq \mu_0(a) \\
b_1(a) - b_0(a) &\leq \mu_1(a) - \mu_0(a)\\
 &\vdots \\
b_k(a) - b_{k-1}(a) + \cdots +(-1)^k b_0(a) &\leq \mu_k(a) - \mu_{k-1}(a) + \cdots (-1)^k \mu_0(a) 
\end{split}
\end{equation}
e finalmente
\begin{equation} \label{eq:weak-morse}
b_k(a) \leq \mu_k(a).
\end{equation}
\end{theorem}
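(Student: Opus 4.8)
The plan is to prove the Morse inequalities by an inductive argument over the critical values of $f$, tracking how both polynomials $\mathcal{M}_a(t)$ and $\mathcal{P}_a(t)$ change as $a$ crosses a critical value. The main tool is the Teorema Fundamental da Teoria de Morse (Teorema \ref{thm:morse-fund}): between consecutive critical values the sublevel set $M^a$ does not change diffeomorphism type, so neither $\mathcal{M}_a$ nor $\mathcal{P}_a$ changes; when $a$ crosses a critical value $c$ with critical points $p_1,\dots,p_k$ of indices $\lambda_1,\dots,\lambda_k$, the sublevel $M^b$ is obtained from $M^a$ by attaching a $\lambda_i$-cell for each $p_i$. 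On the Morse-polynomial side this adds $\sum_i t^{\lambda_i}$ to $\mathcal{M}_a$. So the crux is understanding how the Betti numbers $b_i$ change under attaching a single $\lambda$-cell.

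First I would set up the homological input. For a single cell attachment $N = P \cup_\phi e^\lambda$ there is the long exact sequence of the pair $(N,P)$, and $H_i(N,P;\mathbf{k}) \cong H_i(D^\lambda, S^{\lambda-1};\mathbf{k})$, which is $\mathbf{k}$ if $i=\lambda$ and $0$ otherwise. From the long exact sequence
\begin{equation*}
\cdots \to H_i(P) \to H_i(N) \to H_i(N,P) \to H_{i-1}(P) \to \cdots
\end{equation*}
one deduces that the polynomial $\mathcal{P}_N(t) - \mathcal{P}_P(t)$ equals either $t^\lambda$ (if the connecting map $H_\lambda(N,P) \to H_{\lambda-1}(P)$ is zero) or $-t^{\lambda-1}$ (if it is injective). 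In the first case the cell creates a new $\lambda$-cycle; in the second it kills a $(\lambda-1)$-cycle. Crucially, in both cases the quantity $\mathcal{M}(t) - \mathcal{P}(t)$ changes by $t^\lambda - t^\lambda = 0$ or by $t^\lambda - (-t^{\lambda-1}) = t^{\lambda-1}(1+t)$, i.e.\ it changes by a non-negative multiple of $(1+t)$. Iterating this over all cells attached at all critical values below $a$ (using additivity of the decomposition and the fact that between critical values nothing changes) yields $\mathcal{M}_a(t) - \mathcal{P}_a(t) = Q_a(t)(1+t)$ with $Q_a$ having non-negative integer coefficients, which is \eqref{eq:morse-inequalities}.

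Once \eqref{eq:morse-inequalities} is established, the remaining assertions are formal. Writing $Q_a(t) = \sum_j q_j t^j$ with $q_j \geq 0$, comparing coefficients of $t^k$ in $\mathcal{M}_a(t) - \mathcal{P}_a(t) = Q_a(t)(1+t)$ gives $\mu_k(a) - b_k(a) = q_k + q_{k-1} \geq 0$ (with $q_{-1}=0$), which is the weak inequality \eqref{eq:weak-morse}. For the alternating sums \eqref{eq:morse-inequalities2}, I would take the alternating partial sum $\sum_{i=0}^k (-1)^{k-i}(\mu_i(a) - b_i(a))$; telescoping the relation $\mu_i - b_i = q_i + q_{i-1}$ collapses this to $q_k \geq 0$, giving exactly \eqref{eq:morse-inequalities2}. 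The one point requiring a little care, and the place I expect to spend the most effort, is justifying the bookkeeping when several critical points share the same critical value $c$: I would handle this either by a small perturbation of $f$ that separates the critical values (noting the $\mu_i$ are unchanged and the claim is about the fixed sublevel $M^a$ with $a$ regular), or directly by replacing the single-cell long exact sequence with the relative homology $H_i(M^b, M^a;\mathbf{k}) \cong \bigoplus_j H_i(D^{\lambda_j}, S^{\lambda_j-1};\mathbf{k})$ and running the same exact-sequence argument once. The hypotheses that $f$ is bounded below and all $M^a$ are compact (satisfied by $L_q$ on a closed submanifold of $\R^N$, by Corol\'ario \ref{cor:Lq-morse}) guarantee finiteness of all the $\mu_i(a)$ and $b_i(a)$ so that the polynomials are well defined.
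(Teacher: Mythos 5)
Your proposal is correct and follows essentially the same route as the paper: track the difference $\mathcal M_a(t)-\mathcal P_a(t)$ across critical values using Teorema \ref{thm:morse-fund}, excision and the long exact sequence of the pair $(M^b,M^a)$ to see that each cell attachment changes the difference by $0$ or by $t^{\lambda-1}(1+t)$, and then extract \eqref{eq:weak-morse} and \eqref{eq:morse-inequalities2} by comparing coefficients and telescoping. Your extra care with several critical points on the same level (via the direct sum $H_i(M^b,M^a)\simeq\bigoplus_j H_i(D^{\lambda_j},S^{\lambda_j-1})$ or a perturbation) is a minor refinement of the same argument, which the paper handles by applying the two alternatives to each critical point in turn.
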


Usando o Teorema \ref{thm:morse-fund} podemos esboçar a demonstração do teorema acima. Para tanto vamos relembrar alguns fatos de topologia algébrica. Para mais detalhes consulte \cite{hatcher}.\\

Seja $X$ um espaço topológico e $A \subset X$ um subespaço. Denote por $C_ \bullet(X;G)$ e por $C_\bullet (A;G)$ o espaço das cadeias singulares de $X$ e $A$ com coeficientes no grupo abeliano $G$ e seja $C_i(X,A) = C_i(X)\slash C_i(A)$. Como o operador de bordo $\del:C_i(A;G) \to C_{i-1}(A;G)$ é a restrição de $\del:C_i(X;G) \to C_{i-1}(X;G)$, obtemos um operador de bordo bem definido no quociente $\del:C_i(X,A;G)$ $\to C_{i-1}(X,A;G)$. Os grupo de homologia do par $(X,A)$ ou homologia relativa de $X$ e $A$ com coeficientes em $G$ são definidos por
\begin{equation*}
H_i(X,A;G) = \frac{\ker \del :C_i(X,A;G) \to C_{i-1}(X,A;G)}{\im \del :C_{i-1}(X,A;G) \to C_i(X,A;G)}.
\end{equation*}

Da própria definição de $C_\bullet (X,A;G)$ e do operador de bordo obtemos uma sequência exata de complexos
\begin{equation*}
0 \longrightarrow C_\bullet(A;G) \longrightarrow C_\bullet(X;G)\longrightarrow C_\bullet(X,A;G) \longrightarrow 0,
\end{equation*}
de onde obtemos uma sequência exata longa relacionando a homologia do par $(X,A)$ com as homologias de $X$ e $A$:
\begin{equation} \label{eq:relative-homology-seq}
\cdots \to H_i(A;G) \to H_i(X;G) \to H_i(X,A;G) \to H_{i-1}(A;G) \to H_{i-1}(X;G) \to  H_{i-1}(X,A;G) \to \cdots.
\end{equation}
No decorrer do argumento vamos supor que $G=\mathbf k$ é um corpo fixado e denotaremos simplesmente por $H_i(X),H_i(A)$ e $H_i(X,A)$ os grupos de cohomologia com coeficientes em $k$.\\

Para demonstrar as desigualdades de Morse vamos analisar como a diferença $\Delta^a(t)$ dos polinômios em questão se comportam nos subconjuntos $M^a$ conforme $a$ passa por um ponto crítico.

Como $f$ é limitada inferiormente temos que $\Delta^a(t) = 0$ para $a<<0$, pois nesse caso $M^a= \emptyset$.

Sejam agora $a < b$ e suponha que nenhum deles é um valor crítico de $f$. Nosso objetivo é comparar $\Delta^a(t)$ e $\Delta^b(t)$.

\textbf{$1^o$ caso.} Suponha que não existe nenhum ponto crítico em $[a,b]$. Neste caso temos que $\mathcal M_a(t) = \mathcal M_b(t)$. Pelo Teorema \ref{thm:morse-fund} (A) temos que $M^a \simeq M^b$ e portanto $\mathcal P_a(t)= \mathcal P_b(t)$. Sendo assim vemos que $\Delta^a(t) = \Delta^b(t)$, isto é, a diferença não muda.

\textbf{$2^o$ caso.} Suponha que exista um único ponto crítico em $M$ com valor crítico $c \in (a,b)$ e seja $\lambda$ seu índice. Neste caso temos que $\mathcal M_b(t) = \mathcal M_a(t) + t^\lambda$.

Pelo Teorema \ref{thm:morse-fund} (B) temos que $M_b$ tem o mesmo tipo de homotopia de $M_a$ unida com uma $\lambda$-célula: $M^b \simeq M^a \cup e^\lambda$. O Teorema de Excisão para a homologia diz que se $Z \subset A \subset X$ e o fecho de $Z$ está contido no interior de $A$ então existem isomorfismos $H_i(X\setminus Z, A\setminus Z) \simeq H_i(X,A)$. Tomando $X=M^b$, $A=M^a$ e $Z$ o complementar em $M^a$ de uma vizinhança tubular de $\del e^\lambda \simeq S^{\lambda-1}$ temos que
\begin{equation*}
H_k(M^b,M^a) = H_k(D^\lambda,S^{\lambda-1}) = H_{k-1}(S^{\lambda-1}) = \left \lbrace \begin{split} &\mathbf k~ \text{ se }  k=\lambda \\ &0~ \text{ caso contrário } \end{split} \right.
\end{equation*}
onde a primeira igualdade segue do Teorema de Excisão e do fato de $M^b \setminus Z$ ter o mesmo tipo de homotopia de $D^\lambda$ e $M^a \setminus Z$ ter o mesmo tipo de homotopia de $S^{\lambda-1}$. A segunda igualdade segue facilmente aplicando a sequência (\ref{eq:relative-homology-seq}) para o par $(D^\lambda,S^{\lambda-1})$.

Olhando para a sequência longa associada ao par $(M^b,M^b)$ obtemos
\begin{equation*}
0 \to H_\lambda (M^a) \to H_\lambda (M^b) \to \overbrace{H_\lambda (M^b,M^a)}^{ \mathbf k} \stackrel{\delta}{\to}  H_{\lambda-1} (M^a) \to H_{\lambda-1} (M^b) \to 0.
\end{equation*}

Há duas possibilidades para $\delta$. Se $\delta$ é a aplicação nula temos a sequência $0 \to H_\lambda (M^a) \to H_\lambda (M^b) \to H_\lambda (M^b,M^a) \to 0$ e portanto $b_\lambda(b) = b_\lambda(a) + 1$. Temos  então $\mathcal P_a = \mathcal P_a + t^\lambda$ de onde vemos que
\begin{equation*}
\Delta^b(t) = \mathcal M_b(t) - P_b(t) = (\mathcal M_a(t) + t^\lambda) - ( P_a(t) + t^\lambda) =  \Delta^a(t),
\end{equation*}
e portanto a diferença não muda.

Agora, se $\delta \neq 0$, temos uma aplicação sobrejetora $H_{\lambda-1} (M^a) \to H_{\lambda-1} (M^b)$ com núcleo unidimensional. Sendo assim vemos que $b_{\lambda-1}(b) = b_{\lambda-1}(a) - 1$ e portanto $\mathcal P_b(t) = \mathcal P_a(t) - t^{\lambda-1}$. Sendo assim, a nova diferença fica
\begin{equation*}
\begin{split}
\Delta^b(t) &= \mathcal M_b(t) - P_b(t) = (\mathcal M_a(t) + t^\lambda) - ( P_a(t) - t^{\lambda-1})\\
&=  \Delta^a(t) + t^{\lambda-1}+ t^\lambda\\
&= \Delta^a(t) + t^{\lambda-1}(1+t).
\end{split}
\end{equation*}

Em geral, $f$ possui um número finito de pontos críticos no nível $f^{-1}(c)$, para cada um deles, uma das alternativas acima se aplica.\\

Recapitulando o argumento, vimos que a diferença $\Delta^a(t)$ é zero para $a<<0$ e, conforme aumentamos $a$, somamos à diferença anterior um polinômio da forma $Q(t)(1+t)$ onde $Q(t)$ tem coecientes inteiros não negativos, de onde obtemos a estimativa do Teorema \ref{thm:morse-inequalities}.

Para obtermos as desigualdades (\ref{eq:weak-morse}) basta compararmos os termos de mesmo grau em (\ref{eq:morse-inequalities}). Se $q_0,\ldots,q_{n-1}$ denotam os coeficientes do polinômio $Q$ temos que $\mu_0(a) - b_0(a) = q_0 \geq 0$ e $\mu_i(a) - b_i(a) = q_i + q_{i-1} \geq 0$ para $i \geq 1$. Para obter as desigualdades (\ref{eq:morse-inequalities2}) note que
\begin{equation*}
\begin{split}
\mu_k(a) - &\mu_{k-1}(a) + \cdots + \mu_1(a) + (-1)^k \mu_0(a) = \\
&= (b_k(a) + q_k + q_{k-1}) - (b_{k-1}(a) + q_{k-1} + q_{k-2}) + \cdots + (b_1(a) + q_1 + q_0) + (-1)^k(b_0(a) + q_0) \\
&=b_k(a) - b_{k-1}(a) + \cdots + b_1(a) + (-1)^k b_0(a) + q_k + q_0 + (-1)^k q_0 \\
&\geq b_k(a) - b_{k-1}(a) + \cdots + b_1(a) + (-1)^k b_0(a).
\end{split}
\end{equation*}

\section{Homologia de variedades de Stein}

Uma variedade de Stein é uma subvariedade complexa fechada de $\C^N$. \index{variedade!de Stein}

Em contraste com as subvariedades do espaço projetivo $\pr^N$, as variedades de Stein nunca são compactas (veja o Corolário \ref{cor:compact-submanifold}). Se pensarmos em analogia com  a Geometria Algébrica, as variedaes de Stein correspondem às variedades algébricas afins.  

\begin{theorem} \label{thm:stein-homology}
Seja $X \subset \C^N$ uma variedade de Stein de dimensão complexa $n$. Então os grupos de homologia de $X$ com coeficientes em $\Z$ satisfazem
\begin{equation*}
H_i(X,\Z) = 0 ~~ \text{ para } i > n 
\end{equation*}
e
\begin{equation*}
H_n(X,\Z) \text{ é livre de torção.} 
\end{equation*}
\end{theorem}
\begin{proof}
A ideia é usar a Teoria de Morse para calcular $H_i(X,\mathbf k)$ para um corpo $\mathbf k$ e depois aplicar o Teorema dos Coeficientes Universais para provarmos as afirmações sobre $H_i(X,\Z)$.\\

Do Corolário \ref{cor:Lq-morse} podemos escolher $q \in \C^N \setminus X$ de modo que $L_q:X \to \R$ seja uma função de Morse. Além disso, como $X$ é fechada, os subníveis $X^a$ são compactos e portanto podemos aplicar o Teorema \ref{thm:morse-inequalities}.\\

\textbf{Afirmação 1}: $L_q$ não tem pontos críticos de índice maior que $n$ e portanto $\mu_i(a)=0$ para todo $i > n$ e todo $a \in \R$.
\begin{proof}
Seja $p \in X$ um ponto crítico de $L_q$. Temos que mostrar que $\text{ind}_p(L_q)\leq n$. No lema \ref{lemma:hess-Lq} vimos que $\hess_p(L_q) = I - A_\xi$, onde $\xi = q-p$ e portanto o índice de $L_q$ será o número de autovalores negativos de $I-A_\xi$.

Denote por $J:TX \to TX$ a estrutura complexa induzida pela estrutura complexa padrão de $\C^N$.\\

\textbf{Afirmação 2}: O operador de Weingarten anti-comuta com $J$, isto é, $JA_\xi = -A_\xi J$.
\begin{proof}
Sejam $u,v \in T_X$. Note que, como $J$ é uma isometria em cada espaço tangente, o vetor $J\xi$ também é normal a $X$ em $p$. As segundas formas fundamentais com respeito a $\xi$ e $J\xi$ estão relacionadas por
\begin{equation*}
II_{J\xi}(u,v) = -\langle \nabla_u J\xi,v \rangle = -\langle J\nabla_u \xi,v \rangle =  \langle \nabla_u \xi,J v \rangle = -II_\xi(u,Jv),
\end{equation*}
onde usamos que $\nabla J =0$, pois a métrica padrão em $\C^N$ é de Kähler.

Aplicando esta identidade duas vezes otemos $II_\xi(Ju,Jv) = - II_{J\xi}(Ju,v) = II_{-\xi}(u,v) = -II_\xi(u,v)$. Em particular temos que $II_\xi(Ju,v) = II_\xi(u,Jv)$, e  portanto
\begin{equation*}
\begin{split}
\langle A_\xi Ju,v\rangle = II_\xi(Ju,v) = II_\xi(u,Jv) \\
\langle J A_\xi u,v\rangle = -\langle A_\xi u,Jv\rangle = -II_\xi(u,Jv),
\end{split}
\end{equation*}
mostrando que $JA_\xi = - A_\xi J$.
\end{proof}

Da afirmação $2$ vemos que se $v$ é um autovetor de $A_\xi$ com autovalor $a$ então $Jv$ é um autovetor de $A_\xi$ com autovalor $-a$, isto é, os autovalores de $A_\xi$ aparecem em pares com sinais opostos. Sendo assim, os autovalores de $I - A_\xi$ aparecem em pares da forma $1 \pm a$. Em particular, no máximo $n$ deles são negativos, de onde vemos que $\text{ind}_p(L_q)) \leq n$, demonstrando a afirmação $1$.
\end{proof}

A afirmação $1$ mostra que $H_i(X,\mathbf k )=0$ para $i>n$, onde $\mathbf k$ é um corpo de coeficientes fixado. De fato, se $H_k(X,\mathbf k)$ fosse não trivial para algum $k > n$ existiria um $k$-ciclo não trivial $K$ em $M$. Como $K$ é compacto, $K$ estaria contido em algum subnível $X^a$ e portanto definiria um elemento não nulo  em $H_k(X^a,\mathbf k)$. No entanto, das desigualdades de Morse, isso implicaria que $\mu_k(a) \geq b_k(a) > 0$, contrariando a afirmação $1$.

O Teorema dos Coeficientes Universais para a homologia (veja por exemplo \cite{hatcher}, p.264) diz que existe uma sequência exata
\begin{equation*}
0 \longrightarrow H_i(X,\Z) \otimes \mathbf k \longrightarrow H_i(X,\mathbf k) \longrightarrow \text{Tor}(H_{i-1}(X,\Z),\mathbf k) \longrightarrow 0
\end{equation*}
para $i > 0$ e portanto, como o termo do meio se anula para $i > n$, vemos que

a) $H_i(X,\Z) \otimes \mathbf k = 0$ se $i > n$,

b) $H_i(X,\Z)$ não tem torção se $i\geq n$.

A segunda conclusão do enunciado segue do item b) fazendo $i=n$.

Para demonstrar a primeira conclusão note que de a) vemos que se $i>n$ então $H_i(X,\Z)$ só tem elementos de torção\footnote{Um elemento de ordem infinita $0 \neq x \in H_i(X,\Z)$ geraria um subespaço não trivial $\{x \otimes \lambda: \lambda \in \mathbf k \} \subset H_i(X,\Z) \otimes \mathbf k$.}. Mas, do item b), $H_i(X,\Z)$ não tem torção, e portanto devemos ter $H_i(X,\Z) = 0$ para $i>n$.
\end{proof}

\begin{remark}
A afirmação $2$ na demonstração acima tem como consequência o fato de que \textit{toda subvariedade complexa de $\C^N$ é mínima}.

Dada uma subvariedade $M \subset \R^n$ e um vetor normal $\xi \in \nu_pM$, a curvatura média de $M$ em $p$ na direção $\xi$ é, por definição, o número $H_\xi(p) = \text{tr} A_\xi$ e dizemos que uma subvariedade é \textit{mínima} se $H_\xi(p)$ é zero para todo $p$ e toda direção normal $\xi$.

Na demonstração acima vimos que se $X \subset \C^N$ é uma subvariedade complexa e $\xi$ é uma direção normal então os autovalores de $A_\xi$ aparecem em pares com sinais opostos. Em particular $\text{tr} A_\xi = 0$ e portanto $X$ é uma subvariedade mínima.
\end{remark}
\begin{remark}
Existem algumas definições equivalentes de uma variedade de Stein. Uma delas por exemplo é a seguinte: uma variedade complexa $X$ é uma variedade de Stein se $H^q(X,\mathcal F) = 0$ para $q>0$ e todo feixe coerente e analítico\footnote{Um feixe analítico é um feixe de $\mathcal O_X$-módulos e um feixe $\mathcal F$ é dito coerente se todo $x \in X$ admite uma vizinhança $U$ tal que existe uma sequência exata $\mathcal O_X^{\oplus p}|_U \to \mathcal O_X^{\oplus q}|_U \to \mathcal F|_U \to 0$.} $\mathcal F$ sobre $X$.

Uma outra possível definição, mais analítica, é a seguinte: uma variedade complexa $X$ é uma variedade de Stein se $X$ é holomorficamente separável (i.e., para todos $p \neq q$ em $X$ existe uma função holomorfa global em $X$ tal que $f(p)\neq f(q)$), holomorficamente convexa (i.e., para todo compacto $K$ o conjunto $\bar K = \{x \in X: |f(x)|\leq \sup_K |f|, \forall f \in \mathcal O(X)\}$ é de novo compacto) e se para todo ponto $x_0 \in X$ existe um aberto $U$ contendo $x_0$ e funções holomorfas globais $f_1,\ldots,f_n \in \mathcal O(X)$ de modo que $(f_1|_U,\ldots,f_n|_U)$ formam um sistema de coordenadas em $U$.

Um estudo completo sobre as variedades de Stein, inclusive a demonstração da equivalência das definições acima podem ser econtradas no livro \cite{grauert-remmert}.
\end{remark}

\section{O Teorema de Hiperplanos de Lefschetz}

Uma importante consequência do Teorema \ref{thm:stein-homology} é o chamado Teorema de Hiperplanos de Lefschetz, que permite calcular alguns grupos de cohomologia de uma variedade algébrica $X$ a partir dos respectivos grupos de um corte de $X$ por uma hipersuperfície.

\begin{theorem} \index{Teorema!de Hiperplanos de Lefschetz}
Seja $X \subset \pr^m$ uma variedade algébrica de dimensão $n$. Seja $Y = X \cap W$ a intersecção de $X$ com uma hipersuperfície algébrica $W$ que contém os pontos singulares de $X$ mas não contém $X$.

Nessas condições, o homomorfismo
\begin{equation*}
H^i(X,\Z) \longrightarrow H^i(Y,\Z)
\end{equation*}
induzido pela inclusão $Y \subset X$ é um isomorfismo para $i<n-1$ e é injetor se $i=n-1$. Além disso o quociente $H^{n-1}(Y,\Z) \slash H^{n-1}(X,\Z)$ é livre de torção.
\end{theorem}

\begin{proof}
Primeiramente note que basta provarmos o resultado no caso em que $W \subset \pr^m$ é um hiperplano. De fato, se  $W$ é uma hipersuperfície de grau $d$, o mergulho de Veronese $\varphi_{\mathcal O(d)}:\pr^m \to \pr^N$ transformará $W$ em uma seção de hiperplano $W'$ de $P=\varphi_{\mathcal O(d)}(\pr^m)$, isto é $W = H \cap P$ onde $H$ é um hiperplano de $\pr^N$ (veja o exemplo \ref{ex:veronese}). Denotando por $X'$ e $Y'$ as imagens de $X$ e $Y$ respectivamente temos que $Y' = X' \cap H$, isto é, $Y'$ é uma seção de hiperplano de $X'$ e como $\varphi_{\mathcal O(d)}$ é um mergulho a aplicação $H^i(X',\Z) \longrightarrow H^i(Y',\Z)$ é conjugada de $H^i(X,\Z) \longrightarrow H^i(Y,\Z)$ pela aplicação induzida por $\varphi_{\mathcal O(d)}$.

Suponha portanto que $W=H$ é um hiperplano e considere o aberto $U = \pr^m \setminus H$. Como $W$ contém os pontos singulares de $X$ temos que $X \setminus Y \subset U$ é uma subvariedade complexa fechada de $U \simeq \C^m$, ou seja, $X\setminus Y$ é uma variedade de Stein. Do Teorema \ref{thm:stein-homology} vemos então que
\begin{equation} \label{eq:homology-XY}
H_i(X \setminus Y, \Z) = 0 ~~\text{ para } i > n~~ \text{ e }~~ H_n(X \setminus Y, \Z)~~\text{ é livre de torção}.
\end{equation}

Dualizando o complexo de cadeias relativas $C_\bullet(X,Y;\Z)$ obtemos um complexo de cocadeias relativas $C^\bullet(X,Y;\Z)$ e uma sequência de complexos $0 \to C^\bullet(X,Y;\Z) \to C^\bullet(X;\Z)\to C^\bullet(Y;\Z) \to 0$ de onde vemos que existe uma sequência exata para a cohomologia relativa
\begin{equation*}
\cdots \to H^i(X,Y;\Z) \to H^i(X;\Z) \to H^i(Y;\Z) \to H^{i+1}(X,Y;\Z) \to H^{i+1}(X;\Z) \to  H^{i+1}(Y;\Z) \to \cdots.
\end{equation*}
Da dualidade de Lefschetz (veja \cite{munkres}, capítulo 8) temos que $H^k(X,Y;\Z) \simeq H_{2n-k}(X-Y;\Z)$ e portanto, de (\ref{eq:homology-XY}), vemos que $H^k(X,Y;\Z) = 0$ para $k < n$ e $H^n(X,Y;\Z)$ é livre de torção.

A sequência exata acima nos fornece então as sequências $0 \to H^i(X;\Z) \to H^i(Y;\Z) \to 0$ para $i<n-1$, mostrando que a aplicação induzida $H^i(X;\Z) \to H^i(Y;\Z)$ é um isomorfismo.

Para $i=n-1$ temos a sequência $0  \to H^{n-1}(X;\Z) \to H^{n-1}(Y;\Z) \to H^n(X,Y;\Z)$, que mostra que $H^{n-1}(X;\Z) \to H^{n-1}(Y;\Z)$ é injetora. Além disso vemos que o quociente $H^{n-1}(Y,\Z) \slash H^{n-1}(X,\Z)$ é isomorfo a um subgrupo de $H^n(X,Y;\Z)$ e portanto é livre de torção. 
\end{proof}

\begin{example} \textbf{Cohomologia de hipersuperfícies projetivas.}
Como aplicação do Teorema de Hiperplanos de Lefschetz podemos calcular os grupos de cohomologia de uma hipersuperfície suave $Y \subset \pr^{n+1}$, com a única exceção do grupo do meio $H^n(X,\Z)$.

Seja $Y \subset \pr^{n+1}$ uma hipersuperfície suave de dimensão $n$. Aplicando o Teorema de Hiperplanos de Lefschetz para $X= \pr^{n+1}$ e $W=Y$ vemos que a aplicação $H^i(\pr^{n+1},\Z) \to H^i(Y,\Z)$ é um isomorfismo para $i<n$. Como $H^i(\pr^{n+1},\Z)$ é igual a $\Z$ em grau par e igual a 0 em grau  ímpar concluímos que  $H^i(Y,\Z)$ é igual $\Z$ se $i<n$ é par e é igual $0$ se $i<n$ é ímpar.

Da dualidade de Poincaré temos que $H_i(Y,\Z) \simeq H^{2n-i}(Y,\Z)$ é igual $\Z$ se $i>n$ é par e é igual $0$ se $i>n$ é ímpar e como esses grupos são todos livres temos, do teorema dos coeficientes universais para a cohomologia, que $H^i(Y,\Z) \simeq \Hom(H_i(Y,\Z),\Z) \simeq \Z$ para $i>n$ par e $H^i(Y,\Z) = 0$ para $i>n$ ímpar. Concluímos assim que a cohomologia de $Y$ é dada por
\begin{equation*}
H^i(Y,\Z) = \left \lbrace \begin{split} \Z ~ &\text{ se } i \text{ é par} \\ 0 ~ &\text{ se } i \text{ é ímpar} \end{split} \right. ~~~~(i\leq 2n, i\neq n).
\end{equation*}

Vemos portanto que todas as hipersuperfícies $Y \subset \pr^{n+1}$ possuem a mesma cohomologia em grau $i \neq n$, ou seja, as únicas informações topologicamente interessantes sobre $Y$ (do ponto de vista da cohomologia) estão concentradas no grupo do meio $H^n(Y,\Z)$.

Em geral, é de se esperar que o grupo $H^n(Y,\Z)$ dependa do grau da hipersuperfície. No caso $n=1$ por exemplo, a fórmula de Plücker para curvas planas (veja por exemplo \cite{g-h}, cap. 2) diz que se $Y \subset \pr^2$ é uma curva plana suave de grau $d\geq 2$ então seu gênero é $g = \frac{(d-1)(d-2)}{2}$ e portanto o grupo $H^1(X,\Z)$ é um grupo abeliano livre de posto $2g=(d-1)(d-2)$.\\

O resultado acima pode ser generalizado para intersecções completas. Se  $Y = X_1 \cap X_2 \subset \pr^{n+2}$ é uma intersecção completa de dimensão $n$ então, pelo teorema de Hiperplanos de Lefschetz, temos que $H^i(Y,\Z) \simeq H^i(X_1,\Z)$ para $i < \dim X_1 -1 = n$.  Como $X_1$ é uma hipersuperfície em $\pr^{n+2}$, a discussão acima mostra que $H^i(X_1,\Z) \simeq H^i(\pr^{n+2},\Z)$ para $i<n+1$ e por argumento de dualidade análogo ao usado acima concluímos que a cohomologia de $Y$ é a mesma de $\pr^{n+2}$ para em grau $i \neq n$.

O mesmo argumento pode ser aplicado para uma intersecção completa qualquer, de onde obtemos o seguinte resultado.

\begin{proposition} \label{prop:cohomology-complete-intersection}
Seja $Y = X_1 \cap \cdots \cap X_k \subset \pr^{n+k}$ uma intersecção completa de dimensão $n$. Então os grupos de cohomologia de $Y$ são dados por
\begin{equation*}
H^i(Y,\Z) = \left \lbrace \begin{split} \Z ~ &\text{ se } i \text{ é par} \\ 0 ~ &\text{ se } i \text{ é ímpar} \end{split} \right. ~~~~(i\leq 2n, i\neq n).
\end{equation*}
\end{proposition}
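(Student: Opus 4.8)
The plan is to prove Proposition \ref{prop:cohomology-complete-intersection} by induction on $k$, the number of hypersurfaces cutting out $Y$, exactly mirroring the argument sketched in the preceding example for the cases $k=1$ and $k=2$. The base case $k=1$ is a smooth hypersurface $Y \subset \pr^{n+1}$ of dimension $n$, which is treated in the example: applying the Teorema de Hiperplanos de Lefschetz with the ambient variety $X = \pr^{n+1}$ and $W = Y$ gives $H^i(\pr^{n+1},\Z) \simeq H^i(Y,\Z)$ for $i < n$, and the known cohomology of $\pr^{n+1}$ together with a Poincaré duality plus universal-coefficients argument pins down $H^i(Y,\Z)$ for all $i \neq n$ (with $i \leq 2n$).

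For the inductive step, suppose the statement holds for intersections of $k-1$ hypersurfaces. Given $Y = X_1 \cap \cdots \cap X_k \subset \pr^{n+k}$ of dimension $n$, set $\widetilde{Y} = X_1 \cap \cdots \cap X_{k-1}$, which is a complete intersection of dimension $n+1$ in $\pr^{n+k}$, and note that $Y = \widetilde{Y} \cap X_k$ is a hyperplane-type section of $\widetilde{Y}$ (after a Veronese embedding, as in the proof of the Lefschetz hyperplane theorem, $X_k$ becomes a genuine hyperplane section, and $\widetilde Y$ is smooth since $Y$ being a complete intersection forces smoothness of the intermediate varieties). Applying the Teorema de Hiperplanos de Lefschetz to $\widetilde Y$ and its section $Y$ gives that $H^i(\widetilde{Y},\Z) \to H^i(Y,\Z)$ is an isomorphism for $i < \dim \widetilde{Y} - 1 = n$. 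By the induction hypothesis applied to $\widetilde{Y}$ (which has dimension $n+1$, so its "middle degree" is $n+1$), we know $H^i(\widetilde{Y},\Z)$ equals $\Z$ for $i$ even and $0$ for $i$ odd in the range $i \leq 2(n+1)$, $i \neq n+1$; in particular this holds for all $i < n$. Combining, $H^i(Y,\Z)$ has the asserted form for $i < n$.

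To extend this to the range $n < i \leq 2n$, I would invoke Poincaré duality on the smooth compact complex (hence orientable, real dimension $2n$) manifold $Y$: $H_i(Y,\Z) \simeq H^{2n-i}(Y,\Z)$. Since $2n - i < n$ when $i > n$, the groups $H^{2n-i}(Y,\Z)$ are already determined and are free; hence $H_i(Y,\Z)$ is free of the same rank, and one more application of the universal coefficients theorem for cohomology (using that $H_{i-1}(Y,\Z)$ is free, so the $\mathrm{Ext}$ term vanishes) gives $H^i(Y,\Z) \simeq \Hom(H_i(Y,\Z),\Z)$ with the claimed value. One small point needing care is the parity bookkeeping: one must check that "$i$ even $\Leftrightarrow H^i = \Z$" is consistent under the substitution $i \mapsto 2n-i$, which it is precisely because $2n$ is even. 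The main obstacle — which is really more of a bookkeeping subtlety than a genuine difficulty — is making sure all the dimension and degree ranges line up correctly across the Veronese reduction and the induction, and confirming that the smoothness hypotheses propagate down the chain $\pr^{n+k} \supset X_1 \cap \cdots \cap X_{k-1} \supset Y$ so that the Lefschetz hyperplane theorem genuinely applies at each stage; the cohomological input itself is entirely routine once the inductive skeleton is in place.
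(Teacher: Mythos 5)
Your proof follows essentially the same route as the paper: iterate the Lefschetz hyperplane theorem down the chain of intermediate complete intersections to get the range $i<n$, then use Poincar\'e duality and universal coefficients for $n<i\leq 2n$. The one step you assert without justification --- and which the paper's sketch also glosses over --- is that $H_{i-1}(Y,\Z)$ is free in the borderline case $i=n+1$, where $H_{n}(Y,\Z)$ is the middle group not covered by your duality argument; this does hold, because the injectivity-with-torsion-free-quotient clause of the Lefschetz theorem in degree $n$ for $Y\subset\widetilde Y$ shows that the torsion of $H^{n}(Y,\Z)$ coincides with that of $H^{n}(\widetilde Y,\Z)$ (zero by induction), and Poincar\'e duality then gives $H_{n}(Y,\Z)\simeq H^{n}(Y,\Z)$ torsion-free.
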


Como consequência podemos calular o grupo de Picard das intersecções completas.
\begin{proposition}
Seja $X \subset \pr^N$ uma intersecção completa de dimensão maior ou igual $3$. Então grupo de Picard de $X$ é isomorfo a $\Z$. Mais precisamente, todo fibrado de linha sobre $X$ é isomorfo a $\mathcal O(k)|_X$ para algum $k \in \Z$.
\end{proposition}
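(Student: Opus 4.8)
The plan is to read the result off the exponential sequence. Write $\iota\colon X\hookrightarrow\pr^N$ for the inclusion and put $n=\dim X\geq 3$. The long exact sequence associated to $0\to\underline{\Z}\to\mathcal{O}_X\to\mathcal{O}^*_X\to 0$ contains the segment
\[
H^1(X,\underline{\Z})\longrightarrow H^1(X,\mathcal{O}_X)\longrightarrow \Pic(X)\stackrel{c_1}{\longrightarrow} H^2(X,\underline{\Z})\longrightarrow H^2(X,\mathcal{O}_X),
\]
in which the map $\Pic(X)\to H^2(X,\underline{\Z})$ is exactly $c_1$ by Definition \ref{def:chernclass}. So if I can show $H^1(X,\mathcal{O}_X)=H^2(X,\mathcal{O}_X)=0$ and $H^2(X,\underline{\Z})\simeq\Z$, then $c_1$ becomes an isomorphism $\Pic(X)\stackrel{\sim}{\to}\Z$, and it will only remain to check that $\mathcal{O}(1)|_X$ is a generator.

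First I would gather the topological input. Since $X$ is a smooth complete intersection of dimension $n\geq 3$, Proposition \ref{prop:cohomology-complete-intersection} gives $H^1(X,\Z)=0$ (degree $1$ is odd, $\leq 2n$, and $\neq n$) and $H^2(X,\Z)=\Z$ (degree $2$ is even, $\leq 2n$, and $\neq n$ precisely because $n\geq 3$). By Remark \ref{rmk:cohomology-coeficients} it follows that $H^1(X,\C)=0$ and $H^2(X,\C)\simeq\C$ is one-dimensional. Next I would kill the coherent cohomology using Hodge theory. Being projective, $X$ is Kähler, so combining the Dolbeault isomorphism (\ref{eq:dolbeaut-iso}) with the Hodge decomposition (Theorem \ref{thm:hodge-decomposition}) identifies $H^q(X,\mathcal{O}_X)$ with the Hodge summand $H^{0,q}(X)\subset H^q(X,\C)$. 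For $q=1$ this summand sits inside $H^1(X,\C)=0$, so $H^1(X,\mathcal{O}_X)=0$. For $q=2$ we have $H^2(X,\C)=H^{2,0}(X)\oplus H^{1,1}(X)\oplus H^{0,2}(X)$ with $H^{0,2}(X)=\overline{H^{2,0}(X)}$; since $\dim H^2(X,\C)=1$ while the Kähler class $[\omega]$ is a non-zero class of type $(1,1)$ (Proposition \ref{prop:cohom-kahler}), the summand $H^{1,1}(X)$ already fills $H^2(X,\C)$, forcing $H^{2,0}(X)=H^{0,2}(X)=0$ and hence $H^2(X,\mathcal{O}_X)=0$. Feeding this into the displayed sequence yields $\Pic(X)\simeq H^2(X,\underline{\Z})=\Z$.

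Finally I would identify the generator. By the chain of restriction isomorphisms obtained from the Lefschetz hyperplane theorem in the proof of Proposition \ref{prop:cohomology-complete-intersection} (valid in degree $2$ exactly because $2$ stays strictly below the dimension of every partial intersection, again using $n\geq 3$), the pullback $\iota^*\colon H^2(\pr^N,\Z)\to H^2(X,\Z)$ is an isomorphism. By Example \ref{ex:lb-pn} the class $c_1(\mathcal{O}_{\pr^N}(1))=[\omega_{FS}]$ generates $H^2(\pr^N,\Z)$, so by naturality of the first Chern class $c_1(\mathcal{O}(1)|_X)=c_1(\iota^*\mathcal{O}_{\pr^N}(1))=\iota^*c_1(\mathcal{O}_{\pr^N}(1))$ is a generator of $H^2(X,\Z)$. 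Combined with the isomorphism $c_1\colon\Pic(X)\stackrel{\sim}{\to}H^2(X,\Z)$, this shows $\Pic(X)$ is infinite cyclic, generated by $\mathcal{O}(1)|_X$; equivalently, every holomorphic line bundle on $X$ is isomorphic to some $\mathcal{O}(k)|_X$.

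I expect the delicate point to be the vanishing $H^1(X,\mathcal{O}_X)=H^2(X,\mathcal{O}_X)=0$: this is where the hypothesis $\dim X\geq 3$ genuinely enters, guaranteeing that degrees $1$ and $2$ avoid the middle dimension $n$---the only place where $H^\bullet(X,\Z)$, and through it $H^\bullet(X,\mathcal{O}_X)$, can fail to agree with that of projective space---and it is where one must combine the Dolbeault isomorphism, the Kähler Hodge decomposition, and the fact that $[\omega]$ occupies $H^{1,1}$. Everything else is bookkeeping with the long exact sequence of the exponential sequence and with naturality of Chern classes.
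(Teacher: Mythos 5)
Your proposal is correct and follows essentially the same route as the paper: the exponential sequence combined with the vanishing $H^1(X,\mathcal{O}_X)=H^2(X,\mathcal{O}_X)=0$, deduced from Proposição \ref{prop:cohomology-complete-intersection} via the Hodge decomposition and the fact that the Kähler class fills the one-dimensional $H^2(X,\C)$. Your explicit use of the Lefschetz restriction isomorphism $\iota^*\colon H^2(\pr^N,\Z)\to H^2(X,\Z)$ to see that $c_1(\mathcal{O}(1)|_X)$ generates is a slightly more careful rendering of the paper's appeal to Example \ref{ex:lb-pn}, but it is the same argument.
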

\begin{proof}
Considere a sequência exata longa associada a sequência exponencial
\begin{equation*}
H^1(X,\mathcal{O}_X) \longrightarrow \Pic(X) \stackrel{c_1}{\longrightarrow} H^2(X,\Z) \longrightarrow H^2(X,\mathcal{O}_X)
\end{equation*}

Da Proposição \ref{prop:cohomology-complete-intersection} vemos que $H^1(X,\Z) = 0$ e portanto $0 = H^1(X,\C) = H^{1,0}(X) \oplus H^{0,1}(X)$. Em particular $H^1(X,\mathcal{O}_X) \simeq H^{0,1}(X) = 0$.

Como $n\geq 3$ vemos também que $H^2(X,\Z) = \Z$ e portanto $H^2(X,\C)$ é unidimensional. Como $H^2(X,\C) = H^{2,0}(X) \oplus H^{1,1}(X) \oplus H^{0,2}(X)$ e $H^{1,1}(X) \neq 0$ concluimos que $H^2(X,\mathcal{O}_X) \simeq H^{0,2}(X) = 0$.

Sendo asssim a sequência acima mostra que $c_1:\Pic(X) \to H^2(X,\Z) \simeq \Z$ é um isomorfismo. Para ver que todo fibrado de linha é isomorfo a algum $\mathcal O (k)$ argumentamos como no exemplo \ref{ex:lb-pn}: como $c_1(\mathcal O(1)) = [\omega_{FS}]$ gera $H^2(X,\Z)$, o grupo de Picard é gerado por $\mathcal O (1)$.
\end{proof}
Uma outra consequência interessante da discussão acima é a seguinte.
\begin{corollary}
Nenhum toro complexo de dimensão maior que $1$ pode ser mergulhado como uma intersecção completa em $\pr^N$.
\end{corollary}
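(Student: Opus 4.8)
The plan is to combine the cohomology computation for complete intersections (Proposition \ref{prop:cohomology-complete-intersection}) with the elementary topology of complex tori to reach a contradiction. First I would recall, as observed in Example \ref{ex:complex-tori}, that every complex torus $T = \C^n/L$ is diffeomorphic to the product $(S^1)^{2n}$ of $2n$ circles; by the Künneth formula (or directly from the standard cell decomposition of the torus) this yields $H^1(T,\Z) \simeq \Z^{2n}$, which is nonzero whenever $n \geq 1$.

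Next, suppose for contradiction that some complex torus $T$ of complex dimension $n > 1$ admits a holomorphic embedding onto a subvariety $X \subset \pr^N$ which is a complete intersection. Then $X$ is a complete intersection of complex dimension $n$, so Proposition \ref{prop:cohomology-complete-intersection} describes $H^i(X,\Z)$ for all $i \leq 2n$ with $i \neq n$. Since $n \geq 2$, the degree $i = 1$ lies in this range (indeed $1 \leq 2n$ and $1 \neq n$), and being odd it gives $H^1(X,\Z) = 0$. But $X$ is homeomorphic to $T$, so $H^1(X,\Z) \simeq H^1(T,\Z) \simeq \Z^{2n} \neq 0$, a contradiction. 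Hence no such embedding can exist.

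There is essentially no hard step: the argument is a direct application of the results already established, and the only point requiring care is bookkeeping — checking that the hypothesis $n > 1$ is precisely what places degree $1$ in the range where Proposition \ref{prop:cohomology-complete-intersection} forces $H^1$ to vanish, and invoking topological invariance of singular cohomology to transport the computation across the biholomorphism. One may also remark that the bound is sharp: a smooth plane cubic in $\pr^2$ is a hypersurface, hence a complete intersection, and is a complex torus of dimension $1$.
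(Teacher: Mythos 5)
Your argument is correct and is essentially identical to the paper's own proof: both derive $H^1(X,\Z)\simeq\Z^{2n}\neq 0$ for a complex torus and contrast this with the vanishing of $H^1$ forced by Proposition \ref{prop:cohomology-complete-intersection} once $n\geq 2$ places degree $1$ in the range $i\leq 2n$, $i\neq n$. The added remark on sharpness via plane cubics is a pleasant bonus but does not change the route.
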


De fato, um toro complexo $X$ tem $H^1(X,\Z) \simeq \Z^{2n}$ mas, se existisse um mergulho $X \to \pr^N$ tal que sua imagem fosse uma intersecção completa teríamos, da Proposição \ref{prop:cohomology-complete-intersection} e do fato que $n \geq 2$, que $H^1(X,\Z) = 0$.
\end{example}

\bibliography{bibliografia}
\bibliographystyle{plain}

\printindex
\end{document}